\newcommand{\flip}{\mathrm{flip}}
\renewcommand{\tilde}{\widetilde}
\theoremstyle{definition}
\numberwithin{equation}{section}
\newtheorem{theo}[equation]{Theorem}
\newtheorem{defn}[equation]{Definition}
\newtheorem{exa}[equation]{Example}
\newtheorem{prop}[equation]{Proposition}
\newtheorem{cor}[equation]{Corollary}
\newtheorem{lem}[equation]{Lemma}
\newtheorem{conj}[equation]{Conjecture}
\newtheorem{rem}[equation]{Remark}
\newenvironment{pf*}[1]{\proof[#1]}{\endproof}
\newcommand{\Irr}{\operatorname{Irr}}
\renewcommand{\proofname}{\textsf{Proof}}
\renewcommand{\mod}{\operatorname{mod}}
\newcommand{\Sub}{\operatorname{Sub}}
\def\dim{\mathop{\mathrm{dim}}\nolimits}
\def\tr{\mathop{\mathrm{tr}}\nolimits}
\def\Im{\mathop{\mathrm{Im}}\nolimits}
\def\Ker{\mathop{\mathrm{Ker}}\nolimits}
\def\Hom{\mathop{\mathrm{Hom}}\nolimits}
\def\Ext{\mathop{\mathrm{Ext}}\nolimits}
\def\End{\mathop{\mathrm{End}}\nolimits}
\newcommand{\dimv}{\underline{\mathrm{dim}}}
\newcommand{\mfr}[1]{{\mathfrak{#1}}}
\newcommand{\mbf}[1]{{\mathbf{#1}}}
\newcommand{\mbb}[1]{{\mathbb{#1}}}
\newcommand{\mca}[1]{{\mathcal{#1}}}
\newcommand{\mscr}[1]{{\mathscr{#1}}}
\newcommand{\thmref}[1]{Theorem~\ref{#1}}
\newcommand{\secref}[1]{\S\ref{#1}}
\newcommand{\subsecref}[1]{\S\ref{#1}}
\newcommand{\lemref}[1]{Lemma~\ref{#1}}
\newcommand{\propref}[1]{Proposition~\ref{#1}}
\newcommand{\corref}[1]{Corollary~\ref{#1}}
\newenvironment{aenumerate}{%
  \begin{enumerate}%
  }{\end{enumerate}}
\newenvironment{renumerate}{%
  \begin{enumerate}%
  }{\end{enumerate}}
\newcommand{\set}[1]{\left\{#1\right\}}
\newcommand{\bracket}[1]{\left\langle#1\right\rangle}
\newcommand{\+}{\oplus}
\newcommand{\fit}[1]{{\widetilde{f}}_{#1}}
\newcommand{\eit}[1]{{\widetilde{e}}_{#1}}
\newcommand{\Uq}{\mbf{U}_q}
\newcommand{\Qq}{\mbb{Q}(q)}
\newcommand{\gr}{\mathrm{gr}}
\newcommand{\tw}{\tilde{w}}
\newcommand{\Gup}{G^{\operatorname{up}}}
\newcommand{\Glow}{G^{\operatorname{low}}}
\newcommand{\Llow}{\mscr{L}^{\operatorname{low}}}
\newcommand{\Lup}{\mscr{L}^{\operatorname{up}}}
\newcommand{\wt}{\operatorname{wt}}
\newcommand{\Binfty}{\mscr{B}(\infty)}
\newcommand{\vep}{\varepsilon}
\newcommand{\vphi}{\varphi}
\newcommand{\beq}{\begin{equation}}
\newcommand{\eeq}{\end{equation}}
\newcommand{\Frac}{\operatorname{Frac}}
\renewcommand{\bar}{\overline{\phantom{x}}}
\newcommand{\Fl}{\mscr{F}\!\ell}
\newcommand{\jbar}{j}
\newcommand{\pibar}{\pi}
\newcommand{\up}{\operatorname{up}}
\newcommand{\low}{\operatorname{low}}
\newcommand{\Supp}{\operatorname{Supp}}
\newcommand{\U}{\mbf{U}}
\newenvironment{NB}{\color{red}{\bf N.B.} \footnotesize}{}
\newenvironment{NB2}{\color{blue}{\bf N.B.} \tiny}{}
       \def\@makefnmark{%
               \leavevmode
               \raise.9ex\hbox{\check@mathfonts
                       \fontsize\sf@size\z@\normalfont%
                               \@thefnmark}%
       }
\title[Quantum unipotent subgroup and dual canonical basis]
{Quantum unipotent subgroup 
and dual canonical basis%
}
\author[Yoshiyuki Kimura]
{Yoshiyuki Kimura}
\address{Research Institute for Mathematical Science\\
Kyoto University\\
Kyoto 606-8502\\
Japan}
\date{\today}
\email{ykimura@kurim.kyoto-u.ac.jp}
\thanks{This work is supported by Kyoto University Global COE Program `Fostering top leaders in mathematics'.}
\keywords{dual canonical basis, quantum cluster algebra}
\subjclass[2000]{Primary 17B37; Secondary 20G42, 16T20}
\begin{document}
\begin{abstract}
In a series of works \cite{GLS:semican1, GLS:semican2, GLS:Verma, GLS:adaptable, GLS:KacMoody, GLS:chamber},
{Gei\ss-Leclerc-Schr\"{o}er} defined the cluster algebra structure on the coordinate ring $\mbb{C}[N(w)]$ of the unipotent subgroup,
associated with a Weyl group element $w$.
And they proved cluster monomials are contained in Lusztig's \emph{dual semicanonical basis} $\mca{S}^{*}$.
We give a set up for the quantization of their results and propose a conjecture which relates the quantum cluster algebras in \cite{BZ:qcluster} to the \emph{dual canonical basis} $\mbf{B}^{\up}$.
In particular, we prove that the quantum analogue $\mca{O}_{q}[N(w)]$ of $\mbb{C}[N(w)]$ has the induced basis from $\mbf{B}^{\up}$,
which contains quantum flag minors and satisfies a factorization property with respect to the `$q$-center' of $\mca{O}_{q}[N(w)]$.
This generalizes Caldero's results \cite{Cal:unity, Cal:adapted, Cal:qflag} from ADE cases to an arbitary symmetrizable Kac-Moody Lie algebra.

\end{abstract}

\maketitle
\tableofcontents

\section{Introduction}
\subsection{The canonical basis $\mbf{B}$ and the dual canonical basis $\mbf{B}^{\up}$}
Let $\mfr{g}$ be a symmetrizable Kac-Moody Lie algebra,
$\Uq(\mfr{g})$  its associated quantized enveloping algebra, and $\Uq^-(\mfr{g})$ its negative part.
In \cite{Lus:quiver}, Lusztig constructed the canonical basis $\mbf{B}$ of $\Uq^{-}(\mfr{g})$
by a geometric method when $\mfr{g}$ is symmetric.
In \cite{Kas:crystal}, Kashiwara constructed the (lower) global basis $\Glow(\mscr{B}(\infty))$ by a purely algebraic method.
Grojnowski-Lusztig \cite{GroLus} showed that the two bases coincide when $\mfr{g}$ is symmetric.
We call the basis the \emph{canonical basis}.
There are two remarkable properties of the canonical basis,
one is the positivity of structure constants of multiplication and comultiplications,
and another is Kashiwara's crystal structure $\mscr{B}(\infty)$, which is a combinatorial machinery useful for applications to representation theory, such as tensor product decomposition.

Since $\Uq^-(\mfr{g})$ has a natural pairing which makes it into a (twisted) self-dual bialgebra,
we consider the dual basis $\mbf{B}^{\up}$ of the canonical basis in $\Uq^-(\mfr{g})$.
We call it the \emph{dual canonical basis}.
\subsection{Cluster algebras}
Cluster algebras were introduced by Fomin and Zelevinsky \cite{FZ:cluster1} and intensively studied also with Berenstein \cite{FZ:cluster2, BFZ:cluster3, FZ:cluster4}
with an aim of providing a concrete and combinatorial setting for the study of Lusztig's (dual) canonical basis and total positivity.
Quantum cluster algebras were also introduced by Berenstein and Zelevinsky \cite{BZ:qcluster}, Fock and Goncharov \cite{FG:cluster1, FG:cluster2, FG:qcluster} independently.
The definition of (quantum) cluster algebra was motivated by Berenstein and Zelevinsky's earlier work \cite{BZ:string} where combinatorial and multiplicative structures of 
the dual canonical basis were studied for $\mfr{g}=\mfr{sl}_2$ and $\mfr{sl}_3$.
Let us quote from \cite{FZ:cluster1}:
\begin{quote}
We conjecture that the above examples can be extensively generalized: for any simply-connected connected semisimple group $G$,  the coordinate rings $\mbb{C}[G]$ and $\mbb{C}[G/N]$, as well as coordinate rings of many other interesting varieties related to $G$, have a natural structure of a cluster algebra.
This structure should serve as an algebraic framework for the study of ``dual canonical basis'' in these coordinate rings and their $q$-deformations.
In particular, we conjecture that all monomials in the variables of any given cluster (the cluster monomials) belong to this dual canonical basis.
\end{quote}
\begin{NB}
Here ``dual canonical basis'' can be considered as a conjectural analogue of the dual canonical basis of $\Uq^{-}(\mfr{g})$ and $V(\lambda)$.
\begin{itemize}
\item For $\mca{O}_{q}[G]$ with $G$ finite case, we have natural bar-involution, which can be defined as the ``dual bar involution'' as in \cite{BZ:qcluster}.
See also \cite[Chapter 29]{Lus:intro}  and \cite{Lus:Zform}.
\item For general $\mca{O}_{q}[G]$, we have definition as in \cite{Kas:global}, see also \cite{LZ:1002.4701}.
\item For $\mca{O}_{q}[G/N]$ and $\mca{O}_{q}[G/P]$, the definition of ``bar involution'' is not so clear for us.
These can be defined as direct sum of integrable highest weight module, so ``component wise bar involution'' make sense.
But it not clear for us that this i compatible with the ``dual canonical basis'' of quantum Schubert cell and various .... 
\end{itemize}
\end{NB}
In \cite{BFZ:cluster3}, it was shown that the coordinate ring of the double Bruhat cell has a part of structures of a cluster algebra.

A cluster algebra $\mscr{A}$ is a subalgebra of rational function field $\mbb{Q}(x_1, x_2, \cdots, x_r)$ of $r$ indeterminates which is equipped with a distinguished set of generators (\emph{cluster variables})
which is grouped into overlapping subsets (\emph{clusters}) consisting of precisely $r$ elements.
Each subset is defined inductively by a sequence of certain combinatorial operation (\emph{seed mutations}) from the initial seed.
The monomials in the variables of a given single cluster are called \emph{cluster monomials}. 
However, it is not known that a cluster algebra have a basis, related to the dual canonical basis, which includes all cluster monomials in general.
\subsection{Cluster algebra and the dual semicanonical basis}
In a series of works \cite{GLS:semican1, GLS:semican2, GLS:Verma, GLS:adaptable, GLS:KacMoody, GLS:chamber},
Gei\ss, Leclerc and Schr\"{o}er introduced a cluster algebra structure on the coordinate ring $\mbb{C}[N(w)]$ of the unipotent subgroup 
associated with a Weyl group element $w$.
Furthermore they show that the \emph{dual semicanonical basis} $\mca{S}^*$ is compatible with the inclusion $\mbb{C}[N(w)]\subset U(\mfr{n})_{\operatorname{gr}}^*$
and contains all cluster monomials.
Here the dual semicanonical basis is the dual basis of the semicanonical basis of $U(\mfr{n})$, introduced by Lusztig \cite{Lus:aff, Lus:semican},
and ``compatible'' means that $\mca{S}^*\cap \mbb{C}[N(w)]$ forms a $\mbb{C}$-basis of $\mbb{C}[N(w)]$.

It is known that canonical and semicanonical bases share similar combinatorial properties (crystal structure),
but they are different (examples can be found in \cite{Lampe}
\footnote{In \cite{Lampe}, $\underline{\mca{S}}$ is the specialization of the dual canonical basis, while $\underline{\Sigma}$ is the dual semicanonical basis thanks to \cite{GLS:chamber}.}).

\begin{NB}
they studied the relationship between the full-subcategory $\mca{C}_w$ of $\mod(\Lambda)$
which is associated with a Weyl group element $w$ and showed that the associated cluster algebra  with 
coefficients \cite{FZ:cluster4} $\mscr{A}(\mca{C}_w)$ has a natural realization by a subalgebra of the restricted dual
of the universal envelopping algebra $U(\mfr{n}_-)_{\gr}^*$ and compatible with the 
\emph{dual semicanonical basis} $\mca{S}^*$.

Let us give some details of their works.
Let $Q$ be an acylic quiver and $\Lambda_Q$ be an associated completed preprojective alebra $\Lambda=\Lambda_{Q}$.
If we consider the representation variety of $\Lambda_Q$, then we obtain Lusztig's quiver varieties.

For a vertex $i\in Q_0$, we consider a two-sided ideal $I_i=\Lambda(1-e_i)\Lambda$,
where $e_i$ is an idempotent which corresponds to $i\in Q_0$.
For a Weyl group element $w\in W$ and fix a reduced expression $\tw=(i_1, i_2, \cdots, i_l)\in R(w)$,
we define $\Lambda$-bimodule $I_{\tw_{\leq k}}$ by $I_{i_1}\otimes_{\Lambda}\cdots \otimes_{\Lambda}I_{i_k}$ for $1\leq k\leq l$
and consider $\Lambda_{\tw_{\leq k}}:=\Lambda/I_{\tw_{\leq k}}$ and $T_{\tw}:=\bigoplus_{1\leq k\leq l}\Lambda_{\tw_{\leq k}}$.
Let $\mca{C}_w:=\Sub(T_{\tw})$ be the full subcategory of $\mod\Lambda$ which consists of submodule of finite direct sums of $\Lambda_{w}$.
It is known that $\Lambda_w$ and $\mca{C}_w$ do not depend on a choice of reduced expression and $\mca{C}_w=\Sub(\Lambda_w)$.
The description in \cite{GLS:KacMoody} is obtained by taking dual.
\begin{prop}[{\cite{BIRSc} \cite{GLS:adaptable}}]
\textup{(1)}
The additive category $\mca{C}_w$ is a Frobenius category and its stable category $\underline{\mca{C}_w}$ is a $\Hom$-finite $2$-Calabi-Yau category.

\textup{(2)}
If $\tw$ satisfies the condition that the subsets $\{k ; i_k=i\}$ are not empty for any $i\in I$,
then there are precisely $n=\# Q_0$ indecomposable projective-injective object in $\mca{C}_w$.

\textup{(3)}
$T_{\tw}$ is a cluster-tilting object of $\underline{\mca{C}_w}$.
\end{prop}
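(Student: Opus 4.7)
The plan is to establish the three parts by combining the $2$-Calabi-Yau duality on the category of finite-dimensional nilpotent $\Lambda$-modules with functorial-finiteness / Iyama-Yoshino-type arguments, following \cite{BIRSc,GLS:adaptable}. For (1), I would first verify that $\mca{C}_w = \Sub(\Lambda_w)$ is an extension-closed functorially finite subcategory of $\mathrm{nil}(\Lambda)$. Functorial finiteness follows from finite-dimensionality of $\Lambda_w$ via standard approximations built from $\Hom_\Lambda(-,\Lambda_w)$, while extension-closure holds because $\mca{C}_w$ coincides with the category of modules whose layers are compatible with the filtration induced by $I_w$. The $2$-Calabi-Yau duality on $\mathrm{nil}(\Lambda)$ (Crawley-Boevey) then descends to $\underline{\mca{C}_w}$, and the Frobenius property is a consequence of the general principle that a functorially finite extension-closed subcategory of a Hom-finite $2$-CY category which is stably $2$-CY must be Frobenius.

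For (2), under the hypothesis that every $i \in I$ appears in $\tw$, the indecomposable projective-injectives should be identified as $P_i := \Lambda_{\tw_{\leq k(i)}}$ where $k(i) := \max\{k : i_k = i\}$. The independence of $\Lambda_w$ and $\mca{C}_w$ from the choice of reduced expression reduces us to this convenient labelling. Projectivity inside $\mca{C}_w$ follows by showing $\Ext^1_\Lambda(P_i, M) = 0$ for all $M \in \mca{C}_w$ via the tensor-product description of $I_{\tw_{\leq k}}$, and injectivity is then obtained from projectivity by the stable $2$-CY duality. A counting argument yields exactly $n = \#Q_0$ such summands.

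For (3), cluster-tiltingness of $T_{\tw}$ decomposes into rigidity $\Ext^1_\Lambda(T_{\tw}, T_{\tw}) = 0$ and maximality. Rigidity I would prove by induction on $\ell(w)$, using the filtration $\Lambda_{\tw_{\leq 1}} \subset \Lambda_{\tw_{\leq 2}} \subset \cdots$ whose successive subquotients are controlled by the simple modules $S_{i_k}$, together with Ext-vanishing inherited from the preprojective algebra. Maximality then follows by a rank count: $T_{\tw}$ has $\ell(w)$ indecomposable summands, $n$ of which are projective-injective by (2), leaving $\ell(w) - n$ non-projective summands in $\underline{\mca{C}_w}$ --- the expected number for a cluster-tilting object in this $2$-CY category.

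The main obstacle, in my estimation, is the uniform Ext-vanishing required in (3): one needs tight control over the bimodule structure of the iterated tensor products $I_{i_1} \otimes_\Lambda \cdots \otimes_\Lambda I_{i_k}$ and of their higher Tor's against arbitrary modules in $\mca{C}_w$. In the symmetric (ADE) case this is the content of \cite{BIRSc}, but extending to arbitrary symmetrizable Kac-Moody types forces a passage to the valued-quiver/species formalism developed in \cite{GLS:KacMoody}, which substantially complicates the bimodule bookkeeping and is where most of the real work of the proposition lies.
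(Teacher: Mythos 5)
You should first be aware that the paper itself contains no proof of this statement: it is quoted as a known result of Buan--Iyama--Reiten--Scott and Gei\ss--Leclerc--Schr\"oer (the citation \cite{BIRSc}, \cite{GLS:adaptable} is the ``proof''), and in the source it even sits inside a suppressed remark used only to recall background. So there is no argument of the author's to compare yours against; the only meaningful comparison is with the cited literature, and your outline does follow its general strategy (functorially finite, extension-closed subcategory of the nilpotent $\Lambda$-modules, Crawley-Boevey's $\Ext^1$-duality descending to the stable category, projective-injectives coming from the ``last occurrence'' of each vertex in $\tw$, and rigidity of $T_{\tw}$ by induction along the filtration by the ideals $I_{\tw_{\leq k}}$).

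That said, two steps of your sketch would not survive as written. First, the indecomposable projective-injective objects of $\mca{C}_w=\Sub(\Lambda_w)$ are the summands $e_i\Lambda_w\cong e_i\Lambda_{\tw_{\leq k_{\max}(i)}}$, not the modules $\Lambda_{\tw_{\leq k(i)}}$ themselves, which are in general decomposable; without inserting the idempotents your count of $n=\#Q_0$ and of the $\ell(w)$ indecomposable summands of $T_{\tw}$ does not come out right. Second, and more seriously, maximality in (3) cannot be deduced from a rank count. The statement ``a rigid object with $\ell(w)-n$ non-projective indecomposable summands is cluster-tilting'' presupposes both that $\underline{\mca{C}_w}$ admits some cluster-tilting object with that number of summands and that maximal rigid objects are cluster-tilting in this category -- facts which at this point are exactly what is being proved. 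In \cite{BIRSc} (and in the Kac--Moody extension) cluster-tiltingness of $T_{\tw}$ is established directly, by induction on $\ell(w)$, producing for every $X$ with $\Ext^1_\Lambda(T_{\tw},X)=0$ an explicit $\add(T_{\tw})$-resolution via approximation/exchange sequences; some argument of this kind is unavoidable. Finally, a small point of orientation: the proposition concerns preprojective algebras of quivers, i.e.\ the symmetric Kac--Moody case, so the appeal to a valued-quiver/species formalism in your last paragraph is beside the point -- the passage from ADE to general symmetric Kac--Moody in \cite{GLS:KacMoody} does not change the bimodule bookkeeping in the way you describe.
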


By using this category, they gave following additive categorification of 
the coordinate ring of unipotent subgroup $\mbb{C}[N(w)]$ as follows:

For $\tw\in R(w)$, we set the Gabriel quiver $\Gamma_{\tw}:=\End_{\mca{C}_w}(T_{\tw})$ of cluster-tilting object.
We consider this quiver as a quiver with frozen vertices.
For $\Gamma_{\tw}$, we associate a cluster algebra with coefficient $\mscr{A}(\mca{C}_w)$
in the sense of \cite{FZ:cluster4}.
In the their additive categorification, the generating function of Euler number of quiver flag varieties of module of preprojecitve algebra
which is called GLS's $\varphi$ map has important role.

\begin{theo}[{\cite{GLS:KacMoody}}]\label{theo:GLS_additive}
\textup{(1)}
The algebras $\mbb{C}[N(w)]$ of the coordinate ring of unipotent subgroup have a cluster structure.
For each $\tw\in R(w)$, the image of $\varphi$-map
$\{\varphi(\Lambda_{i_1}), \cdots, \varphi(\Lambda_{i_r})\}$ provides an initial cluster of these cluster algebra structure and it can be considered as a generalized minors.
We have isomorphism of algebras

\[\Phi_{\tw}\colon \mscr{A}(\Gamma_{\tw})\simeq \mbb{C}[N(w)].\]

\textup{(2)} The subset $\mca{S}^*(w):=\mca{S}^*\cap \mbb{C}[N(w)]$ forms a $\mbb{C}$-basis of $\mbb{C}[N(w)]$

\textup{(3)} The set of cluster monomials $\mscr{M}(\Gamma_{\tw})$ of the cluster algebra $\mscr{A}(\Gamma_{\tw})$
is contained in the dual semicanonical base $\mca{S}^*(w)$

\textup{(4)} The subalgebra which is generated by projective-injective objects of $\mca{C}_w$
forms the coefficient of $\mscr{A}(\Gamma_{\tw})$ 
\end{theo}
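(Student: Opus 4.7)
The plan is to realize both sides inside the graded dual $U(\mfr{n})^{*}_{\gr}$ and to check seed-by-seed compatibility using the $\varphi$-map. On the representation-theoretic side, Lusztig's construction attaches to every rigid $\Lambda$-module $M$ a dual semicanonical basis vector $\rho_{M}$, and more generally the $\varphi$-map of Gei\ss--Leclerc--Schr\"{o}er sends any $M$ to the linear functional whose value on each PBW monomial is the Euler characteristic of the corresponding variety of composition series of $M$; in particular $\varphi(M)=\rho_{M}$ for rigid $M$. First I would verify, using the chamber Ansatz of \cite{GLS:chamber}, that $\varphi(\Lambda_{\tw_{\leq k}})$ is a unipotent generalized minor $D_{\varpi_{i_{k}},\tw_{\leq k}\varpi_{i_{k}}}\in\mbb{C}[N(w)]$, and that these $r$ functions are algebraically independent and generate the field of fractions of $\mbb{C}[N(w)]$. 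This pins down the initial seed of the cluster algebra $\mscr{A}(\Gamma_{\tw})$ and sets up a candidate homomorphism $\Phi_{\tw}\colon\mscr{A}(\Gamma_{\tw})\to\mbb{C}[N(w)]$.

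The crux is then to show that $\Phi_{\tw}$ commutes with mutation. For each mutable summand $T_{k}$ of $T_{\tw}$, the $2$-Calabi--Yau property of $\underline{\mca{C}_{w}}$ produces a pair of exchange triangles with middle terms $E,E'$, and one has to prove the \emph{multiplication formula}
\[\varphi(T_{k})\,\varphi(T_{k}^{*})=\varphi(E)+\varphi(E').\]
This is the categorification of the cluster exchange relation, and its proof proceeds by rewriting both sides as Euler characteristics of quiver Grassmannians and matching them via a Green-type twisted bialgebra identity in $U(\mfr{n})^{*}_{\gr}$. Once this is in hand, an induction on mutation distance simultaneously produces an inverse to $\Phi_{\tw}$ on each seed (proving (1)) and shows that every cluster variable has the form $\varphi(M)$ for some indecomposable rigid $M\in\mca{C}_{w}$. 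Since cluster monomials are $\varphi$-images of rigid direct sums, and $\varphi(M)=\rho_{M}$ for rigid $M$, they lie in $\mca{S}^{*}$, which gives (3). Assertion (4) is read off from the fact that the projective-injective summands of $T_{\tw}$ are precisely the $\Lambda_{\tw_{\leq k}}$ with $k=\max\{j:i_{j}=i_{k}\}$, whose $\varphi$-images play the role of the frozen (coefficient) variables.

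For (2) I would use the stratification of $\mod(\Lambda)$ by generic module types together with the dual decomposition of $\mca{S}^{*}$: the basis vector $\rho_{M}$ lies in $\mbb{C}[N(w)]$ precisely when $M\in\mca{C}_{w}$, so $\mca{S}^{*}(w)$ inherits the basis property from $\mca{S}^{*}\subset U(\mfr{n})^{*}_{\gr}$. The main obstacle throughout is the multiplication formula above; establishing it requires coupling the constructible-function technology of \cite{Lus:semican} with a delicate analysis of short exact sequences in the Frobenius category $\mca{C}_{w}$, and in particular with a compatibility between the $2$-Calabi--Yau duality on $\underline{\mca{C}_{w}}$ and the Euler form that determines the exchange matrix of the Gabriel quiver $\Gamma_{\tw}$. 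Everything else amounts to bookkeeping around this single identity.
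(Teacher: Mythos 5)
There is an important mismatch of expectations here: this theorem is not proved in the paper at all. It is quoted from Gei\ss--Leclerc--Schr\"{o}er \cite{GLS:KacMoody} (in the source it even sits inside a suppressed note), so there is no internal proof to compare your argument against; the paper only uses the statement as background for its quantization program. What you have written is, in outline, the strategy of the original GLS proof: the cluster character $\varphi$ defined by Euler characteristics of composition-series varieties, the identification of $\varphi(\Lambda_{\tw_{\leq k}})$ with unipotent generalized minors giving the initial cluster, the multiplication formula $\varphi(T_k)\varphi(T_k^{*})=\varphi(E)+\varphi(E')$ coming from the exchange triangles in the stably $2$-Calabi--Yau Frobenius category $\mca{C}_w$ (this is the content of \cite{GLS:semican2}), the fact that $\varphi(M)=\rho_M$ for rigid $M$ giving (3), and the projective-injective summands $\Lambda_{\tw_{\leq k_{\max}}}$ giving the coefficients (4). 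On these points your sketch is faithful to the actual argument.

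The genuine gap is in how you get (1) and (2). Induction on mutation distance together with the multiplication formula only shows that every cluster monomial equals $\varphi(R)$ for a reachable rigid object of $\mca{C}_w$, hence that $\Phi_{\tw}$ is a well-defined homomorphism from $\mscr{A}(\Gamma_{\tw})$ into $\mbb{C}[N(w)]$ whose cluster monomials land in $\mca{S}^{*}$; it does not ``produce an inverse on each seed,'' because nothing in that induction guarantees that the cluster algebra exhausts $\mbb{C}[N(w)]$. In \cite{GLS:KacMoody} this surjectivity is where most of the labor goes: one first proves (2) independently --- the dual semicanonical vectors $\rho_Z$ indexed by irreducible components whose \emph{generic} module lies in $\mca{C}_w$ (the basis is indexed by components, not by modules, so your formulation ``$\rho_M\in\mbb{C}[N(w)]$ iff $M\in\mca{C}_w$'' needs this correction) span $\mbb{C}[N(w)]$, using the realization of $\mbb{C}[N(w)]$ as the graded dual of $U(\mfr{n}(w))$ inside $U(\mfr{n})_{\gr}^{*}$ --- and then shows by explicit mutation sequences that a much larger family of minors, the $\varphi$-images of the interval modules $M[b,d]$ and not only the initial ones, are cluster variables, and finally that this family generates $\mbb{C}[N(w)]$ as an algebra by induction on $\ell(w)$. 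So (2) is an input to the isomorphism in (1), not a byproduct of the seed-by-seed analysis, and your plan as stated would stall exactly at the step ``induction produces an inverse to $\Phi_{\tw}$'' unless these generation and reachability arguments are supplied.
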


\end{NB}
\subsection{Cluster algebra and the dual canonical basis}
Our main result is to give a set up of a quantum analogue of Gei\ss-Leclerc-Schr\"{o}er's results:

\begin{enumerate}
\item 
The dual canonical basis is compatible with the quantum unipotent subgroup $\mca{O}_q[N(w)]$ which is a quantum analogue of $\mbb{C}[N(w)]$,
that is $\mbf{B}^{\up}(w):=\mbf{B}^{\up}\cap \mca{O}_q[N(w)]$ forms a $\mbb{Q}(q)$-basis of $\mca{O}_q[N(w)]$.
(See \thmref{theo:unipgp}.)

\item
Quantum flag minors are mutually $q$-commuting and their monomials are contained in the dual canonical basis up to some $q$-shifts.
Here quantum flag minors are defined as certain matrix coefficients with respect to extremal vectors in integrable highest weight modules.
(See \thmref{theo:seed}.)
\item
The ``$q$-center''  of $\mca{O}_q[N(w)]$ is generated by some of the quantum flag minors.
Moreover any dual canonical basis element in $\mbf{B}^{\up}(w)$ can be factored into the product of an element in the ``$q$-center'' of $\mca{O}_q[N(w)]$
and an ``interval-free'' element.
(See \thmref{theo:unip}.)
\end{enumerate}
When $\mfr{g}$ is of type ADE, Caldero proved the above results in a series of works \cite{Cal:unity, Cal:adapted, Cal:qflag} (see also \cite[6.3]{BuaMar}).
($\mca{O}_{q}[N(w)]$ is denoted by $\U_q(\mfr{n}_w)$ in \cite{Cal:qflag}.)
We generalize them to an arbitary symmetrizable Kac-Moody Lie algebra.
Key tools are the Poincar\'{e}-Birkhoff-Witt basis of $\mca{O}_q[N(w)]$ and the crystal structures.
They are already used by Caldero, but the author cannot follow several claims, and give a self-contained proof in this paper.

\subsection{Quantization conjectures and its consequences}

The above properties (1), (2), (3) can be thought as a part of structures of a quantum cluster algebra.
The corresponding properties of the ``classical limit'' $\mbb{C}[N(w)]$ were shown in \cite{GLS:KacMoody}
if the dual canonical basis is replaced by the dual semicanoncial basis.
We conjecture that remaining structures of a quantum cluster algebra exist on $\mca{O}_{q}[N(w)]$ as in \cite{GLS:KacMoody}.
Let $\mca{O}_{q}[N(w)]_{\mca{A}}$ be the integral form defined by the dual canonical basis $\mbf{B}^{\up}(w)$ where $\mca{A}=\mbb{Q}[q^{\pm}]$.
\begin{conj}[Quantization conjecture]\label{conj:qconj}
\textup{(1)}
We take a reduced expression $\tw=(i_{1}, \cdots, i_{l})$ of the Weyl group element $w$,
then we have an isomorphism of algebras
\[\Phi_{\tw}\colon \mscr{A}^{q}(\Gamma_{\tw}, \Lambda_{\tw})\otimes_{\mbb{Z}[q^{\pm}]}\mbb{Q}[q^{\pm}]\simeq \mca{O}_{q}[N(w)]_{\mca{A}},\]
which sends the initial seed to the quantum flag minors $\{\Delta^q_{s_{i_1}\cdots s_{i_k}\varpi_{i_k}, \varpi_{i_k}}\}_{1\leq k\leq l}$,
defined as matrix coefficients of certain extremal vectors associated with $\tw$,
where  $\Gamma_{\tw}$ is the frozen quiver in \cite{BFZ:cluster3} and \cite{GLS:KacMoody} and $\Lambda_{\tw}$ is the compatible pair in \cite[\S 10.3]{BZ:qcluster}.

(2) Under this isomorphism, the quantum cluster monomials  of 
$\mscr{A}^q(\Gamma_{\tw}, \Lambda_{\tw})$ are contained in the dual canonical base $\mbf{B}^{\up}(w)$ up to some $q$-shifts.
\end{conj}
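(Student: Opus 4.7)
The proof naturally splits along the two parts of the conjecture, and I would attack them in sequence. For part (1), the plan is to first verify that the ordered tuple of quantum flag minors $(\Deltaq_{s_{i_1}\cdots s_{i_k}\varpi_{i_k}, \varpi_{i_k}})_{1 \leq k \leq l}$ forms a compatible pair in the sense of \cite{BZ:qcluster} with respect to $(\Gamma_{\tw}, \Lambda_{\tw})$. The $q$-commutation relations are essentially the content of \thmref{theo:seed}, and matching the exchange matrix $\Lambda_{\tw}$ with the exponents of those $q$-commutations reduces to a direct computation with the pairing between extremal weight vectors. To upgrade this into a full isomorphism of quantum cluster algebras, one would then prove a quantum analog of the $T$-system identity which realizes the exchange relation in \cite{GLS:KacMoody}. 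For initial mutations this is expected to take the form of a quantum Plücker-type identity between matrix coefficients of extremal vectors in fundamental representations, provable using the universal $R$-matrix together with the bilinear pairing on $\mca{O}_q[N(w)]$. Combined with the factorization theorem \thmref{theo:unip}, the quantum flag minors can then be shown to generate $\mca{O}_q[N(w)]_{\mca{A}}$ over $\mbb{Z}[q^{\pm}]$ after localization at the frozen variables, which would yield the isomorphism $\Phi_{\tw}$.

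For part (2), the plan is to proceed by induction on the mutation distance from the initial cluster. The base case is handled by \thmref{theo:seed}: monomials in the initial quantum flag minors lie in $\mbf{B}^{\up}(w)$ up to $q$-shifts. In the inductive step, given a mutation $\mu_k(x) = x'$ with exchange relation $x x' = M_1 + M_2$, where $M_1, M_2$ are monomials in the remaining cluster variables, the goal is to identify $x'$ as a dual canonical basis element. By the induction hypothesis $M_1$ and $M_2$ already lie in $\mbf{B}^{\up}(w)$ up to $q$-shifts, so their sum has a well-defined expansion in this basis. The key is to exploit the crystal structure and the factorization in \thmref{theo:unip} to pin down the ``interval-free'' part of $x'$: its weight is determined by the exchange relation, its images under the Kashiwara operators $\eit{i}^*$ and $\fit{i}^*$ can be read off from the right-hand side, and these data together should force $x'$ to be a single crystal basis element.

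The main obstacle is precisely this inductive step. Even granting the exchange relation as a genuine equality in $\mca{O}_q[N(w)]_{\mca{A}}$, one must rule out cancellation between distinct dual canonical basis elements when expanding $M_1 + M_2$, and verify that $x$ itself divides this sum with quotient equal to a single basis element times a power of $q$. For general symmetrizable Kac-Moody $\mfr{g}$, positivity of the structure constants of $\mbf{B}^{\up}$ is not available outside the symmetric case, so a purely expansion-based argument breaks down. A more robust strategy, not pursued in this paper, would be to construct a monoidal categorification of $\mca{O}_q[N(w)]_{\mca{A}}$ by a suitable subcategory of modules over quiver Hecke algebras, with simple objects categorifying $\mbf{B}^{\up}(w)$ and short exact sequences categorifying the exchange relations; the conjecture would then reduce to showing that cluster variables correspond to real simple objects, which the rigidity provided by \thmref{theo:unip} and \thmref{theo:seed} makes plausible as a future direction.
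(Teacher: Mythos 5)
What you have written is a programme, not a proof, and you should be aware that the statement you are addressing is stated in the paper as Conjecture~\ref{conj:qconj}: the paper itself proves no such result, only the ingredients you cite (\thmref{theo:unipgp}, \thmref{theo:seed}, \thmref{theo:unip}), and records that the conjecture is known only in small cases ($A_2$, $A_3$, $A_4$ with $w=w_0$, and $A_1^{(1)}$ with $w=c^2$). So there is no paper proof to compare against, and your proposal does not supply one either.

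The genuine gaps are exactly where you wave your hands. In part (1), checking that the initial quantum flag minors $q$-commute with exponents matching $\Lambda_{\tw}$ (which does follow from \thmref{theo:seed} and the explicit formula \eqref{eq:lambda}) only produces a quantum torus inside $\Frac(\mca{O}_q[N(w)])$; to get an isomorphism with $\mscr{A}^q(\Gamma_{\tw},\Lambda_{\tw})$ you must prove that every quantum exchange relation (your ``quantum $T$-system / Pl\"ucker identity'') actually holds in $\mca{O}_q[N(w)]$ with the correct $q$-powers, and that the mutated variables stay inside $\mca{O}_q[N(w)]_{\mca{A}}$ and generate it. None of this is established by the tools in the paper, and ``provable using the universal $R$-matrix together with the bilinear pairing'' is an assertion, not an argument. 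In part (2), your inductive step fails as stated: knowing the weight of $x'$ and some data under $\eit{i}^{*}$, $\fit{i}^{*}$ does not force an element of $\Uq^-(\mfr{g})$ congruent to a sum $M_1+M_2$ to be a single dual canonical basis element up to a $q$-shift; without positivity (unavailable for general symmetrizable $\mfr{g}$, as you note) there is no mechanism to exclude cancellation or to show that $x$ divides $M_1+M_2$ in the required sense. You concede this yourself and fall back on a hoped-for monoidal categorification, which is precisely the open problem. In short, your proposal reproduces the heuristic picture the paper already gives in its introduction but does not close any of the steps that make the statement a conjecture rather than a theorem.
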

Let $\mca{A}\to \mbb{C}$ be the algebra homomorphism defined by $q\mapsto 1$.
If we specialize Conjecture \ref{conj:qconj} to $q=1$, we obtain the following ``weak'' conjecture.

\begin{conj}[Weak quantization conjecture]\label{conj:weakqconj}
\textup{(1)}
Let $\tw$ be as above,
We have an isomorphism of algebras
\[\Phi_{\tw}\colon \mscr{A}(\Gamma_{\tw})\otimes_{\mbb{Z}}\mbb{C}\simeq \mbb{C}[N(w)],\]
which sends the initial seed to the specialized quantum flag minors $\{\Delta_{s_{i_1}\cdots s_{i_k}\varpi_{i_k}, \varpi_{i_k}}\}_{1\leq k\leq l}$,
where  $\Gamma_{\tw}$ is the frozen quiver as above.

(2) Under this isomorphism, the cluster monomials  of 
$\mbb{C}[N(w)]$ are contained in the specialized dual canonical base $\mbf{B}^{\up}(w)$ at $q=1$.
\end{conj}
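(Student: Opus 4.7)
The plan is to deduce the Weak Quantization Conjecture from the quantum Quantization Conjecture (Conjecture~\ref{conj:qconj}) by specialization at $q=1$, and then to indicate how one might hope to establish the quantum statement itself. Assuming Conjecture~\ref{conj:qconj}, the reduction is as follows. By \thmref{theo:unipgp} the subset $\mbf{B}^{\up}(w)$ is a basis of the $\mca{A}$-integral form $\mca{O}_{q}[N(w)]_{\mca{A}}$, and the classical limit under $\mca{A}\to\mbb{C}$, $q\mapsto 1$, is $\mbb{C}[N(w)]$; the specialized basis is by definition the specialized dual canonical basis. A quantum cluster algebra specializes to the corresponding classical cluster algebra, and each quantum flag minor $\Deltaq_{s_{i_1}\cdots s_{i_k}\varpi_{i_k},\varpi_{i_k}}$ specializes to the ordinary flag minor $\Delta_{s_{i_1}\cdots s_{i_k}\varpi_{i_k},\varpi_{i_k}}$. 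Thus part~(1) of Conjecture~\ref{conj:qconj} specializes to the algebra isomorphism sending the initial seed to the required elements, and part~(2), namely that quantum cluster monomials lie in $\mbf{B}^{\up}(w)$ up to $q$-shifts, specializes to the containment of cluster monomials in the specialized dual canonical basis.

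The first step in attacking the quantum conjecture is to construct $\Phi_{\tw}$. Using \thmref{theo:seed}, the family $\{\Deltaq_{s_{i_1}\cdots s_{i_k}\varpi_{i_k},\varpi_{i_k}}\}_{1\le k\le l}$ is $q$-commuting, and the $q$-commutation matrix $\Lambda_{\tw}$ combined with the exchange matrix read off from $\Gamma_{\tw}$ must be shown to form a compatible pair in the sense of \cite[\S3]{BZ:qcluster}; this is a finite combinatorial verification matching the classical calculation in \cite{GLS:KacMoody}. One then defines $\Phi_{\tw}$ on the initial seed and extends it multiplicatively. Surjectivity should follow from the fact that the Poincar\'{e}-Birkhoff-Witt basis of $\mca{O}_{q}[N(w)]$ attached to $\tw$ is expressible in terms of iterated $T$-operators applied to quantum flag minors, a quantum counterpart of Gei\ss-Leclerc-Schr\"{o}er's reachability argument; injectivity then follows on dimension grounds once a basis of the target has been produced, together with the classical isomorphism of \cite{GLS:KacMoody} specialized at $q=1$.

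The main obstacle is part~(2): showing that every variable produced by mutation from the initial seed lies in $\mbf{B}^{\up}(w)$, up to a power of $q$. Inductively, one must verify that if a cluster has all variables in $\mbf{B}^{\up}(w)$ modulo $q$-shifts, then any mutated variable, defined by the Berenstein-Zelevinsky exchange relation, is again a dual canonical basis element. This is a delicate positivity and integrality property: it requires a structural understanding of how dual canonical basis elements multiply, in particular which products decompose as a single basis element plus dual-canonically positive corrections. The natural strategy, and the principal unresolved ingredient beyond what is stated in the present excerpt, is a monoidal categorification of $\mca{O}_{q}[N(w)]_{\mca{A}}$ as the Grothendieck ring of a suitable tensor category in which $\mbf{B}^{\up}(w)$ corresponds to classes of simples and mutation corresponds to short exact sequences; the factorization statement of \thmref{theo:unip} supports the expectation that the $q$-central flag minors should play the role of frozen variables in such a picture. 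In the ADE case, Caldero's arguments together with quiver-variety methods (and ultimately Nakajima-type geometric constructions) provide a template, but extending this to an arbitrary symmetrizable Kac-Moody Lie algebra is exactly the point where genuinely new ideas are required.
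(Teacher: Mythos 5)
The statement you are trying to prove is a \emph{conjecture} in this paper, not a theorem: the paper offers no proof of Conjecture~\ref{conj:weakqconj}, but only the observation that it is the $q=1$ specialization of Conjecture~\ref{conj:qconj} (together with the remark that, for cluster monomials, it is equivalent to Gei\ss--Leclerc--Schr\"oer's open orbit conjecture). Your first paragraph reproduces exactly that observation, so as a \emph{reduction} it agrees with the paper; but it is not a proof, because the hypothesis you assume --- Conjecture~\ref{conj:qconj} --- is itself open, known (per the paper) only in the small cases $A_2,A_3,A_4$ with $w=w_0$ and $A_1^{(1)}$ with $w=c^2$. What the paper actually \emph{proves} is strictly weaker than either conjecture: \thmref{theo:unipgp} (compatibility of $\mbf{B}^{\up}$ with $\mca{O}_q[N(w)]$), \thmref{theo:seed} (the quantum flag minors form a strongly compatible, $q$-commuting family), and \thmref{theo:unip} (factorization over the $q$-center). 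None of these yields the containment of all cluster monomials in $\mbf{B}^{\up}(w)$, which is the genuinely open content of part (2); indeed part (1) at $q=1$ is essentially the Gei\ss--Leclerc--Schr\"oer theorem already cited in the introduction, so the entire difficulty of Conjecture~\ref{conj:weakqconj} sits in part (2).

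Concerning your sketch of how one might establish the quantum statement: the construction of $\Phi_{\tw}$ on the initial quantum seed via the compatible pair $(\Gamma_{\tw},\Lambda_{\tw})$ is plausible and consistent with \cite[\S 10]{BZ:qcluster}, but your surjectivity argument is unsupported --- there is no result in the paper (or elsewhere cited) expressing the PBW basis of $\mca{O}_q[N(w)]$ through ``iterated $T$-operators applied to quantum flag minors,'' and the classical reachability argument of \cite{GLS:KacMoody} uses the additive (preprojective-algebra) categorification, which has no quantum counterpart here. You correctly identify the inductive step under mutation as the key obstruction and candidly label the monoidal-categorification strategy as the missing ingredient; that honesty is appropriate, but it confirms that the proposal is a research program rather than a proof. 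In short: your conditional derivation matches the paper's own (one-line) remark defining the weak conjecture as a specialization, and everything beyond that remains open, exactly as the paper intends.
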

Some parts of Conjecture \ref{conj:qconj} were shown for $A_{2}, A_{3}, A_{4}$ cases with $w=w_0$ in \cite{BZ:string} and \cite[\S 12]{GLS:semican1} and $A_1^{(1)}$ with $w=c^2$ in \cite{Lampe}.

The definition of the quantum cluster algebra $\mscr{A}^{q}(\Gamma_{\tw}, \Lambda_{\tw})$ will not be explained.
So we explain the meaning of this conjecture as properties of the dual canonical basis without referring to the axiom of a quantum cluster algebra \cite{BZ:qcluster}.

An element $x\in \mbf{B}^{\up}\setminus \{1\}$ is called \emph{prime} if it does not have a non-trivial factorization
$x=q^{N}x_{1}x_{2}$ with $x_{1}, x_{2}\in \mbf{B}^{\up}$ and $N\in \mbb{Z}$.
A subset $\mbf{x}=\{x_{1}, \cdots, x_{l}\}\subset \mbf{B}^{\up}$
is called \emph{strongly compatible} if for any $m_{1}, \cdots, m_{l}\in \mbb{Z}_{\geq 0}$, the monomial $x_{1}^{m_{1}}\cdots x_{l}^{m_{l}}\in q^{\mbb{Z}}\mbf{B}^{\up}$,
that is $x_{1}^{m_{1}}\cdots x_{l}^{m_{l}}$ 
is contained in the dual canonical basis $\mbf{B}^{\up}$ up to some $q$-shifts.
In particular, $x$ is contained in a compatible family, then it satisfies $x^{m}\in q^{\mbb{Z}}\mbf{B}^{\up}$ for any $m\geq 1$.
A strongly compatible subset $\mbf{x}=\{x_{1}, \cdots, x_{l}\}$ is called \emph{maximal} in $\mbf{B}^{\up}(w)$
if $y\in \mbf{B}^{\up}(w)$ satisfies $yx_{i}\in q^{\mbb{Z}}\mbf{B}^{\up}(w)$ for any $x_{i}$, then there exists $m_{1}, \cdots, m_{l}$ and $N$ such  
$y=q^{N}x_{1}^{m_{1}}\cdots x_{l}^{m_{l}}$.

Our quantization conjecture means that there are lots of maximal strongly compatible subsets of $\mbf{B}^{\up}(w)$,
constructed recursively from $\{\Delta^{q}_{s_{i_{1}}\cdots s_{i_{k}}\varpi_{i_{k}}, \varpi_{i_{k}}}\}_{1\leq k\leq l}$.
For example, for  finite type $\mfr{g}$ with $w=c^{2}$ for a (bipartite) Coxeter element $c$,
it is expected that the dual canonical basis $\mbf{B}^{\up}(w)$ is covered by the (finite) union of the maximal compatible families.
But the union is not the whole $\mbf{B}^{\up}(w)$ in general.

Our quantization conjecture implies several conjectures on (quantum) cluster algebras.
Let us spell out  a few.

If $\mfr{g}$ is symmetric, we have the positivity result for the dual canonical base by the construction of 
\cite{Lus:quiver}.
This implies the positivity conjecture for the quantum cluster algebras $\mscr{A}^{q}(\Gamma_{\tw}, \Lambda_{\tw})$,
stating that cluster monomials are Laurent polynomials with positive coefficients in $q$ and cluster variables of any seed.
This conjecture is known only special cases:
\begin{itemize}
\item cluster algebras of finite type \cite{FZ:cluster2},
\item cluster algebras with bipartite seeds \cite{Nak:cluster},
\item cluster algebras coming from triangulated surfaces \cite{MSW:positivity},
\item acyclic cluster algebras at the initial seed \cite{Qin}.
\end{itemize}

In fact, these results apply only to cluster algebras, not quantum ones except \cite{Qin}.
Thus we have much stronger positivity.

The quantization conjecture also provides us a \emph{monoidal categorification} of $\mbb{C}[N(w)]$
in the sense of Hernandez-Leclerc \cite{HerLec}.
It roughly says that there is a monoidal abelian category $\mfr{N}(w)$ whose complexified Grothendieck ring $K_{0}(\mfr{N}(w))\otimes_{\mbb{Z}}\mbb{C}$
has the cluster algebra structure of $\mbb{C}[N(w)]$ so that the cluster monomials are classes of simple objects.
If the weak quantizaton conjecture is true (and $\mfr{g}$
 is symmetric), the category $\mfr{N}(w)$ is given as
the category of finite dimensional modules of the (equivariant) $\Ext$ algebras of the simple (equivariant) perverse sheaves belonging to $\mbf{B}^{\up}(w)$.
Thanks to \cite{VV:KLR}, $\mfr{N}(w)$ is also considered as the extension-closed subcategory of the module category of Khovanov-Lauda-Rouquier's algebra
\cite{KhoLau:I, KhoLau:II, Rouq:2KM}
consisting of finite dimensional modules whose composition factors are contained in $\mbf{B}^{\up}(w)$.
\begin{NB}More details will be explained in \subsecref{subsec:monoidal}.\end{NB}

When $\mfr{g}$ is symmetric,
Gei\ss, Leclerc and Schr\"{o}er  conjecture that certain dual semicanonical basis elements are specialization of the corresponding dual canonical basis elements.
This is called the \emph{open orbit conjecture}.
\begin{NB}
This property conjecturally holds for cluster monomials, and is true if the weak quantization conjecture is true.
Conversely the open orbit conjecture implies the weak quantization conjecture.
\end{NB}
This class of the dual semicanonical basis element contains all the cluster monomials.
(Conjecturally it exactly consists of the cluster monomials \cite[Conjecture II 5.3]{BIRSc}.)
The open orbit conjecture for the cluster monomials is equivalent to the weak quantization conjecture.
\begin{NB}Note also that this weak form is enough to provide the mononidal categorification explained before.\end{NB}
\begin{NB}More details will be explained in \subsecref{subsec:openorbit}.\end{NB}

This paper is organized as follows.
In \secref{sec:QEA}, we give a review  the quantized enveloping algebra and its canonical basis.
In \secref{sec:upper}, we give a review the dual canonical basis $\mbf{B}^{\up}$ and  its multiplicative properties.
In \secref{sec:unip}, we define the  quantum unipotent subgroup and prove its compatibility with the dual canonical basis.
In \secref{sec:Demazure}, we define the quantum closed unipotent cell and study its relationship with the quantum unipotent subgroup.
In \secref{sec:qflag}, we give quantum flag minors and prove their multiplicative properties.
\begin{NB}In \secref{sec:conj}, we explain the quantization conjecture and its consequences in more details. \end{NB}
\subsection*{Acknowledgement}
The author is grateful to his advisor Hiraku Nakajima for his valuable comments and his sincere encouragement.
\section{Preliminaries: Quantized enveloping algebras and the canonical bases}\label{sec:QEA}
We briefly recall the definition of the quantized enveloping algebra and its canonical base in this section.

\subsection{Definition of $\Uq(\mfr{g})$}
\subsubsection{}
A \emph{root datum} consists of
\begin{enumerate}
\item $\mfr{h}$: a finite-dimensional $\mbb{Q}$-vector space,
\item a finite index set $I$, 
\item $P\subset \mfr{h}^*$: a lattice (weight lattice),
\item $P^\vee=\Hom_\mbb{Z}(P,\mbb{Z})$ with natural pairing $\bracket{~, ~}\colon P\otimes P^\vee\to \mbb{Z}$,
\item $\alpha_i\in P$~for $i\in I$ (simple roots),
\item $h_i\in P^\vee$~for $i\in I$ (simple coroots),
\item $(\cdot ,\cdot)$ a $\mbb{Q}$-valued symmetric bilinear form on $\mfr{h}^*$
\end{enumerate}
satisfying following conditions:
\begin{aenumerate}
\item $\bracket{h_i,\lambda}=2(\alpha_i,\lambda)/(\alpha_i,\alpha_i)$ for $i\in I$ and $\lambda\in P$,
\item $a_{ij}=\bracket{h_i,\alpha_j}=2(\alpha_i,\alpha_j)/(\alpha_i,\alpha_i)$
gives a symmetrizable generalized Cartan matrix, i.e.,
$\bracket{h_i,\alpha_i}=2$,
and $\bracket{h_i,\alpha_j}\in \mbb{Z}_{\leq 0}$ and
$\bracket{h_i,\alpha_j}=0 \Leftrightarrow \bracket{h_j,\alpha_i}=0$ for $i\neq j$,
\item $(\alpha_i,\alpha_i)\in 2\mbb{Z}_{>0}$, i.e. $d_i:=(\alpha_i, \alpha_i)/2\in \mbb{Z}_{>0}$,
\item $\set{\alpha_i}_{i\in I}$ are linearly independent.
\end{aenumerate}
We call $(I, \mfr{h}, (~, ~))$ a \emph{Cartan datum}.
Let $Q=\bigoplus_{i\in I}{\mbb{Z}}\alpha_i\subset P$ be the root lattice.
Let $Q_{\pm}=\pm \sum_{i\in I}\mbf{Z}_{\geq 0}\alpha_i$.
Fot $\xi=\sum_{i\in I}\xi_i\alpha_i\in Q$, we define $\tr(\xi)=\sum_{i\in I}\xi_i$.
And we assume that there exists $\varpi_i\in P$  such that $\bracket{h_i, \varpi_{j}}=\delta_{i, j}$ for any $i,j \in I$.
We call $\varpi_{i}$ the \emph{fundamental weight} corrsponding to $i\in I$.
We say $\lambda\in P$ is \emph{dominant} if $\bracket{h_i, \lambda}\geq 0$ for any $i\in I$ and denote by $P_+$
the set of dominant integral weights.
We denote by $\overline{P}:=\bigoplus_{i\in I}\mbb{Z}\varpi_{i}$ and $\overline{P}_+:=\overline{P}\cap P_+=\bigoplus_{i\in I}\mbb{Z}_{\geq 0}\varpi_i$.
\begin{NB2}
We note that the generalized Cartan matrix $a_{ij}=\bracket{h_i, \alpha_j}$ is symmetrizable,
i.e. $d_ia_{ij}=d_ja_{ji}$.
\end{NB2}%
\subsubsection{}
Let $(I, \mfr{h}, (~, ~))$ be a Cartan datum.
Let ${\mfr{g}}$ be the symmetrizable Kac-Moody Lie algebra corresponding to  the generalized Cartan matrix $A=(a_{ij})$
with the Cartan subalgebra ${\mfr{h}}$,
i.e.,
$\mfr{g}$ is the Lie algebra generated by $\{h ; h\in \mfr{h}\}$, $e_i$, and $f_i$
~$(i\in I)$ with the following relations:
\begin{renumerate}
	\item $[h, h']=0$ for $h, h'\in \mfr{h}$,
	\item $[h, e_i]=\bracket{h,\alpha_i}e_i$, $[h, f_i]=-\bracket{h, \alpha_i}f_i$,
	\item $[e_i, f_j]=\delta_{ij}h_i$, and 
	\item $(\mathrm{ad} e_i)^{1-\bracket{h_i, \alpha_j}}e_j=(\mathrm{ad} f_i)^{1-\bracket{h_i, \alpha_j}}f_j=0$ for $i\neq j$.
\end{renumerate}
We denote the Lie subalgebra generated by $\{f_i\}_{i\in I}$ by $\mfr{n}$.
\subsubsection{}
Suppose a root datum is given.
\begin{NB}
We fix a positive integer $d$ such that $(\alpha_i, \alpha_i)/2\in d^{-1}\mbb{Z}$ for any $i\in I$. 
We introduce an indeterminate $q_s=q^{1/d}$ and set $q=(q_s)^d$.
\end{NB}
We introduce an indeterminate $q$.
For $i\in I$, we set $q_i=q^{(\alpha_i, \alpha_i)/2}$.
For $\xi=\sum_{i\in I}\xi_i\alpha_i \in Q$, we set $q_\xi:=\prod_{i\in I}(q_i)^{\xi_i}=q^{(\xi, \rho)}$, where $\rho$ is the sum of all fundamental weights.
\begin{NB2}
	$(\rho, \alpha_i)=\frac{(\alpha_i, \alpha_i)}{2}\bracket{h_i, \rho}=1$
	If $(I, (\cdot, \cdot))$ is symmetric, $q_i=q$ for any $i\in I$ and $q_\xi=(q)^{\tr(\xi)}$
\end{NB2}
We define $\mbb{Q}$-subalgebras $\mca{A}_0$, $\mca{A}_\infty$ and $\mca{A}$ of $\Qq$ by 
\begin{align*}
	\mca{A}_0&:=\{f\in \Qq; f \text{~is regular at~} q=0\}, \\
	\mca{A}_\infty&:=\{f\in \Qq; f \text{~is regular at~} q=\infty\}, \\
	\mca{A}&:=\mbb{Q}[q^\pm].
\end{align*}
\subsubsection{}
The \emph{quantized enveloping algebra} $\Uq(\mfr{g})$ associated with a root datum is the $\Qq$-algebra 
generated by $e_i,f_i~(i\in I)$, $q^h~(h\in d^{-1}P^*)$ with the following relations:
\begin{renumerate}
\item $q^0=1,q^{h}q^{h'}=q^{h+h'}$,
\item $q^he_{i}q^{-h}=q^{\bracket{h,\alpha_i}}e_i, q^hf_{i}q^{-h}=q^{-\bracket{h,\alpha_i}}f_i$,
\item $e_if_j-f_je_j=\delta_{ij}{(t_i-t_i^{-1})}/{(q_i-q_i^{-1})}$,
\item $\displaystyle \sum_{k=0}^{1-a_{ij}}(-1)^ke_i^{(k)}e_je_i^{(1-a_{ij}-k)}=%
\sum_{k=0}^{1-a_{ij}}(-1)^kf_i^{(k)}f_jf_i^{(1-a_{ij}-k)}=0$~($q$-Serre relations),
\end{renumerate}
where 
$t_i=q^{\frac{(\alpha_i, \alpha_i)}{2}h_i}$,
$[n]_i=(q_i^n-q_i^{-n})/(q_i-q_i^{-1})$,
$[n]_i!=[n]_i[n-1]_i\cdots [1]_i$ for $n>0$ and
$[0]!=1$,
$e_i^{(k)}=e_i^k/[k]_i!, f_i^{(k)}=f_i^k/[k]_i!$
for $i\in I$ and $k\in {\mbb{Z}}_{\geq 0}$.
\subsubsection{}
Let $\Uq^+(\mfr{g})$ (resp.\ $\Uq^-(\mfr{g})$) be the $\Qq$-subalgebra of $\Uq(\mfr{g})$
generated by $e_i$ (resp.\ $f_i$) for $i\in I$.
Then we have the triangular decomposition
\[\Uq(\mfr{g})\simeq \Uq^-(\mfr{g})\otimes_{\Qq}\Qq[P^\vee]\otimes_{\Qq}\Uq^+(\mfr{g}),\]
where $\Qq[P^\vee]$ is the group algebra over $\Qq$, i.e., $\bigoplus_{h\in P^\vee}\Qq q^h$.
\subsubsection{}
For $\xi\in Q$, we define its \emph{root space} $\Uq(\mfr{g})_\xi$ by 
\[\Uq(\mfr{g})_\xi=\{x\in \Uq(\mfr{g})| q^hxq^{-h}=q^{\bracket{h, \xi}}x\text{~for any~} h\in P^*\}.\]
Then we have the root space decomposition
\[\Uq^\pm(\mfr{g})=\bigoplus_{\xi\in Q_\pm}\Uq(\mfr{g})_\xi.\]
An element $x\in \Uq(\mfr{g})$ is \emph{homogenous} if $x\in \Uq(\mfr{g})_\xi$ for some $\xi\in Q$,
and we set $\wt(x)=\xi$.
\subsubsection{}\label{subsubsec:A-form}
Let $\Uq^-(\mfr{g})_{\mca{A}}$ be the $\mca{A}$-subalgebra of $\Uq^-(\mfr{g})$ 
generated by $f_i^{(k)}$ for $i\in I$ and $k\in {\mbb{Z}}_{\geq 0}$.
Let $(\Uq^-(\mfr{g})_{\mca{A}})_\xi:= \Uq^-(\mfr{g})_{\mca{A}} \cap \Uq^-(\mfr{g})_\xi$.
We have
\[\Uq^-(\mfr{g})_{\mca{A}}=\bigoplus_{\xi \in Q_-}(\Uq^-(\mfr{g})_{\mca{A}})_\xi.\]
\begin{NB}
In fact, we can consider $\mbf{A}=\mbb{Z}[q^\pm]$-from which is generated by $\{f_i^{(n)}\}_{i\in I, n\geq 1}$.
For the theory of balanced triple, $\mca{A}$-form is enough to consider.
\end{NB}
\subsubsection{}
We define a $\Qq$-algebra anti-involution $*\colon \Uq(\mfr{g})\to \Uq(\mfr{g})$ by
\begin{align}
*(e_i)=e_i, &&  *(f_i)=f_i,&&*(q^h)=q^{-h}. \label{eq:bar}
\end{align}
We call this the \emph{$*$-involution}.

We define a $\mbb{Q}$-algebra automorphism $\overline{\phantom{x}}\colon \Uq(\mfr{g})\to \Uq(\mfr{g})$ by
\begin{align}
\overline{e_i}=e_i, &&  \overline{f_i}=f_i, && \overline{q}=q^{-1}, &&\overline{q^h}=q^{-h}. \label{eq:star}
\end{align}
We call this the \emph{bar involution}.

We remark that these two involutions preserve $\Uq^+(\mfr{g})$ and $\Uq^-(\mfr{g})$,
and we have $\overline{\phantom{x}}\circ *=*\circ \overline{\phantom{x}}$.
\subsubsection{}\label{sec:coproduct}
In this article, we choose the coproduct on $\Uq(\mfr{g})$ following \cite{Kas:crystal}:
\begin{subequations}
\begin{align}
	\Delta(q^h)&=q^h\otimes q^h, \\
	\Delta(e_i)&=e_i\otimes t_i^{-1}+1\otimes e_i, \\
	\Delta(f_i)&=f_i\otimes 1+t_i\otimes f_i.
\end{align}
\end{subequations}
\begin{NB}
\begin{rem}
In our construction, we define $\Delta$ by  $\Delta_-$ as in \cite{Kas:crystal}.
This coproduct is different from \cite[Lemma 3.1.4]{Lus:intro}, although there is a simple relation between them.
For more details, see \cite[1.4]{Kas:crystal}.
\end{rem}
\end{NB}
\subsubsection{}\label{sec:Lus_form}
We introduce Lusztig's $\Qq$-valued symmetric nondegenerate bilinear form $(~, ~)_L$ on $\Uq^-(\mfr{g})$.
We first define a $\Qq$-algebra structure on $\Uq^-(\mfr{g})\otimes\Uq^-(\mfr{g})$ by
\[(x_1\otimes y_1)(x_2\otimes y_2)=q^{-(\wt(x_2), \wt(y_1))}x_1x_2\otimes y_1y_2,\]
where $x_i, y_i~(i=1, 2)$ are homoegenous elements.

Let 
$r\colon \Uq^-(\mfr{g})\to \Uq^-(\mfr{g})\otimes \Uq^-(\mfr{g})$ be a $\Qq$-algebra homomorphism defined by
\[r(f_i)=f_i\otimes 1+1\otimes f_i~(i\in I).\]
We call this the \emph{twisted coproduct}.

By \cite[1.2.5]{Lus:intro}, the algebra $\Uq^-(\mfr{g})$ has a  unique nondegenerate $\Qq$-valued symmetric bilinear form
$(~, ~)_L\colon \Uq^-(\mfr{g})\times \Uq^-(\mfr{g})\to \Qq$ which satisfies
\begin{subequations}
\begin{align}
	&(1, 1)_L=1, \\
	&(f_i, f_j)_L=\frac{\delta_{i, j}}{1-q_i^{2}}, \\
	&(x, yy')_L=(r(x), y\otimes y')_L, \\
	&(xx', y)_L=(x\otimes x', r(y))_L,
\end{align} 
\end{subequations}
where the form on $\Uq^-(\mfr{g})\otimes \Uq^-(\mfr{g})$ is defined by $(x_1\otimes y_1, x_2\otimes y_2)_L=(x_1, x_2)_L(y_1, y_2)_L$.
\subsubsection{}
The relation between the coproduct $\Delta$ and the twisted coproduct $r$ is given as follows:
\begin{lem}\label{lem:rcoproduct}
	For homogenous $x\in \Uq^-(\mfr{g})_\xi$, we have 
	\beq \Delta(x)=\sum x_{(1)}t_{-\wt(x_{(2)})}\otimes x_{(2)},\eeq
	where
	$r(x)=\sum x_{(1)}\otimes x_{(2)}$, 
	$t_\xi=q^{\nu(\xi)}$, and 
	$\nu(\xi)=\sum_i \frac{(\alpha_i, \alpha_i)}{2}\xi_ih_i$ for $\xi=\sum \xi_i\alpha_i\in Q$.
\end{lem}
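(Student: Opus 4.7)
The plan is to prove the identity by induction on $-\wt(x) \in Q_{+}$, using the fact that $\Uq^{-}(\mfr{g})$ is generated as a $\Qq$-algebra by the $f_i$'s. Let me denote the right-hand side of the formula by $\Delta'(x) := \sum x_{(1)} t_{-\wt(x_{(2)})} \otimes x_{(2)}$. I will check that $\Delta'$ and $\Delta$ agree on generators, and then show both maps are multiplicative in the same sense, so they agree on all of $\Uq^{-}(\mfr{g})$.

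First I would verify the base case. For $x = 1$ we have $r(1) = 1 \otimes 1$ so $\Delta'(1) = 1 \otimes 1 = \Delta(1)$. For $x = f_i$, since $r(f_i) = f_i \otimes 1 + 1 \otimes f_i$, the definition gives $\Delta'(f_i) = f_i \cdot t_0 \otimes 1 + 1 \cdot t_{-(-\alpha_i)} \otimes f_i = f_i \otimes 1 + t_i \otimes f_i$, which matches $\Delta(f_i)$ from \subsecref{sec:coproduct}.

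Next I need the key commutation relation: for any homogeneous $y \in \Uq(\mfr{g})$ and any $\xi \in Q$,
\begin{equation*}
t_{\xi}\, y = q^{(\xi,\wt(y))}\, y\, t_{\xi}.
\end{equation*}
This follows from $q^{h} y q^{-h} = q^{\bracket{h,\wt(y)}} y$ together with $\bracket{\nu(\xi), \wt(y)} = (\xi, \wt(y))$, the latter being a direct unpacking of the definitions of $\nu$ and of the bilinear form. With this in hand, given homogeneous $x, y \in \Uq^{-}(\mfr{g})$ for which the formula holds, I compute
\begin{align*}
\Delta(x)\Delta(y) &= \sum x_{(1)} t_{-\wt(x_{(2)})} y_{(1)} t_{-\wt(y_{(2)})} \otimes x_{(2)} y_{(2)} \\
&= \sum q^{-(\wt(x_{(2)}),\, \wt(y_{(1)}))}\, x_{(1)} y_{(1)}\, t_{-\wt(x_{(2)})-\wt(y_{(2)})} \otimes x_{(2)} y_{(2)}.
\end{align*}
On the other hand, using that $r$ is an algebra homomorphism with respect to the twisted multiplication on $\Uq^{-}(\mfr{g}) \otimes \Uq^{-}(\mfr{g})$ defined in \subsecref{sec:Lus_form}, we get $r(xy) = \sum q^{-(\wt(y_{(1)}),\, \wt(x_{(2)}))} x_{(1)} y_{(1)} \otimes x_{(2)} y_{(2)}$, and hence $\Delta'(xy)$ equals exactly the same expression. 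Therefore $\Delta(xy) = \Delta'(xy)$, completing the induction.

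The main obstacle is purely bookkeeping: one must be careful that the $q$-powers arising from the twisted multiplication on the codomain of $r$ precisely match those produced by commuting the Cartan elements $t_{-\wt(x_{(2)})}$ past the lower triangular factor $y_{(1)}$ in $\Delta(x)\Delta(y)$. The identification $\bracket{\nu(\xi), \wt(y)} = (\xi, \wt(y))$ is what makes the two bookkeeping devices compatible, and once this is in place everything reduces to the base case for $f_i$.
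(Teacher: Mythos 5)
Your proof is correct and follows essentially the same route as the paper's own argument: verify the formula on the generators $f_i$, use the commutation $t_\xi y = q^{(\xi,\wt(y))} y t_\xi$ (resting on $\bracket{\nu(\xi),\wt(y)}=(\xi,\wt(y))$), and match the $q$-powers from the twisted multiplication defining $r$ against those from $\Delta(x)\Delta(y)$ in an induction on the weight.
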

\begin{NB}
\begin{proof}
First we remark, the following relation. 
For $\xi\in Q, x\in \Uq(\mfr{g})_{\wt(x)}$
\[t_\xi x t_{-\xi}=q^{\bracket{\nu(\xi), \wt(x)}}x=q^{(\xi, \wt(x))}\]

We prove the induction on $|\tr(\xi)|$ of $x\in \Uq^-(\mfr{g})$.
For the generators $F_i$, the statement is true.
For $x=x'x'$ with $x'\in \Uq^-(\mfr{g})_{\xi'}, x''\in \Uq^-(\mfr{g})_{\xi''}$, we assume that the statement holds.
Let $r(x')=\sum x'_{(1)}\otimes x'_{(2)}, r(x'')=\sum x''_{(1)}\otimes x''_{(2)}$.
Then we have 
\begin{align*}
	r(x)=r(x')r(x'')=&(\sum x'_{(1)}\otimes x'_{(2)})\cdot (\sum x''_{(1)}\otimes x''_{(2)}) \\
	=&\sum \sum q^{-(\wt(x'_{(2)}), \wt(x''_{(1)}))} x'_{(1)}x'_{(1)}\otimes x'_{(2)}x'_{(2)}\\
	\Delta(x)=&\Delta(x')\Delta(x') \\
	=&(\sum x'_{(1)}t_{-\wt(x'_{(2)})}\otimes x'_{(2)})\cdot (\sum x''_{(1)}t_{-\wt(x''_{(2)})}\otimes x''_{(2)}) \\
	=&\sum \sum x'_{(1)}t_{-\wt(x'_{(2)})}x''_{(1)}t_{-\wt(x''_{(2)})}\otimes x'_{(2)} x''_{(2)}) \\
	=&\sum \sum x'_{(1)}t_{-\wt(x'_{(2)})}x''_{(1)}t_{\wt(x'_{(2)})}t_{-\wt(x'_{(2)})}t_{-\wt(x''_{(2)})}\otimes x'_{(2)} x''_{(2)}) \\
	=&\sum \sum q^{-(\wt(x'_{(2)}), \wt(x''_{(1)}))}x'_{(1)}x''_{(1)}t_{-\wt(x'_{(2)}x''_{(2)})}\otimes x'_{(2)} x''_{(2)}).
\end{align*}
Then we have obtained the claim.
\end{proof}
\end{NB}
\subsubsection{}
For $i\in I$, we define the unique $\Qq$-linear map  ${_ir}\colon \Uq^-\to \Uq^-$ (resp.\ $r_i\colon \Uq^-\to \Uq^-$) given by
${_ir}(1)=0, {_ir}(f_j)=\delta_{i, j}$ (resp.\ $r_i(1)=0, r_i(f_j)=\delta_{i, j}$) for any $i, j\in I$ and 
\begin{subequations}
\begin{align}
	{_ir}(xy)&={_ir}(x)y+q^{-(\wt x, \alpha_i)}x{_ir}(y), \label{eq:ir}\\
	r_i(xy)&=q^{-(\wt y, \alpha_i)}r_i(x)y+xr_i(y)\label{eq:ri}
\end{align}
\end{subequations}
for homogenous $x, y\in \Uq^-$.
From the definition, we have 
\begin{subequations}
\begin{align}
	(f_ix, y)_L=\frac{1}{1-q_i^{2}}(x,{_ir}y)_L, \label{eq:Lformadj}\\
	(xf_i, y)_L=\frac{1}{1-q_i^{2}}(x,{r_i}y)_L.
\end{align}
\end{subequations}

\begin{NB}
\begin{rem}
	Since we use the coproduct $\Delta_-$ on $\Uq(\mfr{g})$,
	the sign of power of $q$ is different from Lusztig's original bilinear form which is defined in \cite[Chapter 1]{Lus:intro}.
	Our $r\colon \Uq^-\to \Uq^-\otimes \Uq^-$ corresponds to Lusztig's 
	$\overline{r}:=(\bar{~}\otimes \bar{~}) \circ r \circ \bar{~} \colon \Uq^-\xrightarrow{\bar{~}} \Uq^- \xrightarrow{r}\Uq^-\otimes \Uq^-\xrightarrow{\bar{~}\otimes \bar{~}} \Uq^-\otimes \Uq^-$,
	and $\{~, ~\}=\bar{~} \circ (~, ~)\circ (\bar{~}\otimes \bar{~})\colon \Uq^-\otimes \Uq^-\xrightarrow{\bar{~}\otimes \bar{~}} \Uq^-\otimes \Uq^-\xrightarrow{(~, ~)} \Qq\xrightarrow{\bar{~}} \Qq$.
	This normalization comes from the choice of the coproduct $\Delta_-$.
\end{rem}
\end{NB}

\subsection{Canonical basis of $\Uq^-(\mfr{g})$}\label{sec:lower}
In this subsection, we give a brief review of the theory of the canonical base following Kashiwara \cite{Kas:crystal, Kas:bases}.
Note that Kashiwara call it the \emph{lower global base}.
\subsubsection{}
\begin{lem}[{\cite[Lemma 3.4.1]{Kas:crystal}, \cite{Nak:CBMS}}]\label{lem:qBoson}
For $x\in \Uq^-(\mfr{g})$ and any $i\in I$, we have
\[[e_i, x]=\frac{r_i(x)t_i-t_i^{-1}{_ir}(x)}{q_i-q_i^{-1}}.\]
\end{lem}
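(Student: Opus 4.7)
The plan is to prove the identity by induction on $|\tr(\wt(x))|$, the length of $x$ as a monomial in the generators $f_j$.

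For the base case, if $x=1$ both sides vanish trivially. If $x=f_j$, then the left-hand side equals $\delta_{ij}(t_i - t_i^{-1})/(q_i - q_i^{-1})$ by the defining relation (iii) of $\Uq(\mfr{g})$, and the right-hand side equals the same using $r_i(f_j) = {_ir}(f_j) = \delta_{ij}$, so the two coincide.

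For the inductive step, write $x = yz$ with $y, z \in \Uq^-(\mfr{g})$ homogeneous and strictly shorter, and assume the identity holds on $y$ and $z$. Since $[e_i, -]$ is a derivation, $[e_i, yz] = [e_i, y]\,z + y\,[e_i, z]$, and substituting the inductive hypotheses yields an expression involving $r_i(y) t_i z$, $t_i^{-1}\,{_ir}(y)\,z$, $y\,r_i(z) t_i$ and $y\,t_i^{-1}\,{_ir}(z)$. The plan is to rearrange this so that every $t_i^{\pm 1}$ sits on the rightmost or leftmost position required by the claimed formula, using the key commutation rule
\[
t_i^{\pm 1}\, u = q^{\pm(\alpha_i,\,\wt u)}\, u\, t_i^{\pm 1}
\]
for homogeneous $u \in \Uq^-(\mfr{g})$. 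This rule is immediate from defining relation (ii) together with the identity $d_i\bracket{h_i,\wt u} = (\alpha_i,\wt u)$. On the other side, I expand $r_i(yz) t_i - t_i^{-1}\,{_ir}(yz)$ via the Leibniz rules \eqref{eq:ri} and \eqref{eq:ir} and match term by term. The $q$-twist factors $q^{-(\wt z,\alpha_i)}$ and $q^{-(\wt y,\alpha_i)}$ built into the definitions of $r_i$ and ${_ir}$ are precisely what is needed to absorb the $q$-powers produced when moving $t_i^{\pm 1}$ past $y$ or $z$, which closes the induction.

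The main obstacle is the careful bookkeeping of $q$-powers: the inductive step is a one-line conceptual argument but requires tracking four terms whose $q$-exponents depend on $\wt(y)$ and $\wt(z)$, and then verifying that the exponents produced by the Leibniz rules for $r_i, {_ir}$ agree, after the weight-shift induced by $r_i$ (resp.\ ${_ir}$) adding $\alpha_i$ to the weight, with those produced by commuting $t_i^{\pm 1}$. An alternative strategy, which avoids case-by-case computation, uses the adjointness formulas \eqref{eq:Lformadj}: the operators $r_i$ and ${_ir}$ are dual (up to the normalization $1 - q_i^2$) to right and left multiplication by $f_i$ under Lusztig's form, so one can instead verify the identity by pairing both sides against an arbitrary $y \in \Uq^-(\mfr{g})$ and using the compatibility of $(~,~)_L$ with the twisted coproduct $r$. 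Either approach gives the lemma; the inductive one is the route taken in \cite{Kas:crystal} and \cite{Nak:CBMS}.
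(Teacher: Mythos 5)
Your strategy (induction on the length of a monomial in the $f_j$, base case from the relation $e_if_j-f_je_i=\delta_{ij}(t_i-t_i^{-1})/(q_i-q_i^{-1})$, then the derivation property of $[e_i,-]$ plus $q$-power bookkeeping) is the standard argument; the paper itself gives no proof and simply cites Kashiwara, so there is no competing "in-paper" route to compare with. However, the one step you yourself flag as the main obstacle is exactly where the proposal, as written, breaks. Moving $t_i^{\pm 1}$ into position gives $t_i z = q^{(\alpha_i,\wt z)}\, z\, t_i$ and $y\, t_i^{-1} = q^{(\alpha_i,\wt y)}\, t_i^{-1}\, y$ for homogeneous $y,z\in\Uq^-(\mfr{g})$, so closing the induction requires the Leibniz rules $r_i(yz)=q^{(\alpha_i,\wt z)}r_i(y)z+y\,r_i(z)$ and ${_ir}(yz)={_ir}(y)z+q^{(\alpha_i,\wt y)}y\,{_ir}(z)$. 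These exponents are the \emph{inverses} of the factors $q^{-(\wt z,\alpha_i)}$ and $q^{-(\wt y,\alpha_i)}$ printed in \eqref{eq:ri} and \eqref{eq:ir}, which you assert "are precisely what is needed"; since the bilinear form is symmetric, the asserted term-by-term match fails whenever $(\alpha_i,\wt y)$ or $(\alpha_i,\wt z)$ is nonzero, and the induction does not close with the printed factors.

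The resolution is that the displayed rules \eqref{eq:ir}, \eqref{eq:ri} carry a sign slip relative to the paper's own conventions. If ${_ir}$ and $r_i$ are taken to be the $f_i\otimes(\cdot)$ and $(\cdot)\otimes f_i$ components of the twisted coproduct $r$ (the definition forced by the adjunction \eqref{eq:Lformadj}), then a direct computation such as $r(f_jf_i)=f_jf_i\otimes 1+f_j\otimes f_i+q^{-(\alpha_i,\alpha_j)}f_i\otimes f_j+1\otimes f_jf_i$, giving ${_ir}(f_jf_i)=q^{-(\alpha_i,\alpha_j)}f_j$ for $j\neq i$, shows the correct twist is $q^{(\alpha_i,\wt x)}$, not $q^{-(\wt x,\alpha_i)}$, and with these corrected rules your matching goes through verbatim and the lemma follows. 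So you must either derive the Leibniz rules from the coproduct (noting the sign) or actually carry out the bookkeeping; taking the printed exponents on faith yields a false identity. Note also that your proposed alternative via \eqref{eq:Lformadj} does not sidestep this: the adjunction holds only for the coproduct-defined ${_ir}$, $r_i$, so the convention must be pinned down there as well.
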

\subsubsection{}\label{sec:Kas_form}
Kashiwara \cite[\S 3.4]{Kas:crystal} has proved that there is a unique non-degenerate symmetric bilinear form $(\cdot, \cdot)_{K}$ on $\Uq^-(\mfr{g})$
such that 
\begin{subequations}
	\begin{align}
	(f_ix, y)_K&=(x, {_ir}y)_K,\label{eq:Kformadj}\\
	(1, 1)_K&=1.
	\end{align}
\end{subequations}
\begin{lem}[{\cite[Lemma 3.4.7]{Kas:crystal}, \cite[Lemma 1.2.15]{Lus:intro}}]\label{lem:indcrystal}
For $x\in \Uq^-(\mfr{g})$ with ${_ir}(x)=0$ for any $i\in I$ and $\wt(x)\neq 0$, then we have $x=0$.
\end{lem}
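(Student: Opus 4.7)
The plan is to combine the adjoint property \eqref{eq:Kformadj} with the non-degeneracy of $(\cdot,\cdot)_K$ and the fact that $\Uq^-(\mfr{g})$ is generated by the $f_i$.

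First I would note that since $\Uq^-(\mfr{g})$ is generated as a $\Qq$-algebra by the $f_i$'s, every monomial of positive length starts with some $f_i$; hence for $\xi\in Q_-$ with $\xi\neq 0$ we have
\[
\Uq^-(\mfr{g})_\xi \;=\; \sum_{i\in I} f_i\,\Uq^-(\mfr{g})_{\xi+\alpha_i}.
\]
This reduces testing $(x,y)_K=0$ on the weight space $\Uq^-(\mfr{g})_\xi$ to testing against vectors of the form $f_i z$.

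Next, using the symmetry of $(\cdot,\cdot)_K$ together with \eqref{eq:Kformadj}, for every $i\in I$ and $z\in \Uq^-(\mfr{g})_{\xi+\alpha_i}$ I would compute
\[
(x,f_i z)_K \;=\; (f_i z, x)_K \;=\; (z, {_ir}(x))_K \;=\; 0,
\]
where the last equality uses the hypothesis ${_ir}(x)=0$. Combined with the previous step, this yields $(x,y)_K=0$ for every $y\in \Uq^-(\mfr{g})_\xi$.

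Finally, the form $(\cdot,\cdot)_K$ respects the weight grading (it pairs distinct weight spaces to zero, as one sees by induction from the defining recursion, since ${_ir}$ shifts weight by $\alpha_i$), so its non-degeneracy forces non-degeneracy on each weight space. Therefore $x=0$.

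There is no serious obstacle: the only small point to verify is the weight-orthogonality of $(\cdot,\cdot)_K$, which is standard and immediate from \eqref{eq:Kformadj} by induction on weight. Everything else is a one-line computation once the spanning statement for $\Uq^-(\mfr{g})_\xi$ is recorded.
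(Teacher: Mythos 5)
Your argument is correct, and it is essentially the proof given in the cited source \cite[Lemma 1.2.15]{Lus:intro} (the paper itself offers no proof, only the citations): write any element of $\Uq^-(\mfr{g})_{\wt(x)}$ as $\sum_i f_i z_i$, use \eqref{eq:Kformadj} and symmetry to get $(x,\Uq^-(\mfr{g})_{\wt(x)})_K=0$, and conclude by weight-orthogonality and non-degeneracy. The only point to keep in mind is that non-degeneracy of $(\cdot,\cdot)_K$ must be available as an independent input, which is how this paper uses it (quoting \cite[\S 3.4]{Kas:crystal}); in Kashiwara's own development the non-degeneracy of the form is deduced from this very lemma, so there the argument would be circular, whereas for Lusztig's form, non-degenerate by construction, your reasoning is exactly his.
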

\subsubsection{}
We have the following relation between Kashiwara's bilinear form $(~, ~)_K$ and Lusztig's one $(~, ~)_L$.

\begin{lem}[{\cite[2.2]{Lec:qchar}}]\label{lem:KLforms}
For homogenous $x, y\in \Uq^-(\mfr{g})_{\xi}$ with $\xi=-\sum n_i\alpha_i\in Q_-$,
we have
\[(x, y)_K=\prod_{i\in I}(1-q_i^2)^{n_i}(x, y)_L.\]
\end{lem}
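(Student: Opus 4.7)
The plan is to proceed by induction on the height $\mathrm{ht}(-\xi) = \sum_{i \in I} n_i$. The base case $\xi = 0$ is immediate: one has $\Uq^-(\mfr{g})_0 = \Qq \cdot 1$, both forms give $(1,1) = 1$, and the product on the right-hand side is empty.

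For the inductive step, fix $\xi$ with $\sum_i n_i > 0$. Since $\Uq^-(\mfr{g})$ is generated as an algebra by $\{f_i\}_{i \in I}$, the weight space $\Uq^-(\mfr{g})_\xi$ is spanned by elements of the form $f_i x'$ with $i \in I$ and $x' \in \Uq^-(\mfr{g})_{\xi + \alpha_i}$ (extract a leftmost generator from any monomial). By bilinearity, it suffices to verify the identity when $x = f_i x'$. I would then invoke the two adjoint relations: \eqref{eq:Kformadj} yields $(f_i x', y)_K = (x', {_ir}(y))_K$, while \eqref{eq:Lformadj} rearranges to $(x', {_ir}(y))_L = (1 - q_i^2)(f_i x', y)_L$. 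Both $x'$ and ${_ir}(y)$ live in the weight space $\Uq^-(\mfr{g})_{\xi + \alpha_i}$, whose height is $\sum_j n_j - 1$ (with the $i$-th coefficient dropped by one), so the inductive hypothesis supplies the comparison
\[(x', {_ir}(y))_K = (1 - q_i^2)^{n_i - 1}\prod_{j \neq i}(1 - q_j^2)^{n_j}\,(x', {_ir}(y))_L.\]
Multiplying the right factor by the additional $(1 - q_i^2)$ from Lusztig's adjunction yields $(x, y)_K = \prod_{j \in I}(1 - q_j^2)^{n_j}(x, y)_L$, as required.

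I do not anticipate a serious obstacle. The argument is essentially forced once one observes that both $(\cdot, \cdot)_K$ and $(\cdot, \cdot)_L$ are determined by the single value $(1,1) = 1$ together with adjunction against left multiplication by the $f_i$, and that the two adjoint formulas differ precisely by the factor $(1 - q_i^2)$ per generator peeled off. The only mild care needed is the spanning statement used to reduce to $x = f_i x'$, which is a formal consequence of $\{f_i\}_{i \in I}$ generating $\Uq^-(\mfr{g})$; the remainder is bookkeeping of powers of $(1-q_i^2)$.
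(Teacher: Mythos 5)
Your proof is correct and follows essentially the same route as the paper: induction on the height of $\xi$, reducing to $x=f_ix'$ and comparing the adjunction formulas \eqref{eq:Kformadj} and \eqref{eq:Lformadj}, which differ exactly by the factor $(1-q_i^2)$. The only cosmetic difference is that you justify the reduction by the generation of $\Uq^-(\mfr{g})$ by the $f_i$, whereas the paper invokes \lemref{lem:indcrystal} for the same purpose.
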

This can be proved by an induction on $\wt(x)$ by using \lemref{lem:indcrystal}, \eqref{eq:Kformadj} and \eqref{eq:Lformadj}.
\begin{NB}
\begin{proof}
The proof is straightforward.
For convenience of the reader, we give a proof.
We prove by induction on $|\xi|=\sum_{i\in I}n_i$.
For $\xi=0$ case, the statement is true.
So we assume that $|\xi|>0$.
Here, by \lemref{lem:indcrystal}, we have that there exists $i\in I$ and $P'\in \Uq^-(\mfr{g})_{\xi+\alpha_i}$ such that $P=f_iP'$. Here we have 
\begin{align*}
	(x, y)_K&=(f_ix', y)_K =(x',{_ir}(y))_{K}\\
	&=\frac{1}{1-q_i^2}(x',{_ir}(y))_{L}\prod_{i\in I}(1-q_i^2)^{n_i} \\
	&=(f_ix',y)_{L}\prod_{i\in I}(1-q_i^2)^{n_i}=(x,y)_{L}\prod_{i\in I}(1-q_i^2)^{n_i}.
\end{align*}
Hence we have obtained the assertion.
\end{proof}
\end{NB}

\begin{lem}[{\cite[Lemma 1.2.8(b)]{Lus:intro}}] \label{lem:*form}
For any homogenous $x, y\in \Uq^-(\mfr{g})$,
we have
\[(x, y)_K=(x^*, y^*)_K.\]
\end{lem}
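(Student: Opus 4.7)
My plan is to reduce the $*$-invariance of the Kashiwara form to the analogous invariance for Lusztig's form, the latter being covered by \cite[Lemma 1.2.8(b)]{Lus:intro}, and then use the comparison in \lemref{lem:KLforms} to transfer the result.

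First I note that the $*$-involution preserves $\Uq^-(\mfr{g})$ and its weight grading: since $*$ is an anti-algebra automorphism with $*(f_i) = f_i$, any monomial $f_{i_1} \cdots f_{i_r}$ is sent to $f_{i_r} \cdots f_{i_1}$, which has the same weight $-\sum_k \alpha_{i_k}$. Hence for any homogenous $x \in \Uq^-(\mfr{g})_\xi$, we have $x^* \in \Uq^-(\mfr{g})_\xi$. In particular, we may assume $\wt(x) = \wt(y)$; otherwise both sides vanish.

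Second, I establish the analogous statement for Lusztig's form: $(x^*, y^*)_L = (x, y)_L$ for homogenous $x, y$. The standard proof is to verify that the form $B(x, y) := (x^*, y^*)_L$ satisfies the defining properties from \S\ref{sec:Lus_form}: clearly $B(1, 1) = 1$ and $B(f_i, f_j) = \delta_{ij}/(1 - q_i^2)$, while the adjointness of $B$ with respect to the twisted coproduct $r$ reduces to the key identity
\[
r(x^*) = \tau \circ (* \otimes *) \bigl( r(x) \bigr),
\]
where $\tau \colon a \otimes b \mapsto b \otimes a$ is the flip. This identity can be verified by induction on $\wt(x)$: the base case $x = f_i$ is immediate from $r(f_i) = f_i \otimes 1 + 1 \otimes f_i$, and the inductive step on a product $xy$ follows from $r$ being an algebra homomorphism for the twisted product, combined with the anti-algebra property $*(xy) = y^* x^*$, with the $q$-twists on the two sides matching because $\wt(z^*) = \wt(z)$. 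By uniqueness of the form $(\cdot, \cdot)_L$ characterized in \S\ref{sec:Lus_form}, we conclude $B = (\cdot, \cdot)_L$.

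Finally, the identity for the Kashiwara form follows by combining the two preceding steps with \lemref{lem:KLforms}. For homogenous $x, y$ with common weight $-\sum n_i \alpha_i$, the invariance $\wt(x^*) = \wt(x)$ means that the same scaling factor $\prod_i (1 - q_i^2)^{n_i}$ relates $(\cdot, \cdot)_K$ to $(\cdot, \cdot)_L$ on both sides, giving
\[
(x^*, y^*)_K = \prod_i (1 - q_i^2)^{n_i} (x^*, y^*)_L = \prod_i (1 - q_i^2)^{n_i} (x, y)_L = (x, y)_K.
\]
The main obstacle is the key identity $r(x^*) = \tau \circ (* \otimes *)(r(x))$; its inductive verification is routine but requires careful bookkeeping of the $q$-twist in the twisted product structure on $\Uq^-(\mfr{g}) \otimes \Uq^-(\mfr{g})$, where the anti-involution $*$ and the flip $\tau$ must be balanced against the weight-dependent commutation factor.
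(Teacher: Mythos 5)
Your proof is correct, and it is essentially the route the paper intends: the paper gives no argument of its own but simply cites Lusztig's Lemma 1.2.8(b), which is exactly your intermediate statement $(x^*,y^*)_L=(x,y)_L$ (proved there via the identity $r(x^*)=\tau\circ(*\otimes*)(r(x))$, which you correctly verify survives the paper's sign convention for the twisted product because $*$ preserves weights and $(\cdot,\cdot)$ is symmetric). Your final transfer to $(\cdot,\cdot)_K$ via \lemref{lem:KLforms}, using that $x$ and $x^*$ have the same weight so the rescaling factor is identical on both sides, is precisely the step the citation leaves implicit.
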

\subsubsection{}
The \emph{reduced $q$-analogue} $\mscr{B}_q(\mfr{g})$ of a symmetrizable Kac-Moody Lie algebra $\mfr{g}$
is the $\Qq$-algebra generated by ${_ir}$ and $f_i$ with the $q$-Boson relations ${_ir}f_j=q^{-(\alpha_i, \alpha_j)}f_j{_ir}+\delta_{i, j}$ for $i, j\in I$
and the $q$-Serre relations for ${_ir}$ and $f_i$ for $i\in I$.
Then $\Uq^-(\mfr{g})$ becomes a $\mscr{B}_q(\mfr{g})$-modules by \lemref{lem:qBoson}.

By the $q$-Boson relation, any element $x\in \Uq^-(\mfr{g})$ can be uniquely written as $x=\sum_{n\geq 0}f_i^{(n)}x_n$ with 
${_ir}(x_n)=0$ for any $n\geq 0$.
So we define Kashiwara's \emph{modified root operators} $\fit{i}$ and $\eit{i}$ by
\begin{align*}
	\eit{i}x&=\sum_{n\geq 1}f_i^{(n-1)}x_n,\\
	\fit{i}x&=\sum_{n\geq 0}f_i^{(n+1)}x_n.
\end{align*}
By using these operators, Kashiwara introduced the crystal basis $(\mscr{L}(\infty), \mscr{B}(\infty))$ of $\Uq^-(\mfr{g})$:
\begin{theo}[{\cite{Kas:crystal}}]
	Let 
	\begin{align*}
	\mscr{L}(\infty)&:=\sum_{l\geq 0, i_1, i_2, \cdots, i_l\in I}\mca{A}_0\fit{i_1}\cdots \fit{i_l}1\subset \Uq^-(\mfr{g}), \\
	\mscr{B}(\infty)&:=\{\fit{i_1}\cdots \fit{i_l}1 \mod q\mscr{L}(\infty); {l\geq 0, i_1, i_2, \cdots, i_l\in I}\}\subset \mscr{L}(\infty)/q\mscr{L}(\infty).
	\end{align*}
Then we have the followings:
\begin{enumerate}
	\item $\mscr{L}(\infty)$ is a free $\mca{A}_0$-module with $\Qq\otimes_{\mca{A}_0}\mscr{L}(\infty)=\Uq^-(\mfr{g})$.
	\item $\eit{i}\mscr{L}(\infty)\subset \mscr{L}(\infty)$ and $\fit{i}\mscr{L}(\infty)\subset \mscr{L}(\infty)$.
	\item $\mscr{B}(\infty)$ is a $\mbb{Q}$-basis of $\mscr{L}(\infty)/q\mscr{L}(\infty)$.
	\item $\fit{i} \colon \mscr{B}(\infty)\to \mscr{B}(\infty)$ and $\eit{i} \colon \mscr{B}(\infty)\to \mscr{B}(\infty)\cup \{0\}$.
	\item For $b\in\Binfty$ with $\eit{i}(b)\neq 0$, we have $\fit{i}\eit{i}b=b$.
\end{enumerate}
\end{theo}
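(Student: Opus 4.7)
The plan is to execute Kashiwara's simultaneous induction, proving the five assertions for $\Uq^-(\mfr{g})$ in parallel with the existence of a crystal basis $(\mscr{L}(\lambda), \mscr{B}(\lambda))$ on every integrable highest weight module $V(\lambda)$, with the two settings linked by the surjective $\Uq^-$-module map $\pi_\lambda \colon \Uq^-(\mfr{g}) \to V(\lambda)$, $x \mapsto x \cdot v_\lambda$. One cannot isolate the $\Uq^-(\mfr{g})$ side: the non-triviality of the residues $\fit{i_1}\cdots\fit{i_l}1 \bmod q\mscr{L}(\infty)$ required for (3), and the identity $\fit{i}\eit{i}b = b$ in (5), are established by projecting to finite-dimensional representations, whereas the crystal basis on each $V(\lambda)$ is in turn built from the one on $\Uq^-(\mfr{g})$ through $\pi_\lambda$.

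On $V(\lambda)$, every weight vector decomposes uniquely under the $\mfr{sl}_2$-triple $(e_i, f_i, h_i)$ as $v = \sum_{n\geq 0} f_i^{(n)} u_n$ with $e_i u_n = 0$, by complete reducibility; set $\eit{i}v = \sum f_i^{(n-1)}u_n$ and $\fit{i}v = \sum f_i^{(n+1)}u_n$, and define $\mscr{L}(\lambda)$, $\mscr{B}(\lambda)$ by applying monomials in the $\fit{i}$'s to $v_\lambda$. The corresponding decomposition on $\Uq^-(\mfr{g})$ is provided by the $q$-Boson relations of \lemref{lem:qBoson}. The induction proceeds on $\tr(-\xi)\in\mbb{Z}_{\geq 0}$, and at each weight $\xi$ one proves jointly: (i) $\pi_\lambda(\mscr{L}(\infty)_\xi) = \mscr{L}(\lambda)_{\lambda+\xi}$ for every dominant $\lambda$; (ii) $\eit{i}$ and $\fit{i}$ preserve both lattices; (iii) $\pi_\lambda$ intertwines the Kashiwara operators modulo $q$; (iv) $\mscr{B}(\infty)$ and $\mscr{B}(\lambda)$ are bases at weight $\xi$, with $\pi_\lambda$ carrying the former into $\mscr{B}(\lambda)\sqcup\{0\}$; (v) assertions (4) and (5) hold at weight $\xi$.

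The technical engine combines the $q$-Boson decomposition $x = \sum_n f_i^{(n)} x_n$ with ${_ir}(x_n) = 0$, Kashiwara's bilinear form $(\cdot,\cdot)_K$ (adjoint for $\fit{i}$ and $\eit{i}$ up to a unit via \eqref{eq:Kformadj}), and \lemref{lem:indcrystal}. The quantitative input is the asymptotic orthonormality of the proposed basis: pairings of distinct $\fit{i_1}\cdots \fit{i_l}1$ lie in $q\mca{A}_0$, so $(\cdot,\cdot)_K$ descends to a non-degenerate $\mbb{Q}$-valued form on $\mscr{L}(\infty)/q\mscr{L}(\infty)$. This upgrades the spanning statement defining $\mscr{L}(\infty)$ to freeness in (1) and to linear independence in (3).

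The hardest part is closing the loop on $\eit{i}$-stability at weight $\xi$ and on $\fit{i}\eit{i}b = b$ for $b\in\mscr{B}(\infty)$: both require passage to some $V(\lambda)$ with $\bracket{h_i,\lambda}$ sufficiently large, where an $\mfr{sl}_2$-string structure controls $\eit{i}, \fit{i}$, while verifying the crystal structure on $V(\lambda)$ at weight $\lambda+\xi$ itself depends on the $\Uq^-(\mfr{g})$-side data at weight $\xi$. Uniqueness of the $q$-Boson decomposition and \lemref{lem:indcrystal} are the levers that break the apparent circularity, propagating compatibility from all strictly lower weights to the current weight in a single step of the grand loop.
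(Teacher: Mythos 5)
The paper itself gives no proof of this theorem: it is quoted from Kashiwara \cite{Kas:crystal}, and your outline reproduces the architecture of precisely that cited proof --- the grand-loop induction on $\tr(-\xi)$ treating $\Uq^-(\mfr{g})$ and all integrable $V(\lambda)$ simultaneously, linked by $\pi_\lambda$, with the $q$-Boson decomposition of \lemref{lem:qBoson} on one side, the $\mfr{sl}_2$-string decomposition on the other, and passage to $V(\lambda)$ with $\bracket{h_i,\lambda}\gg 0$ to force $\eit{i}$-stability and $\fit{i}\eit{i}b=b$. So in strategy you agree with the source the paper relies on. The one real divergence is how you obtain (1) and (3): you take as ``the quantitative input'' the asymptotic orthonormality of the monomials $\fit{i_1}\cdots\fit{i_l}1$ under $(\cdot,\cdot)_K$. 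In Kashiwara's argument that statement (quoted in this paper as \propref{prop:crylat}) is derived \emph{after} the crystal-basis theorem, as a consequence of it; inside the grand loop, linear independence is instead extracted from the string structure and the projections $\pi_\lambda$. Your route can be made to work, but only if the inner-product estimates --- $(\mscr{L}(\infty),\mscr{L}(\infty))_K\subset\mca{A}_0$ and near-orthonormality of the residues --- are themselves added to your induction list (i)--(v) and reproved at each weight from lower weights via the adjunction \eqref{eq:Kformadj}; as written, your list omits them, so invoking orthonormality is circular until you make that amendment. A small further point: freeness of each weight space of $\mscr{L}(\infty)$ over the discrete valuation ring $\mca{A}_0$ is automatic from finite generation and torsion-freeness (there are only finitely many monomials of a given weight), so the substance of (1) is that the lattice has full rank, i.e.\ that the monomials span $\Uq^-(\mfr{g})_\xi$ over $\Qq$, which again must come out of the loop rather than from the form.
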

We call $(\mscr{L}(\infty), \mscr{B}(\infty))$ the \emph{(lower) crystal basis} of $\Uq^-(\mfr{g})$,
and $\mscr{L}(\infty)$ the \emph{(lower) crystal lattice}.
We denote $1\mod q\mscr{L}(\infty)\in \mscr{B}(\infty)$ by $u_{\infty}$ hereafter.
For $b\in \mscr{B}(\infty)$, we set $\vep_{i}(b):=\max\{n\in \mbb{Z}_{\geq 0}; \eit{i}^nb\neq 0\}<\infty$,
and $\eit{i}^{\max}(b):=\eit{i}^{\vep_{i}(b)}b\in \mscr{B}(\infty)$.
\subsubsection{}
We have the following compatibility of the $*$-involution with the crystal lattice $\mscr{L}(\infty)$.
\begin{theo}[{\cite[Proposition 5.2.4]{Kas:crystal}, \cite[Theorem 2.1.1]{Kas:Demazure}}]
	We have 
	\begin{subequations}
	\begin{align}
		*(\mscr{L}(\infty))=\mscr{L}(\infty), \\
		*(\Binfty)=\Binfty.
	\end{align}
	\end{subequations}
\end{theo}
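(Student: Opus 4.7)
The plan is to introduce the starred Kashiwara operators $\fit{i}^{*}:=*\circ\fit{i}\circ *$ and $\eit{i}^{*}:=*\circ\eit{i}\circ *$ and show that they preserve $\mscr{L}(\infty)$, since then the first assertion is immediate from the definition of $\mscr{L}(\infty)$ as the $\mca{A}_0$-span of monomials in the $\fit{i}$, provided we work on both sides by applying $*$. The second assertion will follow from the first together with the fact that the classes of the starred monomials in $\mscr{L}(\infty)/q\mscr{L}(\infty)$ land in $\Binfty$.

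First, I would establish the compatibility $r_i = *\circ {_ir}\circ *$ on $\Uq^-(\mfr{g})$. On generators this is trivial ($r_i(f_j)=\delta_{ij}={_ir}(f_j)$ and $*(f_j)=f_j$), and since $\wt(x^*)=\wt(x)$ an induction on word length using the twisted Leibniz rules \eqref{eq:ir} and \eqref{eq:ri} together with the anti-homomorphism property $*(xy)=y^{*}x^{*}$ matches the two derivations termwise. Consequently, the $q$-Boson decomposition $x=\sum_{n\ge 0}f_i^{(n)}x_n$ with ${_ir}(x_n)=0$ transforms under $*$ into $x^{*}=\sum_{n\ge 0}x_n^{*}f_i^{(n)}$ with $r_i(x_n^{*})=0$, and the starred operators $\fit{i}^{*}$, $\eit{i}^{*}$ are precisely the shift operators for this ``right'' decomposition.

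The core technical content is to show $\fit{i}^{*}\mscr{L}(\infty)\subset\mscr{L}(\infty)$ and $\eit{i}^{*}\mscr{L}(\infty)\subset\mscr{L}(\infty)$. This I would handle by Kashiwara's simultaneous induction (``grand loop''): one proves in one package the stability of $\mscr{L}(\infty)$ under both families $\{\fit{i},\eit{i}\}$ and $\{\fit{i}^{*},\eit{i}^{*}\}$ together with the commutation relations $\eit{i}\eit{j}^{*}\equiv \eit{j}^{*}\eit{i}\pmod{q\mscr{L}(\infty)}$ for $i\neq j$, inducting on the weight $-\wt(x)\in Q_{+}$. The inductive step uses \lemref{lem:qBoson} to express $e_i$ in terms of $r_i$ and ${_ir}$, the compatibility of $(\,,\,)_K$ with both decompositions (via \lemref{lem:*form} and \eqref{eq:Kformadj}), and \lemref{lem:indcrystal} to reduce any nonzero homogeneous vector to one in the image of some $f_j$. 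This is the main obstacle, since the $*$-involution intertwines left and right multiplication, so the relevant estimates must be controlled on both sides simultaneously.

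Granting stability, the second assertion is clean: any $b\in\Binfty$ is represented by a monomial $\fit{i_1}\cdots\fit{i_l}\cdot 1$, hence $b^{*}$ is represented by $\fit{i_l}^{*}\cdots\fit{i_1}^{*}\cdot 1$ (using $*(1)=1$ and the anti-automorphism property), which lies in $\mscr{L}(\infty)$ by step above. That $b^{*}$ lies in $\Binfty$ (not merely in $\mscr{L}(\infty)/q\mscr{L}(\infty)$) follows from the analogous crystal-operator properties for the starred operators obtained along with the grand loop: namely $\fit{i}^{*}$ is injective on $\Binfty$ and $\eit{i}^{*}\fit{i}^{*}b=b$ for $b\in\Binfty$, which give that the starred-monomial classes are either zero or elements of $\Binfty$, and nonzero-ness is preserved under $*$ because $*$ is an involution of $\Uq^{-}(\mfr{g})$ stabilizing the lattice.
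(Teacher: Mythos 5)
You should first note that the paper does not prove this statement at all: it is quoted verbatim from Kashiwara (\cite[Proposition 5.2.4]{Kas:crystal} for the lattice, \cite[Theorem 2.1.1]{Kas:Demazure} for the crystal), so the only meaningful comparison is with those cited proofs. Measured against them, your outline has the right general shape but its entire technical core is deferred rather than carried out. The step you yourself identify as ``the core technical content'' --- stability of $\mscr{L}(\infty)$ under $\fit{i}^{*},\eit{i}^{*}$ together with the mod-$q\mscr{L}(\infty)$ commutation statements --- is simply delegated to ``Kashiwara's grand loop''; but the grand loop of \cite{Kas:crystal} establishes the crystal bases of $\Uq^-(\mfr{g})$ and of the $V(\lambda)$, not the starred-operator statements, so invoking it does not discharge the step, and running an analogous simultaneous induction for the starred family is essentially the theorem itself. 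Worse, your concluding paragraph is circular as written: granting only $*(\mscr{L}(\infty))=\mscr{L}(\infty)$, the assertion that $\fit{i}^{*}$ maps $\Binfty$ into $\Binfty$ with the stated injectivity and $\eit{i}^{*}\fit{i}^{*}b=b$ is (together with nonvanishing of the starred monomial classes) equivalent to $*(\Binfty)=\Binfty$, so it cannot be quoted as a byproduct ``obtained along with the grand loop'' without actually producing that argument. There is also a small slip: since $*\circ\fit{i}=\fit{i}^{*}\circ *$ and $*(1)=1$, the class of $*b$ is represented by $\fit{i_1}^{*}\cdots\fit{i_l}^{*}u_{\infty}$; conjugating each operator does not reverse the order.

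For the record, the two halves are of very different difficulty, and your heavy machinery is misplaced on the easy half. The lattice statement has a one-line proof, which is Kashiwara's: $\mscr{L}(\infty)=\{x\in\Uq^{-}(\mfr{g});\,(x,x)_K\in\mca{A}_0\}$ (\cite[Proposition 5.1.3]{Kas:crystal}, refining \propref{prop:crylat}), and $(x^{*},y^{*})_K=(x,y)_K$ (\lemref{lem:*form}), so $*(\mscr{L}(\infty))=\mscr{L}(\infty)$ immediately, with no induction. The genuinely hard half is $*(\Binfty)=\Binfty$, which Kashiwara proves in \cite{Kas:Demazure} by an induction on weight analyzing $\vep_i^{*}$ and the interaction of $\eit{i}$ with right multiplication by $f_i$ (the mechanism behind \thmref{theo:Kasemb}); note that orthonormality of $\Binfty$ with respect to $(\cdot,\cdot)_0$ from \propref{prop:crylat} is not enough by itself, since an isometry of $\mscr{L}(\infty)/q\mscr{L}(\infty)$ need not permute a prescribed orthonormal basis. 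As it stands, your proposal leaves precisely this essential step unproven.
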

For $i\in I$ and $b\in \mscr{B}(\infty)$, we set
\begin{subequations}\begin{align}
\fit{i}^{*}(b):=(*\circ \fit{i}\circ *)(b), \\
\eit{i}^{*}(b):=(*\circ \eit{i}\circ *)(b).
\end{align}\end{subequations}
For $b\in \mscr{B}(\infty)$, we set $\vep_i^*(b):=\max\{n\in \mbb{Z}_{\geq 0}; \eit{i}^{*n}b\neq 0\}<\infty$.
We have $\vep_i^*(b)=\vep_i(*b)$.
\subsubsection{}
We recall some results on relationship between the crystal lattice $\mscr{L}(\infty)$ and Kashiwara's form $(\cdot, \cdot)_K$.

\begin{prop}[{\cite[Proposition 5.1.2]{Kas:crystal}}]\label{prop:crylat}
	We have
	 \[(\mscr{L}(\infty), \mscr{L}(\infty))_K\subset \mca{A}_0.\]
	Therefore  the $\mbb{Q}$-valued inner product on 
	$\mscr{L}(\infty)/q\mscr{L}(\infty)$
	given by $(\cdot, \cdot)|_{q=0}$ is well-defined, which we denote by $(\cdot, \cdot)_0$.
	Then we have the following properties:
	\begin{enumerate}
	\item $(\eit{i}u, u')_0=(u, \fit{i}u')_0$ for $u, u'\in \mscr{L}(\infty)/q\mscr{L}(\infty)$,
	\item $\mscr{B}(\infty)\subset \mscr{L}(\infty)/q\mscr{L}(\infty)$ is an orthonormal basis with respect to $(~, ~)_0$.
	\end{enumerate}
	Moreover we have 
	\begin{equation}\mscr{L}(\infty)=\{x \in \Uq^-(\mfr{g}); (x, \mscr{L}(\infty))_K\subset \mca{A}_0\},\label{eq:crylat}\end{equation}
	that is the crystal lattice $\mscr{L}(\infty)$ is a self-dual lattice with respect to $(\cdot, \cdot)_K$.
\end{prop}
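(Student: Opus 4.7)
The proof relies on the $i$-string decomposition: for each $i \in I$, any $x \in \Uq^-(\mfr{g})$ has a unique expression $x = \sum_{n \geq 0} f_i^{(n)} x_n$ with ${_ir}(x_n) = 0$. By iterating the Leibniz rule \eqref{eq:ir} one computes ${_ir}^k(f_i^{(n)})$ explicitly as a scalar multiple of $f_i^{(n-k)}$; combined with \eqref{eq:Kformadj} and ${_ir}(y_m) = 0$, this gives both the orthogonality $(f_i^{(n)} x_n, f_i^{(m)} y_m)_K = 0$ for $n \neq m$ and the diagonal identity
\[(x, y)_K = \sum_{n \geq 0} c_n (x_n, y_n)_K,\]
with explicit coefficients $c_n \in \mca{A}_0$ satisfying $c_n \equiv 1 \pmod q$ for all $n$.

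For the containment $(\mscr{L}(\infty), \mscr{L}(\infty))_K \subset \mca{A}_0$, I would induct on $|\xi| := \sum n_i$ where $\xi = -\sum n_i \alpha_i \in Q_-$; the base case $\xi = 0$ is $(1,1)_K = 1$. The inductive step uses the fundamental compatibility of $\mscr{L}(\infty)$ with the $i$-string decomposition (intrinsic to Kashiwara's construction of the crystal lattice): $x \in \mscr{L}(\infty)$ if and only if each $x_n \in \mscr{L}(\infty)$ is annihilated by ${_ir}$. For $n \geq 1$, the pairings $(x_n, y_n)_K$ involve weights of strictly smaller $|\cdot|$ and so lie in $\mca{A}_0$ by induction. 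For the $n = 0$ term, $x_0, y_0 \in \mscr{L}(\infty)_\xi$ remain of weight $\xi$ but satisfy ${_ir}(x_0) = {_ir}(y_0) = 0$; provided $\xi \neq 0$, \lemref{lem:indcrystal} produces some $j \neq i$ with ${_jr}(x_0) \neq 0$, and iterating the $j$-string decomposition splits $(x_0, y_0)_K$ into smaller-weight pairings plus a residual in $\mscr{L}(\infty)_\xi$ annihilated by both ${_ir}$ and ${_jr}$. After at most $|I|$ iterations one lands in $\mscr{L}(\infty)_\xi \cap \bigcap_{j \in I} \ker({_jr}) = \{0\}$ by \lemref{lem:indcrystal}, terminating the argument.

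With $\mca{A}_0$-valuedness established, $(\cdot,\cdot)_0$ is well-defined on $\mscr{L}(\infty)/q\mscr{L}(\infty)$, and since $c_n \equiv 1 \pmod q$ the key formula reduces to $(x, y)_0 = \sum_{n \geq 0} (x_n, y_n)_0$. For the adjointness (1), substituting $i$-string expansions into both sides produces the same sum $\sum_n (u_{n+1}, u'_n)_0$ after reindexing, yielding $(\eit{i} u, u')_0 = (u, \fit{i} u')_0$. For orthonormality (2), every $b \in \mscr{B}(\infty)$ has the form $\fit{i_1} \cdots \fit{i_l} u_\infty$; the adjointness (1) together with $\eit{i} \fit{i} = \mathrm{id}$ on $\mscr{B}(\infty)$ gives $(\fit{i} b, \fit{i} b')_0 = (b, \eit{i} \fit{i} b')_0 = (b, b')_0$, and orthonormality propagates by induction on the length $l$ from $(u_\infty, u_\infty)_0 = 1$. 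Finally, for the self-duality \eqref{eq:crylat}, set $\mscr{L}' := \{x \in \Uq^-(\mfr{g}) : (x, \mscr{L}(\infty))_K \subset \mca{A}_0\}$; the inclusion $\mscr{L}(\infty) \subset \mscr{L}'$ is the statement just proved, and for $\mscr{L}' \subset \mscr{L}(\infty)$ one writes any $x \in \mscr{L}'$ as $x = \sum_{b \in \mscr{B}(\infty)} a_b \tilde b$ over lifts $\tilde b$ of the basis, noting that orthonormality gives $a_b \equiv (x, \tilde b)_K \pmod q$, hence $a_b \in \mca{A}_0$ by induction on height.

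The main obstacle is the iterated $n = 0$ reduction in the second paragraph: one must verify that successive $j$-string decompositions of $x_0 \in \mscr{L}(\infty) \cap \ker({_ir})$ simultaneously preserve $\mscr{L}(\infty)$ and the kernels of all the previously considered ${_{i_k}r}$. This relies on Kashiwara's compatibility results for the crystal lattice under distinct string decompositions, which in turn rest on a delicate analysis of the commutation of $\fit{i}$ and $\fit{j}$ on $\mscr{L}(\infty)/q\mscr{L}(\infty)$.
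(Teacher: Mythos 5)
Your argument for the central containment $(\mscr{L}(\infty),\mscr{L}(\infty))_K\subset\mca{A}_0$ has a genuine gap, and it is exactly the one you flag at the end. In your induction the $i$-string decomposition of two arbitrary elements $x,y\in\mscr{L}(\infty)_{\xi}$ leaves the diagonal term $(x_0,y_0)_K$ with $x_0,y_0$ of the \emph{same} weight $\xi$, and the proposed cure --- decomposing $x_0$ along a second index $j$, then a third, until one lands in $\bigcap_{j}\Ker({_jr})$ --- does not work: the $j$-string components of $x_0$ need not lie in $\Ker({_ir})$ any longer, and there is no ``compatibility of the crystal lattice under distinct string decompositions'' that preserves the previously used kernels. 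So the iteration never reaches the simultaneous kernel, Lemma~\ref{lem:indcrystal} cannot be invoked, and the containment on which everything else rests (well-definedness of $(\cdot,\cdot)_0$, adjunction, orthonormality, self-duality) is not established. (For what it is worth, the paper itself gives no proof here; it quotes Kashiwara's Proposition 5.1.2, whose argument avoids your obstacle as described below.)

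The standard way to close the induction is never to pair two arbitrary lattice elements. Since $\mscr{L}(\infty)=\sum\mca{A}_0\,\fit{i_1}\cdots\fit{i_l}1$ by definition, it suffices to prove $(\fit{i}u,v)_K\in\mca{A}_0$ for $u,v\in\mscr{L}(\infty)$, by downward induction on the weight. Writing $u=\sum_n f_i^{(n)}u_n$ and $v=\sum_m f_i^{(m)}v_m$ with ${_ir}u_n={_ir}v_m=0$ (each component lies again in $\mscr{L}(\infty)$, which follows from $\eit{i}\mscr{L}(\infty)\subset\mscr{L}(\infty)$ and $\fit{i}\mscr{L}(\infty)\subset\mscr{L}(\infty)$ via $x_0=x-\fit{i}\eit{i}x$ and iteration), your orthogonality formula gives $(\fit{i}u,v)_K=\sum_n c_{n+1}(u_n,v_{n+1})_K$: the shift by one means \emph{every} pairing on the right is between elements of strictly smaller height, so the induction closes with no residual diagonal term and no appeal to simultaneous kernels. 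With that repair your remaining steps go through essentially as written --- adjunction, then orthonormality of $\mscr{B}(\infty)$ by induction from $(u_{\infty},u_{\infty})_0=1$; for self-duality the clean version is: if some coefficient of $x=\sum_b a_b\widetilde{b}$ (lifts $\widetilde{b}\in\mscr{L}(\infty)$ of $\mscr{B}(\infty)$) had a pole of order $N\geq 1$ at $q=0$, then $q^N x\in\mscr{L}(\infty)\setminus q\mscr{L}(\infty)$ would pair with some $\widetilde{b}$ outside $q\mca{A}_0$ by orthonormality, contradicting $(x,\mscr{L}(\infty))_K\subset\mca{A}_0$, which is tighter than your ``$a_b\equiv(x,\widetilde b)_K\pmod q$'' shortcut. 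Finally, note that your claim $c_n\equiv 1\pmod q$ is correct only with Kashiwara's convention ${_ir}f_i=q_i^{-2}f_i\,{_ir}+1$ (the one relevant to the cited result); it is worth stating the convention, since with the opposite sign the coefficients degenerate at $q=0$ and orthonormality would fail.
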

\begin{NB}
By the above properties, we have the following characterization of $\mscr{L}(\infty)$ 
\begin{prop}[{\cite[Proposition 5.1.3]{Kas:crystal}}]
We have 
\begin{equation}\mscr{L}(\infty)=\{x\in \Uq^-(\mfr{g});  (x, x)_K\in \mca{A}_0\}.\end{equation}
\end{prop}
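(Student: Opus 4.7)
The plan is to deduce all four assertions from a single orthogonality formula governing Kashiwara's form against the $i$-string decomposition, bootstrapping via a simultaneous induction on the height of the weight $\xi$ that leans on the compatibility of $\mscr{L}(\infty)$ with this decomposition (part of Kashiwara's crystal basis theorem). Specifically, for each $i \in I$ every $x \in \Uq^-(\mfr{g})$ decomposes uniquely as $x = \sum_{n \geq 0} f_i^{(n)} x_n$ with $x_n \in \Ker {_ir}$. A short induction using \eqref{eq:ir} gives ${_ir}(f_i^{(n)}) = q_i^{-n+1} f_i^{(n-1)}$, and iterating \eqref{eq:Kformadj} produces the key orthogonality
\[
(f_i^{(n)} x_n,\, f_i^{(m)} y_m)_K \;=\; \delta_{n,m}\, C_n(q_i)\, (x_n, y_n)_K, \qquad C_n(q_i) := \prod_{k=1}^{n} \frac{q_i^2 - 1}{q_i^{2k} - 1};
\]
the off-diagonal vanishing ($n \ne m$) follows by swapping the two arguments and invoking the symmetry of $(\cdot,\cdot)_K$. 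The decisive point is that $C_n(q_i)$ is a unit in $\mca{A}_0$ with $C_n(q_i) \equiv 1 \pmod{q\mca{A}_0}$, which trivialises all subsequent mod-$q$ arguments.

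Granted the crystal-basis compatibility $x \in \mscr{L}(\infty) \Leftrightarrow x_n \in \mscr{L}(\infty)$ for each $n$, the formula places $(\mscr{L}(\infty), \mscr{L}(\infty))_K$ in $\mca{A}_0$ by induction on $|\wt(x)|$: the base is $(1, 1)_K = 1$, for $n \geq 1$ each $x_n$ has strictly smaller height, and the $n = 0$ piece (lying in $\mscr{L}(\infty)_\xi \cap \Ker {_ir}_i$) is handled by iterating with another $j \in I$, which terminates because \lemref{lem:indcrystal} forces the joint kernel of all ${_ir}_i$ to vanish on positive-height weight spaces. Reducing mod $q$ then defines $(\cdot,\cdot)_0$ on $\mscr{L}(\infty)/q\mscr{L}(\infty)$. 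Adjointness of $\eit{i}$ and $\fit{i}$ at $q = 0$ is immediate from the same formula: since $\eit{i}$ sends $f_i^{(n)} x_n$ to $f_i^{(n-1)} x_n$ and $\fit{i}$ sends it to $f_i^{(n+1)} x_n$, both $(\eit{i} u, u')_K$ and $(u, \fit{i} u')_K$ evaluate to the same sum of $(x_n, y_m)_K$-terms, differing only by ratios $C_{n}/C_{n-1}$ that collapse to $1$ modulo $q$.

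Orthonormality of $\Binfty$ under $(\cdot,\cdot)_0$ I would prove by a parallel height induction. For $b, c \in \Binfty_\xi$ with $\xi \ne 0$, pick $i$ with $\vep_i(b) + \vep_i(c) > 0$, write $b = \fit{i}^{n} b'$ and $c = \fit{i}^{m} c'$ with $b', c' \in \Ker \eit{i}$, and compute
\[
(b, c)_0 = (\fit{i}^n b',\, \fit{i}^m c')_0 = (\eit{i}^n \fit{i}^m c',\, b')_0 = \delta_{n, m}(b', c')_0,
\]
using the adjointness just established, the crystal identities $\eit{i} \fit{i} = \mathrm{id}$ on $\Binfty$ and $\eit{i}(\Ker \eit{i}) = 0$, and the induction hypothesis on $b', c'$ of strictly smaller height (with base $(u_\infty, u_\infty)_0 = 1$).

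For the self-duality \eqref{eq:crylat}, set $\mscr{L}^{\vee} := \{x \in \Uq^-(\mfr{g}) : (x, \mscr{L}(\infty))_K \subset \mca{A}_0\}$; the inclusion $\mscr{L}(\infty) \subseteq \mscr{L}^{\vee}$ is the main statement already proved. The reverse follows weight by weight from Nakayama's lemma: each $\mscr{L}(\infty)_\xi$ is a finitely generated free $\mca{A}_0$-module, the Gram matrix of $(\cdot,\cdot)_0$ in the basis $\Binfty_\xi$ is the identity, so $\mscr{L}(\infty)_\xi \to \Hom_{\mca{A}_0}(\mscr{L}(\infty)_\xi, \mca{A}_0)$ is an isomorphism mod $q$ and hence an isomorphism over the local ring $\mca{A}_0$, identifying $\mscr{L}(\infty)_\xi$ with $\mscr{L}^{\vee}_\xi$. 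The main obstacle is the careful bookkeeping in the orthogonality formula --- verifying $C_n(q_i) \in 1 + q\mca{A}_0$ --- together with the tacit use of Kashiwara's ``grand loop'' compatibility of $\mscr{L}(\infty)$ with the $i$-string decomposition, which underpins the simultaneous induction throughout.
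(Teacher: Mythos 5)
You have proved the self-dual-lattice characterization $\mscr{L}(\infty) = \{x : (x, \mscr{L}(\infty))_K \subset \mca{A}_0\}$, which is equation~\eqref{eq:crylat} of Proposition~\ref{prop:crylat}; that is not the statement at issue. The target is the \emph{diagonal} characterization $\mscr{L}(\infty) = \{x \in \Uq^-(\mfr{g}) : (x,x)_K \in \mca{A}_0\}$, whose defining condition is a priori much weaker: only $x$ paired against itself is required to be regular at $q=0$, not $x$ paired against all of $\mscr{L}(\infty)$. Your Nakayama argument identifies $\mscr{L}(\infty)$ with the dual lattice $\mscr{L}^\vee := \{x : (x, \mscr{L}(\infty))_K \subset \mca{A}_0\}$, but it never engages with the potentially larger set $\{x : (x,x)_K \in \mca{A}_0\}$, so the proposal does not prove the stated equality.

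The missing ingredient is positivity over the ordered field $\mbb{Q}$. Fix a weight space and expand $x = \sum_b a_b v_b$ with $a_b \in \Qq$, where $\{v_b\}$ lifts the orthonormal basis $\Binfty$ so that $(v_b,v_{b'})_K = \delta_{b,b'} + q\,c_{b,b'}$ with $c_{b,b'} \in \mca{A}_0$. If $x \notin \mscr{L}(\infty)$, some $a_b$ has a pole at $q=0$; let $m > 0$ be the maximal pole order and put $\tilde{a}_b := q^m a_b \in \mca{A}_0$, so at least one $\tilde{a}_b(0) \in \mbb{Q}$ is nonzero. Then
\[
q^{2m}(x,x)_K \;=\; \sum_b \tilde{a}_b^{\;2} \;+\; q\sum_{b,b'}\tilde{a}_b\tilde{a}_{b'}c_{b,b'},
\]
whose value at $q=0$ is $\sum_b \tilde{a}_b(0)^2 > 0$, a nonzero sum of squares of rationals. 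Hence $(x,x)_K$ has a pole of order exactly $2m > 0$, contradicting $(x,x)_K \in \mca{A}_0$. This sum-of-squares step, resting on the fact that the base field is $\mbb{Q}$, is precisely what Kashiwara's Proposition 5.1.3 uses to pass from the weak diagonal hypothesis to membership in $\mscr{L}(\infty)$; your careful derivation of the orthogonality formula and the containment $(\mscr{L}(\infty),\mscr{L}(\infty))_K \subset \mca{A}_0$ are valid inputs to it, but the positivity argument is absent and the proof is incomplete without it. (One small slip along the way: ${_ir}(f_i^{(n)}) = q_i^{\,n-1}f_i^{(n-1)}$, not $q_i^{\,-n+1}f_i^{(n-1)}$, though this does not affect your expression for $C_n(q_i)$.)
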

\end{NB}
\subsubsection{}
Let $\overline{\phantom{A}}\colon \Qq\to \Qq$ be the $\mbb{Q}$-algebra involution sending $q$ to $q^{-1}$.
Let $V$ be a vector space over $\Qq$,
$\mscr{L}_0$ be an $\mca{A}_0$-submodule of $V$,
$\mscr{L}_\infty$ be an $\mca{A}_\infty$-submodule of $V$, 
and $V_{\mca{A}}$ be an $\mca{A}$-submodule of $V$.
We set $E:=\mscr{L}_0\cap \mscr{L}_\infty \cap V_{\mca{A}}$.
\begin{defn}
We say that a triple $(\mscr{L}_0, \mscr{L}_{\infty}, V_{\mca{A}})$ is \emph{balanced} if 
each $\mscr{L}_0, \mscr{L}_{\infty}$, and $V_{\mca{A}}$ generates $V$ as $\Qq$-vector space and if one of the following equivalent conditions is satisfied
\begin{enumerate}
	\item $E\to \mscr{L}_0/q\mscr{L}_0$ is an isomorphism,
	\item $E\to \mscr{L}_\infty/q^{-1}\mscr{L}_\infty$ is an isomorphism,
	\item $(\mscr{L}_0\cap V_{\mca{A}})\+ (q^{-1}\mscr{L}_\infty\cap V_{\mca{A}})\to V_{\mca{A}}$
	is an isomorphism,
	\item $\mca{A}_0\otimes_{\mbb{Q}} E\to \mscr{L}_0$,
	$\mca{A}_\infty\otimes_{\mbb{Q}} E\to \mscr{L}_\infty$,
	$\mca{A}\otimes_{\mbb{Q}} E\to V_{\mca{A}}$,
	and $\Qq\otimes_{\mbb{Q}}E\to V$ are isomorphisms.
\end{enumerate}
\end{defn}

\begin{theo}[{\cite[Theorem 6]{Kas:crystal}}]
	The triple $(\mscr{L}(\infty), \overline{\mscr{L}(\infty)}, \Uq^-(\mfr{g})_{\mca{A}})$
	is balanced.
\end{theo}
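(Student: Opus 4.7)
The strategy is to construct a $\mbb{Q}$-basis $\{G^{\low}(b)\}_{b\in\mscr{B}(\infty)}$ of $E:=\mscr{L}(\infty)\cap\overline{\mscr{L}(\infty)}\cap\Uq^-(\mfr{g})_{\mca{A}}$ with $G^{\low}(b)\equiv b\pmod{q\mscr{L}(\infty)}$. Once this family is produced, the specialization map $E\to\mscr{L}(\infty)/q\mscr{L}(\infty)$ sends it bijectively onto $\mscr{B}(\infty)$, verifying condition (1) of the definition of balancedness; conditions (2), (3), and (4) then follow formally by applying the bar involution and by Nakayama-type arguments once one knows the rank of each weight space matches.

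I would prove existence by induction on the height $\tr(-\wt(b))$. For the base case, set $G^{\low}(u_{\infty}):=1$. For $b\ne u_{\infty}$, choose $i\in I$ with $n:=\vep_{i}(b)>0$ and put $b':=\eit{i}^{n}b\in\mscr{B}(\infty)$; by the very definition of the modified root operators, $\vep_{i}(b')=0$ and $b'$ has strictly smaller height. Define $G^{\low}(b):=f_{i}^{(n)}G^{\low}(b')$. This element is bar-invariant (divided powers are fixed by the bar), lies in $\Uq^-(\mfr{g})_{\mca{A}}$, and belongs to $\mscr{L}(\infty)$. To check $G^{\low}(b)\equiv b\pmod{q\mscr{L}(\infty)}$ I would use the $q$-Boson decomposition implicit in Lemma~\ref{lem:qBoson} to write $G^{\low}(b')=y_{0}+\sum_{k\ge 1}f_{i}^{(k)}y_{k}$ with ${_{i}r}(y_{k})=0$, observing that $\vep_{i}(b')=0$ forces $y_{k}\in q\mscr{L}(\infty)$ for $k\ge 1$ and $y_{0}\equiv b'\pmod{q\mscr{L}(\infty)}$, whence $f_{i}^{(n)}G^{\low}(b')\equiv f_{i}^{(n)}y_{0}\equiv\fit{i}^{n}b'=b$ modulo $q\mscr{L}(\infty)$.

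The main obstacle is well-definedness: showing that the element $G^{\low}(b)$ thus constructed is independent of the choice of $i$ with $\vep_{i}(b)>0$. This reduces to a uniqueness lemma asserting that any bar-invariant $v\in\Uq^-(\mfr{g})_{\mca{A}}\cap\mscr{L}(\infty)$ satisfying $v\in q\mscr{L}(\infty)$ must vanish. To prove the lemma I would induct on weight and, using the self-duality of $\mscr{L}(\infty)$ under Kashiwara's form (Proposition~\ref{prop:crylat}) together with the orthonormality of $\mscr{B}(\infty)$ under $(\cdot,\cdot)_{0}$, read off the coefficients of $v$ by pairing against the previously-constructed $G^{\low}(b'')$ at lower heights: self-duality forces each pairing into $\mca{A}_{0}$, bar-invariance then confines it to $\mca{A}_{0}\cap\overline{\mca{A}_{0}}=\mca{A}_{0}\cap\mca{A}_{\infty}=\mbb{Q}$, while $v\in q\mscr{L}(\infty)$ simultaneously confines it to $q\mca{A}_{0}$; the intersection $\mbb{Q}\cap q\mca{A}_{0}$ is zero, forcing each coefficient and hence $v$ to vanish. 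With uniqueness secured, the inductive construction closes, $\{G^{\low}(b)\}_{b\in\mscr{B}(\infty)}$ is the desired basis of $E$, and the balanced-triple conclusion follows.
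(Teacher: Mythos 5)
This theorem is not proved in the paper you are reading: it is quoted from Kashiwara \cite[Theorem 6]{Kas:crystal}, and the proof there is the ``grand loop'' --- a simultaneous induction on weight that establishes a long list of interlocking statements about $\Uq^-(\mfr{g})_{\mca{A}}$ \emph{and} about every integrable highest weight module $V(\lambda)$ at once (compatibility of lattices under $\pi_\lambda$, the expansion of $f_i^{(m)}\Glow(b)$ that appears here as Theorem~\ref{thm:multlower} and Theorem~\ref{theo:lower_filt}, etc.). Your two-step sketch, an induction internal to $\Uq^-(\mfr{g})$, is not that proof, and the steps you lean on are exactly the ones the grand loop exists to supply. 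Concretely: (i) the element $G^{\low}(b):=f_i^{(n)}G^{\low}(b')$ is not known to lie in $\mscr{L}(\infty)$. Left multiplication by $f_i^{(n)}$ does not preserve the crystal lattice --- already in rank one $f_i\cdot f_i^{(1)}=[2]_i f_i^{(2)}\notin\mscr{L}(\infty)$ --- and your argument that the tail $\sum_{k\geq 1}f_i^{(k)}y_k\in q\mscr{L}(\infty)$ is harmless fails, because $f_i^{(n)}f_i^{(k)}y_k=\begin{bmatrix}n+k\\ n\end{bmatrix}_{i}f_i^{(n+k)}y_k$ and this $q_i$-binomial has a pole of order $d_i nk$ at $q=0$; a single factor of $q$ in the tail is nowhere near enough. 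Controlling such error terms is precisely the content of the fine multiplication formula (Theorem~\ref{thm:multlower}), which in Kashiwara's development is proved \emph{within} the same induction as the balanced-triple theorem, not available as an input. (ii) Your uniqueness lemma, even if true, does not give injectivity of $E\to\mscr{L}(\infty)/q\mscr{L}(\infty)$: elements of $E=\mscr{L}(\infty)\cap\overline{\mscr{L}(\infty)}\cap\Uq^-(\mfr{g})_{\mca{A}}$ are not a priori bar-invariant (that they are is a \emph{consequence} of the theorem), so a vanishing statement for bar-invariant $v$ is not the statement you need.

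Moreover the proposed proof of that lemma is itself flawed: $(\cdot,\cdot)_K$ pairs only vectors of equal weight, so pairing $v$ against elements ``at lower heights'' yields nothing; at the same weight the constructed lifts are only orthonormal modulo $q$ (Proposition~\ref{prop:crylat}), so pairings do not literally read off coefficients without a further $q$-adic induction; and the step ``bar-invariance confines the pairing to $\mca{A}_0\cap\mca{A}_\infty=\mbb{Q}$'' presupposes $\overline{(x,y)_K}=(\overline{x},\overline{y})_K$, which is false --- by Proposition~\ref{prop:dualbar} the bar-twisted form involves the $*$-involution and a shift by $q^{N(\wt x)}$. Finally, even granted one bar-invariant lift per crystal element, balancedness also requires these lifts to be an $\mca{A}$-basis of $\Uq^-(\mfr{g})_{\mca{A}}$ and an $\mca{A}_0$-basis of $\mscr{L}(\infty)$, which needs a triangularity argument against a known integral basis that your ``Nakayama-type'' remark does not provide. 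In short, the skeleton (construct lifts, prove uniqueness) has the right shape, but every load-bearing step requires the full strength of Kashiwara's simultaneous induction involving the $V(\lambda)$ (or, for symmetric $\mfr{g}$, Lusztig's geometric construction), so as written the argument has genuine gaps.
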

Let $\Glow\colon \mscr{L}(\infty)/q\mscr{L}(\infty)\to E:=\mscr{L}(\infty)\cap \overline{\mscr{L}(\infty)} \cap \Uq^-(\mfr{g})_{\mca{A}}$
be the inverse of $E\xrightarrow{\sim} \mscr{L}(\infty)/q\mscr{L}(\infty)$.
Then $\{\Glow(b) ; b\in \mscr{B}(\infty)\}$ forms an $\mca{A}$-basis of $\Uq^-(\mfr{g})_{\mca{A}}$.
This basis is called the \emph{canonical basis} of $\Uq^-(\mfr{g})$.
Using this characterization, we obtain the following compatibility of the canonical basis and the $*$-involution.
\begin{prop}\label{prop:*lower}
We have 
\[*\Glow(b)=\Glow(*b).\]
\end{prop}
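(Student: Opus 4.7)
The plan is to exploit the uniqueness characterization of $\Glow(b)$ provided by the balanced triple $(\mscr{L}(\infty), \overline{\mscr{L}(\infty)}, \Uq^-(\mfr{g})_{\mca{A}})$: the element $\Glow(b)$ is the unique element of $E=\mscr{L}(\infty)\cap \overline{\mscr{L}(\infty)}\cap \Uq^-(\mfr{g})_{\mca{A}}$ whose image in $\mscr{L}(\infty)/q\mscr{L}(\infty)$ is $b$. Hence it suffices to check that $*\Glow(b)$ lies in $E$ and reduces to $*b$ modulo $q\mscr{L}(\infty)$.

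First I would verify that the $*$-involution preserves each of the three $\mca{A}_0$-, $\mca{A}_\infty$-, and $\mca{A}$-submodules appearing in the balanced triple. Compatibility with $\mscr{L}(\infty)$ is exactly the content of the theorem cited just above, $*(\mscr{L}(\infty))=\mscr{L}(\infty)$. Compatibility with $\overline{\mscr{L}(\infty)}$ then follows from the commutation relation $\overline{\phantom{x}}\circ *=*\circ \overline{\phantom{x}}$ recorded in \eqref{eq:star}, since $*\overline{\mscr{L}(\infty)}=\overline{*\mscr{L}(\infty)}=\overline{\mscr{L}(\infty)}$. Compatibility with the integral form $\Uq^-(\mfr{g})_{\mca{A}}$ follows because $*$ is a $\Qq$-algebra anti-involution fixing each $f_i$, hence sending the generators $f_i^{(k)}=f_i^k/[k]_i!$ to themselves and therefore stabilizing the $\mca{A}$-subalgebra generated by them (see \S\ref{subsubsec:A-form}).

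Putting these three together, $*\Glow(b)\in E$. Moreover the induced action of $*$ on $\mscr{L}(\infty)/q\mscr{L}(\infty)$ is well-defined by the first property, and by definition $*b$ is precisely the image of $*\Glow(b)$ under the isomorphism $E\xrightarrow{\sim}\mscr{L}(\infty)/q\mscr{L}(\infty)$. Therefore $*\Glow(b)$ is the distinguished preimage $\Glow(*b)$, which is the desired equality.

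There is no real obstacle here, as every ingredient has already been established: the balanced-triple characterization of $\Glow$, the compatibility of $*$ with the crystal lattice, and the commutation of $*$ and the bar involution. The only point requiring any care is the stability of $\Uq^-(\mfr{g})_{\mca{A}}$ under $*$, which, once one notes that $*$ is an \emph{anti}-involution fixing each divided power generator, is immediate because the $\mca{A}$-algebra generated by the $f_i^{(k)}$ is stable under both multiplication and its opposite.
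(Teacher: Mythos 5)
Your proof is correct and follows exactly the route the paper intends: the paper introduces $\Glow$ via the balanced triple $(\mscr{L}(\infty), \overline{\mscr{L}(\infty)}, \Uq^-(\mfr{g})_{\mca{A}})$ and derives the proposition "using this characterization," which is precisely your argument that $*$ stabilizes all three lattices (via $*(\mscr{L}(\infty))=\mscr{L}(\infty)$, the commutation $\overline{\phantom{x}}\circ *=*\circ\overline{\phantom{x}}$, and $*$ fixing the divided powers $f_i^{(k)}$) and acts as $b\mapsto *b$ modulo $q\mscr{L}(\infty)$. Nothing is missing.
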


\subsubsection{}
For integrable highest weight modules, we can define the (lower) crystal basis and the canonical basis of them as for $\Uq^-(\mfr{g})$,
see \cite[Theorem 2, Theorem 6]{Kas:crystal} for more details.
Let $M$ be an integrable $\U_{q}(\mfr{g})$-module and $M=\bigoplus_{\lambda\in P}M_\lambda$ be its weight decomposition.
For $u\in \Ker(e_i)\cap M_\lambda$ and $0\leq n\leq \bracket{h_i, \lambda}$, we define 
Kashiwara's modified operators or (lower) crystal operators $\eit{i}^{\low}$ and $\fit{i}^{\low}$ by
\begin{align*}
\eit{i}^{\low}(f_i^{(n)}u)&=f_i^{(n-1)}u,\\
\fit{i}^{\low}(f_i^{(n)}u)&=f_i^{(n+1)}u.
\end{align*}
Here we understand $f_{i}^{(-1)}u$ and $f_i^{(\bracket{h_i, \lambda}+1)}u$ as $0$.
Note that we denote the operators $\fit{i}$ and $\eit{i}$ in \cite[2.2]{Kas:crystal} by $\fit{i}^{\low}$ and $\eit{i}^{\low}$ following \cite{Kas:global}.

Let $\lambda\in P_+$ and $V(\lambda)$ be the integrable highest weight $\Uq(\mfr{g})$-module generated by a highest weight vector $u_\lambda$ of weight $\lambda$.
Let $\mscr{L}^{\low}(\lambda)$ be the $\mca{A}_{0}$-submodule spanned by $\fit{i_{1}}^{\low}\cdots \fit{i_{l}}^{\low}u_{\lambda}$.
Let $\mscr{B}^{\low}(\lambda)$ be the subset of $\mscr{L}^{\low}(\lambda)/q_{s}\mscr{L}^{\low}(\lambda)$ consisting of
the non-zero vectors of the form $\fit{i_{1}}^{\low}\cdots \fit{i_{l}}^{\low}u_{\lambda}$, that is 
\begin{align*}
\mscr{L}^{\low}(\lambda)&:=\sum \mca{A}_{0}\fit{i_{1}}^{\low}\cdots \fit{i_{l}}^{\low}u_{\lambda}\subset V(\lambda), \\
\mscr{B}^{\low}(\lambda)&:=\{\fit{i_{1}}^{\low}\cdots \fit{i_{l}}^{\low}u_{\lambda} \mod q\mscr{L}^{\low}(\lambda)\}\setminus \{0\}\subset \mscr{L}^{\low}(\lambda)/q_{s}\mscr{L}^{\low}(\lambda).
\end{align*}

\begin{theo}[{\cite[Theorem 2]{Kas:crystal}}]

\textup{(1)} $\mscr{L}^{\low}(\lambda)$ is a free $\mca{A}_{0}$-submodule with $\Qq\otimes_{\mca{A}_{0}}\mscr{L}^{\low}(\lambda)\simeq V(\lambda)$
and  $\mscr{L}^{\low}(\lambda)=\bigoplus_{\mu\in P}\mscr{L}^{\low}(\lambda)_{\mu}$ where $\mscr{L}^{\low}(\lambda)_{\mu}=\mscr{L}^{\low}(\lambda)\cap M_{\mu}$.

\textup{(2)} 
$\eit{i}^{\low}\mscr{L}^{\low}(\lambda)\subset \mscr{L}^{\low}(\lambda)$
and  $\fit{i}^{\low}\mscr{L}^{\low}(\lambda)\subset \mscr{L}^{\low}(\lambda)$.

\textup{(3)}
$\mscr{B}^{\low}(\lambda)$ is a $\mbb{Q}$-basis of $\mscr{L}^{\low}(\lambda)/q\mscr{L}^{\low}(\lambda)$
and $\mscr{B}^{\low}(\lambda)=\bigsqcup_{\mu\in P}\mscr{B}^{\low}(\lambda)_{\mu}$
where $\mscr{B}^{\low}(\lambda)_{\mu}=\mscr{B}^{\low}(\lambda)\cap \mscr{L}^{\low}(\lambda)_{\mu}/q\mscr{L}^{\low}(\lambda)_{\mu}$.

\textup{(4)}
For $i\in I$, we have $\eit{i}\mscr{B}(\lambda)\subset \mscr{B}(\lambda)\cup \{0\}$ and 
$\fit{i}\mscr{B}(\lambda)\subset \mscr{B}(\lambda)\cup \{0\}$.

\textup{(5)}
For $b, b'\in \mscr{B}^{\low}(\lambda)$, 
$b'=\fit{i}^{\low}b$ is equivalent to $b=\eit{i}^{\low}b'$.

\end{theo}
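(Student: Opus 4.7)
My plan is to prove statements (1)--(5) simultaneously by induction on $N := \tr(\lambda-\mu)\geq 0$, mirroring Kashiwara's grand-loop argument for $\Uq^-(\mfr{g})$ whose conclusion is already available. The base case $\mu=\lambda$ is trivial since $V(\lambda)_\lambda=\Qq u_\lambda$. The key structural ingredient is the $\mfr{sl}_2$-decomposition: for each $i\in I$ and each weight $\mu$, every $u\in V(\lambda)_\mu$ has a unique expansion $u=\sum_{n\geq 0} f_i^{(n)} u_n$ with $u_n\in \Ker(e_i)\cap V(\lambda)_{\mu+n\alpha_i}$. This is the $V(\lambda)$-analogue of the $q$-Boson decomposition used to define $\fit{i}$ and $\eit{i}$ on $\Uq^-(\mfr{g})$; by construction $\fit{i}^{\low}$ and $\eit{i}^{\low}$ shift $n$ by $\pm 1$ in this decomposition, which makes (5) immediate.

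Next I would exploit the canonical surjection $\pi_\lambda\colon \Uq^-(\mfr{g})\to V(\lambda)$, $x\mapsto xu_\lambda$. Writing $x=\sum f_i^{(n)} x_n$ with ${_ir}(x_n)=0$ for the $q$-Boson decomposition in $\Uq^-(\mfr{g})$, a direct computation shows that $\pi_\lambda(x_n)$ either lies in $\Ker(e_i)$ or is killed by $\pi_\lambda$ (the latter when $n$ exceeds $\bracket{h_i,\mu+n\alpha_i}$). Hence $\pi_\lambda$ intertwines the $q$-Boson decomposition with the $\mfr{sl}_2$-decomposition, yielding $\pi_\lambda(\mscr{L}(\infty))=\mscr{L}^{\low}(\lambda)$ and a surjection $\Binfty\to \mscr{B}^{\low}(\lambda)\sqcup\{0\}$. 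Combined with the induction hypothesis this delivers (1), (2), and (4), where for (2) the point is that $\eit{i}^{\low}(f_i^{(n)}u_n)=f_i^{(n-1)}u_n$ remains in $\mscr{L}^{\low}(\lambda)$ because the components $u_n$ themselves do, by induction at lower depth in $N$.

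The linear independence statement (3) is the main obstacle. I would handle it by introducing the Shapovalov-type bilinear form $(\cdot,\cdot)_\lambda$ on $V(\lambda)$ characterized by $(u_\lambda,u_\lambda)_\lambda=1$ and by adjointness of $f_i$ and a suitably normalized $e_i$. An inductive argument, using the adjointness of $\eit{i}^{\low}$ and $\fit{i}^{\low}$ modulo $q$ together with \propref{prop:crylat} transported via $\pi_\lambda$, shows that $(\mscr{L}^{\low}(\lambda),\mscr{L}^{\low}(\lambda))_\lambda\subset \mca{A}_0$ and that the image of $\mscr{B}^{\low}(\lambda)$ in $\mscr{L}^{\low}(\lambda)/q\mscr{L}^{\low}(\lambda)$ is orthonormal, which forces linear independence. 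The delicate part is the grand-loop bookkeeping: each of (1)--(5) at depth $N$ depends on all of them at depth $N-1$ for $V(\lambda)$ together with the previously established $\Uq^-(\mfr{g})$-statements, so one must arrange the implications carefully to avoid circularity. No new conceptual input beyond the $\Uq^-(\mfr{g})$-case is required.
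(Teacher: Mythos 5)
Your outline is, for the most part, a r\'esum\'e of Kashiwara's grand-loop proof (the paper itself offers no proof of this theorem; it simply cites \cite[Theorem 2]{Kas:crystal}), and redoing that induction would be a legitimate route. But the one place where you depart from Kashiwara is exactly the place where the argument breaks: the claim that ``a direct computation shows'' that $\pi_\lambda$ carries the $q$-Boson decomposition of $\Uq^-(\mfr{g})$ to the $\mfr{sl}_2$-string decomposition of $V(\lambda)$. This is false. If $x=\sum_n f_i^{(n)}x_n$ with ${_ir}(x_n)=0$, then by \lemref{lem:qBoson}
\[
e_i\bigl(x_nu_\lambda\bigr)=\frac{r_i(x_n)t_i-t_i^{-1}\,{_ir}(x_n)}{q_i-q_i^{-1}}\,u_\lambda
=\frac{q_i^{\bracket{h_i,\lambda}}}{q_i-q_i^{-1}}\,r_i(x_n)u_\lambda ,
\]
so membership of $x_nu_\lambda$ in $\Ker(e_i)$ is governed by $r_i(x_n)$, not by ${_ir}(x_n)$, and it need not vanish. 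Concretely, for $\mfr{g}=\mfr{sl}_3$ the element $x=f_2f_1-q^{-1}f_1f_2$ satisfies ${_1r}(x)=0$ but $r_1(x)=(1-q^{-2})f_2$, so $xu_\lambda\notin\Ker(e_1)$ whenever $\bracket{h_2,\lambda}>0$; thus $\pi_\lambda$ does not intertwine the two decompositions, and it is not true that each $\pi_\lambda(x_n)$ is either an $e_i$-highest vector or zero.

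Consequently (1), (2), (4) and the surjection $\Binfty\to\mscr{B}^{\low}(\lambda)\sqcup\{0\}$ do not follow as you claim. The correct statement is only that $\pi_\lambda$ commutes with $\fit{i}$, $\eit{i}$ \emph{modulo} $q\mscr{L}^{\low}(\lambda)$ --- this is precisely \thmref{theo:grloop} (Kashiwara's Theorem 5) --- and that compatibility is itself one of the main \emph{outputs} of the grand loop, proved simultaneously with the statements (1)--(5) for $V(\lambda)$ and the corresponding statements for $\Uq^-(\mfr{g})$; it cannot be used as an easy input to deduce them, so your reduction is circular. A smaller instance of the same issue: (5) is not ``immediate,'' because elements of $\mscr{B}^{\low}(\lambda)$ are residues in $\mscr{L}^{\low}(\lambda)/q\mscr{L}^{\low}(\lambda)$ of iterated $\fit{i}^{\low}$-images of $u_\lambda$, not pure string vectors $f_i^{(n)}u$, and the needed identity $\eit{i}^{\low}\fit{i}^{\low}b=b$ (when $\fit{i}^{\low}b\neq 0$) modulo $q\mscr{L}^{\low}(\lambda)$ is again part of what the induction must establish. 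To repair the proposal you would have to run the genuine grand loop, proving the $V(\lambda)$- and $\Uq^-(\mfr{g})$-statements together with the mod-$q$ compatibility of $\pi_\lambda$, rather than importing the $\Uq^-(\mfr{g})$ case and transporting it by an exact intertwining that does not exist.
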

We call $(\mscr{L}^{\low}(\lambda), \mscr{B}^{\low}(\lambda))$ the \emph{lower crystal basis} of $V(\lambda)$,
and $\mscr{L}^{\low}(\lambda)$ the \emph{lower crystal lattice}.

Let $\overline{\phantom{x}}$ be the bar-involution defined by $\overline{Pu_\lambda}=\overline{P}u_\lambda$.
Set $V(\lambda)_{\mca{A}}:=\Uq^-(\mfr{g})_{\mca{A}}u_{\lambda}$. 
\begin{theo}[{\cite[Theorem 6]{Kas:crystal}}]
The triple $(\mscr{L}^{\low}(\lambda), \overline{\mscr{L}^{\low}(\lambda)}, V(\lambda)_{\mca{A}})$
is balanced.
\end{theo}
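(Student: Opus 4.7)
The plan is to transport the balanced structure from $\Uq^{-}(\mfr{g})$ to $V(\lambda)$ via the canonical surjection $\pi_\lambda \colon \Uq^{-}(\mfr{g}) \to V(\lambda)$, $P \mapsto P u_\lambda$. First I would verify that $\pi_\lambda$ is compatible with all three ingredients: it maps $\Uq^{-}(\mfr{g})_{\mca{A}}$ onto $V(\lambda)_{\mca{A}}$ by the definition of the $\mca{A}$-form; it intertwines the bar involutions because $\overline{u_\lambda} = u_\lambda$ and $\overline{P u_\lambda} = \overline{P} u_\lambda$; and, by comparing the modified root operators $\fit{i}$ on $\Uq^{-}(\mfr{g})$ with $\fit{i}^{\low}$ on $V(\lambda)$ on vectors annihilated by $e_i$, it maps $\mscr{L}(\infty)$ onto $\mscr{L}^{\low}(\lambda)$ and descends to a surjection $\mscr{B}(\infty) \to \mscr{B}^{\low}(\lambda) \sqcup \{0\}$.

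The heart of the proof is to show that the kernel $N_\lambda := \Ker \pi_\lambda$ is spanned by a subset of the lower global basis. Explicitly, I would establish
\[ N_\lambda = \bigoplus_{b \in \mscr{B}^{\times}_\lambda} \mca{A} \cdot \Glow(b), \qquad \mscr{B}^{\times}_\lambda := \{\, b \in \mscr{B}(\infty) \mid \vep^{*}_i(b) > \bracket{h_i, \lambda} \text{ for some } i\,\}. \]
Since $N_\lambda = \sum_i \Uq^{-}(\mfr{g}) \cdot f_i^{\bracket{h_i, \lambda}+1}$ and the $*$-involution preserves the canonical basis by \propref{prop:*lower}, it suffices to prove that $f_i^{n+1} \Uq^{-}(\mfr{g})$ is the $\mca{A}$-span of $\{\Glow(b) \mid \vep_i(b) > n\}$. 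I would argue this by induction on weight, using the relation between $\fit{i}$ and left multiplication by $f_i$ together with the self-duality of $\mscr{L}(\infty)$ under Kashiwara's form $(\cdot,\cdot)_K$ (\propref{prop:crylat}) to construct bar-invariant lifts of the appropriate crystal-basis elements.

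Granted this kernel description, the quotient inherits a balanced triple: the set $\{\Glow(b)\, u_\lambda \mid b \in \mscr{B}(\infty) \setminus \mscr{B}^{\times}_\lambda\}$ lies simultaneously in $V(\lambda)_{\mca{A}}$ (as the image of $\Uq^{-}(\mfr{g})_{\mca{A}}$), in $\overline{\mscr{L}^{\low}(\lambda)}$ (because each $\Glow(b)$ is bar-invariant and $\pi_\lambda$ intertwines bar), and in $\mscr{L}^{\low}(\lambda)$ (being the $\pi_\lambda$-image of $\Glow(b) \in \mscr{L}(\infty)$); moreover, its image modulo $q \mscr{L}^{\low}(\lambda)$ is exactly the crystal basis $\mscr{B}^{\low}(\lambda)$. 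The four equivalent isomorphism conditions defining a balanced triple on $V(\lambda)$ then follow directly from the corresponding statements for $(\mscr{L}(\infty), \overline{\mscr{L}(\infty)}, \Uq^{-}(\mfr{g})_{\mca{A}})$ by passing to the quotient by $N_\lambda$.

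The main obstacle is the kernel identification in the second paragraph: proving that $f_i^{n+1}\Uq^{-}(\mfr{g})$ coincides with the $\mca{A}$-span of the canonical basis vectors whose $\vep_i$ exceeds $n$. This is not a formal consequence of the balanced property for $\Uq^{-}(\mfr{g})$; it requires genuine crystal-theoretic input, namely that the lower global basis lifts the crystal basis compatibly with every $f_i$-string. Care must also be taken to combine the statements for all $i \in I$ simultaneously so that $N_\lambda$ itself (rather than each ideal $\Uq^{-}(\mfr{g})\, f_i^{\bracket{h_i, \lambda}+1}$ individually) is identified as the span of a subset of $\{\Glow(b)\}_{b \in \mscr{B}(\infty)}$.
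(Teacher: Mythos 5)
This statement is quoted in the paper from \cite[Theorem 6]{Kas:crystal} with no proof given: Kashiwara establishes it inside his big simultaneous induction (the ``grand loop''-type argument), in which the balanced triples for $\Uq^-(\mfr{g})$ and for the modules $V(\lambda)$, together with the compatibility statements such as \thmref{theo:grloop} and \thmref{theo:lower_filt}, are proved in an intertwined way (the $\Uq^-(\mfr{g})$ statement in a given weight being controlled by $V(\lambda)$ for $\lambda$ sufficiently dominant). Your route goes in the opposite direction: you take the balanced triple for $\Uq^-(\mfr{g})$ as known and push it through $\pi_\lambda$. As a derivation from the other results the paper quotes, this essentially works: the kernel of $\pi_\lambda$ is the left ideal $\sum_i \Uq^-(\mfr{g})f_i^{\bracket{h_i,\lambda}+1}$, its compatibility with the canonical basis is exactly \thmref{theo:lower_filt} (transported by the $*$-involution via \propref{prop:*lower} and $\vep_i^*(b)=\vep_i(*b)$), and once the three lattices $\mscr{L}^{\low}(\lambda)$, $\overline{\mscr{L}^{\low}(\lambda)}$, $V(\lambda)_{\mca{A}}$ are identified as the $\mca{A}_0$-, $\mca{A}_\infty$- and $\mca{A}$-spans of the surviving vectors $\Glow(b)u_\lambda$, the four balanced-triple conditions do descend as you claim. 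What this buys is a clean reduction of the $V(\lambda)$ statement to the $\Uq^-(\mfr{g})$ statement plus \thmref{theo:lower_filt}; what it does not buy is an independent proof, since in Kashiwara's development \thmref{theo:lower_filt} is not available prior to Theorem 6 but is proved alongside it.

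That is also where your proposal is thin. You correctly flag the kernel identification as the crux, but the sketch ``induction on weight, using the relation between $\fit{i}$ and multiplication by $f_i$ and self-duality of $\mscr{L}(\infty)$'' is not a proof: the statement that $f_i^{n}\Uq^-(\mfr{g})$ is spanned by $\{\Glow(b):\vep_i(b)\geq n\}$ is precisely \cite[Theorem 7]{Kas:crystal} and is of the same depth as the theorem you are proving, so either cite it (making your argument a reduction) or accept that at this point the proposal is circular as an independent argument. A second, smaller gap: you assert that the images $\Glow(b)u_\lambda$ for $b\notin\mscr{B}^{\times}_\lambda$ reduce modulo $q\mscr{L}^{\low}(\lambda)$ to exactly $\mscr{B}^{\low}(\lambda)$. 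This needs the identification $\{b\in\mscr{B}(\infty):\pibar_\lambda(b)\neq 0\}=\mscr{B}(\infty)\setminus\mscr{B}^{\times}_\lambda$; the inclusion $\subset$ follows from your kernel description (if $b\in\mscr{B}^{\times}_\lambda$ then $\Glow(b)u_\lambda=0$), and the equality then follows by comparing cardinalities in each weight space using the bijection of \thmref{theo:grloop}(2), but this step should be said explicitly rather than assumed.
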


Let  $\Glow_{\lambda}$ be the inverse of 
$\Llow(\lambda)\cap \overline{\Llow(\lambda)}\cap V(\lambda)_{\mca{A}}\xrightarrow{\sim} \Llow(\lambda)/q\Llow(\lambda)$.
We call $\Glow_{\lambda}(\mscr{B}^{\low}(\lambda))$ the \emph{canonical basis} of $V(\lambda)$.
\subsubsection{}
We have a compatibility of the (lower) crystal basis of $\Uq^-(\mfr{g})$ and the integrable modules $V(\lambda)$.
Let $\pi_\lambda\colon \Uq^-(\mfr{g})\to V(\lambda)$ be the $\Uq^-(\mfr{g})$-module homomorphism
defined by $x\mapsto xu_{\lambda}$.
\begin{theo}[{\cite[Theorem 5]{Kas:crystal}}]\label{theo:grloop}
We have the following properties:
\begin{enumerate}
	\item $\pi_\lambda \mscr{L}(\infty)=\mscr{L}(\lambda)$,
	hence $\pi_\lambda$ induces a surjection homomorphism $\pibar_\lambda\colon \mscr{L}(\infty)/q\mscr{L}(\infty) \to \mscr{L}^{\low}(\lambda)/q\mscr{L}^{\low}(\lambda)$.
	\item $\pi_\lambda$ induces a bijection
	$\{b\in \mscr{B}(\infty) ; \pibar_\lambda(b)\neq 0\}\simeq \mscr{B}^{\low}(\lambda).$
	\item $\fit{i}^{\low}\circ \pibar_\lambda(b)=\pibar_\lambda\circ \fit{i}(b)$ if $\pibar_\lambda(b)\neq 0$.
	\item $\eit{i}^{\low}\circ \pibar_\lambda(b)=\pibar_\lambda\circ \eit{i}(b)$ if $\eit{i}\circ \pibar_\lambda(b)\neq 0$.
\end{enumerate}
\end{theo}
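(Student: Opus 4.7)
This is Kashiwara's classical compatibility theorem, proved by his celebrated grand loop argument; we outline the strategy. The natural approach is a simultaneous induction on $-\tr(\xi)$ for $x \in \Uq^-(\mfr{g})_\xi$ establishing all four assertions at once, with the compatibility of the Kashiwara operators in (3) and (4) as the technical heart.

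First, one reduces (1) and (2) to (3). Since $\Llow(\lambda)$ is by definition the $\mca{A}_0$-span of the vectors $\fit{i_1}^{\low}\cdots \fit{i_l}^{\low} u_\lambda$ and $u_\lambda = \pi_\lambda(1)$, once the commutativity $\pibar_\lambda \circ \fit{i} = \fit{i}^{\low} \circ \pibar_\lambda$ at the level of $\mscr{L}/q\mscr{L}$ is established, the surjection in (1) follows immediately and $\pibar_\lambda$ carries $\Binfty$ into $\mscr{B}^{\low}(\lambda) \cup \{0\}$. Assertion (2) then reduces to showing that distinct nonzero images are distinct, which follows by recovering each $b\in \Binfty$ with $\pibar_\lambda(b)\neq 0$ as $\fit{i_1}\cdots \fit{i_l}u_\infty$ from a string decomposition of $\pibar_\lambda(b)$ in $\mscr{B}^{\low}(\lambda)$, using (4).

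The substantive content is (3). Given $x \in \mscr{L}(\infty)_\xi$, the $q$-boson relation yields a decomposition $x = \sum_{n \geq 0} f_i^{(n)} x_n$ with ${_ir}(x_n) = 0$ and $x_n \in \Uq^-(\mfr{g})_{\xi + n\alpha_i}$, so by definition $\fit{i}(x) = \sum_n f_i^{(n+1)} x_n$ and hence
\begin{equation*}
\pi_\lambda(\fit{i}(x)) = \sum_{n\geq 0} f_i^{(n+1)}(x_n u_\lambda).
\end{equation*}
To compare this with $\fit{i}^{\low}(\pi_\lambda(x))$, one must re-decompose each $x_n u_\lambda$ along the primary decomposition of $V(\lambda)$ by $\Ker e_i$: write $x_n u_\lambda = \sum_{m \geq 0} f_i^{(m)} y_{n,m}$ with $e_i y_{n,m} = 0$. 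Using \lemref{lem:qBoson}, the vanishing ${_ir}(x_n) = 0$ collapses the commutator to $[e_i, x_n] = r_i(x_n) t_i / (q_i - q_i^{-1})$, and the induction hypothesis applied to $r_i(x_n)$, which lies at the strictly higher weight $\xi + (n+1)\alpha_i$, forces each correction $y_{n,m}$ with $m \geq 1$ to lie in $q\Llow(\lambda)$. Consequently, modulo $q\Llow(\lambda)$ one has $\pi_\lambda(x) \equiv \sum_n f_i^{(n)} y_{n,0}$ with $y_{n,0}$ representing classes in $\Ker e_i$, and then $\fit{i}^{\low}(\pi_\lambda(x)) \equiv \sum_n f_i^{(n+1)} y_{n,0} \equiv \pi_\lambda(\fit{i}(x))$, proving (3). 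Assertion (4) follows by a symmetric argument on the $\Ker e_i$-decomposition, or by inverting (3) using that $\eit{i}\fit{i}$ acts as the identity on $\Binfty$.

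The main obstacle is the claimed divisibility by $q$ of the correction terms $y_{n,m}$ for $m \geq 1$; this estimate is what forces a simultaneous grand loop over all four statements at all weights, since the bound on $y_{n,m}$ uses the already-established compatibility at the weight of $r_i(x_n)$, which is strictly higher in $Q_-$. Once the loop closes at each step of the induction, all four assertions follow together.
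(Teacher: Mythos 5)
First, note that the paper does not prove this statement at all: it is imported verbatim as Kashiwara's Theorem 5 of \cite{Kas:crystal}, so there is no in-paper argument to compare with; what you have written is an outline of Kashiwara's grand loop. Your global structure is the right one (simultaneous induction on the weight, reduction of (1) and (2) to the commutation statements (3) and (4), with the $q$-divisibility of correction terms as the crux), but the crucial step is asserted rather than proved, and it does not follow from the induction as you have set it up.

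Concretely, the gap is the claim that ``the induction hypothesis applied to $r_i(x_n)$ \dots forces each correction $y_{n,m}$ with $m\geq 1$ to lie in $q\Llow(\lambda)$.'' The induction hypothesis you carry consists only of assertions (1)--(4) at higher weights, i.e.\ statements about $\pibar_\lambda$, $\fit{i}$, $\eit{i}$ on $\mscr{L}(\infty)/q\mscr{L}(\infty)$ and $\Llow(\lambda)/q\Llow(\lambda)$; it gives no bound whatsoever on the $q$-adic size of $r_i(x_n)u_\lambda$. What the computation actually produces is $e_i(x_n u_\lambda)=\frac{q_i^{\bracket{h_i,\lambda}}}{q_i-q_i^{-1}}\,r_i(x_n)u_\lambda$, and one must weigh the small factor $q_i^{\bracket{h_i,\lambda}+1}$ against two sources of negative powers of $q$: the operator $r_i$ (equivalently ${_ir}$ on the other side) does \emph{not} preserve $\mscr{L}(\infty)$ (already in rank one, $r_i(f_i^{(n)})=q_i^{1-n}f_i^{(n-1)}$), and the coefficients $[\bracket{h_i,\mu}-m+1]_i$ appearing when you read off $y_{n,m}$ from $e_i\bigl(\sum_m f_i^{(m)}y_{n,m}\bigr)$ are themselves of order $q^{-\bracket{h_i,\mu}+m-1}$. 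Balancing these is exactly the delicate bookkeeping that forces Kashiwara's grand loop to run over a much larger family of interlocking statements (estimates for $e_i$ acting on $\mscr{L}(\lambda)$, for ${_ir}$ acting on $\mscr{L}(\infty)$ relative to $\vep_i$, compatibilities for all dominant $\lambda$ simultaneously, and inner-product bounds), not just the four assertions quoted here. As written, your loop does not close: the estimate you need is an additional statement that must be added to the induction and proved alongside (1)--(4), which is precisely what \cite{Kas:crystal} does. Since the present paper simply cites the result, the appropriate fix is either to do the same or to reproduce the full system of auxiliary statements of the grand loop rather than the four-statement induction sketched above.
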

We denote the inverse of the bijection $\pi_\lambda$ by $\jbar_\lambda$.
\subsubsection{}
We also have a compatibility of the canonical basis of $\Uq^-(\mfr{g})$ and the integrable modules $V(\lambda)$ via $\pi_\lambda$.
\begin{theo}[{\cite[7.3 Lemma 7.3.2]{Kas:crystal}}]
For $\lambda\in P_+$ and $b\in \mscr{B}(\infty)$ with $\pibar_\lambda(b)\neq 0$,
we have 
\[\Glow(b)u_\lambda=\Glow_{\lambda}(\pibar_\lambda(b)).\]
\end{theo}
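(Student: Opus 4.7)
The plan is to verify that $\Glow(b)u_\lambda$ satisfies the defining properties of $\Glow_{\lambda}(\pibar_\lambda(b))$ coming from the balanced triple $(\mscr{L}^{\low}(\lambda), \overline{\mscr{L}^{\low}(\lambda)}, V(\lambda)_{\mca{A}})$, and then conclude by the uniqueness of the preimage under the isomorphism $\mscr{L}^{\low}(\lambda)\cap \overline{\mscr{L}^{\low}(\lambda)}\cap V(\lambda)_{\mca{A}} \xrightarrow{\sim} \mscr{L}^{\low}(\lambda)/q\mscr{L}^{\low}(\lambda)$.

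Concretely, I would check the three required containments in turn. First, $\Glow(b)\in \mscr{L}(\infty)$ combined with $\pi_\lambda\mscr{L}(\infty)=\mscr{L}^{\low}(\lambda)$ from \thmref{theo:grloop} gives $\Glow(b)u_\lambda\in \mscr{L}^{\low}(\lambda)$. Second, $\Glow(b)\in \Uq^-(\mfr{g})_{\mca{A}}$ and the definition $V(\lambda)_{\mca{A}}:=\Uq^-(\mfr{g})_{\mca{A}}u_\lambda$ immediately yield $\Glow(b)u_\lambda\in V(\lambda)_{\mca{A}}$. Third, because the bar involution on $V(\lambda)$ is defined by $\overline{Pu_\lambda}=\overline{P}u_\lambda$, we have $\overline{\Glow(b)u_\lambda}=\overline{\Glow(b)}u_\lambda$; since $\overline{\Glow(b)}=\Glow(b)$ lies in $\mscr{L}(\infty)$, applying $\pi_\lambda$ again shows $\overline{\Glow(b)u_\lambda}\in \mscr{L}^{\low}(\lambda)$, i.e.\ $\Glow(b)u_\lambda\in \overline{\mscr{L}^{\low}(\lambda)}$.

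Next I would compute the class modulo $q\mscr{L}^{\low}(\lambda)$. By the definition of the canonical basis, $\Glow(b)\equiv b \pmod{q\mscr{L}(\infty)}$, so applying $\pi_\lambda$ and using the compatibility $\pi_\lambda\mscr{L}(\infty)=\mscr{L}^{\low}(\lambda)$ of \thmref{theo:grloop}(1)--(2) yields $\Glow(b)u_\lambda\equiv \pibar_\lambda(b)\pmod{q\mscr{L}^{\low}(\lambda)}$. Combined with the three containments above, $\Glow(b)u_\lambda$ is an element of $\mscr{L}^{\low}(\lambda)\cap \overline{\mscr{L}^{\low}(\lambda)}\cap V(\lambda)_{\mca{A}}$ whose residue modulo $q\mscr{L}^{\low}(\lambda)$ is $\pibar_\lambda(b)$, which by the balanced triple characterization of $\Glow_{\lambda}$ gives $\Glow(b)u_\lambda=\Glow_{\lambda}(\pibar_\lambda(b))$, as desired.

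The only potentially subtle step is the third containment, which rests on the bar-invariance $\overline{\Glow(b)}=\Glow(b)$ of the canonical basis and on the fact that $\pi_\lambda$ intertwines the two bar involutions; both are built into the constructions recalled above, so no real obstruction arises beyond carefully tracking these compatibilities. The argument is therefore essentially a diagram-chase through the balanced triples for $\Uq^-(\mfr{g})$ and $V(\lambda)$ using the surjection $\pi_\lambda$.
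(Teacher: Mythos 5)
Your argument is correct, and in fact this paper gives no proof of its own to compare against: the statement is simply quoted from Kashiwara (\cite[Lemma 7.3.2]{Kas:crystal}), so the only question is whether your deduction is valid from the facts the paper recalls. It is. The three containments ($\Glow(b)u_\lambda\in \mscr{L}^{\low}(\lambda)$ via $\pi_\lambda\mscr{L}(\infty)=\mscr{L}^{\low}(\lambda)$, membership in $V(\lambda)_{\mca{A}}=\Uq^-(\mfr{g})_{\mca{A}}u_\lambda$, and bar-invariance hence membership in $\overline{\mscr{L}^{\low}(\lambda)}$), together with the residue computation $\Glow(b)u_\lambda\equiv\pibar_\lambda(b)\bmod q\mscr{L}^{\low}(\lambda)$ and the injectivity of $\mscr{L}^{\low}(\lambda)\cap\overline{\mscr{L}^{\low}(\lambda)}\cap V(\lambda)_{\mca{A}}\to\mscr{L}^{\low}(\lambda)/q\mscr{L}^{\low}(\lambda)$ from the balanced-triple theorem, do pin down $\Glow(b)u_\lambda=\Glow_\lambda(\pibar_\lambda(b))$. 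The one input you should flag explicitly is $\overline{\Glow(b)}=\Glow(b)$: this is not a formal consequence of membership in $E=\mscr{L}(\infty)\cap\overline{\mscr{L}(\infty)}\cap\Uq^-(\mfr{g})_{\mca{A}}$ alone, but it is a standard part of the global basis theorem (and is implicitly invoked in this paper, e.g.\ when asserting $\sigma(x)=x$ for $x\in\mbf{B}^{\up}$). One further remark on provenance: in Kashiwara's original paper the statement is established inside the grand-loop induction that proves the balanced-triple theorems for $\Uq^-(\mfr{g})$ and all the $V(\lambda)$ simultaneously, so your argument is an a posteriori derivation rather than a reconstruction of his proof; but taking the recalled theorems as given, it is a complete and correct verification.
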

\subsubsection{}
For the canonical basis, we have the following expansion of left and right multiplication with respect to $f_i^{(m)}$.
\begin{theo}[{\cite[(3.1.2)]{Kas:Demazure}}]\label{thm:multlower}
	For $b\in \mscr{B}(\infty)$, we have
	\begin{subequations}
	\begin{align}
	f_i^{(m)}\Glow(b)&=\begin{bmatrix}\vep_i(b)+m \\ m\end{bmatrix}\Glow(\fit{i}^mb)+\sum_{\vep_i(b')>\vep_i(b)+m}f_{b b'; i}^{(m)}(q)\Glow(b'),\\
	\Glow(b)f_i^{(m)}&=\begin{bmatrix}\vep^*_i(b)+m \\ m\end{bmatrix}\Glow(\fit{i}^{*m}b)+\sum_{\vep^*_i(b')>\vep^*_i(b)+m}f^{*(m)}_{b b'; i}(q)\Glow(b'),
	\end{align}
	\end{subequations}
	where $f_{b b'; i}^{(m)}(q)=\overline{f_{b b'; i}^{(m)}(q)}, f^{*(m)}_{b b'; i}(q)=\overline{f^{*(m)}_{b b'; i}(q)}\in \mca{A}$.
\end{theo}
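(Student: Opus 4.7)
The plan is to establish the left-multiplication formula first and then deduce the right-multiplication formula via the $*$-anti-involution. For the second formula, apply $*$ to the first formula with $*b$ in place of $b$, using $*(\Glow(b)) = \Glow(*b)$ from \propref{prop:*lower}, $*(f_i^{(m)}) = f_i^{(m)}$, $\vep_i^*(b) = \vep_i(*b)$, and $\fit{i}^{*}(b) = *(\fit{i}(*b))$; this transforms the expansion of $f_i^{(m)} \Glow(*b)$ into that of $\Glow(b) f_i^{(m)}$, yielding the second formula with $f_{b,b';i}^{*(m)}(q) := f_{*b, *b'; i}^{(m)}(q)$, still bar-invariant because $*$ commutes with the bar involution.

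The main tool for the first formula is Kashiwara's unique $q$-boson decomposition $\Glow(b) = \sum_{k \geq 0} f_i^{(k)} X_k$ with ${_ir}(X_k) = 0$, coming from the $q$-Boson relations underlying \lemref{lem:qBoson}. Since $\Glow(b) \equiv b \pmod{q\mscr{L}(\infty)}$ and $\vep_i(b) = \max\{n : \eit{i}^n b \neq 0\}$, the crystal-level decomposition of $b$ in $\mscr{L}(\infty)/q\mscr{L}(\infty)$ has its lowest non-vanishing $f_i^{(k)}$-component at $k = \vep_i(b)$ with value $\eit{i}^{\vep_i(b)} b$. This lifts to $X_k \in q\mscr{L}(\infty)$ for $k < \vep_i(b)$ and $X_{\vep_i(b)} \equiv \Glow(\eit{i}^{\vep_i(b)} b) \pmod{q\mscr{L}(\infty)}$.

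Multiplying by $f_i^{(m)}$ and using the divided-power identity $f_i^{(m)} f_i^{(k)} = \begin{bmatrix}k+m \\ m\end{bmatrix} f_i^{(k+m)}$, one obtains
\[f_i^{(m)} \Glow(b) = \sum_{k \geq 0} \begin{bmatrix}k+m \\ m\end{bmatrix} f_i^{(k+m)} X_k.\]
Each summand equals $\begin{bmatrix}k+m \\ m\end{bmatrix} \fit{i}^{k+m} X_k$ (since ${_ir}(X_k) = 0$), and $\fit{i}^{k+m} X_k$ expands in the canonical basis into $\Glow(b')$ with $\vep_i(b') \geq k+m$, because $\fit{i}^{k+m}$ carries crystal basis vectors with $\vep_i = 0$ to those with $\vep_i = k+m$. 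The $k = \vep_i(b)$ summand thus contributes $\begin{bmatrix}\vep_i(b)+m \\ m\end{bmatrix} \Glow(\fit{i}^m b)$ modulo terms with $\vep_i(b') > \vep_i(b)+m$, while all $k > \vep_i(b)$ summands contribute only terms with $\vep_i(b') > \vep_i(b)+m$. The coefficients lie in $\mca{A}$ because $f_i^{(m)} \in \Uq^-(\mfr{g})_{\mca{A}}$ (see \S\ref{subsubsec:A-form}) and $\{\Glow(b')\}_{b' \in \Binfty}$ is an $\mca{A}$-basis, and they are bar-invariant since $f_i^{(m)} \Glow(b)$ is bar-invariant.

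The chief technical obstacle is the leading-term identification in the second paragraph: namely verifying precisely that $X_{\vep_i(b)} \equiv \Glow(\eit{i}^{\vep_i(b)} b) \pmod{q\mscr{L}(\infty)}$ and, more delicately, that the $q$-binomial $\begin{bmatrix}\vep_i(b)+m \\ m\end{bmatrix}$, which has a pole of order $\vep_i(b)$ at $q=0$, combines with the subleading $X_k$'s to yield a well-defined element of $\mscr{L}(\infty)$. The abstract pole-cancellation is forced by integrality of $f_i^{(m)} \Glow(b) \in \Uq^-(\mfr{g})_{\mca{A}}$, but pinpointing the leading coefficient as exactly $\begin{bmatrix}\vep_i(b)+m \\ m\end{bmatrix}$ requires careful coefficient matching modulo $q\mscr{L}(\infty)$.
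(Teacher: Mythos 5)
Your overall strategy (write $\Glow(b)=\sum_{k}f_i^{(k)}X_k$ with ${_ir}(X_k)=0$, multiply by $f_i^{(m)}$, and deduce the right-hand formula from the left-hand one via the $*$-involution) is sensible, and the $*$-reduction, as well as the observation that the coefficients lie in $\mca{A}$ and are bar-invariant, are fine; note that the paper itself gives no proof of this theorem but quotes it from Kashiwara. However, there are genuine gaps in the core of your argument. First, your treatment of the components with $k<\vep_i(b)$ is both mis-justified and too weak: the definition $\vep_i(b)=\max\{n:\eit{i}^nb\neq 0\}$ controls the \emph{highest} non-vanishing component modulo $q\mscr{L}(\infty)$, not the lowest, and, more seriously, knowing only that $X_k\in q\mscr{L}(\infty)$ for $k<\vep_i(b)$ gives no control at all after multiplication by $\begin{bmatrix}k+m\\ m\end{bmatrix}\notin\mca{A}_0$: those summands could a priori contribute canonical basis elements $b'$ with $\vep_i(b')$ as small as $k+m<\vep_i(b)+m$, and your contribution analysis simply never addresses them. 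What is actually needed (and true) is that $X_k=0$ for $k<\vep_i(b)$, i.e.\ $\Glow(b)\in f_i^{\vep_i(b)}\Uq^-(\mfr{g})$, which is exactly \thmref{theo:lower_filt}; the same theorem, rather than the heuristic that ``$\fit{i}^{k+m}$ carries crystal basis vectors with $\vep_i=0$ to those with $\vep_i=k+m$'' (invalid, since $X_k$ need not be a combination of canonical basis elements with $\vep_i=0$ — $\Ker({_ir})$ is not spanned by canonical basis vectors), is what legitimately shows that $f_i^{(k+m)}X_k$ expands only over $b'$ with $\vep_i(b')\geq k+m$.

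Second, and decisively, the identification of the coefficient of $\Glow(\fit{i}^m b)$ as exactly $\begin{bmatrix}\vep_i(b)+m\\ m\end{bmatrix}$, together with the exclusion of all other $b'$ with $\vep_i(b')=\vep_i(b)+m$, is not established; you flag it yourself and propose ``careful coefficient matching modulo $q\mscr{L}(\infty)$'', but that cannot work, for the same pole reason. From $f_i^{(\vep_i(b)+m)}X_{\vep_i(b)}=\Glow(\fit{i}^m b)+qy$ with $y\in\mscr{L}(\infty)$, multiplying by the $q$-binomial (whose lowest term is $q^{-m\vep_i(b)}$) pushes the error $qy$ into non-positive $q$-degrees, so bar-invariance of the coefficients combined with information at $q=0$ does not determine them: a nonzero bar-invariant Laurent polynomial supported in degrees between $-m\vep_i(b)+1$ and $m\vep_i(b)-1$ could still be hiding in the expansion. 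Closing this requires an additional ingredient absent from your proposal — for instance control of the expansion with respect to the second lattice $\overline{\mscr{L}(\infty)}$ (i.e.\ at $q=\infty$), an induction, or the finer integral statement underlying Kashiwara's own argument. As written, your proof reduces the theorem to precisely its hardest point and leaves that point open.
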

As a corollary of the above theorem, we have the following compatibilities  of 
the right and left ideals $f_{i}^{n}\Uq^{-}(\mfr{g})$ and $\Uq^{-}(\mfr{g})f_{i}^{n}$ with the canonical basis.
\begin{theo}[{\cite[Theorem 7]{Kas:crystal}}]\label{theo:lower_filt}
For $i\in I$ and $n\geq 1$, we have 
\begin{align*}
	f_i^n\Uq^-(\mfr{g})\cap \Uq^-(\mfr{g})_{\mca{A}}&=\bigoplus_{b\in \mscr{B}(\infty), \vep_i(b)\geq n}\mca{A}\Glow(b), \\
	\Uq^-(\mfr{g})f_i^n\cap \Uq^-(\mfr{g})_{\mca{A}}&=\bigoplus_{b\in \mscr{B}(\infty), \vep^*_i(b)\geq n}\mca{A}\Glow(b).
\end{align*}

\end{theo}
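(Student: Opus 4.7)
The two identities are interchanged by the $*$-involution: since $*$ is a $\Qq$-algebra anti-involution with $*(f_i)=f_i$ and $*\Glow(b)=\Glow(*b)$ by \propref{prop:*lower}, it carries $f_i^n\Uq^-(\mfr{g})\cap\Uq^-(\mfr{g})_{\mca{A}}$ onto $\Uq^-(\mfr{g})f_i^n\cap\Uq^-(\mfr{g})_{\mca{A}}$, while $\vep_i^*(b)=\vep_i(*b)$ matches the index sets on the right-hand sides. Hence it suffices to prove the first identity.

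For the inclusion $\supseteq$ I show each $\Glow(b)$ with $\vep_i(b)\geq n$ lies in $f_i^n\Uq^-(\mfr{g})$, since membership in $\Uq^-(\mfr{g})_{\mca{A}}$ is automatic. Within a fixed weight space, $\vep_i$ is bounded above, so I proceed by downward induction on $\vep_i(b)$. Setting $b':=\eit{i}^n b$ and applying \thmref{thm:multlower} gives
\[
f_i^{(n)}\Glow(b')=\begin{bmatrix}\vep_i(b)\\ n\end{bmatrix}\Glow(b)+\sum_{\vep_i(b'')>\vep_i(b)}f^{(n)}_{b'b''; i}(q)\,\Glow(b'').
\]
The left side, and (by the induction hypothesis) each $\Glow(b'')$ on the right, belong to $f_i^n\Uq^-(\mfr{g})$; the $q$-binomial coefficient is a nonzero scalar in $\Qq$, so dividing by it yields $\Glow(b)\in f_i^n\Uq^-(\mfr{g})$.

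For the inclusion $\subseteq$, which is the substantive direction, I write $x=\sum_b c_b\Glow(b)\in f_i^n\Uq^-(\mfr{g})\cap\Uq^-(\mfr{g})_{\mca{A}}$ with $c_b\in\mca{A}$ and must show that $c_b=0$ whenever $\vep_i(b)<n$. The plan is to combine Kashiwara's adjointness $(f_i^n y,z)_K=(y,({_ir})^n z)_K$ with the orthonormality of $\Binfty$ in $\mscr{L}(\infty)/q\mscr{L}(\infty)$ from \propref{prop:crylat}. More concretely, I would establish the $\vep_i$-triangularity of the canonical basis with respect to the decomposition $\Uq^-(\mfr{g})=\bigoplus_{k\geq 0}f_i^{(k)}\Ker{_ir}$: namely that $\Glow(b)=f_i^{(\vep_i(b))}v_b+(\text{lower in }k)$ with $v_b$ a nonzero vector in $\Ker{_ir}\cap \mscr{L}(\infty)$, refine the decomposition to an $\mca{A}$-integral one, and then recover the coefficients $c_b$ recursively, starting from the smallest occurring value of $\vep_i$; the condition $x\in f_i^n\Uq^-(\mfr{g})$ forces the components with $\vep_i(b)<n$ to vanish. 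The main obstacle is upgrading the $\Qq$-level $\vep_i$-triangularity (which is implicit in \thmref{thm:multlower}) to an $\mca{A}$-integral statement that is simultaneously compatible with $\mscr{L}(\infty)$, $\overline{\mscr{L}(\infty)}$, and $\Uq^-(\mfr{g})_{\mca{A}}$; this is where the balanced-triple characterization of $\Glow$ and a careful analysis of ${_ir}^n$ on each $i$-string are essential.
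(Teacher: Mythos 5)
Your reduction of the second identity to the first via the $*$-involution and your proof of the inclusion $\supseteq$ are correct, and they are in the same spirit as the paper, which presents the theorem as a direct corollary of \thmref{thm:multlower} (citing Kashiwara). The problem is the inclusion $\subseteq$, which you yourself call the substantive direction but never actually prove: you describe a plan (adjointness of ${_ir}$, $\vep_i$-triangularity with respect to $\Uq^-=\bigoplus_k f_i^{(k)}\Ker({_ir})$, an ``$\mca{A}$-integral refinement'', a recursive recovery of the coefficients) and then explicitly leave open ``the main obstacle'' of upgrading the $\Qq$-level triangularity to an integral statement. As written, the proposal does not establish the theorem.

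Moreover, the obstacle you worry about is illusory, and the missing direction is much shorter than your plan. Let $x\in f_i^n\Uq^-(\mfr{g})\cap\Uq^-(\mfr{g})_{\mca{A}}$ and write $x=f_i^ny$ with $y\in\Uq^-(\mfr{g})$; expand $y=\sum_{b'}d_{b'}\Glow(b')$ over $\Qq$. Since $f_i^n=[n]_i!\,f_i^{(n)}$, \thmref{thm:multlower} shows that $f_i^{(n)}\Glow(b')$ is a $\Qq$-linear combination of $\Glow(\fit{i}^{\,n}b')$, whose $\vep_i$ equals $\vep_i(b')+n\geq n$, and of elements $\Glow(b'')$ with $\vep_i(b'')>\vep_i(b')+n\geq n$. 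Hence $f_i^n\Uq^-(\mfr{g})\subseteq\bigoplus_{\vep_i(b)\geq n}\Qq\,\Glow(b)$ already as $\Qq$-vector spaces. Now use only that $\mbf{B}$ is simultaneously a $\Qq$-basis of $\Uq^-(\mfr{g})$ and an $\mca{A}$-basis of $\Uq^-(\mfr{g})_{\mca{A}}$: the canonical-basis coefficients of $x$ are unique, they lie in $\mca{A}$ because $x\in\Uq^-(\mfr{g})_{\mca{A}}$, and by the previous sentence they vanish unless $\vep_i(b)\geq n$. This gives $x\in\bigoplus_{\vep_i(b)\geq n}\mca{A}\,\Glow(b)$ with no integral refinement of the decomposition $\bigoplus_k f_i^{(k)}\Ker({_ir})$, no analysis of ${_ir}^{\,n}$ on $i$-strings, and no appeal to the balanced triple beyond the basis property of $\mbf{B}$. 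In short: keep your $\supseteq$ argument and the $*$-reduction, and replace the sketched recursion by this two-line consequence of \thmref{thm:multlower}.
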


\begin{NB2}
\subsection{Lusztig's canonical basis}
For symmetric Kac-Moody case, we have a geometric construction of Lusztig's $\mbb{Z}[q^\pm]$-form
and the canonical basis $\Glow(\mscr{B}(\infty))$ of $\Uq^-(\mfr{g})$
\end{NB2}
\subsection{Abstract crystal}
The notion of a (abstract) crystal was introduced in ~\cite{Kas:Demazure}
by abstracting the crystal basis of $\Uq^-(\mfr{g})$ 
and of irreducible highest weight representations which are constructed in \cite{Kas:crystal}.
We recall it briefly.
For more detail, see \cite{Kas:bases}.
\subsubsection{}
\begin{defn}
A \emph{crystal} $\mscr{B}$ associated with a root datum is a set $\mscr{B}$ endowed 
with maps $\wt \colon \mscr{B}\to P,\vep_i,\varphi_i:\mscr{B}\to \mbb{Z}\sqcup \set{-\infty}$,
$\eit{i}, \fit{i}\colon \mscr{B}\to \mscr{B} \sqcup \set{0}~(i\in I)$
satisfying following conditions:
\begin{aenumerate}
\item $\varphi_i(b)=\vep_i(b)+\bracket{h_i,\wt(b)}$,
\item $\wt(\eit{i}b)=\wt(b)+\alpha_i$,
$\vep_i(\eit{i}b)=\vep_i(b)-1$, $\varphi_i(\eit{i}b)=\varphi_i(b)+1$,
if $\eit{i}b\in \mscr{B}$,
\item $\wt(\fit{i}b)=\wt(b)-\alpha_i$,
$\vep_i(\fit{i}b)=\vep_i(b)+1$, $\varphi_i(\fit{i}b)=\varphi_i(b)-1$,
if $\fit{i}b\in \mscr{B}$,
\item $b'=\fit{i}b \Leftrightarrow b=\eit{i} b'$ for $b,b'\in \mscr{B}$,
\item if $\varphi_i(b)=-\infty$ for $b\in \mscr{B}$,
then $\eit{i} b=\fit{i} b=0$.
\end{aenumerate}
Let $\wt_i(b)=\bracket{h_i,\wt(b)}$.
\end{defn}

\begin{defn}
For given two crystals $\mscr{B}_1,\mscr{B}_2$ and $h\in \mbb{Z}_{\geq 1}$ ,
 a map $\psi \colon \mscr{B}_1\sqcup\set{0}\to \mscr{B}_2\sqcup\set{0}$ 
 is called a \emph{morphism of amplitude $h$} of crystals from $\mscr{B}_1$ to $\mscr{B}_2$
if it satisfies the following properties for $b\in \mscr{B}_1$ and $i\in I$:
 \begin{aenumerate}
\item $\psi(0)=0$,
 \item 
 $\wt(\psi(b))=h\wt(b)$,
 $\vep_i(\psi(b))=h\vep_i(b)$,
 $\varphi_i(\psi(b))=h\varphi_i(b)$ if $\psi(b)\in \mscr{B}_2$,
 \item
 $\eit{i}^h\psi(b)=\psi(\eit{i}b)$ if  $\psi(b)\in \mscr{B}_2$, $\eit{i}b\in \mscr{B}_1$,
 \item
 $\fit{i}^h\psi(b)=\psi(\fit{i}b)$ if  $\psi(b)\in \mscr{B}_2$, $\fit{i}b\in \mscr{B}_1$.
 \end{aenumerate}
When $h=1$, it is simply called a \emph{morphism of crystal}. 
A morphism $\psi\colon\mscr{B}_1\to \mscr{B}_2$ is \emph{strict} if $\psi$ commutes with
$\eit{i},\fit{i}$ for all $i\in I$ without any restriction.
A strict morphism $\psi\colon\mscr{B}_1\to \mscr{B}_2$ is called an \emph{strict embedding} if $\psi$ is an injective map from $\mscr{B}_1\sqcup\set{0}$ to $\mscr{B}_2\sqcup\set{0}$.
\end{defn}

\begin{defn}
The \emph{tensor product} $\mscr{B}_1\otimes \mscr{B}_2$ of crystals $\mscr{B}_1$ and
$\mscr{B}_2$ is defined to be the set $\mscr{B}_1\times\mscr{B}_2$ with maps given by
\begin{subequations}
\begin{align}
   & \wt(b_1\otimes b_2) = \wt(b_1)+\wt(b_2),
\\
   & \vep_{i}(b_1\otimes b_2) 
   = \max(\vep_{i}(b_1), \vep_{i}(b_2)-\wt_{i}(b_1)),
\\
   & \vphi_{i}(b_1\otimes b_2) 
   = \max(\vphi_{i}(b_2), \vphi_{i}(b_1)+ \wt_{i}(b_2)), 
\\
   & \eit{i}(b_1\otimes b_2)
   = 
   \begin{cases}
      \eit{i}b_1 \otimes b_2 &
        \text{if $\vphi_{i}(b_1)\ge \vep_{i}(b_2)$},
   \\
      b_1\otimes \eit{i}b_2 & \text{otherwise},
   \end{cases}\label{eq:eit}
\\
   & \fit{i}(b_1\otimes b_2)
   = 
   \begin{cases}
      \fit{i}b_1 \otimes b_2 &
        \text{if $\vphi_{i}(b_1) > \vep_{i}(b_2)$},
   \\
      b_1\otimes \fit{i}b_2 & \text{otherwise}.
   \end{cases}\label{eq:fit}
\end{align}
\end{subequations}
Here $(b_1, b_2)$ is denoted by $b_1\otimes b_2$ and $0\otimes b_2$,
$b_1\otimes 0$ are identified with $0$.
\end{defn}
Iterating \eqref{eq:eit} and \eqref{eq:fit}, we obtain the followings:
\begin{subequations}
\begin{align}
\eit{i}^{n}(b_{1}\otimes b_{2})&=
\begin{cases}
\eit{i}^{n}b_{1}\otimes b_{2} & \text{~if~} \varphi_{i}(b_{1})\geq \vep_{i}(b_{2}), \\
\eit{i}^{n-\vep_{i}(b_{2})+\vphi_{i}(b_{2})}b_{1}\otimes \eit{i}^{\vep_{i}(b_{2})-\vphi_{i}(b_{1})} & \text{~if~} \vep_{i}(b_{2})\geq \vphi_{i}(b_{1})\geq \vep_{i}(b_{2})-n, \\
b_{1}\otimes \eit{i}^{n}b_{2} & \text{~if~} \vep_{i}(b_{2})-n\geq \vphi_{i}(b_{1}).
\end{cases}\label{eq:eitn}\\
\fit{i}^{n}(b_{1}\otimes b_{2})&=\begin{cases}
\fit{i}^{n}b_{1}\otimes b_{2} & \text{~if~} {\vphi_{i}(b_{1})\geq \vep_{i}(b_{2})+n},\\
\fit{i}^{\vphi_{i}(b_{1})-\vep_{i}(b_{2})}b_{1}\otimes \fit{i}^{n-\vphi_{i}(b_{1})+\vep_{i}(b_{2})}b_{2} & \text{~if~} \vep_{i}(b_{2})+n\geq \vphi_{i}(b_{1}) \geq \vep_{i}(b_{2}),\\
b_{1}\otimes \fit{i}^{n}b_{2} & \text{~if~} \vep_{i}(b_{2})\geq \vphi_{i}(b_{1}),
\end{cases}\label{eq:fitn}
\end{align}
\end{subequations}
\begin{NB}
\subsubsection{}
For crystals $\mscr{B}_{1}$, $\mscr{B}_{2}$ and $\mscr{B}_{3}$,
it is known that $(\mscr{B}_{1}\otimes \mscr{B}_{2})\otimes \mscr{B}_{3}$ is isomorphic to 
$\mscr{B}_{1}\otimes (\mscr{B}_{2}\otimes \mscr{B}_{3})$ by $(b_{1}\otimes b_{2})\otimes b_{3}\mapsto b_{1}\otimes (b_{2}\otimes b_{3})$,
see \cite[Proposition 1.3.1]{Kas:Demazure}.
We denote it by $\mscr{B}_{1}\otimes \mscr{B}_{2}\otimes \mscr{B}_{3}$.

Similarly, we can define $\mscr{B}_{1}\otimes \cdots \otimes \mscr{B}_{r}$ for crystal $\mscr{B}_{1}, \cdots, \mscr{B}_{r}$.
For the crystal $\mscr{B}_{1}\otimes \cdots \otimes \mscr{B}_{r}$, we can determine its structure inductively as follows.
For $b_{1}\otimes \cdots \otimes b_{r}\in \mscr{B}_{1}\otimes \cdots \otimes \mscr{B}_{r}$,
we set 
\begin{subequations}
\begin{align}
\vep_{i}^{k}(b_{1}\otimes \cdots \otimes b_{r})&:=\vep_{i}(b_{k})-\sum_{k':k'<k}\wt_{i}(b_{k'}),\\
\vphi_{i}^{k}(b_{1}\otimes \cdots \otimes b_{r})&:=\vphi_{i}(b_{k})+\sum_{k':k'>k}\wt_{i}(b_{k'}).
\end{align}
\end{subequations}
Then we can show the following by induction: 
\begin{subequations}
\begin{align}
\wt(b_{1}\otimes \cdots \otimes b_{r})&=\sum_{k} \wt(b_{k}), \\
\vep_{i}(b_{1}\otimes \cdots \otimes b_{r})&=\max_{k}\vep_{i}^{k}(b_{1}\otimes \cdots \otimes b_{r}), \label{eq:vepmult}\\
\vphi_{i}(b_{1}\otimes \cdots \otimes b_{r})&=\max_{k}\vphi_{i}^{k}(b_{1}\otimes \cdots \otimes b_{r}),\\
\eit{i}(b_{1}\otimes \cdots \otimes b_{r})&=b_{1}\otimes \cdots \otimes \eit{i}b_{k}\otimes \cdots \otimes b_{r}\\
&\text{~where~}k=\min\{k'; \vep_{i}^{k'}(b_{1}\otimes \cdots \otimes b_{r})=\vep_{i}(b_{1}\otimes \cdots \otimes b_{r})\},\\
\fit{i}(b_{1}\otimes \cdots \otimes b_{r})&=b_{1}\otimes \cdots \otimes \fit{i}b_{k}\otimes \cdots \otimes b_{r} \\
&\text{~where~}k=\max\{k'; \vphi_{i}^{k'}(b_{1}\otimes \cdots \otimes b_{r})=\vphi_{i}(b_{1}\otimes \cdots \otimes b_{r})\}.
\end{align}
\end{subequations}
\end{NB}
%
\subsubsection{}
The (lower) crystal basis $\mscr{B}(\infty)$ of $\Uq^-(\mfr{g})$ is an example of an abstract crystal.
The same is true for $\mscr{B}^{\low}(\lambda)$ of $V(\lambda)$ for $\lambda\in P_+$.
We may also write $\mscr{B}(\lambda)$ instead of $\mscr{B}^{\low}(\lambda)$, when it is considered as an abstract crystal.

\begin{exa}
For $i\in I$, let $\mscr{B}_i=\{b_i(n); n\in \mbb{Z}\}$.
We can endow it with a structure of the abstract crystal by 
$\wt(b_i(n))=n\alpha_i$, $\vep_i(b_i(n))=-n$, $\varphi_i(b_i(n))=n$, 
$\vep_j(b_i(n))=\varphi_{j}(b_i(n))=-\infty$, for $j\neq i$, and 
\begin{align*}
	\fit{j}b_i(n)=\begin{cases}b_i(n-1) & \text{~~if $j=i$}, \\ 0 &\text{~~if $j\neq i$},\end{cases} \\
	\eit{j}b_i(n)=\begin{cases}b_i(n+1) & \text{~~if $j=i$}, \\ 0 & \text{~~if $j\neq i$}.\end{cases}
\end{align*}
\end{exa}

\subsubsection{}
For the crystal $\mscr{B}(\infty)$, we have the following strict embedding.
\begin{theo}[{\cite[Theorem 2.2.1]{Kas:Demazure}}]\label{theo:Kasemb}
\textup{(1)}
For each $i\in I$, there exits a strict embedding $\Psi_i\colon \mscr{B}(\infty)\to \mscr{B}(\infty)\otimes \mscr{B}_i$
which satisfies $\Psi_i(u_{\infty})=u_{\infty}\otimes b_i$.

\textup{(2)}
If $\Psi_i(b)=b'\otimes \fit{i}^nb_i$, we have 
\begin{align*}
	\Psi_i(\eit{i}^*b)&=\begin{cases}b'\otimes \fit{i}^{n-1}b_i & \text{~if~} n\geq 1, \\ 0 & \text{~if~} n=0, \end{cases}\\
	\Psi_i(\fit{i}^*b)&=b'\otimes \fit{i}^{n+1}b_i.
\end{align*}

\textup{(3)}
$\Im \Psi_{i}=\{b'\otimes \fit{i}^nb_i; \vep^*_i(b')=0, n\geq 0\}.$
\end{theo}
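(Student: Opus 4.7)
The plan is to construct $\Psi_i$ via the $*$-twisted decomposition of $\Uq^-(\mfr{g})$ with respect to left multiplication by $f_i$, and to identify it at the crystal level through the $*$-Kashiwara operators $\fit{i}^*, \eit{i}^*$. Applying the $*$-involution to the usual $q$-boson decomposition yields a unique expression $x=\sum_{n\geq 0} f_i^{(n)} x_n$ with ${_ir}(x_n)=0$ for every $x\in \Uq^-(\mfr{g})$; combined with the $*$-invariance of $\mscr{L}(\infty)$ and $\mscr{B}(\infty)$ (\propref{prop:*lower}), this decomposition descends to the crystal, so each $b\in \mscr{B}(\infty)$ admits a unique presentation $b=\fit{i}^{*n}b_0$ with $n=\vep_i^*(b)$ and $\vep_i^*(b_0)=0$.

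Define $\Psi_i(b):=b_0\otimes \fit{i}^n b_i$. Injectivity on $\mscr{B}(\infty)\sqcup\{0\}$, the normalization $\Psi_i(u_\infty)=u_\infty\otimes b_i$, weight compatibility (using $\wt(\fit{i}^n b_i)=-n\alpha_i$), the explicit formulas in (2), and the image description in (3) all follow at once from the definitions and from the equalities $\fit{i}^*(\fit{i}^{*n}b_0)=\fit{i}^{*(n+1)}b_0$ and $\eit{i}^*(\fit{i}^{*n}b_0)=\fit{i}^{*(n-1)}b_0$ (the latter interpreted as $0$ when $n=0$).

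The substance lies in proving that $\Psi_i$ is a strict morphism of crystals, i.e.\ that it commutes with $\fit{j},\eit{j}$ for every $j\in I$. For $j\neq i$, the crystal $\mscr{B}_i$ has $\vep_j=\vphi_j=-\infty$, so the tensor product rules \eqref{eq:eit}, \eqref{eq:fit} force $\fit{j}(b_0\otimes \fit{i}^n b_i)=(\fit{j}b_0)\otimes \fit{i}^n b_i$ and similarly for $\eit{j}$. The required compatibility then reduces to showing that $\fit{j}$ commutes with $\fit{i}^*$ on $\mscr{B}(\infty)$ for $j\neq i$, which I would derive from the algebraic observation that left multiplication by $f_j$ commutes with right multiplication by $f_i$, combined with the preservation of $\mscr{L}(\infty)$ by both operations.

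The main obstacle is the case $j=i$, where $\fit{i}$ and $\fit{i}^*$ do not commute. Since $\vep_i(\fit{i}^n b_i)=n$, the tensor product rule \eqref{eq:fit} splits into two branches according to whether $\vphi_i(b_0)>n$ or $\vphi_i(b_0)\leq n$, and one must match each branch with the corresponding behaviour of $\fit{i}b$ under the decomposition $b=\fit{i}^{*n}b_0$. I expect this to reduce to a careful inductive analysis, inside $\Uq^-(\mfr{g})$, of how $\fit{i}$ acts on an element of the form $\fit{i}^{*n}b_0$, which ultimately rests on the commutation relations in the reduced $q$-analogue $\mscr{B}_q(\mfr{g})$ applied at the crystal limit. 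The verification is carried out by induction on $-\wt(b)\in Q_+$ and is intertwined with the other structural properties of $\mscr{B}(\infty)$ in Kashiwara's grand-loop bootstrap.
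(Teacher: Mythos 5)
You should first note that the paper does not prove this statement at all: it is quoted from Kashiwara (\cite[Theorem 2.2.1]{Kas:Demazure}) and used as a black box, so there is no in-paper argument to compare with. Your set-up is at least consistent with the paper's usage: the map you define, $\Psi_i(b)=\eit{i}^{*\max}b\otimes\fit{i}^{\vep_i^*(b)}b_i$ with $b=\fit{i}^{*n}b_0$, $n=\vep_i^*(b)$, $\vep_i^*(b_0)=0$, is exactly the description the paper records immediately after the theorem, and with this definition parts (2), (3), the normalization and injectivity are indeed formal.

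The genuine gap is that the entire content of the theorem is strictness, and your treatment of the only hard case, $j=i$, is a placeholder rather than a proof: saying it ``should reduce to a careful inductive analysis \dots in Kashiwara's grand-loop bootstrap'' is deferring to the very argument you are supposed to supply. Concretely, you must show that for $b=\fit{i}^{*n}b_0$ the actual action of $\fit{i}$ matches the tensor product rule \eqref{eq:fit}: $\fit{i}b=\fit{i}^{*n}(\fit{i}b_0)$ when $\vphi_i(b_0)>n=\vep_i(\fit{i}^nb_i)$, and $\fit{i}b=\fit{i}^{*(n+1)}b_0$ otherwise (with the analogous dichotomy for $\eit{i}$). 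Equivalently one needs precise control of $\vep_i^*(\fit{i}b)$ and $\vep_i(\fit{i}^*b)$, e.g.\ that $\vep_i^*(\fit{i}b)\in\{\vep_i^*(b),\vep_i^*(b)+1\}$ together with the exact criterion distinguishing the two cases; none of this follows softly from the $q$-boson relations, and it is precisely what Kashiwara's proof establishes through the auxiliary estimates of the grand loop. Your $j\neq i$ case is also understated: $\fit{j}$ and $\fit{i}^*$ are defined through two different $q$-boson decompositions, so ``left multiplication by $f_j$ commutes with right multiplication by $f_i$'' is not enough; you need to check that the decomposition $x=\sum_n f_j^{(n)}x_n$ with ${_jr}(x_n)=0$ is preserved by the operator realizing $\fit{i}^*$ (using ${_jr}(yf_i)={_jr}(y)f_i$ for $j\neq i$, via \eqref{eq:ir}) and that this survives passage to $\mscr{L}(\infty)/q\mscr{L}(\infty)$. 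As written, the proposal reproduces the formal consequences of the theorem but leaves its core unproved, so it cannot stand on its own; if the intention is simply to use the result, cite \cite{Kas:Demazure} as the paper does.
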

By the above theorem, we have $\Psi_i(b)=\eit{i}^{*\max}b\otimes \fit{i}^{\vep^*_i(b)}b_i$.
For a sequence $(i_1, i_2, \cdots, i_r)\in I^r$,
we have a strict embedding 
\[\Psi_{(i_1, i_2, \cdots, i_r)}:=(\Psi_{i_r}\otimes \cdots \otimes 1)\cdots (\Psi_{i_2}\otimes 1)\Psi_{i_1}\colon \mscr{B}(\infty)\hookrightarrow %
\mscr{B}(\infty)\otimes \mscr{B}_{i_r}\otimes \cdots \otimes \mscr{B}_{i_1}.\]

\subsubsection{}
For $m\geq 1$, we have the following crystal morphism of amplitude $m$ which is called \emph{inflation of order} $m$ in \cite[Definition 8.1.4]{Kas:bases}.
\begin{prop}[{\cite[Proposition 8.1.3]{Kas:bases}, \cite[Proposition 3.2]{NaiSag}}]
\textup{(1)}
For $m\in \mbb{Z}_{\geq 1}$,
there exists a unique crystal morphism $S_m\colon \mscr{B}(\infty)\to \mscr{B}(\infty)$ of amplitude $m$ satisfying
\begin{align*}
&\wt(S_mb)=m\wt(b), \quad \vep_i(S_mb)=m\vep_i(b), \quad \varphi_i(S_mb)=m\varphi_i(b),\\
&S_m(\eit{i}b)=\eit{i}^mS_m(b), \quad S_m(\fit{i}b)=\fit{i}^mS_m(b),\\
&S_m(u_\infty)=u_{\infty}.
\end{align*}

\textup{(2)}
Let $b\in \mscr{B}(\infty)$. Then we have $(*\circ S_m)(b)=(S_m\circ *)(b)$.
In particular, for any $b\in \mscr{B}(\infty)$, we have 
\begin{align*}
&\vep^*_i(S_mb)=m\vep^*_i(b), \quad \varphi^*_i(S_mb)=m\varphi^*_i(b),\\
&S_m(\eit{i}^*b)=\eit{i}^{*m}S_m(b), \quad S_m(\fit{i}^*b)=\fit{i}^{*m}S_m(b).\\
\end{align*}
\end{prop}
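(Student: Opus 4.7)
The plan is to establish uniqueness first (which is essentially free), construct $S_m$ by a recursive procedure based on the Kashiwara embedding $\Psi_i$, and deduce part (2) from a uniqueness argument.

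Uniqueness follows immediately from the fact that $\mscr{B}(\infty)$ is generated from $u_\infty$ by the operators $\fit{i}$. Every $b$ has a presentation $b=\fit{i_1}\cdots\fit{i_r}u_\infty$, so the required relations $S_m(u_\infty)=u_\infty$ and $S_m(\fit{i}b)=\fit{i}^m S_m(b)$ force $S_m(b)=\fit{i_1}^m\cdots\fit{i_r}^m u_\infty$, determining $S_m$ on all of $\mscr{B}(\infty)$.

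For existence I would induct on the height $\mathrm{ht}(b):=-\tr(\wt(b))\in\mbb{Z}_{\geq 0}$, starting with $S_m(u_\infty):=u_\infty$. For $b$ with $\mathrm{ht}(b)>0$, pick any $i\in I$ with $n:=\vep_i^*(b)>0$. By \thmref{theo:Kasemb} one can write $\Psi_i(b)=b_0\otimes\fit{i}^n b_i$ where $b_0:=\eit{i}^{*n}b$ satisfies $\vep_i^*(b_0)=0$ and $\mathrm{ht}(b_0)<\mathrm{ht}(b)$, so $S_m(b_0)$ is already defined; one sets $S_m(b):=\fit{i}^{*mn}S_m(b_0)$. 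In parallel, I would verify by induction on $\mathrm{ht}(b)$ the compatibilities $\wt(S_m b)=m\wt(b)$, $\vep_j(S_m b)=m\vep_j(b)$, $\vphi_j(S_m b)=m\vphi_j(b)$, $S_m(\eit{j}b)=\eit{j}^m S_m(b)$ and $S_m(\fit{j}b)=\fit{j}^m S_m(b)$ for every $j\in I$, by analyzing how $\eit{j}^m$ and $\fit{j}^m$ act on $\Psi_i(b)=b_0\otimes\fit{i}^n b_i$ through the tensor-product formulas \eqref{eq:eitn}--\eqref{eq:fitn} and transferring the induction hypothesis via $b_0$.

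The main obstacle is well-definedness: the recursive formula must not depend on the choice of $i$. Given two valid choices $i\neq i'$, this comparison reduces to a rank-two analysis of how $\Psi_i$ and $\Psi_{i'}$ interact on $\mscr{B}(\infty)$, ultimately invoking the rigidity of $\mscr{B}(\infty)$ in the style of Kashiwara--Saito. Once existence is settled, part (2) follows from uniqueness in (1): the map $b\mapsto *(S_m(*b))$ is again an amplitude-$m$ crystal endomorphism of $\mscr{B}(\infty)$ fixing $u_\infty$; it satisfies the required intertwining with $\fit{j}$, because $*$ swaps $\fit{j}\leftrightarrow\fit{j}^*$ and the construction simultaneously yields $S_m\circ\fit{j}^*=\fit{j}^{*m}\circ S_m$ as a byproduct of well-definedness. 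By uniqueness $*\circ S_m\circ *=S_m$, and the remaining identities for $\vep_i^*$, $\vphi_i^*$, $\eit{i}^*$, $\fit{i}^*$ in (2) are immediate by applying $*$ to the identities from (1).
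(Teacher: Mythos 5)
Your uniqueness argument is fine, and deducing the starred identities in (2) from (1) plus the $*$-compatibility would be fine \emph{if} existence were in hand. But the existence argument has a genuine hole exactly where you flag it: the recursion $S_m(b):=\fit{i}^{*m\vep_i^*(b)}S_m(\eit{i}^{*\max}b)$ depends a priori on the choice of $i$ with $\vep_i^*(b)>0$, and you dispose of this with ``a rank-two analysis\dots ultimately invoking the rigidity of $\mscr{B}(\infty)$ in the style of Kashiwara--Saito,'' which is not an argument. This consistency statement \emph{is} the proposition: even if you fix a canonical choice of $i$ for each $b$ to make the map well defined, the verification that it intertwines $\fit{j}$ with $\fit{j}^m$ for all $j$ forces you to compare recursions built from different colours, so the same problem reappears. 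Nothing in the paper's toolkit (the embedding $\Psi_i$ of \thmref{theo:Kasemb} and the tensor-product formulas \eqref{eq:eitn}--\eqref{eq:fitn}) is shown to deliver it, and there is no general principle reducing statements about $\mscr{B}(\infty)$ to rank two. Note that the paper itself does not prove this proposition; it quotes it from Kashiwara and Naito--Sagaki, where $S_m$ is constructed globally (on a realization of $\mscr{B}(\infty)$ where the dilation is defined coordinatewise), precisely so that no such pointwise consistency check is needed.

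There is also a circularity in how you organize (1) versus (2). To run your induction you need, at the dilated level, that $\vep_i^*(S_m(b_0))=m\vep_i^*(b_0)=0$ (so that $\Psi_i(S_m b)=S_m(b_0)\otimes\fit{i}^{m\vep_i^*(b)}b_i$) and that $\fit{j}^m$ commutes suitably with $\fit{i}^{*m\vep_i^*(b)}$; these are $*$-side statements belonging to (2). Yet you present (2) as a corollary of the uniqueness in (1), and indeed your map $b\mapsto *(S_m(*b))$ can only be checked to satisfy the defining relations of (1) once one already knows $S_m\circ\fit{i}^*=\fit{i}^{*m}\circ S_m$. You acknowledge this by saying the starred intertwining comes ``as a byproduct of well-definedness,'' but well-definedness is the unproved step. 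A correct write-up must either prove (1) and (2) by a single simultaneous induction with the cross-colour consistency established explicitly, or construct $S_m$ globally as in the cited sources.
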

\section{The dual canonical basis}\label{sec:upper}
\subsection{}
In this subsection, we recall the definition of the dual canonical basis and its charactrization in terms of the \emph{dual bar involution} $\sigma$ with a balanced triple.
We define $\mbf{B}^{\up}\subset \Uq^-(\mfr{g})$ by the dual basis of $\mbf{B}$ under the 
Kashiwara's  bilinear form $(~, ~)_K$.
We define the \emph{dual bar involution} $\sigma \colon \Uq^-(\mfr{g})\to \Uq^-(\mfr{g})$ so that
 \[(\sigma(x), y)_K=\overline{(x, \overline{y})_K}\] 
holds for any $y$ (\cite[10.2]{BZ:qcluster}).
This is well-defined since $(\cdot, \cdot)_{K}$ is non-degenerate.
By its definition, we have $\sigma(x)=x$ for $x\in \mbf{B}^{\up}$
and this is a $\mbb{Q}$-linear involutive automorphism of $\Uq^{-}(\mfr{g})$ which satisfies $\sigma(fx)=\overline{f}\sigma(x)$
for any $f\in \Qq$ and $x\in \Uq^{-}(\mfr{g})$.

\begin{NB}
By its construction, the dual bar involution $\sigma$ depend on the non-degenerate bilinear forms.
We can also define the dual bar-involution by using Lusztig's bilinear form $(\cdot, \cdot)_{L}$.
If we fix the dual bar-involution by $(\sigma_{L}(x) ,y)_{L}=\overline{(x, \overline{y})_{L}}$, the normalization in \propref{prop:dualbar} (3)
will be changed.
To remove the sign in the formula \propref{prop:dualbar} (3), we choose Kashiwara's bilinear form $(\cdot, \cdot)_{K}$ in this paper.
We have a following isomorphism.
\begin{prop}
	Set $\Phi_K\colon \Uq^-(\mfr{g})\simeq \Uq^-(\mfr{g})_{\gr}^*$ by $\bracket{\Phi_K(P), Q}:=(P, Q)_K$ for 
	$P, Q\in \Uq^-(\mfr{g})_{\xi}$ with $\xi=-\sum_{i\in I} n_i\alpha_i\in Q_-$.
	Then we have $\Phi_K(P')*\Phi_K(P'')=\Phi_K(P'P'')$,
	here $\bracket{\phi'*\phi'', P)}:=\bracket{\phi'\otimes \phi'', r(P)}$.
\end{prop}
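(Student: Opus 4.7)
The plan is to verify the bialgebra pairing identity directly from the defining properties of Kashiwara's form, by transferring the analogous identity from Lusztig's form via the scaling \lemref{lem:KLforms}.

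First, $\Phi_K$ is well-defined and induces an isomorphism onto $\Uq^-(\mfr{g})_{\gr}^*$ because $(\cdot,\cdot)_K$ is non-degenerate on each finite-dimensional weight space $\Uq^-(\mfr{g})_\xi$ (\secref{sec:Kas_form}). To prove $\Phi_K(P') * \Phi_K(P'') = \Phi_K(P'P'')$ for homogeneous $P' \in \Uq^-(\mfr{g})_\xi$ and $P'' \in \Uq^-(\mfr{g})_{\xi'}$, it suffices by linearity to evaluate both sides on an arbitrary homogeneous $Q$. Unwinding the definition of $*$ on the graded dual, this reduces to the Hopf-pairing identity
\[(P'P'', Q)_K = (P'\otimes P'', r(Q))_K,\]
where the form on $\Uq^-(\mfr{g})\otimes \Uq^-(\mfr{g})$ is the tensor product of $(\cdot,\cdot)_K$ on each factor.

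The corresponding identity for Lusztig's form, $(xx', y)_L = (x\otimes x', r(y))_L$, is one of the defining properties recalled in \secref{sec:Lus_form}. So my plan is to deduce the Kashiwara version from the Lusztig version by matching scaling factors. Writing $r(Q) = \sum Q_{(1)} \otimes Q_{(2)}$ as a sum of homogeneous pieces, only summands with $\wt(Q_{(1)}) = \xi$ and $\wt(Q_{(2)}) = \xi'$ contribute on the right-hand side. For such summands, \lemref{lem:KLforms} gives conversion factors $\prod_i(1-q_i^2)^{n_i}$ and $\prod_i(1-q_i^2)^{n'_i}$ on the two tensor factors (where $\xi = -\sum n_i\alpha_i$ and $\xi' = -\sum n'_i\alpha_i$), which combine to $\prod_i(1-q_i^2)^{n_i+n'_i}$. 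On the left-hand side, since $P'P'' \in \Uq^-(\mfr{g})_{\xi+\xi'}$, the same factor $\prod_i(1-q_i^2)^{n_i+n'_i}$ appears when converting $(P'P'', Q)_K$ to $(P'P'', Q)_L$. The scaling factors cancel and the two sides coincide.

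The only subtle point is the bookkeeping of scaling factors and the observation that the total degree $\xi + \xi'$ of $P'P''$ is precisely what is needed for the factors on the two sides of the tensor product to match the single factor on the left; no new ideas beyond the defining properties of the two forms are required. One could alternatively give an inductive proof directly from the adjunction $(f_i x, y)_K = (x, {}_ir y)_K$ and the compatibility of ${}_ir$ with multiplication (\eqref{eq:ir}), but the route through Lusztig's form is cleaner.
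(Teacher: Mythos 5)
Your proposal is correct and follows essentially the same route as the paper: unwind the convolution product to the identity $(P'P'',Q)_K=(P'\otimes P'',r(Q))_K$ and deduce it from the defining Hopf-pairing property of Lusztig's form via the weight-wise scaling of \lemref{lem:KLforms}, noting that the factor for $\wt(P'P'')=\xi+\xi'$ matches the product of the factors for $\xi$ and $\xi'$. No gap to report.
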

\begin{proof}
For convenience of the reader, we give a proof.
\begin{align*}
	&\bracket{\Phi_K(x')\Phi_K(x''), y} \\
	:=&\bracket{\Phi_K(x')\otimes \Phi_K(x''), r(y)}=\bracket{\Phi_L(x')\otimes \Phi_L(x''), \sum y_{(1)}\otimes y_{(2)}} \\
	=&\sum\bracket{\Phi_K(x'), y_{(1)}}\bracket{\Phi_K(x''), y_{(2)}}=(x'x'', r(y))_K ~(\text{by \lemref{lem:KLforms}})\\ 
	=&\bracket{\Phi_K(x'x''), y}.
\end{align*}
Thus we obtained the assertion.
\end{proof}
\end{NB}
\subsubsection{}
For $\xi=\sum \xi_i\alpha_i\in Q$, we define
\beq N(\xi):=\frac{1}{2}\left((\xi, \xi)+\sum \xi_i (\alpha_i, \alpha_i)\right)=%
\frac{1}{2}\left( (\xi, \xi)+ 2(\xi, \rho) \right). \eeq
We have $N(-\alpha_i)=0$ for any $i\in I$ and $N(\xi+\eta)=N(\xi)+N(\eta)+(\xi, \eta)$ for any $\xi, \eta\in Q$.
\begin{prop}\label{prop:dualbar}
We assume $x, y\in \Uq^{-}(\mfr{g})$ are homogenous.

\textup{(1)}
If $r(x)=\sum x_{(1)}\otimes x_{(2)}$,
we have 
\[r(\overline{x})=\sum q^{-(\wt x_{(1)}, \wt x_{(2)})}\overline{x_{(2)}}\otimes \overline{x_{(1)}}.\]
\textup{(2)}
We set $\{x, y\}_K:=\overline{(\overline{x}, \overline{y})_K}$,
then we have 
\[\{x, y\}_K=q^{N(\wt x)}(x, *y)_K.\]
\textup{(3)}
We have
\[\sigma(x)=q^{N(\wt x)}(*\circ \overline{\phantom{x}})(x).\]
\end{prop}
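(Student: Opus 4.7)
The plan is to establish (1) first by induction, to deduce (2) by a further induction using (1), and then to derive (3) as a direct consequence of (2) via the defining property of $\sigma$.

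For (1), I would induct on $\tr(-\wt x)\in\mbb{Z}_{\geq 0}$. The base case $x=f_i$ is immediate: $r(f_i)=f_i\otimes 1+1\otimes f_i$ is preserved (with the $q$-twist trivial since one factor always has weight $0$) by the swap $x_{(1)}\otimes x_{(2)}\mapsto \overline{x_{(2)}}\otimes \overline{x_{(1)}}$, and $\overline{f_i}=f_i$. For the inductive step, write $x=x'x''$ with $x',x''$ of smaller height and combine two facts: $r$ is a $\Qq$-algebra homomorphism into $\Uq^-\otimes\Uq^-$ with the twisted multiplication from \secref{sec:Lus_form}, while $\overline{\phantom{x}}$ is an algebra automorphism of $\Uq^-$, so $r(\overline{x'x''})=r(\overline{x'})r(\overline{x''})$. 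Applying the inductive hypothesis to each factor and expanding in the twisted tensor algebra, the claimed formula follows after collecting $q$-powers; the bilinearity of $(\cdot,\cdot)$ and additivity of weights guarantee that the exponent $-(\wt(xy)_{(1)},\wt(xy)_{(2)})$ assembles correctly from the contributions of the twisted multiplication and the inductive exponents for $x'$ and $x''$.

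For (2), I would induct on $-\wt x\in Q_+$, with trivial base case $x=1$. In the inductive step, write $x=f_ix'$. Bar-compatibility $\overline{f_ix'}=f_i\overline{x'}$ together with the defining relation $(f_iz,w)_K=(z,{_ir}w)_K$ gives $\{f_ix',y\}_K=\{x',\overline{{_ir}(\overline{y})}\}_K$. On the other side, $(f_ix',*y)_K=(x',{_ir}(*y))_K=(x',*r_i(y))_K$, where the last step uses the identity ${_ir}=*\circ r_i\circ *$, obtained by conjugating \eqref{eq:ri} by the anti-involution $*$ and matching it with \eqref{eq:ir}. The essential auxiliary identity is a $q$-power relation between $\overline{{_ir}(\overline{y})}$ and $r_i(y)$, extracted from (1) by comparing the bi-homogeneous component of $r(\overline{y})$ whose appropriate factor has weight $-\alpha_i$ with the corresponding component of $r(y)$. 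Combining these with $N(\xi+\eta)=N(\xi)+N(\eta)+(\xi,\eta)$ and $N(-\alpha_i)=0$ closes the induction.

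Part (3) is then immediate from (2) and \lemref{lem:*form}. By the defining property, $(\sigma(x),y)_K=\overline{(x,\overline{y})_K}$; on the other hand, $(q^{N(\wt x)}\!\ast\overline{x},y)_K=q^{N(\wt x)}(\ast\overline{x},y)_K=q^{N(\wt x)}(\overline{x},\ast y)_K$ by \lemref{lem:*form}, and this equals $\overline{(x,\overline{y})_K}$ by applying (2) with $x$ replaced by $\overline{x}$ (using $\wt\overline{x}=\wt x$). Non-degeneracy of $(\cdot,\cdot)_K$ then yields $\sigma(x)=q^{N(\wt x)}(\ast\circ\overline{\phantom{x}})(x)$. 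The main obstacle throughout is the careful bookkeeping of $q$-powers: verifying in (1) that $-(\wt(xy)_{(1)},\wt(xy)_{(2)})$ assembles correctly from the twisted-tensor expansion of $r(xy)=r(x)r(y)$, and performing the analogous $q$-power check for the auxiliary identity relating $\overline{{_ir}(\overline{y})}$ and $r_i(y)$ needed to close (2).
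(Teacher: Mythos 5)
Your proofs of (1) and (3) coincide with the paper's: (1) is the same induction comparing $r(\overline{x'})\,r(\overline{x''})$ with $r(x'x'')$ in the twisted tensor algebra, and (3) is verbatim the paper's two-line argument via \lemref{lem:*form} and non-degeneracy. For (2), however, you take a genuinely different (dual) route. The paper follows Lusztig's Lemma 1.2.11(2): it fixes $x$, factors the \emph{second} argument as $y=y'y''$, and inducts using the coproduct property $(x,y'y'')_K=(r(x),y'\otimes y'')_K$ together with part (1) applied directly to $r(\overline{x})$, so that the exponent $N(\wt x)$ assembles from $N(\xi+\eta)=N(\xi)+N(\eta)+(\xi,\eta)$ applied to the bidegrees of $r(x)$. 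You instead peel a generator off the \emph{first} argument, $x=f_ix'$ (which suffices, since $\Uq^-(\mfr{g})_{\xi}=\sum_i f_i\Uq^-(\mfr{g})_{\xi+\alpha_i}$ for $\xi\neq 0$ by \lemref{lem:indcrystal} and both sides are additive in $x$), and use the adjunction \eqref{eq:Kformadj}, the intertwining ${_ir}=*\circ r_i\circ *$, and a bar-conjugation identity for ${_ir}$ extracted from (1). This works: identifying ${_ir}$ and $r_i$ with the $f_i\otimes(\cdot)$ and $(\cdot)\otimes f_i$ components of $r$ via \eqref{eq:Lformadj} and \lemref{lem:KLforms}, part (1) gives $\overline{{_ir}(\overline{y})}=q^{-(\wt y+\alpha_i,\alpha_i)}\,r_i(y)$, and the induction closes precisely because $N(\wt x)-N(\wt x+\alpha_i)=-(\wt x+\alpha_i,\alpha_i)$, matching this exponent when $\wt x=\wt y$ (the only nontrivial case). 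So your scheme is correct; what remains is exactly the bookkeeping you flag, and it does check out. The trade-off: the paper's version reuses (1) and the multiplicativity of the form with no extra lemmas, while yours trades that for the $q$-boson adjunction and the $*$-compatibility of ${_ir},r_i$, which fits naturally with the Kashiwara-operator machinery used later (e.g.\ \thmref{theo:multupper}) but requires the explicit component identification of ${_ir},r_i$ inside $r$ to pin down the exponent.
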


\begin{proof}
For convenience of the reader, we give a proof.

\textup{(1)}
We follow the argument in \cite[1.2.10]{Lus:intro}.
For generators of $\Uq^-(\mfr{g})$, we have $r(f_i)=f_i\otimes 1+1\otimes f_i=r(\overline{f_i})$.
We prove the assertion by the induction on $\wt$, so we assume that \textup{(1)} holds for homogenous $x', x''$ and show that it holds also for $x=x'x''$.
First we write $r(x')=\sum x'_{(1)}\otimes x'_{(2)}$ and $r(x'')=\sum x''_{(1)}\otimes x''_{(2)}$.
By assumption, we have 
$r(\overline{x'})=\sum q^{-(\wt x'_{(1)}, \wt x'_{(2)})}\overline{x'_{(2)}}\otimes \overline{x'_{(1)}}$ and 
$r(\overline{x''})=\sum q^{-(\wt x''_{(1)}, \wt x''_{(2)})}\overline{x''_{(2)}}\otimes \overline{x''_{(1)}}$.
We have 
$r(x'x'')=r(x')r(x'')=\sum q^{-(\wt x'_{(2)}, \wt x''_{(1)})}x'_1x''_1\otimes x'_2x''_2$ and
\begin{align*}
	r(\overline{x'})r(\overline{x''})=
	\sum q^{-(\wt x''_{(1)}, \wt x''_{(2)})-(\wt x'_{(1)}, \wt x'_{(2)})-(\wt x'_{(1)}, \wt x''_{(2)})}\overline{x'_{(2)}x''_{(2)}}\otimes \overline{x'_{(1)}x''_{(1)}}.
\end{align*}
Then the assertion follows.

\textup{(2)}
We follow the argument in \cite[Lemma 1.2.11 (2)]{Lus:intro}.
For the generators, we have 
 $\{f_i, f_i\}_K=(f_i, f_i)_K=q^{N(\wt f_i)}(f_i, f_i)_K$.
 
We prove the assertion by the induction on $\tr(\wt x)=\tr(\wt y)$.
We prove that \textup{(2)} holds for $y=y'y''$ and for any $x$ assuming it holds for $y', y''$.
First we write $r(x)=\sum x_{(1)}\otimes x_{(2)}$ with $x_{(1)}$ and $x_{(2)}$ homogenous.
We have
\begin{align*}
	\;&(\overline{x}, \overline{y})_K\\
	=\;&(r(\overline{x}), \overline{y'}\otimes \overline{y''})=\sum q^{-(\wt x_{(1)}, \wt x_{(2)})}(\overline{x_{(2)}}\otimes \overline{x_{(1)}}, \overline{y'}\otimes \overline{y''})_K\\
	=\;&\sum q^{-(\wt x_{(1)}, \wt x_{(2)})}(\overline{x_{(2)}}, \overline{y'})_K(\overline{x_{(1)}}, \overline{y''})_K\\
	=\;&\sum q^{-(\wt x_{(1)}, \wt x_{(2)})-N(\wt x_{(1)})-N(\wt x_{(2)})}\overline{(x_{(2)}, *y')_K(x_{(1)}, *y'')_K}\\
	=\;&\sum q^{-N(\wt x)}\overline{(x_{(2)}, *y')_K(x_{(1)}, *y'')_K},
\end{align*}
where we have used the induction hypothesis in the fourth equality.
On the other hand, we have 
\begin{align*}
\;&q^{-N(\wt x)}\overline{(x, *y)_K} \\
=\;&q^{-N(\wt x)}\overline{(r(x), *y''\otimes *y')_K}\\
=\;&q^{-N(\wt x)}\sum \overline{(x_{(1)}\otimes x_{(2)}, *y''\otimes *y')_K}.
\end{align*}
Hence we obtain the assertion.

\textup{(3)}
We have $(\sigma(x), y)=\overline{(x, \overline{y})}=q^{N(\wt(x))}(\overline{x}, *y)=q^{N(\wt(x))}((*\circ \overline{\phantom{x}})(x), y)$,
where we used \lemref{lem:*form}.
Since this holds for any $y$, assertion follows.
\end{proof}
\subsubsection{}
By its construction, we have a characterization of the dual canonical basis $\mbf{B}^{\up}$
in terms of the dual bar involution $\sigma$ and the crystal lattice $\mscr{L}(\infty)$ of $\Uq^-(\mfr{g})$.
We note that $\mscr{L}(\infty)$ is a self-dual $\mca{A}_0$ lattice, see \eqref{eq:crylat},
and hence we do not need to the introduce the dual lattice of $\mscr{L}(\infty)$.

\begin{prop}
We set 
\[\Uq^-(\mfr{g})_{\mca{A}}^{\up}:=\{x\in \Uq^-(\mfr{g}); (x, \Uq^-(\mfr{g})_{\mca{A}})_K\subset \mca{A}\}.\]
Then $(\mscr{L}(\infty), \sigma(\mscr{L}(\infty)), \Uq^-(\mfr{g})_{\mca{A}}^{\up})$
is a balanced triple for the dual canonical basis $\mbf{B}^{\up}$.
\end{prop}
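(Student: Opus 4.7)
The plan is to exhibit $\mbf{B}^{\up}$ itself as a $\mbb{Q}$-basis of $E := \mscr{L}(\infty)\cap \sigma(\mscr{L}(\infty))\cap \Uq^-(\mfr{g})^{\up}_{\mca{A}}$ which lifts $\mscr{B}(\infty)$ through the reduction $\mscr{L}(\infty)\to \mscr{L}(\infty)/q\mscr{L}(\infty)$. For each $b\in \mscr{B}(\infty)$, let $\Gup(b)\in \mbf{B}^{\up}$ be the element dual to $\Glow(b)$, i.e.\ $(\Glow(b'),\Gup(b))_K=\delta_{b,b'}$. Condition (i) of the balanced triple definition (that $E\to \mscr{L}(\infty)/q\mscr{L}(\infty)$ is an isomorphism) will then be verified by checking three membership statements for $\Gup(b)$ together with one reduction-mod-$q$ statement, and combining these via the elementary identity $\mca{A}_0\cap \mca{A}_\infty\cap \mca{A}=\mbb{Q}$ inside $\Qq$.

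The three memberships are each short duality arguments. First, $\Gup(b)\in \mscr{L}(\infty)$: by the self-duality characterization \eqref{eq:crylat} of \propref{prop:crylat}, it suffices to show $(\Gup(b),\mscr{L}(\infty))_K\subset \mca{A}_0$, and this follows because $\mscr{L}(\infty)$ is $\mca{A}_0$-spanned by $\mbf{B}=\{\Glow(b')\}$, against which $\Gup(b)$ pairs to $\delta_{b,b'}\in \mca{A}_0$. Second, $\Gup(b)\in \Uq^-(\mfr{g})^{\up}_{\mca{A}}$: since $\mbf{B}$ is an $\mca{A}$-basis of $\Uq^-(\mfr{g})_{\mca{A}}$, the pairing $(\Gup(b),\Glow(b'))_K=\delta_{b,b'}\in \mca{A}$ suffices. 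Third, $\sigma(\Gup(b))=\Gup(b)$: using bar-invariance of $\Glow(b')$ together with the defining property of $\sigma$,
\[
(\sigma(\Gup(b)),\Glow(b'))_K=\overline{(\Gup(b),\overline{\Glow(b')})_K}=\overline{\delta_{b,b'}}=\delta_{b,b'},
\]
and non-degeneracy of $(\cdot,\cdot)_K$ forces $\sigma(\Gup(b))=\Gup(b)$. For the reduction mod $q$, \propref{prop:crylat}(2) says $\mscr{B}(\infty)$ is orthonormal for $(\cdot,\cdot)_0$, so the relations $(\Glow(b'),\Gup(b))_K|_{q=0}=\delta_{b,b'}$ force $\Gup(b)\equiv b\pmod{q\mscr{L}(\infty)}$.

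From here the conclusion is formal. Applying Nakayama's lemma weight-space-by-weight-space to the free $\mca{A}_0$-module $\mscr{L}(\infty)$, the set $\{\Gup(b)\}$ is an $\mca{A}_0$-basis of $\mscr{L}(\infty)$. Applying the $\mbb{Q}$-linear involution $\sigma$ (which intertwines $\mca{A}_0$ and $\mca{A}_\infty$ via $f\mapsto \overline{f}$) and using $\sigma$-invariance of each $\Gup(b)$ shows that $\{\Gup(b)\}$ is also an $\mca{A}_\infty$-basis of $\sigma(\mscr{L}(\infty))$. Finally, the perfect pairing of $\mca{A}$-modules $(\cdot,\cdot)_K\colon \Uq^-(\mfr{g})_{\mca{A}}\times \Uq^-(\mfr{g})^{\up}_{\mca{A}}\to \mca{A}$, together with the fact that $\mbf{B}$ is an $\mca{A}$-basis of $\Uq^-(\mfr{g})_{\mca{A}}$, forces its dual $\{\Gup(b)\}$ to be an $\mca{A}$-basis of $\Uq^-(\mfr{g})^{\up}_{\mca{A}}$. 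Expanding any $x\in E$ as $x=\sum_b c_b\Gup(b)$ in $\Uq^-(\mfr{g})$, these three basis statements give $c_b\in \mca{A}_0\cap \mca{A}_\infty\cap \mca{A}=\mbb{Q}$ for all $b$, so $E=\bigoplus_{b}\mbb{Q}\,\Gup(b)$ and the reduction $E\to \mscr{L}(\infty)/q\mscr{L}(\infty)$, sending $\Gup(b)\mapsto b$, is an isomorphism.

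There is no serious obstacle: every ingredient (self-duality of $\mscr{L}(\infty)$, orthonormality of $\mscr{B}(\infty)$ mod $q$, bar-invariance of $\Glow(b)$, and the balanced triple for the lower global basis) is quoted above. The only step requiring a moment's thought is identifying $\{\Gup(b)\}$ as a genuine $\mca{A}$-basis of $\Uq^-(\mfr{g})^{\up}_{\mca{A}}$ rather than merely a subset; this however is immediate from the perfect $\mca{A}$-pairing induced by $(\cdot,\cdot)_K$ once one knows $\mbf{B}$ itself is an $\mca{A}$-basis on the lower side.
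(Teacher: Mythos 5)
Your argument is correct, and it is exactly the dualization that the paper intends: the paper states this proposition without proof, treating it as the formal dual of Kashiwara's balanced-triple theorem for $\mbf{B}$ via the nondegenerate form $(\cdot,\cdot)_K$, using precisely the ingredients you invoke (self-duality of $\mscr{L}(\infty)$ from \eqref{eq:crylat}, orthonormality of $\mscr{B}(\infty)$ at $q=0$, bar-invariance of $\Glow(b)$, and the defining property of $\sigma$). Your write-up just makes explicit the three membership checks for $\Gup(b)$, the Nakayama/perfect-pairing identifications of the three lattices, and the intersection $\mca{A}_0\cap\mca{A}_\infty\cap\mca{A}=\mbb{Q}$, so no gap remains.
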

Here we have the following isomorphism of $\mbb{Q}$-vector spaces:
\[\mscr{L}(\infty)\cap \sigma(\mscr{L}(\infty))\cap \Uq^-(\mfr{g})_{\mca{A}}^{\up}\xrightarrow{\sim} \mscr{L}(\infty)/q\mscr{L}(\infty).\]
We denote its inverse by $\Gup$.
Then we have $\mbf{B}^{\up}=\Gup(\mscr{B}(\infty))$.
\subsubsection{}
The above proposition gives a characterization of the dual canonical basis elements.
\begin{cor}[{\cite[Proposition 16]{LNT}}]\label{prop:dualtriple}
A homogenous $x\in \Uq^-(\mfr{g})_{\mca{A}}^{\up}\cap \mscr{L}(\infty) \cap \sigma(\mscr{L}(\infty))$ is an element of the dual canonical basis if and only if 
there exists $b\in \mscr{B}(\infty)$ such that 
\begin{align*}
	\sigma(x)&=x, \\
	x&\equiv b \mod q\mscr{L}(\infty). 
\end{align*}
\end{cor}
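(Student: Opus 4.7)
The plan is to read this corollary as a direct translation of the balanced triple proposition that precedes it. Set $E := \Uq^-(\mfr{g})_{\mca{A}}^{\up}\cap \mscr{L}(\infty) \cap \sigma(\mscr{L}(\infty))$. By the balanced triple property, the reduction map $E \xrightarrow{\sim} \mscr{L}(\infty)/q\mscr{L}(\infty)$ is a $\mbb{Q}$-linear isomorphism, and by construction $\mbf{B}^{\up} = \Gup(\mscr{B}(\infty))$ is precisely the $\mbb{Q}$-basis of $E$ lifting the crystal basis $\mscr{B}(\infty)$. All three requested properties reflect structural features of this lifting.

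For the ``only if'' direction, suppose $x = \Gup(b) \in \mbf{B}^{\up}$. Membership in $E$ and the congruence $x \equiv b \mod q\mscr{L}(\infty)$ are built into the definition of $\Gup$. The $\sigma$-invariance $\sigma(x) = x$ is the only point requiring a short argument: I would prove it by testing against the canonical basis. From the defining relation $(\sigma(x'), y)_K = \overline{(x', \bar y)_K}$, taking $y = \Glow(b')$ and using bar-invariance of the lower global basis ($\bar y = y$) together with the duality $(\Gup(b), \Glow(b'))_K = \delta_{b,b'} \in \mbb{Q}$, one obtains
\[(\sigma(\Gup(b)), \Glow(b'))_K = \overline{(\Gup(b), \Glow(b'))_K} = \delta_{b,b'}.\]
Since $\{\Glow(b')\}$ spans $\Uq^-(\mfr{g})$ and $(\cdot,\cdot)_K$ is non-degenerate, this forces $\sigma(\Gup(b)) = \Gup(b)$.

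For the ``if'' direction, given $x \in E$ with $x \equiv b \mod q\mscr{L}(\infty)$, I would simply compare $x$ with $\Gup(b) \in E$: the difference $x - \Gup(b)$ lies in $E$ and reduces to $0$ in $\mscr{L}(\infty)/q\mscr{L}(\infty)$, hence equals $0$ by injectivity of $E \to \mscr{L}(\infty)/q\mscr{L}(\infty)$. Therefore $x = \Gup(b) \in \mbf{B}^{\up}$. The hypothesis $\sigma(x) = x$ is actually not needed in this direction, as it is automatic from $x \in E$ once one knows $E$ has a $\sigma$-fixed $\mbb{Q}$-basis $\{\Gup(b)\}$ and $\sigma$ is $\overline{\phantom{x}}$-antilinear; it is recorded in the statement to align with the standard Lusztig-type characterization.

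There is no serious obstacle here beyond the $\sigma$-invariance in ``only if'', which in turn rests only on the bar-invariance of the lower global basis (Kashiwara) together with the definition of $\sigma$ as the adjoint to $\overline{\phantom{x}}$ under $(\cdot,\cdot)_K$. Everything else reduces to unraveling the balanced triple.
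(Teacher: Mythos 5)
Your proposal is correct and follows essentially the same route as the paper: the paper states this corollary as an immediate consequence of the balanced triple $(\mscr{L}(\infty), \sigma(\mscr{L}(\infty)), \Uq^-(\mfr{g})_{\mca{A}}^{\up})$, having already recorded $\sigma(x)=x$ for $x\in\mbf{B}^{\up}$ in \S 3.1 via exactly your pairing argument with the bar-invariant canonical basis. Your observation that the $\sigma$-invariance is redundant once $x$ lies in all three lattices is accurate and does not affect the statement.
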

\subsubsection{}
We have the following compatibility of the dual canonical basis and the $*$-involution from \propref{prop:*lower}.
\begin{lem}\label{lem:*upper}
	For $b\in \Binfty$,
	we have 
	\[\Gup(*b)=*\Gup(b).\]
\end{lem}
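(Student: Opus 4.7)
The strategy is to verify that $*\Gup(b)$ satisfies the three conditions of \corref{prop:dualtriple} that characterize $\Gup(*b)$. Namely, I would show that $*\Gup(b)$ lies in the balanced triple $\mscr{L}(\infty)\cap \sigma(\mscr{L}(\infty))\cap \Uq^-(\mfr{g})_{\mca{A}}^{\up}$, is fixed by $\sigma$, and reduces to $*b$ modulo $q\mscr{L}(\infty)$. Uniqueness coming from the balanced-triple characterization would then force $*\Gup(b)=\Gup(*b)$.

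First I would verify that $*$ preserves each of the three lattices. Preservation of $\mscr{L}(\infty)$ is already stated in the theorem quoted before the introduction of $\fit{i}^{*}$. The $\mca{A}$-form $\Uq^-(\mfr{g})_{\mca{A}}$ is preserved because $*$ is a $\Qq$-linear anti-involution fixing each $f_i$, hence fixing each divided power $f_i^{(n)}$. From \lemref{lem:*form} we obtain the adjunction $(*x,y)_K=(x,*y)_K$, so $*$ also preserves the dual integral form $\Uq^-(\mfr{g})_{\mca{A}}^{\up}$. The compatibility with the third lattice $\sigma(\mscr{L}(\infty))$ will follow from commutativity of $\sigma$ and $*$, which I establish next.

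The key identity is $\sigma\circ *=*\circ\sigma$. Using the explicit formula from \propref{prop:dualbar}(3), namely $\sigma(x)=q^{N(\wt x)}(*\circ\overline{\phantom{x}})(x)$, together with the fact that $*$ is $\Qq$-linear, preserves weight (since $\wt(*x)=\wt(x)$), and commutes with the bar involution (noted right after \eqref{eq:star}), a direct substitution yields
\[
\sigma(*x)=q^{N(\wt x)}(*\circ\overline{\phantom{x}})(*x)=q^{N(\wt x)}\,\overline{x}
=*\bigl(q^{N(\wt x)}(*\circ\overline{\phantom{x}})(x)\bigr)=*\sigma(x).
\]
In particular $\sigma(*\Gup(b))=*\sigma(\Gup(b))=*\Gup(b)$, and $*$ preserves $\sigma(\mscr{L}(\infty))$ since it preserves $\mscr{L}(\infty)$.

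Finally, because $*$ preserves $\mscr{L}(\infty)$, it descends to a bijection on $\mscr{L}(\infty)/q\mscr{L}(\infty)$, and by definition this descended map sends $b$ to $*b$ on $\mscr{B}(\infty)$. Since $\Gup(b)\equiv b\pmod{q\mscr{L}(\infty)}$, applying $*$ gives $*\Gup(b)\equiv *b\pmod{q\mscr{L}(\infty)}$. All three conditions of \corref{prop:dualtriple} are satisfied for $*\Gup(b)$ with crystal label $*b$, and the conclusion follows. The only nontrivial step is the commutation $\sigma\circ *=*\circ\sigma$; everything else is a direct consequence of the compatibilities already recorded in the paper.
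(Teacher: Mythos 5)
Your proof is correct, but it takes a different route from the paper. The paper's argument is a three-line duality computation: since $\Gup$ is by construction the dual basis of $\Glow$ under $(\cdot,\cdot)_K$, one writes $(*\Gup(b),\Glow(b'))_K=(\Gup(b),*\Glow(b'))_K=(\Gup(b),\Glow(*b'))_K=\delta_{b,*b'}=\delta_{*b,b'}$, using \lemref{lem:*form} together with the already established $*$-compatibility of the lower canonical basis (\propref{prop:*lower}); this pins down $*\Gup(b)$ as $\Gup(*b)$ immediately. You instead work intrinsically on the dual side, verifying the characterization of \corref{prop:dualtriple}: preservation of the three lattices by $*$, the commutation $\sigma\circ *=*\circ\sigma$ deduced from the explicit formula $\sigma=q^{N(\wt\cdot)}(*\circ\overline{\phantom{x}})$ of \propref{prop:dualbar}(3) together with $\overline{\phantom{x}}\circ *=*\circ\overline{\phantom{x}}$, and the congruence $*\Gup(b)\equiv *b \bmod q\mscr{L}(\infty)$. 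All of these steps check out (in particular $(*x,y)_K=(x,*y)_K$ does follow from \lemref{lem:*form} since $*$ is an involution, and $\wt(*x)=\wt(x)$ makes the $\sigma$-commutation computation valid for the homogeneous element $\Gup(b)$). What each approach buys: the paper's proof is shorter because it reuses \propref{prop:*lower} and the self-duality of $(\cdot,\cdot)_K$ under $*$, while yours bypasses \propref{prop:*lower} entirely and mirrors, at the dual level, the balanced-triple argument by which the paper obtains \propref{prop:*lower} itself; as a by-product you record the useful identity $\sigma\circ *=*\circ\sigma$, which the paper never states explicitly.
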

\begin{NB}
\begin{proof}
	The proof is straightforward.
	\begin{align*}
	(*\Gup(b), \Glow(b'))_K
	&=(\Gup(b), *\Glow(b'))_K &&\text{by \lemref{lem:*form}}\\
	&=(\Gup(b), \Glow(*b'))_K &&\text{by \propref{prop:*lower}}\\
	&=\delta_{b, *b'}=\delta_{*b, b'} &
	\end{align*}
\end{proof}
\end{NB}
\subsection{Compatible subset}
In this subsection, we introduce the concept of \emph{compatible subsets} of $\mscr{B}(\infty)$.
Roughly speaking, they are closed under the multiplication up to $q$-shifts, considered as subsets of the dual canonical basis $\mbf{B}^{\up}$.
\subsubsection{}
By \propref{prop:dualbar} (3), we obtain the following.
\begin{prop}
For homogenous $x_1, x_2\in \mbf{U}_q^-(\mfr{g})$, we have
\begin{equation}
\sigma(x_1x_2)=q^{(\wt x_1, \wt x_2)}\sigma(x_2)\sigma(x_1).\label{eq:dualbar}
\end{equation}
\end{prop}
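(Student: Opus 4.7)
The plan is to derive this directly from the explicit formula for $\sigma$ given in Proposition~3.1.2(3), namely $\sigma(x) = q^{N(\wt x)} (* \circ \overline{\phantom{x}})(x)$, combined with the bilinear identity for $N$ recorded just before that proposition.

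First I would apply the formula to the product: since $\wt(x_1 x_2) = \wt x_1 + \wt x_2$, one has
\[
\sigma(x_1 x_2) = q^{N(\wt x_1 + \wt x_2)}\, \overline{(x_1 x_2)^{*}}.
\]
Then I would use two structural facts about the involutions on $\Uq^{-}(\mfr{g})$: the $*$-involution is a $\Qq$-algebra anti-involution, so $(x_1 x_2)^{*} = x_2^{*} x_1^{*}$, and the bar involution is a $\mbb{Q}$-algebra automorphism of $\Uq^{-}(\mfr{g})$ (it preserves $\Uq^{-}(\mfr{g})$ and is multiplicative there), so $\overline{x_2^{*} x_1^{*}} = \overline{x_2^{*}}\cdot \overline{x_1^{*}}$. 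Combining,
\[
\sigma(x_1 x_2) = q^{N(\wt x_1 + \wt x_2)}\, \overline{x_2^{*}}\cdot \overline{x_1^{*}}.
\]

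Next I would compute $\sigma(x_2)\sigma(x_1)$ using the same formula on each factor separately; since the $q$-powers are central scalars this gives
\[
\sigma(x_2)\sigma(x_1) = q^{N(\wt x_2) + N(\wt x_1)}\, \overline{x_2^{*}}\cdot \overline{x_1^{*}}.
\]
Finally I would invoke the identity $N(\xi + \eta) = N(\xi) + N(\eta) + (\xi,\eta)$ stated just before Proposition~3.1.2 (with $\xi = \wt x_1$, $\eta = \wt x_2$) to rewrite the exponent in the first expression and conclude
\[
\sigma(x_1 x_2) = q^{(\wt x_1,\, \wt x_2)}\, \sigma(x_2)\sigma(x_1).
\]

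There is no real obstacle; the proof is a one-line bookkeeping of the $q$-shift once Proposition~3.1.2(3) is in hand. The only thing to be careful about is that the $q$-power comes out on the correct side and with the correct sign, which is forced by the quadratic form identity for $N$ and the fact that $*$ and $\overline{\phantom{x}}$ commute (so the order in which they are applied is irrelevant).
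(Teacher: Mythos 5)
Your proof is correct and is exactly the argument the paper intends: the proposition is stated there as an immediate consequence of \propref{prop:dualbar}~(3), i.e.\ $\sigma(x)=q^{N(\wt x)}(*\circ\overline{\phantom{x}})(x)$ together with the identity $N(\xi+\eta)=N(\xi)+N(\eta)+(\xi,\eta)$ and the commutation of $*$ with the bar involution, which is precisely the bookkeeping you carried out.
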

Then we obtain the following property.
\begin{cor}\label{cor:inverse}
Let $b_1, b_2\in \mscr{B}(\infty)$
and consider the following expansion
\[\Gup(b_1)\Gup(b_2)=\sum_{\wt(b)=\wt(b_1)+\wt(b_2)}d_{b_1, b_2}^b(q)\Gup(b).\]
Then we have 
$d_{b_1, b_2}^b(q^{-1})=q^{(\wt b_1, \wt b_2)}d_{b_2, b_1}^b(q).$
In particular, if we have $\Gup(b_1)\Gup(b_2)=q^N\Gup(b_1\circledast b_2)$ for $b_1\circledast b_2\in \mscr{B}(\infty)$ and $N\in \mbf{Z}$,
then we have 
\[\Gup(b_1)\Gup(b_2)=q^{-N-(\wt b_1, \wt b_2)}\Gup(b_2)\Gup(b_1).\]
\end{cor}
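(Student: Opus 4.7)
The plan is to exploit the behavior of the dual bar involution $\sigma$ on products, namely the formula \eqref{eq:dualbar}, together with the fact that each element of $\mbf{B}^{\up}$ is $\sigma$-invariant. Applying $\sigma$ to the defining expansion
\[\Gup(b_1)\Gup(b_2)=\sum_{b}d_{b_1,b_2}^{b}(q)\,\Gup(b)\]
will produce two different expressions for $\Gup(b_2)\Gup(b_1)$, and comparing coefficients in the dual canonical basis will yield the symmetry relation.

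Concretely, the left-hand side becomes, by \eqref{eq:dualbar} and $\sigma(\Gup(b_i))=\Gup(b_i)$,
\[\sigma(\Gup(b_1)\Gup(b_2))=q^{(\wt b_1,\wt b_2)}\sigma(\Gup(b_2))\sigma(\Gup(b_1))=q^{(\wt b_1,\wt b_2)}\Gup(b_2)\Gup(b_1).\]
For the right-hand side, since $\sigma$ is $\mbb{Q}$-linear with $\sigma(fx)=\overline{f}\sigma(x)$ and $\sigma$ fixes every $\Gup(b)$, we obtain
\[\sigma\Bigl(\sum_{b}d_{b_1,b_2}^{b}(q)\Gup(b)\Bigr)=\sum_{b}d_{b_1,b_2}^{b}(q^{-1})\Gup(b).\]
Equating the two expressions and then comparing with the analogous expansion $\Gup(b_2)\Gup(b_1)=\sum_{b}d_{b_2,b_1}^{b}(q)\Gup(b)$, the linear independence of $\mbf{B}^{\up}=\Gup(\mscr{B}(\infty))$ over $\Qq$ yields
\[d_{b_1,b_2}^{b}(q^{-1})=q^{(\wt b_1,\wt b_2)}\,d_{b_2,b_1}^{b}(q),\]
which is the first claim.

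The ``in particular'' assertion follows by specialization: if the only nonzero coefficient on the left is $d_{b_1,b_2}^{b_1\circledast b_2}(q)=q^{N}$, then the identity just established forces $d_{b_2,b_1}^{b_1\circledast b_2}(q)=q^{-N-(\wt b_1,\wt b_2)}$, so $\Gup(b_2)\Gup(b_1)=q^{-N-(\wt b_1,\wt b_2)}\Gup(b_1\circledast b_2)$, and substituting back the expression for $\Gup(b_1\circledast b_2)$ gives the stated $q$-commutation relation.

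There is essentially no analytic obstacle here; the argument is a short formal manipulation. The only point that requires care is the bookkeeping of $q$-powers coming from \eqref{eq:dualbar} and from the action of $\sigma$ on the scalars $d_{b_1,b_2}^{b}(q)\in\Qq$; in particular, one should note that the invariance $\sigma(\Gup(b))=\Gup(b)$ is the key non-formal input and is furnished by the balanced-triple characterization of $\mbf{B}^{\up}$ recalled just above.
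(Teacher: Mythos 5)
Your argument is correct and is essentially the paper's own proof: the first identity is exactly what the author means by ``clear from \eqref{eq:dualbar}'' (apply $\sigma$, use $\sigma(\Gup(b))=\Gup(b)$ and $\sigma(f x)=\overline{f}\sigma(x)$, compare coefficients), and your specialization $d^{b_1\circledast b_2}_{b_2,b_1}(q)=q^{-N-(\wt b_1,\wt b_2)}$ is the same computation the paper performs for the second claim. No gaps; the only caveat is the same $q$-power bookkeeping issue already present in the paper's own formulation of the final displayed relation, which your derivation handles consistently with the paper's proof.
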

\begin{proof}
The first statement is clear from \eqref{eq:dualbar}.
Suppose that $\Gup(b_1)\Gup(b_2)=q^N\Gup(b_1\circledast b_2)$ for $b_1\circledast b_2\in \mscr{B}(\infty)$ and $N\in \mbf{Z}$,
i.e.,  $d_{b_1, b_2}^{b}(q)=q^N\delta_{b, b_1\circledast b_2}$ for $b_1\circledast b_2\in \mscr{B}(\infty)$.
Then we have 
\[d_{b_2, b_1}^b(q)=q^{-(\wt b_1, \wt b_2)}d_{b_1, b_2}^b(q^{-1})=q^{-(\wt b_1, \wt b_2)}q^{-N}\delta_{b, b_1*b_2}.\]
This implies that if $\Gup(b_1)$ and $\Gup(b_2)$ satisfies 
$d_{b_1, b_2}^b(q)=q^N\delta_{b, b_1\circledast b_2}$ for some $b_1 \circledast b_2\in \mscr{B}(\infty)$,
then $\Gup(b_1)$ and $\Gup(b_2)$ $q$-commutes.
\end{proof}
Motivated by this corollary, we introduce the following definition.

\begin{defn}
\textup{(1)}
We denote $x\simeq y$ for $x, y\in\Uq^-(\mfr{g})$ if there exists $N\in \mbb{Z}$ such that 
$x=q^Ny$.

\textup{(2)}
For $b_1, b_2\in \mscr{B}(\infty)$,
we call $b_1$ and $b_2$ are \emph{multiplicative} or \emph{compatible} if
 there exists a unique $b_1\circledast b_2\in \mscr{B}(\infty)$ such that 
\[\Gup(b_1\circledast b_2)\simeq \Gup(b_1)\Gup(b_2).\]

By \corref{cor:inverse} this condition is independent of the order on $b_{1}$ and $b_{2}$.
We write $b_{1}\bot b_{2}$ when this holds.

\textup{(3)}
Elements $b_1, \cdots, b_l\in \mscr{B}(\infty)$ are called \emph{compatible}  if the following holds
\[\Gup(b_1)\cdots \Gup(b_l)\simeq \Gup(b_1 \circledast\cdots \circledast b_l)\] 
for a unique $b_1\circledast\cdots\circledast b_l\in \mscr{B}(\infty)$.
This condition is also independent of the ordering on $b_{1}, \cdots, b_{l}$.

\textup{(4)}
An element $b\in \mscr{B}(\infty)$ is called \emph{real} if $\Gup(b)\Gup(b)\simeq \Gup(b^{[2]})$
for a unique $b^{[2]}\in \mscr{B}(\infty)$,
that is $b\bot b$.

\textup{(5)}
An element $b\in \mscr{B}(\infty)$ is called \emph{strongly real} if $\Gup(b)^m\simeq \Gup(b^{[m]})$
for a unique $b^{[m]}\in \mscr{B}(\infty)$ for any $m$,
that is $\underbrace{b, \cdots ,b}_{m \text{~times~}}$ is compatible for any $m$.

\textup{(6)}
Elements $b_{1}, \cdots, b_{l}$ is called \emph{strongly compatible}
if for any $m_{1}, \cdots, m_{l}\in \mbb{Z}_{\geq 0}$,
the product $\Gup(b_{1})^{m_{1}}\cdots \Gup(b_{l})^{m_{l}}\simeq \Gup(b_{1}^{[m_{1}]}\circledast\cdots \circledast b_{l}^{[m_{l}]})$
for a unique $b_{1}^{[m_{1}]}\circledast\cdots \circledast b_{l}^{[m_{l}]}\in \mscr{B}(\infty)$.

\end{defn}
\begin{NB}
As a corollary of the simple product conjecture, 
we can prove that 
\begin{enumerate}
\item real implies strongly real
\item compatible sequence of real elements are strongly compatible
\end{enumerate}
So, strongly real and strongly compatible are not so strong.
\end{NB}
\begin{rem}
For $b_1, b_2\in \mscr{B}(\infty)$, 
we say a pair $(b_1, b_{2})$ is \emph{quasi-commutative} if we have $\Gup(b_{1})\Gup(b_{2})\simeq \Gup(b_{2})\Gup(b_{1})$ following 
\cite{BZ:string} and \cite{Rei:mult}.
In \cite{BZ:string}, Berenstein and Zelevinsky conjectured that the quasi-commutativity and compatibility is equivalent.
The above corollary proves the Reineke's result that the compatibility for $b_{1}$ and $b_{2}$ implies the quasi-commutativity
generalizes Reineke's result from when $\mfr{g}$ is symmetric to arbitrary symmetrizable $\mfr{g}$.
\end{rem}
\begin{rem}
The relation $b_1\bot b_2$ is \emph{not} an equivalence relation,
as there exists $b$ which does not satisfies $b\bot b$.
In particular, such elements are counter-examples for Berenstein-Zelevinsky's conjecture in \cite{BZ:string}.
In \cite{Lec:imaginary}, Leclerc said that $b$ is \emph{real} if $b\bot b$ and \emph{imaginary} otherwise.
He constructed examples of imaginary elements in \cite{Lec:imaginary}.
Other examples closely related to this paper are given in \cite[Corollary 4.4]{Lampe}. 
\end {rem}
\begin{rem}
Even if $b_1\bot b_2$, we can not determine $N$ in $d_{b_1, b_2}^b=q^{N}\delta_{b, b_1 \circledast b_2}$
in terms of weight of $b_1, b_2$.
In \secref{sec:unip}, we have its explicit form in terms of the Lusztig data of $b$ and $b'$ associated with a reduced expression $\tw$.
\begin{NB}For the case which is treated in \secref{sec:Demazure}, we have explicit form in terms of string data?\end{NB}
\end{rem}

\begin{cor}
\textup{(1)}
If $b_1\bot b_2$, then $*b_1\bot *b_2$.

\textup{(2)}
If $b$ is real, then $*b$ is also real. 
\end{cor}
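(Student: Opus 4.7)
The plan is to apply the $*$-involution directly to the defining equation of compatibility and use its anti-multiplicativity together with Lemma~\ref{lem:*upper}. Concretely, I would start from $b_{1}\bot b_{2}$, which by definition gives an integer $N\in\mbb{Z}$ and an element $b_{1}\circledast b_{2}\in\mscr{B}(\infty)$ with
\[\Gup(b_{1})\Gup(b_{2})=q^{N}\Gup(b_{1}\circledast b_{2}).\]

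Then I would apply $*$ to both sides. Since $*$ is a $\Qq$-algebra anti-involution (so it fixes $q$ and reverses the order of products), and since $*\Gup(b)=\Gup(*b)$ for every $b\in\mscr{B}(\infty)$ by Lemma~\ref{lem:*upper}, this yields
\[\Gup(*b_{2})\Gup(*b_{1})=q^{N}\Gup(*(b_{1}\circledast b_{2})).\]
This exhibits $*b_{2}$ and $*b_{1}$ as a compatible pair, with $(*b_{2})\circledast(*b_{1})=*(b_{1}\circledast b_{2})$. Because Corollary~\ref{cor:inverse} shows that the relation $b\bot b'$ is symmetric in its two arguments, this immediately gives $*b_{1}\bot *b_{2}$, proving (1). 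Part (2) is then the specialization $b_{1}=b_{2}=b$.

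There is no real obstacle: the only point requiring care is that $*$ is an \emph{anti}-involution, so one must note that the order of the factors gets reversed after applying $*$ — but this is harmless because the compatibility relation is symmetric. In particular the identification $*b\circledast *b=*(b^{[2]})$ in part (2) comes out automatically, and one could record the more precise consequence $(*b_{1})\circledast(*b_{2})=*(b_{1}\circledast b_{2})$ and $*(b^{[m]})=(*b)^{[m]}$ (for strongly real elements) as a by-product of the same argument.
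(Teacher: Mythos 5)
Your proof is correct and is exactly the argument the paper intends: the corollary is left as an immediate consequence of Lemma~\ref{lem:*upper}, the fact that $*$ is a $\Qq$-algebra anti-involution, and the order-symmetry of compatibility from Corollary~\ref{cor:inverse}. Nothing further is needed.
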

\subsubsection{}
Let  ${_ir}^{(m)}:={_ir}^m/[m]!$ and ${r_i}^{(m)}:={r_i}^m/[m]!$.
These operators are adjoint of the left and right multiplications of $f_{i}^{(m)}$ by \eqref{eq:Kformadj}.
From \thmref{thm:multlower}, we get the following expansions
for the actions of ${_ir}^{(m)}:={_ir}^m/[m]!$ and ${r_i}^{(m)}:={r_i}^m/[m]!$.
\begin{theo}\label{theo:multupper}
	For $b\in \mscr{B}(\infty)$, we have
	\begin{subequations}
	\begin{align}
	{_ir}^{(m)}\Gup(b)&=\begin{bmatrix}\vep_i(b) \\ m\end{bmatrix}\Gup(\eit{i}^mb)+\sum_{\vep_i(b')<\vep_i(b)-m}E_{b b'; i}^{(m)}(q)\Gup(b'),\\
	{r_i}^{(m)}\Gup(b)&=\begin{bmatrix}\vep^*_i(b)\\ m\end{bmatrix}\Gup(\eit{i}^{*m}b)+\sum_{\vep^*_i(b')<\vep^*_i(b)-m}E^{*(m)}_{b b'; i}(q)\Gup(b'),
	\end{align}
	\end{subequations}
	where $E_{b b'; i}^{(m)}(q)=\overline{E_{b b'; i}^{(m)}(q)}, E^{*(m)}_{b b'; i}(q)=\overline{E^{*(m)}_{b b'; i}(q)}\in \mca{A}$.
\end{theo}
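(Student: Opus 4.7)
The plan is to derive this theorem by dualizing \thmref{thm:multlower} via Kashiwara's bilinear form $(\cdot,\cdot)_K$, exploiting the adjointness between left multiplication by $f_i^{(m)}$ and the operator ${_ir}^{(m)}$ (and similarly between right multiplication and $r_i^{(m)}$) recorded in \eqref{eq:Kformadj}, together with the fact that $\mbf{B}^{\up}$ and $\mbf{B}$ are biorthonormal under $(\cdot,\cdot)_K$.

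Concretely, I would write the unknown expansion
\[
{_ir}^{(m)}\Gup(b)=\sum_{b'\in\mscr{B}(\infty)}c_{b,b';i}^{(m)}(q)\,\Gup(b')
\]
and extract each coefficient by pairing with $\Glow(b')$. By duality and the adjointness property of ${_ir}^{(m)}$ iterated $m$ times (together with the divided-power normalization by $[m]_i!$), one obtains
\[
c_{b,b';i}^{(m)}(q)=\bigl(\Gup(b),\,f_i^{(m)}\Glow(b')\bigr)_K.
\]
Now substitute the expansion of $f_i^{(m)}\Glow(b')$ supplied by \thmref{thm:multlower} and pair with $\Gup(b)$: the orthonormality of $\mbf{B}^{\up}$ and $\mbf{B}$ converts each $\Glow(b'')$ into $\delta_{b,b''}$. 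The leading term contributes precisely when $b=\fit{i}^{m}b'$, equivalently $b'=\eit{i}^{m}b$ (with $\vep_i(b)\geq m$), and in that case $\vep_i(b')+m=\vep_i(b)$, so the coefficient is the desired $\begin{bmatrix}\vep_i(b)\\ m\end{bmatrix}$; when $\vep_i(b)<m$ both sides vanish since $\eit{i}^{m}b=0$. The sum-over-$b''$ in \thmref{thm:multlower} contributes to $c_{b,b';i}^{(m)}(q)$ only when $b''=b$ and $\vep_i(b)>\vep_i(b')+m$, i.e.\ $\vep_i(b')<\vep_i(b)-m$, which is exactly the range stated in the theorem.

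Setting $E_{bb';i}^{(m)}(q):=f_{b'b;i}^{(m)}(q)$ for $b'$ in this range, the bar-invariance and integrality $E_{bb';i}^{(m)}(q)=\overline{E_{bb';i}^{(m)}(q)}\in\mca{A}$ are immediate from the corresponding properties of $f_{b'b;i}^{(m)}(q)$ supplied by \thmref{thm:multlower}. The second identity for $r_i^{(m)}$ is proved in exactly parallel fashion: one uses the right-hand adjointness for Kashiwara's form together with the expansion of $\Glow(b')f_i^{(m)}$ from \thmref{thm:multlower}, whose leading term is controlled by $\eit{i}^{*}$ rather than $\eit{i}$.

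The only delicate step is bookkeeping the $\vep_i$-inequalities on the two sides so that the strict inequality $\vep_i(b)>\vep_i(b')+m$ governing the off-diagonal terms in \thmref{thm:multlower} translates faithfully into $\vep_i(b')<\vep_i(b)-m$ in the statement here, and in particular excludes the leading index $b'=\eit{i}^{m}b$ (where $\vep_i(b')=\vep_i(b)-m$ gives equality, not strict inequality); once this bookkeeping is done and the adjointness plus orthonormality are combined, no substantive obstacle remains.
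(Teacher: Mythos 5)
Your proposal is correct and is essentially the paper's own argument: the paper obtains \thmref{theo:multupper} precisely by noting that ${_ir}^{(m)}$ and $r_i^{(m)}$ are adjoint to left and right multiplication by $f_i^{(m)}$ under $(\cdot,\cdot)_K$ (via \eqref{eq:Kformadj}) and then dualizing the expansions of \thmref{thm:multlower} against the biorthogonal pair $\mbf{B}$, $\mbf{B}^{\up}$. Your bookkeeping of the $\vep_i$-inequalities, the identification of the leading coefficient as $\begin{bmatrix}\vep_i(b)\\ m\end{bmatrix}$, and the inheritance of bar-invariance and integrality of the coefficients all match the intended proof.
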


As a special case, we have the following result.
\begin{cor}[{\cite[Lemma 5.1.1.]{Kas:global}}]\label{cor:string1}
Let $b\in \mscr{B}(\infty)$ with $\vep_i(b)=c$ (resp.\ $\vep_i^*(b)=c$).
Then we have ${_ir}^{(c)}\Gup(b)=\Gup(\eit{i}^{\max}b)$ (resp.\ ${r_i}^{(c)}\Gup(b)=\Gup(\eit{i}^{*\max}b)$).
\end{cor}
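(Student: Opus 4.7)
The plan is to derive this corollary as a direct specialization of \thmref{theo:multupper}, the expansion formula for the action of the divided powers ${_ir}^{(m)}$ and ${r_i}^{(m)}$ on dual canonical basis elements. The key observation is that when $m$ equals the maximal value $\vep_i(b)$ (or $\vep_i^*(b)$ in the right-handed version), both the binomial coefficient in the leading term simplifies to $1$ and the error sum collapses to zero for trivial reasons.

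More precisely, I would set $m=c=\vep_i(b)$ in the first expansion of \thmref{theo:multupper}. The leading coefficient becomes the Gaussian binomial $\begin{bmatrix}c \\ c\end{bmatrix}_i = 1$, and the leading term is $\Gup(\eit{i}^c b) = \Gup(\eit{i}^{\max} b)$ by the definition of $\eit{i}^{\max}$. For the remainder sum, the index set requires $\vep_i(b') < \vep_i(b) - m = c - c = 0$, but $\vep_i$ takes nonnegative integer values on $\mscr{B}(\infty)$, so this condition is vacuous and the sum is empty. This yields the desired equality ${_ir}^{(c)}\Gup(b) = \Gup(\eit{i}^{\max} b)$.

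The right-handed case is identical in structure: apply the second expansion in \thmref{theo:multupper} with $m=\vep_i^*(b)$, observe that $\begin{bmatrix}\vep_i^*(b) \\ \vep_i^*(b)\end{bmatrix}_i = 1$ and that the index condition $\vep_i^*(b') < 0$ on the remainder is never satisfied, giving ${r_i}^{(c)}\Gup(b) = \Gup(\eit{i}^{*\max} b)$. There is no real obstacle here beyond having \thmref{theo:multupper} available; the statement is essentially the ``extremal'' boundary case of that expansion, in which the lower-order correction terms are forced to vanish by the nonnegativity of $\vep_i$ and $\vep_i^*$.
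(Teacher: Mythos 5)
Your proposal is correct and matches the paper's intent exactly: the corollary is stated immediately after \thmref{theo:multupper} as "a special case," and the intended derivation is precisely your specialization $m=\vep_i(b)$ (resp.\ $m=\vep_i^*(b)$), where the Gaussian binomial equals $1$ and the remainder sum is empty because $\vep_i$ and $\vep_i^*$ are nonnegative on $\mscr{B}(\infty)$.
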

By the above corollary and \eqref{eq:ir}, we obtain the following result.
\begin{cor}[{\cite[Lemma 2.1]{Rei:mult}}]\label{cor:string2}
For $b_1, b_2\in \mscr{B}(\infty)$
with 
\[\Gup(b_1)\Gup(b_2)=\sum d_{b_1, b_2}^b(q)\Gup(b),\]
we have $\vep_i(b)\leq \vep_{i}(b_1)+\vep_{i}(b_2)$ for any $i\in I$ if $d_{b_1, b_2}^b(q)\neq 0$.
An equality holds at least one $b$.
\end{cor}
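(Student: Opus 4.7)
The plan is to compute ${_ir}^{(m)}\bigl(\Gup(b_1)\Gup(b_2)\bigr)$ in two independent ways and compare, choosing $m$ to be either one above or equal to $\vep_i(b_1)+\vep_i(b_2)$. Set $c_k := \vep_i(b_k)$ for $k=1,2$.

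First, I would extend the twisted Leibniz rule \eqref{eq:ir} by a straightforward induction on $m$ to an iterated form
\[
{_ir}^{(m)}(xy) \;=\; \sum_{j+k=m} q^{\,\ast}\, {_ir}^{(j)}(x)\,{_ir}^{(k)}(y),
\]
valid for homogeneous $x,y$, where the precise $q$-exponent depending on the weights will play no role in what follows. Applied to $x=\Gup(b_1)$ and $y=\Gup(b_2)$, \thmref{theo:multupper} shows that ${_ir}^{(j)}\Gup(b_1)=0$ whenever $j>c_1$ (both the leading binomial and the error terms vanish for degree reasons), and similarly for $b_2$. Hence whenever $m>c_1+c_2$ every summand vanishes, and ${_ir}^{(m)}\bigl(\Gup(b_1)\Gup(b_2)\bigr)=0$.

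Next, I would expand $\Gup(b_1)\Gup(b_2) = \sum_b d^b_{b_1,b_2}(q)\Gup(b)$ and set $c := \max\{\vep_i(b) : d^b_{b_1,b_2}(q) \neq 0\}$. Using \thmref{theo:multupper} again, ${_ir}^{(c)}\Gup(b)$ vanishes for $\vep_i(b)<c$, while \corref{cor:string1} gives ${_ir}^{(c)}\Gup(b) = \Gup(\eit{i}^{\max}b)$ when $\vep_i(b)=c$. Since distinct $b$ with $\vep_i(b)=c$ yield distinct $\eit{i}^{\max}b$, the resulting expression
\[
{_ir}^{(c)}\!\bigl(\Gup(b_1)\Gup(b_2)\bigr) \;=\; \sum_{\vep_i(b)=c,\; d^b_{b_1,b_2}(q)\neq 0} d^b_{b_1,b_2}(q)\,\Gup(\eit{i}^{\max}b)
\]
is a non-trivial linear combination of distinct elements of $\mbf{B}^{\up}$, hence non-zero. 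Comparing with the vanishing from the previous paragraph forces $c \leq c_1+c_2$, which is the desired inequality.

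For the attainment statement, I would apply the iterated Leibniz formula at $m=c_1+c_2$: the only pair $(j,k)$ satisfying $j\leq c_1$ and $k\leq c_2$ with $j+k=c_1+c_2$ is $(c_1,c_2)$, so only that single summand survives, and by \corref{cor:string1} it equals $q^{\,\ast}\,\Gup(\eit{i}^{\max}b_1)\Gup(\eit{i}^{\max}b_2)$. This is non-zero because $\Uq^-(\mfr{g})$ is an integral domain. Expanding the same element through $\Gup(b_1)\Gup(b_2) = \sum d^b_{b_1,b_2}(q)\Gup(b)$ then forces some $b$ with $d^b_{b_1,b_2}(q)\neq 0$ to satisfy $\vep_i(b)\geq c_1+c_2$; combined with the bound already established, $\vep_i(b)=c_1+c_2$ for at least one such $b$. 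The only technical point worth care is the bookkeeping for the iterated Leibniz rule, but since the precise $q$-powers are never read off, this reduces to a clean induction; the remainder is a leading-order non-vanishing argument that exploits the linear independence of $\mbf{B}^{\up}$ and the absence of zero divisors in $\Uq^-(\mfr{g})$.
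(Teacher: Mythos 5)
Your proof is correct and takes essentially the same route as the paper: both compute ${_ir}^{(m)}\bigl(\Gup(b_1)\Gup(b_2)\bigr)$ in two ways, using the iterated twisted Leibniz rule together with \thmref{theo:multupper} and \corref{cor:string1}, and both deduce the attainment of equality from the top-degree identity \eqref{eq:string3} plus the absence of zero divisors in $\Uq^-(\mfr{g})$. The only (cosmetic) difference is that the paper phrases the inequality as a descending induction on $\vep_i(b)$, whereas you apply ${_ir}^{(c)}$ at the maximal $c$ and invoke the linear independence of $\mbf{B}^{\up}$.
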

If fact, we can prove prove $d^{b}_{b_{1}, b_{2}}(q)=0$ if $\vep_{i}(b)>\vep_{i}(b_{1})+\vep_{i}(b_{2})$
by the descending induction on $\vep_{i}(b)$.
In particular, the positivity of $d^{b}_{b_{1}, b_{2}}$, assumed in \cite{Rei:mult}, is not used in the proof.
The second assertion follows from 
\begin{equation}
\begin{split}
\;&{_{i}r}^{(\vep_{i}(b_{1})+\vep_{i}(b_{2}))}(\Gup(b_{1})\Gup(b_{2})) \\
=\;&q^{N}\Gup(\eit{i}^{\max}b_{1})\Gup(\eit{i}^{\max}b_{2})\\
=\;&\sum_{\vep_{i}(b_{1})+\vep_{i}(b_{2})=\vep_{i}(b)}q^{N}d^{b}_{b_{1}, b_{2}}(q)\Gup(\eit{i}^{\max}b)
\end{split}\label{eq:string3}
\end{equation}
for some $N\in \mbb{Z}$.
\begin{NB}
In particular, we have 

\begin{align}
_ir^{\vep_i(b_1)+\vep_i(b_2)}(\Gup(b_1)\Gup(b_2))
=&q^{\vep_{i}(b_1)}q^{-\vep_{i}(b_2)(\wt b_1, \alpha_i)}\Gup(\eit{i}^{\max}b_1)\Gup(\eit{i}^{\max}b_2) \\
=&\sum_{\vep_i(b)=\vep_i(b_1)+\vep_i(b_2)}d_{b_1, b_2}^b(q)\Gup(\eit{i}^{\max}b).\label{eq:stringest}
\end{align}

\end{NB}
\begin{NB}
\begin{proof}
For the completeness, we give a proof.
Firstly, we have 
\[_ir^{(m)}(xy)=\sum_{m'+m''=m}q_i^{m'}q^{-m''(\wt x, \alpha_i)}~\!_ir^{(m')}(x)_ir^{(m'')}(y).\]
By the multiplication formula, we obtain:
\[_ir^{\vep_{i}(b_1)+\vep_{i}(b_2)}(\Gup(b_1)\Gup(b_2))=%
q_i^{\vep_{i}(b_1)}q^{-\vep_{i}(b_2)(\wt b_1, \alpha_i)}\Gup(\eit{i}^{\max}b_1)\Gup(\eit{i}^{\max}b_2),\]
and $_ir^{\vep_{i}(b_1)+\vep_{i}(b_2)+1}(\Gup(b_1)\Gup(b_2))=0$.
On the other hand, if we consider the expansion 
$\Gup(b_1)\Gup(b_2)=\sum d_{b_1, b_2}^b(q) \Gup(b)$ and apply $_ir^{\vep_{i}(b_1)+\vep_{i}(b_2)+1}$.
Here we have 
\begin{align*}
0= &_ir^{\vep_{i}(b_1)+\vep_{i}(b_2)+1}(\sum d_{b_1, b_2}^b(q) \Gup(b))\\
 =&\sum d_{b_1, b_2}^b(q) \left(\begin{bmatrix}\vep_i(b) \\ \vep_{i}(b_1)+\vep_{i}(b_2)+1\end{bmatrix}
 \Gup(\eit{i}^{\vep_i(b_1)+\vep_i(b_2)+1}b)+\sum_{\vep_i(b')<\vep(b)-(\vep_{i}(b_1)+\vep_{i}(b_2)+1)}E_{i, bb'}^*(q)\Gup(b')\right),
\end{align*}
where we used the \thmref{theo:multupper}.
Here we have $d_{b_1, b_2}^b(q)=0$ if $\vep_{i}(b)>\vep_{i}(b_{1})+\vep_{i}(b_{2})$ by the descending induction on $\vep_{i}(b)$,
so we obtain the assertion.
Then we 
In particular, we obtain
\[_ir^{\vep_i(b_1)+\vep_i(b_2)}(\Gup(b_1)\Gup(b_2))=\sum_{\vep_i(b)=\vep_i(b_1)+\vep_i(b_2)}d_{b_1, b_2}^b(q)\Gup(\eit{i}^{\max}b).\]
\end{proof}
\end{NB}

As a corollary of \corref{cor:string1} and \corref{cor:string2}, we obtain  the following criterion.
\begin{cor}\label{cor:string3}
\textup{(1)}
If $b_1\bot b_2$, then $\eit{i}^{\max}b_1 \bot \eit{i}^{\max}b_2$ for any $i\in I$.
In fact, we have $\vep_{i}(b_{1}\circledast b_{2})=\vep_{i}(b_{1})+\vep_{i}(b_{2})$ and 
$\eit{i}^{\max}(b_{1})\circledast \eit{i}^{\max}(b_{2})=\eit{i}^{\max}(b_{1}\circledast b_{2})$.
Similar statement holds for $\eit{i}^{*\max}$.

\textup{(2)}
If $b$ is (resp.\ strongly) real, $\eit{i}^{\max}(b)$ is (resp.\ strongly) real for any $i\in I$.
In fact, we have $\vep_{i}(b^{[m]})=m\vep_{i}(b)$ and 
$(\eit{i}^{\max}b)^{[m]}=\eit{i}^{\max}(b^{[m]})$ for $m=2$ (resp.\ any $m$).
Similar statement holds for $\eit{i}^{*\max}$.

\end{cor}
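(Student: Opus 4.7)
The plan is to reduce everything to part (1) and handle (1) by applying the operator ${_ir}^{(\vep_i(b_1)+\vep_i(b_2))}$ to both sides of the defining identity of compatibility and reading off what we need from \corref{cor:string1}, \corref{cor:string2}, and the computation \eqref{eq:string3}.

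First I would prove (1) for $\eit{i}^{\max}$. Suppose $b_1\bot b_2$, so that $\Gup(b_1)\Gup(b_2)=q^{N}\Gup(b_1\circledast b_2)$ for some $N\in\mbb{Z}$. Set $m:=\vep_i(b_1)+\vep_i(b_2)$ and apply ${_ir}^{(m)}$ to both sides. By \eqref{eq:string3}, the left-hand side equals $q^{M}\Gup(\eit{i}^{\max}b_1)\Gup(\eit{i}^{\max}b_2)$ for some $M\in\mbb{Z}$, which is nonzero because it is a product of two nonzero elements of the dual canonical basis (it lies in the $\mca{A}$-form $\Uq^-(\mfr{g})^{\up}_{\mca{A}}$ and its leading term on the crystal basis is $\eit{i}^{\max}b_1\otimes \eit{i}^{\max}b_2$-type). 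On the right-hand side, \thmref{theo:multupper} implies that ${_ir}^{(m)}\Gup(b_1\circledast b_2)=0$ unless $\vep_i(b_1\circledast b_2)\ge m$; combined with the reverse inequality $\vep_i(b_1\circledast b_2)\le m$ from \corref{cor:string2}, we conclude $\vep_i(b_1\circledast b_2)=\vep_i(b_1)+\vep_i(b_2)$. With this equality in hand, \corref{cor:string1} gives ${_ir}^{(m)}\Gup(b_1\circledast b_2)=\Gup(\eit{i}^{\max}(b_1\circledast b_2))$. Equating the two sides yields
\[
\Gup(\eit{i}^{\max}b_1)\Gup(\eit{i}^{\max}b_2)\simeq \Gup(\eit{i}^{\max}(b_1\circledast b_2)),
\]
which simultaneously establishes $\eit{i}^{\max}b_1\bot\eit{i}^{\max}b_2$ and the identity $\eit{i}^{\max}(b_1)\circledast\eit{i}^{\max}(b_2)=\eit{i}^{\max}(b_1\circledast b_2)$.

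For the $\eit{i}^{*\max}$ statement, I would either repeat the same argument with $r_i$ and ${r_i}^{(m)}$ in place of ${_ir}$ and ${_ir}^{(m)}$ (using the second halves of \thmref{theo:multupper}, \corref{cor:string1}, \corref{cor:string2}, all of which are stated in parallel), or transport via the $*$-involution: by \lemref{lem:*upper}, $\Gup(*b)=*\Gup(b)$, and since $*$ is an anti-automorphism we get $*b_1\bot *b_2$ with $(*b_1)\circledast(*b_2)=*(b_2\circledast b_1)$; then apply the already-proven $\eit{i}^{\max}$ statement to $*b_1,*b_2$ and take $*$ back, using $\eit{i}^{*\max}=*\circ\eit{i}^{\max}\circ *$.

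Part (2) is now immediate from (1). For the real case, apply (1) with $b_1=b_2=b$: the assumption $b\bot b$ gives $\eit{i}^{\max}b\bot\eit{i}^{\max}b$ with $(\eit{i}^{\max}b)^{[2]}=\eit{i}^{\max}(b^{[2]})$ and $\vep_i(b^{[2]})=2\vep_i(b)$. For the strongly real case, induct on $m$: if $b,b,\dots,b$ ($m$ copies) is compatible, then by (1) applied iteratively the same is true for $\eit{i}^{\max}b$, giving $(\eit{i}^{\max}b)^{[m]}=\eit{i}^{\max}(b^{[m]})$ and $\vep_i(b^{[m]})=m\vep_i(b)$. The $\eit{i}^{*\max}$ variant is handled identically.

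The only genuine subtlety I foresee is the nonvanishing of $\Gup(\eit{i}^{\max}b_1)\Gup(\eit{i}^{\max}b_2)$ in the key step; this is what forces the inequality from \corref{cor:string2} to be an equality, so care is needed in invoking \eqref{eq:string3}, but it is unproblematic since both factors are nonzero dual canonical basis elements and the dual canonical basis is $\mbb{Q}(q)$-linearly independent, so their product is nonzero in $\Uq^-(\mfr{g})$.
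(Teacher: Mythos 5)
Your argument is correct and is essentially the paper's own route: the corollary is presented there as a direct consequence of \corref{cor:string1}, \corref{cor:string2} and the computation \eqref{eq:string3}, which are exactly the tools you invoke, and your treatment of part (2) by iteration and of the $\eit{i}^{*\max}$ statements via the $*$-involution (or the parallel $r_i$-argument) matches the intended reduction. The one quibble is your justification of the nonvanishing of $\Gup(\eit{i}^{\max}b_1)\Gup(\eit{i}^{\max}b_2)$ --- linear independence of $\mbf{B}^{\up}$ alone does not exclude zero divisors (one needs that $\Uq^-(\mfr{g})$ is a domain) --- but the detour is unnecessary: since the expansion of $\Gup(b_1)\Gup(b_2)$ consists of the single term $b_1\circledast b_2$, the clause of \corref{cor:string2} stating that equality $\vep_i(b)=\vep_i(b_1)+\vep_i(b_2)$ holds for at least one $b$ already yields $\vep_i(b_1\circledast b_2)=\vep_i(b_1)+\vep_i(b_2)$, after which \corref{cor:string1} and \eqref{eq:string3} finish exactly as you say.
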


\begin{lem}
If $b$ is (resp.\ strongly) real, we have $b^{[2]}=S_{2}(b)$ (resp.\ $b^{[m]}=S_{m}(b)$).
\end{lem}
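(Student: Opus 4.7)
The plan is to prove the identity $b^{[m]} = S_m(b)$ by induction on $-\tr(\wt b) \in \mbb{Z}_{\geq 0}$, invoking the characterization of $S_m$ from the cited proposition together with the compatibility of $(-)^{[m]}$ with $\eit{i}^{\max}$ supplied by \corref{cor:string3}.

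For the base case $b = u_\infty$, we have $\Gup(u_\infty) = 1$, so $\Gup(u_\infty)^m = 1 = \Gup(u_\infty)$, forcing $u_\infty^{[m]} = u_\infty$. On the other hand, $S_m(u_\infty) = u_\infty$ by the defining property of $S_m$, so the identity holds.

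For the inductive step, suppose $b \neq u_\infty$ and choose $i \in I$ with $c := \vep_i(b) \geq 1$. Then $b = \fit{i}^c(\eit{i}^{\max} b)$, and $\eit{i}^{\max} b$ has strictly smaller $-\tr(\wt \cdot)$. By \corref{cor:string3}\textup{(2)}, $\eit{i}^{\max} b$ is again (strongly) real with $(\eit{i}^{\max}b)^{[m]} = \eit{i}^{\max}(b^{[m]})$ and $\vep_i(b^{[m]}) = mc$. The induction hypothesis gives $(\eit{i}^{\max}b)^{[m]} = S_m(\eit{i}^{\max} b)$, so $\eit{i}^{\max}(b^{[m]}) = S_m(\eit{i}^{\max} b)$. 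Since $\vep_i(b^{[m]}) = mc$, applying $\fit{i}^{mc}$ recovers
\[
b^{[m]} = \fit{i}^{mc}\bigl(\eit{i}^{\max}(b^{[m]})\bigr) = \fit{i}^{mc}\bigl(S_m(\eit{i}^{\max} b)\bigr).
\]
By iterating the relation $S_m(\fit{i}x) = \fit{i}^m S_m(x)$ from the characterizing properties of the inflation $S_m$, we obtain $\fit{i}^{mc} S_m(\eit{i}^{\max} b) = S_m(\fit{i}^c \eit{i}^{\max} b) = S_m(b)$, and the induction closes.

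The real engine of the argument is \corref{cor:string3}, which upgrades the multiplicative identity $\Gup(b)^m \simeq \Gup(b^{[m]})$ into crystal-theoretic identities matching precisely the clauses defining $S_m$; once this bridge is established the induction is mechanical. The only point requiring care is verifying that $b^{[m]}$ satisfies \emph{enough} of the $S_m$-axioms to invoke the uniqueness of $S_m$ --- here we use that $b^{[m]}$ is determined by its image $\eit{i}^{\max}b^{[m]}$ together with the integer $\vep_i(b^{[m]})$, both of which match those of $S_m(b)$.
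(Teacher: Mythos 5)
Your argument is correct and is essentially the paper's proof: the paper likewise connects $b$ to $u_\infty$ by successive applications of $\eit{i}^{\max}$ through (strongly) real elements and concludes from the formulas $\vep_i(b^{[m]})=m\vep_i(b)$, $(\eit{i}^{\max}b)^{[m]}=\eit{i}^{\max}(b^{[m]})$ of \corref{cor:string3}(2). You have merely made the induction and the matching with the defining properties of $S_m$ explicit, which is a faithful filling-in of the same route.
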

\begin{proof}
For any $b$ with $\tr(\wt(b))>0$, there exists $i\in I$ such that $\vep_{i}(b)>0$.
Therefore  we can connect $b$ to $u_{\infty}$ by a path consisting of (strongly) real elements by successive applications of $\eit{i}^{\max}$'s.
From the formula in \corref{cor:string3} (2), we get the assertion.
\end{proof}
\subsection{Compatibilities of the dual canonical basis}
In this subsection, we study the dual canonical basis of integrable highest weight modules and its compatibilities with tensor products.
\subsubsection{}
We recall the definition of the dual canonical base of the integrable highest weight module $V(\lambda)$ following \cite[4.2]{Kas:global}.
Kashiwara call it the \emph{upper global basis}.
Let $M$ be an integrable $\U_{q}(\mfr{g})$-module with a weight decomposition $M=\bigoplus_{\lambda\in P}M_{\lambda}$.
For $u\in \Ker(e_{i})\cap M_{\lambda}$ and $0\leq n\leq \bracket{h_{i}, \lambda}$, 
we define other modified root operators called the \emph{upper crystal operators}:
\begin{align*}
\eit{i}^{\up}(f_{i}^{(n)}u)&=\frac{[\bracket{h_{i}, \lambda}-n+1]_{i}}{[n]_{i}}f_{i}^{(n-1)}u, \\
\fit{i}^{\up}(f_{i}^{(n)}u)&=\frac{[n+1]_{i}}{[\bracket{h_{i}, \lambda}-n]_{i}}f_{i}^{(n+1)}u.
\end{align*}
We have a $\Qq$-linear anti-automorphism $\varphi$ on $\Uq(\mfr{g})$ defined by 
\begin{align}
\varphi(e_i)=f_i, && \varphi(f_i)=e_i, && \varphi(q^h)=q^h. \label{eq:varphi}
\end{align}
For $\lambda\in P_+$, we have a unique symmetric non-degenerate bilinear form $(~, ~)_{\lambda}\colon V(\lambda)\otimes V(\lambda)\to \Qq$
which satisfies 
\begin{subequations}
\begin{align}
(\varphi(x)u, v)_\lambda&=(u, xv)_\lambda,\label{eq:innerprod_rep} &&  \text{~for~} u, v\in V(\lambda) \text{~and~} x\in \Uq(\mfr{g}),\\
(u_\lambda, u_\lambda)_\lambda&=1.
\end{align}
\end{subequations}
Then we have
\begin{subequations}
\begin{align}
(\eit{i}^{\up}u, v)_{\lambda}&=(u, \fit{i}^{\low}v)_{\lambda}, \\
(\fit{i}^{\up}u, v)_{\lambda}&=(u, \eit{i}^{\low}v)_{\lambda}.
\end{align}
\end{subequations}
Using $(\cdot, \cdot)_{\lambda}$, we define the dual bar involution $\sigma_{\lambda}$ by 
\[(\sigma_{\lambda}u, v)_{\lambda}:=\overline{(u, \overline{v})_{\lambda}}.\]
This is well-defined since $(\cdot, \cdot)_{\lambda}$ is a non-degenerate bilinear form.
We set 
\begin{subequations}
\begin{align}
	V(\lambda)_{\mca{A}}^{\up}&:=\{u\in V(\lambda); (u, V(\lambda)_{\mca{A}})_\lambda\subset \mca{A}\}, \\
	\Lup(\lambda)&:=\{u\in V(\lambda); (u, \Llow(\lambda))_\lambda\subset \mca{A}_0\}.
\end{align}
\end{subequations}
Then we have $\sigma_{\lambda}(\Lup(\lambda))=\{u\in V(\lambda); (u, \overline{\Llow(\lambda)})_\lambda\subset \mca{A}_\infty\}$.
Kashiwara denote $\sigma_{\lambda}(\Lup(\lambda))$ by $\overline{\mscr{L}}^{\up}(\lambda)$.
The triple $(\Lup(\lambda), \sigma_{\lambda}(\Lup(\lambda)), V(\lambda)_{\mca{A}}^{\up})$ is balanced by \cite[Lemma 2.2.3]{Kas:global}.

\begin{prop}\label{prop:dualVlambda}
Let $\mscr{B}^{\up}(\lambda)$ be the dual basis of $\mscr{B}^{\low}(\lambda)$ with respect to the induced pairing  
$(\cdot, \cdot)_{\lambda}\colon \Lup(\lambda)/q\Lup(\lambda)\times \Llow(\lambda)/q\Llow(\lambda)\to \mbb{Q}$,
then the pair $(\Lup(\lambda), \mscr{B}^{up}(\lambda))$ is an \emph{upper crystal base},
that is 
\begin{enumerate}
	\item $\Lup(\lambda)$ is a free $\mca{A}_{0}$-module with $\Qq\otimes_{\mca{A}_{0}}\Lup(\lambda)\simeq V(\lambda)$,
	\item $\fit{i}^{\up}\Lup(\lambda)\subset \Lup(\lambda)$ and $\eit{i}^{\up}\Lup(\lambda)\subset \Lup(\lambda)$,
	\item $\mscr{B}^{\up}(\lambda)\subset \Lup(\lambda)/q\Lup(\lambda)$ is a $\mbb{Q}$-basis,
	\item $\eit{i}^{\up}\mscr{B}^{\up}(\lambda)\subset \mscr{B}^{\up}(\lambda)\sqcup\{0\}$ and 
	$\fit{i}^{\up}\mscr{B}^{\up}(\lambda)\subset \mscr{B}^{\up}(\lambda)\sqcup\{0\}$,
	\item For $b, b'\in \mscr{B}^{\up}(\lambda)$, $b=\fit{i}^{\up}b'$ is equivalent to $\eit{i}^{\up}b=b'$.
\end{enumerate}
\end{prop}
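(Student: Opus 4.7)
The plan is to derive every assertion from the corresponding property of the lower crystal basis $(\Llow(\lambda), \mscr{B}^{\low}(\lambda))$ by transferring across the non-degenerate pairing $(\cdot, \cdot)_\lambda$. The two adjointness formulas $(\eit{i}^{\up}u, v)_\lambda = (u, \fit{i}^{\low}v)_\lambda$ and $(\fit{i}^{\up}u, v)_\lambda = (u, \eit{i}^{\low}v)_\lambda$ stated just before the proposition are the central tools: they say that the upper operators are precisely the adjoints of the lower ones, so preservation statements for the upper side correspond to preservation statements for the lower side, which are already known.

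For (1), I would argue that $\Lup(\lambda)$ is by definition the $\mca{A}_0$-dual lattice of $\Llow(\lambda)$ inside $V(\lambda)$ with respect to $(\cdot, \cdot)_\lambda$. Since $\Llow(\lambda)=\bigoplus_\mu \Llow(\lambda)_\mu$ is a free $\mca{A}_0$-module of finite rank on each weight space and $(\cdot, \cdot)_\lambda$ restricts to a non-degenerate pairing between weight spaces of the same weight, $\Lup(\lambda)$ is also free of the same rank on each weight space, and $\Qq\otimes_{\mca{A}_0}\Lup(\lambda)=V(\lambda)$. For (2), given $u\in \Lup(\lambda)$ and $v\in\Llow(\lambda)$, the adjointness gives $(\eit{i}^{\up}u,v)_\lambda=(u,\fit{i}^{\low}v)_\lambda\in\mca{A}_0$ because $\fit{i}^{\low}\Llow(\lambda)\subset\Llow(\lambda)$; hence $\eit{i}^{\up}u\in\Lup(\lambda)$, and similarly for $\fit{i}^{\up}$.

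For (3), the pairing $(\cdot,\cdot)_\lambda$ descends to a perfect pairing $\Lup(\lambda)/q\Lup(\lambda)\times \Llow(\lambda)/q\Llow(\lambda)\to\mbb{Q}$ on each weight space, since $\Lup(\lambda)$ was defined as the $\mca{A}_0$-dual. Thus $\mscr{B}^{\up}(\lambda)$, the dual basis of $\mscr{B}^{\low}(\lambda)$, is a $\mbb{Q}$-basis of $\Lup(\lambda)/q\Lup(\lambda)$. For (4) and (5), take $b\in \mscr{B}^{\up}(\lambda)$ dual to $b^{\low}\in \mscr{B}^{\low}(\lambda)$. For any $b'^{\low}\in \mscr{B}^{\low}(\lambda)$ the adjointness gives $(\fit{i}^{\up}b, b'^{\low})_\lambda\equiv (b,\eit{i}^{\low}b'^{\low})_\lambda\pmod q$, which is $1$ iff $\eit{i}^{\low}b'^{\low}=b^{\low}$, equivalently (by property (5) of $\mscr{B}^{\low}(\lambda)$) iff $b'^{\low}=\fit{i}^{\low}b^{\low}$. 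Hence in $\Lup(\lambda)/q\Lup(\lambda)$,
\[\fit{i}^{\up}b = (\fit{i}^{\low}b^{\low})^{\vee}\in \mscr{B}^{\up}(\lambda)\sqcup\{0\},\]
with the convention that $0^{\vee}=0$; the argument for $\eit{i}^{\up}$ is symmetric, and (5) is then immediate from the bijection $\mscr{B}^{\up}(\lambda)\leftrightarrow \mscr{B}^{\low}(\lambda)$ intertwining upper and lower operators (with $\eit{i}$ and $\fit{i}$ exchanged).

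The main obstacle is not conceptual but concerns the scalar normalizations: the upper operators $\eit{i}^{\up},\fit{i}^{\up}$ are defined with $q$-binomial coefficients differing from those of $\eit{i}^{\low},\fit{i}^{\low}$, so one must verify carefully that the adjointness relations for the quoted bilinear form $(\cdot,\cdot)_\lambda$ hold with the precise coefficients chosen. Once this adjointness is in hand, together with the fact (from the balanced-triple theorem for $V(\lambda)$) that the induced pairing on the crystal quotients is perfect, everything reduces to formal duality applied to the already established crystal-basis structure on $V(\lambda)$.
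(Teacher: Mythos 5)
Your proposal is correct and is essentially the argument the paper itself relies on (it recalls this result from Kashiwara's construction of the upper global basis): everything is transferred from the lower crystal basis across the non-degenerate pairing $(\cdot,\cdot)_\lambda$ using the adjointness $(\eit{i}^{\up}u,v)_\lambda=(u,\fit{i}^{\low}v)_\lambda$, $(\fit{i}^{\up}u,v)_\lambda=(u,\eit{i}^{\low}v)_\lambda$ and the fact that $\Lup(\lambda)$ is the $\mca{A}_0$-dual lattice of $\Llow(\lambda)$. One small correction: as your own displayed computation $\fit{i}^{\up}b=(\fit{i}^{\low}b^{\low})^{\vee}$ shows, the dual-basis bijection $\mscr{B}^{\up}(\lambda)\leftrightarrow\mscr{B}^{\low}(\lambda)$ intertwines the \emph{same-named} operators (it is an isomorphism of abstract crystals), not with $\eit{i}$ and $\fit{i}$ exchanged --- the exchange occurs only in the adjunction, and your parenthetical remark should be adjusted accordingly.
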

Let $\Gup_\lambda$ be the inverse of $V(\lambda)_{\mca{A}}^{\up}\cap \Lup(\lambda)\cap\sigma_{\lambda}(\Lup(\lambda))\xrightarrow{\sim} \Lup(\lambda)/q\Lup(\lambda)$.
The set $\Gup_{\lambda}(\mscr{B}^{\up}(\lambda))$ is called the \emph{dual canonical basis} of $V(\lambda)$.
By its construction, the dual canonical basis is the dual basis of the canonical basis with respect to $(~, ~)_\lambda$.
We also have
\begin{equation}\Lup(\lambda)_\mu=q^{(\lambda, \lambda)/2-(\mu, \mu)/2}\Llow(\lambda)_\mu \text{~for~} \mu\in P\label{eq:upplow},\end{equation}
see \cite[(4.2.9)]{Kas:global}.
By \eqref{eq:upplow}, we obtain an isomorphism $\Lup(\lambda)/q\Lup(\lambda)\simeq \Llow(\lambda)/q\Llow(\lambda)$.
Through this identification, we have a bijection $\mscr{B}^{\up}(\lambda)\simeq \mscr{B}^{\low}(\lambda)$, and this bijection is an
isomorphism of abstract crystals associated with upper and lower crystal basis.
Hence we can identify $\mscr{B}^{\up}(\lambda)$ with $\mscr{B}^{\low}(\lambda)$ and denote both by $\mscr{B}(\lambda)$ hereafter.
If $\mu\in W\lambda$, this identification is given by the identity as $(\lambda, \lambda)=(\mu, \mu)$.
We can also prove that the canonical base elements and the dual canonical base elements coincide in this case.

\begin{rem}\label{rem:upplow}
For $\Uq^-(\mfr{g})$, we consider the $\Qq$-linear anti-automorphism $a$ of the reduced $q$-analogue $\mscr{B}_q(\mfr{g})$ defined by 
\begin{align}
a({_ir})=f_i, && a(f_i)={_ir}.
\end{align}
Since the (lower) crystal lattice is self-dual by Kashiwara's bilinear form $(\cdot, \cdot)_{K}$,
we do not need to consider the dual lattice of $\mscr{L}(\infty)$.
\end{rem}

\subsubsection{}
Using the pairing $(~, ~)_\lambda$, we consider an $\Qq$-linear embedding $j_\lambda\colon V(\lambda)\to \Uq^-(\mfr{g})$ which is defined in the following commutative diagram:
\[\xymatrix{
V(\lambda)\ar[d]^{j_{\lambda}}\ar[r]^{\sim} %
&V(\lambda)^* \ar[d]^{\pi_\lambda^*}\\
\Uq^-(\mfr{g})\ar[r]^{\sim} & \Uq^-(\mfr{g})^*,
}\]
where the horizontal isomorphisms are induced by the non-degenerate inner products on $V(\lambda)$ and $\Uq^-(\mfr{g})$
and the right vertical homomorphism is the transpose of $\Uq^-(\mfr{g})$-module homomorphism $\pi_\lambda\colon \Uq^-(\mfr{g})\to V(\lambda)$
given by $P\mapsto Pu_{\lambda}$.
Then for $b\in \mscr{B}(\lambda)$, 
we have $j_\lambda\Gup_{\lambda}(b)=\Gup(\jbar_\lambda(b))$,
where $j_{\lambda}$ in the right hand side was defined just after \thmref{theo:grloop}.
Thanks to this equality, there is no fear of confusion even though we use the same symbol $j_{\lambda}$ for different maps.
\begin{NB}
\begin{proof}
For convenience, we give a proof.
For the proof of the statement, it suffices to compute the following inner product.
\begin{align*}
&(j_\lambda\Gup_{\lambda}(\pibar_\lambda(b)), \Glow(b')) =(\Gup_{\lambda}(\pibar_\lambda(b)), \pi_\lambda\Glow(b')) \\
=&(\Gup_{\lambda}(\pibar_\lambda(b)), \Glow_\lambda(\pibar_\lambda(b')))=\delta_{\pibar_\lambda(b), \pibar_\lambda(b')}
\end{align*}
Since $\pibar_{\lambda}(b)=\pibar_{\lambda}(b)\neq 0$ if and only if $b=b'$, we obtain the claim.
\end{proof}
\end{NB}
\subsubsection{}
We use the following result in \cite[7.3.2]{LNT}.
For $\lambda, \lambda_1, \lambda_2, \cdots, \lambda_r\in P_+$ with $\lambda=\sum_j \lambda_j$,
let $\Phi(\lambda_1, \cdots, \lambda_r)\colon V(\lambda_1+\lambda_2+\cdots+\lambda_r)\to V(\lambda_1)\otimes \cdots \otimes V(\lambda_r)$ be
the unique $\Uq(\mfr{g})$-module homomorphism with 
$\Phi(\lambda_1, \cdots, \lambda_r)(u_{\lambda})=u_{\lambda_1}\otimes \cdots \otimes u_{\lambda_r}$.
Then we have the corresponding embeddings
\begin{align*}
&\Phi(\lambda_1, \cdots, \lambda_r)\colon \mscr{B}(\lambda)\hookrightarrow \mscr{B}(\lambda_{1})\otimes \cdots \otimes \mscr{B}(\lambda_{r}), \\
&\Phi(\lambda_1, \cdots, \lambda_r)(\mscr{L}^{\low}(\lambda))\subset \mscr{L}^{\low}(\lambda_{1})\otimes_{\mca{A}_{0}} \cdots \otimes_{\mca{A}_{0}} \mscr{L}^{\low}(\lambda_{r}).
\end{align*}
(See \cite[\S 4.2]{Kas:crystal}.)
Hence we obtain  
\[\Phi(\lambda_1, \cdots, \lambda_r)(\Glow_\lambda(b))\equiv\Glow_{\lambda_{1}}(b_{1})\otimes \cdots \otimes \Glow_{\lambda_{r}}(b_{r}) \mod q(\Llow(\lambda_{1})\otimes \cdots \otimes \Llow(\lambda_{r}))\]
for $\Phi(\lambda_{1}, \cdots, \lambda_{r})(b)=b_{1}\otimes \cdots \otimes b_{r}$ for some $b_{j}\in \mscr{B}^{\low}(\lambda_{j})$.

Let $q_{\lambda_{1}, \cdots, \lambda_{r}}\colon V(\lambda_{1})\otimes \cdots \otimes V(\lambda_{r})\to V(\lambda)$ be 
the homomorphism defined by the commutative diagram
\[\xymatrix{
V(\lambda_1)\otimes \cdots \otimes V(\lambda_r)\ar[d]^{q_{\lambda_1, \cdots, \lambda_r}}\ar[r]^{\sim} %
&V(\lambda_1)^*\otimes \cdots \otimes V(\lambda_r)^*\ar[d]^{\Phi(\lambda_1, \cdots, \lambda_r)^*} \\
V(\lambda)\ar[r]^{\sim} & V(\lambda)^*,
}\]
where the upper horizontal isomorphism is induced by the non-degenerate inner product
$(\cdot, \cdot)_{\lambda_{1}, \cdots, \lambda_{r}}:=(\cdot, \cdot)_{\lambda_{1}}\cdots (\cdot, \cdot)_{\lambda_{r}}$
on $V(\lambda_{1})\otimes \cdots \otimes V(\lambda_{r})$,
the lower horizontal isomorphism is induced by the non-degenrate inner product $(\cdot, \cdot)_{\lambda}$ on $V(\lambda)$
and the right vertical homomorphism is the transpose of $\Phi(\lambda_{1}, \cdots, \lambda_{r})$.

\begin{prop}\label{prop:prod_cry}
Let  $\lambda_1, \cdots, \lambda_r\in P_+$ 
and $b_j\in \mscr{B}(\lambda_j)~(1\leq j\leq r)$.
Assume that there exists  $b_1\diamond \cdots\diamond b_r\in \mscr{B}(\sum \lambda_j)$ with
$\Phi(\lambda_1, \lambda_2, \cdots, \lambda_r)(b_1\diamond \cdots\diamond b_r)=b_1\otimes \cdots \otimes b_r\in \mscr{B}(\lambda_1)\otimes\cdots\otimes \mscr{B}(\lambda_r)$.
Then we have the following equality
\[q_{\lambda_1, \cdots, \lambda_r}(\Gup_{\lambda_1}(b_1)\otimes \cdots \otimes\Gup_{\lambda_r}(b_r))=\Gup_\lambda(b_1\diamond b_2\diamond \cdots\diamond b_r) \mod q\mscr{L}^{\up}(\lambda).\]
\end{prop}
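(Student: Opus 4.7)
The plan is to unfold the definition of $q_{\lambda_1,\dots,\lambda_r}$ as the transpose of $\Phi(\lambda_1,\dots,\lambda_r)$ and detect both sides by pairing them against the lower global basis $\Glow_\lambda(b')$ for $b'\in \mscr{B}(\lambda)$. By the defining commutative diagram,
\[
\bigl(q_{\lambda_1,\dots,\lambda_r}(\Gup_{\lambda_1}(b_1)\otimes\cdots\otimes \Gup_{\lambda_r}(b_r)),\,\Glow_\lambda(b')\bigr)_\lambda
=\bigl(\Gup_{\lambda_1}(b_1)\otimes\cdots\otimes \Gup_{\lambda_r}(b_r),\,\Phi(\lambda_1,\dots,\lambda_r)\Glow_\lambda(b')\bigr)_{\lambda_1,\dots,\lambda_r}.
\]
So the question reduces to evaluating $\Phi(\lambda_1,\dots,\lambda_r)\Glow_\lambda(b')$ well enough to pair it with a tensor product of upper global basis elements.

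Next I would invoke the compatibility of $\Phi(\lambda_1,\dots,\lambda_r)$ with the lower crystal lattice recalled just before the statement: writing $\Phi(\lambda_1,\dots,\lambda_r)(b')=b'_1\otimes\cdots\otimes b'_r\in \mscr{B}(\lambda_1)\otimes\cdots\otimes\mscr{B}(\lambda_r)$, we have
\[
\Phi(\lambda_1,\dots,\lambda_r)\Glow_\lambda(b')\equiv \Glow_{\lambda_1}(b'_1)\otimes\cdots\otimes \Glow_{\lambda_r}(b'_r)
\pmod{q\bigl(\Llow(\lambda_1)\otimes\cdots\otimes \Llow(\lambda_r)\bigr)}.
\]
Since $\Gup_{\lambda_j}(b_j)\in \Lup(\lambda_j)$ and $\bigl(\Lup(\lambda_j),\Llow(\lambda_j)\bigr)_{\lambda_j}\subset \mca{A}_0$ with $\bigl(\Gup_{\lambda_j}(b_j),\Glow_{\lambda_j}(b'_j)\bigr)_{\lambda_j}\equiv \delta_{b_j,b'_j}\pmod{q\mca{A}_0}$, multiplying the factors gives
\[
\bigl(q_{\lambda_1,\dots,\lambda_r}(\Gup_{\lambda_1}(b_1)\otimes\cdots\otimes \Gup_{\lambda_r}(b_r)),\,\Glow_\lambda(b')\bigr)_\lambda
\equiv \prod_{j=1}^r\delta_{b_j,b'_j}\pmod{q\mca{A}_0}.
\]
Because $\Phi(\lambda_1,\dots,\lambda_r)$ is a strict embedding of crystals, the right hand side equals $\delta_{b',\,b_1\diamond\cdots\diamond b_r}$. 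Since the canonical and dual canonical bases of $V(\lambda)$ are dual with respect to $(\cdot,\cdot)_\lambda$, this is exactly $\bigl(\Gup_\lambda(b_1\diamond\cdots\diamond b_r),\Glow_\lambda(b')\bigr)_\lambda\bmod q\mca{A}_0$ for every $b'\in \mscr{B}(\lambda)$.

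To turn this pointwise pairing statement into the required congruence in $\Lup(\lambda)/q\Lup(\lambda)$, I first note that both sides lie in $\Lup(\lambda)$: for $\Gup_\lambda(b_1\diamond\cdots\diamond b_r)$ this is immediate, while for $q_{\lambda_1,\dots,\lambda_r}$ applied to a tensor product of upper global basis elements it follows by dualizing the inclusion $\Phi(\lambda_1,\dots,\lambda_r)\Llow(\lambda)\subset \Llow(\lambda_1)\otimes\cdots\otimes\Llow(\lambda_r)$ using the definition $\Lup(\lambda)=\{u:(u,\Llow(\lambda))_\lambda\subset \mca{A}_0\}$. Then the perfect pairing $\Lup(\lambda)/q\Lup(\lambda)\times \Llow(\lambda)/q\Llow(\lambda)\to \mbb{Q}$ induced by $(\cdot,\cdot)_\lambda$ lets us conclude the desired congruence from the fact that the two elements have identical pairings with every $\Glow_\lambda(b')\bmod q$.

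The only potentially delicate point is verifying that $q_{\lambda_1,\dots,\lambda_r}$ does preserve the upper crystal lattice, i.e.\ that it sends $\Lup(\lambda_1)\otimes_{\mca{A}_0}\cdots\otimes_{\mca{A}_0}\Lup(\lambda_r)$ into $\Lup(\lambda)$; once this inclusion is in hand (via the self-duality encoded in the definitions of $\Lup$ and $\Llow$ and the $\mca{A}_0$-stability of $\Phi$) the rest is a formal unwinding of the defining commutative diagram together with the crystal compatibility of $\Phi$.
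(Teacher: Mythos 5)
Your proposal is correct and follows essentially the same route as the paper: both verify that $q_{\lambda_1,\cdots,\lambda_r}$ sends the tensor product of upper crystal lattices into $\Lup(\lambda)$, then detect the image of $\Gup_{\lambda_1}(b_1)\otimes\cdots\otimes\Gup_{\lambda_r}(b_r)$ by pairing with $\Glow_\lambda(b')$ modulo $q$, using the transpose definition of $q_{\lambda_1,\cdots,\lambda_r}$, the crystal compatibility of $\Phi(\lambda_1,\cdots,\lambda_r)$, and the duality of the canonical and dual canonical bases. The only difference is that you spell out the lattice-preservation step and the final passage through the perfect pairing on $\Lup(\lambda)/q\Lup(\lambda)\times\Llow(\lambda)/q\Llow(\lambda)$ in more detail than the paper, which simply asserts the inclusion; your justification of it is valid.
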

We give the proof for a completeness.
\begin{proof}
We have $q_{\lambda_1, \cdots, \lambda_r}(\Lup(\lambda_1)\otimes_{\mca{A}_0}\cdots \otimes_{\mca{A}_0}\Lup(\lambda_r))\subset \Lup(\lambda)$,
in particular we have $q_{\lambda_1, \cdots, \lambda_r}(\Gup_{\lambda_1}(b_1)\otimes \cdots\otimes \Gup_{\lambda_r}(b_r))\in \Lup(\lambda)$.

Hence to show the statement, it suffices  to compute the following inner product 
\[(q_{\lambda_1, \cdots, \lambda_r}(\Gup_{\lambda_1}(b_1)\otimes \cdots\otimes \Gup_{\lambda_r}(b_r)), \Glow_{\lambda}(b))_\lambda|_{q=0}\]
for $b\in \mscr{B}(\lambda)$.
By its definition of $q_{\lambda_1, \cdots, \lambda_r}$, this is equal to $(b_1\otimes \cdots \otimes b_r, \Phi(\lambda_1, \cdots, \lambda_r)(b))_{\lambda_1, \cdots, \lambda_r}|_{q=0}$.
Since the tensor product of the dual canonical basis is the dual of the tensor product of the canonical basis, this is equal to 
$\delta_{b_1\otimes \cdots \otimes b_r, \Phi(\lambda_1, \cdots, \lambda_r)(b)}=\delta_{b_1\diamond b_2\diamond \cdots\diamond b_r, b}$.
Hence we obtained the assertion.
\end{proof}

\subsubsection{}
To compute a product of dual canonical basis elements of integble highest weight modules,
we need the following modification of the coproduct as in \cite[7.2.5, 7.2.6]{LNT}.
\begin{lem}
For $\lambda, \mu\in P_+$,
let $r_{\lambda, \mu}\colon \Uq^-(\mfr{g})\to \Uq^-(\mfr{g})\otimes \Uq^-(\mfr{g})$ be the $\Qq$-linear map 
defined by
\[r_{\lambda, \mu}\Glow(b)=\sum_{b_1, b_2}d_{b_1, b_2}^b(q)q^{-(\wt(b_2), \lambda)}\Glow(b_1)\otimes \Glow(b_2)\]
for $\Glow(b)\in \Uq^-(\mfr{g})$ with $r(\Glow(b))=\sum_{b_1, b_2}d_{b_1, b_2}^b(q)\Glow(b_1)\otimes \Glow(b_2)$.
Then we have the commutative diagram of $\Qq$-vector spaces
\[\xymatrix{
\Uq^-(\mfr{g}) \ar[r]^{\pi_{\lambda+\mu}} \ar[d]_{r_{\lambda, \mu}}& V(\lambda+\mu) \ar[d]^{\Phi(\lambda, \mu)}\\
\Uq^-(\mfr{g})\otimes \Uq^-(\mfr{g}) \ar[r]_{\pi_\lambda\otimes \pi_\mu}& V(\lambda)\otimes V(\mu)
.}\]
\end{lem}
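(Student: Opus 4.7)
The plan is to unwind the definition of $r_{\lambda,\mu}$, express everything in terms of the twisted coproduct $r$, and then invoke \lemref{lem:rcoproduct} to reduce the commutativity to a straightforward computation of the action of the torus part of $\Delta(x)$ on the tensor product of highest weight vectors.

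Concretely, I would proceed as follows. Since $\Phi(\lambda,\mu)$ is a $\Uq(\mfr{g})$-module homomorphism sending $u_{\lambda+\mu}$ to $u_\lambda\otimes u_\mu$, for any homogeneous $x\in\Uq^-(\mfr{g})$ we have
\[
\Phi(\lambda,\mu)(\pi_{\lambda+\mu}(x))=\Phi(\lambda,\mu)(x\,u_{\lambda+\mu})=\Delta(x)(u_\lambda\otimes u_\mu).
\]
By \lemref{lem:rcoproduct}, writing $r(x)=\sum x_{(1)}\otimes x_{(2)}$ with $x_{(2)}$ homogeneous, this equals
\[
\sum x_{(1)}t_{-\wt(x_{(2)})}u_\lambda\otimes x_{(2)}u_\mu.
\]
Since $t_\xi=q^{\nu(\xi)}$ with $\nu(\xi)=\sum_i\tfrac{(\alpha_i,\alpha_i)}{2}\xi_i h_i$, and $\langle h_i,\lambda\rangle\cdot\tfrac{(\alpha_i,\alpha_i)}{2}=(\alpha_i,\lambda)$, the torus element $t_{-\wt(x_{(2)})}$ acts on $u_\lambda$ by the scalar $q^{-(\wt(x_{(2)}),\lambda)}$. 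Hence
\[
\Phi(\lambda,\mu)(\pi_{\lambda+\mu}(x))=\sum q^{-(\wt(x_{(2)}),\lambda)}\,\pi_\lambda(x_{(1)})\otimes\pi_\mu(x_{(2)}).
\]

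On the other hand, applying $\pi_\lambda\otimes\pi_\mu$ to the defining formula for $r_{\lambda,\mu}$ on $x=\Glow(b)$ and using the expansion $r(\Glow(b))=\sum d^b_{b_1,b_2}(q)\,\Glow(b_1)\otimes\Glow(b_2)$, one obtains exactly the same sum, because the weight of $\Glow(b_2)$ equals $\wt(b_2)$ and the $q$-factor $q^{-(\wt(b_2),\lambda)}$ in the definition of $r_{\lambda,\mu}$ matches the one produced above. Since the canonical basis is a $\Qq$-basis of $\Uq^-(\mfr{g})$, linearity then gives commutativity of the diagram for all $x$.

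The verification is essentially routine once \lemref{lem:rcoproduct} is invoked; the only point requiring care is the matching of the $q$-shift, i.e.\ checking that $\langle\nu(\xi),\lambda\rangle=(\xi,\lambda)$ so that the twist built into $r_{\lambda,\mu}$ coincides with the twist produced by moving $t_{-\wt(x_{(2)})}$ past $u_\lambda$. That is the main (and essentially only) obstacle, and it follows directly from the axioms of the root datum.
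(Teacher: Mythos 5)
Your proposal is correct and follows essentially the same route as the paper's own argument: use that $\Phi(\lambda,\mu)$ is a module homomorphism so that $\Phi(\lambda,\mu)(\pi_{\lambda+\mu}(x))=\Delta(x)(u_\lambda\otimes u_\mu)$, rewrite $\Delta$ via \lemref{lem:rcoproduct}, and evaluate the torus factor $t_{-\wt(x_{(2)})}$ on $u_\lambda$ as the scalar $q^{-(\wt(x_{(2)}),\lambda)}$, which matches the twist built into $r_{\lambda,\mu}$. No gaps.
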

\begin{NB}
\begin{proof}
First we note that 
\[\Phi(\lambda, \mu)(\pi_{\lambda+\mu}(P))=\Phi(\lambda, \mu)(Pu_{\lambda+\mu})=\Delta_-(P)(u_{\lambda}\otimes u_\mu).\]
For $P\in \Uq^-(\mfr{g})$ with $r(P)=\sum P_{(1)}\otimes P_{(2)}$,
we have $\Delta_-(P)=\sum P_{(1)}t_{-\wt(P_{(2)})}\otimes P_{(2)}$.
Then for $b\in \mscr{B}(\infty)$ with $r(\Glow(b))=\sum_{b_1, b_2}d_{b_1, b_2}^b(q)\Glow(b_1)\otimes \Glow(b_2)$,
we have $\Delta_-(\Glow(b))=\sum_{b_1, b_2}d_{b_1, b_2}^b(q)\Glow(b_1)t_{-\wt(b_2)}\otimes \Glow(b_2)$.
Then we obtain that 
\begin{align*}
&\Delta_-(\Glow(b))(u_{\lambda}\otimes u_{\mu})\\
=&\sum_{b_1, b_2}d_{b_1, b_2}^b(q)\Glow(b_1)t_{-\wt(b_2)}\otimes \Glow(b_2)(u_{\lambda}\otimes u_{\mu}) \\
=&\sum_{b_1, b_2}d_{b_1, b_2}^b(q)q^{-(\wt(b_2), \lambda)}\Glow(b_1)\otimes \Glow(b_2)(u_{\lambda}\otimes u_{\mu}).
\end{align*}
\end{proof}
\end{NB}
Using the above modification, we obtain the following formula.
\begin{prop}\label{prop:prod_rep}
For $b_1\in \mscr{B}(\lambda)$ and $b_2\in \mscr{B}(\mu)$,
we have 
\[q^{(\wt b_2-\mu, \lambda)}\Gup(\jbar_\lambda(b_1))\Gup(\jbar_\mu(b_2))=j_{\lambda+\mu}q_{\lambda, \mu}(\Gup_\lambda(b_1)\otimes \Gup_\mu(b_2)).\]
\end{prop}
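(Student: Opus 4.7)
The natural strategy is to pair both sides against $\Glow(b)$ for an arbitrary $b\in\mscr{B}(\infty)$ under Kashiwara's form $(\cdot,\cdot)_K$ and show the resulting scalars agree. Since $\mbf{B}^{\up}$ is the basis dual to $\mbf{B}$ under $(\cdot,\cdot)_K$ and both sides of the claimed identity live in the weight space $\Uq^-(\mfr{g})_{\wt b_1-\lambda + \wt b_2-\mu}$, testing against all $\Glow(b)$ determines each side uniquely.

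For the right-hand side, I would unfold the two adjointnesses built into the definitions of $j_{\lambda+\mu}$ and $q_{\lambda,\mu}$ via their commutative diagrams, namely
\[(j_{\lambda+\mu}w, P)_K = (w, \pi_{\lambda+\mu}P)_{\lambda+\mu}, \qquad (q_{\lambda,\mu}(v_1\otimes v_2), w)_{\lambda+\mu} = (v_1\otimes v_2, \Phi(\lambda,\mu)w)_{\lambda,\mu}.\]
Combining them reduces the computation to $(\Gup_\lambda(b_1)\otimes\Gup_\mu(b_2),\ \Phi(\lambda,\mu)\pi_{\lambda+\mu}\Glow(b))_{\lambda,\mu}$. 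The preceding lemma rewrites this as $(\Gup_\lambda(b_1)\otimes\Gup_\mu(b_2),\ (\pi_\lambda\otimes\pi_\mu)r_{\lambda,\mu}\Glow(b))_{\lambda,\mu}$. Expanding $r_{\lambda,\mu}\Glow(b) = \sum_{b_1',b_2'} d^b_{b_1',b_2'}(q)\, q^{-(\wt b_2',\lambda)}\Glow(b_1')\otimes\Glow(b_2')$ and invoking the crystal compatibility $\pi_\nu\Glow(b'')\equiv\Glow_\nu(\pibar_\nu b'') \bmod q\mscr{L}^{\low}(\nu)$ from \thmref{theo:grloop} together with the duality $(\Gup_\nu(b),\Glow_\nu(b''))_\nu = \delta_{b,b''}$, only the term with $b_1'=\jbar_\lambda(b_1)$ and $b_2'=\jbar_\mu(b_2)$ survives, giving the scalar $q^{-(\wt\jbar_\mu(b_2),\lambda)}\, d^b_{\jbar_\lambda(b_1),\jbar_\mu(b_2)}(q)$.

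For the left-hand side, the plan is to use the Hopf-pairing identity
\[(xy,z)_K = \sum (x, z_{(1)})_K\,(y, z_{(2)})_K, \qquad r(z) = \sum z_{(1)}\otimes z_{(2)},\]
which is not stated explicitly in the excerpt but can be derived from the corresponding property of Lusztig's form (\secref{sec:Lus_form}) via the weight-space conversion of Lemma~\ref{lem:KLforms}, noting that the $\prod(1-q_i^2)^{n_i}$ factors cancel because $n_i(z) = n_i(z_{(1)})+n_i(z_{(2)})$. Pairing $\Gup(\jbar_\lambda b_1)\Gup(\jbar_\mu b_2)$ against $\Glow(b)$ in this way picks out exactly $d^b_{\jbar_\lambda(b_1),\jbar_\mu(b_2)}(q)$, which after multiplication by the prefactor $q^{(\wt b_2-\mu,\lambda)}$ matches the right-hand side once one observes $\wt\jbar_\mu(b_2) = \wt(b_2)-\mu$ (since $\pi_\mu$ shifts weight by $+\mu$).

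The principal obstacle is the careful bookkeeping of $q$-exponents: the factor $q^{-(\wt b_2',\lambda)}$ in $r_{\lambda,\mu}$ ultimately comes from the action of $t_{-\wt x_{(2)}}$ on $u_\lambda$ in \lemref{lem:rcoproduct}, and one must consistently track whether the weight in question refers to an element of $\mscr{B}(\infty)$ (valued in $Q_-$) or of $\mscr{B}(\mu)$ (valued in $\mu+Q_-$). A secondary step is verifying the Hopf-pairing property for $(\cdot,\cdot)_K$, which is the only ingredient used on the LHS that is not a direct quotation from the text.
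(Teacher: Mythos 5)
Your proposal is correct and follows essentially the same route as the paper's own argument: pair both sides with $\Glow(b')$ under $(\cdot,\cdot)_K$, unfold the adjunctions defining $j_{\lambda+\mu}$ and $q_{\lambda,\mu}$, replace $\Phi(\lambda,\mu)\circ\pi_{\lambda+\mu}$ by $(\pi_\lambda\otimes\pi_\mu)\circ r_{\lambda,\mu}$ via the preceding lemma, and conclude with the Hopf-pairing property of $(\cdot,\cdot)_K$, which as you note follows from Lemma~\ref{lem:KLforms}. One small correction: to extract the surviving coefficient exactly you should invoke the exact equality $\Glow(b'')u_\nu=\Glow_\nu(\pibar_\nu(b''))$ (the theorem following Theorem~\ref{theo:grloop}), not the congruence modulo $q\mscr{L}^{\low}(\nu)$, since the latter would only yield the identity up to terms in $q\mca{A}_0$.
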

\begin{NB}
\begin{proof}
For convenience of the reader, we give a proof.
\begin{align*}
	&(j_{\lambda+\mu}q_{\lambda, \mu}(\Gup_\lambda(b_1)\otimes \Gup_\mu(b_2)), \Glow(b'))_K \\
	=&(q_{\lambda, \mu}(\Gup_\lambda(b_1)\otimes \Gup_\mu(b_2)), \pi_{\lambda+\mu}\Glow(b'))_K \\
	=&(\Gup_\lambda(b_1)\otimes \Gup_\mu(b_2), \Phi(\lambda, \mu)\pi_{\lambda+\mu}\Glow(b')) \\
	=&(\Gup_\lambda(b_1)\otimes \Gup_\mu(b_2), (\pi_{\lambda}\otimes\pi_{\mu})r_{\lambda, \mu}\Glow(b')) \\
	=&(\Gup(\jbar_\lambda b_1)\otimes \Gup(\jbar_\mu b_2), r_{\lambda, \mu}\Glow(b')) \\
	=&q^{-(\wt b_2-\mu, \lambda)}(\Gup(\jbar_\lambda b_1)\otimes \Gup(\jbar_\mu b_2), r\Glow(b'))
\end{align*}
Hence, we have obtain the equality.
\end{proof}
\end{NB}
\subsubsection{}
Combining \propref{prop:prod_rep} with \propref{prop:prod_cry}, we obtain the following proposition.
\begin{NB}
This is false!
This is a generalization of \cite[1.9]{Cal:adapted}, in our argument we don't need the assumption
on the string parametrization.
\end{NB}
\begin{prop}
Let  $\lambda_1, \cdots, \lambda_r\in P_+$ 
and $b_j\in \mscr{B}(\lambda_j)~(1\leq j\leq r)$.
Assume that there exists  $b_1\diamond \cdots\diamond b_r\in \mscr{B}(\sum \lambda_j)$ with
$\Phi(\lambda_1, \lambda_2, \cdots, \lambda_r)(b_1\diamond \cdots\diamond b_r)=b_1\otimes \cdots \otimes b_r\in \mscr{B}(\lambda_1)\otimes\cdots\otimes \mscr{B}(\lambda_r)$.
Then there exists a unique $m\in \mbb{Z}$ such that
\[q^m\Gup(\jbar_{\lambda_1}(b_1))\cdots \Gup(\jbar_{\lambda_r}(b_r))=\Gup(\jbar_{\sum \lambda_i}(b_1\diamond \cdots\diamond b_r)) \mod q\mscr{L}(\infty).\]
\end{prop}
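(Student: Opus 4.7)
The plan is to induct on $r$. The base case $r=1$ is trivial (take $m=0$), so the crux is the case $r=2$.

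For $r=2$, combine Proposition~\ref{prop:prod_rep} with Proposition~\ref{prop:prod_cry}. The former gives
\[
q^{(\wt b_2-\lambda_2,\lambda_1)}\Gup(\jbar_{\lambda_1}(b_1))\Gup(\jbar_{\lambda_2}(b_2))
= j_{\lambda_1+\lambda_2}\bigl(q_{\lambda_1,\lambda_2}(\Gup_{\lambda_1}(b_1)\otimes\Gup_{\lambda_2}(b_2))\bigr),
\]
while the latter says $q_{\lambda_1,\lambda_2}(\Gup_{\lambda_1}(b_1)\otimes\Gup_{\lambda_2}(b_2))\equiv\Gup_{\lambda_1+\lambda_2}(b_1\diamond b_2) \mod q\Lup(\lambda_1+\lambda_2)$. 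Since $j_{\lambda_1+\lambda_2}\Gup_{\lambda_1+\lambda_2}(b_1\diamond b_2)=\Gup(\jbar_{\lambda_1+\lambda_2}(b_1\diamond b_2))$, all that remains is to transport this congruence through $j_{\lambda_1+\lambda_2}$, i.e., to verify $j_{\lambda_1+\lambda_2}(\Lup(\lambda_1+\lambda_2))\subset\mscr{L}(\infty)$. I would deduce this from the observation that $\{\Gup_\lambda(b)\}_{b\in\mscr{B}(\lambda)}$ is an $\mca{A}_0$-basis of $\Lup(\lambda)$ (by balancedness of the triple defining the dual canonical basis of $V(\lambda)$), that $j_\lambda$ sends each such basis vector into $\mbf{B}^{\up}$, and that $\mbf{B}^{\up}$ is itself an $\mca{A}_0$-basis of $\mscr{L}(\infty)$ thanks to the self-duality~\eqref{eq:crylat} of the crystal lattice under $(\cdot,\cdot)_K$. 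Hence $m=(\wt b_2-\lambda_2,\lambda_1)$ works for $r=2$.

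For the inductive step, associativity of tensor products of crystals gives the factorization
\[
\Phi(\lambda_1,\ldots,\lambda_r)=(\Phi(\lambda_1,\ldots,\lambda_{r-1})\otimes\mathrm{id})\circ\Phi(\textstyle\sum_{j<r}\lambda_j,\lambda_r),
\]
so the hypothesis that $b_1\otimes\cdots\otimes b_r$ lies in the image of $\Phi(\lambda_1,\ldots,\lambda_r)$ produces a unique $b'\in\mscr{B}(\sum_{j<r}\lambda_j)$ satisfying $\Phi(\sum_{j<r}\lambda_j,\lambda_r)(b_1\diamond\cdots\diamond b_r)=b'\otimes b_r$ and $\Phi(\lambda_1,\ldots,\lambda_{r-1})(b')=b_1\otimes\cdots\otimes b_{r-1}$; renaming $b'=:b_1\diamond\cdots\diamond b_{r-1}$, the inductive hypothesis applies to the first $r-1$ factors, and the $r=2$ case then handles the pair $(b_1\diamond\cdots\diamond b_{r-1},b_r)$. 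Summing the two exponents produces the required $m$.

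Uniqueness of $m$ is automatic once existence is known: the right-hand side $\Gup(\jbar_{\sum\lambda_j}(b_1\diamond\cdots\diamond b_r))$ is a crystal basis element, hence nonzero in $\mscr{L}(\infty)/q\mscr{L}(\infty)$, so the $q$-valuation of $\Gup(\jbar_{\lambda_1}(b_1))\cdots\Gup(\jbar_{\lambda_r}(b_r))$ relative to $\mscr{L}(\infty)$ is pinned down, and with it the integer $m$. The main obstacle in the argument is not computational but structural: the congruence supplied by Proposition~\ref{prop:prod_cry} lives in the upper crystal lattice $\Lup(\lambda_1+\lambda_2)$ of the highest weight module, and converting it into a congruence in the lower lattice $\mscr{L}(\infty)\subset\Uq^-(\mfr{g})$ via $j_{\lambda_1+\lambda_2}$ is the point where the self-duality of $\mscr{L}(\infty)$ under $(\cdot,\cdot)_K$ must be invoked to tie the two sides of the identity together.
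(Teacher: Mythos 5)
Your $r=2$ case is correct and is exactly the paper's mechanism: the exact identity of \propref{prop:prod_rep}, the congruence of \propref{prop:prod_cry}, and the transport through $j_{\lambda_1+\lambda_2}$, which is indeed harmless because $j_\lambda\Gup_\lambda(b)=\Gup(\jbar_\lambda(b))$ carries the $\mca{A}_0$-basis $\Gup_\lambda(\mscr{B}(\lambda))$ of $\Lup(\lambda)$ into $\mbf{B}^{\up}\subset\mscr{L}(\infty)$; your uniqueness remark is also fine. The genuine gap is in the inductive step. Your inductive hypothesis only records a class modulo $q\mscr{L}(\infty)$: writing $q^{m_1}\Gup(\jbar_{\lambda_1}(b_1))\cdots\Gup(\jbar_{\lambda_{r-1}}(b_{r-1}))=\Gup(\jbar_{\nu}(b'))+qE$ with $\nu=\sum_{j<r}\lambda_j$ and $E\in\mscr{L}(\infty)$, your chaining multiplies this by $q^{m_2}\Gup(\jbar_{\lambda_r}(b_r))$ and silently needs $q^{m_2}E\,\Gup(\jbar_{\lambda_r}(b_r))\in\mscr{L}(\infty)$. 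Nothing in your hypothesis gives this: $\mscr{L}(\infty)$ is not stable under multiplication (already $f_i^{(m)}f_i^{(n)}=\begin{bmatrix}m+n\\ m\end{bmatrix}_i f_i^{(m+n)}$ leaves the lattice, the $q$-binomial containing $q_i^{-mn}$), and by \corref{cor:inverse} a product of two dual canonical basis elements can carry an arbitrarily negative power of $q$ depending on the pairing of weights, so a congruence modulo $q\mscr{L}(\infty)$ cannot be multiplied by a fixed basis element and a fixed $q$-shift without additional control of the error term $E$. The needed control is precisely the information your mod-$q$ statement has thrown away, namely that the product of the first $r-1$ factors lies in $j_\nu(\Lup(\nu))$ with error in $j_\nu(q\Lup(\nu))$.

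The paper sidesteps this entirely: the proof of \propref{prop:prod_rep} uses only that the arguments are weight vectors, so it can be iterated as an \emph{exact} identity, giving $q^m\Gup(\jbar_{\lambda_1}(b_1))\cdots\Gup(\jbar_{\lambda_r}(b_r))=j_{\sum\lambda_j}\,q_{\lambda_1,\ldots,\lambda_r}(\Gup_{\lambda_1}(b_1)\otimes\cdots\otimes\Gup_{\lambda_r}(b_r))$, and then \propref{prop:prod_cry} (which is already stated for general $r$) together with $q_{\lambda_1,\ldots,\lambda_r}(\Lup(\lambda_1)\otimes\cdots\otimes\Lup(\lambda_r))\subset\Lup(\lambda)$ is applied exactly once, so the reduction modulo $q$ happens a single time at the very end. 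If you strengthen your inductive statement to this exact identity (keeping track of membership in $j_\nu(\Lup(\nu))$ rather than a class in $\mscr{L}(\infty)/q\mscr{L}(\infty)$), your induction closes, but it then coincides with the paper's argument; as written, the step from $r-1$ to $r$ does not follow.
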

\begin{NB}
\begin{proof}
By iterating the \propref{prop:prod_rep}, for some $m$, 
we have
\[q^m\Gup(\jbar_{\lambda_1}(b_1))\cdots \Gup(\jbar_{\lambda_r}(b_r))=j_{\sum \lambda_j}q_{\lambda_1, \cdots, \lambda_r}%
(\Gup_{\lambda_1}(b_1)\otimes \cdots \otimes\Gup_{\lambda_r}(b_r)).\]
Here by \propref{prop:prod_cry} we have 
\[j_{\sum \lambda_j}q_{\lambda_1, \cdots, \lambda_r}(\Gup_{\lambda_1}(b_1)\otimes \cdots \otimes\Gup_{\lambda_r}(b_r)) %
\equiv \Gup(\jbar_{\sum \lambda_j}(b_1\diamond \cdots \diamond b_r))\mod q\mscr{L}(\infty).\]
Then we obtain
\begin{equation}
q^m \Gup(\jbar_{\lambda_1}(b_1))\cdots \Gup(\jbar_{\lambda_r}(b_r))=\Gup(\jbar_{\sum \lambda_j}(b_1\diamond \cdots \diamond b_r))+\sum d_{b_1, \cdots, b_r}^{b'}(q)\Gup(b'),
\label{eq:caladapted1}
\end{equation}
with $d_{b_1, \cdots, b_r}^{b'}(q)\in q\mbb{Z}[q]$.
Applying the dual bar involuion, we obtain 
\[q^{-m} \Gup(\jbar_{\lambda_r}(b_r))\cdots \Gup(\jbar_{\lambda_1}(b_1))=\Gup(\jbar_{\sum \lambda_j}(b_1\diamond \cdots \diamond b_r))+\sum d_{b_1, \cdots, b_r}^{b'}(q^{-1})\Gup(b').\]
By the assumption on $q$-commutativitiy, we have
\begin{equation}
q^{m'} \Gup(\jbar_{\lambda_1}(b_1))\cdots\Gup(\jbar_{\lambda_r}(b_r))=\Gup(\jbar_{\sum \lambda_j}(b_1\diamond \cdots \diamond b_r))+\sum d_{b_1, \cdots, b_r}^{b'}(q^{-1})\Gup(b').
\label{eq:caladapted2}
\end{equation}
By compareing \eqref{eq:caladapted1} and \eqref{eq:caladapted2}, we obtain the assertion.
\end{proof}
\end{NB}
\begin{NB}
\begin{rem}
Satoshi Naito informed me that the (ii) in \cite[1.8 Propositon]{Cal:adapted}  does not hold for $r\geq 3$ in general.
But the theorem itself can be proved by considering the induction on $w$.
\end{rem}
\end{NB}

\section{Quantum unipotent subgroup and the dual canonical basis}\label{sec:unip}
\subsection{The Lie algebra $\mfr{n}(w)$}
\subsubsection{}
\begin{NB}
\begin{defn}[{\cite[Section 6.1]{Kum}}]
Let $\Delta_+$ be the set of positive roots associated with the Lie algebra $\mfr{g}$.
A subset $\Theta$ of $\Delta^+$ is called \emph{bracket closed} if for all $\alpha, \beta\in \Theta$
with $\alpha+\beta\in \Delta^+$, we have $\alpha+\beta\in \Theta$.
A subset $\Theta$ of $\Delta^+$ is called \emph{bracket coclosed} if $\Delta^+\setminus \Theta$ is bracket closed.
\end{defn}
\end{NB}
Let $w\in W$ be an element of the Weyl group associated with $\mfr{g}$.
Let $\Delta^+(w):=\Delta^+\cap w\Delta^-=\{\alpha\in \Delta^+ | w^{-1}\alpha<0\}\subset \Delta^+$.
\begin{NB}It is well-known that $\Delta^+(w)$ is bracket closed and bracket coclosed (\cite[Example 6.1.5(b)]{Kum}).\end{NB}
We have the following description of $\Delta^+(w)$ as follows (\cite[Lemma 1.3.14]{Kum}).

For a Weyl group element $w$,
let  $\tw=(i_1, i_2, \cdots, i_l)\in R(w)$ be a reduced expression of $w$, 
where $R(w)$ is the set of reduced expression of $w$.
	For each $1\leq k\leq l=l(w)$, we set
	\[\beta_k:=s_{i_1}s_{i_2}\cdots s_{i_{k-1}}(\alpha_{i_k}).\]
	Then $\Delta^+(w)$ has cardinality exactly equal to $l=l(w)$ and we have
	\[\Delta^+(w)=\{\beta_k\}_{1\leq k\leq l}.\]

Let
\[\mfr{n}(w)=\bigoplus_{\alpha\in \Delta^+(w)}\mfr{g}_{-\alpha}.\]
Let $N(w)$ be the corresponding (pro-)unipotent (pro-) group in \cite[VI]{Kum}.
Then $N(w)$ is a unipotent algebraic group of dimension $l(w)$ and its Lie algebra is $\mfr{n}(w)$.
We can identify the restricted dual $U(\mfr{n}(w))_{\operatorname{gr}}^{*}$ of $U(\mfr{n}(w))$ with the coordinate ring of $N(w)$,
that is $U(\mfr{n}(w))_{\operatorname{gr}}^{*}\simeq \mbb{C}[N(w)]$, see \cite[5.2]{GLS:KacMoody} for more details.

\subsection{Braid group symmetry on $\Uq(\mfr{g})$} \label{sec:Lusbraid}
We define (quantum) root vectors, using Lusztig's braid group symmetry $\{T_i\}$ on $\Uq(\mfr{g})$.
See \cite[Chapter 32]{Lus:intro} for more details.
\subsubsection{}
Following \cite[37.1.3]{Lus:intro},
we define the $\Qq$-algebra automorphisms $T'_{i, e}\colon \Uq(\mfr{g})\to \Uq(\mfr{g})$ for $i\in I$ and $e\in \{\pm 1\}$ by
\begin{subequations}
\begin{align}
	T'_{i, e}(q^h)&=q^{s_i(h)}, \\
	T'_{i, e}(e_i)&=-t_i^{e}f_i, \\
	T'_{i, e}(f_i)&=-e_it_i^{-e}, \\
	T'_{i, e}(e_j)&=\sum_{r+s=-\bracket{h_i, \alpha_j}}(-1)^r q_i^{er}e_i^{(r)}e_j e_i^{(s)} \text{~for~} j\neq i, \\
	T'_{i, e}(f_j)&=\sum_{r+s=-\bracket{h_i, \alpha_j}}(-1)^r q_i^{-er}f_i^{(s)}f_j f_i^{(r)} \text{~for~} j\neq i.
\end{align}
\end{subequations}

For $i\in I$ and $e\in \{\pm 1\}$,
we also define the $\Qq$-algebra automorphisms $T''_{i, e}\colon \Uq(\mfr{g})\to \Uq(\mfr{g})$ by
\begin{subequations}
\begin{align}
	T''_{i, -e}(q^h)&=q^{s_i(h)}, \\
	T''_{i, -e}(e_i)&=-f_it_i^{-e}, \\
	T''_{i, -e}(f_i)&=-t_i^{e}e_i, \\
	T''_{i, -e}(e_j)&=\sum_{r+s=-\bracket{h_i, \alpha_j}}(-1)^r q_i^{er}e_i^{(s)}e_j e_i^{(r)} \text{~for~} j\neq i, \\
	T''_{i, -e}(f_j)&=\sum_{r+s=-\bracket{h_i, \alpha_j}}(-1)^r q_i^{-er}f_i^{(r)}f_j f_i^{(s)} \text{~for~} j\neq i.
\end{align}
\end{subequations}
We have 
\beq T'_{i, e}T''_{i, -e}=T''_{i, -e}T'_{i, e}=\text{id}.\eeq
In the following, we write $T_i=T''_{i, 1}$ and $T_i^{-1}=T'_{i, -1}$ as in \cite[Proposition 1.3.1]{Saito:PBW}.
\subsubsection{}
We define a $q$-analogue of the action of the Weyl group on integrable module following \cite[Chapter 5]{Lus:intro} and \cite{Saito:PBW}.
We use a $q$-analogue of exponential $\exp_q(x)$ defined by 
\[\exp_q(x):=\sum_{n\geq 0}\frac{q^{n(n-1)/2}}{[n]_q!}x^n.\]
We have 
\begin{equation}\exp_{q}(x)\exp_{q^{-1}}(-x)=1.\label{eq:expinv}\end{equation}
For $i\in I$, we define $S_i$ (\cite[(1.2.2), (1.2.13)]{Saito:PBW}) by
\begin{subequations}
\begin{align}
S_i:= \;&\exp_{q_i^{-1}}(q_i^{-1}e_it_{i}^{-1})\exp_{q_i^{-1}}(-f_{i})\exp_{q_i^{-1}}(q_ie_{i}t_{i})q_i^{h_i(h_i+1)/2} \\
=\;&\exp_{q_i^{-1}}(-q_i^{-1}f_it_{i})\exp_{q_i^{-1}}(e_i)\exp_{q_i^{-1}}(-q_if_it_i^{-1})q_i^{h_i(h_i+1)/2}.
\end{align}
\end{subequations}
For integrable $\Uq(\mfr{g})$-modules, the action of $S_i$ is well-defined.
It is known that the action of $\{S_i\}_{i\in I}$ satisfies the braid group relations for the Weyl group $W$.

The braid group symmetry $\{T_i\}_{i\in I}$ defined above is described as
\begin{equation}
T_i(x)=S_ixS_i^{-1} \label{eq:braidadj},
\end{equation}
where the elements are considered in the endomorphism ring of integrable modules,
see \cite[1.3]{Saito:PBW} for more details.

\subsubsection{}
We have the following relationship between $T_{i}, T_{i}^{-1}$ and the $*$-involution.
%
\begin{prop}[{\cite[37.2.4]{Lus:intro}}]
	We have 
	\begin{align}
	* \circ T_i\circ *&=T_i^{-1}.  \label{eq:Ti_inverse}
	\end{align}
\end{prop}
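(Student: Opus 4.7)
The plan is to reduce the identity to a direct check on the algebra generators. Since $*$ is a $\Qq$-algebra anti-involution while $T_i$ is a $\Qq$-algebra automorphism, the composition $*\circ T_i \circ *$ is again a $\Qq$-algebra automorphism (the two anti-involutions compensate). Because $T_i^{-1}=T'_{i,-1}$ is also a $\Qq$-algebra automorphism, and $\Uq(\mfr{g})$ is generated by $q^h$ ($h\in d^{-1}P^*$) together with $e_j, f_j$ ($j\in I$), it suffices to verify
\[ (*\circ T_i \circ *)(x) = T_i^{-1}(x) \qquad \text{for } x\in\{q^h,\, e_j,\, f_j\}. \]

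For the Cartan part, the formula is immediate: since $*(q^h)=q^{-h}$, we have $(*\circ T_i\circ *)(q^h)=*(T_i(q^{-h}))=*(q^{-s_i(h)})=q^{s_i(h)}=T_i^{-1}(q^h)$. For $x=e_i$ or $f_i$, one uses the explicit values $T_i(e_i)=-f_it_i$, $T_i(f_i)=-t_i^{-1}e_i$ and the fact that $*$ fixes $e_i,f_i$ and inverts $t_i$; since $*$ reverses multiplication, one immediately gets $-t_i^{-1}f_i$ and $-e_it_i$ respectively, matching $T_i^{-1}(e_i)$ and $T_i^{-1}(f_i)$.

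The substantive step is the case $j\neq i$. Starting from
\[ T_i(e_j)=\sum_{r+s=-\langle h_i,\alpha_j\rangle}(-1)^r q_i^{-r}\, e_i^{(s)}e_j e_i^{(r)}, \]
apply $*$ on the left and use that $*$ fixes each $e_i^{(k)}$ and $e_j$ but reverses the order of the product; the roles of $r$ and $s$ in the monomial are swapped, producing
\[ (*\circ T_i\circ *)(e_j)=\sum_{r+s=-\langle h_i,\alpha_j\rangle}(-1)^r q_i^{-r}\, e_i^{(r)}e_j e_i^{(s)}, \]
which is precisely the defining formula for $T'_{i,-1}(e_j)=T_i^{-1}(e_j)$. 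The case of $f_j$ is handled by the same mechanism. The key mechanism to highlight is that the passage from $T''_{i,-e}$ to $T'_{i,e}$ in the formulas for $e_j, f_j$ is exactly the index swap $r\leftrightarrow s$ on the outer factors $e_i^{(r)},e_i^{(s)}$, and this is what order reversal under $*$ implements.

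The only potential obstacle is bookkeeping the sign conventions $e\in\{\pm 1\}$ in $T'_{i,e}$ and $T''_{i,-e}$ together with the fact that $T_i=T''_{i,1}$ (so $-e=1$, giving exponents $q_i^{-r}$ on the $T''$ side) matches $T_i^{-1}=T'_{i,-1}$ (so $e=-1$, giving exponents $q_i^{-r}$ on the $T'$ side); once these are aligned, no further computation is needed. Thus the proof is essentially a one-page verification on generators, with no conceptual obstruction.
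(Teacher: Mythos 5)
Your verification is correct: since $*\circ T_i\circ *$ and $T_i^{-1}=T'_{i,-1}$ are both algebra automorphisms, checking them on $q^h$, $e_j$, $f_j$ suffices, and your computations (including the $r\leftrightarrow s$ swap coming from order reversal under $*$ in the rank-one formulas for $j\neq i$) all check out against the paper's definitions of $T'_{i,e}$ and $T''_{i,-e}$ with $T_i=T''_{i,1}$, $T_i^{-1}=T'_{i,-1}$. The paper itself gives no argument, citing \cite[37.2.4]{Lus:intro}, and the proof there is exactly this generator-by-generator verification, so your route coincides with the cited one.
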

\begin{NB}

Moreover we have the following relation between $T'_{i, e}$ and $T''_{i, e}$ for homogenous elements in $\Uq(\mfr{g})$.
\begin{prop}\cite[37.2.4]{Lus:intro}
\begin{align}
	\;&\bar{~} \circ T'_{i, e} \circ \bar{~}= T'_{i, -e}, \\
	\;&\bar{~} \circ T''_{i, e} \circ \bar{~}= T''_{i, -e}, \\
	\;&T''_{i, e}(x)=(-1)^{\bracket{h_i, \xi}}q^{e(\alpha_i, \xi)}T'_{i, e}(x) \text{~for~} x\in \Uq(\mfr{g})_\xi, \\
	\;&T'_{i, e}(x)=(-1)^{\bracket{h_i, \xi}}q^{-e(\alpha_i, \xi)}T''_{i, e}(x) \text{~for~} x\in \Uq(\mfr{g})_\xi.
\end{align}
\end{prop}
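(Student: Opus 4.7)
The plan is to verify all four identities by reducing them to direct computations on the Chevalley generators $q^h$, $e_j$, $f_j$ ($j\in I$), where the actions of $T'_{i,\pm e}$ and $T''_{i,\pm e}$ are given by the explicit formulas in \S\ref{sec:Lusbraid}. The key observations I would use throughout are: the bar involution $\bar{~}$ fixes every Chevalley generator and sends $q\mapsto q^{-1}$; the quantum integers $[n]_i$ are bar-invariant, hence the divided powers $e_i^{(r)}, f_i^{(r)}$ are bar-invariant; and the double conjugation $\bar{~}\circ T'_{i,e}\circ \bar{~}$ is actually a $\Qq$-algebra automorphism because the two bars cancel on $q$-scalars. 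In particular, two $\Qq$-algebra automorphisms agreeing on generators are equal.

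For the first two identities, I would apply the composition $\bar{~}\circ T'_{i,e}\circ \bar{~}$ term-by-term to each generator. On $q^h$: $\bar{~}(T'_{i,e}(q^{-h}))=\bar{~}(q^{-s_i(h)})=q^{s_i(h)}$, which matches $T'_{i,-e}(q^h)$ since the formula for $T'_{i,\pm e}$ on $q^h$ is independent of $e$. On $e_i$: $\bar{~}(T'_{i,e}(e_i))=\bar{~}(-t_i^{e}f_i)=-t_i^{-e}f_i=T'_{i,-e}(e_i)$, and similarly for $f_i$. On $e_j$ with $j\ne i$: bar-conjugating the explicit sum leaves $(-1)^r$, the divided powers, and $e_j$ untouched, while replacing the coefficient $q_i^{er}$ by $q_i^{-er}$, which is precisely the defining series for $T'_{i,-e}(e_j)$; the case of $f_j$ is analogous. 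The argument for $T''_{i,e}$ is identical.

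For the third identity, I would first note that the map $D_{i,e}\colon x\mapsto (-1)^{\bracket{h_i,\xi}}q^{e(\alpha_i,\xi)}x$ on $\Uq(\mfr{g})_{\xi}$ is a $\Qq$-algebra automorphism, since both $\bracket{h_i,\cdot}$ and $(\alpha_i,\cdot)$ are $\mbb{Z}$-linear on $Q$, so the scalar is multiplicative in the weight. Consequently $x\mapsto D_{i,e}(T'_{i,e}(x))$ is an algebra automorphism, and it suffices to match $T''_{i,e}$ on generators. For $q^h$ (weight $0$) both sides equal $q^{s_i(h)}$. For $e_i$: using $f_it_i^{e}=q_i^{2e}t_i^{e}f_i$ one gets $T''_{i,e}(e_i)=-f_it_i^{e}=q_i^{2e}T'_{i,e}(e_i)$, matching $(-1)^{\bracket{h_i,\alpha_i}}q^{e(\alpha_i,\alpha_i)}=q_i^{2e}$; the case of $f_i$ is dual. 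For $e_j$ with $j\ne i$, the two explicit series for $T'_{i,e}(e_j)$ and $T''_{i,e}(e_j)$ differ only by a swap $r\leftrightarrow s$; the termwise ratio is $(-1)^{s-r}q_i^{-e(r+s)}$, and since $r+s=-a_{ij}$ is fixed, we have $(-1)^{s-r}=(-1)^{-a_{ij}-2r}=(-1)^{a_{ij}}$ independently of $r$, so the scalar $(-1)^{a_{ij}}q_i^{ea_{ij}}=(-1)^{\bracket{h_i,\alpha_j}}q^{e(\alpha_i,\alpha_j)}$ pulls out of the sum uniformly. The case of $f_j$ is symmetric. The fourth identity then follows from the third by rearranging, since $(-1)^{\bracket{h_i,\xi}}$ is its own inverse and flipping the sign of $e$ inverts $q^{e(\alpha_i,\xi)}$.

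The main obstacle is purely bookkeeping: one has to keep careful track of the index swap $r\leftrightarrow s$ that distinguishes $T'$ from $T''$, and check that the resulting signs and powers of $q_i$ recombine into a factor depending only on the weight $\xi$ and not on the individual summand. No deeper structural input is required beyond the explicit definitions of $T'_{i,e}$ and $T''_{i,e}$ together with the elementary properties of $\bar{~}$.
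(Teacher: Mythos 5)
Your proof is correct, and it follows the same route as the cited source: the paper itself gives no proof for this proposition (it is quoted directly from \cite[37.2.4]{Lus:intro}, and in the source these identities are likewise verified by direct computation on the generators $q^h$, $e_i$, $f_i$, $e_j$, $f_j$ using the explicit formulas for $T'_{i,e}$ and $T''_{i,e}$). Your bookkeeping on the index swap $r\leftrightarrow s$ and the resulting weight-dependent scalar $(-1)^{\bracket{h_i,\xi}}q^{e(\alpha_i,\xi)}$, as well as the observation that this scalar is multiplicative and the double bar conjugation is $\Qq$-linear, are exactly the points one must check, and you check them correctly.
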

\end{NB}
\begin{NB}
\subsubsection{}
It is known that the operators $\{T_{i}\}_{i\in I}$ satisfy the braid group relations corresponding to the Weyl group \cite[Chapter 39]{Lus:intro} and \cite[1.4]{Saito:PBW},
then we can extend to a braid group action on $\Uq(\mfr{g})$ by
\begin{subequations}
\begin{align}
	T_{w w'}&=T_w\circ T_{w'} ~~\text{if~~} \ell(ww')=\ell(w)+\ell(w'), \\
	T_{s_i}&=T_i.
\end{align}
\end{subequations}
\end{NB}
\subsection{Quantum nilpotent subalgebra $\Uq^-(w, e)$}
\subsubsection{}\label{Root vector}
We define root vectors associated with $\tw=(i_{1}, \cdots, i_{l})\in R(w)$ for $w\in W$.
See \cite[Proposition 40.1.3, Proposition 41.1.4]{Lus:intro} for more detail.
For $w\in W$ and $\tw\in R(w)$, we define $\beta_{k}$ as above.
We define the \emph{root vectors} $F_e(\beta_k)$ associated with $\beta_k\in \Delta(w)$ and $e\in \{\pm 1\}$
by 
\[F_e(\beta_k):=T_{i_1}^{e}\cdots T_{i_{k-1}}^{e}(f_{i_k}).\]
It is known that $F_e(\beta_k)\in \Uq^-(\mfr{g})$.
We note that $F_e(\beta_k)$ \emph{does} depend on the choice of $\tw\in R(w)$.
We define its divided power by 
\[F_e(c\beta_k):=T_{i_1}^{e}\cdots T_{i_{k-1}}^{e}(f_{i_k}^{(c)})\]
for $c\geq 1$.
It is known that $F_e(c\beta_k)\in \Uq^-(\mfr{g})_{\mca{A}}$.
\subsubsection{}\label{sec:PBWbasis}
\begin{theo}[{\cite[Proposition 40.2.1, Proposition 41.1.3]{Lus:intro}}]\label{thm:PBW}

\textup{(1)}
For $w\in W$, $\tw\in R(w)$, $e\in \{\pm 1\}$ and $\mbf{c}\in \mbb{Z}_{\geq 0}^l$, we set 
\[F_e(\mbf{c}, \tw):=\begin{cases}
F_e(c_1\beta_1)\cdots  F_e(c_l\beta_l)\;& \text{~if~} e=+1, \\
F_e(c_l\beta_l)\cdots  F_e(c_1\beta_1)\;& \text{~if~} e=-1.
\end{cases}\]
Then $\{F_e(\mbf{c}, \tw)\}_{\mbf{c}\in \mbb{Z}_{\geq 0}^l}$ forms a basis of a subspace $\Uq^{-}(w, e)$ of $\Uq^{-}(\mfr{g})$ which does not depend on $\tw$.

\textup{(2)}
We have $F_e(\mbf{c}, \tw)\in \Uq^-(\mfr{g})_{\mca{A}}$ for any $\mbf{c}\in \mbb{Z}_{\geq 0}^l$.
\end{theo}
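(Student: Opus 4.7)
The plan is to follow Lusztig's approach in \cite[\S 40.2, \S 41.1]{Lus:intro}. First I would reduce the two signs $e = \pm 1$ to a single one. The $*$-involution is a $\Qq$-algebra anti-automorphism fixing each $f_{i_k}^{(c)}$, and it conjugates $T_i$ to $T_i^{-1}$ by \eqref{eq:Ti_inverse}, so $*F_{+1}(c\beta_k) = F_{-1}(c\beta_k)$. Because the definition of $F_{-1}(\mbf{c}, \tw)$ reverses the product order, applying $*$ (which itself reverses order, being an anti-homomorphism) yields $*F_{-1}(\mbf{c}, \tw) = F_{+1}(\mbf{c}, \tw)$. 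Hence it suffices to establish the $e=+1$ case, and both parts of the theorem for $e=-1$ then follow by transport via $*$.

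For $e = +1$, part (2) (the integrality $F(c\beta_k) \in \Uq^-(\mfr{g})_{\mca{A}}$ and hence $F(\mbf{c}, \tw) \in \Uq^-(\mfr{g})_{\mca{A}}$) follows from the standard criterion that $T_i$ preserves both $\Uq^-(\mfr{g})$ and its $\mca{A}$-form on weight vectors whose weight remains negative under $s_i$. Since $s_{i_1}\cdots s_{i_{j}}(\alpha_{i_{j+1}}) \in \Delta^+$ at every intermediate step (each initial subword being reduced), the successive action of $T_{i_1}, \ldots, T_{i_{k-1}}$ on $f_{i_k}^{(c)}$ stays inside $\Uq^-(\mfr{g})_{\mca{A}}$. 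Linear independence and the basis property for a fixed $\tw$ I would prove by induction on $l(w)$ using the factorization $F(\mbf{c}, \tw) = f_{i_1}^{(c_1)} \cdot T_{i_1}\!\big(F(\mbf{c}', \tw')\big)$, where $\tw' = (i_2, \ldots, i_l)$ is a reduced expression for $s_{i_1} w$ and $\mbf{c}' = (c_2, \ldots, c_l)$. Iterating ${_{i_1}r}$ (using \lemref{lem:qBoson}) separates the $f_{i_1}^{(c_1)}$ prefix from the image of $T_{i_1}$, on which ${_{i_1}r}$ acts by zero on the relevant weight components; then the inductive hypothesis applied inside $\Uq^-(s_{i_1} w, +1)$ recovers $\mbf{c}'$.

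The remaining and most subtle point is that the subspace $\Uq^-(w, +1) := \operatorname{span}\{F(\mbf{c}, \tw)\}_{\mbf{c}}$ does not depend on $\tw$. By Matsumoto's theorem, any two reduced expressions of $w$ are connected by a sequence of braid moves, so it suffices to verify invariance under a single rank-two braid move. \emph{This rank-two verification is where I expect the main work to lie.} One must show that in each rank-two Cartan type ($A_1 \times A_1$, $A_2$, $B_2$, $G_2$), the PBW monomials built from the two orderings $(i, j, i, \ldots)$ and $(j, i, j, \ldots)$ of length $m_{ij}$ span the same subspace of $\Uq^-$, using the explicit $q$-commutation relations among the root vectors forced by the quantum Serre relations. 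The case $A_1 \times A_1$ is immediate (the two simple root vectors $q$-commute); $A_2$ reduces to a single Levendorskii--Soibelman type identity; $B_2$ and $G_2$ require progressively lengthier but bounded computations producing the full table of $q$-commutators of root vectors. Once these rank-two identities are established, a general braid move affects only a contiguous rank-two subword of $\tw$, so the change-of-basis matrix factors through the rank-two case and the monomial factors outside that subword are untouched up to explicit $q$-shifts; this gives the global independence from $\tw$ and completes the proof.
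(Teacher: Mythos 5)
The paper does not prove this statement at all: Theorem~\ref{thm:PBW} is imported verbatim from Lusztig (\cite[Propositions 40.2.1, 41.1.3]{Lus:intro}), so there is no in-paper argument to compare against. Your sketch is a sound reconstruction of the cited proof's skeleton: the reduction of $e=-1$ to $e=+1$ via $*$ (using $*T_i*=T_i^{-1}$ and the order reversal) is exactly right; integrality comes from the fact that the braid operators preserve the $\mca{A}$-form of the whole algebra together with $F_e(c\beta_k)\in\Uq^-(\mfr{g})$ (which holds because each initial subword is reduced); and your inductive factorization $F(\mbf{c},\tw)=f_{i_1}^{(c_1)}T_{i_1}(F(\mbf{c}',\tw'))$ combined with the decomposition $\Uq^-=\bigoplus_n f_{i_1}^{(n)}\Uq^-[i_1]$ and injectivity of $T_{i_1}$ is the standard linear-independence argument. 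For the independence of $\tw$, your Matsumoto reduction is correct, and it is worth noting explicitly why it survives the Kac--Moody setting: braid moves exist only when $m_{ij}<\infty$, so the rank-two subsystems that actually occur are always of finite type $A_1\times A_1$, $A_2$, $B_2$, $G_2$; moreover the factors outside the moved segment are literally unchanged (no $q$-shifts), because the operator $T_{i_1}\cdots T_{i_{p+m-1}}$ prefixing them is the same for both words --- this uses the braid relations for the $T_i$ themselves, which is a genuine prerequisite you are implicitly invoking. The one place where your plan is heavier than necessary is the rank-two verification: rather than tabulating $q$-commutators (lengthy for $G_2$), one can observe that for a rank-two finite-type subalgebra and its longest element, the PBW monomials for either reduced word are linearly independent (by the inner-product computation, cf.\ \propref{prop:innerprod}) and their number in each weight equals the graded dimension of the rank-two negative part, so both orderings span the entire rank-two $\Uq^-$ and hence the same space; this is essentially how the cited source closes the argument. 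With that substitution or with your explicit computations carried out, the proposal is complete.
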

\subsubsection{}\label{sec:LSformula}
We recall commutation relations for root vectors and its divied powers $\{F(c_k\beta_k)\}_{1\leq k\leq l, c_k\geq 1}$,
known as the \emph{Levendorskii-Soibelman formula}.
See \cite{LevSoi}, \cite{Beck:convex} or \cite{Nak:CBMS} for more details.

In this subsections, we give statements for the $e=+1$ case.
We can obtain the corresponding results for the $e=-1$ case, applying the $*$-involution \eqref{eq:Ti_inverse}.
So we denote $F_e(c\beta)$, $F_{e}(\mbf{c}, \tw)$ by $F(c\beta)$, $F(\mbf{c}, \tw)$ by omitting $e$.

Let $w\in W$, $\tw=(i_1, i_2, \cdots, i_l)\in R(w)$
and fix a total order on $\Delta^+(w)$ given by 
\[\beta_1<\beta_2<\cdots <\beta_l.\]
\begin{theo}[{\cite[Proposition 3.6]{Nak:CBMS}, \cite[5.5.2 Proposition]{LevSoi}}]\label{theo:LSformula}
For $j<k$, let us write
\[F(c_k\beta_k)F(c_j\beta_{j})-q^{-(c_j\beta_j, c_k\beta_k)} F(c_j\beta_{j})F(c_k\beta_{k})%
=\sum_{}f_{\mbf{c'}}F(\mbf{c}', \tw)\]
with $f_{\mbf{c}'}\in \Qq$.
If $f_{\mbf{c}'}\neq 0$, then $c'_{j}<c_{j}$ and $c'_{k}<c_{k}$
with $\sum_{j\leq m\leq k} c'_{m}\beta_m=c_{j}\beta_j+c_{k}\beta_k$.
\end{theo}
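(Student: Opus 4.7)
The plan is to combine the $Q$-grading on $\Uq^-(\mfr{g})$ with Lusztig's braid symmetry, which relates PBW bases attached to different prefixes and suffixes of $\tw$, to progressively localize the support of the PBW expansion.

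The weight condition $\sum_m c'_m \beta_m = c_j\beta_j + c_k\beta_k$ is automatic, since each PBW monomial $F(\mbf{c}', \tw)$ is a weight vector of weight $-\sum_m c'_m \beta_m$, while the left-hand side is homogeneous of weight $-c_j\beta_j - c_k\beta_k$, and PBW monomials of distinct weights are linearly independent by Theorem~\ref{thm:PBW}.

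To confine the support to the interval $j \leq m \leq k$, I would apply $T_{i_{j-1}}^{-1}\cdots T_{i_1}^{-1}$ to the identity. By the definition of the root vectors, this map sends $F(c_m\beta_m)$ for $m \geq j$ to $T_{i_j}\cdots T_{i_{m-1}}(f_{i_m}^{(c_m)})$, which is itself a root vector for the shortened reduced expression $(i_j, \ldots, i_l)$ of $s_{i_{j-1}}\cdots s_{i_1}w$ and therefore lies in $\Uq^-(\mfr{g})$; hence the image of the left-hand side lies in $\Uq^-(s_{i_{j-1}}\cdots s_{i_1}w, +1) \subset \Uq^-(\mfr{g})$. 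By contrast, for $m < j$ the image $T_{i_{j-1}}^{-1}\cdots T_{i_m}^{-1}(f_{i_m}^{(c_m)})$ picks up $e_{i_m}$-components and leaves $\Uq^-(\mfr{g})$. Combining this with Theorem~\ref{thm:PBW} applied to the shortened reduced expression, only PBW monomials with $c'_m = 0$ for $m < j$ can contribute. A symmetric argument---most cleanly carried out via the $*$-involution, which intertwines $T_i$ with $T_i^{-1}$ by \eqref{eq:Ti_inverse} and swaps the $e = +1$ and $e = -1$ PBW bases---yields $c'_m = 0$ for $m > k$.

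The strict inequalities $c'_j < c_j$ and $c'_k < c_k$ are then extracted as follows. Since the distinct real positive roots $\beta_j, \beta_k$ are linearly independent, the weight condition admits a unique tuple supported in $\{j, k\}$, namely $\mbf{c}_{jk}$ with $c_j$ at position $j$ and $c_k$ at position $k$; the corresponding PBW element $F(\mbf{c}_{jk}, \tw) = F(c_j\beta_j)F(c_k\beta_k)$ is precisely the term annihilated by the $q$-shift $q^{-(c_j\beta_j, c_k\beta_k)}$ on the left-hand side. For any other contributing $\mbf{c}'$, I would argue by induction on $c_j + c_k$: the base case $c_j = c_k = 1$ is the classical commutator formula, which can be reduced after conjugation to an explicit rank-two computation via the quantum Serre relations; and the inductive step uses the rank-one identities $F(\beta_j)^{c_j} = [c_j]_{i_j}!\, F(c_j\beta_j)$ to propagate the strict inequalities from the case of lower total divided-power degree. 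The main obstacle is the base case of the last induction: one must verify that for $c_j = c_k = 1$ the commutator $F(\beta_k)F(\beta_j) - q^{-(\beta_j, \beta_k)}F(\beta_j)F(\beta_k)$ has PBW expansion supported strictly in the interior $j < m < k$. This requires a secondary induction on $k - j$, with base case $k = j + 1$ where the commutator vanishes by the quantum Serre relations, and inductive step applying the $k - j = 1$ result at an interior pair and iterating.
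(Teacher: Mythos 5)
Note first that the paper does not prove this theorem at all: it is quoted from \cite{Nak:CBMS} and \cite{LevSoi}, so your argument has to stand on its own against the standard proofs there. Your overall plan --- the weight-grading observation (which is correct), conjugation by $T_{i_{j-1}}^{-1}\cdots T_{i_1}^{-1}$ to move $j$ to the first position, the $*$-involution \eqref{eq:Ti_inverse} for the other end, and an induction for the strict inequalities --- is indeed the standard route, but two steps are genuinely incomplete as written. The first is the localization of the support: from ``the image of the left-hand side lies in $\Uq^-(\mfr{g})$ while the images of monomials with $c'_m>0$ for some $m<j$ leave $\Uq^-(\mfr{g})$'' you cannot conclude that those coefficients vanish, because a linear combination of elements outside $\Uq^-(\mfr{g})$ can land back in $\Uq^-(\mfr{g})$ after cancellation of their positive-part components. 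What makes the argument work is a uniqueness statement: every PBW monomial with $c'_1=c$ lies in $f_{i_1}^{(c)}\Uq^-[i_1]$, the decomposition $\Uq^-(\mfr{g})=\bigoplus_{c\geq 0}f_{i_1}^{(c)}\Uq^-[i_1]$ is direct (\lemref{lem:orthdecomp} together with Kashiwara's unique expansion $x=\sum_n f_i^{(n)}x_n$ with ${}_{i}r(x_n)=0$), and for $j\geq 2$ the left-hand side is a product of elements of the subalgebra $\Uq^-[i_1]$, hence sits in the $c=0$ summand; only then do the coefficients with $c'_1>0$ vanish, after which one applies $T_{i_1}^{-1}$ and iterates. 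You never invoke this, and the same care is needed at the right end, where the $*$-involution by itself does not turn position $k$ into a ``first letter''; it must be combined with the analogous decomposition $\Uq^-(\mfr{g})=\bigoplus_{c}{}^{*}\Uq^-[i]\,f_i^{(c)}$.

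The more serious gap is in the strict inequalities $c'_j<c_j$, $c'_k<c_k$. Your secondary induction does not close: the base case $k=j+1$ is fine (after conjugation it is the rank-two statement that $f_i$ and $T_i(f_{j'})$ $q$-commute, which does follow from the $q$-Serre relation), but the inductive step ``apply the $k-j=1$ result at an interior pair and iterate'' gives no handle on the $q$-commutator $F(\beta_k)F(\beta_j)-q^{-(\beta_j,\beta_k)}F(\beta_j)F(\beta_k)$ when $k>j+1$: $q$-commutativity of consecutive root vectors says nothing about non-adjacent ones. Likewise, the induction on $c_j+c_k$ that is supposed to propagate strictness via $F(\beta_j)^{c_j}=[c_j]_{i_j}!\,F(c_j\beta_j)$ silently requires knowing how $F(\beta_j)$ and $F(\beta_k)$ commute past the interior monomials produced at earlier stages --- instances of the very theorem being proved for other pairs --- and the bookkeeping that the exponents at positions $j$ and $k$ stay strictly below $c_j$ and $c_k$ is exactly the hard point; nothing in your outline controls it. The standard way to obtain $c'_j<c_j$ is not by iterating adjacent commutations but by a direct computation with the derivation ${}_{i_1}r$ after the conjugation of the previous paragraph: $F(c_k\beta_k)$ lies in $\Uq^-[i_1]=\ker{}_{i_1}r$, and the derivation property \eqref{eq:ir} shows that the components of the two products in $\bigoplus_{c\geq c_1}f_{i_1}^{(c)}\Uq^-[i_1]$ cancel in the $q$-commutator --- this is precisely where the specific power $q^{-(c_j\beta_j,c_k\beta_k)}$ in the statement is used; the bound $c'_k<c_k$ then follows by the symmetric argument with $r_i$. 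This idea, which is the heart of the proofs in the cited sources, is absent from your proposal, so the strict inequalities are not established.
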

\begin{NB}
\begin{rem}
The integrality of the \thmref{theo:LSformula}, that is 
$\rho^{\mbf{c}'}_{c_j\beta_j, c_k\beta_k}\in \mca{A}$,
is not known except for finite cases and affine cases ?
But this should follow from the existence of the canonical base and its integrality? 
\end{rem}
\end{NB}
\subsubsection{}
The following proposition is  a consequence of \thmref{theo:LSformula}.
(cf.\ \cite[2.4.2 Proposition Theorem b)]{LevSoi:qWeyl} and \cite[2.2 Proposition]{DKP:solvable}.)
\begin{prop}\label{prop:DKP}
Let $\tw=(i_1, i_2, \cdots, i_l)$ be a reduced expression for $w\in W$
and $e\in\{\pm 1\}$.
Then the subspace $\Uq^-(w, e)$ is a $\Qq$-subalgebra generated by $\{F_e(\beta_k)\}_{1\leq k\leq l}$.
\end{prop}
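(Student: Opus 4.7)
The proof separates into two tasks: showing that $\Uq^-(w,e)$ is a $\Qq$-subalgebra, and identifying the root vectors as generators. The generation claim follows easily once the subalgebra property is established. On one hand, $F_e(\beta_k)$ is the PBW basis element with Lusztig datum $\mbf{e}_k$ (the $k$-th standard vector of $\mbb{Z}_{\geq 0}^l$), so it lies in $\Uq^-(w,e)$. On the other hand, because $T_{i_1}^e\cdots T_{i_{k-1}}^e$ is a $\Qq$-algebra automorphism, one has
$$F_e(c\beta_k)=\frac{1}{[c]_{i_k}!}F_e(\beta_k)^c,$$
so every PBW basis element $F_e(\mbf{c},\tw)$ is a $\Qq$-multiple of an explicitly ordered product of powers of root vectors and thus lies in the subalgebra generated by $\{F_e(\beta_k)\}_{1\leq k\leq l}$. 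Hence the essential task is to show $\Uq^-(w,e)$ is closed under multiplication.

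I treat the case $e=+1$; the case $e=-1$ follows by applying the $*$-involution in view of \eqref{eq:Ti_inverse}, since $*$ fixes the generators $f_i$ and preserves the PBW-type structure. By linearity it suffices to show that the product $F(\mbf{c},\tw)\cdot F(\mbf{c}',\tw)$ of two PBW monomials lies in the $\Qq$-span of the PBW basis. The concatenated word
$$F(c_1\beta_1)\cdots F(c_l\beta_l)\cdot F(c'_1\beta_1)\cdots F(c'_l\beta_l)$$
is generally not PBW ordered at the interface between the two factors. My plan is to bring it into PBW order by repeatedly invoking \thmref{theo:LSformula} on an adjacent out-of-order pair $F(c_k\beta_k)F(c_j\beta_j)$ with $j<k$: the LS formula rewrites such a pair as the straight term $q^{-(c_j\beta_j,c_k\beta_k)}F(c_j\beta_j)F(c_k\beta_k)$ plus correction terms $\sum f_{\mbf{d}}F(\mbf{d},\tw)$ supported on tuples $\mbf{d}$ with $d_j<c_j$, $d_k<c_k$, and $\sum_{j\leq m\leq k}d_m\beta_m=c_j\beta_j+c_k\beta_k$.

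The main obstacle is termination of this rewriting, because a correction term can introduce new factors at intermediate positions $j<m<k$ and thereby lengthen the word. I would resolve this by a two-parameter well-founded induction. The outer parameter is the total weight $\xi\in Q_+$ with $-\xi=\wt F(\mbf{c},\tw)+\wt F(\mbf{c}',\tw)$, which is preserved by every LS-swap since $\sum_{j\leq m\leq k}d_m\beta_m=c_j\beta_j+c_k\beta_k$. For fixed $\xi$ the possible unordered products are finite in number (each exponent is bounded in terms of $\xi$), and on this finite set I would measure, say, the lexicographically largest out-of-order position together with the total exponent of the swapped pair at that position. The strict inequalities $d_j<c_j$ and $d_k<c_k$ in the LS formula force this measure to drop strictly at every application, so the reordering procedure terminates. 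Combined with the linear independence of the PBW basis (\thmref{thm:PBW}(1)), this produces the required expansion of $F(\mbf{c},\tw)\cdot F(\mbf{c}',\tw)$ in the PBW basis and establishes closure under multiplication, completing the proof.
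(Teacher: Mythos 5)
Your overall route is the one the paper intends: the paper gives no argument for this proposition beyond declaring it a consequence of \thmref{theo:LSformula} (with pointers to Levendorskii--Soibelman and De Concini--Kac--Procesi), and your reductions are sound --- generation follows from $F_e(c\beta_k)=F_e(\beta_k)^{c}/[c]_{i_k}!$, the $e=-1$ case follows from $e=+1$ via the $*$-involution, and everything hinges on closure of the span of the PBW monomials under multiplication, which you attack by straightening with the Levendorskii--Soibelman relations. The gap is your termination argument. The measure you propose (``the lexicographically largest out-of-order position together with the total exponent of the swapped pair at that position'') does not drop at every application, and the strict inequalities $d_j<c_j$, $d_k<c_k$ cannot force it to: they say nothing about the straight term, where no exponent changes and where the transposition can create a new out-of-order adjacency one step further to the right (apply the swap to $F(\beta_3)F(\beta_1)F(\beta_2)$: the straight term $q^{-(\beta_1,\beta_3)}F(\beta_1)F(\beta_3)F(\beta_2)$ has its unique out-of-order pair at position $2$ instead of $1$). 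In the other direction, a correction term inserts factors $F(c'_m\beta_m)$ with $j<m<k$ into the middle of the word, and these can form new inversions with factors on either side of the inserted block, so neither the rightmost bad position nor the total inversion count is monotone. As written, the induction does not close.

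The argument is repairable, but with a different measure. For words of the fixed weight $\xi$ (finitely many, as you note), compare the left-to-right sequences of root indices lexicographically. If you apply \thmref{theo:LSformula} to an adjacent pair $F(c_k\beta_k)F(c_j\beta_j)$ at positions $(p,p+1)$, every output word agrees with the input strictly to the left of $p$ and carries at position $p$ an index strictly smaller than $k$: the straight term has $j<k$ there, and a correction term cannot be a pure power of $\beta_k$ (its weight would have to be $c_j\beta_j+c_k\beta_k$ with exponent $<c_k$, impossible), so its leftmost factor has index in $[j,k)$. Hence every term is strictly smaller in this well-founded order, the rewriting terminates, and a word with weakly increasing indices is a scalar multiple of a PBW monomial because adjacent factors with equal index merge, up to a $q$-binomial coefficient, into a single divided power. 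Alternatively you can avoid rewriting altogether and argue as in the references the paper cites: induct on $\ell(w)$, writing $\Uq^-(w,e)$ as $\sum_{c\ge 0}\Uq^-(w',e)\,F_e(c\beta_l)$ with $w'=s_{i_1}\cdots s_{i_{l-1}}$, and use \thmref{theo:LSformula} with a sub-induction on the exponent of $\beta_l$; termination is then built into the nested inductions on well-ordered parameters.
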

We call it the \emph{quantum nilpotent subalgebra} associated with $w\in W$.

\subsubsection{}\label{subsec:lex}
We define a lexicographic order $\leq$ on $\mbb{Z}_{\geq 0}^l$ associated with $\tw\in R(w)$
by
\begin{align*}
&\mbf{c}=(c_1, c_2, \cdots, c_l)< \mbf{c'}=(c'_1, c'_2, \cdots, c'_l) \\
\Longleftrightarrow & \text{~~there exists~~} 1\leq p \leq l \text{~~such that~~} 
c_1=c'_1, \cdots, c_{p-1}=c'_{p-1}, c_p<c'_p.
\end{align*}
The following theorem is obtained as a consequence of the Levendorskii-Soibelmann formula.
\begin{theo}
	Let $w\in W$ and $\tw\in R(w)$ be its reduced expression.
	For $\mbf{c}\in \mbb{Z}_{\geq 0}^l$, we consider the following $\Qq$-subspace $\mca{F}_{\leq \mbf{c}}^{\tw}\Uq^-(w)$: 
	\[\mca{F}_{\leq \mbf{c}}^{\tw}\Uq^-(w):=\bigoplus_{\mbf{c'}\leq \mbf{c}}\Qq F(\mbf{c'}, \tw).\]
	Then 
	
	\textup{(1)} 
	$\{\mca{F}_{\leq \mbf{c}}^{\tw}\Uq^-(w)\}_{\mbf{c}\in \mbb{Z}_{\geq 0}^l}$ forms an increasing filtration on $\Uq^-(w)$.
	
	\textup{(2)}
	The associated graded algebra $\gr^{\tw}\Uq^-(w)$ is generated by $\{\gr^{\tw}(F(\beta_k))| 1\leq k\leq l\}$
	with relations:
	\[\gr^{\tw}(F(\beta_k))\gr^{\tw}(F(\beta_j))=q^{-(\beta_j, \beta_k)}\gr^{\tw}(F(\beta_j))\gr^{\tw}(F(\beta_k))%
	~(j<k).\]
\end{theo}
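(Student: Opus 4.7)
The plan is to use the Levendorskii--Soibelman formula (\thmref{theo:LSformula}) as the main tool, proving Part~(1) by explicit reordering and then deducing Part~(2) as a corollary. Since $\{F(\mbf{c}, \tw)\}_{\mbf{c}}$ is a $\Qq$-basis of $\Uq^-(w)$ by \thmref{thm:PBW}, the subspaces $\mca{F}_{\leq \mbf{c}}^{\tw}\Uq^-(w)$ form a tautologically increasing chain of $\Qq$-subspaces. The substantive content of Part~(1) is the multiplicativity $\mca{F}_{\leq \mbf{a}}^{\tw}\Uq^-(w)\cdot \mca{F}_{\leq \mbf{b}}^{\tw}\Uq^-(w)\subseteq \mca{F}_{\leq \mbf{a}+\mbf{b}}^{\tw}\Uq^-(w)$, which reduces to the single assertion $F(\mbf{a}, \tw)F(\mbf{b}, \tw)\in \mca{F}_{\leq \mbf{a}+\mbf{b}}^{\tw}\Uq^-(w)$ for arbitrary $\mbf{a}, \mbf{b}\in \mbb{Z}_{\geq 0}^l$.

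To establish this, I would reorder the product $F(a_1\beta_1)\cdots F(a_l\beta_l)F(b_1\beta_1)\cdots F(b_l\beta_l)$ by migrating each $F(b_k\beta_k)$ leftward past the $F(a_m\beta_m)$ with $m>k$ and then merging with $F(a_k\beta_k)$ via the divided-power identity $F(b_k\beta_k)F(a_k\beta_k)=\begin{bmatrix}a_k+b_k\\ b_k\end{bmatrix}_{q_{i_k}} F((a_k+b_k)\beta_k)$. Each commutation $F(a_m\beta_m)F(b_k\beta_k)$ with $k<m$ is rewritten via \thmref{theo:LSformula} as $q^{-(b_k\beta_k,\,a_m\beta_m)}F(b_k\beta_k)F(a_m\beta_m)$ plus an error, which is a $\Qq$-linear combination of $F(\mbf{c}', \tw)$ with $\mbf{c}'$ supported on $[k,m]$, $c'_k<b_k$, and $c'_m<a_m$. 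The decisive observation is that any such $\mbf{c}'$ satisfies $\mbf{c}'<_{\text{lex}}b_k\mbf{e}_k+a_m\mbf{e}_m$ because $c'_p=0$ for $p<k$ and $c'_k<b_k$. Multiplying back by the surrounding, unaffected factors and reducing further to PBW form through the induction hypothesis, each such contribution ends up in $\mca{F}_{<\mbf{a}+\mbf{b}}^{\tw}\Uq^-(w)$. Because every intermediate monomial shares the fixed root weight $\sum_p(a_p+b_p)\beta_p\in Q_+$, the relevant set of PBW indices is finite, so strong induction on the lex order within this finite set closes the argument. One also extracts the leading-term formula $F(\mbf{a}, \tw)F(\mbf{b}, \tw)\equiv q^N\prod_k\begin{bmatrix}a_k+b_k\\ b_k\end{bmatrix}_{q_{i_k}}F(\mbf{a}+\mbf{b}, \tw)$ modulo $\mca{F}_{<\mbf{a}+\mbf{b}}^{\tw}\Uq^-(w)$ for a suitable $N\in \mbb{Z}$.

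Given Part~(1), Part~(2) is clean: the symbols $\{\gr^{\tw}F(\mbf{c}, \tw)\}_{\mbf{c}}$ form a $\Qq$-basis of $\gr^{\tw}\Uq^-(w)$, and the identity $F(\beta_k)^{c_k}=[c_k]_{i_k}!\,F(c_k\beta_k)$ shows that $(\gr^{\tw}F(\beta_k))^{c_k}$ is a nonzero scalar multiple of $\gr^{\tw}F(c_k\beta_k)$ in the $c_k\mbf{e}_k$-graded piece. Hence $\gr^{\tw}\Uq^-(w)$ is generated by $\{\gr^{\tw}F(\beta_k)\}_{k=1}^{l}$. The $q$-commutation $\gr^{\tw}F(\beta_k)\gr^{\tw}F(\beta_j)=q^{-(\beta_j,\beta_k)}\gr^{\tw}F(\beta_j)\gr^{\tw}F(\beta_k)$ for $j<k$ is exactly the top-degree component of \thmref{theo:LSformula}, since the error terms belong to $\mca{F}_{<\mbf{e}_j+\mbf{e}_k}^{\tw}\Uq^-(w)$; completeness of these relations follows from a Hilbert-series comparison against the quantum affine space on $l$ generators subject to the displayed $q$-commutation. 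The main obstacle is the inductive bookkeeping in Part~(1): after one LS swap produces an error, it must be multiplied by the remaining factors and reorganised to PBW form, and one needs to verify that no intermediate reduction yields an index $\mbf{c}$ escaping $\mbf{a}+\mbf{b}$ in the lex order. Phrasing the induction on the lex order of a fixed weight space (which is finite) is what makes the recursion well-founded.
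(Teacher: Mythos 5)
Your argument is correct and is essentially the paper's own route: the paper states this theorem without proof, as ``a consequence of the Levendorskii--Soibelman formula,'' and your straightening argument (lex-decrease of the LS error indices since they are supported on $[j,k]$ with $c'_j<c_j$, translation-invariance of the lexicographic order, and finiteness of each weight space to make the induction well-founded) is precisely the standard way to make that deduction explicit, with Part (2) then following from the top-degree term of the LS relation and a dimension count in each weight space. The only adjustment I would make is to phrase the induction hypothesis for arbitrary words in the divided powers $F(c\beta_k)$ (inducting on the lexicographic order of the total index within a fixed weight, and on the number of inversions for fixed index) rather than only for products of two PBW monomials, since after one swap the error terms re-enter as longer words; your closing remark about the bookkeeping already identifies this point.
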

We call this the \emph{De Concini-Kac filtration}.
\subsection{PBW basis and the canonical base}
In this subsection, we recall  compatibilities between Lusztig's braid symmetry $\{T_i\}_{i\in I}$ and the canonical base.
For more details, see \cite[Chapter 38]{Lus:intro}, \cite{Lus:braid} and \cite{Saito:PBW}.
\begin{lem}[{\cite[Proposition 38.1.6, Lemma 38.1.5]{Lus:intro}}]\label{lem:orthdecomp}
	\textup{(1)}
	For $i\in I$, we have 
	\begin{align*}
	\Uq^-[i]:=&\{x\in \Uq^-; {_ir}(x)=0\}, \\
	=&\{x\in \Uq^-; T_i^{-1}(x)\in \Uq^-\}, \\
	{^*}\Uq^-[i]:=&\{x\in \Uq^-; {r_i}(x)=0\}, \\
	=&\{x\in \Uq^-; T_i(x)\in \Uq^-\}.
	\end{align*}
	
	\textup{(2)}
	For $i\in I$, we have the following orthogonal decompositions:
	\begin{align*}
	\Uq^-=\Uq^-[i]\+f_i\Uq^-={^*}\Uq^-[i]\+\Uq^-f_i.
	\end{align*}
\end{lem}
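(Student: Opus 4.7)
The plan is to establish (2) first, since it is what really drives the argument, and then deduce the $T_i^{\pm1}$-characterizations in (1) from (2) together with explicit braid-group computations.

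For (2), I would start from the $q$-Boson relation (used earlier to define the modified root operators): every $x\in\Uq^-(\mfr{g})$ can be uniquely written as $x=\sum_{n\ge 0}f_i^{(n)}x_n$ with ${}_ir(x_n)=0$ for all $n$. The $n=0$ summand lies in $\Uq^-[i]$, and the tail $\sum_{n\ge 1}f_i^{(n)}x_n$ lies in $f_i\Uq^-$, so $\Uq^-=\Uq^-[i]+f_i\Uq^-$. Directness and orthogonality are a single application of the adjoint identity $(f_iy,z)_K=(y,{}_ir(z))_K$ from \eqref{eq:Kformadj}: if $z\in\Uq^-[i]$ then $(f_iy,z)_K=(y,{}_ir(z))_K=0$ for every $y$, hence $f_i\Uq^-$ and $\Uq^-[i]$ are orthogonal, and non-degeneracy of $(\cdot,\cdot)_K$ upgrades this to a direct sum decomposition. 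The second decomposition $\Uq^-={}^{*}\Uq^-[i]\oplus \Uq^-f_i$ then follows by applying the $*$-involution, using $*(f_ix)=*(x)f_i$, the identity $*\circ{}_ir=r_i\circ *$ (easy from \eqref{eq:ir}, \eqref{eq:ri}), and \lemref{lem:*form} which guarantees that $*$ is an isometry for $(\cdot,\cdot)_K$.

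For (1), the identity $\Uq^-[i]=\{x:{}_ir(x)=0\}$ is the definition, so the content is the characterization via $T_i^{-1}$. I would first verify by direct computation from the formulas in \S\ref{sec:Lusbraid} that $T_i^{-1}(f_j)\in\Uq^-$ for $j\ne i$ and that these elements satisfy ${}_ir\bigl(T_i^{-1}(f_j)\bigr)=0$; combined with the fact that $\Uq^-[i]$ is a subalgebra (immediate from the derivation property \eqref{eq:ir}, since ${}_ir(f_j)=0$ for $j\ne i$), this already shows the $\Qq$-subalgebra generated by $\{T_i^{-1}(f_j)\}_{j\ne i}$ is contained in both $\Uq^-[i]$ and $T_i(\Uq^-)\cap\Uq^-$. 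A dimension/weight count in each graded piece $\Uq^-(\mfr{g})_\xi$, using that $T_i^{-1}$ is a $\Qq$-algebra automorphism and hence preserves graded dimensions after applying $s_i$ to the weight, will then force equality $\Uq^-[i]=\{x:T_i^{-1}(x)\in\Uq^-\}$. The ${}^{*}\Uq^-[i]=\{x:T_i(x)\in\Uq^-\}$ half is obtained by conjugating with $*$ and using \eqref{eq:Ti_inverse}.

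The main obstacle is the inclusion $\{x:T_i^{-1}(x)\in\Uq^-\}\subset \Uq^-[i]$: explicit formulas for $T_i^{-1}$ on a generating set are clean, but the converse needs ruling out cancellations. My preferred route is to use the decomposition from (2) and argue by contradiction: if $x=x_0+f_ix'$ with $x_0\in\Uq^-[i]$, $x'\in\Uq^-$, $x'\ne 0$, then $T_i^{-1}(x)=T_i^{-1}(x_0)-(e_it_i^{-1})\,T_i^{-1}(x')$; since $T_i^{-1}(x_0)\in\Uq^-$ by the forward direction, membership of $T_i^{-1}(x)$ in $\Uq^-$ would force $(e_it_i^{-1})T_i^{-1}(x')\in\Uq^-$, which is impossible on grading/triangular-decomposition grounds because $T_i^{-1}(x')\ne 0$ and the presence of the $e_it_i^{-1}$ factor throws the element into the non-negative part of the triangular decomposition. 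This contradiction gives $x'=0$, hence $x\in\Uq^-[i]$, completing the proof.
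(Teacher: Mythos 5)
First, note that the paper does not actually prove this lemma: it is quoted verbatim from Lusztig \cite[Proposition 38.1.6, Lemma 38.1.5]{Lus:intro}, so your attempt is competing with Lusztig's argument rather than with anything in the text. Your part (2) is correct and is the standard argument: the unique expansion $x=\sum_{n\geq 0}f_i^{(n)}x_n$ with ${_ir}(x_n)=0$ gives $\Uq^-=\Uq^-[i]+f_i\Uq^-$, adjunction \eqref{eq:Kformadj} plus nondegeneracy of $(\cdot,\cdot)_K$ gives orthogonality and directness, and transporting by $*$ (using $r_i=*\circ{_ir}\circ *$ and \lemref{lem:*form}) gives the second decomposition. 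Your reduction of the inclusion $\{x: T_i^{-1}(x)\in\Uq^-\}\subset\Uq^-[i]$ to the claim that $e_it_i\,T_i^{-1}(x')\in\Uq^-$ forces $x'=0$ is also workable, although ``grading grounds'' must be made precise: $T_i^{-1}(x')$ is not homogeneous for the triangular decomposition, so you need an actual argument, e.g.\ that left multiplication by $e_i$ strictly raises the top $\Uq^+$-degree in the triangular-decomposition expansion (using injectivity of left multiplication by $e_i$ on $\Uq^+$), so that the product cannot land in $\Uq^-$.

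The genuine gap is the forward inclusion $\Uq^-[i]\subset\{x: T_i^{-1}(x)\in\Uq^-\}$, which is the substantive content of Lusztig's Proposition 38.1.6 and on which your converse argument itself relies. The subalgebra generated by $\{T_i^{-1}(f_j)\}_{j\neq i}$ is in general a \emph{proper} subalgebra of $\Uq^-[i]$: its generators sit in weights $-s_i(\alpha_j)=-\alpha_j+a_{ij}\alpha_i$, so it contains nothing in weight $-\alpha_j$, whereas $f_j\in\Uq^-[i]$; already for type $A_2$ with $i=1$ it misses $f_2$. Hence no dimension/weight count anchored on that subalgebra can force $\Uq^-[i]=\Uq^-\cap T_i(\Uq^-)$, and worse, the graded dimension of $\Uq^-\cap T_i(\Uq^-)$ is precisely what is not known in advance, so the proposed count is circular. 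Observe also that if you did possess an algebra generating set $S$ of $\Uq^-[i]$ with $T_i^{-1}(S)\subset\Uq^-$, no counting would be needed at all, since $T_i^{-1}$ is an algebra automorphism and $\Uq^-$ is a subalgebra; the entire difficulty is to produce such a set, or else to run Lusztig's induction exploiting the compatibility of $T_i^{\pm1}$ with ${_ir}$ and $r_i$, and your proposal supplies neither. As it stands, part (1) is not established.
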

From \lemref{lem:orthdecomp} and \thmref{theo:lower_filt}, we obtain the following result.
\begin{prop}
	For $n\geq 0$ and $i\in I$, the subspaces 
	$\bigoplus_{k=0}^n f_i^k\Uq^-[i]$ and $\bigoplus_{k=0}^n {^*}\Uq^-[i]f_i^k$
	are compatible with the dual canonical base and we have 
	\begin{align*}
	\bigoplus_{k=0}^n f_i^k\Uq^-[i]=\bigoplus_{b\in \mscr{B}(\infty), \vep_i(b)\leq n}\Qq \Gup(b), \\
	\bigoplus_{k=0}^n {^*}\Uq^-[i]f_i^k=\bigoplus_{b\in \mscr{B}(\infty), \vep^*_i(b)\leq n}\Qq \Gup(b).
	\end{align*}
\end{prop}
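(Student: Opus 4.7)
The plan is to iterate the orthogonal decomposition of \lemref{lem:orthdecomp} and then identify subspaces by duality against the filtration in \thmref{theo:lower_filt}.

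First, iterating $\Uq^-=\Uq^-[i]\+ f_i\Uq^-$ (orthogonal with respect to Kashiwara's form $(\cdot,\cdot)_K$) yields the orthogonal decomposition
\[\Uq^-(\mfr{g})=\bigoplus_{k\geq 0} f_i^k\Uq^-[i].\]
Consequently $f_i^{n+1}\Uq^-(\mfr{g})=\bigoplus_{k\geq n+1}f_i^k\Uq^-[i]$, and so $\bigoplus_{k=0}^n f_i^k\Uq^-[i]$ coincides with the $(\cdot,\cdot)_K$-orthogonal complement of $f_i^{n+1}\Uq^-(\mfr{g})$ in $\Uq^-(\mfr{g})$.

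Second, by \thmref{theo:lower_filt} (tensored with $\Qq$ over $\mca{A}$), the subspace $f_i^{n+1}\Uq^-(\mfr{g})$ admits the $\Qq$-basis $\{\Glow(b)\mid \vep_i(b)\geq n+1\}$. Since $\mbf{B}^{\up}=\{\Gup(b)\}_{b\in\mscr{B}(\infty)}$ is the dual basis of $\mbf{B}=\{\Glow(b)\}_{b\in\mscr{B}(\infty)}$ with respect to $(\cdot,\cdot)_K$, its $(\cdot,\cdot)_K$-orthogonal complement in $\Uq^-(\mfr{g})$ has the $\Qq$-basis $\{\Gup(b)\mid \vep_i(b)\leq n\}$. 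Combining the two identifications gives the first equality and the claimed compatibility with $\mbf{B}^{\up}$.

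For the second equality, apply the algebra anti-involution $*$. Using the adjunction formulas \eqref{eq:Kformadj}, \eqref{eq:Lformadj} together with \lemref{lem:*form}, one checks $r_i=*\circ{_ir}\circ *$, hence $*(\Uq^-[i])={^*}\Uq^-[i]$; since $*f_i=f_i$ and $*$ is an anti-homomorphism, $*(f_i^k\Uq^-[i])={^*}\Uq^-[i]f_i^k$. Combining with \lemref{lem:*upper}, which gives $*\Gup(b)=\Gup(*b)$, and the identity $\vep_i^*(b)=\vep_i(*b)$, the second equality follows by applying $*$ to the first.

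There is no real obstacle; the only points requiring care are (i) verifying that the decomposition in \lemref{lem:orthdecomp} is orthogonal with respect to the \emph{same} form $(\cdot,\cdot)_K$ that is dual for $\mbf{B}$ and $\mbf{B}^{\up}$, and (ii) the routine passage from the $\mca{A}$-form statement of \thmref{theo:lower_filt} to the $\Qq$-vector space statement used here.
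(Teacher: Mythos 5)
Your proof is correct and follows essentially the same route the paper indicates: it combines the orthogonal decomposition of \lemref{lem:orthdecomp} (iterated, and checked against $(\cdot,\cdot)_K$ via \eqref{eq:Kformadj}) with the canonical-basis compatibility of $f_i^{n+1}\Uq^-(\mfr{g})$ from \thmref{theo:lower_filt}, then dualizes. The only cosmetic difference is that you obtain the second equality by applying $*$ (using $r_i=*\circ{_ir}\circ *$, \lemref{lem:*upper} and $\vep_i^*(b)=\vep_i(*b)$) rather than running the symmetric argument with ${^*}\Uq^-[i]$, $\Uq^-(\mfr{g})f_i^n$ and the second half of \thmref{theo:lower_filt}; both are equally valid.
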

Let ${^i}\pi\colon \Uq^-\to \Uq^-[i]$ (resp.\ $\pi{^i}\colon \Uq^-\to {^*}\Uq^-[i]$)
be the orthogonal projection whose kernel is $f_i\Uq^-(\mfr{g})$ (resp.\ $\Uq^-(\mfr{g})f_i$).
The following result is due to Saito and Lusztig.
\begin{theo}[{\cite[Prospoition 3.4.7, Corollary 3.4.8]{Saito:PBW}, \cite[Theorem 1.2]{Lus:braid}}]\label{theo:braidlower}
	For $b\in \mscr{B}(\infty)$ with $\vep_i^*(b)=0$, we have 
	\[T_i({\pi^i}\Glow(b))={^i}\pi(\Glow(\Lambda_i(b))) \in \mscr{L}(\infty),\]
	where $\Lambda_i \colon \{b\in \mscr{B}(\infty); \vep_i^{*}(b)=0\}\to \{b\in \mscr{B}(\infty); \vep_i(b)=0\}$
	is the bijection given by $\Lambda_i(b)=\fit{i}^{*\varphi_i(b)}\eit{i}^{\vep_i(b)}b$
	and its inverse is given by $\Lambda_i^{-1}(b)=\fit{i}^{\varphi^*_i(b)}\eit{i}^{*\vep^*_i(b)}b$.
\end{theo}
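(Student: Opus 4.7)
The plan is to exploit the characterization of canonical basis elements as the unique bar-invariant lifts of crystal basis elements inside $\mscr{L}(\infty)\cap \Uq^-(\mfr{g})_{\mca{A}}$, and to verify the three required properties (lattice membership, congruence modulo $q\mscr{L}(\infty)$, bar-invariance) for both sides of the claimed identity separately. The key structural observation is that $T_i$ restricts to an isomorphism of ${^*}\Uq^-[i]$ onto $\Uq^-[i]$ by \lemref{lem:orthdecomp}, so that both $T_i({\pi^i}\Glow(b))$ and ${^i}\pi(\Glow(\Lambda_i(b)))$ live naturally inside $\Uq^-[i]$ and can be compared there.

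First, I would analyze the projections at the crystal-lattice level. Using the $q$-Boson decomposition any $x\in \Uq^-(\mfr{g})$ can be written uniquely as $x=\sum_{n\geq 0} x_n f_i^n$ with $r_i(x_n)=0$, and $\pi^i(x)=x_0$. Combining this with \thmref{theo:lower_filt} and the orthonormality of $\mscr{B}(\infty)$ in $\mscr{L}(\infty)/q\mscr{L}(\infty)$ from \propref{prop:crylat}, I would show that when $\vep_i^*(b)=0$ the higher order summands $x_nf_i^n$ with $n\geq 1$ lie in $q\mscr{L}(\infty)$, so that $\pi^i\Glow(b)\in \mscr{L}(\infty)$ and $\pi^i\Glow(b)\equiv b \mod q\mscr{L}(\infty)$. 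Applying the $*$-involution together with \propref{prop:*lower} yields the symmetric statement for ${^i}\pi$ on elements with $\vep_i=0$.

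Next, I would establish the main technical claim that $T_i$ carries $\mscr{L}(\infty)\cap {^*}\Uq^-[i]$ into $\mscr{L}(\infty)\cap \Uq^-[i]$, and that the induced map on the quotient realizes the bijection $\Lambda_i$ on crystal bases. The weight bookkeeping is the easy check: $\Lambda_i(b)=\fit{i}^{*\varphi_i(b)}\eit{i}^{\vep_i(b)}b$ has $\vep_i(\Lambda_i(b))=0$ and weight $s_i\wt(b)$, matching the $s_i$-twist of the $P^\vee$-grading by $T_i$. For bar-invariance I would use $*\circ T_i\circ *=T_i^{-1}$ from \eqref{eq:Ti_inverse}: since both $\pi^i\Glow(b)$ and ${^i}\pi\Glow(\Lambda_i(b))$ are defined over $\mca{A}$ and commute with the bar involution, the balanced-triple characterization of $\Glow(\Lambda_i(b))$ will then force the asserted equality.

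The main obstacle will be the second step, namely proving that $T_i$ preserves the crystal lattice on ${^*}\Uq^-[i]$ and induces exactly $\Lambda_i$ on $\mscr{B}(\infty)$. This is the technical core of Saito's reflection theorem and cannot be obtained by purely formal manipulations: Saito's approach in \cite{Saito:PBW} proceeds by careful analysis of rank-two subalgebras together with induction on the length of $b$ via Kashiwara's embeddings $\Psi_i$ and $\Psi_i^*$, using the explicit realisation of $T_i$ as conjugation by $S_i$ from \eqref{eq:braidadj}. I would follow this strategy, but the computation is delicate because $T_i$ mixes divided powers of $e_i$ and $f_i$, and one must verify that this mixing is compatible with the integral form $\Uq^-(\mfr{g})_{\mca{A}}$ and with the lattice $\mscr{L}(\infty)$; the reduction to rank two and explicit rank-two identities for $\exp_{q_i^{\pm 1}}$ (cf.\ \eqref{eq:expinv}) are the unavoidable combinatorial work.
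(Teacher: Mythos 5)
The first thing to note is that the paper does not prove this statement at all: \thmref{theo:braidlower} is imported verbatim from Saito (Proposition 3.4.7, Corollary 3.4.8 of \cite{Saito:PBW}) and Lusztig (\cite{Lus:braid}), and is then used as a black box to deduce \thmref{theo:braidupper}. So there is no in-paper argument to measure your proposal against; the honest comparison is with Saito's original proof, and on that score your write-up is an outline rather than a proof. You yourself concede that the decisive step --- that $T_i$ maps $\mscr{L}(\infty)\cap{^*}\Uq^-[i]$ into $\mscr{L}(\infty)\cap\Uq^-[i]$ and induces exactly $\Lambda_i=\fit{i}^{*\varphi_i}\eit{i}^{\vep_i}$ on $\mscr{B}(\infty)$ --- ``cannot be obtained by purely formal manipulations'' and must be done by Saito's rank-two analysis and induction via the embeddings $\Psi_i$. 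Since that step \emph{is} the theorem, deferring it means the proposal does not add an argument beyond the citation the paper already gives.

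There is also a concrete flaw in the step you do spell out. Your plan to pin down the identity by bar-invariance plus the balanced-triple characterization of $\Glow(\Lambda_i(b))$ does not work as stated: the relation \eqref{eq:Ti_inverse} is $*\circ T_i\circ *=T_i^{-1}$, a statement about the $*$-involution, not the bar involution; in fact $\overline{\phantom{x}}\circ T''_{i,e}\circ\overline{\phantom{x}}=T''_{i,-e}$, so $T_i$ does \emph{not} commute with the bar involution, and neither $T_i(\pi^i\Glow(b))$ nor ${^i}\pi(\Glow(\Lambda_i(b)))$ is bar-invariant in general (the projections kill the correction terms of \thmref{thm:multlower} that are needed for bar-invariance). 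Hence the equality cannot be ``forced'' by the characterization of canonical basis elements as bar-invariant lifts; Saito instead proves it by tracking how $T_i=S_i(\cdot)S_i^{-1}$ interacts with the operators ${_ir}$, $r_i$ and the crystal operators, with the congruence modulo $q\mscr{L}(\infty)$ and the $\mca{A}$-integrality established along the way. Your first step (that $\pi^i\Glow(b)\in\mscr{L}(\infty)$ and $\pi^i\Glow(b)\equiv b$ when $\vep_i^*(b)=0$) is also not free: it already presupposes that the projection $\pi^i$ preserves the crystal lattice, which is part of the same circle of results and needs proof, not just \thmref{theo:lower_filt} plus orthonormality. If you want a self-contained treatment you must either reproduce Saito's induction or Lusztig's geometric argument; as it stands, the proposal correctly names the ingredients but leaves the theorem unproved and substitutes an invalid uniqueness argument for the missing core.
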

By \thmref{theo:braidlower}, we obtain the following result.
\begin{theo}[{\cite[Theorem 4.1.2]{Saito:PBW}, \cite[Proposition 8.2]{Lus:braid}}]
For $w\in W$, $\tw=(i_1, i_2, \cdots, i_l)\in R(w)$ and $e\in \{\pm 1\}$,

\textup{(1)}
we have $F_e(\mbf{c}, \tw)\in \mscr{L}(\infty)$
and  
\[b_e(\mbf{c}, \tw)=F_e(\mbf{c}, \tw) \mod q\mscr{L}(\infty)\in \mscr{B}(\infty).\]

\textup{(b)}
The map $\mbb{Z}_{\geq 0}^l \to \mscr{B}(\infty)$ which is defined by $\mbf{c}\mapsto b_e(\mbf{c}, \tw)$ is injective.
We denote the image by $\mscr{B}(w, e)$ and this does not depend on a choice of $\tw\in R(w)$.
\end{theo}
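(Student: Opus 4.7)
The proof is an induction on $l = l(w)$, the base case $l = 0$ being trivial. First I reduce to $e = +1$: using \eqref{eq:Ti_inverse} and the fact that $*$ fixes each divided power $f_{i_k}^{(c_k)}$ and reverses products, one checks $*F_+(\mbf{c}, \tw) = F_-(\mbf{c}, \tw)$, and since $*$ preserves both $\mscr{L}(\infty)$ and $\mscr{B}(\infty)$, the case $e = -1$ follows from $e = +1$. Hereafter I write $F(\mbf{c}, \tw) := F_+(\mbf{c}, \tw)$ and $b(\mbf{c}, \tw) := b_+(\mbf{c}, \tw)$.

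For the inductive step, set $i = i_1$, $w' = s_i w$, $\tw' = (i_2, \ldots, i_l) \in R(w')$, $\mbf{c}' = (c_2, \ldots, c_l)$. Expressing each root vector $F(c_k\beta_k) = T_i(F'(c_k\beta_{k-1}'))$ in terms of the root vectors $F'(\cdot)$ for $\tw'$ (for $k \geq 2$), we obtain the recursion
\[
F(\mbf{c}, \tw) = f_i^{(c_1)} \cdot T_i\bigl(F(\mbf{c}', \tw')\bigr).
\]
The induction hypothesis gives $F(\mbf{c}', \tw') \in \mscr{L}(\infty)$ with reduction $b' := b(\mbf{c}', \tw') \in \mscr{B}(\infty)$. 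Moreover $T_i(F'(\beta_k')) = F(\beta_{k+1}) \in \Uq^-$, so $F'(\beta_k') \in {}^*\Uq^-[i]$ by \lemref{lem:orthdecomp}; since $r_i$ is a skew derivation by \eqref{eq:ri}, the kernel ${}^*\Uq^-[i]$ is a subalgebra, hence $F(\mbf{c}', \tw') \in {}^*\Uq^-[i]$. Working modulo $q\mscr{L}(\infty)$, the compatibility of ${}^*\Uq^-[i]$ with the crystal lattice forces $\vep_i^*(b') = 0$. Applying \thmref{theo:braidlower} then yields $T_i(F(\mbf{c}', \tw')) \in \mscr{L}(\infty)$ with
\[
T_i\bigl(F(\mbf{c}', \tw')\bigr) \equiv \Glow\bigl(\Lambda_i(b')\bigr) \pmod{q\mscr{L}(\infty)}.
\]
Since $\vep_i(\Lambda_i(b')) = 0$, the multiplication formula \thmref{thm:multlower} gives leading coefficient $\binom{c_1}{c_1} = 1$ with all error terms in strictly larger $\vep_i$-weight; combining the two steps yields $F(\mbf{c}, \tw) \in \mscr{L}(\infty)$ together with the \emph{recursive formula}
\[
b(\mbf{c}, \tw) = \fit{i}^{c_1}\,\Lambda_i\bigl(b(\mbf{c}', \tw')\bigr),
\]
which proves~(1). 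The injectivity in~(2) is then immediate inductively: $c_1 = \vep_i(b(\mbf{c}, \tw))$ and $b(\mbf{c}', \tw') = \Lambda_i^{-1}\eit{i}^{\max}\,b(\mbf{c}, \tw)$ allow $\mbf{c}$ to be recovered from $b(\mbf{c}, \tw)$ by induction.

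The hard part is independence of $\mscr{B}(w, e) \subset \mscr{B}(\infty)$ from the choice of $\tw$. Any two reduced expressions for $w$ are related by a sequence of braid moves (rank-two relations in $W$), so it suffices to show invariance under a single such move. On the PBW side the braid relations for $\{T_i\}$ provide an explicit (piecewise-linear) transition map on $\mbb{Z}_{\geq 0}^l$ between the two parametrizations; on the crystal side the recursion above expresses $\mscr{B}(w, e)$ in terms of $\Lambda_{i_1}$ and $\fit{i_1}$. Matching these reduces independence to a rank-two verification (types $A_1 \times A_1$, $A_2$, $B_2$, $G_2$) of the compatibility of $\Lambda_i$, $\Lambda_j$ with the Kashiwara operators $\fit{i}$, $\fit{j}$ on $\mscr{B}(\infty)$. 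I expect this rank-two analysis, which is the technical heart of \cite{Saito:PBW} and \cite{Lus:braid}, to be the main obstacle.
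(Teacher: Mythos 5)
The statement you are proving is quoted in the paper as a known result (Saito, Theorem 4.1.2; Lusztig, Proposition 8.2), introduced with ``By \thmref{theo:braidlower}, we obtain the following result''; your inductive skeleton for part (1), via the recursion $F(\mbf{c},\tw)=f_{i_1}^{(c_1)}T_{i_1}(F(\mbf{c}',\tw'))$ and the crystal formula $b(\mbf{c},\tw)=\fit{i_1}^{c_1}\Lambda_{i_1}(b(\mbf{c}',\tw'))$, is indeed the standard route behind that citation, and the injectivity deduction from it is fine. But two steps are not justified as written. First, \thmref{theo:braidlower} is a statement about $T_i(\pi^i\Glow(b))$ for canonical basis elements, not about arbitrary elements of $\mscr{L}(\infty)\cap\ker r_i$; to apply it to $F(\mbf{c}',\tw')$ you must expand $F(\mbf{c}',\tw')=\sum_b a_b\Glow(b)$ with $a_b\in\mca{A}_0$, use \thmref{theo:lower_filt} to see $\pi^i\Glow(b)=0$ when $\vep_i^*(b)>0$, and check $^i\pi\Glow(\Lambda_i b')\equiv\Glow(\Lambda_i b')\bmod q\mscr{L}(\infty)$. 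Second, and more seriously as a matter of logic, \thmref{thm:multlower} does not give what you claim: its coefficients lie in $\mca{A}=\mbb{Q}[q^{\pm}]$, not in $q\mca{A}_0$, so ``leading coefficient $1$ with error terms of larger $\vep_i$'' does not by itself yield $F(\mbf{c},\tw)\in\mscr{L}(\infty)$ nor the congruence mod $q\mscr{L}(\infty)$ (left multiplication by $f_i^{(m)}$ does not preserve $\mscr{L}(\infty)$ in general). The correct bridge is that $T_{i_1}(F(\mbf{c}',\tw'))\in\ker({_{i_1}r})$, and for $u\in\ker({_ir})$ one has the exact identity $f_i^{(m)}u=\fit{i}^{\,m}u$, which preserves the lattice and the class; without some such argument the inductive step is incomplete.

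The genuine gap, however, is the last claim of the theorem: the independence of $\mscr{B}(w,e)$ from the choice of $\tw\in R(w)$. You reduce it to a rank-two verification across braid moves and then explicitly leave that verification undone (``I expect this \dots to be the main obstacle''), so this part is asserted, not proved. Note that once part (1) is established there is a route that avoids the rank-two case analysis entirely, using only facts already quoted in the paper: by \thmref{thm:PBW} the span $\Uq^{-}(w,e)$ of the PBW monomials is independent of $\tw$, and by \propref{prop:innerprod} together with \lemref{lem:KLforms} the PBW basis is almost orthonormal for $(\cdot,\cdot)_K$, i.e.\ $(F_e(\mbf{c},\tw),F_e(\mbf{c}',\tw))_K\in\delta_{\mbf{c},\mbf{c}'}(1+q\mca{A}_0)$. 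Hence for any $u\in\Uq^{-}(w,e)\cap\mscr{L}(\infty)$ the coefficients in $u=\sum a_{\mbf{c}}F_e(\mbf{c},\tw)$ lie in $\mca{A}_0$ (use $(\mscr{L}(\infty),\mscr{L}(\infty))_K\subset\mca{A}_0$ from \propref{prop:crylat}), so the classes $b_e(\mbf{c},\tw)$ span exactly the image of $\Uq^{-}(w,e)\cap\mscr{L}(\infty)$ in $\mscr{L}(\infty)/q\mscr{L}(\infty)$; since two subsets of the basis $\mscr{B}(\infty)$ with the same span coincide, $\mscr{B}(w,e)$ depends only on the subspace $\Uq^{-}(w,e)$ and hence not on $\tw$. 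Supplying either this argument or the rank-two analysis is necessary before your proposal can be considered a proof.
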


For fixed $\tw\in R(w)$, we denote  the inverse of $\mbf{c}\mapsto b_e(\mbf{c}, \tw)$ by $L_{e, \tw} \colon \mscr{B}(w, e)\to \mbb{Z}_{\geq 0}^l$.
This map is called \emph{Lusztig data} of $b$ associated with $\tw$.
\subsubsection{}
As a corollary of the above description, we have the following properties.
\begin{cor}
\textup{(1)}
We have $\Lambda_i S_m(b)=S_m \Lambda_i(b)$ 
for $b\in \{b\in \mscr{B}(\infty) ; \vep_i^{*}(b)=0\}$.

\textup{(2)}
We have $S_m \colon \mscr{B}(w, e)\to \mscr{B}(w, e)$ for any $m\geq 1$ and $S_m(b_{e}(\mbf{c}, \tw))=b_{e}(m\mbf{c}, \tw)$.

\textup{(3)}
We have $*(\mscr{B}(w, e))=\mscr{B}(w, -e)$ and 
$*b_e(\mbf{c}, \tw)=b_{-e}(\mbf{c}, \tw)$.
\end{cor}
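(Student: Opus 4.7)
My plan is to treat the three assertions in the order (1), (3), (2), each following essentially formally from the properties established earlier in the paper.

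For part (1), I will simply unfold the definition $\Lambda_i(b)=\fit{i}^{*\varphi_i(b)}\eit{i}^{\vep_i(b)}b$ and apply the characterizing properties of the inflation $S_m$ stated in the proposition on inflations: $S_m\circ\eit{i}=\eit{i}^m\circ S_m$, $S_m\circ\fit{i}=\fit{i}^m\circ S_m$, and via $S_m\circ *=*\circ S_m$ also the same relations with the $*$-conjugated operators; furthermore $\vep_i(S_m b)=m\vep_i(b)$, $\varphi_i(S_m b)=m\varphi_i(b)$, and $\vep^*_i(S_m b)=m\vep^*_i(b)$. In particular $\vep^*_i(S_m b)=0$, so $\Lambda_i(S_m b)$ is defined, and
\[
\Lambda_i(S_m b)=\fit{i}^{*m\varphi_i(b)}\eit{i}^{m\vep_i(b)}S_m b
=\fit{i}^{*m\varphi_i(b)}S_m\eit{i}^{\vep_i(b)}b
=S_m\Lambda_i(b)
\]
yields the claim.

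For part (3), since $*(f_i^{(c)})=f_i^{(c)}$, the relation $*\circ T_i\circ *=T_i^{-1}$ in \eqref{eq:Ti_inverse} gives
\[*F_{+1}(c\beta_k)=*(T_{i_1}\cdots T_{i_{k-1}}(f_{i_k}^{(c)}))=T_{i_1}^{-1}\cdots T_{i_{k-1}}^{-1}(f_{i_k}^{(c)})=F_{-1}(c\beta_k).\]
Because $*$ is an algebra anti-involution, the product defining $F_{+1}(\mbf{c},\tw)$ transforms as
\[*F_{+1}(\mbf{c},\tw)=F_{-1}(c_l\beta_l)\cdots F_{-1}(c_1\beta_1)=F_{-1}(\mbf{c},\tw).\]
Reducing modulo $q\mscr{L}(\infty)$ and using that $*$ preserves $\mscr{L}(\infty)$ and permutes $\mscr{B}(\infty)$, we obtain $*b_{+1}(\mbf{c},\tw)=b_{-1}(\mbf{c},\tw)$, so in particular $*\mscr{B}(w,+1)=\mscr{B}(w,-1)$.

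For part (2), I will prove the $e=+1$ assertion by induction on $l=l(w)$; the $e=-1$ case then follows by conjugating with $*$ and invoking both part (3) and $S_m\circ *=*\circ S_m$. The base case $l=0$ reduces to $S_m(u_\infty)=u_\infty$. For the inductive step, let $\tw=(i_1,i_2,\ldots,i_l)$, $\tw'=(i_2,\ldots,i_l)$, and $\mbf{c}'=(c_2,\ldots,c_l)$. On the PBW side we have the factorization $F_{+1}(\mbf{c},\tw)=f_{i_1}^{(c_1)}\,T_{i_1}(F_{+1}(\mbf{c}',\tw'))$, and Saito's \thmref{theo:braidlower} together with the decomposition of $\Uq^-$ in \lemref{lem:orthdecomp} translates this into the crystal recursion $b_{+1}(\mbf{c},\tw)=\fit{i_1}^{c_1}\Lambda_{i_1}(b_{+1}(\mbf{c}',\tw'))$, noting that the Lusztig datum $\mbf{c}'$ associated with $\tw'$ produces elements with $\vep^*_{i_1}=0$ so that $\Lambda_{i_1}$ applies. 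Using part (1), the inflation property $S_m\fit{i_1}=\fit{i_1}^m S_m$, and the induction hypothesis applied to $\tw'$, we compute
\[
S_m b_{+1}(\mbf{c},\tw)=\fit{i_1}^{mc_1}\Lambda_{i_1}(S_m b_{+1}(\mbf{c}',\tw'))
=\fit{i_1}^{mc_1}\Lambda_{i_1}(b_{+1}(m\mbf{c}',\tw'))=b_{+1}(m\mbf{c},\tw),
\]
which is the assertion.

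The main obstacle I expect is the justification of the crystal-level recursion $b_{+1}(\mbf{c},\tw)=\fit{i_1}^{c_1}\Lambda_{i_1}(b_{+1}(\mbf{c}',\tw'))$ used in part (2). Two things must be checked: that $b_{+1}(\mbf{c}',\tw')$ lies in $\{b:\vep^*_{i_1}(b)=0\}$ so that $\Lambda_{i_1}$ is defined on it, which follows from the fact that $T_{i_1}$ lands in $\Uq^-$ precisely on the subspace $\,^{*}\!\Uq^-[i_1]$; and that the operator $f_{i_1}^{(c_1)}\cdot$ induces $\fit{i_1}^{c_1}$ on the Lusztig datum without perturbing the other entries, which follows from the multiplication expansion in \thmref{thm:multlower} combined with the orthogonal decomposition $\Uq^-=\bigoplus_k f_{i_1}^k\Uq^-[i_1]$. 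Once these structural facts are in hand, the induction is purely formal.
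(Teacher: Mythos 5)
Your proof is correct and follows essentially the same route as the paper: part (1) by unfolding $\Lambda_i(b)=\fit{i}^{*\varphi_i(b)}\eit{i}^{\vep_i(b)}b$ against the inflation properties of $S_m$, part (3) from $*\circ T_i\circ *=T_i^{-1}$ together with $*(\mscr{L}(\infty))=\mscr{L}(\infty)$, and part (2) by induction on $\ell(w)$ via the recursion $b_{+1}(\mbf{c},\tw)=\fit{i_1}^{c_1}\Lambda_{i_1}(b_{+1}(\mbf{c}',\tw'))$, which is exactly the Saito-type argument the paper relies on. The only cosmetic point is that in justifying this recursion the decisive fact is ${_{i_1}r}\bigl(T_{i_1}(F_{+1}(\mbf{c}',\tw'))\bigr)=0$ coming from \lemref{lem:orthdecomp}, rather than the canonical-basis expansion of \thmref{thm:multlower}, but your appeal to the decomposition $\Uq^-=\bigoplus_k f_{i_1}^k\Uq^-[i_1]$ already covers this.
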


\begin{NB}
\begin{proof}
For convenience of the reader, we give a proof.
\textup{(1)}
For $b\in \mca{B}(\infty)$, we have $m\vep_i(b)=\vep_i(S_mb)$ for any $m$.
Here the left hand side make sense.
\begin{align*}
	\Lambda_i S_m(b)=\fit{i}^{*\varphi_i(S_m(b))}\eit{i}^{\vep_i(S_mb)}S_mb=\fit{i}^{*m\varphi_i(b)}\eit{i}^{m\vep_i(b)}S_mb=S_m(\Lambda_i(b)),
\end{align*}

\textup{(2)}
This can be proven by the induction on the length $l$ of $w\in W$.
\end{proof}
\end{NB}


\subsection{Inner products of PBW basis}
By  \ref{lem:KLforms},
we have the following modification of \cite[Proposition 38.2.1]{Lus:intro}.
\begin{prop}\label{prop:Ti_Kform}
For $x,y\in \Uq^-(\mfr{g})_{\xi}$ with $x, y\in \Uq^-[i]$ (resp.\ with $x, y\in {^*}\Uq^-[i]$),
we have 
\[(x, y)_K=(1-q_i^2)^{-\bracket{h_i, \xi}}(T_i^{-1}x, T_i^{-1}y)_K ~(\text{resp.\ } (1-q_i^2)^{-\bracket{h_i, \xi}}(T_ix, T_iy)_K).\]
\end{prop}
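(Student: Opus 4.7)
The plan is to reduce to Lusztig's Proposition 38.2.1, which asserts the analogous identity
$(x,y)_L = (T_i^{-1}x, T_i^{-1}y)_L$
for Lusztig's bilinear form whenever $x, y \in \Uq^-[i]$ are homogeneous of the same weight $\xi$, and then translate between $(\cdot,\cdot)_L$ and $(\cdot,\cdot)_K$ using \lemref{lem:KLforms}, keeping careful track of how the weight changes under $T_i^{-1}$.

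Write $\xi = -\sum_{j \in I} n_j \alpha_j \in Q_-$. Since $T_i^{-1}$ sends weight $\xi$ to $s_i(\xi) = \xi - \bracket{h_i,\xi}\alpha_i$, the coefficient of $-\alpha_i$ in $-s_i(\xi)$ is $n_i + \bracket{h_i,\xi}$ while the other coefficients remain $n_j$. Applying \lemref{lem:KLforms} to both sides yields
\begin{align*}
(x,y)_K &= \prod_{j \in I}(1-q_j^2)^{n_j}\,(x,y)_L, \\
(T_i^{-1}x, T_i^{-1}y)_K &= (1-q_i^2)^{\bracket{h_i,\xi}}\prod_{j \in I}(1-q_j^2)^{n_j}\,(T_i^{-1}x, T_i^{-1}y)_L.
\end{align*}
Combining these with Lusztig's identity gives $(T_i^{-1}x, T_i^{-1}y)_K = (1-q_i^2)^{\bracket{h_i,\xi}}(x,y)_K$, which is the first assertion.

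For the ${^*}\Uq^-[i]$ case I would apply the $*$-involution. By \lemref{lem:*form} the form $(\cdot,\cdot)_K$ is $*$-invariant, by \eqref{eq:Ti_inverse} we have $*\circ T_i = T_i^{-1}\circ *$, and $*$ interchanges $\Uq^-[i]$ with ${^*}\Uq^-[i]$ while preserving weight. Hence for $x, y \in {^*}\Uq^-[i]$ of weight $\xi$, the elements $*x, *y$ lie in $\Uq^-[i]_\xi$, and applying the first identity gives
\[
(x,y)_K = (*x, *y)_K = (1-q_i^2)^{-\bracket{h_i,\xi}}(T_i^{-1}*x, T_i^{-1}*y)_K = (1-q_i^2)^{-\bracket{h_i,\xi}}(T_ix, T_iy)_K,
\]
as required.

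The only real subtlety is confirming that Lusztig's identity in [Lus:intro, \S 38.2.1] is stated for precisely our $T_i^{-1} = T'_{i,-1}$ on $\Uq^-[i]$ with no extra $q$-power correction coming from the difference between the coproduct conventions $\Delta$ (used by Kashiwara, adopted here in \S\ref{sec:coproduct}) and Lusztig's coproduct; once this normalization is matched, the argument is entirely the weight bookkeeping above.
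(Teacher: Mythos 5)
Your proposal is correct and is exactly the route the paper intends: the paper gives no explicit proof beyond the remark that the proposition is "the modification of \cite[Proposition 38.2.1]{Lus:intro} by \lemref{lem:KLforms}", and your weight bookkeeping ($n'_i = n_i + \bracket{h_i,\xi}$ under $\xi \mapsto s_i\xi$) together with the $*$-involution reduction via \lemref{lem:*form} and \eqref{eq:Ti_inverse} supplies precisely the details that are left implicit. The convention-matching caveat you raise is real but harmless here, since the paper's normalization $(f_i,f_j)_L=\delta_{ij}/(1-q_i^2)$ agrees with Lusztig's and the discrepancy coming from the choice of coproduct only amounts to bar-conjugation, which does not affect the identity on $\Uq^-[i]$.
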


\subsubsection{}
We have the following formula for inner product of PBW basis with respect to Lusztig's bilinear form $(~, ~)_{L}$.
For more details, see \cite[Propsition 38.2.3]{Lus:intro}.
\begin{prop}\label{prop:innerprod}
Let $w\in W$ and $\tw\in R(w)$ with $l=\ell(w)$.
We have 
\[(F(\mbf{c}, \tw), F(\mbf{c}', \tw))_L=\prod_{k=1}^l\delta_{c_k, c'_k}\prod_{s=1}^{c_k}\frac{1}{1-q_i^{2s}}=%
\prod_{k=1}^l\delta_{c_k, c'_k}(-1)^{c_k}\frac{q_{i_k}^{-\frac{c_k(c_k+1)}{2}}}{(q_{i_k}-q_{i_k}^{-1})^{c_k}[c_k]_{i_k}^!}.\]

\end{prop}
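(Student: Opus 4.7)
The plan is to induct on $l = \ell(w)$, peeling off the leftmost simple reflection. The base case $l = 1$ reduces to computing $(f_i^{(c)}, f_i^{(c')})_L$; iterating the adjunction \eqref{eq:Lformadj} together with ${_ir}(1) = 0$ and the $q$-Boson evaluation of ${_ir}^n(f_i^m)$ yields both the orthogonality $\delta_{c,c'}$ and the claimed product. For the inductive step, write $\tw = (i_1, \tw')$ with $\tw' = (i_2, \ldots, i_l) \in R(s_{i_1}w)$, and use the factorization $F(\mbf{c}, \tw) = f_{i_1}^{(c_1)} \cdot T_{i_1}(F(\mbf{c}', \tw'))$ for $\mbf{c}' = (c_2, \ldots, c_l)$. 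By \lemref{lem:orthdecomp} together with the fact that $T_{i_1}$ carries $\Uq^-(s_{i_1}w)$ into $\Uq^-$, the element $x := T_{i_1}(F(\mbf{c}', \tw'))$ lies in $\Uq^-[i_1]$; write $y := T_{i_1}(F(\mbf{d}', \tw')) \in \Uq^-[i_1]$ analogously.

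The inner product $(F(\mbf{c}, \tw), F(\mbf{d}, \tw))_L = (f_{i_1}^{(c_1)} x, f_{i_1}^{(d_1)} y)_L$ is computed by applying \eqref{eq:Lformadj} $c_1$ times to move $f_{i_1}^{(c_1)}$ onto the other factor, giving
\[
(f_{i_1}^{(c_1)} x, f_{i_1}^{(d_1)} y)_L \;=\; \frac{1}{[c_1]_{i_1}!\,(1-q_{i_1}^2)^{c_1}}\bigl(x,\, {_{i_1}r}^{c_1}(f_{i_1}^{(d_1)} y)\bigr)_L.
\]
Since ${_{i_1}r}(y) = 0$, iterating the product rule \eqref{eq:ir} reduces ${_{i_1}r}^{c_1}(f_{i_1}^{(d_1)} y)$ to an explicit scalar multiple of $f_{i_1}^{(d_1 - c_1)} y$ when $c_1 \le d_1$, and to zero when $c_1 > d_1$. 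Swapping the roles of $\mbf{c}$ and $\mbf{d}$ forces $c_1 = d_1$ for the pairing to be nonzero, after which the computation collapses to a scalar multiple of $(x, y)_L = (T_{i_1} F(\mbf{c}', \tw'), T_{i_1} F(\mbf{d}', \tw'))_L$.

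To remove the braid operator, apply \propref{prop:Ti_Kform} — converting between Kashiwara's and Lusztig's forms via \lemref{lem:KLforms}, which introduces weight-dependent powers of $(1 - q_{i_1}^2)$ — in order to identify $(x, y)_L$ as an explicit multiple of $(F(\mbf{c}', \tw'), F(\mbf{d}', \tw'))_L$. The induction hypothesis then forces $c_k = d_k$ for all $k \ge 2$ and supplies the product over indices $2 \le k \le l$. It remains to check that all accumulated scalar factors — the $[c_1]_{i_1}!\,(1 - q_{i_1}^2)^{c_1}$ from the adjunction, the coefficient from the $q$-Boson evaluation, and the $(1 - q_{i_1}^2)$-powers from the braid conversion — combine into exactly the single missing factor $\prod_{s=1}^{c_1}(1 - q_{i_1}^{2s})^{-1}$.

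The main obstacle is this last bookkeeping step. The $q$-power produced by the iterated $q$-Boson evaluation depends on $\wt(y) = \wt(T_{i_1} F(\mbf{c}', \tw'))$, and the exponent $\langle h_{i_1}, \wt(x)\rangle$ in the \propref{prop:Ti_Kform}-conversion is governed by the same weight; showing that these weight-dependent $q$- and $(1 - q^2)$-contributions cancel cleanly requires the algebraic identity $(q_i - q_i^{-1})^c [c]_i! = (-1)^c q_i^{-c(c+1)/2} \prod_{s=1}^{c}(1 - q_i^{2s})$ (equivalent to the stated second form of the answer), together with careful matching of the weight $\wt(F(\mbf{c}', \tw'))$ under $s_{i_1}$. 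Once this identity is invoked, the residual factor collapses to $\prod_{s=1}^{c_1}(1 - q_{i_1}^{2s})^{-1}$ and the induction closes.
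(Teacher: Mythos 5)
Your plan is sound, and it is worth noting that the paper itself gives no argument for this proposition: it simply refers to \cite[Proposition 38.2.3]{Lus:intro}, whose proof is exactly the induction you describe (peel off $i_1$, use the orthogonal decomposition $\Uq^-=\Uq^-[i_1]\oplus f_{i_1}\Uq^-$ of \lemref{lem:orthdecomp}, invariance of the form under $T_{i_1}$ on $\Uq^-[i_1]$, and the rank-one norm computation). So you have reconstructed the intended proof rather than found a new one. The one place where your route is more laborious than necessary is the removal of the braid operator: you pass through \propref{prop:Ti_Kform} and then convert back with \lemref{lem:KLforms}, and you flag the resulting $(1-q_{i_1}^2)$-bookkeeping as the main obstacle. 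But the correction factor $(1-q_{i_1}^2)^{-\langle h_{i_1},\xi\rangle}$ in \propref{prop:Ti_Kform} is precisely the change in the conversion factor of \lemref{lem:KLforms} caused by the weight moving from $\xi$ to $s_{i_1}\xi$; the two cancel identically, which is just the statement (Lusztig's Proposition 38.2.1, of which \propref{prop:Ti_Kform} is the stated ``modification'') that $T_{i_1}^{-1}$ preserves $(\cdot,\cdot)_L$ on $\Uq^-[i_1]$ with no factor at all. Using that Lusztig-form statement directly makes the inductive step factor-free, and the only genuine computation left is $(f_i^{(c)},f_i^{(c)})_L$, where the identity $(q_i-q_i^{-1})^c[c]_i!\,q_i^{c(c+1)/2}(-1)^c=\prod_{s=1}^c(1-q_i^{2s})$ you invoke indeed closes the argument. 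One caution for the rank-one step: the derivation rule \eqref{eq:ir} as printed is not consistent with the $r$-defined form and \eqref{eq:Lformadj} (the exponent should be $q^{(\wt x,\alpha_i)}$ rather than $q^{-(\wt x,\alpha_i)}$ with the paper's twisted multiplication); if you use both displayed formulas literally, your base case comes out off from the stated answer by a power of $q_i$, so compute ${_ir}(f_i^{c})$ directly from the coproduct $r$ rather than from \eqref{eq:ir}.
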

\begin{NB}
We have
\[\overline{(F^{\tw}(\mbf{c}), F^{\tw}(\mbf{c}))_L}=(-1)^{\sum c_k}\prod_{k=1}^l q_{i_k}^{\frac{c_k(c_k+1)}{2}}(F^{\tw}(\mbf{c}), F^{\tw}(\mbf{c}))_L.\]
For $\xi=\sum \xi_i\alpha_i\in Q$, we set $\xi(q)=\prod_{i\in I}(1-q_i^2)^{\xi_i}=\prod_{i\in I}\{q_i(q_i^{-1}-q_i)\}^{\xi_i}$,
then we have $\overline{\xi(q)}=\prod_{i\in I}\{q_i^{-1}(q_i-q_i^{-1})\}^{\xi_i}=q_{-2\xi}(-1)^{\tr \xi}\xi(q)=(-1)^{\tr \xi}q^{-2(\xi, \rho)}\xi(q)$.
Here we consider the inner product 
\[(F^{\tw}(\mbf{c}), F^{\tw}(\mbf{c}))_K=(F^{\tw}(\mbf{c}), F^{\tw}(\mbf{c}))_L(-\wt(c))(q)\]
then we have 
\begin{align*}
	&\overline{(F^{\tw}(\mbf{c}), F^{\tw}(\mbf{c}))_K}=\overline{(F^{\tw}(\mbf{c}), F^{\tw}(\mbf{c}))_L(-\wt(c))(q)} \\
	=&(-1)^{\sum c_k}\prod_{k=1}^l q_{i_k}^{\frac{c_k(c_k+1)}{2}}(-1)^{\tr\wt{\mbf{c}}}q^{2(\wt{\mbf{c}}, \rho)}(F^{\tw}(\mbf{c}), F^{\tw}(\mbf{c}))_K\\
	=&(-1)^{\sum c_k(1-\tr\beta_k))}\prod_{k=1}^l q_{i_k}^{\frac{c_k(c_k+1)}{2}}q^{2(\wt{\mbf{c}}, \rho)}(F^{\tw}(\mbf{c}), F^{\tw}(\mbf{c}))_K
\end{align*}
\end{NB}
\subsection{Compatibility with $T_i$ and the dual canonical base}
\subsubsection{}
By using the above results, we obtain the following compatibility between
the dual canonical basis and Lusztig's braid group symmetry $T_i$.
\begin{theo}\label{theo:braidupper}
For $b\in \mscr{B}(\infty)$ wiht $\vep_i^*(b)=0$,
we have 
\[(1-q_i^2)^{\bracket{h_i, \xi}}T_i \Gup(b)=\Gup(\Lambda_i b).\]
\end{theo}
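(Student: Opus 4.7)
The plan is to invoke the duality of $\mbf{B}^{\up}$ and $\mbf{B}$ under Kashiwara's bilinear form: since $\Gup(\Lambda_i b)$ is characterized by the property $(\Gup(\Lambda_i b), \Glow(b'))_K = \delta_{\Lambda_i b, b'}$ for every $b' \in \mscr{B}(\infty)$, it suffices to compute the pairing $(T_i\Gup(b), \Glow(b'))_K$ and show that it equals $\delta_{\Lambda_i b, b'}$ up to the scalar $(1-q_i^2)^{\bracket{h_i,\xi}}$ predicted by Proposition \ref{prop:Ti_Kform}. No direct verification of $\sigma$-invariance or lattice membership is then needed: it is automatic from being a dual basis element.

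First, since $\vep_i^*(b)=0$, the compatibility of the dual canonical basis with the subspaces $\bigoplus_{k=0}^n {^*}\Uq^-[i]f_i^k$ (displayed just after Lemma \ref{lem:orthdecomp}) gives $\Gup(b) \in {^*}\Uq^-[i]$, and so by Lemma \ref{lem:orthdecomp} the element $T_i\Gup(b)$ lies in $\Uq^-[i]$. The adjointness $(f_i u, v)_K = (u, {_ir} v)_K$ makes $\Uq^-[i]$ and $f_i\Uq^-$ mutually orthogonal (and likewise for ${^*}\Uq^-[i]$ and $\Uq^-f_i$), so Theorem \ref{theo:lower_filt} forces $(T_i\Gup(b), \Glow(b'))_K = 0$ whenever $\vep_i(b')\geq 1$, which is already consistent with $\vep_i(\Lambda_i b)=0$. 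For $b'$ with $\vep_i(b')=0$, decompose $\Glow(b') = {^i}\pi\Glow(b') + (\text{remainder in $f_i\Uq^-$})$; orthogonality kills the remainder, and Proposition \ref{prop:Ti_Kform} transfers $T_i$ across the pairing:
\[(T_i\Gup(b), {^i}\pi\Glow(b'))_K = (1-q_i^2)^{\bracket{h_i,\wt(b)}}(\Gup(b), T_i^{-1}{^i}\pi\Glow(b'))_K.\]
The Saito--Lusztig theorem \ref{theo:braidlower}, rewritten by replacing $b$ with $\Lambda_i^{-1}b'$, identifies $T_i^{-1}{^i}\pi\Glow(b') = \pi^i\Glow(\Lambda_i^{-1}b')$. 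The orthogonality ${^*}\Uq^-[i]\perp \Uq^-f_i$, applied to $\Gup(b)\in {^*}\Uq^-[i]$, collapses the remaining projection: $(\Gup(b), \pi^i\Glow(\Lambda_i^{-1}b'))_K = (\Gup(b), \Glow(\Lambda_i^{-1}b'))_K = \delta_{\Lambda_i b, b'}$. Assembling the pieces yields the claimed pairing value, and hence the theorem after matching the sign of the exponent to the $\xi$ convention used in the statement.

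The main technical point --- and the reason the argument goes through cleanly --- is that the Saito--Lusztig description of $T_i$ on the lower canonical basis \emph{necessarily} involves the projections ${^i}\pi$ and $\pi^i$, because $\Glow(b')$ is not in general compatible with the orthogonal decompositions $\Uq^-=\Uq^-[i]\oplus f_i\Uq^-$ and $\Uq^-={^*}\Uq^-[i]\oplus \Uq^-f_i$, whereas for the dual canonical basis element $\Gup(b)$ with $\vep_i^*(b)=0$ one has the exact membership $\Gup(b)\in {^*}\Uq^-[i]$. Thus the projections become invisible upon pairing against $\Gup(b)$, and one obtains a single scalar multiple of $\Gup(\Lambda_i b)$ rather than a sum with correction terms; this is the precise sense in which the dual canonical basis is better adapted to Lusztig's braid symmetries than the canonical basis itself.
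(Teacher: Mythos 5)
Your argument is correct and is essentially the paper's own proof: both compute $(T_i\Gup(b),\Glow(b'))_K$ via the orthogonal decompositions of Lemma \ref{lem:orthdecomp}, transfer $T_i$ across Kashiwara's form by Proposition \ref{prop:Ti_Kform}, and invoke the Saito--Lusztig Theorem \ref{theo:braidlower} to identify $T_i^{-1}\,{}^i\pi\,\Glow(b')=\pi^i\Glow(\Lambda_i^{-1}b')$, with the projections disappearing against $\Gup(b)\in{}^*\Uq^-[i]$ and $T_i\Gup(b)\in\Uq^-[i]$. Your explicit case split on $\vep_i(b')$ and the remark that duality obviates checking $\sigma$-invariance are only presentational refinements of the same argument.
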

\begin{proof}
We shall prove that $((1-q_i^2)^{\bracket{h_i, \xi}}T_i \Gup(b), \Glow(b'))_K=\delta_{b', \Lambda_i(b)}$.
By \lemref{lem:orthdecomp}, $(1-q_i^2)^{\bracket{h_i, \xi}}(T_i \Gup(b), \Glow(b'))_{K}$ is equal to 
$(1-q_i^2)^{\bracket{h_i, \xi}}(T_i \Gup(b), {{^i}\pi}\Glow(b'))_{K}$.
By the \propref{prop:Ti_Kform}, this is equal to $(\Gup(b), T_i^{-1}{{^i}\pi}\Glow(b'))_{K}$.
Using \thmref{theo:braidlower}, we have
\begin{align*}
\;&(\Gup(b), T_i^{-1}{{^i}\pi}\Glow(b'))_{K}=(\Gup(b), \pi^{i}\Glow(\Lambda_i^{-1}b'))_{K}\\
=\;&(\Gup(b), \Glow(\Lambda_i^{-1}b'))_{K}=\delta_{b, \Lambda_i^{-1}(b')}.
\end{align*}
Then we obtain the assertion.
\end{proof}
As a corollary, we obtain the following multiplicative properties.

\begin{cor}
\textup{(1)}
For $b_1, b_2\in \mscr{B}(\infty)$ with $\vep_i^*(b_1)=\vep_i^*(b_2)=0$
(resp.\ $\vep_i(b_1)=\vep_i(b_2)=0$)
and $b_1\bot b_2$,
we have $\Lambda_i(b_1)\bot \Lambda_i(b_2)$
(resp.\ $\Lambda_i^{-1}(b_1)\bot \Lambda_i^{-1}(b_2)$).

\textup{(2)}
If $b\in \mscr{B}(\infty)$ with $\vep^*_i(b)=0$ (resp.\ $\vep_{i}(b)=0$) is (strongly) real, then $\Lambda_i(b)$ (resp.\ $\Lambda_i^{-1}(b)$) is also (strongly) real.
\end{cor}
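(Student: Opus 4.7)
The plan is to exploit Theorem \ref{theo:braidupper} directly, together with the algebra automorphism property of $T_i$ and the compatibility of $\bot$ with $\eit{i}^{*\max}$ (via $*$ applied to Corollary \ref{cor:string3}).

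For part (1), I would start from the assumption $\Gup(b_1)\Gup(b_2) = q^N\Gup(b_1\circledast b_2)$ and apply $T_i$. Since $T_i$ is a $\Qq$-algebra automorphism, and since each $\vep_i^*(b_j) = 0$, Theorem \ref{theo:braidupper} gives
\[\Gup(\Lambda_i b_1)\Gup(\Lambda_i b_2) = (1-q_i^2)^{\bracket{h_i,\,\xi_1+\xi_2}}\,T_i\bigl(\Gup(b_1)\Gup(b_2)\bigr) = q^N (1-q_i^2)^{\bracket{h_i,\,\xi_1+\xi_2}}\,T_i\Gup(b_1\circledast b_2),\]
where $\xi_j := \wt(b_j)$. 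To convert the right-hand side back via Theorem \ref{theo:braidupper}, I need $\vep_i^*(b_1\circledast b_2) = 0$. This is exactly the $*$-analogue of Corollary \ref{cor:string3}(1), which yields $\vep_i^*(b_1\circledast b_2) = \vep_i^*(b_1) + \vep_i^*(b_2) = 0$ and furthermore identifies $\Lambda_i$-translates (see below). Applying Theorem \ref{theo:braidupper} once more then produces $q^N\Gup(\Lambda_i(b_1\circledast b_2))$, proving $\Lambda_i b_1\bot \Lambda_i b_2$ with $\Lambda_i(b_1)\circledast\Lambda_i(b_2) = \Lambda_i(b_1\circledast b_2)$.

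For part (2), the real case is immediate by specializing (1) to $b_1=b_2=b$. For the strongly real case, I would apply the same computation inductively: given $\Gup(b)^m = q^{N_m}\Gup(b^{[m]})$, apply $T_i^{\otimes m}$ as an algebra map and use $b^{[m]} = S_m(b)$. The earlier corollary ensures $\Lambda_i S_m = S_m \Lambda_i$ on the locus $\{\vep_i^* = 0\}$, and since $\vep_i^*(S_m(b)) = m\vep_i^*(b) = 0$, Theorem \ref{theo:braidupper} again converts both sides to dual canonical basis elements, yielding $\Gup(\Lambda_i b)^m \simeq \Gup(S_m(\Lambda_i b))$, i.e.\ $(\Lambda_i b)^{[m]} = \Lambda_i(b^{[m]})$.

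The only mild obstacle is bookkeeping the powers of $(1-q_i^2)$ and verifying that on each application of Theorem \ref{theo:braidupper} the hypothesis $\vep_i^*(\,\cdot\,) = 0$ is genuinely available — both for $b_1\circledast b_2$ in (1) and for $b^{[m]} = S_m(b)$ in (2). The first is handled by the $*$-version of Corollary \ref{cor:string3}, the second by the multiplicativity $\vep_i^*(S_m(b)) = m\vep_i^*(b)$ from the properties of the inflation $S_m$. The analogous claims for $\Lambda_i^{-1}$ follow by applying the argument to $*b_1, *b_2$ (using $*\Gup(b) = \Gup(*b)$ from Lemma \ref{lem:*upper} and $\vep_i(b) = \vep_i^*(*b)$), together with $\Lambda_i^{-1}(b) = *\Lambda_i(*b)$ which is visible from the explicit formulas for $\Lambda_i^{\pm 1}$ in Theorem \ref{theo:braidlower}.
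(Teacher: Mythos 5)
Your argument is correct and is essentially the paper's own proof: apply $T_i$ (as an algebra automorphism) to the relation $\Gup(b_1)\Gup(b_2)=q^N\Gup(b_1\circledast b_2)$ and convert both sides via Theorem~\ref{theo:braidupper}, handling the $\Lambda_i^{-1}$ case by the $*$-involution. You are in fact slightly more careful than the paper, which leaves implicit the verification that $\vep_i^*(b_1\circledast b_2)=0$ (resp.\ $\vep_i^*(b^{[m]})=0$) needed to apply Theorem~\ref{theo:braidupper} on the right-hand side; your appeal to the $*$-version of Corollary~\ref{cor:string3} and to $b^{[m]}=S_m(b)$ fills that in correctly.
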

\begin{NB}
\begin{proof}
Let $b_1, b_2\in \mscr{B}(\infty)$ with $\wt(b_1)=\xi_1, \wt(b_2)=\xi_2$.
Here we assume that $\vep_i^*(b_1)=\vep_{i}^*(b_2)=0$ and $b_1\bot b_2$, that is  we have $\Gup(b_1)\Gup(b_2)=q^N\Gup(b_1*b_2)$.
Let $\xi_1=\wt(b_1)$ and $\xi_2=\wt(b_2)$.
If we apply $(1-q_i^2)^{\bracket{h_i, \xi_1+\xi_2}}T_i(\Gup(b_1)\Gup(b_2))=q^N(1-q_i^2)^{\bracket{h_i, \xi_1+\xi_2}}T_i(\Gup(b_1*b_2))$.
Hence we obtain the claim.
\end{proof}
\end{NB}
\subsection{Compatibility with the dual canonical basis}
In this subsection,
we prove the compatibility of the dual canonical basis with the $\Qq$-subalgebra $\Uq^-(w, e)$.
This is a straightforward generalization of \cite[2.2 Proposition]{Cal:qflag} and \cite[Theorem 4.1]{Lampe}.
Here we fix $w\in W$ and $\tw\in R(w)$.
\begin{NB}
\begin{theo}\label{theo:unipgp}
	For $w\in W$ and $e\in \{\pm 1\}$, the subalgebra $\Uq^-(w, e)$ is compatible with the dual canonical basis,
	that is we have 
	\[\Uq^-(w, e)=\bigoplus_{b\in \mscr{B}(w, e)}\Qq\Gup(b).\]
\end{theo}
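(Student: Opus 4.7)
The plan is to proceed by induction on $\ell(w)$, after reducing the $e=-1$ case to $e=+1$ via the $*$-involution. Since $*$ is a $\Qq$-linear anti-involution fixing each $f_i$, and \eqref{eq:Ti_inverse} gives $*T_i^{-1} = T_i *$, a direct computation shows $*F_-(\mbf{c}, \tw) = F_+(\mbf{c}, \tw)$, hence $*(\Uq^-(w, -1)) = \Uq^-(w, +1)$. Combined with $*(\mscr{B}(w, -1)) = \mscr{B}(w, +1)$ and \lemref{lem:*upper}, the $e=-1$ compatibility follows from the $e=+1$ case.

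For $e=+1$, the base $w=1$ is trivial. For the inductive step, write $w = s_i w'$ with $\ell(w) = \ell(w')+1$ and choose a reduced expression $\tw = (i, \tw')$ where $\tw' \in R(w')$. Since $F_+((c, \mbf{c}'), \tw) = f_i^{(c)} T_i(F_+(\mbf{c}', \tw'))$, the PBW theorem (\thmref{thm:PBW}) yields
\[
\Uq^-(w, +1) \;=\; \bigoplus_{c \geq 0} f_i^{(c)}\, T_i\bigl(\Uq^-(w', +1)\bigr).
\]
By the inductive hypothesis, $\Uq^-(w', +1) = \bigoplus_{b' \in \mscr{B}(w', +1)} \Qq\,\Gup(b')$. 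The key sublemma is that $\vep_i^*(b') = 0$ for every $b' \in \mscr{B}(w', +1)$; equivalently, $\Uq^-(w', +1) \subset {^*}\Uq^-[i]$ by \lemref{lem:orthdecomp}. Since $T_i$ respects products, this reduces to checking that $T_i$ sends each PBW generator $F(\beta'_p) = T_{j_1}\cdots T_{j_{p-1}}(f_{j_p})$ of $\Uq^-(w', +1)$ into $\Uq^-(\mfr{g})$, which holds because $(i, j_1, \ldots, j_p)$ is an initial subword of the reduced $\tw$ and hence itself reduced, so $T_iT_{j_1}\cdots T_{j_{p-1}}(f_{j_p})$ is the root vector of $w$ attached to the corresponding positive root.

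Granting the sublemma, \thmref{theo:braidupper} yields $T_i \Gup(b') \simeq \Gup(\Lambda_i b')$ with $\vep_i(\Lambda_i b') = 0$, so
\[
\Uq^-(w, +1) \;=\; \bigoplus_{c \geq 0,\; b' \in \mscr{B}(w', +1)} \Qq\, f_i^{(c)}\,\Gup(\Lambda_i b').
\]
Saito's recursive description of PBW data identifies the index set $\{(c, b')\}$ with $\mscr{B}(w, +1)$ via $b_+((c, \mbf{c}'), \tw) = \fit{i}^{c}\Lambda_i b_+(\mbf{c}', \tw')$. The change of basis from $\{f_i^{(c)} \Gup(\Lambda_i b')\}$ to $\{\Gup(b) : b \in \mscr{B}(w, +1)\}$ is unitriangular with respect to the filtration by $\vep_i$ (which is compatible with the dual canonical basis by \thmref{theo:lower_filt}), with leading term $\Gup(\fit{i}^{c}\Lambda_i b')$; matching dimensions weight-space by weight-space then forces the dual canonical basis expansion of $\Uq^-(w, +1)$ to be supported exactly on $\mscr{B}(w, +1)$. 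The main obstacle is this last triangular identification, specifically ensuring that successive left multiplications by $f_i^{(c)}$ do not leak $\Gup(b)$ with $b \notin \mscr{B}(w, +1)$; this is precisely where the compatibility of the $\vep_i$-filtration with Saito's crystal-theoretic parametrization $b = \fit{i}^c \Lambda_i b'$ of $\mscr{B}(w, +1)$ has to be used.
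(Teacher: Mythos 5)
Your reduction of $e=-1$ to $e=+1$ via the $*$-involution is fine, and so is the sublemma: since $\Uq^-(w',+1)$ is generated by the root vectors attached to the reduced word $\tw'$ and $(i,\tw')$ is reduced, $T_i$ maps it into $\Uq^-(\mfr{g})$, so by \lemref{lem:orthdecomp} and the induction hypothesis every $b'\in\mscr{B}(w',+1)$ has $\vep_i^*(b')=0$, and \thmref{theo:braidupper} gives $T_i\Gup(b')\simeq\Gup(\Lambda_i b')$. Also the leading-term statement $f_i^{(c)}\Gup(\Lambda_i b')=q^N\Gup(\fit{i}^c\Lambda_i b')+\sum_{\vep_i(b)<c}(\cdots)\Gup(b)$ is correct (it follows from \corref{cor:string1}, \corref{cor:string2} and the analogue of \eqref{eq:string3}).

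The gap is the final step, and it is exactly the point you flag as "the main obstacle": triangularity in $\vep_i$ plus a weight-space dimension count does not force the expansion of $\Uq^-(w,+1)$ to be supported on $\mscr{B}(w,+1)$. What your argument shows is that the projection of $\Uq^-(w,+1)$ onto $\bigoplus_{b\in\mscr{B}(w,+1)}\Qq\Gup(b)$ along the complementary dual canonical basis vectors is an isomorphism in each weight space; but a subspace can project isomorphically onto a coordinate subspace of the same dimension without being equal to it, precisely because the lower-order terms $\Gup(b)$ with $\vep_i(b)<c$ are not known to lie in $\mscr{B}(w,+1)$, and nothing in the $\vep_i$-filtration or in Saito's parametrization $b=\fit{i}^c\Lambda_i b'$ controls them. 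Trying to close the induction on $c$ in either direction stalls at the same point. The paper rules out this leakage by a different mechanism: it first shows the dual root vectors $F^{\up}(c\beta_k)$ lie in $\mbf{B}^{\up}$, proves an integral dual Levendorskii--Soibelman formula (\thmref{theo:dualLS}; the integrality is the extra step needed because outside finite/affine type the PBW basis is not known to be an $\mca{A}$-basis of $\Uq^-(\mfr{g})_{\mca{A}}\cap\Uq^-(w)$), deduces that the dual bar involution $\sigma$ acts unitriangularly on the dual PBW basis with respect to the lexicographic order (\propref{prop:dualbarPBW}), and then runs the Kazhdan--Lusztig-type argument together with the characterization of $\mbf{B}^{\up}$ by $\sigma$-invariance and congruence modulo $q\mscr{L}(\infty)$ (\corref{prop:dualtriple}) to conclude $B^{\up}(\mbf{c},\tw)=\Gup(b(\mbf{c},\tw))\in\Uq^-(w,e)$. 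It is the $\sigma$-invariance characterization that pins the basis down; your proposal has no substitute for it, so as written the proof does not go through.
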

\end{NB}
\begin{theo}\label{theo:unipgp}
	For $w\in W$ and $e\in \{\pm 1\}$,
	the triple $(\Uq^-(w, e)\cap \mscr{L}(\infty), \Uq^-(w, e)\cap \sigma(\mscr{L}(\infty)), \Uq^-(w, e)\cap \Uq^{-}(\mfr{g})_{\mca{A}}^{\up})$ is balanced,
	in particular we have 
	\[\Uq^-(w, e)\cap \Uq^{-}(\mfr{g})_{\mca{A}}^{\up}=\bigoplus_{b\in \mscr{B}(w, e)}\mca{A}\Gup(b).\]
\end{theo}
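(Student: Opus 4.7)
The proof proceeds by induction on $\ell(w)$. By applying the $*$-involution (noting $*\mscr{B}(w,+1)=\mscr{B}(w,-1)$ and the $*$-stability of $\mscr{L}(\infty)$, $\sigma$, and $\Uq^-(\mfr{g})^{\up}_{\mca{A}}$ from \lemref{lem:*upper}) we reduce at once to the case $e=+1$; the base case $\ell(w)=0$ is immediate since $\Uq^-(w,+1)=\Qq$. For the inductive step fix $\tw=(i_1,\ldots,i_l)\in R(w)$ and set $i:=i_1$, $w':=s_iw$, $\tw':=(i_2,\ldots,i_l)$. The factorization $F(\mbf{c},\tw)=f_i^{(c_1)}T_i\bigl(F(\mbf{c}',\tw')\bigr)$ together with \lemref{lem:orthdecomp} yields
\[\Uq^-(w,+1)\;=\;\bigoplus_{c\geq 0}f_i^{(c)}\,T_i\bigl(\Uq^-(w',+1)\bigr),\qquad T_i\bigl(\Uq^-(w',+1)\bigr)\subset \Uq^-[i].\]

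The linchpin of the induction is the auxiliary inclusion $\Uq^-(w',+1)\subset{}^*\Uq^-[i]=\ker(r_i)$. For each generator $F(\beta_k',\tw')$ of $\Uq^-(w',+1)$ the root-vector identity gives $T_iF(\beta_k',\tw')=F(\beta_{k+1},\tw)\in\Uq^-(\mfr{g})$, so $F(\beta_k',\tw')\in{}^*\Uq^-[i]$ by \lemref{lem:orthdecomp}; since $\ker(r_i)$ is a subalgebra (by \eqref{eq:ri}) it contains $\Uq^-(w',+1)$. In particular $\vep_i^*(b')=0$ for every $b'\in\mscr{B}(w',+1)$, and \thmref{theo:braidupper} gives
\[(1-q_i^2)^{\langle h_i,\wt(b')\rangle}\,T_i\Gup(b')\;=\;\Gup(\Lambda_i(b'))\in T_i\bigl(\Uq^-(w',+1)\bigr)\subset\Uq^-(w,+1).\]
Combined with the inductive hypothesis for $\Uq^-(w',+1)$, this produces $\Gup(b)\in\Uq^-(w,+1)$ for every $b\in\mscr{B}(w,+1)$ with $\vep_i(b)=0$. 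For general $b=\tilde f_i^c\Lambda_i(b')\in\mscr{B}(w,+1)$ with $c=\vep_i(b)>0$, we apply \thmref{thm:multlower} (or its dual analogue) to the product $f_i^{(c)}\Gup(\Lambda_ib')$, which lies in $\Uq^-(w,+1)$ by \propref{prop:DKP}; descending induction on $\vep_i$ (bounded at each weight) successively solves for $\Gup(b)\in\Uq^-(w,+1)$.

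To control the correction terms appearing in the multiplication formula we produce an explicit $\mca{A}_0$-basis of $\Uq^-(w,+1)\cap\mscr{L}(\infty)$. Set $F^{\up}(\mbf{c},\tw):=F(\mbf{c},\tw)/(F(\mbf{c},\tw),F(\mbf{c},\tw))_K$. By \propref{prop:innerprod} and \lemref{lem:KLforms} the inner product $(F(\mbf{c},\tw),F(\mbf{c},\tw))_K$ is a unit in $\mca{A}_0$, so $F^{\up}(\mbf{c},\tw)\in\mscr{L}(\infty)\cap\Uq^-(w,+1)$; the self-duality of $\mscr{L}(\infty)$ from \propref{prop:crylat} then forces $\{F^{\up}(\mbf{c},\tw)\}_{\mbf{c}}$ to be an $\mca{A}_0$-basis of $\Uq^-(w,+1)\cap\mscr{L}(\infty)$, projecting onto $\mscr{B}(w,+1)\subset\mscr{L}(\infty)/q\mscr{L}(\infty)$. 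Consequently every element of $\Uq^-(w,+1)\cap\mscr{L}(\infty)$ projects into $\mbb{Q}\text{-span}(\mscr{B}(w,+1))$, constraining the admissible correction terms $\Gup(b'')$ to indices $b''\in\mscr{B}(w,+1)$ as needed to close the descending induction.

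Balancedness of the triple then reduces to the stability $\sigma(\Uq^-(w,+1))=\Uq^-(w,+1)$, after which the balanced-triple property descends from the ambient balanced triple $(\mscr{L}(\infty),\sigma(\mscr{L}(\infty)),\Uq^-(\mfr{g})^{\up}_{\mca{A}})$. Using \propref{prop:dualbar}(3), $\sigma=q^{N(\cdot)}(*\circ\overline{\phantom{x}})$: the $*$-involution exchanges $\Uq^-(w,+1)$ with $\Uq^-(w,-1)$ via \eqref{eq:Ti_inverse}, while the bar involution satisfies $\overline{T_i(x)}=(\text{sign}\cdot q^{\text{power}})T_i^{-1}(\overline{x})$ on homogeneous elements and thus sends each root vector $F(\beta_k)$ to a scalar multiple of $F_{-1}(\beta_k)$; composing and invoking the subalgebra property \propref{prop:DKP} gives the desired $\sigma$-stability. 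The main obstacle throughout is the tight coupling among the PBW structure, the crystal lattice, and the dual canonical basis inside $\Uq^-(w,+1)$; the technical key that drives the induction is the auxiliary inclusion $\Uq^-(w',+1)\subset{}^*\Uq^-[i]$, which is precisely what makes \thmref{theo:braidupper} applicable at every step.
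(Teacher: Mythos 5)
Your overall scheme differs from the paper's, and parts of it are fine (the reduction to $e=+1$, the decomposition $\Uq^-(w,+1)=\bigoplus_{c\geq 0}f_i^{(c)}T_i\bigl(\Uq^-(w',+1)\bigr)$, the inclusion $\Uq^-(w',+1)\subset{}^*\Uq^-[i]$ via \lemref{lem:orthdecomp}, and the appeal to \thmref{theo:braidupper}), but the central inductive step has a genuine gap. First, the multiplication formula you invoke is the lower-basis one: on the dual side the inequality reverses. Since $f_i^{(c)}$ is proportional to the dual canonical basis element $F^{\up}(c\alpha_i)$ (\propref{prop:dualmultroot}), \corref{cor:string2} shows that in $f_i^{(c)}\Gup(\Lambda_i b')=q^{N}\Gup(\fit{i}^{c}\Lambda_i b')+\sum c_{b''}(q)\Gup(b'')$ every $b''$ that occurs satisfies $\vep_i(b'')\leq c$, not $\vep_i(b'')>c$, so the ``descending induction on $\vep_i$'' is set up in the wrong direction. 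Second, and more seriously, in whichever direction you induct, to ``solve for'' $\Gup(b)$ you must already know that each correction index $b''$ lies in $\mscr{B}(w,+1)$ (equivalently that $\Gup(b'')\in\Uq^-(w,+1)$); that is precisely the compatibility statement being proved, so the step is circular. Your attempted fix does not close this: the claim that $\{F^{\up}(\mbf{c},\tw)\}$ is an $\mca{A}_0$-basis of $\Uq^-(w,+1)\cap\mscr{L}(\infty)$ reducing onto the span of $\mscr{B}(w,+1)$ is itself correct (it follows from \propref{prop:crylat} and \propref{prop:innerprod}), but it only controls reductions mod $q\mscr{L}(\infty)$ of elements lying in the lattice, whereas the products $f_i^{(c)}\Gup(\Lambda_i b')$ have dual-canonical expansion coefficients in $\mca{A}=\mbb{Q}[q^{\pm}]$ and need not lie in $\mscr{L}(\infty)$; hence it says nothing about which $\Gup(b'')$ occur in them. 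A related slip occurs at the end: balancedness of the intersected triple does not follow from $\sigma$-stability of $\Uq^-(w,+1)$ alone; it follows only after the compatibility $\Uq^-(w,e)=\bigoplus_{b\in\mscr{B}(w,e)}\Qq\Gup(b)$ is in hand.

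The paper circumvents exactly this support-control problem by arguing in the opposite direction: instead of expanding given dual canonical basis elements and trying to place them in $\Uq^-(w,e)$, it constructs elements of $\Uq^-(w,e)$ and identifies them with dual canonical basis elements by the characterization \corref{prop:dualtriple}. Concretely, it shows the normalized root vectors $F^{\up}(n\beta_k)$ lie in $\mbf{B}^{\up}$ (\propref{prop:dualmultroot}, using \thmref{theo:braidupper} as you do), proves the integral dual Levendorskii--Soibelman formula (\thmref{theo:dualLS}) so that the dual PBW vectors span an $\mca{A}$-subalgebra, establishes the unitriangularity of $\sigma$ on the dual PBW basis (\propref{prop:dualbarPBW}), and then runs a Kazhdan--Lusztig-type construction entirely inside $\Uq^-(w,e)$ producing a $\sigma$-invariant basis congruent to $b(\mbf{c},\tw)$ modulo $q\mscr{L}(\infty)$, which must be $\Gup(b(\mbf{c},\tw))$ (\thmref{theo:dualcan2}); compatibility and balancedness follow. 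To repair your induction you would need an analogous triangularity statement relating $\{F^{\up}(\mbf{c},\tw)\}$ to the dual canonical basis, at which point your argument essentially becomes the paper's proof.
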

\begin{NB}
\begin{rem}
In contrast to the above theorem,  we have $\Glow(b_e(\mbf{c}, \tw))\notin \Uq^-(w, e)$ in general.
\end{rem}
\end{NB}
The proof of this theorem occupies the rest of this subsection.
As in \cite[3.4]{LNT}, \cite[Proposition 31, Corollary 41]{Lec:qchar} and \cite{Cal:qflag}, we first prove that the dual root vectors are 
contained in the dual canonical basis and then prove the unitriangular property of upper global basis with respect to the dual PBW basis.
The compatibility with the dual canonical basis is its direct consequence.

Our proof needs an extra step from ones in \cite{LNT, Lec:qchar, Cal:qflag}, as it is not known that the PBW basis is an $\mca{A}$-basis of 
$\Uq^{-}(\mfr{g})_{\mca{A}}\cap \Uq^{-}(w)$ unless $\mfr{g}$ is of finite or affine type.
\subsubsection{}
\begin{prop}\label{prop:dualmultroot}
\textup{(1)}
For $i\in I$ and $n\geq 1$, let
\[F^{\up}(n\alpha_i):=\frac{f_i^{(n)}}{(f_i^{(n)}, f_i^{(n)})_K}.\]
Then we have $F^{\up}(n\alpha_i)\in \mbf{B}^{\up}$, $(F^{\up}_{\alpha_{i}})^{n}\in q^{\mbb{Z}}\mbf{B}^{\up}$ and 
$F^{\up}(n\alpha_{i})F^{\up}(m\alpha_{i})=q_{i}^{mn}F^{\up}((m+n)\alpha_{i})$

\textup{(2)}
For $n\geq 1$ and $1\leq k\leq l$, 
let  
 \[F^{\up}(n\beta_k):=\frac{F(n\beta_k)}{(F(n\beta_k), F(n{\beta_k}))_K}.\]
Then we have $F^{\up}(n\beta_k)\in \mbf{B}^{\up}$, $F^{\up}(\beta_k)^n\in q^{\mbb{Z}}\mbf{B}^{\up}$ and
$F^{\up}(n\beta_{k})F^{\up}(m\beta_{k})=q_{i_{k}}^{mn}F^{\up}((m+n)\beta_{k})$

\end{prop}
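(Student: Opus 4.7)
The plan is to establish both parts by identifying the normalized root vectors with specific dual canonical basis elements.

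For part \textup{(1)}, the weight space $\Uq^{-}(\mfr{g})_{-n\alpha_{i}}$ is one-dimensional, spanned by $f_{i}^{(n)} = \Glow(\fit{i}^{n}u_{\infty})$. Since $\mbf{B}^{\up}$ is by definition the dual basis of the canonical basis with respect to Kashiwara's form $(\cdot,\cdot)_{K}$, the unique dual canonical basis element at weight $-n\alpha_{i}$ must equal
\[\Gup(\fit{i}^{n}u_{\infty}) = \frac{f_{i}^{(n)}}{(f_{i}^{(n)}, f_{i}^{(n)})_{K}} = F^{\up}(n\alpha_{i}).\]
A direct calculation using \lemref{lem:KLforms} together with the $w = s_{i}$ case of \propref{prop:innerprod} yields a closed form for $(f_{i}^{(n)}, f_{i}^{(n)})_{K}$, from which the multiplication formula and the claim $(F^{\up}(\alpha_{i}))^{n} \in q^{\mbb{Z}}\mbf{B}^{\up}$ follow by elementary manipulation of divided powers.

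For part \textup{(2)}, I promote the statement from $n\alpha_{i_{k}}$ to $n\beta_{k}$ by iterating Lusztig's braid symmetry $T_{i_{1}}\cdots T_{i_{k-1}}$ and using its compatibility with the dual canonical basis given in \thmref{theo:braidupper}. For $j = k, k-1, \ldots, 1$, set
\[b^{(j)} := \Lambda_{i_{j}}\Lambda_{i_{j+1}}\cdots \Lambda_{i_{k-1}}(\fit{i_{k}}^{n}u_{\infty}) \in \mscr{B}(\infty),\]
with the convention $b^{(k)} := \fit{i_{k}}^{n}u_{\infty}$. By Saito's PBW theory, the successive conditions $\vep_{i_{j}}^{*}(b^{(j+1)})=0$ required to apply $\Lambda_{i_{j}}$ hold for a reduced expression $\tw$, since the resulting element is the Lusztig datum for $b_{+1}((0,\ldots,n,\ldots,0),\tw) \in \mscr{B}(w,+1)$. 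Iterating \thmref{theo:braidupper} yields
\[\Gup(b^{(1)}) = \prod_{j=1}^{k-1}(1-q_{i_{j}}^{2})^{\bracket{h_{i_{j}}, \wt b^{(j+1)}}}\, T_{i_{1}}\cdots T_{i_{k-1}}\Gup(\fit{i_{k}}^{n}u_{\infty}),\]
and by part \textup{(1)} the right hand side is a scalar multiple of $T_{i_{1}}\cdots T_{i_{k-1}}(f_{i_{k}}^{(n)}) = F(n\beta_{k})$. To identify this scalar with the norm factor in the definition of $F^{\up}(n\beta_{k})$, I use the iterated form of \propref{prop:Ti_Kform} to express $(F(n\beta_{k}), F(n\beta_{k}))_{K}$ in terms of $(f_{i_{k}}^{(n)}, f_{i_{k}}^{(n)})_{K}$ via exactly the same product of factors $(1-q_{i_{j}}^{2})^{\bracket{h_{i_{j}},\cdot}}$; the scalars cancel and yield $F^{\up}(n\beta_{k}) = \Gup(b^{(1)}) \in \mbf{B}^{\up}$.

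The multiplication formula for $F^{\up}(n\beta_{k})$ and $F^{\up}(m\beta_{k})$ descends from its $\alpha_{i_{k}}$-counterpart by applying the algebra automorphism $T_{i_{1}}\cdots T_{i_{k-1}}$ to the identity on divided powers of $f_{i_{k}}$, since the normalization factors on the two sides cancel. The assertion $F^{\up}(\beta_{k})^{n} \in q^{\mbb{Z}}\mbf{B}^{\up}$ is then immediate. The main obstacle is the careful bookkeeping at both iteration steps: verifying the chain of $\vep^{*}$-conditions for the $\Lambda$-composition (which is a standard consequence of Saito's reflection functors applied to reduced words), and matching the scalar factors produced by \thmref{theo:braidupper} against those from \propref{prop:Ti_Kform}. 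Neither presents real difficulty, but both demand attention to the precise power of $(1-q_{i_{j}}^{2})$ appearing at each stage.
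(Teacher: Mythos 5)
Your proposal is correct and follows essentially the same route as the paper: part (1) via the one-dimensionality of the weight space together with the norm computation from \lemref{lem:KLforms} and \propref{prop:innerprod}, and part (2) by transporting the result along the braid symmetry using \thmref{theo:braidupper} (with \propref{prop:Ti_Kform} controlling the norms). The paper states the second step in one line, while you spell out the iteration of $\Lambda_{i_j}$ and the scalar bookkeeping, but the argument is the same.
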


\begin{proof}
Since $F_i^{(n)}$ are the canonical base elements and $\dim \Uq^-(\mfr{g})_{-n\alpha_i}=1$ for any $n\geq 1$, $F^{\up}(n\alpha_i)$ are the dual canonical base elements by its definition.
By \propref{prop:innerprod} and \lemref{lem:KLforms}, we have 
\[(F_i^{(n)}, F_i^{(n)})_K=(1-q_i^2)^n/\prod_{j=1}^n (1-q_i^{2j})=\frac{1}{[n]_i!}q_i^{\frac{n(n-1)}{2}}.\]
Therefore we have  $(F^{\up}(\alpha_{i}))^{n}=q_i^{\frac{n(n-1)}{2}}F^{\up}(n\alpha_{i})\in \mbf{B}^{\up}$,
in particular $F^{\up}(\alpha_{i})$ is a strongly real element.
Applying \thmref{theo:braidupper}, we obtain the result for $F^{\up}(n\beta_{k})$ for $1\leq k\leq l$.
\end{proof}
\begin{NB}
We have 
\[F^{\up}(n\alpha_{i})F^{\up}(m\alpha_{i})=q_{i}^{mn}F^{\up}((m+n)\alpha_{i}).\]
\begin{proof}
Proof is straightforward.
By $(m+n)(m+n-1)=m(m-1)+n(n-1)+2mn$, we have
\[-\frac{(m+n)(m+n-1)}{2}+mn=-\frac{m(m-1)}{2}-\frac{n(n-1)}{2}.\]
Hence we have 
\begin{align*}
F^{\up}(n\alpha_{i})F^{\up}(m\alpha_{i})=&q_i^{-\frac{n(n-1)}{2}-\frac{m(m-1)}{2}}F^{\up}(\alpha_{i})^{n+m},\\
=&q_{i}^{-\frac{(m+n)(m+n-1)}{2}+mn}F^{\up}(\alpha_{i})^{n+m}.
\end{align*}
\end{proof}
\end{NB}

\begin{NB}
The following is a 1st proof.
\subsubsection{Computation of root vectors}
Here we prove that the normalized root vector (dual root vector) is contained the dual canonical basis.
\begin{prop}\label{prop:dual root}
Let $w\in W$ and $\tilde{w}=(i_1, i_2, \cdots, i_l)\in R(w)$ be its reduced expression.
Then we have 
\[F_{\beta_k}^*:=\frac{1}{(F_{\beta_k}, F_{\beta_k})_K}F_{\beta_k}\in \mbf{B}^{\up},\]
i.e, we have the following two conditions:

\textup{(1)} $(* \circ \overline{\phantom{x}})(F_{\beta_k}^*)=q^{-N(-\beta_k)}F_{\beta_k}^*$

\textup{(2)} $F_{\beta_k}^*\in \Uq^-(\mfr{g})_{\mca{A}}^*$

\textup{(3)} $F_{\beta_k}^* \mod q\mscr{L}(\infty)\in \mscr{B}(\infty).$

\end{prop}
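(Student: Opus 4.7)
The plan is to treat (1) by a direct computation in the one-dimensional weight space $\Uq^{-}(\mfr{g})_{-n\alpha_i}$, and then to derive (2) by transporting (1) through Lusztig's braid operator $T_{i_1}\cdots T_{i_{k-1}}$, using the compatibility of $T_i$ with the dual canonical basis (\thmref{theo:braidupper}) and with the Kashiwara form (\propref{prop:Ti_Kform}).

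For (1), the key point is that $\Uq^{-}(\mfr{g})_{-n\alpha_i}$ is one-dimensional and spanned by the canonical basis element $f_i^{(n)}=\Glow(\fit{i}^n u_{\infty})$. Consequently the dual canonical basis element $\Gup(\fit{i}^n u_{\infty})$ in this weight space is the unique scalar multiple of $f_i^{(n)}$ that is $(\cdot,\cdot)_K$-dual to it, i.e.\ $\Gup(\fit{i}^n u_{\infty})=f_i^{(n)}/(f_i^{(n)},f_i^{(n)})_K=F^{\up}(n\alpha_i)$. This already gives $F^{\up}(n\alpha_i)\in\mbf{B}^{\up}$. Next I would compute the norm: \propref{prop:innerprod} applied to $w=s_i$ gives the value of $(f_i^{(n)},f_i^{(n)})_L$, and \lemref{lem:KLforms} converts it to $(f_i^{(n)},f_i^{(n)})_K=q_i^{n(n-1)/2}/[n]_i!$. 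The multiplication formula $F^{\up}(m\alpha_i)F^{\up}(n\alpha_i)=q_i^{mn}F^{\up}((m+n)\alpha_i)$ then follows from the elementary identity $f_i^{(m)}f_i^{(n)}=\begin{bmatrix}m+n\\ n\end{bmatrix}_i f_i^{(m+n)}$ after substituting the normalizations and using $\tfrac{(m+n)(m+n-1)}{2}=\tfrac{m(m-1)}{2}+\tfrac{n(n-1)}{2}+mn$ to balance the $q_i$-powers. Iterating this gives $F^{\up}(\alpha_i)^n=q_i^{n(n-1)/2}F^{\up}(n\alpha_i)\in q^{\mbb{Z}}\mbf{B}^{\up}$.

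For (2), apply $T_{i_1}\cdots T_{i_{k-1}}$, which is a $\Qq$-algebra automorphism, to the identities of (1). The relation $F(n\beta_k)=T_{i_1}\cdots T_{i_{k-1}}(f_{i_k}^{(n)})$ transports the multiplication law of (1) verbatim (the $q_{i_k}^{mn}$ coefficient is untouched since $T_{i_j}$ is $\Qq$-linear), yielding $F^{\up}(n\beta_k)F^{\up}(m\beta_k)=q_{i_k}^{mn}F^{\up}((m+n)\beta_k)$ and $F^{\up}(\beta_k)^n\in q^{\mbb{Z}}\mbf{B}^{\up}$ from the corresponding rank-one identities. For the dual-canonical-basis membership, iterate \thmref{theo:braidupper}: at each step the intermediate crystal element $\Lambda_{i_{j+1}}\cdots\Lambda_{i_{k-1}}(\fit{i_k}^n u_{\infty})$ satisfies $\vep^{*}_{i_j}=0$ by the standard reduced-expression property of the Saito reflections (which guarantees that $\mscr{B}(w,+)$ is built up by the $\Lambda_{i_j}$ in this order). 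Each application of \thmref{theo:braidupper} introduces a factor of the form $(1-q_{i_j}^2)^{\bracket{h_{i_j},\xi_j}}$ on the left-hand side.

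The main obstacle is bookkeeping: one must show that the product of these accumulated $(1-q_{i_j}^2)$-factors matches precisely the ratio $(F(n\beta_k),F(n\beta_k))_K/(f_{i_k}^{(n)},f_{i_k}^{(n)})_K$. This is exactly what \propref{prop:Ti_Kform} provides: each $T_{i_j}$ rescales the Kashiwara form by the same factor $(1-q_{i_j}^2)^{\bracket{h_{i_j},\xi_j}}$ on elements lying in $\Uq^-[i_j]$ (resp.\ ${^*}\Uq^-[i_j]$), whose conditions are verified inductively along the reduced expression. Consequently the accumulated scalars from \thmref{theo:braidupper} and from the norm transformation cancel exactly, giving $F^{\up}(n\beta_k)=\Gup(b_+(n\beta_k))\in\mbf{B}^{\up}$. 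Once this identification is made, the remaining assertions of (2) are immediate consequences of (1) together with the fact that $T_{i_1}\cdots T_{i_{k-1}}$ is a $\Qq$-algebra automorphism.
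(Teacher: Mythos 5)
Your proof is correct: it follows the rank-one-plus-braid-symmetry route, identifying $F^{\up}(n\alpha_i)$ with the dual canonical basis element from the one-dimensionality of $\Uq^{-}(\mfr{g})_{-n\alpha_i}$, then pushing through $T_{i_1}\cdots T_{i_{k-1}}$ by iterating \thmref{theo:braidupper} and observing that the $(1-q_{i_j}^2)$-scalars are exactly offset by the change of $K$-norm supplied by \propref{prop:Ti_Kform}. The applicability conditions $\vep_{i_j}^{*}=0$ do hold as you claim: the intermediate PBW elements $T_{i_{j+1}}\cdots T_{i_{k-1}}(f_{i_k}^{(n)})$ lie in ${^*}\Uq^{-}[i_j]$ because $s_{i_j}\cdots s_{i_{k-1}}(\alpha_{i_k})$ is a positive root in a reduced expression, and \lemref{lem:orthdecomp} together with the crystal description of ${^*}\Uq^{-}[i]$ then gives $\vep_{i_j}^{*}=0$. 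This is in fact the argument the paper uses in its final form for \propref{prop:dualmultroot}. The paper's proof of the proposition as stated takes a different route: it verifies the three conditions of \corref{prop:dualtriple} directly, computing $(*\circ \overline{\phantom{x}})(F_{\beta_k})=(-1)^{\operatorname{ht}(\beta_k)-1}q^{N(-\beta_k)}F_{\beta_k}$ from the relation between $T'_{i,1}$ and $T_i$, computing $\kappa_{\beta_k}=(F_{\beta_k},F_{\beta_k})_K$ in closed form from \propref{prop:innerprod} and \lemref{lem:KLforms}, checking that $\overline{\kappa_{\beta_k}}/\kappa_{\beta_k}$ absorbs the sign and $q$-power so $\sigma(F^{\up}_{\beta_k})=F^{\up}_{\beta_k}$, and using $\kappa_{\beta_k}(0)=1$ to get $F^{\up}_{\beta_k}\equiv F_{\beta_k}\bmod q\mscr{L}(\infty)$. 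The direct route makes the dual-bar invariance transparent but treats only a single root vector; your braid-symmetry route is less computational, simultaneously yields the identification with the PBW crystal element and the multiplication law, and generalizes to divided powers for free --- which is presumably why the author adopted it in the published version. One small slip worth correcting: from \propref{prop:innerprod} and \lemref{lem:KLforms} the norm is $(f_i^{(n)},f_i^{(n)})_K=q_i^{-n(n-1)/2}/[n]_i!$ with a \emph{negative} exponent (check $n=2$: the value is $1/(1+q_i^2)$), so the relation reads $F^{\up}(n\alpha_i)F^{\up}(m\alpha_i)=q_i^{-mn}F^{\up}((m+n)\alpha_i)$; the same slip appears in the paper's text and has no bearing on the structure of your argument.
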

The proof of the above proposition is straightforward generalization of \cite[Proposition 2.1]{Cal:qflag}.
\subsubsection{}
The following computation is straightforward generalization of \cite[Proposition 31]{Lec:qchar} for general symmetrizable Kac-Moody case.
For convenience of readers, we give a proof.
\begin{lem}\label{lem:dual root}
\textup{(1)}
	Let $w\in W$ and $\tw=(i_1, i_2, \cdots, i_l)\in R(w)$.
We have 
	\[(* \circ\overline{\phantom{x}})(T_{i_1}T_{i_2}\cdots T_{i_{k-1}}(F_{i_k}))=(-1)^{A(k)}q^{B(k)}T_{i_1}T_{i_2}\cdots T_{i_{k-1}}(F_{i_k}),\]
	where
	\[A(k)=\sum_{j=1}^{k-1}\bracket{h_{i_j}, s_{i_{j+1}}s_{i_{j+2}}\cdots s_{i_{k-1}}(\alpha_{i_k})},~~ %
	B(k)=\sum_{j=1}^{k-1}(\alpha_{i_j}, s_{i_{j+1}}s_{i_{j+2}}\cdots s_{i_{k-1}}(\alpha_{i_k})).\]
\textup{(2)} We have  $A(k)=\operatorname{ht}(\beta_k)-1$ and $B(k)=N(-\beta_k).$
\end{lem}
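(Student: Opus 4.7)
The plan is to reduce part (1) to iterating a single commutation identity between $\sigma := *\circ\overline{\phantom{x}}$ and $T_i$, and then to obtain part (2) by a telescoping calculation combined with the standard Kac-Moody identity $\rho - w(\rho) = \sum_{\beta\in\Delta^+(w)}\beta$.

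First I would establish the commutation rule: for homogeneous $z \in \Uq(\mfr{g})_\eta$,
\[
\sigma(T_i(z)) \;=\; (-1)^{\bracket{h_i,\eta}}\,q^{-(\alpha_i,\eta)}\,T_i(\sigma(z)).
\]
To derive it, write $\overline{T_i(z)} = (\overline{\phantom{x}}\circ T_i\circ\overline{\phantom{x}})(\overline z) = T''_{i,-1}(\overline z)$ using $T_i=T''_{i,1}$ and the Lusztig relation $\overline{\phantom{x}}\circ T''_{i,1}\circ\overline{\phantom{x}} = T''_{i,-1}$ from \cite[37.2.4]{Lus:intro}; then rewrite $T''_{i,-1}(\overline z) = (-1)^{\bracket{h_i,\eta}}q^{-(\alpha_i,\eta)}T_i^{-1}(\overline z)$ using the comparison of $T''_{i,-1}$ with $T'_{i,-1}=T_i^{-1}$ on the weight space $\Uq(\mfr{g})_\eta$ (bar preserves weights). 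Apply $*$: since $*$ is $\Qq$-linear the scalar factor is unchanged, and $*\circ T_i^{-1} = T_i\circ *$ follows from composing \eqref{eq:Ti_inverse} with $*$ and using $*^2=\mathrm{id}$. This yields the claimed identity.

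Armed with this, I iterate on $j$ by letting $y^{(j)} := T_{i_j}T_{i_{j+1}}\cdots T_{i_{k-1}}(f_{i_k})$, whose weight is $-\gamma_{j-1}$ where $\gamma_j := s_{i_{j+1}}\cdots s_{i_{k-1}}(\alpha_{i_k})$. The base case $\sigma(f_{i_k})=f_{i_k}$ is clear from $*(f_i)=\overline{f_i}=f_i$. At step $j$ going from $k-1$ down to $1$, the commutation identity applied to $z=y^{(j+1)}\in\Uq(\mfr{g})_{-\gamma_j}$ contributes a factor $(-1)^{\bracket{h_{i_j},\gamma_j}}\,q^{(\alpha_{i_j},\gamma_j)}$ (the two sign flips inside $\bracket{h_{i_j},-\gamma_j}$ cancel in the exponent of $-1$, and similarly for the $q$-power). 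Multiplying these factors over $j=1,\ldots,k-1$ gives exactly $(-1)^{A(k)}\,q^{B(k)}\,T_{i_1}\cdots T_{i_{k-1}}(f_{i_k})$, establishing part (1). For part (2), set $w_j := s_{i_1}\cdots s_{i_j}$ so $\gamma_j = w_j^{-1}(\beta_k)$ and $w_j(\alpha_{i_j})=-\beta_j$. Telescoping $\gamma_{j-1}-\gamma_j = -\bracket{h_{i_j},\gamma_j}\,\alpha_{i_j}$ from $j=1$ to $k-1$ yields $\alpha_{i_k}-\beta_k = \sum_j\bracket{h_{i_j},\gamma_j}\,\alpha_{i_j}$; taking heights on both sides computes $A(k)$, giving equality with $\operatorname{ht}(\beta_k)-1$ modulo $2$ (the only thing required for the sign $(-1)^{A(k)}$). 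For $B(k)$, rewrite $(\alpha_{i_j},\gamma_j)=(w_j\alpha_{i_j},\beta_k)=-(\beta_j,\beta_k)$, hence $B(k) = -\sum_{j<k}(\beta_j,\beta_k)$. Applying the standard Kac-Moody identity $\rho - w_{k-1}(\rho) = \sum_{j<k}\beta_j$ gives $B(k) = (\beta_k, w_{k-1}\rho)-(\beta_k,\rho) = (\alpha_{i_k},\rho)-(\beta_k,\rho) = \tfrac{1}{2}(\alpha_{i_k},\alpha_{i_k})-(\beta_k,\rho)$, using $w_{k-1}^{-1}\beta_k=\alpha_{i_k}$ and $(\alpha_i,\rho)=(\alpha_i,\alpha_i)/2$; since $(\beta_k,\beta_k)=(\alpha_{i_k},\alpha_{i_k})$, this is exactly $N(-\beta_k)$.

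The main obstacle is the commutation rule itself: one must juggle all four braid symmetries $T'_{i,\pm1}, T''_{i,\pm1}$ and carefully track how $\overline{\phantom{x}}$ (which sends $q\mapsto q^{-1}$) and $*$ (which is $\Qq$-linear) interact with them, extracting the precise sign and $q$-power without any drift, as those propagate through the induction and determine $A(k)$ and $B(k)$. Once this identity is in hand, both statements fall out of routine bookkeeping and the standard $\rho - w\rho$ formula.
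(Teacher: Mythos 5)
Your argument is correct and, for part (1), is essentially the paper's own proof: both iterate a commutation identity between $\sigma = *\circ\overline{\phantom{x}}$ and a single $T_{i}$, extracted from Lusztig's comparison of $T'_{i,\pm 1}$ and $T''_{i,\pm 1}$ on weight spaces. What you state as $\sigma(T_i(z))=(-1)^{\bracket{h_i,\eta}}q^{-(\alpha_i,\eta)}T_i(\sigma(z))$ for $z\in \Uq(\mfr{g})_{\eta}$ the paper writes as $\sigma\circ T_i\circ\sigma=T'_{i,1}$ together with the scalar formula $T'_{i,1}(x)=(-1)^{\bracket{h_i,\xi}}q^{-(\alpha_i,\xi)}T_i(x)$; since $\sigma$ is an involution preserving weights these are the same identity in different packaging, and both proofs then recurse down the product $T_{i_1}\cdots T_{i_{k-1}}(f_{i_k})$ in the same way. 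For $B(k)$ in part (2) you take a mildly different route: the paper telescopes directly with the recursion $N(s_i\xi)=N(\xi)-(\xi,\alpha_i)$ and $N(-\alpha_{i_k})=0$, whereas you first convert each summand to $(\alpha_{i_j},\gamma_j)=-(\beta_j,\beta_k)$ and then invoke $\rho-w\rho=\sum_{\beta\in\Delta^{+}(w)}\beta$; both are routine and equivalent. You also compute $A(k)$ explicitly, which the paper leaves implicit, and correctly find $A(k)=1-\operatorname{ht}(\beta_k)$ rather than $\operatorname{ht}(\beta_k)-1$ as printed in the statement; you are right that only the parity enters into $(-1)^{A(k)}$, so the conclusion of the lemma is unaffected, but you have caught a genuine sign slip in the stated value of $A(k)$.
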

\begin{proof}
For convenience of the reader, we give a proof.
We follow the argument in \cite[Proposition 31]{Lec:qchar}.
By \cite[37.2.4]{Lus:intro} 
we have $T'_{i, 1}(x)=(-1)^{\bracket{h_i, \xi}}q^{-(\alpha_i, \xi)}T_{i}(x)$ for $x\in \Uq(\mfr{g})_{\xi}$.
Then since $* $ and $\overline{\phantom{x}}$ are involution,
\begin{align*}
(* \circ\overline{\phantom{x}})(T_{i_1}T_{i_2}\cdots T_{i_{k-1}}(F_{i_k}))
=&(* \circ\overline{\phantom{x}} \circ T_{i}\circ \overline{\phantom{x}}\circ* )(* \circ\overline{\phantom{x}}(T_{i_2}\cdots T_{i_{k-1}}(F_{i_k}))) \\
=&(-1)^{A'(k)}q^{B'(k)} (* \circ\overline{\phantom{x}} \circ T_{i}\circ \overline{\phantom{x}}\circ* )(T_{i_2}\cdots T_{i_{k-1}}(F_{i_k}))\\
=&(-1)^{A'(k)}q^{B'(k)} T'_{i, 1}(T_{i_2}\cdots T_{i_{k-1}}(F_{i_k})) ,
\end{align*}
where 
\[A'(k)=\sum_{j=2}^{k-1}\bracket{h_{i_j}, s_{i_{j+1}}s_{i_{j+2}}\cdots s_{i_{k-1}}(\alpha_{i_k})}, ~~%
B'(k)=\sum_{j=2}^{k-1}(\alpha_{i_j}, s_{i_{j+1}}s_{i_{j+2}}\cdots s_{i_{k-1}}(\alpha_{i_k})).\]
Then we obtain the assertion.
\textup{(2)}
Since $N(s_i\xi)=N(\xi)-(\xi, \alpha_i)$ and $N(-\alpha_i)=0$,
we have $N(-\beta_k)=N(-s_{i_1}\cdots s_{i_{k-1}}\alpha_{i_k})=N(s_{i_2}\cdots s_{i_{k-1}}(-\alpha_{i_k}))-(s_{i_2}\cdots s_{i_{k-1}}(-\alpha_{i_k}), \alpha_{i_1})=\cdots=A(k)$.
\end{proof}

\begin{lem}
For $1\leq k\leq l$, $c\in \mbb{Z}_{\geq 1}$,
we set $\mbf{c}_k$ by $(\mbf{c}_k)_{j}=c\delta_{k, j}$, 
then we have 
\begin{equation}
F^{\up}(\mbf{c}_k):=\frac{1}{(F(\mbf{c}_k), F(\mbf{c}_k))_K}F(\mbf{c}_k)=q_{i_k}^{-\frac{c(c-1)}{2}}(F^{\up}_{\beta_k})^{c}\in \Uq^-(\mfr{g})^*_{\mca{A}}.\label{eq:dualpower}
\end{equation}
\end{lem}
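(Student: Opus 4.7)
The plan is to reduce the statement to the single simple root case $w=s_{i}$ (where the identity is essentially the content of the proof of \propref{prop:dualmultroot}(1)) and then propagate through the braid group operators exactly as in the proof of \propref{prop:dualmultroot}(2). Since $\mbf{c}_{k}$ is supported at the single position $k$, one has by the very definition of the PBW basis $F(\mbf{c}_{k},\tw)=T_{i_{1}}\cdots T_{i_{k-1}}(f_{i_{k}}^{(c)})$, so $F^{\up}(\mbf{c}_{k})$ coincides with the previously considered $F^{\up}(c\beta_{k})=F(c\beta_{k})/(F(c\beta_{k}),F(c\beta_{k}))_{K}$. The target identity is therefore equivalent to
\[
(F^{\up}(\beta_{k}))^{c}=q_{i_{k}}^{c(c-1)/2}\,F^{\up}(c\beta_{k}),
\]
plus the integrality statement $F^{\up}(c\beta_{k})\in \Uq^{-}(\mfr{g})_{\mca{A}}^{\up}$.

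First I would handle the case $k=1$ (so $\beta_{1}=\alpha_{i_{1}}$). Here the proof of \propref{prop:dualmultroot}(1) already records
\[
(f_{i}^{(n)},f_{i}^{(n)})_{K}=\frac{q_{i}^{n(n-1)/2}}{[n]_{i}!},
\]
which combined with $f_{i}^{n}=[n]_{i}!\,f_{i}^{(n)}$ yields at once $(F^{\up}(\alpha_{i}))^{n}=q_{i}^{n(n-1)/2}F^{\up}(n\alpha_{i})$. This establishes the identity in the simple root case and, since $f_{i}^{(n)}\in \mbf{B}\subset \Uq^{-}(\mfr{g})_{\mca{A}}$, also the required membership in $\Uq^{-}(\mfr{g})_{\mca{A}}^{\up}$.

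Next I would transport the identity by the braid symmetry $T_{i_{1}}\cdots T_{i_{k-1}}$. By \thmref{theo:braidupper}, after the normalization factor $(1-q_{i_{k}}^{2})^{\langle h_{i_{k}},\xi\rangle}$ the operator $T_{j}$ sends an element of the dual canonical basis (supported in the appropriate subspace $\Uq^{-}[j]$) to another element of $\mbf{B}^{\up}$. Applying the iterated braid symmetry to both sides of $(F^{\up}(\alpha_{i_{k}}))^{c}=q_{i_{k}}^{c(c-1)/2}F^{\up}(c\alpha_{i_{k}})$ and absorbing the resulting normalization constants into the two sides symmetrically (the constants on each side are identical because both sides live in the same weight space $-c\beta_{k}$), the scalar $q_{i_{k}}^{c(c-1)/2}$ is unchanged and we obtain $(F^{\up}(\beta_{k}))^{c}=q_{i_{k}}^{c(c-1)/2}F^{\up}(c\beta_{k})$, as desired.

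The only delicate point is to justify that both of the needed elements $F^{\up}(\beta_{k})^{c}$ and $F^{\up}(c\beta_{k})$ stay inside $\Uq^{-}(\mfr{g})_{\mca{A}}^{\up}$, but this follows because $(F^{\up}(\beta_{k}))^{c}$ is (up to a power of $q$) already in the dual canonical basis by \propref{prop:dualmultroot}(2), and the dual canonical basis spans $\Uq^{-}(\mfr{g})_{\mca{A}}^{\up}$ over $\mca{A}$. I expect no serious obstacle: the lemma is essentially a bookkeeping restatement of \propref{prop:dualmultroot} in the coordinates $F(\mbf{c},\tw)$ that will be used in the subsequent unitriangularity arguments.
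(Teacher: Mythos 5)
Your proposal is correct and follows essentially the same route as the paper's (unpublished $\text{N.B.}$) proof: compute $(f_{i}^{(n)},f_{i}^{(n)})_{K}$ via \propref{prop:innerprod} and \lemref{lem:KLforms} to settle the simple-root case, then transport through $T_{i_{1}}\cdots T_{i_{k-1}}$. The only cosmetic difference is in the transport step: the paper invokes integrality of the braid action on $\Uq^{-}(\mfr{g})_{\mca{A}}$ (Lusztig, Chapter 41) and the scaling of norms under $T_{i}$ (\propref{prop:Ti_Kform}), whereas you instead apply the normalized statement of \thmref{theo:braidupper} to each factor and argue that the $(1-q_{i_{j}}^{2})$-powers cancel because their exponents are linear in weight; you also deduce $\mca{A}$-integrality from the already-proved membership $F^{\up}(c\beta_{k})\in\mbf{B}^{\up}$ of \propref{prop:dualmultroot}(2) rather than from a separate integrality lemma. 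Both are valid and amount to the same computation.
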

\begin{proof}
By \propref{prop:innerprod} and \lemref{lem:KLforms}, we have 
\[(F_i^{(n)}, F_i^{(n)})_K=(1-q_i^2)^n/\prod_{j=1}^n (1-q_i^{2j})=\frac{1}{[n]_i!}q_i^{\frac{n(n-1)}{2}}.\]
We obtain \eqref{eq:dualpower}.
By its definition, we have $(F_i^{n})^*:=F_i^{(n)}/(F_i^{(n)}, F_i^{(n)})_K\in \Uq^-(\mfr{g})_{\mca{A}}^*$.
Then we have $F^{\up}(\mbf{c}_k)\in \Uq^-(\mfr{g})_{\mca{A}}^*$ by its integrality properties of the braid group action \cite[Chapter 41]{Lus:intro}.
\end{proof}

\begin{pf*}{Proof of \propref{prop:dual root}}
Here we show the characterization of dual canonical basis by \propref{prop:dualtriple}.
First we compute an action of dual bar involution on dual root vector.
We set $\kappa_{\beta_k}:=(F_{\beta_k}, F_{\beta_k})_K$ and $F_{\beta_k}^*:=F_{\beta_k}/\kappa_{\beta_k}$.
First we note that we have $(*\circ \overline{\phantom{x}})(F_{\beta_k})=(-1)^{ht(\beta_k)-1}q^{N(-\beta_k)}F_{\beta_k}$.
Since 
\[\kappa_{\beta_k}=\prod_{i\in I}(q_i^{-1}-q_i)^{-\xi_i}q_i^{-\xi_i}\frac{q_{i_k}^{-1}}{q_{i_k}^{-1}-q_{i_k}},\]
where $\beta_k=\sum \xi_i\alpha_i$ with $\xi\in\mbb{Z}_{\leq 0}$,
we have $\overline{\kappa_{\beta_k}}=(-1)^{ht(\beta_k)-1}q^{2N(-\beta_k)}\kappa_{\beta_k}$.
Then we have a condition (1), that is 
\[(*\circ \overline{\phantom{x}})(F^{\up}_{\beta_k})=q^{-N(-\beta_k)}F^{\up}_{\beta_k}.\]

Since the normalization factor $\kappa_{\beta_k}=\prod_{i\in I}(1-q_{i}^2)^{-\xi_i}(1-q_{i_k}^2)^{-1}$ is a rational function  in $q$ which is regular at $q=0$ and $\kappa_{\beta_k}(0)=1$,
then $F^{\up}_{\beta_k}\equiv F_{\beta_k}$.
Hence we have $F^{\up}_{\beta_k}\in \mbf{B}^{\up}$.
\end{pf*}
By \ref{eq:dualpower}, \propref{prop:dual root} and \eqref{eq:dualbar},
we have 
\begin{cor}\label{cor:multipleroot}
For $1\leq k\leq l$, 
we have 
$F^{\up}(\mbf{c}_k)=q_{i_k}^{-\frac{c(c-1)}{2}}(F^{\up}_{\beta_k})^{c}\in \mbf{B}^{\up}$
\end{cor}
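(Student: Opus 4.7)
The plan is to prove (1) directly and then obtain (2) by transporting it through Lusztig's braid group symmetry. The proof is essentially a bookkeeping exercise once the two compatibilities of $T_i$ with the dual canonical basis (\thmref{theo:braidupper}) and with Kashiwara's form (\propref{prop:Ti_Kform}) are in hand.

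For part (1), the weight space $\mbf{U}_q^-(\mfr{g})_{-n\alpha_i}$ is one-dimensional and spanned by the canonical basis element $f_i^{(n)} = \Glow(\fit{i}^n u_{\infty})$; the definition of $\mbf{B}^{\up}$ as the dual of $\mbf{B}$ with respect to $(\cdot,\cdot)_K$ then tautologically gives $F^{\up}(n\alpha_i) = f_i^{(n)}/(f_i^{(n)}, f_i^{(n)})_K \in \mbf{B}^{\up}$. Specialising \propref{prop:innerprod} to $\tw = (i)$ yields $(f_i^{(n)}, f_i^{(n)})_L = \prod_{s=1}^n (1-q_i^{2s})^{-1}$, and \lemref{lem:KLforms} converts this to
\[(f_i^{(n)}, f_i^{(n)})_K = \prod_{s=1}^n \frac{1-q_i^2}{1-q_i^{2s}} = \frac{q_i^{-n(n-1)/2}}{[n]_i!},\]
after rewriting $1-q_i^{2s} = q_i^{s-1}(q_i-q_i^{-1})[s]_i$. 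Since $(f_i, f_i)_K = 1$ so $F^{\up}(\alpha_i) = f_i$, the identity $f_i^n = [n]_i!\, f_i^{(n)}$ immediately gives $(F^{\up}(\alpha_i))^n = q_i^{\pm n(n-1)/2} F^{\up}(n\alpha_i) \in q^{\mbb{Z}} \mbf{B}^{\up}$, and the stated multiplication formula reduces via $f_i^{(n)} f_i^{(m)} = \binom{n+m}{n}_{q_i} f_i^{(n+m)}$ to a numerical identity between the quantum binomial coefficient and the ratio of the three normalization constants, which one checks directly.

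For part (2), the plan is to use induction on $k$, the base case $k=1$ being (1). For the inductive step, write $F(n\beta_k) = T_{i_1} F(n\beta'_{k})$, where $\beta'_{k}$ is the corresponding positive root for the reduced expression $(i_2, \dots, i_l)$ of $s_{i_1} w$, so that by induction $F^{\up}(n\beta'_k) \in \mbf{B}^{\up}$ with some crystal label $b$ of weight $\xi$. Then \thmref{theo:braidupper} gives $T_{i_1}\Gup(b) = (1-q_{i_1}^2)^{-\langle h_{i_1}, \xi\rangle}\Gup(\Lambda_{i_1}b)$, while \propref{prop:Ti_Kform} gives $(T_{i_1}F(n\beta'_k), T_{i_1}F(n\beta'_k))_K = (1-q_{i_1}^2)^{-\langle h_{i_1}, \xi\rangle}(F(n\beta'_k), F(n\beta'_k))_K$. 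Taking the ratio that defines $F^{\up}(n\beta_k)$, the two factors $(1-q_{i_1}^2)^{-\langle h_{i_1}, \xi\rangle}$ cancel exactly and we conclude $F^{\up}(n\beta_k) = \Gup(\Lambda_{i_1}b) \in \mbf{B}^{\up}$. The remaining two claims for $\beta_k$ follow because $T_{i_1}\cdots T_{i_{k-1}}$ is an algebra automorphism: applying it to the identity $(F^{\up}(\alpha_{i_k}))^n = q_{i_k}^{\pm n(n-1)/2} F^{\up}(n\alpha_{i_k})$ from (1), together with the same cancellation of $(1-q^2)$-factors in both sides, yields $F^{\up}(\beta_k)^n \in q^{\mbb{Z}}\mbf{B}^{\up}$; and applying it to the multiplication formula from (1) yields the corresponding formula $F^{\up}(n\beta_k)F^{\up}(m\beta_k) = q_{i_k}^{mn} F^{\up}((n+m)\beta_k)$.

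The only hypothesis that needs verification at each inductive step is $\vep_{i_1}^*(b) = 0$, together with the membership $F(n\beta'_k) \in {}^{*}\mbf{U}_q^-[i_1]$ required by \propref{prop:Ti_Kform}. Both are automatic: by \lemref{lem:orthdecomp}, $x \in {}^{*}\mbf{U}_q^-[i_1]$ iff $T_{i_1}(x) \in \mbf{U}_q^-(\mfr{g})$, and the defining property of the root vectors $T_{i_1}F(n\beta'_k) = F(n\beta_k) \in \mbf{U}_q^-(\mfr{g})$ supplies this inclusion; combined with the $\vep_i^*$-characterization of $\mbf{U}_q^-(\mfr{g})f_{i_1}$ in \thmref{theo:lower_filt}, it forces $\vep_{i_1}^*(b) = 0$. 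The main obstacle is therefore not conceptual but computational: it lies in keeping track of the exponents of $q_i$ through the inductive unwinding of $T_{i_1}\cdots T_{i_{k-1}}$ and verifying that the $(1-q_{i_1}^2)$-factors from \thmref{theo:braidupper} and \propref{prop:Ti_Kform} indeed combine to produce $(F(n\beta_k), F(n\beta_k))_K$. Once this is checked, the statements of (2) follow by the standard characterization of dual canonical basis elements by their leading term in $\mscr{L}(\infty)/q\mscr{L}(\infty)$.
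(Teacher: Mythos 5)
Your proof is correct and follows the paper's own route, namely the proof of \propref{prop:dualmultroot}: part (1) from one-dimensionality of $\Uq^-(\mfr{g})_{-n\alpha_i}$ together with the explicit value of $(f_i^{(n)},f_i^{(n)})_K$ obtained from \propref{prop:innerprod} and \lemref{lem:KLforms}, and then transport to the general root vector $F^{\up}(n\beta_k)$ via \thmref{theo:braidupper}. The paper compresses the second step to the single line ``Applying \thmref{theo:braidupper},'' and you rightly unpack it as an induction on $k$: at each step the factor $(1-q_{i_1}^2)^{-\langle h_{i_1},\xi\rangle}$ coming from \thmref{theo:braidupper} cancels against the renormalization of the $K$-inner product from \propref{prop:Ti_Kform} (the latter sits in the denominator of the definition of $F^{\up}(n\beta_k)$), and the hypothesis $\vep_{i_1}^*(b)=0$ — needed both for \thmref{theo:braidupper} and for $F(n\beta'_k)\in {}^{*}\Uq^-[i_1]$ in \propref{prop:Ti_Kform} — is supplied exactly as you say by \lemref{lem:orthdecomp} and \thmref{theo:lower_filt}. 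One small observation: your intermediate value $(f_i^{(n)},f_i^{(n)})_K=q_i^{-n(n-1)/2}/[n]_i!$ is the correct one (for $n=2$: $(1-q_i^2)/(1-q_i^4)=1/(1+q_i^2)=q_i^{-1}/[2]_i$), whereas the displayed formula in the paper's proof of \propref{prop:dualmultroot} and consequently the $q$-exponent stated in the present corollary carry the opposite sign (it should read $F^{\up}(\mbf{c}_k)=q_{i_k}^{+c(c-1)/2}(F^{\up}_{\beta_k})^{c}$); your hedge $q_i^{\pm n(n-1)/2}$ prudently sidesteps this slip.
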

\end{NB}
\begin{NB}In particular, (dual) root vector is (strongly) real.\end{NB}

\subsubsection{}
For the computation of the action of the dual bar involution $\sigma$,
we need an integrality property of the Levendorskii-Soibelman's formula for the dual root vectors and its multiple.
For $w\in W$, $\tw\in R(w)$ and $e\in \{\pm 1\}$,
we set
\[F^{\up}_e(\mbf{c}, \tw):=\frac{1}{(F_e(\mbf{c}, \tw), F_e(\mbf{c}, \tw))_K}F_e(\mbf{c}, \tw).\]
This is the dual basis  of $\{F_e(\mbf{c}, \tw)\}$ with respect to Kashiwara's bilinear form $(\cdot, \cdot)_{K}$.
As before, when we consider only the $e=1$ case, we omit the subscript $e$.
\begin{theo}[dual Levendorskii-Soibelman formula]\label{theo:dualLS}
For $j<k$,
we write
\[F^{\up}(c_k\beta_k)F^{\up}(c_j\beta_j)-q^{-(c_j\beta_j, c_k\beta_k)} F^{\up}(c_j\beta_j)F^{\up}(c_k\beta_k)=\sum_{}f^{*}_{\mbf{c}'}
F^{\up}(\mbf{c}', \tw).\]
Then $f^*_{\mbf{c}'}\in \mca{A}$ and %
if $f^*_{\mbf{c}'}\neq 0$, then $c'_{j}<c_{j}$ and $c'_{k}<c_{k}$
with $\sum_{j\leq m\leq k} c'_{m}\beta_m=c_{j}\beta_j+c_{k}\beta_k$.

\end{theo}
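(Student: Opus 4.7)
The plan is to derive the dual Levendorskii--Soibelman formula by renormalizing the original LS formula (\thmref{theo:LSformula}), handling the index constraints directly and addressing integrality via the dual integral form. Write $N(\mbf{c}, \tw) := (F(\mbf{c}, \tw), F(\mbf{c}, \tw))_K$, so that $F^{\up}(\mbf{c}, \tw) = F(\mbf{c}, \tw)/N(\mbf{c}, \tw)$. The orthogonality in \propref{prop:innerprod} combined with the weight-matching factorization in \lemref{lem:KLforms} yields $N(\mbf{c}, \tw) = \prod_m N(c_m\beta_m)$; equivalently, the dual PBW vectors multiply as $F^{\up}(\mbf{c}, \tw) = \prod_m F^{\up}(c_m\beta_m)$.

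Applying the original LS formula to the $q$-commutator and dividing both sides by $N(c_j\beta_j)N(c_k\beta_k)$ produces the desired identity with
\[ f^*_{\mbf{c}'}(q) = f_{\mbf{c}'}(q)\cdot \frac{N(\mbf{c}', \tw)}{N(c_j\beta_j)\,N(c_k\beta_k)}. \]
The index constraints $c'_j < c_j$, $c'_k < c_k$, and $\sum_{j\leq m\leq k} c'_m\beta_m = c_j\beta_j + c_k\beta_k$ transfer verbatim from \thmref{theo:LSformula}. Since the normalization ratio alone is generally only a rational function in $q$, the integrality $f^*_{\mbf{c}'}(q) \in \mca{A}$ requires a genuine structural argument rather than a direct computation.

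For integrality, the core observation is that the LHS lies in the dual integral form $\Uq^-(\mfr{g})_{\mca{A}}^{\up} = \bigoplus_b \mca{A}\Gup(b)$. Indeed, each $F^{\up}(c\beta_m) \in \mbf{B}^{\up}$ by \propref{prop:dualmultroot}, and $\Uq^-(\mfr{g})_{\mca{A}}^{\up}$ is closed under multiplication because the identity $(xy,z)_K = \sum (x,z_{(1)})_K (y,z_{(2)})_K$ (a direct consequence of \lemref{lem:KLforms} with weight matching) combined with the standard integrality $r(\Uq^-(\mfr{g})_{\mca{A}}) \subset \Uq^-(\mfr{g})_{\mca{A}}\otimes\Uq^-(\mfr{g})_{\mca{A}}$ forces $(xy,\Uq^-(\mfr{g})_{\mca{A}})_K \subset \mca{A}$ whenever $x,y \in \Uq^-(\mfr{g})_{\mca{A}}^{\up}$. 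The same argument places every $F^{\up}(\mbf{c}', \tw) = \prod_m F^{\up}(c'_m\beta_m)$ in $\Uq^-(\mfr{g})_{\mca{A}}^{\up}$.

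To extract each $f^*_{\mbf{c}'}$ as an element of $\mca{A}$, I will invoke the unitriangular change of basis between dual PBW vectors and the dual canonical basis. Saito's braid-compatibility (Theorem 4.1.2 of \cite{Saito:PBW}) gives $F(\mbf{c}, \tw) \equiv \Glow(b(\mbf{c}, \tw)) \bmod q\mscr{L}(\infty)$, and a direct computation from \propref{prop:innerprod} shows $N(\mbf{c}, \tw) \in 1 + q\mca{A}_0$; hence $F^{\up}(\mbf{c}, \tw) \equiv \Gup(b(\mbf{c}, \tw)) \bmod q\mscr{L}(\infty)$. Combined with $F^{\up}(\mbf{c}, \tw) \in \Uq^-(\mfr{g})_{\mca{A}}^{\up}$, this produces an expansion $F^{\up}(\mbf{c}, \tw) = \Gup(b(\mbf{c},\tw)) + \sum_{\mbf{c}' \neq \mbf{c}} A_{\mbf{c},\mbf{c}'}(q)\Gup(b(\mbf{c}', \tw))$ with $\mca{A}$-coefficients and strictly triangular lower-order corrections (with respect to the lex order on Lusztig data of \subsecref{subsec:lex}). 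The associated transition matrix is unitriangular, hence invertible over $\mca{A}$, and applying its inverse to the $\mca{A}$-expansion of the LHS in $\{\Gup(b)\}$ produces $f^*_{\mbf{c}'} \in \mca{A}$. The main obstacle is verifying this unitriangular structure in the full symmetrizable Kac--Moody generality; it depends on the interplay between Saito's braid symmetry and the dual canonical basis developed in the preceding subsections.
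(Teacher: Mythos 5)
Your first half coincides with the paper's argument: the norm of $F(\mbf{c},\tw)$ factorizes by \propref{prop:innerprod} and \lemref{lem:KLforms}, the support constraints on $\mbf{c}'$ transfer verbatim from \thmref{theo:LSformula}, and the left-hand side lies in the dual integral form $\Uq^-(\mfr{g})_{\mca{A}}^{\up}$ because each $F^{\up}(c\beta_m)$ is a dual canonical basis element (\propref{prop:dualmultroot}) and $\Uq^-(\mfr{g})_{\mca{A}}^{\up}$ is an $\mca{A}$-subalgebra, the twisted coproduct preserving $\Uq^-(\mfr{g})_{\mca{A}}$. The genuine gap is in your final extraction of the coefficients $f^{*}_{\mbf{c}'}$. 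The unitriangular $\mca{A}$-expansion of $F^{\up}(\mbf{c},\tw)$ in the elements $\Gup(b(\mbf{c}',\tw))$, triangular for the lexicographic order on Lusztig data, is not available at this point: that only elements of $\mscr{B}(w,e)$ occur is \thmref{theo:unipgp}, and the triangular change of basis is \thmref{theo:dualcan2}, and in the paper both are \emph{consequences} of the present theorem (through \propref{prop:dualbarPBW} and the Kazhdan--Lusztig-type argument), so invoking them here is circular; you yourself flag this "main obstacle" without resolving it. Knowing merely that the left-hand side has an $\mca{A}$-expansion in the full dual canonical basis does not by itself let you recover the individual $f^{*}_{\mbf{c}'}$, since a priori the expansion of each dual PBW vector may involve dual canonical basis elements outside the image of the Lusztig parametrization and with no controlled triangularity.

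The missing step has a much shorter route, which is the one the paper takes and which avoids the dual canonical basis entirely: by \propref{prop:innerprod} and \lemref{lem:KLforms} the PBW vectors are orthogonal for $(\cdot,\cdot)_K$, so $(F^{\up}(\mbf{c},\tw),F(\mbf{c}'',\tw))_K=\delta_{\mbf{c},\mbf{c}''}$ and hence $f^{*}_{\mbf{c}'}$ equals the pairing of the left-hand side with $F(\mbf{c}',\tw)$. Since you have already placed the left-hand side in $\Uq^-(\mfr{g})_{\mca{A}}^{\up}$, which by definition pairs into $\mca{A}$ against $\Uq^-(\mfr{g})_{\mca{A}}$, and since $F(\mbf{c}',\tw)\in\Uq^-(\mfr{g})_{\mca{A}}$ by \thmref{thm:PBW}\,(2), the integrality $f^{*}_{\mbf{c}'}\in\mca{A}$ follows immediately. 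Replacing your transition-matrix argument by this single pairing closes the gap and recovers the paper's proof.
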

\begin{proof}
Firstly, a weaker statement that $f^{*}_{\mbf{c}'}\in \Qq$ with the above conditions follows from \thmref{theo:LSformula}
and \propref{prop:innerprod}.
Let us prove that $f^{*}_{\mbf{c}'}\in \mca{A}$.
Since the twisted coproduct $r$ preserves the $\mca{A}$-form $\Uq^-(\mfr{g})_{\mca{A}}$,
the dual integral form $\Uq^-(\mfr{g})_{\mca{A}}^{\up}$ is an $\mca{A}$-subalgebra of $\Uq^-(\mfr{g})$.
Therefore the left hand side belongs to $\Uq^{-}(\mfr{g})_{\mca{A}}^{\up}$ by  \propref{prop:dualmultroot}.
Taking the inner product with $F(\mbf{c}', \tw)$, we find $f^{*}_{\mbf{c}'}\in \mca{A}$ thanks to \thmref{thm:PBW}.
\end{proof}

In particular, we have the $\mca{A}$-subalgebra $\Uq^-(w, e)_{\mca{A}}^{\up}$ of $\Uq^-(w, e)$
generated by $\{F^{\up}(c\beta_k)\}_{1\leq k\leq l, c\geq 1}$.
This subalgebra has a free $\mca{A}$-basis $\{F^{\up}_e(\mbf{c}, \tw); \mbf{c}\in \mbb{Z}_{\geq 0}^l\}.$
We call this base the \emph{dual PBW basis}.
\begin{NB}
By the above theorem, we obtain the following integral version of Levendorskii-Soibelman's formula.
\begin{theo}[dual Levendorskii-Soibelman formula]\label{theo:dualLS}
For $j<k$,
we write
\[F(c_k\beta_k)F(c_j\beta_j)-q^{-(c_j\beta_j, c_k\beta_k)} F(c_j\beta_j)F(c_k\beta_k)=\sum_{}f_{\mbf{c}'}
F(\mbf{c}', \tw).\]
Then $f_{\mbf{c}'}\in \mca{A}$ and %
if $f^*_{\mbf{c}'}\neq 0$, then $c'_{j}<c_{j}$ and $c'_{k}<c_{k}$
with $\sum_{j\leq m\leq k} c'_{m}\beta_m=c_{j}\beta_j+c_{k}\beta_k$.
\end{theo}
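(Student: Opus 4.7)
The plan is to reduce this to the classical Levendorskii-Soibelman formula of \thmref{theo:LSformula} and then upgrade rationality to integrality by using that the dual integral form is closed under multiplication. First, I would dualize the triangularity statement: by \propref{prop:innerprod}, the normalization scalars $(F(c\beta_k),F(c\beta_k))_K$ satisfy the factorization
\[(F(\mbf{c},\tw),F(\mbf{c},\tw))_K=\prod_k (F(c_k\beta_k),F(c_k\beta_k))_K\]
(each PBW vector $F(\mbf{c},\tw)$ is orthogonal and its norm is a product over $k$). Consequently $F^{\up}(\mbf{c},\tw)=\prod_k (F(c_k\beta_k),F(c_k\beta_k))_K^{-1}F(\mbf{c},\tw)$, and the identity to be proved is obtained from \thmref{theo:LSformula} simply by dividing through by $(F(c_j\beta_j),F(c_j\beta_j))_K(F(c_k\beta_k),F(c_k\beta_k))_K$. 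The weight condition $\sum_{j\leq m\leq k}c'_m\beta_m=c_j\beta_j+c_k\beta_k$ forces $f^*_{\mbf{c}'}$ to involve only $m\in[j,k]$, and the strict inequalities $c'_j<c_j$, $c'_k<c_k$ are inherited directly from \thmref{theo:LSformula}. At this stage we conclude $f^*_{\mbf{c}'}\in\Qq$.

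The core of the work is showing $f^*_{\mbf{c}'}\in \mca{A}$. The plan is to verify that the left-hand side of the identity lies in the integral dual form $\Uq^-(\mfr{g})_{\mca{A}}^{\up}$, and then extract coefficients by pairing against the PBW basis. For the first point, I would use that the twisted coproduct $r$ preserves the integral form $\Uq^-(\mfr{g})_{\mca{A}}$ (this follows from $r(f_i)=f_i\otimes 1+1\otimes f_i$ and the multiplicativity of $r$, combined with the integrality of divided powers), which by duality with respect to Kashiwara's form $(\cdot,\cdot)_K$ makes $\Uq^-(\mfr{g})_{\mca{A}}^{\up}$ an $\mca{A}$-subalgebra of $\Uq^-(\mfr{g})$. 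By \propref{prop:dualmultroot}, each $F^{\up}(c\beta_k)$ lies in $\Uq^-(\mfr{g})_{\mca{A}}^{\up}$ (up to a sign/$q$-power, it is a dual canonical basis element, and the power $F^{\up}(\beta_k)^c$ reduces to $F^{\up}(c\beta_k)$ up to an invertible element of $\mca{A}$). Therefore the LHS of the displayed identity belongs to $\Uq^-(\mfr{g})_{\mca{A}}^{\up}$.

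To extract the coefficients, I would pair the identity with a fixed PBW basis vector $F(\mbf{c}'',\tw)$ using Kashiwara's form. By \thmref{thm:PBW}(2), $F(\mbf{c}'',\tw)\in \Uq^-(\mfr{g})_{\mca{A}}$, so the pairing lands in $\mca{A}$ by definition of $\Uq^-(\mfr{g})_{\mca{A}}^{\up}$. On the other hand, since the PBW basis is orthogonal with explicit diagonal values given by \propref{prop:innerprod}, this pairing computes $f^*_{\mbf{c}''}$ times the normalization $(F(\mbf{c}''),F(\mbf{c}''))_K/(F(\mbf{c}''),F(\mbf{c}''))_K=1$ (since we are pairing the dual PBW expansion against the PBW vector). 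Hence $f^*_{\mbf{c}'}\in\mca{A}$, as desired.

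The main obstacle I anticipate is the integrality upgrade rather than the rational triangularity, which is essentially formal. Concretely, the subtle step is verifying that $\Uq^-(\mfr{g})_{\mca{A}}^{\up}$ is stable under multiplication; although this is standard, it hinges on the preservation of $\Uq^-(\mfr{g})_{\mca{A}}$ under $r$, and one must be careful with the $q$-power twist in the multiplication on $\Uq^-(\mfr{g})\otimes \Uq^-(\mfr{g})$. Once that is in place, the rest of the argument is a routine pairing computation with the PBW basis, taking advantage of the diagonal form of $(\cdot,\cdot)_K$ on the PBW basis guaranteed by \propref{prop:innerprod} and \lemref{lem:KLforms}.
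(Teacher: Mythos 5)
Your proposal is correct and follows essentially the same route as the paper: rational triangularity is read off from \thmref{theo:LSformula} together with the diagonal PBW inner products of \propref{prop:innerprod}, and integrality is obtained by noting that $\Uq^-(\mfr{g})_{\mca{A}}^{\up}$ is an $\mca{A}$-subalgebra (since $r$ preserves $\Uq^-(\mfr{g})_{\mca{A}}$), that the dual root vectors lie in it by \propref{prop:dualmultroot}, and then pairing against the PBW basis, which lies in $\Uq^-(\mfr{g})_{\mca{A}}$ by \thmref{thm:PBW}. No gaps; this matches the paper's proof.
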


\end{NB}

\subsubsection{}
We compute the action of the dual bar involution $\sigma$ on the dual PBW basis.
The following is straightforward generalization of \cite[2.1 Corollary (i)]{Cal:qflag},
and follows from \eqref{eq:dualbar} and \thmref{theo:dualLS}.
\begin{prop}\label{prop:dualbarPBW}
We have 
\begin{NB}\textup{(1)}
\[(*\circ \overline{\phantom{x}})
(F(\mbf{c}, \tw))= (-1)^{A(\mbf{c})}q^{B(\mbf{c})}F(\mbf{c}, \tw) + \sum_{\mbf{c'}<\mbf{c}}f_{\mbf{c}, \mbf{c'}}(q)F(\mbf{c}, \tw),\]
where $f_{\mbf{c}, \mbf{c'}}(q)\in \mca{A}$, $A(\mbf{c})=?$ and $B(\mbf{c})=?$.

\textup{(2)}
\[(*\circ \bar)(F^{\up}(\mbf{c}, \tw))=q^{-N(\mbf{c})}F^{\up}(\mbf{c}, \tw)+ \sum_{\mbf{c'}<\mbf{c}}f^*_{\mbf{c}, \mbf{c'}}(q)F^{\up}(\mbf{c}', \tw),\]
where $f^*_{\mbf{c}, \mbf{c'}}(q)\in \mca{A}$.
\end{NB}
\[\sigma(F^{\up}(\mbf{c}, \tw))=F^{\up}(\mbf{c}, \tw)+ \sum_{\mbf{c'}<\mbf{c}}f^*_{\mbf{c}, \mbf{c'}}(q)F^{\up}(\mbf{c}', \tw),\]
where $f^*_{\mbf{c}, \mbf{c'}}(q)\in \mca{A}$.
\end{prop}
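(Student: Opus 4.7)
The plan is to expand a dual PBW basis element as a product of dual root vectors, apply $\sigma$ using its anti-multiplicative twist \eqref{eq:dualbar}, and then reorder via the dual Levendorskii--Soibelman formula (\thmref{theo:dualLS}), tracking the lex filtration carefully.

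First I would check the factorization
\[
F^{\up}(\mbf{c}, \tw) = F^{\up}(c_1 \beta_1)\, F^{\up}(c_2 \beta_2) \cdots F^{\up}(c_l \beta_l).
\]
This follows because $(F(\mbf{c}, \tw), F(\mbf{c}, \tw))_L = \prod_k (F(c_k\beta_k), F(c_k\beta_k))_L$ by \propref{prop:innerprod}, and the weight-dependent factor in \lemref{lem:KLforms} relating $(\cdot,\cdot)_L$ to $(\cdot,\cdot)_K$ is multiplicative in the weight. Then, applying \eqref{eq:dualbar} iteratively and using that $\sigma$ fixes each $F^{\up}(c_k\beta_k)$ by \propref{prop:dualmultroot}, one obtains
\[
\sigma(F^{\up}(\mbf{c}, \tw)) \;=\; q^{N}\, F^{\up}(c_l \beta_l)\, F^{\up}(c_{l-1}\beta_{l-1}) \cdots F^{\up}(c_1 \beta_1), \qquad N = \sum_{j < k} (c_j \beta_j,\, c_k \beta_k).
\]

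Next I would sort this reversed product back into PBW order by successively swapping adjacent factors using \thmref{theo:dualLS}. Each elementary swap of $F^{\up}(c_k \beta_k)$ past $F^{\up}(c_j \beta_j)$ (with $j<k$) produces the factor $q^{-(c_j \beta_j, c_k \beta_k)}$ on the leading term together with an $\mca{A}$-linear combination of dual PBW monomials whose support lies strictly between $j$ and $k$ and is strictly smaller in both the $j$-th and $k$-th components. After all the swaps are carried out, the accumulated $q$-factor is exactly $q^{-N}$, which cancels the prefactor $q^{N}$ and leaves $F^{\up}(\mbf{c}, \tw)$ as the leading term. To formalize the control of the correction terms I would proceed by descending induction on $\mbf{c}$ in the lex order of \subsecref{subsec:lex}: the De Concini--Kac filtration $\mca{F}_{\leq \mbf{c}}^{\tw}\Uq^-(w)$ is preserved by the reordering argument, so all corrections lie in $\mca{F}_{<\mbf{c}}^{\tw}\Uq^-(w)$ and hence expand in $\{F^{\up}(\mbf{c}', \tw)\}_{\mbf{c}' < \mbf{c}}$.

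The integrality statement $f^*_{\mbf{c}, \mbf{c}'}(q) \in \mca{A}$ will follow from the fact that each intermediate swap produces $\mca{A}$-valued coefficients (the integral form of \thmref{theo:dualLS}), together with $\sigma(F^{\up}(\mbf{c},\tw)) \in \Uq^-(\mfr{g})_{\mca{A}}^{\up}$. The latter holds because $\sigma$ fixes every element of $\mbf{B}^{\up}$ (hence preserves the integral form), and $F^{\up}(\mbf{c},\tw)$ belongs to this integral form by \propref{prop:dualmultroot} and \thmref{theo:dualLS} applied to the product of dual root vectors.

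The main technical obstacle I anticipate is the bookkeeping in the inductive step: after the first few swaps the elements to be reordered are no longer pure products of dual root vectors, so one must either use the De Concini--Kac filtration abstractly (working modulo lower filtration pieces where the root vectors genuinely $q$-commute) or set up an induction that handles the compound corrections cleanly. The filtration approach, combined with the fact that $\sigma$ obviously sends $\mca{F}_{\leq \mbf{c}}^{\tw}$ into itself (since the graded relations $q$-commute and $\sigma$ reverses products with the matching $q$-twist), should make the leading-term computation transparent and reduce the remainder to a lower filtration piece handled by induction.
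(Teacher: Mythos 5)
Your proposal is correct and is essentially the paper's own argument: factor $F^{\up}(\mbf{c},\tw)$ into the $\sigma$-fixed dual root vector powers (Proposition \ref{prop:dualmultroot}), reverse the product with the $q$-twist of \eqref{eq:dualbar}, and straighten back into PBW order with the dual Levendorskii--Soibelman formula (Theorem \ref{theo:dualLS}), the accumulated $q$-powers cancelling on the leading term and all corrections lying strictly lower in the lex/De Concini--Kac filtration with $\mca{A}$-coefficients.
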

\begin{NB}
\begin{proof}
First proof
In \textup{(2)}, we note that a weaker statement that $f^*_{\mbf{c}, \mbf{c'}}\in \Qq$ easily follows from \textup{(1)}.
So only we have to show is $f\in \mca{A}$. 
By the definition of the dual bar involution, we know that the dual $\mca{A}$-form of $\Uq^-(\mfr{g})$ is stable under the dual bar involution
and the integrality that $F(\mbf{c}, \tw)\in \Uq^-(\mfr{g})_{\mca{A}}$.
Then the coefficient $f^*_{\mbf{c}, \mbf{c'}}=((*\circ \bar)(F^{\up}(\mbf{c}, \tw)), F(\mbf{c}', \tw))_{K}\in \mca{A}$, so we have obtained \textup{(1)}.
So from here, we prove \textup{(1)}.
\begin{align*}
(*\circ \overline{\phantom{x}})(F_{\tw, \mbf{c}})
=&(*\circ \overline{\phantom{x}})(F_{\beta_1}^{(c_1)}F_{\beta_2}^{(c_2)}\cdots F_{\beta_l}^{(c_l)}) \\
=&(*\circ \overline{\phantom{x}})(F_{\beta_l}^{(c_l)})\cdots (*\circ \overline{\phantom{x}})(F_{\beta_2}^{(c_2)})(*\circ \overline{\phantom{x}})(F_{\beta_1}^{(c_1)}) \\
=&(-1)^{\sum_k c_k(\tr(\beta_k)-1)}q^{\sum_kc_kN(\beta_k)}F_{\beta_l}^{(c_l)}F_{\beta_{l-1}}^{(c_{l-1})}\cdots F_{\beta_1}^{(c_1)} \\
\equiv &(-1)^{\sum_k c_k(\tr(\beta_k)-1)}q^{\sum_kc_kN(\beta_k)+\sum_{k<l}c_kc_l(\beta_k, \beta_l)}F_{\beta_1}^{(c_1)}F_{\beta_2}^{(c_2)}\cdots F_{\beta_l}^{(c_l)}
\mod \mca{F}^{\tw}_{\leq \mbf{c}}
\end{align*}
We have 
\begin{align*}
\sigma(F^{\up}(\mbf{c}))=&q^{\sum_{i<j}(c_i\beta_i, c_j\beta_j)}\sigma(F^{\up}(\mbf{c}_l))\cdots \sigma(F^{\up}(\mbf{c}_1)) && \text{by \eqref{eq:dualbar}} \\
=&F^{\up}(\mbf{c})+\sum_{\mbf{c'}<\mbf{c}}f^*_{\mbf{c}, \mbf{c'}}(q)F^{\up}(\mbf{c}', \tw) && \text{by \lemref{lem:dualLS}}
\end{align*}
\end{proof}
\end{NB}

\subsubsection{}
\begin{theo}\label{theo:dualcan2}
\textup{(1)}
Let $w\in W$ and $\tw\in R(w)$.
Then there exists a unique $\mca{A}$-basis $\{B^{\up}(\mbf{c}, \tw) ; \mbf{c}\in \mbb{Z}_{\geq 0}^l\}$ of $\Uq^{-}(w, e)_{\mca{A}}^{\up}$ with the following properties: 
\begin{subequations}
\begin{align}
	&\sigma(B^{\up}(\mbf{c}, \tw))=B^{\up}(\mbf{c}, \tw), \label{eq:dualPBWcan1} \\
	&F^{\up}(\mbf{c}, \tw)=B^{\up}(\mbf{c}, \tw) + \sum_{\mbf{c'}<\mbf{c}}\varphi_{\mbf{c}, \mbf{c'}}B^{\up}(\mbf{c}', \tw), \varphi_{\mbf{c}, \mbf{c'}}\in q\mbb{Z}[q]. \label{eq:dualPBWcan2}
\end{align}
\end{subequations}
\textup{(2)}
	We have $B^{\up}(\mbf{c}, \tw)=\Gup(b(\mbf{c}, \tw)).$
\end{theo}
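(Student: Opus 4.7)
The plan is to follow a Kazhdan--Lusztig style construction and then verify that the resulting basis coincides with the dual canonical basis via the characterization in \corref{prop:dualtriple}.

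For part (1), I would argue by induction on $\mbf{c}$ in the lexicographic order on $\mbb{Z}_{\geq 0}^{l}$, with $B^{\up}(\mbf{0}, \tw) := 1$ as the base case. Assuming $B^{\up}(\mbf{c}', \tw)$ has been constructed for every $\mbf{c}' < \mbf{c}$ with the required properties, the inductive hypothesis lets one convert \propref{prop:dualbarPBW} into
\[\sigma(F^{\up}(\mbf{c}, \tw)) = F^{\up}(\mbf{c}, \tw) + \sum_{\mbf{c}' < \mbf{c}} g^{*}_{\mbf{c}, \mbf{c}'}(q)\, B^{\up}(\mbf{c}', \tw), \qquad g^{*}_{\mbf{c}, \mbf{c}'} \in \mca{A}.\]
Writing the ansatz $B^{\up}(\mbf{c}, \tw) = F^{\up}(\mbf{c}, \tw) - \sum_{\mbf{c}' < \mbf{c}} \alpha_{\mbf{c}'}\, B^{\up}(\mbf{c}', \tw)$ and using the $\sigma$-invariance of the $B^{\up}(\mbf{c}', \tw)$, the requirement $\sigma(B^{\up}(\mbf{c}, \tw)) = B^{\up}(\mbf{c}, \tw)$ reduces to $\alpha_{\mbf{c}'} - \overline{\alpha_{\mbf{c}'}} = -g^{*}_{\mbf{c}, \mbf{c}'}$. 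The identity $\sigma^{2} = \mathrm{id}$ supplies the antisymmetry constraints on the $g^{*}$'s needed for the standard Kazhdan--Lusztig--Lusztig lemma (\cite[\S 24.2.1]{Lus:intro}), which yields unique $\alpha_{\mbf{c}'} \in q\mbb{Z}[q]$ solving this triangular system. The expansion \eqref{eq:dualPBWcan2} then follows by inverting the resulting unitriangular change of basis.

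For part (2), I would verify that $B^{\up}(\mbf{c}, \tw)$ satisfies the characterization in \corref{prop:dualtriple}: (i) $\sigma$-invariance, (ii) congruence to $b(\mbf{c}, \tw)$ modulo $q\mscr{L}(\infty)$, and (iii) membership in $\mscr{L}(\infty) \cap \sigma(\mscr{L}(\infty)) \cap \Uq^{-}(\mfr{g})^{\up}_{\mca{A}}$. Property (i) is built into the construction. For (ii) and (iii), the key technical input is the factorization
\[F^{\up}(\mbf{c}, \tw) = \prod_{k=1}^{l} F^{\up}(c_{k}\beta_{k}),\]
which follows from the identity $(F(\mbf{c}, \tw), F(\mbf{c}, \tw))_{K} = \prod_{k}(F(c_{k}\beta_{k}), F(c_{k}\beta_{k}))_{K}$ by combining \propref{prop:innerprod} with \lemref{lem:KLforms}: the weight-dependent prefactor in \lemref{lem:KLforms} factors multiplicatively since the weight is additive in $k$, and the $(\cdot,\cdot)_{L}$-orthogonality is diagonal in $\mbf{c}$. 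Combined with \propref{prop:dualmultroot} and the fact that $\Uq^{-}(\mfr{g})^{\up}_{\mca{A}}$ is an $\mca{A}$-subalgebra (as the twisted coproduct $r$ preserves the $\mca{A}$-form), each factor $F^{\up}(c_{k}\beta_{k})$ lies in $\mbf{B}^{\up} \subset \Uq^{-}(\mfr{g})^{\up}_{\mca{A}}$, and therefore so does their product. The explicit formula in \propref{prop:innerprod} also shows $(F(\mbf{c}, \tw), F(\mbf{c}, \tw))_{K} \in \mca{A}_{0}$ with value $1$ at $q = 0$, whence $F^{\up}(\mbf{c}, \tw) \in \mscr{L}(\infty)$ and $F^{\up}(\mbf{c}, \tw) \equiv F(\mbf{c}, \tw) \equiv b(\mbf{c}, \tw) \mod q\mscr{L}(\infty)$. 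Since $B^{\up}(\mbf{c}, \tw)$ differs from $F^{\up}(\mbf{c}, \tw)$ only by terms with coefficients in $q\mbb{Z}[q] \subset q\mca{A}_{0}$, properties (ii) and (iii) follow; containment in $\sigma(\mscr{L}(\infty))$ is automatic from (i) together with containment in $\mscr{L}(\infty)$.

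The main obstacle is the careful bookkeeping of normalization factors: establishing the clean factorization $F^{\up}(\mbf{c}, \tw) = \prod_{k} F^{\up}(c_{k}\beta_{k})$ requires a precise comparison between Kashiwara's and Lusztig's bilinear forms via \lemref{lem:KLforms}, and the membership $F^{\up}(\mbf{c}, \tw) \in \mscr{L}(\infty)$ with the correct image modulo $q\mscr{L}(\infty)$ requires showing the normalization scalar lies in $\mca{A}_{0}$ and equals $1$ at $q=0$. Both are direct computations but must be tracked carefully.
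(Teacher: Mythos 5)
Your proof is correct and follows essentially the same route as the paper: part (1) is the standard Kazhdan--Lusztig-type argument whose only input is \propref{prop:dualbarPBW}, and part (2) verifies the characterization of \corref{prop:dualtriple} using that $(F(\mbf{c},\tw),F(\mbf{c},\tw))_K\in\mca{A}_0$ with value $1$ at $q=0$, so that $B^{\up}(\mbf{c},\tw)\equiv F^{\up}(\mbf{c},\tw)\equiv b(\mbf{c},\tw)\bmod q\mscr{L}(\infty)$. Your explicit factorization $F^{\up}(\mbf{c},\tw)=\prod_k F^{\up}(c_k\beta_k)$ merely spells out the membership in $\Uq^{-}(\mfr{g})_{\mca{A}}^{\up}$ that the paper gets from \propref{prop:dualmultroot} together with \thmref{theo:dualLS}, so the two arguments are the same in substance.
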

\begin{proof}
The proof of \textup{(1)} is the same as one for the existence of Kazhdan-Lusztig polynomials.
The only claim we need is \propref{prop:dualbarPBW}.

\textup{(2)}
Since we have $f_{\mbf{c}}(q)=(F(\mbf{c}, \tw), F(\mbf{c}, \tw))_K \in \mca{A}_0$ and  $f_{\mbf{c}}(0)=1$,
we obtain
\[B^{\up}(\mbf{c}, \tw)\equiv F^{\up}(\mbf{c}, \tw)\equiv b(\mbf{c}, \tw) \mod q\mscr{L}(\infty).\]
Therefore (2) follows from (1) and \propref{prop:dualtriple}.
\begin{NB}
\textup{(2)} follows easily from \textup{(1)} and the fact that $F^{\up}(\mbf{c}, \tw)=f_{\mbf{c}}(q)F(\mbf{c}, \tw)$
with $f_{\mbf{c}}(q)\in A_0$ and $f_{\mbf{c}}(0)=1$.
The proof of \textup{(1)}  is  the same as one for the existence of (dual) Kazhdan-Lusztig polynomials.
The uniqueness is clear from its proof.
We prove for convenience for the readers.
We use the (ascending) induction on the lexicographic order $\leq_{\tw} $.
Since the space $\Uq^-(w)$ has weight space decomposition, we construct the basis on each weight space.
For minimal $\mbf{c}$, we have $B^{\up}(\mbf{c}, \tw)=F^{\up}(\mbf{c}, \tw)$ by \propref{prop:dualbarPBW}.
We assume that $B^{\up}(\mbf{c}', \tw)$ is already obtained for $\mbf{c'}<\mbf{c}$.
Since \propref{prop:dualbarPBW}  and induction hypothesis, we have 
\begin{align}
(*\circ \overline{\phantom{x}})(F^{\up}(\mbf{c}, \tw))=&q^{-N(\wt(\mbf{c}))}F^{\up}(\mbf{c}, \tw)+%
\sum_{\mbf{c'}<_{\tw}\mbf{c}}a_{\mbf{c}, \mbf{c'}}F^{\up}(\mbf{c}', \tw) ~(\because \text{\propref{prop:dualbarPBW} }), \\
=&q^{-N(\wt(\mbf{c}))}F^{\up}(\mbf{c}, \tw)+\sum_{\mbf{c'}<_{\tw}\mbf{c}}b_{\mbf{c}, \mbf{c'}}B^{\up}(\mbf{c}', \tw) ~(\because \text{induction hypothesis}),
\end{align}
\beq  (*\circ \overline{\phantom{x}})(F^{\up}(\mbf{c}, \tw))=q^{-N(\wt(\mbf{c}))}F^{\up}(\mbf{c}, \tw)+\sum_{\mbf{c'}<_{\tw}\mbf{c}}f^*_{\mbf{c}, \mbf{c'}}B^{\up}(\mbf{c}', \tw)%
\label{eq:dualPBWcan3} \eeq 
where $f^*_{\mbf{c}, \mbf{c'}}\in \mca{A}$.
Here we apply $*\circ \bar$, then we have 
\beq F^{\up}(\mbf{c}, \tw)=q^{N(\wt(\mbf{c}))}F^{\up}(\mbf{c}, \tw)+\sum_{\mbf{c'}<_{\tw}\mbf{c}}\overline{f^*_{\mbf{c}, \mbf{c'}}}q^{-N(\wt(\mbf{c'}))}B^{\up}(\mbf{c}', \tw)%
\label{eq:dualPBWcan4}.\eeq
Since $\wt(\mbf{c})=\wt(\mbf{c'})$, then we have $N(\wt(\mbf{c}))=N(\wt(\mbf{c}'))$.
By comparing \eqref{eq:dualPBWcan3} and \eqref{eq:dualPBWcan4},
we obtain
\[q^{N(\wt(c))}f^*_{\mbf{c}, \mbf{c'}}=-q^{-N(\wt(c))}\overline{f^*_{\mbf{c}, \mbf{c'}}}=-\overline{q^{N(\wt(\mbf{c}))}f^*_{\mbf{c}, \mbf{c'}}},\]
then there exists $\varphi_{\mbf{c}, \mbf{c'}}\in q\mbb{Z}[q]$ such that 
$q^{N(\wt(c))}f^*_{\mbf{c}, \mbf{c'}}=\varphi_{\mbf{c}, \mbf{c'}}(q)-\varphi_{\mbf{c}, \mbf{c'}}(q^{-1})$.
Here we set 
\[B^{\up}(\mbf{c}, \tw)=F^{\up}(\mbf{c}, \tw)+\sum_{\mbf{c}'<_{\tw}\mbf{c}}\varphi_{\mbf{c}, \mbf{c'}}(q)B^{\up}(\mbf{c}', \tw),\]
then we can easily check that $B^{\up}(\mbf{c}, \tw)$ satisfies the conditions \eqref{eq:dualPBWcan1} and \eqref{eq:dualPBWcan2}.
This finishes the proof.
\end{NB}
\end{proof}
As a corollary, we have $\Uq^{-}(w, e)_{\mca{A}}^{\up}=\Uq^{-}(w, e)\cap \Uq^{-}(\mfr{g})_{\mca{A}}^{\up}$
since $\{\Gup(b)\}_{b\in \mscr{B}(\infty)}$ is an $\mca{A}$-basis of $\Uq^{-}(\mfr{g})_{\mca{A}}^{\up}$.
Together with this result, \thmref{theo:dualcan2} implies \thmref{theo:unipgp}.
\subsection{}
In this subsection, we study basic commutation relation among the dual canonical basis of $\Uq^{-}(w, e=1)$.
The following is a generalization of \cite[Proposition 4.2]{Rei:mult}, and follows from the characterization of dual canonical basis in terms of dual PBW basis.
For $\mbf{c}, \mbf{c}'\in \mbb{Z}_{\geq 0}^{l}$, we set
\[c_{\tw}(\mbf{c}, \mbf{c}')=\sum_{l<k}(c_k\beta_k, c'_{l}\beta_l)-\frac{1}{2}\sum_{k}c_{k}c'_{k}(\beta_{k}, \beta_{k}).\]
\begin{prop}\label{prop:dsum_formula}
We have 
\[\Gup(b(\mbf{c}, \tw))\Gup(b(\mbf{c}', \tw))=q^{-c_{\tw}(\mbf{c}, \mbf{c}')}\Gup(b(\mbf{c}+\mbf{c}', \tw))+\sum d_{\mbf{c}, \mbf{c'}}^{\mbf{d}}(q)\Gup(b(\mbf{d}, \tw)),\]
where $\mbf{d}< \mbf{c}+\mbf{c'}$ and  $d_{\mbf{c}, \mbf{c'}}^{\mbf{d}}(q)\in \mca{A}$.
\end{prop}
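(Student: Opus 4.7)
The plan is to exploit the unitriangular relation between the dual PBW basis $\{F^{\up}(\mbf{c},\tw)\}$ and the dual canonical basis $\{\Gup(b(\mbf{c},\tw))\}$ established in \thmref{theo:dualcan2}, together with the dual Levendorskii-Soibelman formula (\thmref{theo:dualLS}) and the factorization $F^{\up}(c\beta_k)F^{\up}(c'\beta_k)=q_{i_k}^{cc'}F^{\up}((c+c')\beta_k)$ of \propref{prop:dualmultroot}.

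First I would work at the level of the dual PBW basis and compute
\[
F^{\up}(\mbf{c},\tw)\,F^{\up}(\mbf{c}',\tw)
=F^{\up}(c_1\beta_1)\cdots F^{\up}(c_l\beta_l)\cdot F^{\up}(c'_1\beta_1)\cdots F^{\up}(c'_l\beta_l).
\]
To rewrite this in the standard ordered form, I would move each factor $F^{\up}(c'_j\beta_j)$ to the left past every $F^{\up}(c_k\beta_k)$ with $k>j$, using \thmref{theo:dualLS}. The leading term of each such swap contributes $q^{-(c'_j\beta_j,c_k\beta_k)}$, while the remainder lies in $\mca{F}^{\tw}_{<}$ (i.e.\ is an $\mca{A}$-linear combination of $F^{\up}(\mbf{c}'',\tw)$ with multi-indices strictly smaller than $\mbf{c}+\mbf{c}'$ in the lexicographic order of \subsecref{subsec:lex}). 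After all such moves, each pair $F^{\up}(c_k\beta_k)F^{\up}(c'_k\beta_k)$ collapses via \propref{prop:dualmultroot} to $q_{i_k}^{c_kc'_k}F^{\up}((c_k+c'_k)\beta_k)=q^{\tfrac12 c_kc'_k(\beta_k,\beta_k)}F^{\up}((c_k+c'_k)\beta_k)$, using $(\beta_k,\beta_k)=(\alpha_{i_k},\alpha_{i_k})=2d_{i_k}$. Collecting the exponents yields the leading term
\[
q^{-\sum_{j<k}(c'_j\beta_j,c_k\beta_k)+\tfrac12\sum_k c_kc'_k(\beta_k,\beta_k)}F^{\up}(\mbf{c}+\mbf{c}',\tw)
=q^{-c_{\tw}(\mbf{c},\mbf{c}')}F^{\up}(\mbf{c}+\mbf{c}',\tw),
\]
with all other terms being $\mca{A}$-multiples of $F^{\up}(\mbf{d},\tw)$ for $\mbf{d}<\mbf{c}+\mbf{c}'$.

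To finish, I would translate this identity from the dual PBW basis to the dual canonical basis. By \thmref{theo:dualcan2}, the transition matrix $F^{\up}(\mbf{c},\tw)\mapsto\Gup(b(\mbf{c},\tw))$ is unitriangular with respect to the same lexicographic order and its entries lie in $\mca{A}$; in particular the span of $\{F^{\up}(\mbf{d},\tw)\mid \mbf{d}\le\mbf{e}\}$ coincides with the span of $\{\Gup(b(\mbf{d},\tw))\mid \mbf{d}\le\mbf{e}\}$ for every $\mbf{e}$. Substituting in both sides of the identity above and isolating the coefficient of $\Gup(b(\mbf{c}+\mbf{c}',\tw))$, which comes only from the leading dual PBW term, gives the stated formula; all remaining contributions stay in $\bigoplus_{\mbf{d}<\mbf{c}+\mbf{c}'}\mca{A}\,\Gup(b(\mbf{d},\tw))$.

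The main obstacle is the careful bookkeeping of the $q$-shift, especially making sure the symmetry of $(\cdot,\cdot)$ lets us write $\sum_{j<k}(c'_j\beta_j,c_k\beta_k)$ as $\sum_{l<k}(c_k\beta_k,c'_l\beta_l)$ (matching the convention in the definition of $c_{\tw}$), and confirming that the error terms produced at every step of the reordering procedure — both from \thmref{theo:dualLS} and from expanding the transition coefficients in \thmref{theo:dualcan2} — are controlled strictly below $\mbf{c}+\mbf{c}'$ in the lexicographic order. The integrality of the coefficients $d_{\mbf{c},\mbf{c}'}^{\mbf{d}}(q)\in\mca{A}$ is then automatic since both the dual PBW expansion and the transition matrix have coefficients in $\mca{A}$.
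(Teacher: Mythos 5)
Your proposal is correct and follows essentially the same route as the paper: the paper's proof also combines the unitriangularity \eqref{eq:dualPBWcan2} of Theorem~\ref{theo:dualcan2}, the dual Levendorskii--Soibelman formula (Theorem~\ref{theo:dualLS}), and the factorization of dual root vector powers (Proposition~\ref{prop:dualmultroot}). The only cosmetic difference is that the paper packages your hand reordering and error-term bookkeeping by passing to the associated graded algebra $\gr^{\tw}$ of the De Concini--Kac filtration, where the leading coefficient $q^{-c_{\tw}(\mbf{c},\mbf{c}')}$ is read off directly.
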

\begin{NB}
\begin{proof}
To prove the statement, it suffices  to compute the top term of the DeConcini-Kac filtration.
We have
\begin{align*}
&\gr^{\tw}(\Gup(b(\mbf{c}, \tw))\Gup(b(\mbf{c}', \tw)) \\
=&\gr^{\tw}(\Gup(b(\mbf{c}, \tw))\gr^{\tw}(\Gup(b(\mbf{c}', \tw)) \\
=&\gr^{\tw}(F^{\up}(\mbf{c}, \tw))\gr^{\tw}(F^{\up}(\mbf{c}', \tw))~&&\text{by \eqref{eq:dualPBWcan2}},\\
=&q^{-c_{\tw}(\mbf{c}, \mbf{c'})} \gr^{\tw}(F^{\up}(\mbf{c}+\mbf{c'}, \tw))~&&\text{by \thmref{theo:dualLS} and \propref{prop:dualmultroot}}, \\
=&q^{-c_{\tw}(\mbf{c}, \mbf{c'})} \gr^{\tw}(\Gup(b(\mbf{c}, \tw)) && \text{by \eqref{eq:dualPBWcan2}}.
\end{align*}
Then the assertion follows.
\end{proof}
\end{NB}
\begin{cor}\label{cor:dsum}
If $b(\mbf{c}, \tw)\bot b(\mbf{c}', \tw)$, we have
\[\Gup(b(\mbf{c}, \tw))\Gup(b(\mbf{c}, \tw))\simeq \Gup(b(\mbf{c}+\mbf{c}', \tw)),\]
that is $b(\mbf{c}, \tw)\circledast b(\mbf{c}', \tw)=b(\mbf{c}+\mbf{c}', \tw)$.
\end{cor}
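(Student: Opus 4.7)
The plan is to derive this corollary as a direct consequence of \propref{prop:dsum_formula} together with the fact that the map $\mbf{c}\mapsto b(\mbf{c}, \tw)$ is a bijection from $\mbb{Z}_{\geq 0}^l$ onto $\mscr{B}(w, e=1)$. Since the dual canonical basis $\{\Gup(b)\}_{b\in\mscr{B}(\infty)}$ is linearly independent and the basis elements $b(\mbf{c}, \tw)$ are mutually distinct as $\mbf{c}$ varies, any expansion of a product with respect to this indexing is unique.

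First I would invoke \propref{prop:dsum_formula} to write
\[
\Gup(b(\mbf{c}, \tw))\Gup(b(\mbf{c}', \tw)) = q^{-c_{\tw}(\mbf{c}, \mbf{c}')}\Gup(b(\mbf{c}+\mbf{c}', \tw)) + \sum_{\mbf{d} < \mbf{c}+\mbf{c}'} d_{\mbf{c}, \mbf{c}'}^{\mbf{d}}(q)\Gup(b(\mbf{d}, \tw)).
\]
By hypothesis $b(\mbf{c}, \tw)\bot b(\mbf{c}', \tw)$, so there exist a unique $b\in\mscr{B}(\infty)$ and $N\in\mbb{Z}$ with $\Gup(b(\mbf{c},\tw))\Gup(b(\mbf{c}',\tw)) = q^N \Gup(b)$. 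Comparing with the above expansion and using linear independence of $\{\Gup(b')\}$, all the coefficients $d_{\mbf{c}, \mbf{c}'}^{\mbf{d}}(q)$ with $\mbf{d} \neq \mbf{c}+\mbf{c}'$ must vanish, while the leading coefficient forces $b = b(\mbf{c}+\mbf{c}', \tw)$ and $N = -c_{\tw}(\mbf{c}, \mbf{c}')$.

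This yields precisely $\Gup(b(\mbf{c}, \tw))\Gup(b(\mbf{c}', \tw)) = q^{-c_{\tw}(\mbf{c}, \mbf{c}')}\Gup(b(\mbf{c}+\mbf{c}', \tw))$, hence $\Gup(b(\mbf{c}, \tw))\Gup(b(\mbf{c}', \tw))\simeq \Gup(b(\mbf{c}+\mbf{c}', \tw))$ and $b(\mbf{c}, \tw)\circledast b(\mbf{c}', \tw) = b(\mbf{c}+\mbf{c}', \tw)$, which is the claim.

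There is essentially no obstacle here: the entire content is packaged in \propref{prop:dsum_formula}, whose leading term identifies the ``top'' dual PBW basis element. The only subtlety is to observe that the uniqueness clause in the definition of $\bot$ (i.e.\ that the product is a single basis element up to a $q$-power) is strong enough to force all the subleading coefficients in the PBW expansion to vanish, and that the specific $q$-shift $q^{-c_{\tw}(\mbf{c}, \mbf{c}')}$ is then automatically determined by the leading coefficient computed via the Levendorskii--Soibelman formula (\thmref{theo:dualLS}) and \propref{prop:dualmultroot}.
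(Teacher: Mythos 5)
Your argument is correct and is exactly the route the paper intends: the corollary is stated as an immediate consequence of \propref{prop:dsum_formula}, since the leading coefficient $q^{-c_{\tw}(\mbf{c},\mbf{c}')}$ is a nonzero $q$-power, so the single basis element allowed by the compatibility hypothesis must be $b(\mbf{c}+\mbf{c}',\tw)$ and all subleading terms vanish. Nothing is missing.
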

\subsubsection{}
Using \propref{prop:dsum_formula}, we have the following expression of $q$-power 
of the $q$-commuting dual canonical basis elements in $\mscr{B}(w, e=1)$ as in \cite[Proposition 18]{LNT}.
\begin{prop}
If $G^{\up}(b(\mbf{c}, \tw))G^{\up}(b(\mbf{c}', \tw))=q^{-N_{\tw}(\mbf{c}, \mbf{c}')}G^{\up}(b(\mbf{c}', \tw))G^{\up}(b(\mbf{c}, \tw))$,
then we have 
\begin{equation}N_{\tw}(\mbf{c}, \mbf{c}')=c_{\tw}(\mbf{c}, \mbf{c}')-c_{\tw}(\mbf{c}', \mbf{c}).\label{eq:lambda}\end{equation}
\end{prop}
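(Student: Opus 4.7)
The plan is to extract the claimed formula directly from the leading‐term computation already carried out in \propref{prop:dsum_formula}, by comparing the two expansions obtained from the two possible orderings of the product.

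First I would apply \propref{prop:dsum_formula} twice, once to each order:
\begin{align*}
\Gup(b(\mbf{c},\tw))\Gup(b(\mbf{c}',\tw)) &= q^{-c_{\tw}(\mbf{c},\mbf{c}')}\Gup(b(\mbf{c}+\mbf{c}',\tw)) + \sum_{\mbf{d}<\mbf{c}+\mbf{c}'} d^{\mbf{d}}_{\mbf{c},\mbf{c}'}(q)\,\Gup(b(\mbf{d},\tw)),\\
\Gup(b(\mbf{c}',\tw))\Gup(b(\mbf{c},\tw)) &= q^{-c_{\tw}(\mbf{c}',\mbf{c})}\Gup(b(\mbf{c}+\mbf{c}',\tw)) + \sum_{\mbf{d}<\mbf{c}+\mbf{c}'} d^{\mbf{d}}_{\mbf{c}',\mbf{c}}(q)\,\Gup(b(\mbf{d},\tw)).
\end{align*}
Here the crucial point is that both products produce the \emph{same} top basis vector $\Gup(b(\mbf{c}+\mbf{c}',\tw))$ with respect to the lexicographic order on $\mbb{Z}_{\geq 0}^l$ attached to $\tw$ (\subsecref{subsec:lex}), all other contributions being strictly smaller in that order.

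Next I would substitute the hypothesis
\[\Gup(b(\mbf{c},\tw))\Gup(b(\mbf{c}',\tw)) = q^{-N_{\tw}(\mbf{c},\mbf{c}')}\Gup(b(\mbf{c}',\tw))\Gup(b(\mbf{c},\tw))\]
into the first expansion, obtaining
\[q^{-c_{\tw}(\mbf{c},\mbf{c}')}\Gup(b(\mbf{c}+\mbf{c}',\tw)) + (\text{lower terms}) = q^{-N_{\tw}(\mbf{c},\mbf{c}')-c_{\tw}(\mbf{c}',\mbf{c})}\Gup(b(\mbf{c}+\mbf{c}',\tw)) + (\text{lower terms}).\]
Because $\{\Gup(b(\mbf{d},\tw))\}_{\mbf{d}\in\mbb{Z}_{\geq 0}^l}$ is a basis of $\Uq^{-}(w,e=1)$ (\thmref{theo:unipgp}) and the leading term $\Gup(b(\mbf{c}+\mbf{c}',\tw))$ is uniquely determined by the lexicographic ordering, the two coefficients must coincide. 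Equating them gives
\[-c_{\tw}(\mbf{c},\mbf{c}') = -N_{\tw}(\mbf{c},\mbf{c}')-c_{\tw}(\mbf{c}',\mbf{c}),\]
which rearranges to \eqref{eq:lambda}.

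There is essentially no obstacle here: the proposition is a direct consequence of the explicit leading-term formula in \propref{prop:dsum_formula}, which itself rested on the De Concini--Kac filtration and the dual Levendorskii--Soibelman formula (\thmref{theo:dualLS}). The only point that deserves a brief verification is that the top term indeed survives, i.e.\ that the possibly cancelling lower-order contributions cannot conspire to modify the coefficient of $\Gup(b(\mbf{c}+\mbf{c}',\tw))$; this is immediate from the strict inequality $\mbf{d}<\mbf{c}+\mbf{c}'$ satisfied by every other index appearing in the expansions.
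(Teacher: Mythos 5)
Your argument is correct and is exactly the route the paper intends: the proposition is stated as a direct consequence of \propref{prop:dsum_formula}, and comparing the coefficients of the leading term $\Gup(b(\mbf{c}+\mbf{c}',\tw))$ in the two orderings, using the injectivity of $\mbf{c}\mapsto b(\mbf{c},\tw)$ and the strict inequality $\mbf{d}<\mbf{c}+\mbf{c}'$ for the remaining terms, yields \eqref{eq:lambda} precisely as you describe.
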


\begin{NB}
For $\mbf{c}, \mbf{c'}\in \mbb{Z}_{\geq 0}^l$,
we set 
\[U(\mbf{c}, \mbf{c}'):=\frac{q^{c_{\tw}(\mbf{c}, \mbf{c}')+1}B^{\up}(\mbf{c})B^{\up}(\mbf{c}')-q^{c_{\tw}(\mbf{c}', \mbf{c})-1}B^{\up}(\mbf{c}')B^{\up}(\mbf{c})}{q-q^{-1}}.\]
The following holds, this is same as in \cite[Proposition 18]{LNT}.
This follows from \propref{prop:dsum_formula} and \corref{cor:inverse}
\begin{lem}
Set $U_{\tw, e}(\mbf{c}, \mbf{c}')=\sum \beta_{\mbf{c}, \mbf{c'}}^{\mbf{d}}(q)\Gup(b_e(\mbf{d}, \tw))$,
then we have the following:

\textup{(1)} $\beta_{\mbf{c}, \mbf{c'}}^{\mbf{d}}(q) \in \mca{A}$ and $\beta_{\mbf{c}, \mbf{c'}}^{\mbf{c}+\mbf{c'}}(q)=1$.

\textup{(2)} $\beta_{\mbf{c}, \mbf{c'}}^{\mbf{d}}(q)\neq 0$, then  $\mbf{d}\leq \mbf{c}+\mbf{c'}$.

\textup{(3)} $\overline{\beta_{\mbf{c}, \mbf{c}'}(q)}=\beta_{\mbf{c}, \mbf{c}'}(q)$.
\end{lem}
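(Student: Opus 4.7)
The plan is to deduce the identity from the unitriangular relationship \eqref{eq:dualPBWcan2} between the dual canonical basis and the dual PBW basis, combined with the dual Levendorskii--Soibelman formula (\thmref{theo:dualLS}) controlling the product of dual PBW elements. By \thmref{theo:dualcan2} we have $\Gup(b(\mbf{c},\tw))=B^{\up}(\mbf{c},\tw)$, and the change of basis $B^{\up}(\mbf{c},\tw)=F^{\up}(\mbf{c},\tw)+\sum_{\mbf{c}''<\mbf{c}}\psi_{\mbf{c},\mbf{c}''}F^{\up}(\mbf{c}'',\tw)$ has coefficients in $q\mbb{Z}[q]\subset\mca{A}$ and is invertible over $\mca{A}$. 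It therefore suffices to establish the analogous formula for dual PBW basis elements, i.e.\
\[F^{\up}(\mbf{c},\tw)F^{\up}(\mbf{c}',\tw)=q^{-c_{\tw}(\mbf{c},\mbf{c}')}F^{\up}(\mbf{c}+\mbf{c}',\tw)+\sum_{\mbf{d}<\mbf{c}+\mbf{c}'}e^{\mbf{d}}_{\mbf{c},\mbf{c}'}(q)F^{\up}(\mbf{d},\tw),\]
with $e^{\mbf{d}}_{\mbf{c},\mbf{c}'}(q)\in\mca{A}$, and then translate back.

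The next step is to exploit the factorization $F^{\up}(\mbf{c},\tw)=F^{\up}(c_{1}\beta_{1})\cdots F^{\up}(c_{l}\beta_{l})$, which follows from combining \propref{prop:innerprod} with \lemref{lem:KLforms} to see that $(F(\mbf{c},\tw),F(\mbf{c},\tw))_{K}=\prod_{k}(F(c_{k}\beta_{k}),F(c_{k}\beta_{k}))_{K}$. Multiplying $F^{\up}(\mbf{c},\tw)F^{\up}(\mbf{c}',\tw)$ produces the interleaved product $F^{\up}(c_{1}\beta_{1})\cdots F^{\up}(c_{l}\beta_{l})F^{\up}(c_{1}'\beta_{1})\cdots F^{\up}(c_{l}'\beta_{l})$, which I would rearrange into $\prod_{k}F^{\up}(c_{k}\beta_{k})F^{\up}(c_{k}'\beta_{k})$ by iteratively applying the dual LS formula of \thmref{theo:dualLS} to each pair $F^{\up}(c_{k}\beta_{k})F^{\up}(c_{l}'\beta_{l})$ with $l<k$. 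Each such swap yields the quasi-commutation factor $q^{-(c_{k}\beta_{k},c_{l}'\beta_{l})}$ plus an error term in $\mca{A}\cdot F^{\up}(\mbf{c}'',\tw)$ supported in $[l,k]$ with $c''_{l}<c_{l}$ and $c''_{k}<c_{k}$. After all swaps, the diagonal factors combine via \propref{prop:dualmultroot} as $F^{\up}(c_{k}\beta_{k})F^{\up}(c_{k}'\beta_{k})=q_{i_{k}}^{c_{k}c_{k}'}F^{\up}((c_{k}+c_{k}')\beta_{k})=q^{c_{k}c_{k}'(\beta_{k},\beta_{k})/2}F^{\up}((c_{k}+c_{k}')\beta_{k})$. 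Summing all exponents gives precisely $-\sum_{l<k}(c_{k}\beta_{k},c_{l}'\beta_{l})+\tfrac{1}{2}\sum_{k}c_{k}c_{k}'(\beta_{k},\beta_{k})=-c_{\tw}(\mbf{c},\mbf{c}')$ as desired.

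For the error terms, integrality over $\mca{A}$ is automatic, as both the swap coefficients from \thmref{theo:dualLS} and the base change $F^{\up}\leftrightarrow B^{\up}$ live in $\mca{A}$. The main obstacle is the bookkeeping that every error term, after being multiplied against the surrounding untouched factors and re-expanded in $\{\Gup(b(\mbf{d},\tw))\}_{\mbf{d}}$, is supported strictly below $\mbf{c}+\mbf{c}'$ in the lexicographic order of \subsecref{subsec:lex}. This reduces to the support bound in \thmref{theo:dualLS}: if $l$ is the smallest position at which a swap produces an error, then the resulting monomial agrees with $\mbf{c}+\mbf{c}'$ strictly to the left of $l$ but has a strictly smaller coordinate at $l$ (since $c''_{l}<c_{l}$ and the $\mbf{c}'$-contribution at position $l$ is unchanged at the moment of the swap), placing it below $\mbf{c}+\mbf{c}'$ in lex. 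I would formalize this by induction on the number of adjacent swaps performed in the reordering procedure, mirroring the standard argument behind \cite[5.5.2]{LevSoi}, and then invoke the inverse of \eqref{eq:dualPBWcan2} once more to conclude the stated formula for $\Gup(b(\mbf{c},\tw))\Gup(b(\mbf{c}',\tw))$.
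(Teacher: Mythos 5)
Your proposal never engages with the object the lemma is actually about. The lemma concerns the expansion of the element
\[
U_{\tw, e}(\mbf{c}, \mbf{c}')=\frac{q^{c_{\tw}(\mbf{c}, \mbf{c}')+1}B^{\up}(\mbf{c})B^{\up}(\mbf{c}')-q^{c_{\tw}(\mbf{c}', \mbf{c})-1}B^{\up}(\mbf{c}')B^{\up}(\mbf{c})}{q-q^{-1}},
\]
a quantum bracket à la \cite[Proposition 18]{LNT}, and the three assertions are about the coefficients $\beta^{\mbf{d}}_{\mbf{c},\mbf{c}'}$ of \emph{this} element in the dual canonical basis. What you set out to prove is the formula for the straight product $\Gup(b(\mbf{c},\tw))\Gup(b(\mbf{c}',\tw))$, which is precisely \propref{prop:dsum_formula} — a prior result that serves as one of the two \emph{inputs} to the lemma, not the lemma itself. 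You never write down $U_{\tw,e}(\mbf{c},\mbf{c}')$ and never address part (3), which is the bar-invariance $\overline{\beta^{\mbf{d}}_{\mbf{c},\mbf{c}'}}=\beta^{\mbf{d}}_{\mbf{c},\mbf{c}'}$ and is the entire point of forming the bracket rather than the product. Note also that the leading coefficient of the product is $q^{-c_{\tw}(\mbf{c},\mbf{c}')}$, not $1$, so part (1) is literally false for what you prove; it is only the normalized combination $U$ that has leading coefficient $1$.

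The intended argument, as the surrounding text indicates, is short and assumes both \propref{prop:dsum_formula} and \corref{cor:inverse}. Writing $\Gup(b(\mbf{c},\tw))\Gup(b(\mbf{c}',\tw))=\sum_{\mbf{d}} d^{\mbf{d}}_{\mbf{c},\mbf{c}'}(q)\Gup(b(\mbf{d},\tw))$, one has
\[
\beta^{\mbf{d}}_{\mbf{c},\mbf{c}'}=\frac{q^{c_{\tw}(\mbf{c},\mbf{c}')+1}\,d^{\mbf{d}}_{\mbf{c},\mbf{c}'}(q)-q^{c_{\tw}(\mbf{c}',\mbf{c})-1}\,d^{\mbf{d}}_{\mbf{c}',\mbf{c}}(q)}{q-q^{-1}}.
\]
\corref{cor:inverse} gives $\overline{d^{\mbf{d}}_{\mbf{c},\mbf{c}'}(q)}=q^{(\wt\mbf{c},\wt\mbf{c}')}d^{\mbf{d}}_{\mbf{c}',\mbf{c}}(q)$; coupled with the weight identity relating $c_{\tw}(\mbf{c},\mbf{c}')$, $c_{\tw}(\mbf{c}',\mbf{c})$ and $(\wt\mbf{c},\wt\mbf{c}')$, the numerator is anti-invariant under bar, hence divisible by $q-q^{-1}$ in $\mca{A}$, and the quotient $\beta^{\mbf{d}}_{\mbf{c},\mbf{c}'}$ is bar-invariant — that gives (3) and the integrality in (1). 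The normalization $\beta^{\mbf{c}+\mbf{c}'}_{\mbf{c},\mbf{c}'}=1$ is then immediate from $d^{\mbf{c}+\mbf{c}'}_{\mbf{c},\mbf{c}'}(q)=q^{-c_{\tw}(\mbf{c},\mbf{c}')}$, which makes the numerator collapse to $q-q^{-1}$. Part (2) is inherited from the support bound $\mbf{d}<\mbf{c}+\mbf{c}'$ in \propref{prop:dsum_formula}. You should take \propref{prop:dsum_formula} as a black box and carry out this calculation rather than re-derive it; the substantive content you are missing is entirely in the use of \corref{cor:inverse} on the numerator.
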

\end{NB}

\subsection{}\label{sec:quantumunip}
In this subsection, we recall the specialization of $\Uq^{-}(w, e)$ at $q=1$.
\subsubsection{}
We have the following property of the specialization of $\Uq^{-}$ at $q=1$.
\begin{theo}[{\cite[\S 33.1]{Lus:intro}}]\label{prop:spec}
There is an isomorphism of algebras:
\[\Phi\colon U(\mfr{n})\xrightarrow{\sim} \mbb{C}\otimes_{\mca{A}}\Uq^{-}(\mfr{g})_{\mca{A}}\]
which sends $f_{i}$ to $f_{i}$.
\end{theo}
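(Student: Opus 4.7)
The plan is to invoke the universal property of $U(\mfr{n})$ and then verify that the resulting homomorphism is bijective by matching weight-graded dimensions via the PBW basis.

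First I would define $\Phi$ on generators. Since $U(\mfr{n})$ is, by definition, the universal enveloping algebra of the free Lie algebra on $\{f_{i}\}_{i\in I}$ modulo the Serre relations $(\ad f_{i})^{1-a_{ij}}(f_{j})=0$ for $i\neq j$, it suffices to send each $f_{i}\in U(\mfr{n})$ to $1\otimes f_{i}\in \mbb{C}\otimes_{\mca{A}}\Uq^{-}(\mfr{g})_{\mca{A}}$ and then check that the classical Serre relations are satisfied in the target. For this I would start from the quantum Serre relations
\[
\sum_{k=0}^{1-a_{ij}}(-1)^{k}f_{i}^{(k)}f_{j}f_{i}^{(1-a_{ij}-k)}=0,
\]
which hold in $\Uq^{-}(\mfr{g})_{\mca{A}}$ since they involve only divided powers. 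Under the specialization $q\mapsto 1$, we have $f_{i}^{(k)}\mapsto f_{i}^{k}/k!$, so clearing denominators by $(1-a_{ij})!$ recovers exactly $\sum_{k}(-1)^{k}\binom{1-a_{ij}}{k}f_{i}^{k}f_{j}f_{i}^{1-a_{ij}-k}=0$, which is the expanded form of the classical Serre relation. This makes $\Phi$ a well-defined $\mbb{C}$-algebra homomorphism.

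Next I would show $\Phi$ is surjective. The $\mca{A}$-algebra $\Uq^{-}(\mfr{g})_{\mca{A}}$ is generated by the divided powers $\{f_{i}^{(n)}\}$ (cf.\ \S\ref{subsubsec:A-form}). In the specialization $\mbb{C}\otimes_{\mca{A}}\Uq^{-}(\mfr{g})_{\mca{A}}$ the element $1\otimes f_{i}^{(n)}$ equals $\tfrac{1}{n!}(1\otimes f_{i})^{n}$, so the image of $\Phi$ contains every generator of the specialized algebra. Hence $\Phi$ is surjective.

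The remaining and main issue is injectivity, i.e.\ ruling out extra relations appearing after specialization. For this I would compare both sides weight-by-weight using a PBW basis. On the quantum side, for any $w\in W$ with reduced expression $\tw$, Theorem~\ref{thm:PBW} provides a basis $\{F(\mbf{c},\tw)\}_{\mbf{c}\in\mbb{Z}_{\geq 0}^{l(w)}}$ of $\Uq^{-}(w)$ contained in $\Uq^{-}(\mfr{g})_{\mca{A}}$, and by taking $w$ through a filtration exhausting $Q_{-}$ (or by working directly with Lusztig's global PBW-type basis of $\Uq^{-}(\mfr{g})_{\mca{A}}$ in \cite[\S 33.1]{Lus:intro}), one sees that $\Uq^{-}(\mfr{g})_{\mca{A}}$ is a free $\mca{A}$-module whose rank in each root space $-\xi$ equals $\dim U(\mfr{n})_{-\xi}$. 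Specializing, the images of the PBW vectors specialize to a spanning set of $\mbb{C}\otimes_{\mca{A}}\Uq^{-}(\mfr{g})_{\mca{A}}$ of the correct size; on the other hand they are obtained from ordered monomials in the $f_{i}$'s (up to lower-order corrections governed by Theorem~\ref{theo:LSformula}, whose structure constants lie in $\mca{A}$ and become the classical bracket commutators at $q=1$), so they are images under $\Phi$ of a basis of $U(\mfr{n})_{-\xi}$ given by ordered monomials in classical root vectors. Thus $\Phi$ is injective on each weight space, hence everywhere, completing the proof.

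The hard part will be making the PBW comparison rigorous in the symmetrizable Kac--Moody generality: one must check that the Levendorskii--Soibelman commutation relations specialize correctly to the classical relations in $U(\mfr{n})$ and that the integrality of the structure constants (Theorem~\ref{theo:LSformula}) survives specialization. All of this is essentially contained in \cite[\S 33.1]{Lus:intro}, which is why the statement can be cited rather than proved from scratch.
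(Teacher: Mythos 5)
The paper itself offers no argument for this statement: it is quoted directly from \cite[\S 33.1]{Lus:intro}, so what you are really being judged on is whether your sketch would stand as a proof in the stated generality. Your first two steps are fine: the quantum Serre relations live in $\Uq^{-}(\mfr{g})_{\mca{A}}$, specialize at $q=1$ to the classical Serre relations (after clearing the factorials coming from $f_i^{(k)}\mapsto f_i^k/k!$), so $\Phi$ is well defined; and since $\Uq^{-}(\mfr{g})_{\mca{A}}$ is generated over $\mca{A}$ by the $f_i^{(n)}$ and $1\otimes f_i^{(n)}=\frac{1}{n!}(1\otimes f_i)^n$ over $\mbb{C}$, the map is surjective. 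This surjection-plus-graded-dimension strategy is in fact the standard (Lusztig's) route.

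The genuine gap is in how you propose to get the graded dimension count for injectivity. The PBW bases $\{F(\mbf{c},\tw)\}$ of \thmref{thm:PBW} only span the subalgebras $\Uq^{-}(w,e)$, and these do \emph{not} exhaust $\Uq^{-}(\mfr{g})$ once $\mfr{g}$ is not of finite type: $\Delta^{+}(w)=\Delta^{+}\cap w\Delta^{-}$ consists of real roots only, so every $\Uq^{-}(w)$ misses the contributions of imaginary roots, and "taking $w$ through a filtration exhausting $Q_{-}$" is impossible — for a fixed weight $-\xi$ involving imaginary roots, $\dim\Uq^{-}(w)_{-\xi}$ is strictly smaller than $\dim U(\mfr{n})_{-\xi}$ for every $w$. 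Nor does \cite[\S 33.1]{Lus:intro} supply a "global PBW-type basis" in Kac--Moody generality; no such basis is available there. The Levendorskii--Soibelman specialization remarks do not repair this, since they again only concern the $\Uq^{-}(w)$. The fix, within the toolkit this paper already quotes, is to replace the PBW comparison by the canonical basis: the balanced-triple theorem gives that $\Uq^{-}(\mfr{g})_{\mca{A}}=\bigoplus_{b\in\mscr{B}(\infty)}\mca{A}\,\Glow(b)$ is a free $\mca{A}$-module, and the number of elements of $\mscr{B}(\infty)$ of weight $-\xi$ equals the Kostant partition function value $\dim U(\mfr{n})_{-\xi}$. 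Hence each weight space of $\mbb{C}\otimes_{\mca{A}}\Uq^{-}(\mfr{g})_{\mca{A}}$ has dimension $\dim U(\mfr{n})_{-\xi}$, and your graded surjection between spaces of equal finite dimension is an isomorphism. (Alternatively, one simply defers to Lusztig's own argument in 33.1, which compares graded characters without any PBW basis.) As written, though, the injectivity step fails outside finite type.
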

Let $r\colon U(\mfr{n})\to U(\mfr{n})\otimes U(\mfr{n})$ be the coproduct defined by $r(f)=f\otimes 1+1\otimes f$ for $f\in \mfr{n}$.
Here we note that $U(\mfr{n})$ is generated by $\{f_{i}\}_{i\in I}$ as algebra.
Since the specializatioin of the twisted coproduct satisfies this relation on the generators, the above is an isomorphism of bialgebras.
\subsubsection{}
Let $\mbb{C}[N]$ be the restricted dual of the universal enveloping algebra $U(\mfr{n})$ of the Lie algebra $\mfr{n}$,
that is
\[\mbb{C}[N]:=\bigoplus_{\xi\in Q}U(\mfr{n})_\xi^*.\]
We take the dual $\Uq^{-}(\mfr{g})_{\mca{A}}^{\up}$ of $\Uq^-(\mfr{g})_{\mca{A}}$ as before.
Since the multiplication of $\Uq^{-}(\mfr{g})$ preserves $\Uq^{-}(\mfr{g})_{\mca{A}}$, 
the twisted coproduct $r$ preserves the dual integral form $\Uq^{-}(\mfr{g})_{\mca{A}}^{\up}$,
that is $r(\Uq^{-}(\mfr{g})_{\mca{A}}^{\up})\subset \Uq^{-}(\mfr{g})_{\mca{A}}^{\up}\otimes \Uq^{-}(\mfr{g})_{\mca{A}}^{\up}$.

Let  $r^{*}\colon \mbb{C}[N]\otimes \mbb{C}[N]\to \mbb{C}[N]$ be a product so that 
$\bracket{r^{*}(\varphi\otimes \varphi'), x}=\bracket{\varphi\otimes \varphi', r(x)}$ holds for any $x\in U(\mfr{n})$ and 
 $\mu^{*}\colon \mbb{C}[N]\to \mbb{C}[N]\otimes \mbb{C}[N]$ be a coproduct so that
$\bracket{\mu^{*}(\varphi), x\otimes x'}=\bracket{\varphi, \mu(x\otimes x')}$ holds for any $x, x'\in U(\mfr{n})$,
where $\mu\colon U(\mfr{n})\otimes U(\mfr{n})\to U(\mfr{n})$ is the product on $U(\mfr{n})$.
The above isomorphism $\Phi$ induces the following.

\begin{prop}\label{prop:dualspec}
There is an isomorphism of bialgebras
\[\Phi^{\up}\colon \mbb{C}\otimes_{\mca{A}}\Uq^{-}(\mfr{g})_{\mca{A}}^{\up}\xrightarrow{\sim}\mbb{C}[N],\]
that is we have 
\begin{align*}
\mu^{*}\circ \Phi^{\up}&=(\Phi^{\up}\otimes \Phi^{\up})\circ r, \\
r^{*}\circ (\Phi^{\up}\otimes \Phi^{\up})&=\Phi^{\up}\circ \mu.
\end{align*}
\end{prop}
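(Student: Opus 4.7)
The plan is to realise $\Phi^{\up}$ as the $\mca{A}$-linear transpose (weight space by weight space) of the bialgebra isomorphism $\Phi$ of \thmref{prop:spec}, using Kashiwara's form $(\cdot,\cdot)_K$ to identify $\Uq^-(\mfr{g})_{\mca{A}}^{\up}$ with the graded $\mca{A}$-dual of $\Uq^-(\mfr{g})_{\mca{A}}$. Concretely, each weight space $\Uq^-(\mfr{g})_{\mca{A},\xi}$ is a finitely generated free $\mca{A}$-module (the canonical basis providing an explicit $\mca{A}$-basis), and the very definition of $\Uq^-(\mfr{g})_{\mca{A}}^{\up}$ together with non-degeneracy of $(\cdot,\cdot)_K$ exhibits $(\cdot,\cdot)_K$ as a perfect $\mca{A}$-pairing on each weight space. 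Specialising via $q\mapsto 1$ therefore yields a perfect $\mbb{C}$-bilinear pairing, so $\mbb{C}\otimes_{\mca{A}}\Uq^-(\mfr{g})_{\mca{A}}^{\up}$ identifies with the graded $\mbb{C}$-dual of $\mbb{C}\otimes_{\mca{A}}\Uq^-(\mfr{g})_{\mca{A}}$. Combining this with the tautological identification $\mbb{C}[N]=U(\mfr{n})^{\gr*}$ and transposing $\Phi$ weight space by weight space furnishes the required $\mbb{C}$-linear isomorphism $\Phi^{\up}\colon\mbb{C}\otimes_{\mca{A}}\Uq^-(\mfr{g})_{\mca{A}}^{\up}\xrightarrow{\sim}\mbb{C}[N]$.

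For the bialgebra identities the decisive point is that on $\Uq^-(\mfr{g})$ the multiplication $\mu$ and the twisted coproduct $r$ are mutually adjoint under $(\cdot,\cdot)_K$ (by the defining adjunction $(xx',y)_L=(x\otimes x',r(y))_L$ of Lusztig's form combined with \lemref{lem:KLforms}); equivalently, $(\cdot,\cdot)_K$ makes $\Uq^-(\mfr{g})$ into a (twisted) self-dual bialgebra. Since $\Phi$ is a bialgebra isomorphism intertwining $\mu$ on $\Uq^-(\mfr{g})$ with the multiplication on $U(\mfr{n})$ and $r$ on $\Uq^-(\mfr{g})$ with the primitive coproduct on $U(\mfr{n})$, transposing $\Phi$ and invoking the defining adjunctions $\bracket{\mu^*(\varphi),x\otimes x'}=\bracket{\varphi,\mu(x\otimes x')}$ and $\bracket{r^*(\varphi\otimes\varphi'),x}=\bracket{\varphi\otimes\varphi',r(x)}$ immediately produces the two identities $\mu^*\circ\Phi^{\up}=(\Phi^{\up}\otimes\Phi^{\up})\circ r$ and $r^*\circ(\Phi^{\up}\otimes\Phi^{\up})=\Phi^{\up}\circ\mu$.

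The only subtlety, which is rather mild, is $\mca{A}$-integrality at every step so that the passage to $q=1$ is well-defined: namely, that $(\cdot,\cdot)_K$ is $\mca{A}$-valued on $\Uq^-(\mfr{g})_{\mca{A}}\otimes\Uq^-(\mfr{g})_{\mca{A}}^{\up}$, and that both $\mu$ and $r$ preserve both integral forms (so that the bialgebra structure on $\mbb{C}\otimes_{\mca{A}}\Uq^-(\mfr{g})_{\mca{A}}^{\up}$ really is defined). The latter point is explicitly recorded in the paragraph immediately preceding the statement of the proposition, while the former is built into the very definition of the dual $\mca{A}$-form, so no deeper obstacle arises.
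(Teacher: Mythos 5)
Your proposal is correct and follows exactly the route the paper intends: the paper gives no separate proof beyond the remark that the isomorphism $\Phi$ of \thmref{prop:spec} ``induces'' the statement, after having set up precisely the ingredients you use (the dual integral form $\Uq^{-}(\mfr{g})_{\mca{A}}^{\up}$ as the graded $\mca{A}$-dual under $(\cdot,\cdot)_K$, preservation of both integral forms by $\mu$ and $r$, and the transposed operations $\mu^{*}, r^{*}$). Your filling-in of the weight-space-by-weight-space perfectness of the pairing and the adjunction between $\mu$ and $r$ under $(\cdot,\cdot)_K$ is exactly the implicit content of the paper's argument, so there is nothing to add.
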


\subsubsection{}
Let 
\begin{align*}
\sigma_{i}:=&\;\exp(-f_{i})\exp(e_{i})\exp(-f_{i})\\
=&\;\exp(e_{i})\exp(-f_{i})\exp(e_{i}),
\end{align*}
for $i\in I$.
Then we have 
\begin{align*}
(\sigma_{i})^{-1}=&\;\exp(f_{i})\exp(-e_{i})\exp(f_{i})\\
=&\;\exp(-e_{i})\exp(f_{i})\exp(-e_{i}).
\end{align*}
(This $(\sigma_{i})^{-1}$ is equal to $\overline{s}_{i}$ used in \cite[7.1]{GLS:KacMoody}.)
The action of $\sigma_{i}$ is well-defined on integrable $\mfr{g}$-modules, especially on the adjoint representation of $\mfr{g}$.
Under the specialization at $q=1$, we have $\sigma_{i}=S_{i}|_{q=1}$.
\subsubsection{}
For $\tw\in R(w)$ and $e\in \{\pm 1\}$, 
let 
\[f_e(\beta_k):=\sigma_{i_{1}}^{e}\cdots \sigma_{i_{k-1}}^{e}(f_{i_{k}}).\]
Then we have $f_e(\beta_k)\in \mfr{g}_{-\beta_{k}}$ and 
\[\mfr{n}(w)=\bigoplus_{1\leq k\leq l}\mbb{C}f_e(\beta_k).\]
By the definition, $f_e(\beta_k)$ is the specialization of $F_e(\beta_k)$.
\begin{NB}
signs can be differ by the choice of $e\in \{\pm 1\}$
\end{NB}
\subsubsection{}
Let $\mbb{C}[N(w)]$ be the restricted dual of the universal enveloping algebra $U(\mfr{n}(w))$ associated with $\mfr{n}(w)$.
We consider a basis of $\mfr{n}(w)$ given by $\{f_e(\beta_k)\}_{1\leq k\leq l}$ and also
a basis $\{f_e(\beta_k)\}_{1\leq k\leq l}\cup \{f'_{k}\}$ of $\mfr{g}$ which includes $\{f_e(\beta_k)\}_{1\leq k\leq l}$ as in \cite[4.3]{GLS:KacMoody}.
Here we fix a total order on the basis of $\mfr{g}$ by
\[f_e(\beta_1)<\cdots<f_e(\beta_k)<f'_1<f'_2<\cdots.\]
By the Poincar\'{e}-Birkhoff-Witt basis theorem, we have a basis of $U(\mfr{n})$ given by 
\[f_e((\mbf{c}, \mbf{d}), \tw):=\begin{cases}f_{e}(\beta_1)^{(c_1)}\cdots f_{e}(\beta_l)^{(c_l)}{f'_1}^{(d_1)}\cdots & \text{~when~} e=1, \\%
\cdots {f'_1}^{(d_1)}f_{e}(\beta_l)^{(c_l)}\cdots f_{e}(\beta_1)^{(c_1)} &\text{~when~} e=-1 ,\end{cases}\]
and also a basis of $U(\mfr{n}(w))$ given by 
\[f_e(\mbf{c}, \tw):=\begin{cases}f_{e}(\beta_1)^{(c_1)}\cdots f_{e}(\beta_l)^{(c_l)} & \text{~when~} e=1, \\%
f_{e}(\beta_l)^{(c_l)}\cdots f_{e}(\beta_1)^{(c_1)} &\text{~when~} e=-1 ,\end{cases}\]
where $x^{(c)}=x^c/c!$ for $x\in \mfr{g}$ and $c\in \mbb{Z}_{\geq 0}$.
We have $\Phi(f_{e}(\mbf{c}, \tw))=F_{e}(\mbf{c}, \tw)|_{q=1}$.
\subsubsection{}
Let $\{f_{e}^{*}(\mbf{c}, \tw)\}$ (resp.\ $\{f_{e}^{*}((\mbf{c}, \mbf{d}), \tw)\}$) be the dual basis of $\{f_{e}(\mbf{c}, \tw)\}$
(resp.\ $\{f_{e}((\mbf{c}, \mbf{d}), \tw)\}$).
Using these, we obtain a section of $\mbb{C}[N]\to \mbb{C}[N(w)]$ as algebras.
\begin{lem}
Let $\widetilde{\pi}_{w}^{*}\colon \mbb{C}[N(w)]\to \mbb{C}[N]$ be a $\mbb{C}$-linear homomorphism defined by 
\[\widetilde{\pi}_{w}^{*}(f^{*}_{e}(\mbf{c}, \tw)):=f_{e}^{*}((\mbf{c}, 0), \tw).\]
Then it is an algebra embedding.
\end{lem}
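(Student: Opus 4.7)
The plan is to verify multiplicativity of $\widetilde{\pi}_w^{*}$ directly on the chosen dual PBW basis. Injectivity is automatic, since $\widetilde{\pi}_w^{*}$ sends a basis of $\mbb{C}[N(w)]$ bijectively onto a $\mbb{C}$-linearly independent subset of $\mbb{C}[N]$. The products on $\mbb{C}[N]$ and $\mbb{C}[N(w)]$ are dual to the coproduct $r$ on $U(\mfr{n})$ and to its restriction $r|_{U(\mfr{n}(w))}$, respectively; since $U(\mfr{n}(w))$ is a sub-Hopf-algebra of $U(\mfr{n})$ these two coproducts are compatible. It therefore suffices, for all tuples $(\mbf{a},\mbf{b})$ and all $\mbf{c},\mbf{c}'\in \mbb{Z}_{\geq 0}^{l}$, to identify the coefficient of $f_e((\mbf{c},0),\tw)\otimes f_e((\mbf{c}',0),\tw)$ in $r(f_e((\mbf{a},\mbf{b}),\tw))\in U(\mfr{n})\otimes U(\mfr{n})$ with $\delta_{\mbf{b},\mbf{0}}$ times the coefficient of $f_e(\mbf{c},\tw)\otimes f_e(\mbf{c}',\tw)$ in $r|_{U(\mfr{n}(w))}(f_e(\mbf{a},\tw))$.

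The structural input I shall use is that $\Delta^{+}(w)$ is bracket closed and bracket coclosed in $\Delta^{+}$, a standard fact (\cite[Example 6.1.5(b)]{Kum}). Consequently the complement $\mfr{n}'(w):=\bigoplus_{\alpha\in \Delta^{+}\setminus \Delta^{+}(w)}\mfr{g}_{-\alpha}$ is also a Lie subalgebra of $\mfr{n}$, so $U(\mfr{n}'(w))$ is a sub-Hopf-algebra of $U(\mfr{n})$, and the PBW theorem yields a linear isomorphism $U(\mfr{n}(w))\otimes U(\mfr{n}'(w))\xrightarrow{\sim} U(\mfr{n})$ via the multiplication map taken in the prescribed ordering. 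In particular $f_{e}((\mbf{c},\mbf{d}),\tw)=f_{e}(\mbf{c},\tw)\cdot f'^{(\mbf{d})}$ with $f'^{(\mbf{d})}\in U(\mfr{n}'(w))$.

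The central calculation then uses the Hopf-algebra identity $r(xy)=r(x)r(y)$ to factor
\[r\bigl(f_{e}((\mbf{a},\mbf{b}),\tw)\bigr)=r\bigl(f_{e}(\mbf{a},\tw)\bigr)\cdot r\bigl(f'^{(\mbf{b})}\bigr),\]
where the first factor lies in $U(\mfr{n}(w))^{\otimes 2}$ and the second in $U(\mfr{n}'(w))^{\otimes 2}$. Each tensor slot of the resulting product then has the form $u\cdot v$ with $u\in U(\mfr{n}(w))$ and $v\in U(\mfr{n}'(w))$, already in PBW normal form; its $\mfr{n}'(w)$-multi-index vanishes if and only if $v$ is a scalar multiple of $1$. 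Since every $f_{j}'$ is primitive, $r(f'^{(\mbf{b})})$ is a linear combination of tensor products of divided powers of the $f_{j}'$, and the only term whose two tensor slots are both scalars is the summand coming from $\mbf{b}=\mbf{0}$, namely $1\otimes 1$. Hence the desired coefficient vanishes whenever $\mbf{b}\neq \mbf{0}$; for $\mbf{b}=\mbf{0}$ the computation reduces to the analogous one inside $U(\mfr{n}(w))$, giving exactly the structure constant for multiplication in $\mbb{C}[N(w)]$ as transported by $\widetilde{\pi}_w^{*}$.

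The principal obstacle is the bookkeeping of this PBW normal form analysis, specifically ensuring that when $\mbf{b}\neq \mbf{0}$ no cancellation across different summands of $r(f'^{(\mbf{b})})$ produces a spurious contribution. This is resolved once one notes that the $\mfr{n}'(w)$-graded piece of each product $u\cdot v$ is controlled entirely by $v$, so the various summands contribute to genuinely distinct PBW basis vectors and cannot cancel. The argument does not require $\mfr{n}(w)$ and $\mfr{n}'(w)$ to commute (they do not in general), only that the chosen PBW ordering places the $\mfr{n}(w)$-generators before the $\mfr{n}'(w)$-generators.
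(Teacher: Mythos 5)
Your proposal is correct and takes essentially the same route as the paper: both arguments come down to the splitting formula $r(f_{e}((\mbf{c}',\mbf{d}'),\tw))=\sum_{\mbf{c}'_{1}+\mbf{c}'_{2}=\mbf{c}',\,\mbf{d}'_{1}+\mbf{d}'_{2}=\mbf{d}'}f_{e}((\mbf{c}'_{1},\mbf{d}'_{1}),\tw)\otimes f_{e}((\mbf{c}'_{2},\mbf{d}'_{2}),\tw)$ for the coproduct of a PBW monomial, from which pairing against basis vectors gives $\delta_{\mbf{c}_{1}+\mbf{c}_{2},\mbf{c}'}\delta_{0,\mbf{d}'}$ on both sides of the multiplicativity identity. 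The only deviation is your appeal to bracket-coclosedness of $\Delta^{+}(w)$ to make $\mfr{n}'(w)$ a Lie subalgebra and $U(\mfr{n}'(w))$ a Hopf subalgebra, which is superfluous: the splitting formula holds for any ordered basis of $\mfr{n}$ extending $\{f_{e}(\beta_{k})\}$, because every element of $\mfr{n}$ is primitive in $U(\mfr{n})$ and $r$ is an algebra homomorphism, so the expansion already lands on PBW basis tensors with no possible cancellation.
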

\begin{proof}
First $\bracket{\widetilde{\pi}_{w}^{*}(f_{e}^{*}(\mbf{c}_{1}, \tw))\cdot \widetilde{\pi}_{w}^{*}(f_{e}^{*}(\mbf{c}_{2}, \tw)), f_{e}((\mbf{c'}, \mbf{d}'), \tw)}$
is equal to 
\[\bracket{\widetilde{\pi}_{w}^{*}(f_{e}^{*}(\mbf{c}_{1}, \tw))\otimes\widetilde{\pi}_{w}^{*}(f_{e}^{*}(\mbf{c}_{2}, \tw)), r(f_{e}((\mbf{c'}, \mbf{d}'), \tw))}.\]
We note that 
\begin{equation}
r(f_{e}((\mbf{c'}, \mbf{d}'), \tw))=\sum_{\mbf{c'}_{1}+\mbf{c'}_{2}=\mbf{c'}, \mbf{d}'_{1}+\mbf{d}'_{2}=\mbf{d}'}f_{e}((\mbf{c'}_{1}, \mbf{d}'_{1}), \tw))\otimes f_{e}((\mbf{c'}_{2}, \mbf{d}'_{2}), \tw)).\label{eq:PBW}\end{equation}

Hence the above is equal to $\delta_{\mbf{c}_{1}+\mbf{c}_{2}, \mbf{c}'}\delta_{0, \mbf{d}'}$.
On the other hand, we consider 
\[\bracket{\widetilde{\pi}_{w}^{*}(f_{e}^{*}(\mbf{c}_{1}, \tw)\cdot f_{e}^{*}(\mbf{c}_{2}, \tw)), f_{e}((\mbf{c'}, \mbf{d}'), \tw)}.\]
By \eqref{eq:PBW}, we have $f_{e}^{*}(\mbf{c}_{1}, \tw)\cdot f_{e}^{*}(\mbf{c}_{2}, \tw):=r^{*}(f_{e}^{*}(\mbf{c}_{1}, \tw)\otimes f_{e}^{*}(\mbf{c}_{2}, \tw))=f_{e}^{*}(\mbf{c}_{1}+\mbf{c}_{2}, \tw)$.
Then the above is equal to $\bracket{\widetilde{\pi}_{w}^{*}(f_{e}^{*}(\mbf{c}_{1}+\mbf{c}_{2}, \tw)), f_{e}((\mbf{c'}, \mbf{d}'), \tw)}=\delta_{\mbf{c}_{1}+\mbf{c}_{2}, \mbf{c}'}\delta_{0, \mbf{d}'}$.
Then the assertion holds.
\end{proof}
By \cite[Proposition 8.2]{GLS:KacMoody}, this embedding does not depend on the choice of $\tw\in R(w)$ and of the basis of $\mfr{g}$.

\subsubsection{}
We study the image of $\Uq^{-}(w, e)_{\mca{A}}^{\up}\otimes_{\mca{A}}\mbb{C}$ under the isomorphism $\Phi^{\up}$.
\begin{lem}\label{lem:rootvector}
Let $f\in \mfr{g}_{\alpha}$ with $\alpha\in \Delta_{+}\setminus \Delta_+(w)$, we have 
\[\bracket{f, \Phi^{\up}(\Gup(b)|_{q=1})}=0\]
for $b\in \mscr{B}(w, e)$.
\end{lem}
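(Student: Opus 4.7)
The plan is to identify $\Phi^{\up}(\Gup(b)|_{q=1})$ with a functional in the image of the algebra embedding $\widetilde{\pi}_w^*\colon \mbb{C}[N(w)] \hookrightarrow \mbb{C}[N]$ from the preceding lemma, after which the conclusion is immediate. By the defining formula $\widetilde{\pi}_w^*(f_e^*(\mbf{c}, \tw)) = f_e^*((\mbf{c}, 0), \tw)$, any element in the image of $\widetilde{\pi}_w^*$ annihilates every PBW basis vector $f_e((\mbf{c}', \mbf{d}'), \tw)$ with $\mbf{d}' \neq 0$. A root vector $f$ attached to a root $\alpha \in \Delta_+ \setminus \Delta_+(w)$ belongs to the $\mbb{C}$-span of the auxiliary basis $\{f'_k\} = \{f_e((0, e_k), \tw)\}$, which is precisely a set of such PBW basis vectors, so the pairing vanishes.

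To make the identification, I first invoke \thmref{theo:unipgp} and \thmref{theo:dualcan2} to expand $\Gup(b) = \sum_{\mbf{c}} a_{\mbf{c}}(q)\, F_e^{\up}(\mbf{c}, \tw)$ with $a_{\mbf{c}}(q) \in \mca{A}$. Since $\Phi^{\up}$ is compatible with $q \to 1$, it suffices to prove
\[
\Phi^{\up}\bigl(F_e^{\up}(\mbf{c}, \tw)|_{q=1}\bigr) = \widetilde{\pi}_w^*(f_e^*(\mbf{c}, \tw))
\]
for every $\mbf{c}$. Both sides are functionals on $U(\mfr{n})$, and I would check their agreement on the PBW basis $\{f_e((\mbf{c}', \mbf{d}'), \tw)\}$. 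For $\mbf{d}' = 0$, one has $\Phi(f_e(\mbf{c}', \tw)) = F_e(\mbf{c}', \tw)|_{q=1}$ since the braid operators $T_i$ specialize to the $\sigma_i$; the defining property of $\Phi^{\up}$ combined with $(F_e^{\up}(\mbf{c}, \tw), F_e(\mbf{c}', \tw))_K = \delta_{\mbf{c}, \mbf{c}'}$ then gives $\delta_{\mbf{c}, \mbf{c}'}$ on both sides.

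The hard part will be the case $\mbf{d}' \neq 0$, where both sides must vanish. To handle it, I would realize a quantum lift of $f_e((\mbf{c}', \mbf{d}'), \tw)$ inside a larger quantum nilpotent subalgebra: choose a Weyl group element $w' \geq w$ whose reduced expression $\tw'$ has $\tw$ as an initial segment and whose $\Delta_+(w')$ contains every positive root appearing in the support of the chosen monomial; this is always possible, since only finitely many simple roots are involved. The monomial then admits a canonical PBW lift $y = F_e(\tilde{\mbf{c}}, \tw')$ whose multi-index $\tilde{\mbf{c}}$ has nontrivial support beyond the first $l$ coordinates, while $F_e^{\up}(\mbf{c}, \tw) = F_e^{\up}((\mbf{c}, 0), \tw')$ by the same construction. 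The orthogonality formula in \propref{prop:innerprod}, translated from Lusztig's to Kashiwara's form via \lemref{lem:KLforms}, forces $(F_e^{\up}(\mbf{c}, \tw), y)_K = 0$ as an identity in $q$, and so in particular after specialization. Independence of the resulting identification from the auxiliary choice of $\tw'$ (and the complementary basis $\{f'_k\}$) is precisely \cite[Proposition 8.2]{GLS:KacMoody} quoted above, completing the proof.
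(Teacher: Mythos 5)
Your argument reduces the lemma to the identity $\Phi^{\up}\bigl(F^{\up}_{e}(\mbf{c},\tw)|_{q=1}\bigr)=f^{*}_{e}((\mbf{c},0),\tw)$, i.e.\ to the statement that the image of $\Phi^{\up}$ on $\Uq^{-}(w,e)^{\up}_{\mca{A}}\otimes\mbb{C}$ lands in the image of $\widetilde{\pi}^{*}_{w}$. That identity is exactly the content of \thmref{theo:spec}, and in the paper \thmref{theo:spec} is \emph{deduced from} this lemma (together with \propref{prop:comodule}); so everything rests on your independent proof of the vanishing of the pairing against $f_{e}((\mbf{c}',\mbf{d}'),\tw)$ when $\mbf{d}'\neq 0$. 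Your preliminary steps there are fine (the expansion of $\Gup(b)$ in the dual PBW basis via \thmref{theo:dualcan2}, and the observation that $F^{\up}_{e}(\mbf{c},\tw)=F^{\up}_{e}((\mbf{c},0),\tw')$ for an extended reduced word), but the key step is not.

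The gap is the claim that one can choose $w'\geq w$, with a reduced expression $\tw'$ extending $\tw$, such that $\Delta_{+}(w')$ contains every positive root occurring among the complementary basis vectors $f'_{k}$ in the support of the monomial; ``only finitely many simple roots are involved'' does not give this. For a general symmetrizable Kac--Moody algebra $\mfr{n}$ contains imaginary root spaces, and an imaginary root is never an inversion: every element of $\Delta_{+}(w')$ is a real root of the form $s_{i_1}\cdots s_{i_{k-1}}(\alpha_{i_k})$, so those $f'_{k}$ admit no quantum PBW lift of the form $F_{e}(\tilde{\mbf{c}},\tw')$ at all. Even for real roots the claim fails: for $\mfr{g}$ of type $A_1^{(1)}$ and $\tw=(1)$, every $w'$ whose reduced word begins with $s_{1}$ has inversion set contained in $\{\alpha_{1}+n(\alpha_{0}+\alpha_{1});\,n\geq 0\}$, so the root $\alpha_{0}\in\Delta_{+}\setminus\Delta_{+}(s_{1})$ is never covered. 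Since the whole point of the paper is the arbitrary symmetrizable Kac--Moody case, the proposal only has a chance in finite type, where Caldero's result is already known. Note also that the lemma itself needs none of this machinery: the pairing respects the $Q$-grading and $\Gup(b)$ for $b\in\mscr{B}(w,e)$ has weight $-\sum_{k}c_{k}\beta_{k}$, so a nonzero pairing would force $\alpha=\sum_{k}a_{k}\beta_{k}$ with $a_{k}\in\mbb{Z}_{\geq 0}$; but $\alpha\notin\Delta_{+}(w)$ gives $w^{-1}\alpha\in\Delta_{+}$, while $w^{-1}\beta_{k}<0$ for all $k$ gives $w^{-1}\alpha\in Q_{-}$, a contradiction. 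This two-line weight argument is the paper's proof, and it avoids any appeal to $\widetilde{\pi}^{*}_{w}$ or to longer reduced words.
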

\begin{proof}
Suppose that $b\in \mscr{B}(w, e)$ and $f\in \mfr{g}_{\alpha}$ with $\bracket{f, \Phi^{\up}(\Gup(b)|_{q=1})}\neq 0$.
Then we have
\[\alpha=\sum_{1\leq k\leq l} a_{k}\beta_{k}\]
for some $a_{k}\in \mbb{Z}_{\geq 0}$.
By the definition of $\Delta_{+}(w)$, we have $w^{-1}\alpha\in \Delta_{+}$ and $w^{-1}(\sum_{1\leq k\leq l} a_{k}\beta_{k})\in Q_{-}$.
This is a contradiction.
Hence we get the assertion.
\begin{NB}
The following argument is not correct.
Since $\Delta^{+}(w)$ is a bracket closed subset, we have $\alpha=\beta_{k}$ for some $1\leq k\leq l$.
\end{NB}
\end{proof}
\subsubsection{}\label{sec:partialR}
We have the following formula of the (twisted) coproduct of the root vectors $F(\beta_{k})$, see \cite[3.5 Corollary 3]{DamDec}.
\begin{prop}\label{prop:coprod_root}
We have the following expansion:
\[r(F(\beta_{k}))-(1\otimes F(\beta_{k})+F(\beta_{k})\otimes 1)=\sum_{\mbf{c}}x_{\mbf{c}}\otimes F(\mbf{c}, \tw),\]
where $x_{\mbf{c}}\in \Uq^{-}(\mfr{g})$ and if $x_{\mbf{c}}\neq 0$, then $c_{k'}=0$ for $k'\geq k$.
\end{prop}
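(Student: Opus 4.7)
The plan is to argue by induction on $k$, following the strategy of Damiani--De~Concini. The base case $k=1$ is immediate: $F(\beta_{1})=f_{i_{1}}$ is a generator, so $r(F(\beta_{1}))=f_{i_{1}}\otimes 1+1\otimes f_{i_{1}}$ and the difference on the left-hand side is zero, hence vacuously in $\Uq^{-}(\mfr{g})\otimes\{0\}$.

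For the inductive step, the idea is to exploit the recursive definition $F(\beta_{k})=T_{i_{1}}(\widetilde{F})$, where $\widetilde{F}:=T_{i_{2}}\cdots T_{i_{k-1}}(f_{i_{k}})$ is the PBW root vector at position $k-1$ associated to the shorter reduced expression $\tw':=(i_{2},\dots,i_{l})$ of $w':=s_{i_{1}}w$. By the induction hypothesis applied to $\tw'$, we have
\[
  r(\widetilde{F})=\widetilde{F}\otimes 1+1\otimes \widetilde{F}+\sum_{\mbf{c}'}\widetilde{x}_{\mbf{c}'}\otimes F(\mbf{c}',\tw'),
\]
with the right tensor factors supported strictly before position $k-1$ in $\tw'$, i.e.\ in the subalgebra generated by $F(\beta'_{1}),\dots,F(\beta'_{k-2})$, where $\beta'_{j}:=s_{i_{2}}\cdots s_{i_{j}}(\alpha_{i_{j+1}})$. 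Since $\beta_{j+1}=s_{i_{1}}(\beta'_{j})$ for $1\leq j\leq k-2$, the image under $T_{i_{1}}$ of this subalgebra is precisely the subalgebra of $\Uq^{-}(w)$ generated by $F(\beta_{2}),\dots,F(\beta_{k-1})$, which by \propref{prop:DKP} and \thmref{theo:LSformula} coincides with the span of PBW monomials $F(\mbf{c},\tw)$ with $c_{k'}=0$ for $k'\geq k$. Thus, if we can transport the identity for $r(\widetilde{F})$ through $T_{i_{1}}$, the claim will follow.

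The main obstacle is that Lusztig's braid operator $T_{i_{1}}$ does not commute with the twisted coproduct $r$, so one cannot simply apply $T_{i_{1}}\otimes T_{i_{1}}$ to both sides. To handle this, I would use the explicit formula $T_{i_{1}}(f_{j})=\sum_{r+s=-\langle h_{i_{1}},\alpha_{j}\rangle}(-1)^{r}q_{i_{1}}^{-r}f_{i_{1}}^{(s)}f_{j}f_{i_{1}}^{(r)}$ together with the fact that $r$ is a $\Qq$-algebra homomorphism, expanding $r(T_{i_{1}}(\widetilde{F}))$ as a polynomial in $r(f_{i_{1}})=f_{i_{1}}\otimes 1+1\otimes f_{i_{1}}$ and in the factors arising from $r(\widetilde{F})$. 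Each monomial in this expansion can then be rewritten, via the $q$-commutation relations of \thmref{theo:LSformula}, as a tensor product whose right factor is a product of root vectors $F(\beta_{j})$ with $2\leq j\leq k-1$, possibly together with the desired primitive piece $1\otimes F(\beta_{k})$ produced by the leading term $1\otimes\widetilde{F}$ after applying $T_{i_{1}}$ on the right. The combinatorial bookkeeping is tedious but strictly governed by the Levendorskii--Soibelman formula and its integral refinement \thmref{theo:dualLS}.

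Once the recursive identity has been unpacked and all terms collected, the left tensor factors lie in $\Uq^{-}(\mfr{g})$ (with no required control), while the right tensor factors lie, by construction, in the subalgebra generated by $F(\beta_{1}),\dots,F(\beta_{k-1})$. By the PBW theorem for $\Uq^{-}(w)$ (\thmref{thm:PBW}), this subalgebra is exactly the span of $F(\mbf{c},\tw)$ with $c_{k'}=0$ for $k'\geq k$, which completes the induction and establishes the proposition.
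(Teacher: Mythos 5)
The paper itself gives no proof of this proposition: it is quoted from Damiani--De~Concini \cite[3.5 Corollary 3]{DamDec}, so the only question is whether your induction stands on its own, and it does not. The gap sits exactly at the step you describe as tedious bookkeeping: you never supply a valid mechanism for transporting the coproduct identity for $\widetilde{F}=T_{i_2}\cdots T_{i_{k-1}}(f_{i_k})$ through $T_{i_1}$. The workaround you sketch is not well defined. The twisted coproduct $r$ is an algebra homomorphism on $\Uq^{-}(\mfr{g})$ only; to ``expand $r(T_{i_1}(\widetilde{F}))$ as a polynomial in $r(f_{i_1})$ and in the factors arising from $r(\widetilde{F})$'' you would have to apply $r$ to the $T_{i_1}$-images of the generators occurring in $\widetilde{F}$, but $T_{i_1}(f_{i_1})=-t_{i_1}^{-1}e_{i_1}\notin\Uq^{-}(\mfr{g})$, so the intermediate terms leave the domain of $r$; re-expressing $T_{i_1}(\widetilde{F})$ in the generators $f_j$ is just $F(\beta_k)$ again, which is circular. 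Similarly, the claim that the primitive pieces $F(\beta_k)\otimes 1$ and $1\otimes F(\beta_k)$ are ``produced by the leading terms after applying $T_{i_1}$'' in one tensor factor presupposes a relation between $r\circ T_{i_1}$ and $(T_{i_1}\otimes T_{i_1})\circ r$ (or $1\otimes T_{i_1}$), and no such relation is stated or proved; also note that $r(\widetilde F)$ is a coproduct, not a factorization, so there are no ``factors'' of $\widetilde F$ to manipulate.

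What is missing is precisely the compatibility of the coproduct with the braid operators: $\Delta(T_i(x))$ agrees with $(T_i\otimes T_i)(\Delta(x))$ only after conjugation by the rank-one quasi-$\mca{R}$-matrix (an element of the form $\sum_n c_n\, e_i^{n}\otimes f_i^{n}$ in a completion, the rank-one analogue of the intertwiner $\Xi$ recalled in \secref{sec:quasiR}; cf.\ \cite[Chapter 37]{Lus:intro}). With that formula the induction along the reduced word can be carried out: the conjugation inserts only powers of $f_{i_1}=F(\beta_1)$ into the right-hand tensor factors (and $e_{i_1}$'s into the left-hand ones, which cancel after projecting back into $\Uq^{-}\otimes\Uq^{-}$), and one then checks, using \thmref{theo:LSformula} as you indicate, that the right factors stay in the span of $F(\mbf{c},\tw)$ with $c_{k'}=0$ for $k'\geq k$. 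This $\mca{R}$-matrix step (or an equivalent device, e.g.\ an induction using the skew derivations ${_ir}$, $r_i$ and \lemref{lem:orthdecomp}) is the heart of the known proofs, including the one in the cited reference; without it your inductive step does not go through. Note also that you cannot shortcut via \propref{prop:comodule}, since in this paper that statement is deduced from the present proposition.
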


We have the compatibility of the twisted coproduct $r$ with $\Uq^{-}(w, e)$ (cf.\ \cite[2.4.2 Theorem c)]{LevSoi:qWeyl}).
\begin{NB}The following is a consequence of \cite[Theorem 4 (3)]{Damiani} and also stated in \cite[2.4.2 (c)]{LevSoi:qWeyl}.\end{NB}

\begin{prop}\label{prop:comodule}
We have 
\begin{align*}
r(\Uq^{-}(w, +1)_{\mca{A}}^{\up})&\subset \Uq^{-}(\mfr{g})_{\mca{A}}^{\up}\otimes \Uq^{-}(w, +1)_{\mca{A}}^{\up}, \\
r(\Uq^{-}(w, -1)_{\mca{A}}^{\up})&\subset \Uq^{-}(w, -1)_{\mca{A}}^{\up}\otimes \Uq^{-}(\mfr{g})_{\mca{A}}^{\up},
\end{align*}
that is $\Uq^{-}(w, +1)_{\mca{A}}^{\up}$ (resp.\ $\Uq^{-}(w, -1)_{\mca{A}}^{\up}$) is a left (resp.\ right) $\Uq^{-}(\mfr{g})_{\mca{A}}^{\up}$-comodule.
\end{prop}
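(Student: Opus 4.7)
The plan is to prove the $e = +1$ case directly and then deduce the $e = -1$ case by applying the $*$-involution.

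First, I would recall that the twisted coproduct preserves the dual integral form, that is $r(\Uq^-(\mfr{g})_{\mca{A}}^{\up}) \subset \Uq^-(\mfr{g})_{\mca{A}}^{\up} \otimes_{\mca{A}} \Uq^-(\mfr{g})_{\mca{A}}^{\up}$. This is already noted in \subsecref{sec:quantumunip} and follows because $r$ is dual to multiplication on $\Uq^-(\mfr{g})_{\mca{A}}$ under Kashiwara's form. In particular, $r(\Uq^-(w, +1)_{\mca{A}}^{\up})$ lies in this tensor product.

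Next, working over $\Qq$, I would establish the weaker containment
\[
r(\Uq^-(w, +1)) \subset \Uq^-(\mfr{g}) \otimes \Uq^-(w, +1).
\]
By \propref{prop:DKP} the subalgebra $\Uq^-(w, +1)$ is generated by the root vectors $F(\beta_1), \dots, F(\beta_l)$, and \propref{prop:coprod_root} shows that for each $k$ the right tensor factors appearing in $r(F(\beta_k))$ are PBW monomials supported on $\{\beta_1, \dots, \beta_{k-1}\}$, hence lying in $\Uq^-(w, +1)$. Since $r$ is an algebra homomorphism into the tensor product equipped with the twisted multiplication, and $\Uq^-(\mfr{g}) \otimes \Uq^-(w, +1)$ is closed under that twisted multiplication (the right tensor factors multiply inside the subalgebra $\Uq^-(w, +1)$), the containment extends from the generators to all of $\Uq^-(w, +1)$.

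Combining the two, I would write any $z \in r(\Uq^-(w, +1)_{\mca{A}}^{\up})$ uniquely as $\sum_{b \in \mscr{B}(\infty)} c_b \otimes \Gup(b)$ with $c_b \in \Uq^-(\mfr{g})_{\mca{A}}^{\up}$, using the dual canonical basis as an $\mca{A}$-basis of the second tensor factor. The containment in $\Uq^-(\mfr{g}) \otimes \Uq^-(w, +1)$, together with \thmref{theo:unipgp} (which identifies $\Uq^-(w, +1)$ with the span of $\{\Gup(b) : b \in \mscr{B}(w, +1)\}$), forces $c_b = 0$ for $b \notin \mscr{B}(w, +1)$; hence $z \in \Uq^-(\mfr{g})_{\mca{A}}^{\up} \otimes_{\mca{A}} \Uq^-(w, +1)_{\mca{A}}^{\up}$.

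For the $e = -1$ case, I would verify on the generators $f_i$ the identity $r \circ * = \flip \circ (* \otimes *) \circ r$, and extend it to all of $\Uq^-(\mfr{g})$ using that $*$ is a weight-preserving antiautomorphism compatible with the twisted multiplication on the target. Combined with \lemref{lem:*upper} and the fact that $*$ interchanges $\Uq^-(w, +1)_{\mca{A}}^{\up}$ with $\Uq^-(w, -1)_{\mca{A}}^{\up}$, the $e = -1$ statement follows from the $e = +1$ statement by applying $\flip \circ (* \otimes *)$. The main obstacle is the integrality step in the third paragraph, where \thmref{theo:unipgp} is crucial for passing from a containment in the tensor product of the ambient algebra and the subalgebra to a containment in the tensor product of their dual integral forms.
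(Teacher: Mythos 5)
Your proposal is correct and follows essentially the same route as the paper: reduce the integral statement to the $\Qq$-statement via the fact that $r$ preserves $\Uq^-(\mfr{g})_{\mca{A}}^{\up}$ together with \thmref{theo:unipgp}, prove the $e=+1$ case over $\Qq$ from \propref{prop:DKP} and \propref{prop:coprod_root}, and transfer to $e=-1$ by the $*$-involution. Your write-up merely makes explicit some steps (the basis expansion $\sum c_b\otimes\Gup(b)$ and the identity $r\circ *=\flip\circ(*\otimes *)\circ r$) that the paper leaves implicit.
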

\begin{proof}
Recall that we proved $\Uq^{-}(w, e)_{\mca{A}}^{\up}=\Uq^{-}(w, e)\cap \Uq^{-}(\mfr{g})_{\mca{A}}^{\up}$ during the proof of \thmref{theo:unipgp}.
Since $r$ preserves the dual $\mca{A}$-form $\Uq^{-}(\mfr{g})_{\mca{A}}^{\up}$, it suffices to prove a weaker statement,
that is 
\begin{align*}
r(\Uq^{-}(w, +1))&\subset \Uq^{-}(\mfr{g})\otimes \Uq^{-}(w, +1), \\
r(\Uq^{-}(w, -1))&\subset \Uq^{-}(w, -1)\otimes \Uq^{-}(\mfr{g}).
\end{align*}
Moreover if we apply the $*$-involution, we obtain the claim for the $e=-1$ case from the claim for the $e=1$ case.
So it is enough to prove the $e=1$ case.
This assertion is a consequence of \propref{prop:DKP} and \propref{prop:coprod_root}.
\end{proof}
\subsubsection{}
\begin{theo}\label{theo:spec}
Under the algebra homomorphism $\Phi^{\up}$, we have 
\[\mbb{C}\otimes_{\mca{A}}\Uq^-(w, e)_{\mca{A}}^{\up}\simeq \mbb{C}[N(w)].\]
\end{theo}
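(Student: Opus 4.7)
The plan is to show that $\Phi^{\up}$ restricts to an isomorphism from $\mbb{C}\otimes_{\mca{A}}\Uq^-(w, e)^{\up}_{\mca{A}}$ onto the embedded copy $\widetilde{\pi}_w^*(\mbb{C}[N(w)])\subset \mbb{C}[N]$. I treat $e=+1$ in detail; the case $e=-1$ will follow by applying the $*$-involution, using \eqref{eq:Ti_inverse} and the identification $\mscr{B}(w,-1)=*\mscr{B}(w,+1)$, together with \lemref{lem:*upper}.

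First I would characterize $\widetilde{\pi}_w^*(\mbb{C}[N(w)])$ as the annihilator in $\mbb{C}[N]$ of the subspace $\mca{I}\subset U(\mfr{n})$ spanned by the PBW monomials $f_{+1}((\mbf{c},\mbf{d}),\tw)$ with $\mbf{d}\neq 0$. Since $\Delta^+(w)$ is bracket-coclosed, $\mfr{n}':=\bigoplus_{\alpha\in \Delta^+\setminus \Delta^+(w)}\mfr{g}_{-\alpha}$ is a Lie subalgebra of $\mfr{n}$, and the chosen PBW ordering identifies $\mca{I}=U(\mfr{n}(w))\cdot U(\mfr{n}')_+$, where $U(\mfr{n}')_+$ denotes the augmentation ideal.

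Next I would prove the inclusion $\Phi^{\up}(\mbb{C}\otimes \Uq^-(w,+1)^{\up}_{\mca{A}})\subset \widetilde{\pi}_w^*(\mbb{C}[N(w)])$. Take $\zeta\in \mbb{C}\otimes \Uq^-(w,+1)^{\up}_{\mca{A}}$ and $y\in \mca{I}$. Since $U(\mfr{n}')_+$ is generated by $\mfr{n}'$ as a right $U(\mfr{n}')$-module, we can write $y=\sum_i w_iz_i$ with $w_i\in U(\mfr{n})$ and $z_i\in \mfr{g}_{-\alpha_i}$ for some $\alpha_i\in \Delta^+\setminus \Delta^+(w)$. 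Writing $r(\zeta)=\sum \zeta_{(1)}\otimes \zeta_{(2)}$ and invoking the bialgebra compatibility of $\Phi^{\up}$ from \propref{prop:dualspec},
\[\Phi^{\up}(\zeta)(w_iz_i)\;=\;\sum \Phi^{\up}(\zeta_{(1)})(w_i)\,\Phi^{\up}(\zeta_{(2)})(z_i).\]
By \propref{prop:comodule}, every $\zeta_{(2)}$ lies in $\mbb{C}\otimes \Uq^-(w,+1)^{\up}_{\mca{A}}$; expanding it in the dual canonical basis $\{\Gup(b)|_{q=1}\}_{b\in \mscr{B}(w,+1)}$ and applying \lemref{lem:rootvector} gives $\Phi^{\up}(\zeta_{(2)})(z_i)=0$. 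Hence $\Phi^{\up}(\zeta)(y)=0$, so $\Phi^{\up}(\zeta)$ indeed annihilates $\mca{I}$.

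Finally I would upgrade the inclusion to an equality by a weight-space dimension count. For each $\xi\in Q_-$, \thmref{theo:unipgp} combined with the Lusztig-data parametrization $\mscr{B}(w,+1)\simeq \mbb{Z}_{\geq 0}^l$ shows that the $\xi$-weight component of $\mbb{C}\otimes \Uq^-(w,+1)^{\up}_{\mca{A}}$ has dimension $\#\{\mbf{c}\in \mbb{Z}_{\geq 0}^l : -\sum c_k\beta_k=\xi\}$. The same count arises for $\widetilde{\pi}_w^*(\mbb{C}[N(w)])_\xi\simeq U(\mfr{n}(w))_\xi^*$ from the PBW basis $\{f_{+1}(\mbf{c},\tw)\}$ of $U(\mfr{n}(w))$. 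Since $\Phi^{\up}$ and $\widetilde{\pi}_w^*$ are both injective and graded, the inclusion is an equality in every weight space, and \thmref{theo:spec} follows.

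The main obstacle is the bootstrapping in the second paragraph: promoting the single-root vanishing in \lemref{lem:rootvector} to vanishing on the whole left ideal $U(\mfr{n}(w))\cdot U(\mfr{n}')_+$. This is precisely what the one-sided comodule structure of \propref{prop:comodule} supplies, once combined with the bialgebra compatibility of $\Phi^{\up}$; the argument then reduces the vanishing to a single application of \lemref{lem:rootvector} on the rightmost factor.
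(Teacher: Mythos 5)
Your proposal is correct and follows essentially the same route as the paper's proof: the bialgebra compatibility of $\Phi^{\up}$ (\propref{prop:dualspec}), the one-sided comodule property (\propref{prop:comodule}) and the root-vector vanishing (\lemref{lem:rootvector}) are combined exactly as in the paper to show the image annihilates the span of PBW monomials with $\mbf{d}\neq 0$, i.e.\ lands in $\widetilde{\pi}_{w}^{*}(\mbb{C}[N(w)])$. The only cosmetic differences are that the paper evaluates the specialized dual PBW basis directly against the classical PBW basis, obtaining $\Phi^{\up}(F^{\up}_{e}(\mbf{c},\tw)|_{q=1})=f^{*}_{e}((\mbf{c},0),\tw)$ and hence bijectivity at once, whereas you get surjectivity by a weight-space dimension count via Lusztig data, and you reduce $e=-1$ to $e=+1$ by the $*$-involution while the paper runs both cases in parallel.
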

In view of this theorem, the quantum nilpotent subalgebra $\Uq^{-}(w, e)$ can be considered as
the ``quantum coordinate ring'' of the corresponding unipotent subgroup $N(w)$,
so we call it the \emph{quantum unipotent subgroup} and denote it by $\mca{O}_{q}[N(w)]$.
\begin{NB}In the compatibility with the quantum closed unipotent cell, we give a precise definition of $\mca{O}_{q}[N(w)]$ in \secref{subsec:nilpDemazure}.\end{NB}
\begin{proof}
We compute the following inner product: 
\[\bracket{\Phi^{\up}(F^{\up}_{e}(\mbf{c}, \tw)|_{q=1}), f_{e}((\mbf{c}', \mbf{d}'), \tw)}.\]
First we have
\begin{align*}
&\;\bracket{\Phi^{\up}(F^{\up}_{e}(\mbf{c}, \tw)|_{q=1}), f_{e}((\mbf{c}', \mbf{d}'), \tw)}\\
=&\;\begin{cases}
\bracket{\mu^{*}(\Phi^{\up}(F^{\up}_{e}(\mbf{c}, \tw)|_{q=1}),  f_{e}((\mbf{c}', 0), \tw)\otimes f_{e}((0, \mbf{d}'), \tw)} & \text{~when~} e=1 \\
\bracket{\mu^{*}(\Phi^{\up}(F^{\up}_{e}(\mbf{c}, \tw)|_{q=1}),  f_{e}((0, \mbf{d}'), \tw)\otimes f_{e}((\mbf{c}', 0), \tw)} & \text{~when~} e=-1 \\
\end{cases}\\
=&\;\begin{cases}%
\bracket{(\Phi^{\up}\otimes \Phi^{\up})(r(F^{\up}_{e}(\mbf{c}, \tw)|_{q=1})),  f_{e}((\mbf{c}', 0), \tw)\otimes f_{e}((0, \mbf{d}'), \tw)} & \text{~when~} e=1 \\
\bracket{(\Phi^{\up}\otimes \Phi^{\up})(r(F^{\up}_{e}(\mbf{c}, \tw)|_{q=1})),  f_{e}((0, \mbf{d}'), \tw)\otimes f_{e}((\mbf{c}', 0), \tw)} & \text{~when~} e=-1 \\
\end{cases}\\
=&\;0
\end{align*}
if $\mbf{d}'\neq 0$.
This follows from \lemref{lem:rootvector} and \propref{prop:comodule}.
Hence it suffices to compute the following form
\[\bracket{\Phi^{\up}(F^{\up}_{e}(\mbf{c}, \tw)|_{q=1}), f_{e}(\mbf{c}', \tw)}.\]
This is equal to $\bracket{F^{\up}_{e}(\mbf{c}, \tw)|_{q=1}, \Phi(f_{e}(\mbf{c}', \tw))}=\bracket{F^{\up}(\mbf{c}, \tw)|_{q=1}, F_{e}(\mbf{c}', \tw)|_{q=1}}=\delta_{\mbf{c}, \mbf{c}'}$.
Hence we have $\Phi^{\up}(F_{e}^{\up}(\mbf{c}, \tw)|_{q=1})=f_{e}^{*}((\mbf{c}, 0), \tw)$ and the assertion.
\end{proof}

\section{Quantum closed unipotent cell and the dual canonical basis}\label{sec:Demazure}
\subsection{Demazure-Schubert filtration $\mbf{U}_w^-$}\label{subsec:U_w}
We recall the definition of the Demazure-Schubert filtration $\U_w^-$ associated with a Weyl group element $w\in W$.
\subsubsection{}
\begin{NB}
{1st version}
We recall about the result on the Demazure-Schubert filtration.
First we recall the result of the compatibilities with the canonical basis.
Let $\mbf{i}=(i_1, i_2, \cdots, i_l)\in I^l$ be any sequence in $I$.
$\Uq^-(\mfr{g})_{\mbf{i}}$ be the subspace which is spanned by the monomials 
$F_{i_1}^{(a_1)}\cdots F_{i_l}^{(a_l)}$ for various $a_1, \cdots, a_l\in \mbb{Z}_{\geq 0}$.
Then the following is the compatibility with the canonical basis.
\begin{prop}\cite[4.2]{Lus:problem}
The subspace $\Uq^-(\mfr{g})_{\mbf{i}}$ is a subcoalgebra of $\Uq^-(\mfr{g})$
and compatible with the basis $\mbf{B}$,
that is there exists a subset $\mscr{B}_{\mbf{i}}(\infty)$ of $\mscr{B}(\infty)$ such that:
\[\Uq^-(\mfr{g})_{\mbf{i}}=\bigoplus_{b\in \mscr{B}_{\mbf{i}}(\infty)}\Qq \Glow(b).\]
\end{prop}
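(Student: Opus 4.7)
The plan is to treat the two parts of the proposition separately. For the subcoalgebra property, the key identity is $r(f_i^{(n)})\in\sum_{a+b=n}\Qq\,f_i^{(a)}\otimes f_i^{(b)}$, obtained from $r(f_i)=f_i\otimes 1+1\otimes f_i$ together with the twisted multiplication on $\Uq^-(\mfr{g})\otimes\Uq^-(\mfr{g})$ of \S\ref{sec:Lus_form}. Since $r$ is a $\Qq$-algebra homomorphism for this twisted structure, iterating on an ordered monomial $f_{i_1}^{(a_1)}\cdots f_{i_l}^{(a_l)}$ yields an element of $\Uq^-(\mfr{g})_\mbf{i}\otimes\Uq^-(\mfr{g})_\mbf{i}$, as each tensor factor is again an ordered monomial of the same shape.

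For compatibility with the canonical basis, I would proceed by induction on $l$. The base case $l=0$ is trivial, as $\Uq^-(\mfr{g})_\emptyset=\Qq\cdot 1=\Qq\,\Glow(u_\infty)$. For the inductive step, set $\mbf{i}'=(i_2,\ldots,i_l)$ so that $\Uq^-(\mfr{g})_\mbf{i}=\sum_{a\geq 0}f_{i_1}^{(a)}\,\Uq^-(\mfr{g})_{\mbf{i}'}$, and by the inductive hypothesis $\Uq^-(\mfr{g})_{\mbf{i}'}=\bigoplus_{b\in\mscr{B}_{\mbf{i}'}(\infty)}\Qq\,\Glow(b)$. Applying \thmref{thm:multlower}, each generator $f_{i_1}^{(a)}\Glow(b)$ becomes an $\mca{A}$-linear combination of $\Glow(b')$ with $\vep_{i_1}(b')\geq a$, with leading term $\begin{bmatrix}\vep_{i_1}(b)+a\\ a\end{bmatrix}_{i_1}\Glow(\fit{i_1}^{a}b)$. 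Setting $\mscr{B}_\mbf{i}(\infty):=\{b\in\mscr{B}(\infty):\Glow(b)\in\Uq^-(\mfr{g})_\mbf{i}\}$, the inclusion $\bigoplus_{b\in\mscr{B}_\mbf{i}(\infty)}\Qq\Glow(b)\subset\Uq^-(\mfr{g})_\mbf{i}$ is tautological; the content of the proposition is the reverse.

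The main obstacle is precisely this reverse inclusion: showing that every $\Glow(b')$ appearing in the canonical-basis expansion of a generator $f_{i_1}^{(a)}\Glow(b)$ itself belongs to $\Uq^-(\mfr{g})_\mbf{i}$, not merely their $\Qq$-linear combination. To resolve this I would work in a fixed weight space $\Uq^-(\mfr{g})_\xi$, which is finite dimensional, and perform a descending induction on $\vep_{i_1}(b')$. For $b'$ with maximal $\vep_{i_1}$, observe via \thmref{thm:multlower} applied to $b''=\eit{i_1}^{\max}b'$ and $m=\vep_{i_1}(b')$ that $f_{i_1}^{(\vep_{i_1}(b'))}\Glow(b'')$ equals $\Glow(b')$ up to a scalar plus terms with strictly larger $\vep_{i_1}$, and peel off each term recursively; the element $\Glow(b'')$ lies in a lower-degree weight and can be handled by the inductive hypothesis.

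Alternatively, a cleaner route is to exploit that $\Uq^-(\mfr{g})_\mbf{i}$ is bar-invariant (each $f_i^{(n)}$ is), carries the natural $\mca{A}$-integral form spanned by the ordered monomials, and intersects $\mscr{L}(\infty)$ in a sublattice whose image modulo $q\mscr{L}(\infty)$ is the $\mbb{Q}$-span of the crystal subset $\{\fit{i_1}^{a_1}\cdots\fit{i_l}^{a_l}u_\infty:a_k\geq 0\}\setminus\{0\}$. The balanced triple technique recalled in \S\ref{sec:lower} then yields the compatibility at once, identifying $\mscr{B}_\mbf{i}(\infty)$ explicitly as this subset.
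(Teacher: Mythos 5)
The paper itself does not prove this proposition: it is quoted verbatim from \cite[4.2]{Lus:problem}, where it rests on the geometric construction of $\mbf{B}$ (algebraically one can argue as in Kashiwara's Demazure paper \cite{Kas:Demazure}), so your proposal must be measured against those arguments rather than against anything in the text. Your coalgebra half is correct and is exactly what the paper means by ``by its definition'': each $r(f_i^{(n)})$ is a sum of terms $q_i^{\bullet}\,f_i^{(a)}\otimes f_i^{(b)}$ with $a+b=n$, and since the twisted product turns a product of such simple tensors into a power of $q$ times (ordered monomial)$\otimes$(ordered monomial) for the same word $\mbf{i}$, one gets $r(\Uq^-(\mfr{g})_{\mbf{i}})\subset \Uq^-(\mfr{g})_{\mbf{i}}\otimes\Uq^-(\mfr{g})_{\mbf{i}}$.

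The compatibility half has a genuine gap, and it sits exactly where you yourself located the difficulty. In your inductive route the decisive sentence is that $\Glow(b'')$, with $b''=\eit{i_1}^{\max}b'$, ``lies in a lower-degree weight and can be handled by the inductive hypothesis.'' The induction on $l$ only says that $\Uq^-(\mfr{g})_{\mbf{i}'}$ is spanned by whichever canonical basis elements it happens to contain; it gives no control over which $b$ those are, and nothing forces $\eit{i_1}^{\max}b'$, for $b'$ occurring in the expansion of $f_{i_1}^{(a)}\Glow(b)$, to lie in $\mscr{B}_{\mbf{i}'}(\infty)$, nor even in $\mscr{B}_{\mbf{i}}(\infty)$; and an induction ``on the weight'' carries no statement that could supply such a membership. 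If the peeling argument worked as written, it would show that $\sum_a f_i^{(a)}V$ is compatible with $\mbf{B}$ for an arbitrary compatible subspace $V$, with no hypothesis tying $V$ to the crystal --- a much stronger claim than the proposition, and not what Lusztig or Kashiwara prove: the actual content is precisely the closure of the relevant crystal subset under the operators $\eit{i}$ (Kashiwara's string property, his condition (S)), which has to be established simultaneously with the spanning statement in the induction on $l$, cf.\ \cite[\S 3.2--3.3]{Kas:Demazure}; it cannot be extracted from \thmref{thm:multlower} alone. Your ``cleaner'' second route begs the same question: the assertion that $\Uq^-(\mfr{g})_{\mbf{i}}\cap\mscr{L}(\infty)$ reduces modulo $q\mscr{L}(\infty)$ exactly to the span of $\{\fit{i_1}^{a_1}\cdots\fit{i_l}^{a_l}u_\infty\}$ is equivalent to the theorem, not an input to it. Note in particular that the monomials themselves do not lie in $\mscr{L}(\infty)$ (already $f_i^{(a)}f_i^{(b)}=\begin{bmatrix}a+b\\ a\end{bmatrix}_i f_i^{(a+b)}\notin\mscr{L}(\infty)$ for $a,b\geq 1$), so computing this intersection and its reduction is not routine, and bar-invariance together with an $\mca{A}$-span of monomials does not by itself produce a balanced triple. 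To complete the proof you would either have to reproduce Lusztig's geometric argument (semisimplicity of the pushforward complexes attached to $(\mbf{i},\mbf{a})$ plus a support/triangularity induction showing every simple summand lies in the span of these complexes), or run Kashiwara's simultaneous induction including the string property; the steps you supply do not substitute for either.
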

\end{NB}
Let $\mbf{i}=(i_1, \cdots, i_l)$ be a sequence in $I$ and $\mbf{U}_{\mbf{i}}^-$
the $\Qq$-linear subspace spanned by the monomials $F_{i_1}^{(a_1)}\cdots F_{i_1}^{(a_l)}$ for
all $(a_1, a_2, \cdots, a_l)\in \mbb{Z}_{\geq 0}^l$,
that is 
\[\mbf{U}_{\mbf{i}}^-:=\sum_{a_1, a_2, \cdots, a_l\in \mbb{Z}_{\geq 0}}\Qq F_{i_1}^{(a_1)}\cdots F_{i_1}^{(a_l)}.\]
By its definition, this is a $\Qq$-subcoalgebra of $\Uq^-$.
We have the following compatibility with the canonical base.
\begin{prop}[{\cite[4.2]{Lus:problem}}]
The subcoalgebra $\mbf{U}_{\mbf{i}}^-$ is compatible with the canonical basis $\mbf{B}$,
that is there exists a subset $\mscr{B}_{\mbf{i}}(\infty)$ of $\mscr{B}(\infty)$ such that
\[\mbf{U}_{\mbf{i}}^-=\bigoplus_{b\in \mscr{B}_{\mbf{i}}(\infty)}\Qq \Glow(b).\]
\end{prop}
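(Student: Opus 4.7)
The plan is to proceed by induction on the length $l$ of the sequence $\mbf{i}$. The base case $l=0$ gives $\mbf{U}_{()}^- = \Qq$ with $\mscr{B}_{()}(\infty) = \{u_\infty\}$; the case $l=1$ gives $\mbf{U}_{(i)}^- = \bigoplus_{a \geq 0}\Qq f_i^{(a)}$ with $\mscr{B}_{(i)}(\infty) = \{\fit{i}^a u_\infty : a \geq 0\}$, since $f_i^{(a)} = \Glow(\fit{i}^a u_\infty)$. For the inductive step, write $\mbf{i} = (i_1, \mbf{i}')$ with $\mbf{i}' = (i_2, \ldots, i_l)$; the recursive description $\mbf{U}_{\mbf{i}}^- = \sum_{a \geq 0}f_{i_1}^{(a)}\mbf{U}_{\mbf{i}'}^-$ combined with the induction hypothesis $\mbf{U}_{\mbf{i}'}^- = \bigoplus_{b \in \mscr{B}_{\mbf{i}'}(\infty)}\Qq\Glow(b)$ yields a spanning set $\{f_{i_1}^{(a)}\Glow(b) : a \geq 0,\ b \in \mscr{B}_{\mbf{i}'}(\infty)\}$ for $\mbf{U}_{\mbf{i}}^-$.

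To upgrade this spanning statement to a direct sum decomposition over a subset of $\mscr{B}(\infty)$, I would exploit the $\vep_{i_1}$-filtration. The key inputs are \thmref{theo:lower_filt}, which says that $f_{i_1}^n \Uq^-(\mfr{g}) \cap \Uq^-(\mfr{g})_{\mca{A}}$ has $\mca{A}$-basis $\{\Glow(b) : \vep_{i_1}(b) \geq n\}$, and the multiplication formula of \thmref{thm:multlower}, which gives
\[f_{i_1}^{(a)}\Glow(b) = \begin{bmatrix}\vep_{i_1}(b)+a\\a\end{bmatrix}_{i_1}\Glow(\fit{i_1}^a b) + \sum_{\vep_{i_1}(b') > \vep_{i_1}(b)+a}f_{bb';i_1}^{(a)}(q)\Glow(b').\]
Filtering $\mbf{U}_{\mbf{i}}^-$ by $F^n := \mbf{U}_{\mbf{i}}^- \cap f_{i_1}^n\Uq^-(\mfr{g})$ and passing to associated graded, the identification $f_{i_1}^n\Uq^-(\mfr{g})/f_{i_1}^{n+1}\Uq^-(\mfr{g}) \cong \Uq^-[i_1]$ coming from \lemref{lem:orthdecomp} lets me translate the problem to $\Uq^-[i_1]$, where canonical basis elements $\Glow(b)$ with $\vep_{i_1}(b) = n$ correspond bijectively (via $\Glow(b) \leftrightarrow \Glow(\eit{i_1}^n b)$) to canonical basis elements of $\Uq^-[i_1]$. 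Setting $\mscr{B}_{\mbf{i}}(\infty) := \{b \in \mscr{B}(\infty) : \Glow(b) \in \mbf{U}_{\mbf{i}}^-\}$, compatibility at each graded level would then assemble into the desired global direct sum decomposition.

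The main obstacle will be controlling the orthogonal projection $\pi^{i_1}: \Uq^-(\mfr{g}) \to \Uq^-[i_1]$ through the filtration: although the induction describes $\mbf{U}_{\mbf{i}'}^-$ in terms of canonical basis elements, $\pi^{i_1}$ does not a priori preserve this basis. The way around this is to observe, using the multiplication formula above applied to $b \in \mscr{B}_{\mbf{i}'}(\infty)$, that the composition ``act by $f_{i_1}^{(a)}$, then project to $\mathrm{gr}^a$'' sends $\Glow(b)$ to a scalar multiple of $\Glow(\fit{i_1}^a b)$ modulo $F^{a+1}$, so the image of $f_{i_1}^{(a)}\mbf{U}_{\mbf{i}'}^-$ in $\mathrm{gr}^a$ is spanned by canonical basis elements. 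A cleaner alternative is to pull back from the Demazure-type submodule $\mbf{U}_{\mbf{i}}^- u_\lambda \subset V(\lambda)$ for $\lambda$ sufficiently dominant, via \thmref{theo:grloop}, and invoke Kashiwara's theorem on compatibility of Demazure modules with the canonical basis of $V(\lambda)$; this shifts the main difficulty onto the cited theorem but avoids direct manipulation inside $\Uq^-(\mfr{g})$.
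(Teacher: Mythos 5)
The paper itself offers no argument for this proposition: it is quoted from \cite[4.2]{Lus:problem}, where the proof (for symmetric $\mfr{g}$) is geometric, realizing the monomials $F_{i_1}^{(a_1)}\cdots F_{i_l}^{(a_l)}$ as pushforwards of constant sheaves and invoking the decomposition theorem, so your purely algebraic induction is in any case a different route. The inductive reduction to the step ``$N$ compatible $\Rightarrow \sum_a f_{i_1}^{(a)}N$ compatible'' is fine, but the place where you ``assemble'' graded-level compatibility into the global direct sum is a genuine gap. Knowing that the image of $F^n=\mbf{U}_{\mbf{i}}^-\cap f_{i_1}^n\Uq^-(\mfr{g})$ in $f_{i_1}^n\Uq^-(\mfr{g})/f_{i_1}^{n+1}\Uq^-(\mfr{g})$ is spanned by classes of canonical basis elements does not imply that $\mbf{U}_{\mbf{i}}^-$ is spanned by canonical basis elements: to run the evident induction down the filtration you must subtract the actual elements $\Glow(\fit{i_1}^{a}b)$ from a given element of $F^n$, and for that you need $\Glow(\fit{i_1}^{a}b)\in\mbf{U}_{\mbf{i}}^-$ --- which is essentially the statement being proved. \thmref{thm:multlower} does not give this, because the correction terms $\Glow(b')$ with $\vep_{i_1}(b')>\vep_{i_1}(b)+a$ are not known to lie in $\mbf{U}_{\mbf{i}}^-$, let alone to have the form $f_{i_1}^{(c)}\cdot(\text{element of }\mbf{U}_{\mbf{i}'}^-)$. (A toy example: the line spanned by $e_1+e_2$ in a filtered space with basis $\{e_1,e_2\}$ has graded pieces spanned by basis classes, yet is not spanned by a subset of the basis.) The graded pieces of the induced filtration are also not under control as you claim: two monomials $f_{i_1}^{(a)}\Glow(b_1)$ and $f_{i_1}^{(a')}\Glow(b_2)$ can have proportional leading terms, and their difference then lies deeper in the filtration with only the unknown correction terms surviving; also, minor point, the class of $\Glow(b)$ with $\vep_{i_1}(b)=n$ in the associated graded corresponds to the projection of $\Glow(\eit{i_1}^{n}b)$ to $\Uq^-[i_1]$, not to $\Glow(\eit{i_1}^{n}b)$ itself, by \lemref{lem:orthdecomp}.

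The real content of the inductive step --- if $N$ is spanned by $\{\Glow(b):b\in\mscr{B}'\}$ with $\eit{i}\mscr{B}'\subset\mscr{B}'\sqcup\{0\}$, then $\sum_a f_i^{(a)}N$ is again spanned by canonical basis elements --- is a theorem of Kashiwara, proved by the delicate induction behind \cite[\S 3]{Kas:Demazure}, and not a formal consequence of \thmref{thm:multlower} together with \thmref{theo:lower_filt}; Lusztig's cited argument replaces this by positivity coming from the decomposition theorem. Your fallback through $\mbf{U}_{\mbf{i}}^-u_\lambda\subset V(\lambda)$ for sufficiently dominant $\lambda$ is the right kind of move, but as phrased it only covers reduced sequences, where $\mbf{U}_{\tw}^-u_\lambda$ is the Demazure module $V_w(\lambda)$, whereas the proposition concerns arbitrary $\mbf{i}\in I^l$; to make it work you would need Kashiwara's lemma in its word-by-word form together with the passage to the limit $\lambda\to\infty$, at which point the proof is being outsourced rather than given.
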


\begin{rem}
If we consider the $\mca{A}$-subspace $(\mbf{U}_{\mbf{i}}^-)_{\mca{A}}$ spanned by the monomials $F_{i_1}^{(a_1)}\cdots F_{i_l}^{(a_l)}$,
then $(\mbf{U}_{\mbf{i}}^-)_{\mca{A}}$ is a $\mca{A}$-subcoalgebra of $\Uq^-$ and we have 
\[(\mbf{U}_{\mbf{i}}^-)_{\mca{A}}=\bigoplus_{b\in \mscr{B}_{\mbf{i}}(\infty)}\mca{A} \Glow(b).\]
\end{rem}
\begin{NB}
\subsubsection{Geometric meaning of $\U_{\mbf{i}}^-$}
By Lusztig's construction, the subspace $\Uq^-(\mfr{g})_{\mbf{i}}$ has the following geometric description.
First we fix any sequence $(i_1, i_2, \cdots, i_l)\in I^l$.
We consider the projective morphism 
\[\pi_{\mbf{i}, \mbf{a}}^{\Omega} \colon \widetilde{\Fl}_{\mbf{i}, \mbf{a}}^{\Omega}\to \mbf{E}_{\Omega, \mbf{V}}\]
for various $\mbf{a}=(a_1, \cdots, a_l)\in \mbb{Z}_{\geq 0}$ with $\dimv \mbf{V}=\sum_{j}a_ji_j$.
Since $\pi_{\mbf{i}, \mbf{a}}^{\Omega}$ is proper and $\widetilde{\Fl}_{\mbf{i}, \mbf{a}}^{\Omega}$ is smooth,
then 
\[L^{\Omega}_{\mbf{i}, \mbf{a}}:=\mbb{R}(\pi_{\mbf{i}, \mbf{a}}^{\Omega})_!(\mbf{1}_{\widetilde{\Fl}_{\mbf{i}, \mbf{a}}^{\Omega}}[\dim \widetilde{\Fl}_{\mbf{i}, \mbf{a}}^{\Omega}])\]
is a semisimple complex on $\mbf{E}_{\Omega, \mbf{V}}$ by the decomposition theorem.
We denote by $\mscr{P}^{\Omega}_{\mbf{i}, \mbf{V}}$
the set of isomorphism classes of simple perverse sheaves which appears in the direct summands of $L^{\Omega}_{\mbf{i}, \mbf{a}}$
for various $\mbf{a}$. 
Let $\mscr{Q}^{\Omega}_{\mbf{i}, \mbf{V}}$ be the full subcategory whose objects are the semisimple complexes 
which is isomorphic to finite direct sums of objects $\mscr{P}^{\Omega}_{\mbf{i}, \mbf{V}}$ with some shifts.

Let $\mca{K}(\mscr{Q}^{\Omega}_{\mbf{i}, \mbf{V}})$ be the abelian group with generators $(L)$ for
each isomorphism class of object $L$ in $\widetilde{\Fl}_{\mbf{i}, \mbf{a}}^{\Omega}$.
We define relations $(L')+(L'')=(L)$ whenever $L\simeq L'\+L''$.
We can define $\mca{A}=\mbb{Z}[q^\pm]$-action by $q^\pm(L)=(L[\pm 1])$.
By its construction, $\mca{K}(\mscr{Q}^{\Omega}_{\mbf{i}, \mbf{V}})$ is a free $\mca{A}$-module with the basis given by $\mscr{P}^{\Omega}_{\mbf{i}, \mbf{V}}$.
\end{NB}
\begin{rem}
By the construction of $\mbf{U}^-_{\mbf{i}}$, it is clear that
\begin{align*}
*(\mbf{U}^-_{\mbf{i}})&=\mbf{U}^-_{\mbf{i}^{\operatorname{opp}}}, \\
*(\mscr{B}_{\mbf{i}}(\infty))&=\mscr{B}_{\mbf{i}^{\operatorname{opp}}}(\infty),
\end{align*}
where $\mbf{i}^{opp}=(i_l, i_{l-1}, \cdots, i_1)$ for $\mbf{i}=(i_1, i_2, \cdots, i_l)$.
\end{rem}
\subsubsection{}
For $w\in W$, we consider $\U^-_{\tw}$ associated with $\tw=(i_{1}, \cdots, i_{l})\in R(w)$.
Then it is known that $\U_{\tw}^-$ does not depend on the choice of the reduced expression $\tw$ 
(\cite[5.3]{Lus:problem}).
Therefore we denote $\U_{\tw}^-$ by $\U_w^-$ and  also $\mscr{B}_{\mbf{i}}(\infty)$ by $\mscr{B}_w(\infty)$
by abuse of notations.
By their constructions, we have 
\begin{subequations}
\begin{align}
*(\U_{w}^-)&=\U_{w^{-1}}^-,\\
*(\mscr{B}_w(\infty))&=\mscr{B}_{w^{-1}}(\infty).
\end{align}
\end{subequations}
\begin{NB2}
In the case $\mbf{i}$ is a reduced expression for some Weyl group element $w\in W$,
then it is known \cite[5.3]{Lus:problem} that the subspace $\Uq^-(\mfr{g})_{\mbf{i}}$ does not depend 
on a choice of reduced expression.
So we denote it by $\Uq^-(\mfr{g})_{w}$.
\end{NB2}
\subsubsection{}
Following \cite[9.3]{BZ:qcluster}, we define the \emph{quantum closed unipotent cell} $\mca{O}_q[\overline{N_{w}}]$ 
associated with $w$ by
\begin{NB}
This definition does not work. (10/10/15)
\[\mca{O}_q[\overline{N_{w}}]:=\bigoplus_{b\in \mscr{B}_{w}(\infty)}\Qq \Gup(b).\]
\end{NB}
\[\mca{O}_q[\overline{N_{w}}]:=\Uq^{-}(\mfr{g})/(\mbf{U}_{w}^{-})^{\bot}=\Uq^{-}(\mfr{g})/\bigoplus_{b\notin \mscr{B}_{w}(\infty)}\Qq \Gup(b).\]
Let $\iota^{*}_{w}\colon \mbf{U}_{q}^{-}(\mfr{g})\to \mca{O}_q[\overline{N_{w}}]$ be the natural projection.
Since $(\mbf{U}_{w}^{-})^{\bot}=\bigoplus_{b\notin \mscr{B}_{w}(\infty)}\Qq \Gup(b)$ is compatible with $\mbf{B}^{\up}$,
the natural projection induces an bijection 
$\{\Gup(b); b\in \mscr{B}_{w}(\infty)\}\simeq \{\iota^{*}_{w}(\Gup(b)); b \in \mscr{B}_{w}(\infty)\}$.
Moreover, $(\mbf{U}_{w}^{-})^{\bot}$ is a two-sided ideal since $\mbf{U}_{w}^{-}$ is a subcoalgebra.
Thus $\mca{O}_q[\overline{N_{w}}]$ has an induced algebra structure.
\subsection{Demazure module and its crystal}
In this subsection, we recall the definition of the extremal vector $u_{w\lambda}$ and the associated Demazure module $V_w(\lambda)$.
In particular, we remind that $\mscr{B}_w(\infty)$ can be considered as a certain limit of 
the Demazure crystal.
\subsubsection{}
For $i\in I$, we consider the subalgebra $\Uq(\mfr{g})_i$ generated by $e_i, f_i, t_i$.
Consider the $(l+1)$-dimensional irreducible representation of $\Uq(\mfr{g})_i$
with a highest weight vector $u_{0}^{(l)}$, let $u_{k}^{(l)}:=f_i^{(k)}u_0^{(l)}~(1\leq k\leq l)$.
We have 
\begin{equation}S_i(u_k^{(l)})=(-1)^{l-k}q_i^{(l-k)(k+1)}u_{l-k}^{(l)}.\end{equation}
In particular, we have 
\begin{subequations}
\begin{align}
	S_i(u_l^{(l)})&=u_0^{(l)}, \label{eq:refl}\\
	S_{i}(u_0^{(l)})&=(-q_i)^{l}u_l^{(l)}.
\end{align}
\end{subequations}
\subsubsection{}
For $\lambda\in P$ and $w\in W$, let us denote by $u_{w\lambda}$
the canonical basis element of weight $w\lambda$.
We have the following description (\cite[3.2]{Kas:Demazure} and \cite[Lemma 39.1.2]{Lus:intro}):
\begin{align*}
	u_{w\lambda}&=u_\lambda && \text{~if~} w=1, \\
	u_{s_iw\lambda}&=f_i^{(m)}u_{w\lambda}=S_i^{-1}u_{w\lambda} && \text{~if~} m=\bracket{h_i, w\lambda}\geq 0.
\end{align*}%
\begin{NB}Since we have $\mscr{B}^{\low}(\lambda)_{W\lambda}=\mscr{B}^{\up}(\lambda)_{W\lambda}$, \end{NB}
Recall that $u_{w\lambda}$ is also the dual canonical basis element.
For $\tw\in R(w)$, we have 
\begin{equation}u_{w\lambda}=S_{i_1}^{-1}\cdots S_{i_l}^{-1}u_{\lambda}.\label{eq:extreme}\end{equation}%
\begin{NB2}%
For $(i_1, i_2, \cdots, i_l)\in R$, we set 
\[u_{w\lambda}:=S_{i_1}^{-1}\cdots S_{i_l}^{-1}u_{\lambda},\]
where $u_\lambda$ is the highest weight vector.
Then it is known that $u_{w\lambda}$ does not depend on a choice of reduced expression
and both lower and upper global basis element.
\end{NB2}%
\subsubsection{}\label{sec:Demazure_module}
We recall basic properties of the Demazure module,
see \cite[\S 3]{Kas:Demazure} or \cite[Chapitre 9]{Kas:bases} for more details. 
Let $\lambda\in P_+$ and $V(\lambda)$ be the integrable highest weight $\Uq(\mfr{g})$-module with a highest weight vector $u_\lambda$
of weight $\lambda$.
Let $V_w(\lambda):=\mbf{U}_q^+(\mfr{g})u_{w\lambda}$.
This $\Uq^+(\mfr{g})$-module is called the \emph{Demazure module} associated with $w$ and $\lambda$.
We have the following properties of the Demazure module $V_w(\lambda)$.
\begin{prop}
Let $w\in W$ and $\tw=(i_1, \cdots, i_l)\in R(w)$ be a reduced expression of $w$.

\textup{(1)}
We have
\[V_w(\lambda)=\sum_{a_1, \cdots, a_l\in \mbb{Z}_{\geq 0}}\Qq F_{i_1}^{(a_1)}\cdots F_{i_l}^{(a_l)}u_\lambda.\]

\textup{(2)}
We define $\mscr{B}_w(\lambda)\subset \mscr{B}(\lambda)$ by 
\begin{subequations}
\begin{align}
\mscr{B}_w(\lambda):=&\{\fit{i_1}^{a_1}\cdots \fit{i_r}^{a_l}u_\lambda\in \mscr{B}(\lambda); (a_1, \cdots, a_l)\in \mbb{Z}_{\geq 0}^{l}\setminus \{0\}\} \\
=&\{b\in \mscr{B}(\lambda); \eit{i_l}^{\max}\cdots \eit{i_1}^{\max}b=u_\lambda\}.
\end{align}
\end{subequations}
Then we have 
\[V_{w}(\lambda)=\bigoplus_{b\in \mscr{B}_w(\lambda)}\Qq\Glow_\lambda(b).\]

\textup{(3)}
For $i\in I$, we have 
\[\eit{i}\mscr{B}_w(\lambda)\subset \mscr{B}_w(\lambda)\sqcup \{0\}.\]
\end{prop}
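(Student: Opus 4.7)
The plan is to prove parts (1), (2), and (3) simultaneously by induction on the length $\ell(w)$, following Kashiwara's strategy in \cite{Kas:Demazure, Kas:bases}. The base case $w = 1$ is immediate: $V_{1}(\lambda) = \Qq u_{\lambda}$, $\mscr{B}_{1}(\lambda) = \{u_{\lambda}\}$, and (3) holds vacuously since $\eit{i} u_{\lambda} = 0$ for every $i \in I$.

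For the inductive step, I would write $w = s_{i} w'$ with $i = i_{1}$ and $\tw' = (i_{2}, \ldots, i_{l}) \in R(w')$. Since $\ell(s_{i} w') > \ell(w')$, the integer $m := \bracket{h_{i}, w'\lambda}$ is non-negative and the extremal vector identity gives $u_{w\lambda} = f_{i}^{(m)} u_{w'\lambda}$ together with $e_{i} u_{w'\lambda} = 0$. The core intermediate claim is the recursion
\[V_{w}(\lambda) = \sum_{n \geq 0} f_{i}^{(n)} V_{w'}(\lambda).\]
The inclusion $\supseteq$ uses the $\mfr{sl}_{2}$-identity $e_{i}^{(m-n)} u_{w\lambda} \in \Qq^{\times}\cdot f_{i}^{(n)} u_{w'\lambda}$ for $0 \leq n \leq m$, together with $f_{i}^{(n)} u_{w'\lambda} = 0$ for $n > m$, to place each $f_{i}^{(n)} u_{w'\lambda}$ in $V_{w}(\lambda)$; this then propagates to all of $V_{w'}(\lambda)$ by commuting $f_{i}^{(n)}$ past $\Uq^{+}(\mfr{g})$ using $[e_{j}, f_{i}] = 0$ for $j \neq i$ and the standard commutator for $[e_{i}, f_{i}^{(n)}]$. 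The inclusion $\subseteq$ is dual: $u_{w\lambda}$ lies in the RHS, and the RHS is $\Uq^{+}(\mfr{g})$-stable for the same commutator reasons, so it contains $V_{w}(\lambda) = \Uq^{+}(\mfr{g}) u_{w\lambda}$. Iterating this recursion along $\tw$ yields part (1).

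Part (2) splits in two. The equivalence of the two descriptions of $\mscr{B}_{w}(\lambda)$ is immediate from the crystal identities $\eit{i}\fit{i}^{a} b = \fit{i}^{a-1} b$ and $\eit{i}^{\max} u_{\lambda} = u_{\lambda}$. To upgrade (1) to the canonical basis compatibility, I would invoke the Kashiwara congruence $f_{i}^{(a)} \Glow_{\lambda}(b) \equiv \Glow_{\lambda}(\fit{i}^{a} b) \mod q\Llow(\lambda)$ (when $\fit{i}^{a} b \neq 0$) to see that the spanning monomials $f_{i_{1}}^{(a_{1})} \cdots f_{i_{l}}^{(a_{l})} u_{\lambda}$ of (1) reduce modulo $q\Llow(\lambda)$ precisely to the crystal elements of $\mscr{B}_{w}(\lambda)$; a weight-space dimension count together with the balanced triple property of $(\Llow(\lambda), \overline{\Llow(\lambda)}, V(\lambda)_{\mca{A}})$ then forces $V_{w}(\lambda) = \bigoplus_{b \in \mscr{B}_{w}(\lambda)} \Qq \Glow_{\lambda}(b)$. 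Part (3) follows by the same induction: writing $b \in \mscr{B}_{w}(\lambda)$ as $\fit{i_{1}}^{a_{1}} b''$ with $b'' \in \mscr{B}_{w'}(\lambda)$, the tensor-product combinatorics for $\eit{i}$ on such chains (cf.\ \eqref{eq:eitn}) shows $\eit{i} b$ is either of the form $\fit{i_{1}}^{a_{1}-1} b''$ or $\fit{i_{1}}^{a_{1}}(\eit{i} b'')$, both of which lie in $\mscr{B}_{w}(\lambda)$ by the induction hypothesis applied to $w'$.

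The main obstacle is the $\supseteq$ direction of the recursion: pushing $f_{i}^{(n)}$ past arbitrary products of $e_{j}$'s while keeping the result inside $\sum_{n'} f_{i}^{(n')} V_{w'}(\lambda)$ requires careful bookkeeping of the $q$-commutators, and this is precisely where Kashiwara's subtle combinatorial argument is used. A secondary technical point is the passage between the divided power $f_{i}^{(a)}$ and the crystal operator $\fit{i}^{a}$ on the lower global basis, which is supplied by the compatibility results recalled in \secref{sec:lower}.
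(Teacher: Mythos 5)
The paper itself gives no proof of this proposition: it is recalled from Kashiwara \cite[\S 3]{Kas:Demazure}, \cite{Kas:bases}, so your sketch has to be measured against Kashiwara's argument. Your skeleton for (1) is fine: the recursion $V_{s_iw'}(\lambda)=\sum_{n\geq 0}f_i^{(n)}V_{w'}(\lambda)$ for $\ell(s_iw')>\ell(w')$ does follow from the $\mfr{sl}_2$-string through $u_{w'\lambda}$ plus $\Uq^+(\mfr{g})$-stability of the right-hand side, and iterating it gives the monomial description. The real content of the proposition, however, is (2) and (3), and both of your arguments there break. For (3), crystal operators with different indices do not commute, and \eqref{eq:eitn} is a statement about tensor products of crystals; it says nothing about composing $\eit{i}$ with $\fit{i_1}^{a_1}$ inside the single crystal $\mscr{B}(\lambda)$, so there is no dichotomy ``$\eit{i}b=\fit{i_1}^{a_1-1}b''$ or $\fit{i_1}^{a_1}(\eit{i}b'')$''. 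Concretely, for $\mfr{g}=\mfr{sl}_3$, $\lambda=\varpi_1+\varpi_2$, $\tw=(1,2,1)$, take $b''=\fit{2}^{2}\fit{1}u_\lambda\in\mscr{B}_{s_2s_1}(\lambda)$ and $b=\fit{1}b''$ (the lowest weight element): then $\eit{2}b=\fit{1}^{2}\fit{2}u_\lambda\neq 0$, while $\fit{1}(\eit{2}b'')=\fit{1}\fit{2}\fit{1}u_\lambda=0$ and $\fit{1}^{a_1-1}b''=b''$ has the wrong weight, so $\eit{2}b$ is neither of your candidates (the conclusion of (3) happens to hold, but your argument cannot see it). The same gap undercuts your claim that the two descriptions of $\mscr{B}_w(\lambda)$ agree ``immediately'': after applying $\eit{i_1}^{\max}$ you must know that $\eit{i_1}^{\max}b'$ stays in the Demazure crystal of $w_{\geq 2}$, which is exactly the string property for $w_{\geq 2}$ with respect to the letter $i_1$, not a formal crystal identity. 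Kashiwara's proof of (3) is module-theoretic: it runs through the global-basis compatibility of $V_w(\lambda)$, the expansion formulas for $f_i^{(m)}\Glow_\lambda(b)$ (the $V(\lambda)$-analogue of \thmref{thm:multlower}), and the independence of $V_w(\lambda)$ of the reduced word, which lets one choose a word beginning with $i$ when $\ell(s_iw)<\ell(w)$, with a separate treatment of the remaining case.

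Your passage from (1) to (2) also does not parse. Divided powers do not preserve the crystal lattice: the leading coefficient in $f_i^{(m)}\Glow_\lambda(b)$ is the $q$-binomial $\begin{bmatrix}\vep_i(b)+m\\ m\end{bmatrix}$, which does not lie in $\mca{A}_0$ once $\vep_i(b)>0$ and $m>0$, so the spanning monomials of (1) need not lie in $\Llow(\lambda)$ and have no reduction modulo $q\Llow(\lambda)$. Already for $\tw=(1,2,1)$, $\mbf{a}=(1,0,1)$ and $\bracket{h_1,\lambda}\geq 2$ the monomial is $f_1f_1u_\lambda=[2]_1f_1^{(2)}u_\lambda\notin\Llow(\lambda)$. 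Moreover the ``weight-space dimension count'' presupposes $\dim V_w(\lambda)_\mu=\#\mscr{B}_w(\lambda)_\mu$, i.e.\ a Demazure character formula, which is part of what is being proved, so that step is circular. In Kashiwara's treatment, (2) and (3) are proved together by a simultaneous induction on $\ell(w)$ in which one shows that $\sum_{n}f_i^{(n)}V_{w'}(\lambda)$ is compatible with the crystal lattice and the balanced triple $(\Llow(\lambda),\overline{\Llow(\lambda)},V(\lambda)_{\mca{A}})$, using the expansion theorem rather than reduction of monomials; this mechanism is the heart of the proof and is missing from your outline.
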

We call $\mscr{B}_w(\lambda)$ the \emph{Demazure crystal}.

\subsubsection{}
We have a similar description of $\mscr{B}_{w}(\infty)$ as $\mscr{B}_w(\lambda)$.
Thus $\mscr{B}_w(\infty)$ can be interpreted as certain limit of the Demazure crystal.
%
\begin{prop}[{\cite[Corollary 3.2.2]{Kas:Demazure}}]\label{lem:Demazure}
Let $w\in W$ and $(i_1, \cdots, i_l)\in R(w)$ be its reduced expression.

\textup{(1)}
We have 
\begin{subequations}
\begin{align}
\mscr{B}_w(\infty)=&\{\fit{i_1}^{a_1}\cdots \fit{i_r}^{a_l}u_\infty\in \mscr{B}(\infty); (a_1, \cdots, a_l)\in \mbb{Z}_{\geq 0}^{l}\setminus \{0\}\} \\
=&\{b\in \mscr{B}(\infty); \eit{i_l}^{\max}\cdots \eit{i_1}^{\max}b=u_\infty\}.
\end{align}
\end{subequations}

\textup{(2)}
For $i\in I$, we have
\begin{equation}\eit{i}\mscr{B}_w(\infty)\subset \mscr{B}_w(\infty)\sqcup \{0\}. \end{equation}
\end{prop}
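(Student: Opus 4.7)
The plan is to deduce both statements from the parallel, well-known facts about the Demazure crystal $\mscr{B}_w(\lambda)\subset\mscr{B}(\lambda)$ of an integrable highest weight module $V(\lambda)$ recalled in \S\ref{sec:Demazure_module}, by transporting them along the projections $\pibar_\lambda\colon \mscr{B}(\infty)\to\mscr{B}(\lambda)\sqcup\{0\}$ of Theorem \ref{theo:grloop} for sufficiently dominant $\lambda\in P_+$.

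The key preparatory step is to establish the identity
\[\pibar_\lambda\bigl(\mscr{B}_w(\infty)\bigr)\setminus\{0\}=\mscr{B}_w(\lambda)\qquad\text{for every }\lambda\in P_+.\]
This follows from three inputs: (a) the alternative description $V_w(\lambda)=\sum f_{i_1}^{(a_1)}\cdots f_{i_l}^{(a_l)}u_\lambda=\pi_\lambda(\mbf{U}_w^-)$ of the Demazure module given in \S\ref{sec:Demazure_module}(1); (b) the compatibility $\pi_\lambda(\Glow(b))=\Glow_\lambda(\pibar_\lambda(b))$ when $\pibar_\lambda(b)\neq 0$, and $\pi_\lambda(\Glow(b))=0$ otherwise; and (c) the canonical-basis decompositions $\mbf{U}_w^-=\bigoplus_{b\in \mscr{B}_w(\infty)}\Qq\Glow(b)$ and $V_w(\lambda)=\bigoplus_{b\in \mscr{B}_w(\lambda)}\Qq\Glow_\lambda(b)$ from \S\ref{subsec:U_w} and \S\ref{sec:Demazure_module}(2).

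With this identification in hand, both (1) and (2) reduce to formal crystal-theoretic manipulations using the intertwinings $\pibar_\lambda\circ\fit{i}=\fit{i}^{\low}\circ\pibar_\lambda$ and $\pibar_\lambda\circ\eit{i}=\eit{i}^{\low}\circ\pibar_\lambda$ of Theorem \ref{theo:grloop}(3,4) together with the fact that $\pibar_\lambda$ is a bijection on the region where it is nonzero. Concretely, for each $b\in \mscr{B}(\infty)$ I would choose $\lambda\in P_+$ with $\bracket{h_i,\lambda}$ large enough that $\pibar_\lambda$ does not vanish on the finitely many elements arising in the argument (by Theorem \ref{theo:lower_filt}, $\bracket{h_i,\lambda}\geq\vep_i^*(b')$ suffices for each such $b'$). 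For (1), given $b\in\mscr{B}_w(\infty)$ we have $\pibar_\lambda(b)\in\mscr{B}_w(\lambda)$, which by Kashiwara's Demazure theorem for $V(\lambda)$ equals $\fit{i_1}^{a_1}\cdots\fit{i_l}^{a_l}u_\lambda$ for some $a_k\geq 0$; iterating the $\fit{i}$-intertwining starting from $u_\infty\mapsto u_\lambda$, together with injectivity on the bijectivity range, yields $b=\fit{i_1}^{a_1}\cdots\fit{i_l}^{a_l}u_\infty$. The reverse inclusion and the second characterization via $\eit{i_l}^{\max}\cdots\eit{i_1}^{\max}b=u_\infty$ are handled symmetrically, noting that applying $\eit{i}^{\max}$ only raises weight and so preserves the bijectivity range. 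Statement (2) transfers the $\eit{i}$-stability of $\mscr{B}_w(\lambda)$ in the same fashion.

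The main obstacle is the identification $\pibar_\lambda(\mscr{B}_w(\infty))\setminus\{0\}=\mscr{B}_w(\lambda)$; it rests entirely on Kashiwara's compatibility of the global bases of $\Uq^-(\mfr{g})$ and of $V(\lambda)$ under $\pi_\lambda$, combined with the simultaneous canonical-basis compatibility of $\mbf{U}_w^-$ and $V_w(\lambda)$. Once this is in place, the rest of the argument is a formal transfer modulo the Demazure crystal theorem for $V(\lambda)$, which we treat as a black box.
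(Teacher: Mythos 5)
Your argument is essentially correct, but it follows a genuinely different route from the paper: the paper gives no proof of this proposition at all, citing Kashiwara's Corollary 3.2.2 directly, and Kashiwara's original argument is a simultaneous induction on $\ell(w)$ that establishes the Demazure statements for $V_w(\lambda)$ and for the $\mscr{B}(\infty)$-level object at the same time. You instead take the Demazure crystal theorem for $V(\lambda)$ (quoted in \S\ref{sec:Demazure_module}) as a black box and transfer it to $\mscr{B}(\infty)$: Lusztig's compatibility $\mbf{U}_w^-=\bigoplus_{b\in\mscr{B}_w(\infty)}\Qq\Glow(b)$ (which is precisely how the paper \emph{defines} $\mscr{B}_w(\infty)$), together with $\pi_\lambda(\mbf{U}_w^-)=V_w(\lambda)$ and $\pi_\lambda(\Glow(b))=\Glow_\lambda(\pibar_\lambda(b))$ (resp.\ $=0$), yields $\pibar_\lambda(\mscr{B}_w(\infty))\setminus\{0\}=\mscr{B}_w(\lambda)$ by linear independence of the canonical basis of $V(\lambda)$, and then the grand-loop intertwinings of Theorem \ref{theo:grloop} transport the crystal descriptions and the $\eit{i}$-stability back to $\mscr{B}(\infty)$ for sufficiently dominant $\lambda$. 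This is a legitimate alternative derivation using only facts already quoted in the paper; what the citation (i.e.\ Kashiwara's induction) buys instead is independence from Lusztig's compatibility result and no need for the limiting ``sufficiently dominant $\lambda$'' bookkeeping.

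Two points you should tighten, though neither is a gap in the idea. First, the exponents $a_k$ you read off from $\pibar_\lambda(b)\in\mscr{B}_w(\lambda)$ depend on $\lambda$, so the dominance of $\lambda$ must be fixed a priori: choosing $\bracket{h_i,\lambda}\geq\tr(-\wt(b))$ for all $i$ works, since every element occurring in the computation has weight between $\wt(b)$ and $0$, hence $\vep_i^*\leq\tr(-\wt(b))$, so the entire argument stays in the locus where $\pibar_\lambda$ is injective, commutes with $\fit{i}$ and $\eit{i}$, and preserves $\vep_i$. Second, the vanishing half of your input (b), namely $\pi_\lambda(\Glow(b))=0$ when $\pibar_\lambda(b)=0$, and your criterion $\pibar_\lambda(b)\neq0\Leftrightarrow\vep_i^*(b)\leq\bracket{h_i,\lambda}$ for all $i$, are not literally stated in the quoted theorems; they follow from Theorem \ref{theo:lower_filt} combined with $\Ker\pi_\lambda=\sum_{i}\Uq^-(\mfr{g})f_i^{\bracket{h_i,\lambda}+1}$, and this should be said explicitly since it is the pivot of the whole transfer.
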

\subsection{}\label{subsec:nilpDemazure}
To study  multiplicative properties of $\Uq^-(w, e)$, we relate it to the quantum closed unipotent cell $\mca{O}_q[\overline{N_w}]$.
The following is a generalization of \cite[3.2 Lemma]{Cal:qflag}.
This can be considered as a quantum analogue of \cite[Corollary 15.7]{GLS:KacMoody}.
\begin{theo}\label{theo:emb}
For $w\in W$ and $e\in \{\pm 1\}$, we have the following embedding of algebras:
\[\Uq^-(w, e)\hookrightarrow \mca{O}_q[\overline{N_{w^{-e}}}].\]
\end{theo}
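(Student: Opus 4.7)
My first step is to translate the embedding statement into a combinatorial inclusion inside $\mscr{B}(\infty)$. Since $\mca{O}_q[\overline{N_{w^{-e}}}]$ is by definition the quotient $\Uq^-(\mfr{g})/(\mbf{U}_{w^{-e}}^-)^{\bot}$ and $(\mbf{U}_{w^{-e}}^-)^{\bot} = \bigoplus_{b\notin \mscr{B}_{w^{-e}}(\infty)}\Qq\Gup(b)$ is compatible with the dual canonical basis, the map $\Uq^-(w,e)\hookrightarrow \Uq^-(\mfr{g})\twoheadrightarrow \mca{O}_q[\overline{N_{w^{-e}}}]$ is automatically a $\Qq$-algebra homomorphism. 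Combined with \thmref{theo:unipgp}, which gives $\Uq^-(w,e)=\bigoplus_{b\in \mscr{B}(w,e)}\Qq\Gup(b)$, injectivity of this composition is equivalent to the containment
\[
\mscr{B}(w,e)\subseteq \mscr{B}_{w^{-e}}(\infty).
\]

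Next, I reduce to the case $e=1$ via the $*$-involution. Since $*\Uq^-(w,1)=\Uq^-(w,-1)$, $*\mscr{B}(w,1)=\mscr{B}(w,-1)$, and $*\mscr{B}_{w^{-1}}(\infty)=\mscr{B}_w(\infty)$, the two cases $e=\pm 1$ are interchanged by $*$, so it suffices to prove $\mscr{B}(w,1)\subseteq \mscr{B}_{w^{-1}}(\infty)$.

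I then prove this inclusion by induction on $l=l(w)$, the base case $l=0$ being trivial. Fix $\tw=(i_1,\dots,i_l)\in R(w)$, set $\tw'=(i_2,\dots,i_l)\in R(w')$ with $w'=s_{i_1}w$, and write $\mbf{c}=(c_1,\mbf{c}')$. Using the recursive factorization
\[
F(\mbf{c},\tw)=f_{i_1}^{(c_1)}\cdot T_{i_1}[F(\mbf{c}',\tw')],
\]
I observe via \lemref{lem:orthdecomp} that $F(\mbf{c}',\tw')\in{}^*\Uq^-[i_1]$ (so that $T_{i_1}$ applied to it lies in $\Uq^-[i_1]$), which allows me to combine Kashiwara's theory of the modified operator $\fit{i_1}$ with Saito's \thmref{theo:braidlower} to obtain the crystal identity
\[
b(\mbf{c},\tw)=\fit{i_1}^{c_1}\,\Lambda_{i_1}\bigl(b(\mbf{c}',\tw')\bigr)\pmod{q\mscr{L}(\infty)},
\]
where $\Lambda_{i_1}$ is Saito's reflection on $\mscr{B}(\infty)$.

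The main obstacle is the final step: showing that $\fit{i_1}^{c_1}\circ \Lambda_{i_1}$ carries $\mscr{B}_{(w')^{-1}}(\infty)$ into $\mscr{B}_{w^{-1}}(\infty)$. By induction, $b(\mbf{c}',\tw')\in \mscr{B}_{(w')^{-1}}(\infty)$, so this is the only content remaining. I plan to verify it by comparing both sides under the iterated Kashiwara embedding $\Psi_{\tw}:\mscr{B}(\infty)\hookrightarrow \mscr{B}(\infty)\otimes \mscr{B}_{i_l}\otimes\cdots\otimes \mscr{B}_{i_1}$ of \thmref{theo:Kasemb}: both $\mscr{B}(w,1)$ and $\mscr{B}_{w^{-1}}(\infty)$ can be characterized as the set of $b$ whose $\Psi_{\tw}$-image has first tensor factor equal to $u_\infty$, with only the explicit form of the remaining $\fit{i_k}^{n_k}b_{i_k}$ factors varying; the identity above is precisely the recursion that realizes this common characterization, with the Saito reflection accounting for the transition from the $\tw'$-adapted picture to the $\tw$-adapted picture. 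The subtle point in carrying this out is that $\Lambda_{i_1}$ mixes $\eit{i_1}^{\max}$ and $\fit{i_1}^{*\varphi_{i_1}}$, so its compatibility with the Demazure structure has to be checked step by step using the tensor product rule \eqref{eq:eitn}--\eqref{eq:fitn}, together with the hypothesis $\vep_{i_1}^*(b(\mbf{c}',\tw'))=0$ inherited from $F(\mbf{c}',\tw')\in{}^*\Uq^-[i_1]$.
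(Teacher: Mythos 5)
Your reductions agree with the paper up to the last step: passing from the algebra embedding to the crystal inclusion $\mscr{B}(w,e)\subseteq\mscr{B}_{w^{-e}}(\infty)$ via \thmref{theo:unipgp} and the compatibility of $(\mbf{U}_{w^{-e}}^{-})^{\bot}$ with $\mbf{B}^{\up}$, the $*$-reduction to $e=1$, and the inductive identity $b(\mbf{c},\tw)=\fit{i_1}^{c_1}\Lambda_{i_1}(b(\mbf{c}',\tw'))$ are all exactly as in the paper. The gap is in the final step, which is the real content of the theorem. Your justification rests on the claim that both $\mscr{B}(w,1)$ and $\mscr{B}_{w^{-1}}(\infty)$ are characterized as those $b$ whose image under $\Psi_{\tw}$ has first tensor factor $u_\infty$. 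That characterization is correct for $\mscr{B}_{w^{-1}}(\infty)$: the first factor of $\Psi_{\tw}(b)$ is $\eit{i_l}^{*\max}\cdots\eit{i_1}^{*\max}b$, so the condition is equivalent to $b\in *\mscr{B}_{w}(\infty)=\mscr{B}_{w^{-1}}(\infty)$. But it is false for $\mscr{B}(w,1)$, which is in general a \emph{proper} subset of $\mscr{B}_{w^{-1}}(\infty)$ (already for $w=s_1s_2$ in type $A_2$, the weight $-(\alpha_1+2\alpha_2)$ occurs in $\mscr{B}_{w^{-1}}(\infty)$ but not in $\mscr{B}(w,1)$). So there is no common characterization to match, and what actually has to be proved is precisely that $\fit{i_1}^{c_1}\Lambda_{i_1}$ sends $b(\mbf{c}',\tw')\in\mscr{B}_{(w')^{-1}}(\infty)$ into $\mscr{B}_{w^{-1}}(\infty)$; your proposal leaves this as a plan.

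Carrying out that plan with the iterated embedding is exactly where the difficulty sits: to push $\fit{i_1}^{c_1}$ through $\Psi_{\tw}$ you need, for instance, that $\vep_{i_1}^{*}(\eit{i_1}^{\max}b(\mbf{c}',\tw'))=0$ and you need to control on which tensor factor each $\fit{i_1}$ acts; the hypothesis $\vep_{i_1}^{*}(b(\mbf{c}',\tw'))=0$ alone does not give this. The paper finishes by a different route: writing $b'=b(\mbf{c}',\tw')$, it first observes that $\fit{i_1}^{*\vphi_{i_1}(b')}\eit{i_1}^{\max}b'\in\mscr{B}_{w^{-1}}(\infty)$, then uses the Demazure string property \cite[Theorem 3.3.2]{Kas:Demazure} to reduce to a \emph{single} application of $\fit{i_1}$, computes only with the single embedding $\Psi_{i_1}$, and in the nontrivial case ($\vep_{i_1}(b')>0$) invokes \cite[Proposition 3.3.4]{Kas:Demazure}: the $i_1$-string through $b'$ meets $\mscr{B}_{(w')^{-1}}(\infty)$ in two distinct elements and hence lies entirely inside it. Until you either complete the tensor-product bookkeeping in full or replace it by an argument of this kind, the key inclusion $\mscr{B}(w,1)\subseteq\mscr{B}_{w^{-1}}(\infty)$ remains unproven.
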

\begin{proof}
\begin{NB}Since both algebras $\Uq^-(w, e)$ and $\mca{O}_q[\overline{N_{w^{-e}}}]$ are compatible with the dual canonical base,\end{NB}
We consider the composite of the inclusion $\Uq^-(w, e)\hookrightarrow \Uq^{-}(\mfr{g})$ and the natural projection $\iota^{*}_{w^{-e}}\colon \Uq^{-}(\mfr{g})\to \mca{O}_{q}[\overline{N_{w^{-e}}}]$.
Since both homomorphisms are algebra homomorphisms, we obtain an algebra homomorphism
\[\Uq^-(w, e)\to\mca{O}_q[\overline{N_{w^{-e}}}].\]
Since $\Uq^{-}(w, e)$ is compatible with $\mbf{B}^{\up}$ and $\iota^{*}_{w^{-e}}$ induces an bijection
$\{\Gup(b); b\in \mscr{B}_{w^{-e}}(\infty)\}\simeq \{\iota^{*}_{w^{-e}}(\Gup(b)); b\in \mscr{B}_{w^{-e}}(\infty)\}$,
it suffices to prove the corresponding assertion for
the crystals, that is $\mscr{B}(w, e)\hookrightarrow \mscr{B}_{w^{-e}}(\infty)$.
Since we have $*(\mscr{B}(w, e))=\mscr{B}(w, -e)$ and $*(\mscr{B}_w(\infty))=\mscr{B}_{w^{-1}}(\infty)$,
it is enough to prove the claim for the $e=1$ case.

We prove $\mscr{B}(w, 1)\subset \mscr{B}_{w^{-1}}(\infty)$ by the induction on $l=\ell(w)$.
For $l=1$ case, by the constructions of $\mscr{B}(s_i, e)$ and $\mscr{B}_{s_i}(\infty)$,
we have $\mscr{B}(s_i, e)=\mscr{B}_{s_i}(\infty)$ for any $i\in I$ and $e\in \{\pm 1\}$.
Let $\tw=(i_{1}, \cdots, i_{l})\in R(w)$ be a reduced expression.
For $l\geq 2$, we can assume that,
for $w_{\geq 2}:=s_{i_2}\cdots s_{i_l}\in W$, $\tw_{\geq 2}=(i_2, i_3, \cdots, i_l)\in R(w_{\geq 2})$ and 
$\mbf{c}_{\geq 2}=(c_2, c_3, \cdots, c_l)\in \mbb{Z}_{\geq 0}^{l-1}$,
we have 
\[b_{\geq 2}:=F(\mbf{c}_{\geq 2}, \tw_{\geq 2})\mod q\mscr{L}(\infty)\in \mscr{B}_{w_{\geq 2}^{-1}}(\infty)\]
by the induction hypothesis.
Note that $\eit{i_1}^{\max}b_{\geq 2}\in \mscr{B}_{w_{\geq 2}^{-1}}(\infty)$ by \lemref{lem:Demazure} (2).
Since $*(\mscr{B}_{w}(\infty))=\mscr{B}_{w^{-1}}(\infty)$, we have $\fit{i_{1}}^{*\vphi_{i_{1}}(b_{\geq 2})}\eit{i_1}^{\max}b_{\geq 2}\in \mscr{B}_{w^{-1}}(\infty)$.
In view of \cite[Theorem 3.3.2]{Kas:Demazure}, it suffices to prove $\fit{i_{1}}(\fit{i_{1}}^{*\vphi_{i_{1}}(b_{\geq 2})}\eit{i_1}^{\max}b_{\geq 2})\in \mscr{B}_{w^{-1}}(\infty)$.
We consider the image of it under the Kashiwara embedding $\Psi_{i_{1}}\colon \mscr{B}(\infty)\to \mscr{B}(\infty)\otimes \mscr{B}_{i_{1}}$
and show that the image of it is contained in $\mscr{B}_{w_{\geq 2}^{-1}}(\infty)\otimes \mscr{B}_{i_{1}}$.
Since $\Psi_{i_{1}}$ is a strict embedding, we have 
\begin{align*}
\Psi_{i_{1}}(\fit{i_{1}}(\fit{i_{1}}^{*\vphi_{i_{1}}(b_{\geq 2})}\eit{i_1}^{\max}b_{\geq 2}))
=&\fit{i_{1}}(\Psi_{i_{1}}(\fit{i_{1}}^{*\vphi_{i_{1}}(b_{\geq 2})}\eit{i_1}^{\max}b_{\geq 2}))\\
=&\fit{i_{1}}(\eit{i_1}^{\max}b_{\geq 2}\otimes \fit{i_{1}}^{\vphi_{i_{1}}(b_{\geq 2})}b_{i_{1}}).
\end{align*}
If $\vphi_{i_{1}}(\eit{i_{1}}^{\max}b_{\geq 2})\leq \vep_{i_{1}}(\fit{i_{1}}^{\vphi_{i_{1}}(b_{\geq 2})})$,
we have $\fit{i_{1}}(\eit{i_1}^{\max}b_{\geq 2}\otimes \fit{i_{1}}^{\vphi_{i_{1}}(b_{\geq 2})}b_{i_{1}})=\eit{i_1}^{\max}b_{\geq 2}\otimes \fit{i_{1}}^{\vphi_{i_{1}+1}(b_{\geq 2})}b_{i_{1}}$.
This is contained in $\mscr{B}_{w_{\geq 2}^{-1}}(\infty)\otimes \mscr{B}_{i_{1}}$.

Suppose $\vphi_{i_{1}}(\eit{i_{1}}^{\max}b_{\geq 2})> \vep_{i_{1}}(\fit{i_{1}}^{\vphi_{i_{1}}(b_{\geq 2})})$.
This means that $\vep_{i_{1}}(b_{\geq 2})>0$.
Let $S$ be the $i_{1}$-string which contains $b_{\geq 2}$ and $\eit{i_{1}}^{\max}(b_{\geq 2})$.
Note that $b_{\geq 2}\neq \eit{i_{1}}^{\max}(b_{\geq 2})$.
Since both $b\geq 2$ and $\eit{i_{1}}^{\max}b_{\geq 2}$ are in $\mscr{B}_{w_{\geq 2}^{-1}}(\infty)$,
we have $S\cap \mscr{B}_{w_{\geq 2}^{-1}}(\infty)=S$ by \cite[Proposition 3.3.4]{Kas:Demazure}.
Hence we get the assertion.
\begin{NB}
The following proof does not work (10/10/15).
We follow the argument in \cite[4.2 Example]{Saito:PBW}.
For a reduced expression $(i_1, i_2, \cdots, i_l)\in R(w)$,  we note that if we have 
$\Psi_{(i_1, i_2, \cdots, i_r)}(b)=u_{\infty}\otimes \fit{i_r}^{m_r}b_{i_r}\otimes \cdots \otimes \fit{i_{1}}^{m_1}b_{i_1}$
for some $m_1, m_2, \cdots, m_r$, then we have 
$\eit{i_r}^{*\max}\cdots \eit{i_1}^{*\max}(b)=u_\infty$ and $b\in \mscr{B}_{w^{-1}}(\infty)$
by the description of the Kashiwara embedding.
Conversely the image of $b\in \mscr{B}_{w^{-1}}(\infty)$ under $\Psi_{(i_{1}, \cdots, i_{r})}$ has an above form.
So we show 
$\Psi_{(i_1, i_2, \cdots, i_r)}(b)=u_{\infty}\otimes \fit{i_r}^{m_r}b_{i_r}\otimes \cdots \otimes \fit{i_1}^{m_1}b_{i_1}$
for any $b\in \mscr{B}(w, e=1)$.

We prove by the induction $l=\ell(w)$.
For $l=1$ case, by the constructions of $\mscr{B}(s_i, e)$ and $\mscr{B}_{s_i}(\infty)$,
we have $\mscr{B}(s_i, e)=\mscr{B}_{s_i}(\infty)$ for any $i\in I$ and $e\in \{\pm 1\}$.
For $l\geq 2$, we can assume that,
for $w_{\geq 2}:=s_{i_2}\cdots s_{i_l}\in W$, $\tw_{\geq 2}=(i_2, i_3, \cdots, i_l)\in R(w_{\geq 2})$ and 
$\mbf{c}_{\geq 2}=(c_2, c_3, \cdots, c_l)\in \mbb{Z}_{\geq 0}^{l-1}$,
we have 
\[b(\mbf{c}_{\geq 2}, \tw_{\geq 2})=F(\mbf{c}_{\geq 2}, \tw_{\geq 2})\mod q\mscr{L}(\infty)\in \mscr{B}_{w_{\geq 2}^{-1}}(\infty)\]
by the induction hypothesis.
Note that $\eit{i_1}^{\max}b(\mbf{c}_{\geq 2}, \tw_{\geq 2})\in \mscr{B}_{w_{\geq 2}^{-1}}(\infty)$.
Therefore 
\[\Psi_{(i_{2}, \cdots, i_{r})}(\eit{i_1}^{\max}b(\mbf{c}_{\geq 2}, \tw_{\geq 2}))=u_{\infty}\otimes \fit{i_{l}}^{m'_{l}}b_{i_{l}}\otimes \cdots \otimes \fit{i_{2}}^{m'_{2}}b_{i_{2}}\]
for some $m'_{2}, \cdots, m'_{l}\in \mbb{Z}_{\geq 0}$.
Now we consider
\begin{align*}
b(\mbf{c}, \tw)\;&=F(\mbf{c}, \tw) \mod q\mscr{L}(\infty)=\fit{i_1}^{c_1}\Lambda_{i_1}b(\mbf{c}_{\geq 2}, \tw_{\geq 2})\\
\;&=\fit{i_1}^{c_1}\fit{i_1}^{*\vphi_{i_1}(b(\mbf{c}_{\geq 2}, \tw_{\geq 2}))}\eit{i_1}^{\max}b(\mbf{c}_{\geq 2}, \tw_{\geq 2}).
\end{align*}
We have 
\begin{align*}
\;&\Psi_{i_l}\circ \cdots \circ \Psi_{i_1}(b(\mbf{c}; \tw)) \\
=\;&\fit{i_1}^{c_1}\Psi_{i_l}\circ \cdots \circ \Psi_{i_1}(\fit{i_1}^{*\varphi_{i_1}(b(\mbf{c}_{\geq 2}; \tw_{\geq 2}))}\eit{i_1}^{\max}b(\mbf{c}_{\geq 2}; \tw_{\geq 2})) \\
=\;&\fit{i_1}^{c_1}((\Psi_{i_l}\circ \cdots \circ \Psi_{i_2})(\eit{i_1}^{\max}b(\mbf{c}_{\geq 2}; \tw_{\geq 2}))\otimes \fit{i_1}^{\varphi_{i_1}(b(\mbf{c}_{\geq 2}; \tw_{\geq 2}))}b_{i_1}).
\end{align*}
Hence the above is equal to
\[\fit{i_1}^{c_1}((u_{\infty}\otimes \fit{i_l}^{m'_l}b_{i_l}\otimes \cdots \otimes \fit{i_2}^{m'_2}b_{i_2})\otimes \fit{i_1}^{\varphi_{i_1}(b(\mbf{c}_{\geq 2}; \tw_{\geq 2}))}b_{i_1}).\]
Note that $\vphi_{i_{1}}(u_{\infty})=\vep_{i_{1}}(u_{\infty})+\wt_{i_{1}}(u_{\infty})=0$
and $\vep_{i_{1}}(\fit{i_l}^{m'_l}b_{i_l}\otimes \cdots \otimes \fit{i_2}^{m'_2}b_{i_2}\otimes \fit{i_1}^{\varphi_{i_1}(b(\mbf{c}_{\geq 2}; \tw_{\geq 2}))}b_{i_1})=\vep_{i_{1}}(u_{\infty}\otimes\fit{i_l}^{m'_l}b_{i_l}\otimes \cdots \otimes \fit{i_2}^{m'_2}b_{i_2}\otimes \fit{i_1}^{\varphi_{i_1}(b(\mbf{c}_{\geq 2}; \tw_{\geq 2}))}b_{i_1})=\vep_{i_{1}}(\Lambda_{i_{1}}b(\mbf{c}_{\geq 2}, \tw_{\geq 2}))=0$.
Here we used \eqref{eq:vepmult} and \thmref{theo:Kasemb}.
So the above is equal to 
\[u_{\infty}\otimes \fit{i_1}^{c_1}(\fit{i_l}^{m'_l}b_{i_l}\otimes \cdots \otimes \fit{i_2}^{m'_2}b_{i_2}\otimes \fit{i_1}^{\varphi_{i_1}(b(\mbf{c}_{\geq 2}; \tw_{\geq 2}))}b_{i_1})\]
by \eqref{eq:fitn}.
Thus we obtain the claim for $l$.
\end{NB}
\end{proof}
\begin{NB}
In view of this theorem, 
we call  $\Uq^{-}(w, e=-1)$ the quantum unipotent subgroup $\mca{O}_{q}[N(w)]$ and we set $\mscr{B}(w)$
by $\mscr{B}(w, -1)$.
So we have the following embedding:
\[\mca{O}_{q}[N(w)]\hookrightarrow \mca{O}_{q}[\overline{N}_{w}].\]
This result can be considered as quantum analogue of \cite[Corollary 15.7]{GLS:KacMoody}.\end{NB}

\begin{NB2}To study the multiplicative properties of $\Uq^-(w)$, we relate it with the Demazure-Schubert filtration.
The following is a generalization of \cite[3.2 Lemma]{Cal:qflag}.
\begin{prop}
For $w\in W$ and $e\in \{\pm 1\}$, we have 
\[\mscr{B}(w, e)\subset \mscr{B}_{w^{-e}}(\infty).\]
\end{prop}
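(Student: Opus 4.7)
The plan is to realize the desired homomorphism as the composite
\[\Uq^-(w,e)\hookrightarrow \Uq^-(\mfr{g})\xrightarrow{\iota^*_{w^{-e}}} \mca{O}_q[\overline{N_{w^{-e}}}],\]
where the first arrow is the tautological inclusion and the second is the canonical projection. Both are algebra homomorphisms: $\iota^*_{w^{-e}}$ is one because $(\mbf{U}^-_{w^{-e}})^\bot$ is a two-sided ideal, which is in turn a consequence of $\mbf{U}^-_{w^{-e}}$ being a subcoalgebra. Hence only injectivity needs to be established. Since by \thmref{theo:unipgp} the subalgebra $\Uq^-(w,e)$ has $\{\Gup(b);b\in\mscr{B}(w,e)\}$ as a $\Qq$-basis, and $\iota^*_{w^{-e}}$ kills the basis vectors indexed outside $\mscr{B}_{w^{-e}}(\infty)$ while sending those inside $\mscr{B}_{w^{-e}}(\infty)$ to a linearly independent family, injectivity is equivalent to the purely combinatorial inclusion $\mscr{B}(w,e)\subset \mscr{B}_{w^{-e}}(\infty)$.

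Since $*(\mscr{B}(w,e))=\mscr{B}(w,-e)$ and $*(\mscr{B}_w(\infty))=\mscr{B}_{w^{-1}}(\infty)$, applying the $*$-involution reduces the claim to the case $e=+1$, namely $\mscr{B}(w,+1)\subset \mscr{B}_{w^{-1}}(\infty)$. My plan is to proceed by induction on $\ell(w)$. The base $\ell(w)=1$ is immediate since $\mscr{B}(s_i,+1)$ and $\mscr{B}_{s_i}(\infty)$ both coincide with the $\fit{i}$-orbit of $u_\infty$. For the induction step, fix a reduced expression $\tw=(i_1,\ldots,i_l)\in R(w)$, let $\tw_{\geq 2}=(i_2,\ldots,i_l)$ and $w_{\geq 2}=s_{i_2}\cdots s_{i_l}$, and write a typical element of $\mscr{B}(w,+1)$ as $b(\mbf{c},\tw)=\fit{i_1}^{c_1}\fit{i_1}^{*\vphi_{i_1}(b_{\geq 2})}\eit{i_1}^{\max}b_{\geq 2}$, where by induction $b_{\geq 2}:=b(\mbf{c}_{\geq 2},\tw_{\geq 2})\in \mscr{B}_{w_{\geq 2}^{-1}}(\infty)$. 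By \propref{lem:Demazure}(2), $\eit{i_1}^{\max}b_{\geq 2}$ still lies in $\mscr{B}_{w_{\geq 2}^{-1}}(\infty)$.

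To finish, I plan to transport the problem through the Kashiwara embedding $\Psi_{i_1}\colon \mscr{B}(\infty)\hookrightarrow \mscr{B}(\infty)\otimes \mscr{B}_{i_1}$ of \thmref{theo:Kasemb}: the strictness of $\Psi_{i_1}$ converts the outer $\fit{i_1}$ into an operator on the tensor product, whose effect is controlled by the crystal tensor product rule. One then splits into two cases according to whether $\vphi_{i_1}(\eit{i_1}^{\max}b_{\geq 2})\leq\vep_{i_1}(\fit{i_1}^{\vphi_{i_1}(b_{\geq 2})}b_{i_1})$ or the reverse inequality holds. In the first case the action is harmless, landing automatically in $\mscr{B}_{w_{\geq 2}^{-1}}(\infty)\otimes \mscr{B}_{i_1}$, hence in $\mscr{B}_{w^{-1}}(\infty)$ via the characterization of the Demazure crystal through $\Psi_{i_1}$. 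The principal obstacle is the second case, where the $\fit{i_1}$ genuinely moves within the $i_1$-string of $\eit{i_1}^{\max}b_{\geq 2}$; here the key input will be the $i_1$-string convexity of the Demazure piece (\cite[Proposition 3.3.4]{Kas:Demazure}), which ensures the whole string sits inside $\mscr{B}_{w_{\geq 2}^{-1}}(\infty)$ as soon as it meets that set in two points. This convexity, combined with the fact that $w^{-1}=w_{\geq 2}^{-1}s_{i_1}$ has a reduced expression ending in $s_{i_1}$, is exactly what closes the induction.
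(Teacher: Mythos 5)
Your proposal is correct and takes essentially the same route as the paper's own argument (given inside the proof of \thmref{theo:emb}): reduction to $e=+1$ via the $*$-involution, induction on $\ell(w)$ through the decomposition $b(\mbf{c},\tw)=\fit{i_1}^{c_1}\fit{i_1}^{*\vphi_{i_1}(b_{\geq 2})}\eit{i_1}^{\max}b_{\geq 2}$, transport through the Kashiwara embedding $\Psi_{i_1}$ with the same two-case tensor-product-rule analysis, and the string property of \cite[Proposition 3.3.4]{Kas:Demazure} in the hard case. The only cosmetic difference is that you push all $c_1$ powers of $\fit{i_1}$ through $\Psi_{i_1}$ at once, whereas the paper first reduces to a single application of $\fit{i_1}$ by invoking \cite[Theorem 3.3.2]{Kas:Demazure}.
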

\begin{proof}
We prove the case with $e=1$.
For the case $e=-1$, we obtain the claim by applying $*$-involution and uses the properties.
We follow the argument in \cite[4.2 Example]{Saito:PBW}.
To prove the statement, we consider the image of the Kashiwara embedding along $\tw=(i_1, i_2, \cdots, i_l)$.
It suffices for us that $\Psi_{i_l}\circ \cdots \circ \Psi_{i_1}(b(\mbf{c}; \tw))=u_{\infty}\otimes \fit{i_l}^{m_l}b_{i_l}\otimes \cdots \fit{i_1}^{m_1}b_{i_1}$
for some non-negative integers $m_1, \cdots, m_l$.
We prove by the induction on $l=l(w)$.
For $l=1$ case, we have $\mscr{B}(s_{i}, e)=\mscr{B}_{s_i}$ by the constructions.
We take $\tw=(i_1, i_2, \cdots, i_l)\in R(w)$.
By the induction hypothesis, we have that the statement for $w_{\geq 2}:=s_{i_2}\cdots s_{i_l}$ and $\tw_{\geq 2}=(i_2, \cdots, i_l)\in R(w_{\geq 2})$.
In particular, we have 
$b(\mbf{c}_{\geq 2}; \tw_{\geq 2}):=F_{i_2}^{(c_2)}\cdots T_{i_2}\cdots T_{i_l}(F_{i_l}^{(c_l)}) \mod q\mscr{L}(\infty)\in \mscr{B}_{w_{\geq 2}^{-1}}(\infty)$.
We consider an element
\[b(\mbf{c}; \tw)=\fit{i_1}^{c_1}\fit{i_1}^{*\varphi_{i_1}(b(\mbf{c}_{\geq 2}; \tw_{\geq 2}))}\eit{i_1}^{\max}b(\mbf{c}_{\geq 2}; \tw_{\geq 2}).\]
Here we have $0\neq \eit{i_1}^{\max}b(\mbf{c}_{\geq 2}; \tw_{\geq 2})\in \mscr{B}_{w_{\geq 2}^{-1}}(\infty)$
by \cite[Proposition 3.2.5 (iii)]{Kas:Demazure}.
\begin{align*}
&\Psi_{i_l}\circ \cdots \circ \Psi_{i_1}(b(\mbf{c}; \tw)) \\
=&\fit{i_1}^{c_1}\Psi_{i_l}\circ \cdots \circ \Psi_{i_1}(\fit{i_1}^{*\varphi_{i_1}(b(\mbf{c}_{\geq 2}; \tw_{\geq 2}))}\eit{i_1}^{\max}b(\mbf{c}_{\geq 2}; \tw_{\geq 2})) \\
=&\fit{i_1}^{c_1}((\Psi_{i_l}\circ \cdots \circ \Psi_{i_2})(\eit{i_1}^{\max}b(\mbf{c}_{\geq 2}; \tw_{\geq 2}))\otimes \fit{i_1}^{\varphi_{i_1}(b(\mbf{c}_{\geq 2}; \tw_{\geq 2}))}b_{i_1}) \\
=&\fit{i_1}^{c_1}((u_{\infty}\otimes \fit{i_l}^{m'_l}b_{i_l}\otimes \cdots \otimes \fit{i_2}^{m'_2}b_{i_2})\otimes \fit{i_1}^{\varphi_{i_1}(b(\mbf{c}_{\geq 2}; \tw_{\geq 2}))}b_{i_1})\\
=&u_{\infty}\otimes \fit{i_1}^{c_1}(\fit{i_l}^{m'_l}b_{i_l}\otimes \cdots \otimes \fit{i_2}^{m'_2}b_{i_2}\otimes \fit{i_1}^{\varphi_{i_1}(b(\mbf{c}_{\geq 2}; \tw_{\geq 2}))}b_{i_1})
\end{align*}
Hence the claim holds.
\end{proof}
\end{NB2}

\begin{NB}
\subsection{Feigin map as quantization of Gei\ss-Leclerc-Schr\"{o}er's $\varphi$-map}
In this subsection, we introduce Feigin map $\chi_{\tw}$ following \cite{Ber:Fei} and \cite{Rei:Fei}.
This is $q$-commutative ``specialzation'' of $q$-character or $q$-shuffle algebra 
\subsubsection{}
Let $w\in W$ and $\tw=(i_1, i_2, \cdots, i_l)\in R(w)$.
We define $\chi_{\tw}\colon \Uq^-\to \Qq[Z_1, Z_2, \cdots, Z_n]$ as follows.
Let $r\colon \Uq^-\to \Uq^-\otimes \Uq^-$ be the twisted coproduct.
Let $r^{\otimes l}:=(r\otimes 1^{\otimes l-2})(r\otimes 1^{\otimes l-2})\cdots r\colon \Uq^-\to (\Uq^-)^{\otimes l}$.
If we define an $\Qq$-algebra structure by 
\[(x_1\otimes x_2 \otimes \cdots \otimes x_l)(y_1\otimes y_2 \otimes \cdots \otimes y_l):=q^{-\sum_{k<l}(\wt(x_l),\wt( y_l)))} (x_1y_1\otimes x_2y_2 \otimes \cdots \otimes x_ly_l),\]
then $r^{\otimes l}$ is $\Qq$-algebra homomorphism.
\subsubsection{}
\[\mca{J}_{w}:=\{u\in \Uq^-(\mfr{g}) ; (u, \mbf{U}^-_{q, w})=0 \}=\bigoplus_{b\in B(\infty)\setminus B_w(\infty)}\Qq\Gup(b)\]
Since we have $r(\mbf{U}^-_{q, w})\subset \mbf{U}^-_{q, w}\otimes \mbf{U}^-_{q, w}$.
Then $\mca{J}_{w}$ is a two-sided ideal in $\Uq^-(\mfr{g})$.
\end{NB}

\section{Construction of initial seed: Quantum flag minors}\label{sec:qflag}
In this section, we give a construction of the quantum initial seed in Conjecture \ref{conj:qconj}
which corresponds to the initial seed in \cite{GLS:KacMoody}.
We only consider the $e=-1$ case, but the other case follows by applying the $\ast$-involution.

\subsection{Quantum generalized minors}
\subsubsection{}
We first define a quantum generalized minor.
This is a $q$-analogue of a (restricted) generalized minor 
$D_{w\lambda}=D_{w\lambda, \lambda}$ which is defined in \cite[7.1]{GLS:KacMoody}.

\begin{defn}[quantum generalized minor]\label{def:qminor}
For $\lambda\in P_+$ and $w\in W$, let
\[\Delta_{w\lambda}=\Delta_{w\lambda, \lambda}:=\jbar_\lambda(u_{w\lambda}).\]
We call it a \emph{quantum generalized minor}.
When $\lambda$ is a fundamental weight, we call it a \emph{quantum flag minor}.
\end{defn}
By its definition, it is given by a matrix coefficient as
\[(\Delta_{w\lambda, \lambda}, P)=(u_{w\lambda}, Pu_\lambda).\]
\subsubsection{}
The following result for extremal vectors is well-known. 
\begin{lem}[{\cite[Lemma 8.6]{Nak:CBMS}}]\label{lem:extremal}
For $\lambda, \mu\in P_+$ and $w\in W$,
we have 
\[\Phi(\lambda, \mu)(u_{w(\lambda+\mu)})=u_{w\lambda}\otimes u_{w\mu}.\]
It follows that  
\[q_{\lambda, \mu}(u_{w\lambda}\otimes u_{w\mu})=u_{w\lambda+w\mu}.\]
Therefore we get 
\[q^{(w\mu-\mu, \lambda)}\Delta_{w\lambda}\Delta_{w\mu}=\Delta_{w(\lambda+\mu)}\]
by \propref{prop:prod_rep}.
In particular, $\Delta_{w, \lambda}$ is strongly real for any $w\in W$ and $\lambda\in P_{+}$.
\end{lem}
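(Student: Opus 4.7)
The plan is to prove the four assertions in sequence, with the technical core being the behaviour of $S_i^{-1}$ on tensor products of $i$-highest-weight vectors.

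For the first equality $\Phi(\lambda,\mu)(u_{w(\lambda+\mu)}) = u_{w\lambda}\otimes u_{w\mu}$, I would induct on $\ell(w)$. The base case $w = e$ is built into the definition of $\Phi(\lambda,\mu)$. For the inductive step, choose a reduced expression $w = s_i w'$ with $\ell(w')=\ell(w)-1$, so that $m' := \bracket{h_i, w'\lambda}$ and $m'' := \bracket{h_i, w'\mu}$ are both non-negative. By \eqref{eq:extreme} we have $u_{w(\lambda+\mu)} = S_i^{-1} u_{w'(\lambda+\mu)}$, and since $\Phi(\lambda,\mu)$ is a $\Uq(\mfr{g})$-module homomorphism it commutes with $S_i^{-1}$; hence the induction reduces to proving
\[ S_i^{-1}(u_{w'\lambda}\otimes u_{w'\mu}) = u_{s_i w'\lambda}\otimes u_{s_i w'\mu}. \]
The tensor $u_{w'\lambda}\otimes u_{w'\mu}$ is annihilated by $e_i$ and has $i$-weight $m'+m''$, so \eqref{eq:refl} identifies $S_i^{-1}$ acting on it with $f_i^{(m'+m'')}$. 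Expanding via the coproduct identity
\[ \Delta(f_i^{(n)}) = \sum_{k=0}^{n} q_i^{-k(n-k)} f_i^{(n-k)}t_i^{k}\otimes f_i^{(k)} \]
at $n=m'+m''$, and using that $f_i^{(j)}u_{w'\lambda}=0$ for $j>m'$ and $f_i^{(j)}u_{w'\mu}=0$ for $j>m''$, only the $k=m''$ term survives; its $q$-power collapses to $q_i^{-m''m' + m''m'}=1$, yielding the desired identity.

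The second equality follows by duality. The weight space $V(\lambda+\mu)_{w(\lambda+\mu)}$ is one-dimensional, spanned by $u_{w(\lambda+\mu)}$, so $q_{\lambda,\mu}(u_{w\lambda}\otimes u_{w\mu}) = c\cdot u_{w(\lambda+\mu)}$ for some $c \in \Qq$. Pairing with $u_{w(\lambda+\mu)}$ and using that $q_{\lambda,\mu}$ is the adjoint of $\Phi(\lambda,\mu)$ together with the first equality yields $c = (u_{w\lambda},u_{w\lambda})_\lambda(u_{w\mu},u_{w\mu})_\mu = 1$, since the extremal vectors coincide with canonical-basis elements and are orthonormal. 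The third equality is then an immediate application of \propref{prop:prod_rep} with $b_1 = u_{w\lambda}$ and $b_2 = u_{w\mu}$. Strong reality of $\Delta_{w\lambda}$ follows by iterating the third equality: an easy induction on $m\ge 1$ produces integers $N_m$ with $q^{N_m}\Delta_{w\lambda}^{m}=\Delta_{w(m\lambda)}$, and $\Delta_{w(m\lambda)}=\Gup(\jbar_{m\lambda}(u_{w(m\lambda)}))\in \mbf{B}^{\up}$.

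The main technical point is the $q$-power bookkeeping in the tensor-product computation of $S_i^{-1}$; once the cancellation in the coproduct expansion is verified, the remaining assertions reduce to formal manipulations with adjointness and \propref{prop:prod_rep}.
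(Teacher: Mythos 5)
Your proof is correct and takes essentially the paper's route: the paper simply cites the first identity ({\cite[Lemma 8.6]{Nak:CBMS}}) and then derives the remaining claims exactly as you do, via the adjunction defining $q_{\lambda,\mu}$, \propref{prop:prod_rep}, and iteration for strong reality. Your inductive $\mfr{sl}_2$/coproduct computation of $S_{i}^{-1}$ on $u_{w'\lambda}\otimes u_{w'\mu}$ (with the $q$-power cancellation $q_i^{-m''m'}\cdot q_i^{m'm''}=1$) correctly supplies the details of the cited identity, so nothing is missing.
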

\subsubsection{}
We describe extremal vectors in terms of the PBW basis.
This is a straight forward generalization of \cite{Cal:unity}.
For $1\leq k\leq l$, we define the following operations as in \cite[9.8]{GLS:KacMoody},
\begin{align*}
	k^-&:=\max(0, \{1\leq s\leq k-1; i_{s}=i_k\}),\\
	k_{\max}&:=\max\{1\leq s\leq l; i_s=i_k\}. 
\end{align*}
\begin{prop}\label{prop:extremePBW1}
For $0\leq k\leq l$, we define $\mbf{n}_k$ by 
\[\mbf{n}_{k}(j):=\begin{cases}1 & \text{~if~} i_j=i_k,  j\leq k, \\ 0 & \text{~otherwise}.\end{cases}\]
If $i=i_k$ (here we understand  $i=i_k$ holds for any $i$ if $k=0$), we have 
$F_{e=-1}(m\mbf{n}_{k}; \tw)u_{m\varpi_{i}}=u_{s_{i_1}\cdots s_{i_{k}}m\varpi_{i}}$
for $m\geq 1$.
\end{prop}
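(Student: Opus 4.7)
The plan is to prove the identity by induction on $k$, or equivalently on the number $p$ of positions $j\in\{1,\dots,k\}$ with $i_j=i$. The key structural observation is that if $k^-$ denotes the largest index $<k$ with $i_{k^-}=i$ (with $k^-=0$ and $\mbf{n}_0=0$ by convention), then $\mbf{n}_k=\mbf{n}_{k^-}+\mbf{e}_k$, where $\mbf{e}_k$ is supported at position $k$. Since in the $e=-1$ convention we multiply from the largest index downward, this gives the recursion
\[
F_{-1}(m\mbf{n}_k;\tw)\;=\;F_{-1}(m\beta_k)\cdot F_{-1}(m\mbf{n}_{k^-};\tw).
\]
As $i_{k^-}=i$, the induction hypothesis applies to $k^-$ with the same fundamental weight $\varpi_i$ and yields $F_{-1}(m\mbf{n}_{k^-};\tw)u_{m\varpi_i}=u_{s_{i_1}\cdots s_{i_{k^-}}m\varpi_i}=S_{i_1}^{-1}\cdots S_{i_{k^-}}^{-1}u_{m\varpi_i}$ by \eqref{eq:extreme}. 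It then suffices to show that multiplying by $F_{-1}(m\beta_k)=T_{i_1}^{-1}\cdots T_{i_{k-1}}^{-1}(f_{i_k}^{(m)})$ produces $u_{s_{i_1}\cdots s_{i_k}m\varpi_i}$.

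The central tool is the adjoint relation $S_i x S_i^{-1}=T_i(x)$ from \eqref{eq:braidadj}, which rearranges as $S_i^{-1}x = T_i^{-1}(x)S_i^{-1}$ on integrable modules. I will use this to slide the $T^{-1}$'s past the $S^{-1}$'s: starting from the right of $S_{i_1}^{-1}\cdots S_{i_k}^{-1}u_{m\varpi_i}$, one has $S_{i_k}^{-1}u_{m\varpi_i}=f_i^{(m)}u_{m\varpi_i}$ (since $i_k=i$ and $\langle h_i,m\varpi_i\rangle=m$), and then each $S_{i_j}^{-1}$ for $k^-+1\le j\le k-1$ passes through $f_i^{(m)}$ at the cost of converting it to $T_{i_j}^{-1}(f_i^{(m)})$. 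The crucial point that makes the argument terminate cleanly is that for $k^-+1\le j\le k-1$ we have $i_j\ne i$ by the choice of $k^-$, and an integrable $\mathfrak{sl}_2$-module has any vector killed by $e_{i_j}$ with $\langle h_{i_j},m\varpi_i\rangle=0$ automatically killed by $f_{i_j}$ as well, so $u_{m\varpi_i}$ spans a trivial $\Uq(\mfr{g})_{i_j}$-representation and $S_{i_j}^{-1}u_{m\varpi_i}=u_{m\varpi_i}$. Hence
\[
S_{i_{k^-+1}}^{-1}\cdots S_{i_k}^{-1}u_{m\varpi_i}\;=\;T_{i_{k^-+1}}^{-1}\cdots T_{i_{k-1}}^{-1}(f_i^{(m)})\,u_{m\varpi_i}.
\]
Applying the remaining $S_{i_1}^{-1}\cdots S_{i_{k^-}}^{-1}$ on the left and using the adjoint relation once more to move them past the $T^{-1}$'s assembles the factor $T_{i_1}^{-1}\cdots T_{i_{k-1}}^{-1}(f_i^{(m)})=F_{-1}(m\beta_k)$ to the left of $S_{i_1}^{-1}\cdots S_{i_{k^-}}^{-1}u_{m\varpi_i}$, which by the induction hypothesis is $F_{-1}(m\mbf{n}_{k^-};\tw)u_{m\varpi_i}$, giving the desired formula.

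The part I anticipate as the most delicate is the verification that $S_{i_j}^{-1}u_{m\varpi_i}=u_{m\varpi_i}$ when $i_j\ne i$; although this reduces to a standard $\mathfrak{sl}_2$-fact, it is essential that $u_{m\varpi_i}$ remains the honest highest weight vector at each stage, which is why the definition of $k^-$ (ensuring no intervening occurrence of $i$) is exactly what allows the $S$-operators to pass through without creating extra $f_i^{(m)}$-factors. Once this lemma is in hand, the proof is essentially an orchestrated commutation using only \eqref{eq:braidadj}, \eqref{eq:extreme}, and the defining description $u_{s_iw\lambda}=f_i^{(m)}u_{w\lambda}$ from Section \ref{sec:Demazure_module}.
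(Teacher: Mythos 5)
Your proposal is correct and follows essentially the same route as the paper's proof: induction on $k$ via the index $k^-$, the factorization $F_{-1}(m\mbf{n}_k,\tw)=T_{i_1}^{-1}\cdots T_{i_{k-1}}^{-1}(f_{i_k}^{(m)})F_{-1}(m\mbf{n}_{k^-},\tw)$, the adjoint relation $T_i(x)=S_ixS_i^{-1}$, the fact that $S_{i_j}^{\pm1}$ fixes $u_{m\varpi_i}$ for $i_j\neq i$, and the extremal-vector formula $u_{w\lambda}=S_{i_1}^{-1}\cdots S_{i_l}^{-1}u_\lambda$. The only difference is cosmetic: you run the computation from $S_{i_1}^{-1}\cdots S_{i_k}^{-1}u_{m\varpi_i}$ toward the PBW side, whereas the paper simplifies the PBW side toward the extremal vector.
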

\begin{proof}
We follow the argument in \cite[2.1 Lemma]{Cal:unity}.
We prove the assertion by an induction on $k$.
The assertion is trivial when $k=0$.
Note that 
\[F_{-1}(m\mbf{n}_k, \tw)=T_{i_1}^{-1}\cdots T_{i_{k-1}}^{-1}(F_{i_k}^{(m)})F_{-1}(m\mbf{n}_{k^-}, \tw).\]
Therefore we have 
\[F_{-1}(m\mbf{n}_k, \tw)u_{m\varpi}=T_{i_1}^{-1}\cdots T_{i_{k-1}}^{-1}(F_{i_k}^{(m)})u_{s_{i_1}\cdots s_{i_{k^-}}m\varpi_i}\]
{by the induction hypothesis}.
By \eqref{eq:braidadj}, this is equal to 
\begin{align*}
&S_{i_1}^{-1}\cdots S_{i_{k-1}}^{-1}(F_{i_k}^{(m)})S_{i_{k-1}}\cdots S_{i_{1}}S_{i_1}^{-1}\cdots S_{i_{k^-}}^{-1}u_{m\varpi_i} \\
=\;&S_{i_1}^{-1}\cdots S_{i_{k-1}}^{-1}(F_{i_k}^{(m)})S_{i_{k-1}}\cdots S_{i_{k^-+1}}u_{m\varpi_i}.
\end{align*}
Since none of $i_{k^-+1}, \cdots, i_{k-1}$ is $i$, this is equal to 
\[S_{i_1}^{-1}\cdots S_{i_{k-1}}^{-1}(F_{i_k}^{(m)})u_{m\varpi_i}.\]
By \eqref{eq:extreme}, this is nothing but $S_{i_1}^{-1}\cdots S_{i_k}^{-1}u_{m\varpi_i}$.
Therefore the assertion also holds for $k$.
\begin{NB}\begin{align*}
& F_{-1}(m\mbf{n}_{k}; \tw)u_{m\varpi_i}=T_{i_1}^{-1}\cdots T_{i_{k-1}}^{-1}(F_{i_k}^{(m)})F_{-1}(m\mbf{n}_{k^-}; \tw)u_{m\varpi_i} \\
=\;& T_{i_1}^{-1}\cdots T_{i_{k-1}}^{-1}(F_{i_k}^{(m)})u_{s_{i_1}\cdots s_{i_{k^-}}(m\varpi_i)} ~(\text{by induction hypothesis})\\
=\;& S_{i_1}^{-1}\cdots S_{i_{k-1}}^{-1}(F_{i_k}^{(m)})S_{i_{k-1}}\cdots S_{i_1}S_{i_1}^{-1}\cdots S_{i_{k^-}}^{-1}u_{m\varpi_{i}}\\
=\;& S_{i_1}^{-1}\cdots S_{i_{k-1}}^{-1}(F_{i_k}^{(m)})S_{i_{k-1}}\cdots S_{i_{k^-+1}}u_{m\varpi_{i}} \\
=\;&S_{i_1}^{-1}\cdots S_{i_{k-1}}^{-1}(F_{i_k}^{(m)})u_{m\varpi_{i}} \\
=\;& S_{i_1}^{-1}\cdots S_{i_{k-1}}^{-1}S_{i_k}^{-1}u_{m\varpi_{i}}=u_{s_{i_1}\cdots s_{i_k}(m\varpi_{i})}
\end{align*}
Then we obtain the claim.
\end{NB}
\end{proof}
By the above proposition, we have 
$\pi_{m\varpi_{i_k}}(b_{-1}(m\mbf{n}_{k}; \tw))\neq 0$ for any $1\leq k\leq l$ and $m\geq 1$.
\begin{NB}
Hence 
$\Gup(\jbar_{\varpi_{i_k}}(u_{s_{i_1}\cdots s_{i_k}\varpi_{i}}))=b_{-1}(\mbf{c}_{k}; \tw)$.
\end{NB}
Hence $\jbar_{m\varpi_{i_k}}(u_{s_{i_1}\cdots s_{i_k}m\varpi_{i_k}})=\Gup(b_{-1}(m\mbf{n}_k, \tw))$
for any $1\leq k\leq l$.
As a special case, we obtain the following result.
\begin{cor}
For $w\in W$ and fix $\tw\in R(w)$.
For $i\in I$, we set $\mbf{n}^i$ by $\mbf{n}_{k_{\max}}$ with $i_k=i$.
For $\lambda\in P_+$, we set $\mbf{n}^\lambda:=\sum_{i\in I}\lambda_i\mbf{n}^i\in \mbb{Z}_{\geq 0}^l$.
Then we have
\begin{equation}\Delta_{w\lambda}=\Gup(b_{-1}(\mbf{n}^\lambda, \tw)).\end{equation}
\end{cor}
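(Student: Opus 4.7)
The plan is to induct on $\sum_{i\in I}\lambda_i$. For the base case $\lambda=\varpi_i$, I specialize \propref{prop:extremePBW1} at $m=1$ with $k=k_{\max}(i)$, which gives $F_{-1}(\mbf{n}^i,\tw)u_{\varpi_i}=u_{s_{i_1}\cdots s_{i_{k_{\max}(i)}}\varpi_i}$. The elementary observation that $s_{i_j}\varpi_i=\varpi_i$ whenever $i_j\neq i$ (hence for every $j>k_{\max}(i)$) collapses the right-hand side to $u_{w\varpi_i}$. Therefore $\pibar_{\varpi_i}(b_{-1}(\mbf{n}^i,\tw))=u_{w\varpi_i}$ in $\mscr{B}(\varpi_i)$, and the bijection of \thmref{theo:grloop} identifies $b_{-1}(\mbf{n}^i,\tw)=\jbar_{\varpi_i}(u_{w\varpi_i})$, yielding $\Delta_{w\varpi_i}=\Gup(b_{-1}(\mbf{n}^i,\tw))$. (If $i$ does not occur in $\tw$, adopt the convention $\mbf{n}^i=0$; then $w\varpi_i=\varpi_i$ and both sides reduce to $1$.)

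For the inductive step, decompose $\lambda=\lambda'+\varpi_i$ with $\lambda_i\geq 1$. By \lemref{lem:extremal},
\[
\Delta_{w\lambda'}\cdot \Delta_{w\varpi_i}\;=\;q^{-(w\varpi_i-\varpi_i,\,\lambda')}\,\Delta_{w\lambda},
\]
so the product of the two dual canonical basis elements on the left is a $q^{\mbb{Z}}$-multiple of a single element of $\mbf{B}^{\up}$. Substituting the inductive hypothesis and the base case, this means
\[
\Gup(b_{-1}(\mbf{n}^{\lambda'},\tw))\cdot \Gup(b_{-1}(\mbf{n}^i,\tw))\;\in\;q^{\mbb{Z}}\,\mbf{B}^{\up}.
\]
In particular $b_{-1}(\mbf{n}^{\lambda'},\tw)$ and $b_{-1}(\mbf{n}^i,\tw)$ are compatible, and \corref{cor:dsum} (applied in the $e=-1$ setting, which follows verbatim from its $e=+1$ proof or by transport through the $*$-involution together with \lemref{lem:*upper}) identifies their $\circledast$-product as $b_{-1}(\mbf{n}^{\lambda'}+\mbf{n}^i,\tw)=b_{-1}(\mbf{n}^\lambda,\tw)$. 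Hence $\Delta_{w\lambda}$ and $\Gup(b_{-1}(\mbf{n}^\lambda,\tw))$ differ by a power of $q$; as both lie in $\mbf{B}^{\up}$, this forces the power to be trivial and the two elements to coincide.

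The principal obstacle is really just the bookkeeping of the $q$-exponent $-(w\varpi_i-\varpi_i,\lambda')$ and its counterpart produced by \corref{cor:dsum}; I sidestep it by exploiting the sharper observation that any two elements of $\mbf{B}^{\up}$ related by a scalar in $q^{\mbb{Z}}$ are necessarily equal. A conceptually cleaner alternative would be to prove the stronger identity $F_{-1}(\mbf{n}^\lambda,\tw)u_\lambda=u_{w\lambda}$ in $V(\lambda)$ directly by iterating \propref{prop:extremePBW1} through the tensor product embedding $\Phi(\lambda',\varpi_i)(u_{w\lambda})=u_{w\lambda'}\otimes u_{w\varpi_i}$ of \lemref{lem:extremal}, and then invoking \propref{prop:prod_cry} to descend to the crystal; this route bypasses the product-formula step but requires careful analysis of how PBW divided powers interact with $\Phi$, which is precisely what the approach above cleanly avoids.
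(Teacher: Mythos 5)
Your proof is correct and follows essentially the same route as the paper: the fundamental-weight case is extracted from \propref{prop:extremePBW1} (via the observation that the reflections after $k_{\max}$ fix $\varpi_i$), and the general case is obtained by combining \lemref{lem:extremal} with \corref{cor:dsum} and the fact that two elements of $\mbf{B}^{\up}$ differing by a power of $q$ must coincide; your induction on $\sum_i\lambda_i$ and your remark on transporting \corref{cor:dsum} to the $e=-1$ setting merely spell out steps the paper leaves implicit.
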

\begin{proof}
By \propref{prop:extremePBW1},  we have 
\begin{equation}\Delta_{wm\varpi_i}=\Gup(b_{-1}(m\mbf{n}^i, \tw)) \label{eq:fundweight}\end{equation}
for any $i\in I$.
Then by \eqref{eq:fundweight}, \lemref{lem:extremal} and \corref{cor:dsum}, we obtain the assertion.
\end{proof}
\subsection{Commutativity relations}
In this subsection, we prove that quantum generalized minors $\{\Delta_{w\lambda}\}$ $q$-commute with $\Gup(b)$
for $b\in \mscr{B}_{w}(\infty)$ in the quotient $\mca{O}_{q}[\overline{N_{w}}]$.
It means that $\Delta_{w\lambda}$ and $\Gup(b)$ $q$-commute up to $(\mbf{U}_{w}^{-})^{\bot}$.
 By \thmref{theo:emb}, they literally $q$-commute when $b\in \mscr{B}_{w}(w, -1)$.
We denote the projection of $\Gup(b)$ to $\mca{O}_{q}[\overline{N_{w}}]$ also by $\Gup(b)$ for brevity.
\subsubsection{}\label{sec:quasiR}
For the proof of certain $q$-commutativity relation, we need to use the quasi $\mca{R}$-matrix.
We recall its properties.

First we consider another coproduct $\overline{\Delta}$
defined by $(\overline{\phantom{x}}\otimes \overline{\phantom{x}})\circ \Delta \circ \overline{\phantom{x}}$.
We have an analogue of \lemref{lem:rcoproduct}:
\begin{subequations}
\begin{align}
	\overline{\Delta}(q^h)&=q^h\otimes q^h, \\
	\overline{\Delta}(e_i)&=e_i\otimes t_i +1\otimes e_i, \\
	\overline{\Delta}(f_i)&=f_i \otimes 1+t_i^{-1}f_i.
\end{align}
\end{subequations}
We consider the following completion 
\[\Uq^+(\mfr{g})\widehat{\otimes}\Uq^-(\mfr{g})=\bigoplus_{\xi\in Q}\prod_{\xi=\xi'+\xi''}\Uq^+(\mfr{g})_{\xi'}\otimes \Uq^-(\mfr{g})_{\xi''}.\]
Note that the counit $\vep$ extends to the completion.
In \cite[Chapter 4]{Lus:intro}, Lusztig has shown that there exists a unique intertwiner $\Xi \in \Uq^+(\mfr{g})\widehat{\otimes}\Uq^-(\mfr{g})$ 
such that 
\begin{equation}\Xi \circ \Delta(x)=\overline{\Delta}(x) \circ \Xi \text{~for any~} x\in \Uq(\mfr{g}), \label{eq:Rmatrix}\end{equation}
$\vep(\Xi)=1$, and $\Xi \circ \overline{\Xi}=\overline{\Xi}\circ \Xi =1$. 
We have an analogue of \lemref{lem:rcoproduct}:
\begin{equation}
\overline{\Delta}(x)=\sum q^{-(\wt x_{(1)}, \wt x_{(2)})}x_{(2)}t_{\wt x_{(1)}}\otimes x_{(1)}\label{eq:deltabar_r},\end{equation}
for any $x\in \Uq^-(\mfr{g})$ with $r(x)=\sum x_{(1)}\otimes x_{(2)}$.
In particular, we have 
\begin{equation}
\overline{\Delta}(x)(u_\lambda\otimes u_\mu)=\sum q^{-(\wt x_{(1)}, \wt x_{(2)})}x_{(2)}t_{\wt x_{(1)}}u_\lambda \otimes x_{(1)}u_\mu,
\end{equation}
for such $x\in \Uq^-(\mfr{g})$.
\subsubsection{}\label{sec:q-center}
\begin{prop}\label{prop:q-center}
For $b\in \mscr{B}_w(\mu)$ and $u_{w\lambda}\in \mscr{B}_w(\lambda)$,
we have the following $q$-commutation relation in $\mca{O}_{q}[\overline{N_{w}}]$:
\[\jbar_\lambda(u_{w\lambda})\Gup(\jbar_{\mu}(b))\simeq \Gup(\jbar_\mu(b))\jbar_\lambda(u_{w\lambda}).\]
\end{prop}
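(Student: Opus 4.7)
The plan is to use the product formula in \propref{prop:prod_rep} to translate the $q$-commutativity into a comparison of two tensor vectors in $V(\lambda+\mu)$, and then invoke the quasi-$\mca{R}$-matrix from \secref{sec:quasiR} together with the extremality of $u_{w\lambda}$ and the Demazure condition on $b$.

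First, applying \propref{prop:prod_rep} in both tensor orderings yields
\begin{align*}
q^{(\wt b - \mu,\,\lambda)}\,\jbar_\lambda(u_{w\lambda})\,\Gup(\jbar_\mu(b)) &= j_{\lambda+\mu}\bigl(q_{\lambda,\mu}(u_{w\lambda}\otimes \Gup_\mu(b))\bigr),\\
q^{(w\lambda - \lambda,\,\mu)}\,\Gup(\jbar_\mu(b))\,\jbar_\lambda(u_{w\lambda}) &= j_{\lambda+\mu}\bigl(q_{\mu,\lambda}(\Gup_\mu(b)\otimes u_{w\lambda})\bigr).
\end{align*}
Hence the required relation in $\mca{O}_q[\overline{N_w}]$ reduces to showing that, for an appropriate $M\in\mbb{Z}$, the element $q_{\lambda,\mu}(u_{w\lambda}\otimes \Gup_\mu(b)) - q^{M}\,q_{\mu,\lambda}(\Gup_\mu(b)\otimes u_{w\lambda})\in V(\lambda+\mu)$ maps under $j_{\lambda+\mu}$ into $(\mbf{U}_w^-)^{\bot}$. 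When $b = u_{w\mu}$, \lemref{lem:extremal} already supplies $q_{\lambda,\mu}(u_{w\lambda}\otimes u_{w\mu}) = u_{w(\lambda+\mu)} = q_{\mu,\lambda}(u_{w\mu}\otimes u_{w\lambda})$, which combined with the two displayed identities recovers the commutation of $\Delta_{w\lambda}$ and $\Delta_{w\mu}$ with the expected $q$-power and furnishes the base case.

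For general $b\in \mscr{B}_w(\mu)$, one has $\Gup_\mu(b)\in V_w(\mu)$, and the two tensor orderings are related by the quasi-$\mca{R}$-matrix $\Xi\in \Uq^+\widehat{\otimes}\,\Uq^-$ of \secref{sec:quasiR}, which satisfies $\Xi\circ\Delta(x) = \overline{\Delta}(x)\circ\Xi$ and, composed with the flip, yields a $\Uq(\mfr{g})$-module intertwiner between $V(\lambda)\otimes V(\mu)$ and $V(\mu)\otimes V(\lambda)$. The action of $\Xi$ on $u_{w\lambda}\otimes \Gup_\mu(b)$ produces the leading term $1\otimes 1$ plus higher-weight corrections $\Xi_\nu(u_{w\lambda}\otimes \Gup_\mu(b))$ with $\nu>0$; the extremality of $u_{w\lambda}$ kills many of these (those for which the $\Uq^+$-component of $\Xi_\nu$ cannot raise $u_{w\lambda}$ inside $V(\lambda)$), and one then proceeds by induction on the Demazure crystal, using $\eit{i}\mscr{B}_w(\mu)\subset \mscr{B}_w(\mu)\sqcup\{0\}$ from \secref{sec:Demazure_module}, to reduce the remaining contributions to the base case already settled.

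The main obstacle will be precisely to verify that the nontrivial higher corrections $\Xi_\nu(u_{w\lambda}\otimes\Gup_\mu(b))$ with $\nu>0$ land, under $j_{\lambda+\mu}$, in $(\mbf{U}_w^-)^{\bot}$. This requires simultaneous use of the extremality of $u_{w\lambda}$, the Demazure description of $V_w(\mu)$, and the compatibility of the twisted coproduct $r$ with $\mbf{U}_w^-$ that is built into the definition of $\mca{O}_q[\overline{N_w}]$; it is at this step that the hypothesis $b\in \mscr{B}_w(\mu)$, rather than merely $b\in \mscr{B}(\mu)$, plays an essential role.
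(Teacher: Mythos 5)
There is a genuine gap, and you point at it yourself: the whole content of the proposition is the claim that the ``higher corrections'' coming from the quasi-$\mca{R}$-matrix are invisible after pairing against $\mbf{U}_w^-$, and your proposal stops exactly there (``the main obstacle will be precisely to verify\dots''). Neither the appeal to extremality of $u_{w\lambda}$ as stated (``kills many of these'') nor the proposed induction on the Demazure crystal via $\eit{i}\mscr{B}_w(\mu)\subset \mscr{B}_w(\mu)\sqcup\{0\}$ closes it: the correction terms $\Xi_\nu(u_{w\lambda}\otimes\Gup_\mu(b))$ are not controlled by crystal operators in any straightforward way, and no mechanism is given for reducing them to the base case $b=u_{w\mu}$. (A smaller imprecision: $\Xi$ alone composed with $\flip$ is not a $\Uq(\mfr{g})$-intertwiner between the two tensor orderings; one needs the extra diagonal weight factor of the full $\mca{R}$-matrix, which is why the paper never passes through such an intertwiner.)

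The paper's proof closes this step by dualizing rather than by comparing vectors in $V(\lambda+\mu)$. One pairs both products against an arbitrary $x\in\mbf{U}_w^-$; by \propref{prop:prod_rep} this becomes $(u_{w\lambda}\otimes\Gup_\mu(b),\Delta(x)(u_\lambda\otimes u_\mu))_{\lambda,\mu}$, and writing $\Delta(x)=\overline{\Xi}\circ\overline{\Delta}(x)\circ\Xi$ one uses that $\Xi$ acts trivially on $u_\lambda\otimes u_\mu$ and then moves $\overline{\Xi}$ to the left-hand vector by $\varphi$-adjointness of the forms. Now two facts conspire: for $x\in\mbf{U}_w^-$ the vector $\overline{\Delta}(x)(u_\lambda\otimes u_\mu)$ lies in the tensor product of Demazure modules $V_w(\lambda)\otimes V_w(\mu)$ (this is precisely where working in $\mca{O}_q[\overline{N_w}]$, i.e.\ modulo $(\mbf{U}_w^-)^{\bot}$, enters), while the nontrivial components of $(\varphi\otimes\varphi)(\overline{\Xi})$ lower the first tensor factor $u_{w\lambda}$ to weights strictly below $w\lambda$, hence orthogonal to $V_w(\lambda)=\Uq^+(\mfr{g})u_{w\lambda}$, whose weights all lie above $w\lambda$. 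So only the identity component of $\overline{\Xi}$ survives, the pairing equals $(u_{w\lambda}\otimes\Gup_\mu(b),\overline{\Delta}(x)(u_\lambda\otimes u_\mu))_{\lambda,\mu}$, and \eqref{eq:deltabar_r} identifies this, up to a $q$-power depending only on the weights of $u_{w\lambda}$ and $\jbar_\mu(b)$ and not on $x$, with the pairing of the product in the opposite order against $x$. This simultaneous use of extremality and of the Demazure property of $\overline{\Delta}(x)(u_\lambda\otimes u_\mu)$ is the argument your outline is missing; without it, your reduction to showing that a difference of two vectors maps into $(\mbf{U}_w^-)^{\bot}$ remains unproved, and no induction is needed in the paper's route.
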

\begin{proof}
Since we only consider the equality in quantum closed unipotent cell $\mca{O}_q[\overline{N}_w]$,
it is enough to check that inner products with $x\in \U_w^-$ are the same up to some $q$-shifts, and the $q$-shifts do not depend on choice of $x$.
By \propref{prop:prod_rep}, this is equal to 
\[(u_{w\lambda}\otimes \Gup_\mu(b), \Delta(x)(u_{\lambda}\otimes u_\mu))_{\lambda, \mu},\]
where $(\cdot, \cdot)_{\lambda, \mu}$ denotes the inner product on $V(\lambda)\otimes V(\mu)$
defined by $(u\otimes u', v\otimes v')_{\lambda, \mu}:=(u, v)_\lambda (u', v')_{\mu}$.
We use the quasi $\mca{R}$-matrix to rewrite this as
\[(u_{w\lambda}\otimes \Gup_\mu(b), (\overline{\Xi}\circ \overline{\Delta}(x) \circ \Xi)(u_{\lambda}\otimes u_\mu))_{\lambda, \mu}.\]
Since the action of the quasi $\mca{R}$-matrix is trivial on the highest weight vector, this is equal to 
\[(u_{w\lambda}\otimes \Gup_\mu(b), (\overline{\Xi}\circ \overline{\Delta}(x))(u_{\lambda}\otimes u_\mu))_{\lambda, \mu}.\]
Since the inner product has an adjoint property for $\varphi$, this is equal to 
\[((\varphi\otimes \varphi)(\overline{\Xi})(u_{w\lambda}\otimes \Gup_\mu(b)), (\overline{\Delta}(x))(u_{\lambda}\otimes u_\mu))_{\lambda, \mu}.\]
Note that $(\overline{\Delta}(x))(u_{\lambda}\otimes u_\mu)$ is contained in the tensor product 
of Demazure modules $V_w(\lambda)\otimes V_w(\mu)$ by \secref{sec:Demazure_module}.
By the form of quasi $\mca{R}$-matrix \eqref{eq:Rmatrix} and the definition of $\varphi$, 
the nontrivial part of $(\varphi\otimes \varphi)(\overline{\Xi})(u_{w\lambda}\otimes \Gup_\mu(b))$ is not contained in 
the tensor product $V_w(\lambda)\otimes V_w(\mu)$, therefore the above is equal to 
\[(u_{w\lambda}\otimes \Gup_\mu(b), (\overline{\Delta}(x))(u_{\lambda}\otimes u_\mu))_{\lambda, \mu}.\]
By \eqref{eq:deltabar_r}, this is equal to 
\[(u_{w\lambda}\otimes \Gup_\mu(b), (\flip \circ \Delta(x))(u_{\lambda}\otimes u_\mu))_{\lambda, \mu},\]
and also to
\[(\jbar_{\lambda}(u_{w\lambda})\otimes \Gup(\jbar_\mu b), (\flip \circ r(x))_K\]
up to some $q$-shifts, where $\flip(P\otimes Q):=Q\otimes P$.
Therefore we get 
\[(\jbar_\lambda(u_{w\lambda})\Gup(\jbar_\mu(b)), x)_K\simeq (\Gup(\jbar_\mu(b))\jbar_\lambda(u_{w\lambda}), x)_K\]
for any $x\in \mbf{U}_w^-$.
Here we note that $q$-shifts depend only on the weights of $u_{w\lambda}$ and $\jbar_\mu(b)$,
and is independent of $x$.
Then we obtain the assertion.
\end{proof}
Restricting the above equality, we obtain the following $q$-commutativity relations in $\mca{O}_{q}[N(w)]$.
\begin{cor}
For $\mbf{c}\in \mbf{Z}_{\geq 0}^{l}$, we have 
\[\Gup(b_{-1}(\mbf{c}, \tw))\Delta_{w\lambda}\simeq \Delta_{w\lambda}\Gup(b_{-1}(\mbf{c}, \tw)).\]
\begin{NB}\textup{(2)}
For $\lambda, \lambda'\in P_+$ and $w\leq w'$,
we have 
\[\Delta_{w'\lambda'}\Delta_{w\lambda}\simeq \Delta_{w\lambda}\Delta_{w'\lambda'}.\]
\end{NB}
\end{cor}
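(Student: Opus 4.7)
The plan is to deduce this from Proposition~\ref{prop:q-center} by transferring the $q$\nobreakdash-commu\-ta\-tion from the quotient $\mca{O}_{q}[\overline{N_{w}}]$ back to the subalgebra $\mca{O}_{q}[N(w)]=\Uq^-(w,-1)$ via the embedding of Theorem~\ref{theo:emb}. By the preceding corollary, $\Delta_{w\lambda}=\Gup(b_{-1}(\mbf{n}^{\lambda},\tw))\in\Uq^-(w,-1)$, so both products $\Gup(b_{-1}(\mbf{c},\tw))\Delta_{w\lambda}$ and $\Delta_{w\lambda}\Gup(b_{-1}(\mbf{c},\tw))$ lie in $\Uq^-(w,-1)$. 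Because $\Uq^-(w,-1)\hookrightarrow\mca{O}_{q}[\overline{N_{w}}]$ is injective, $\Uq^-(w,-1)\cap(\mbf{U}_{w}^{-})^{\bot}=0$; hence if we can establish $\Gup(b_{-1}(\mbf{c},\tw))\Delta_{w\lambda}\simeq\Delta_{w\lambda}\Gup(b_{-1}(\mbf{c},\tw))$ in $\mca{O}_{q}[\overline{N_{w}}]$, the same identity, with the same $q$\nobreakdash-shift, automatically holds in $\Uq^-(w,-1)$.

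To invoke Proposition~\ref{prop:q-center} we need to realize $b_{-1}(\mbf{c},\tw)$ in the form $\jbar_{\mu}(b')$ for some $b'\in\mscr{B}_{w}(\mu)$ and some $\mu\in P_{+}$. Theorem~\ref{theo:emb} shows that $b_{-1}(\mbf{c},\tw)\in\mscr{B}(w,-1)\subset\mscr{B}_{w}(\infty)$, so by Proposition~\ref{lem:Demazure} we have $\eit{i_{l}}^{\max}\cdots\eit{i_{1}}^{\max}(b_{-1}(\mbf{c},\tw))=u_{\infty}$. Choose $\mu\in P_{+}$ large enough so that $\pibar_{\mu}(b_{-1}(\mbf{c},\tw))\neq 0$; by Theorem~\ref{theo:grloop} there is a unique $b'\in\mscr{B}(\mu)$ with $\jbar_{\mu}(b')=b_{-1}(\mbf{c},\tw)$, and since $\pibar_{\mu}$ intertwines the Kashiwara operators $\eit{i}$ we obtain $\eit{i_{l}}^{\max}\cdots\eit{i_{1}}^{\max}(b')=u_{\mu}$, which places $b'$ in the Demazure crystal $\mscr{B}_{w}(\mu)$ by Proposition~\ref{lem:Demazure} applied to $V(\mu)$. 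Applying Proposition~\ref{prop:q-center} with this $b'$ and the extremal vector $u_{w\lambda}\in\mscr{B}_{w}(\lambda)$ then delivers the desired $q$\nobreakdash-commutation in $\mca{O}_{q}[\overline{N_{w}}]$.

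The main step where care is needed is the translation between the three crystals $\mscr{B}(w,-1)$, $\mscr{B}_{w}(\infty)$ and $\mscr{B}_{w}(\mu)$ so that Proposition~\ref{prop:q-center} actually applies to our PBW element; once that identification is in place, everything else is a direct combination of the injectivity of Theorem~\ref{theo:emb} with the already-established quasi-$\mca{R}$-matrix argument that forms the body of Proposition~\ref{prop:q-center}. No new computation is required, and in particular the $q$\nobreakdash-power appearing in the final $\simeq$ is controlled by the weight-dependent exponent produced in the proof of Proposition~\ref{prop:q-center}.
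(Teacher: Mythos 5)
Your proposal is correct and follows the paper's own route: the paper likewise deduces the corollary by "restricting" Proposition~\ref{prop:q-center} (the quasi-$\mca{R}$-matrix computation in $\mca{O}_q[\overline{N_w}]$) and using the injectivity in Theorem~\ref{theo:emb} to turn $q$-commutation modulo $(\mbf{U}_w^-)^{\bot}$ into literal $q$-commutation in $\Uq^-(w,-1)$. Your extra step — realizing $b_{-1}(\mbf{c},\tw)\in\mscr{B}(w,-1)\subset\mscr{B}_w(\infty)$ as $\jbar_{\mu}(b')$ with $b'\in\mscr{B}_w(\mu)$ for $\mu$ large — is exactly the lifting the paper leaves implicit (and uses explicitly before Theorem~\ref{theo:Demazure}), so it fills in detail rather than changing the argument.
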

\subsection{Factorization of $q$-center}
In this subsection,
we prove the multiplicative property of 
$\mscr{B}_w(\infty)$ with respect to the quantum minors $\{\Delta_{w\lambda}\}_{\lambda\in P_+}$ in $\mca{O}_{q}[\overline{N_{w}}]$.
This is a generalization of \cite[3.1]{Cal:adapted}, \cite{Cal:qflag} and \cite[Lemma 4.2]{Lampe}.
This result can be considered as a $q$-analogue of \cite[Lemma 15.8]{GLS:KacMoody}.
\subsubsection{}
Using \corref{cor:string2} and \eqref{eq:string3} inductively, we obtain the following lemma.
\begin{lem}\label{lem:string3}
Let $w\in W$  and  $\tw=(i_{1}, \cdots, i_{l})\in R(w)$ as above.
We define
\[\vep_{\tw}(b):=(\vep_{i_{1}}(b), \vep_{i_{2}}(\eit{i}^{\max}b), \cdots, \vep_{i_{l}}(\eit{i_{l-1}}^{\max}\cdots \eit{i_{1}}^{\max}b))\]
for $b\in \mscr{B}(\infty)$.
For $b_{1}, b_{2}\in \mscr{B}(\infty)$, let us write
\[\Gup(b_{1})\Gup(b_{2})=\sum d_{b_{1}, b_{2}}^{b}(q)\Gup(b)\]
with $d_{b_{1}, b_{2}}^{b}(q)\in \mca{A}$.
If $d_{b_{1}, b_{2}}^{b}(q)\neq 0$, then  
$\vep_{\tw}(b)\leq \vep_{\tw}(b_{1})+\vep_{\tw}(b_{2})$, where $\leq$ is the lexicographic order on $\mbb{Z}_{\geq 0}^{l}$
as in \secref{subsec:lex}.

Let $b\in \mscr{B}_{w}(\infty)$ with $\vep_{\tw}(b)=\vep_{\tw}(b_{1})+\vep_{\tw}(b_{2})$ for $b_{1}, b_{2}\in \mscr{B}_{w}(\infty)$.
Then we have $d_{b_{1}, b_{2}}^{b}(q)=q^{N}$ for some $N\in \mbb{Z}$.
\end{lem}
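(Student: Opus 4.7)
The plan is to iteratively reduce the statement to the trivial identity $\Gup(u_\infty)^2 = 1$ by stripping off one index at a time from $\tw$, using \corref{cor:string2} to control one coordinate of $\vep_{\tw}$ and equation \eqref{eq:string3} to advance to the next. Introduce the shorthand $b^{(0)}:=b$, $b^{(k)}:=\eit{i_k}^{\max}b^{(k-1)}$, and analogously for $b_1^{(k)}$, $b_2^{(k)}$; then $\vep_{\tw}(b)_j = \vep_{i_j}(b^{(j-1)})$, and the condition $b\in\mscr{B}_w(\infty)$ is equivalent to $b^{(l)}=u_\infty$ by \propref{lem:Demazure}.

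The key intermediate claim is the following reduction. Suppose the equalities
\[\vep_{i_j}(b^{(j-1)})=\vep_{i_j}(b_1^{(j-1)})+\vep_{i_j}(b_2^{(j-1)})\]
hold for every $1\leq j\leq k$. Then there exists $N_k\in\mbb{Z}$, depending only on $b_1, b_2$ and $(i_1,\ldots,i_k)$ but \emph{not} on $b$, such that $d^b_{b_1,b_2}(q) = q^{N_k} d^{b^{(k)}}_{b_1^{(k)},b_2^{(k)}}(q)$. This is proved by induction on $k$. For the inductive step, apply $_{i_k}r^{(m)}$ with $m = \vep_{i_k}(b_1^{(k-1)}) + \vep_{i_k}(b_2^{(k-1)})$ to the expansion of $\Gup(b_1^{(k-1)})\Gup(b_2^{(k-1)})$ given by the inductive hypothesis. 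By \eqref{eq:string3}, the left-hand side becomes $q^{N'}\Gup(b_1^{(k)})\Gup(b_2^{(k)})$ for some $N'$ depending only on $b_1^{(k-1)}$ and $b_2^{(k-1)}$, while the right-hand side collapses, thanks to \corref{cor:string1} together with the vanishing of $d$-coefficients above the $\vep_{i_k}$-bound, to $\sum_{b': \vep_{i_k}(b')=m} d^{b'}_{b_1^{(k-1)},b_2^{(k-1)}}(q)\Gup(\eit{i_k}^{\max}b')$. Since the map $b'\mapsto \eit{i_k}^{\max}b'$ is injective on the indexing set, matching the coefficient of $\Gup(b^{(k)})$ yields the recursion.

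Granting this reduction, the first assertion is immediate: given $d^b_{b_1,b_2}(q)\neq 0$, either all $l$ equalities hold (so $\vep_{\tw}(b)=\vep_{\tw}(b_1)+\vep_{\tw}(b_2)$), or one may take the smallest $k$ at which an equality fails, iterate the reduction through $k-1$ to obtain $d^{b^{(k-1)}}_{b_1^{(k-1)},b_2^{(k-1)}}(q)\neq 0$, and invoke \corref{cor:string2} at the index $i_k$ to get strict inequality in the $k$-th coordinate, hence $\vep_{\tw}(b)<\vep_{\tw}(b_1)+\vep_{\tw}(b_2)$ lexicographically. For the second assertion, if $b, b_1, b_2\in\mscr{B}_w(\infty)$ and $\vep_{\tw}(b)=\vep_{\tw}(b_1)+\vep_{\tw}(b_2)$, then the reduction at $k=l$ gives $d^b_{b_1,b_2}(q) = q^{N_l} d^{u_\infty}_{u_\infty,u_\infty}(q) = q^{N_l}$, as desired. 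The main technical point to verify is the uniformity of $N'$ (hence $N_k$) in $b$; this is transparent once one notes that $N'$ arises from the iterated Leibniz rule \eqref{eq:ir} applied only to the fixed product $\Gup(b_1^{(k-1)})\Gup(b_2^{(k-1)})$, and so does not depend on which $b'$ one reads off on the right-hand side.
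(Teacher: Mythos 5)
Your proof is correct and follows exactly the route the paper indicates: the paper disposes of this lemma with the single remark that it follows ``using \corref{cor:string2} and \eqref{eq:string3} inductively,'' and your argument is precisely that induction, carried out in detail (the recursion $d^{b}_{b_1,b_2}=q^{N_k}d^{b^{(k)}}_{b_1^{(k)},b_2^{(k)}}$ via ${_{i_k}r}^{(m)}$, injectivity of $\eit{i_k}^{\max}$ on the layer $\vep_{i_k}=m$, and the characterization of $\mscr{B}_w(\infty)$ from \propref{lem:Demazure} to land on $u_\infty$). No discrepancies with the paper's intended proof.
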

\subsubsection{}
\begin{prop}
Let $w\in W$ and $\lambda, \mu\in P_+$.
For $b\in \mscr{B}_w(\mu)$ and $u_{w\lambda}\in \mscr{B}_w(\lambda)$, 
there exists $b'\in \mscr{B}_w(\lambda+\mu)$ such that 
\[\Phi(\lambda, \mu)(b')=u_{w\lambda}\otimes b,\]
and we have an equality in $\mca{O}_{q}[\overline{N}_{w}]$:
\[\Delta_{w\lambda}\Gup(\jbar_{\mu}(b))\simeq \Gup(\jbar_{\lambda+\mu}(b')).\]
\end{prop}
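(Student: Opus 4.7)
The proposition has two claims to establish: the existence of $b'\in\mscr{B}_w(\lambda+\mu)$ with $\Phi(\lambda,\mu)(b')=u_{w\lambda}\otimes b$, and the $q$-shifted multiplicative identity in $\mca{O}_q[\overline{N_{w}}]$.

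I would prove existence by induction on $l(w)$, with the base case $l(w)=0$ being trivial since then $b=u_\mu$ and $b'=u_{\lambda+\mu}$. Writing $\tw=(i_1,\tw_{\ge 2})$ with $w_{\ge 2}=s_{i_2}\cdots s_{i_l}$, the extremal-vector values $\vphi_{i_1}(u_{w\lambda})=\max(\bracket{h_{i_1},w\lambda},0)=0$ and $\vep_{i_1}(u_{w\lambda})=\bracket{h_{i_1},w_{\ge 2}\lambda}$, combined with the tensor product rule, yield by a short case analysis on whether $\vep_{i_1}(b)$ vanishes the identity
\[
\eit{i_1}^{\max}(u_{w\lambda}\otimes b)=u_{w_{\ge 2}\lambda}\otimes \eit{i_1}^{\max}(b).
\]
Since $\eit{i_1}^{\max}(b)\in\mscr{B}_{w_{\ge 2}}(\mu)$ by \propref{lem:Demazure}, the inductive hypothesis applied to $w_{\ge 2}$ provides a $b'\in\mscr{B}(\lambda+\mu)$ with $\Phi(\lambda,\mu)(b')=u_{w\lambda}\otimes b$, and iterating the above identity along $\tw$ shows $\eit{i_l}^{\max}\cdots\eit{i_1}^{\max}\Phi(\lambda,\mu)(b')=u_\lambda\otimes u_\mu$, which places $b'$ in $\mscr{B}_w(\lambda+\mu)$ by \propref{lem:Demazure}.

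For the multiplicative identity, I would combine \propref{prop:prod_rep} and \propref{prop:prod_cry} to obtain
\[
q^{(\wt b-\mu,\lambda)}\Delta_{w\lambda}\Gup(\jbar_\mu(b))=j_{\lambda+\mu}\bigl(q_{\lambda,\mu}(\Gup_\lambda(u_{w\lambda})\otimes\Gup_\mu(b))\bigr)\equiv\Gup(\jbar_{\lambda+\mu}(b'))\pmod{q\mscr{L}(\infty)}.
\]
Since $q_{\lambda,\mu}$ preserves upper crystal lattices and $\Uq^-(\mfr{g})_{\mca{A}}^{\up}$ is closed under multiplication, the left-hand side belongs to $\Uq^-(\mfr{g})_{\mca{A}}^{\up}\cap\mscr{L}(\infty)$, and expanding in the dual canonical basis yields
\[
q^{(\wt b-\mu,\lambda)}\Delta_{w\lambda}\Gup(\jbar_\mu(b))=\Gup(\jbar_{\lambda+\mu}(b'))+\sum_{\tilde b\neq\jbar_{\lambda+\mu}(b')}e_{\tilde b}(q)\Gup(\tilde b),\qquad e_{\tilde b}(q)\in q\mbb{Q}[q].
\]
In $\mca{O}_q[\overline{N_{w}}]$ only terms with $\tilde b\in\mscr{B}_w(\infty)$ survive. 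The $q$-commutation $\Delta_{w\lambda}\Gup(\jbar_\mu(b))=q^M\Gup(\jbar_\mu(b))\Delta_{w\lambda}$ in $\mca{O}_q[\overline{N_{w}}]$ from \propref{prop:q-center}, combined with \corref{cor:inverse} applied to the coefficients $d_{\tilde b}(q)$ of $\Delta_{w\lambda}\Gup(\jbar_\mu(b))$, gives the uniform relation $d_{\tilde b}(q^{-1})=q^{(w\lambda-\lambda,\wt b-\mu)-M}d_{\tilde b}(q)$ for every surviving $\tilde b$. Matching the leading coefficient $q^{-(\wt b-\mu,\lambda)}$ at $\tilde b=\jbar_{\lambda+\mu}(b')$ pins down $M=(w\lambda-\lambda,\wt b-\mu)-2(\wt b-\mu,\lambda)$, converting the relation into the bar-symmetry $e_{\tilde b}(q)=e_{\tilde b}(q^{-1})$. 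Since $e_{\tilde b}(q)\in q\mbb{Q}[q]$ has only strictly positive $q$-degree while $e_{\tilde b}(q^{-1})$ has only strictly negative $q$-degree, this symmetry forces $e_{\tilde b}(q)=0$. Hence in $\mca{O}_q[\overline{N_{w}}]$,
\[
\Delta_{w\lambda}\Gup(\jbar_\mu(b))=q^{-(\wt b-\mu,\lambda)}\Gup(\jbar_{\lambda+\mu}(b')),
\]
proving the desired equality up to a $q$-shift.

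The main obstacle lies in the final bar-symmetry argument: one must verify that the $q$-commutation shift $M$ produced by \propref{prop:q-center} matches exactly the value forced by \corref{cor:inverse} on the known leading term, so that the bar-symmetry has no stray $q$-power and truly kills every off-diagonal coefficient. The inductive existence of $b'$, while combinatorially explicit, reduces cleanly to a single application of the tensor product rule once the extremal-vector string lengths are computed, and poses no serious difficulty.
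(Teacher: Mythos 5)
Your overall route is the same as the paper's: the crystal computation with $\vphi_{i_1}(u_{w\lambda})=0$ and the tensor product rule for the existence of $b'$, then \propref{prop:prod_rep} together with \propref{prop:prod_cry} for the leading term, and finally the $q$-commutation from \propref{prop:q-center} combined with the dual bar involution (\corref{cor:inverse}) to kill the lower terms. The genuine gap is in the expansion
\[
q^{(\wt b-\mu,\lambda)}\Delta_{w\lambda}\Gup(\jbar_\mu(b))=\Gup(\jbar_{\lambda+\mu}(b'))+\sum_{\tilde b\neq \jbar_{\lambda+\mu}(b')}e_{\tilde b}(q)\Gup(\tilde b),
\]
where you tacitly take the coefficient of $\Gup(\jbar_{\lambda+\mu}(b'))$ to be exactly $1$. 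The congruence modulo $q\mscr{L}(\infty)$ coming from \propref{prop:prod_cry} only puts this coefficient in $1+q\mbb{Q}[q]$, say $c(q)=1+a_1q+\cdots+a_kq^k$. Your next step, ``matching the leading coefficient at $\tilde b=\jbar_{\lambda+\mu}(b')$ pins down $M$,'' uses that this coefficient is a pure power of $q$, which has not been established. If $k>0$, the relation $d_{\tilde b}(q^{-1})=q^{(w\lambda-\lambda,\wt b-\mu)-M}d_{\tilde b}(q)$ at $\tilde b=\jbar_{\lambda+\mu}(b')$ only forces $c$ to be palindromic and shifts the exponent by $k$; the relation for the remaining coefficients then reads $e_{\tilde b}(q^{-1})=q^{-k}e_{\tilde b}(q)$, which does not force $e_{\tilde b}=0$ (for instance $e_{\tilde b}=q$ with $k=2$ is consistent). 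So the bar-symmetry argument does not close without an independent proof that $c(q)=1$.

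This is exactly the input the paper supplies via the second assertion of \lemref{lem:string3}: the string data along $\tw$ are additive, $\vep_{\tw}(\jbar_{\lambda+\mu}(b'))=\vep_{\tw}(\jbar_\lambda(u_{w\lambda}))+\vep_{\tw}(\jbar_\mu(b))$ (this follows from the same computation $\vphi_{i_1}(u_{w\lambda})=0$ that you use for existence, iterated through \corref{cor:string2} and \eqref{eq:string3}), hence the coefficient of $\Gup(\jbar_{\lambda+\mu}(b'))$ in the product is a pure power $q^N$, and the mod-$q\mscr{L}(\infty)$ congruence then gives $N=0$. Once this is inserted, your determination of $M$ and the vanishing of all $e_{\tilde b}$ go through as written. (A small additional remark on the existence part: the inductive hypothesis for $w_{\geq 2}$ produces an element over $u_{w_{\geq2}\lambda}\otimes \eit{i_1}^{\max}b$, not over $u_{w\lambda}\otimes b$; the clean argument is the one you also give, namely that iterating $\eit{i_k}^{\max}$ along $\tw$ sends $u_{w\lambda}\otimes b$ to $u_\lambda\otimes u_\mu$, so $u_{w\lambda}\otimes b$ lies in the image of $\Phi(\lambda,\mu)$ and the resulting $b'$ lies in $\mscr{B}_w(\lambda+\mu)$, as in the paper.)
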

\begin{proof}
Fix $\tw=(i_{1}, \cdots, i_{l})\in R(w)$, we have 
\[\eit{i_1}^{\max}{(u_{w\lambda}\otimes b)}=u_{w_{\geq 2}\lambda}\otimes \eit{i_{1}}^{\max}b\]
by the tensor product rule \eqref{eq:eitn} for crystal operators and $\vphi_{i_{1}}(u_{w\lambda})=0$.
Using this recursively, we get
\[\eit{i_l}^{\max}\cdots \eit{i_1}^{\max}{(u_{w\lambda}\otimes b)}=u_{\lambda}\otimes u_{\mu}.\]
In particular, there exists $b'\in \mscr{B}_w(\lambda+\mu)$ such that $\Phi(\lambda, \mu)(b')=u_{w\lambda}\otimes b$.
By \propref{prop:prod_rep} and \propref{prop:prod_cry},
we have 
\begin{equation}q^{(\wt b-\mu, \lambda)}\Delta_{w\lambda}\Gup(\jbar_{\mu}{b})=\Gup(\jbar_{\lambda+\mu}(b'))+\sum_{}f_{b, w\lambda}^{b''}(q)\Gup(b'')
\label{eq:eq_1}\end{equation}
for some $f_{b, w\lambda}^{b''}(q)\in q\mbb{Z}[q]$.
By the second assertion of \lemref{lem:string3}, we have $f_{b, w\lambda}^{\jbar_{\lambda+\mu}(b')}(q)=0$ as in \cite[1.8 Proposition (i)]{Cal:adapted}.

Applying the dual bar-involution $\sigma$, we obtain 
\begin{equation}
q^{-(\wt b-\mu, \lambda)+(\wt b-\mu, w\lambda-\lambda)}\Gup(\jbar_{\mu}{b})\Delta_{w\lambda}=\Gup(\jbar_{\lambda+\mu}(b'))+\sum_{}f_{b, w\lambda}^{b''}(q^{-1})\Gup(b'').
\end{equation}
By \propref{prop:q-center}, we have $\Gup(\jbar_{\mu}{b})\Delta_{w\lambda}=q^m\Delta_{w\lambda}\Gup(\jbar_{\mu}{b})$
for some $m\in \mbb{Z}$ in $\mca{O}_{q}[\overline{N}_{w}]$
It is equal to 
\begin{equation}
q^{-(\wt b-\mu, \lambda)+(\wt b-\mu, w\lambda-\lambda)+m}\Delta_{w\lambda}\Gup(\jbar_{\mu}{b})=\Gup(\jbar_{\lambda+\mu}(b'))+\sum_{}f_{b, w\lambda}^{b''}(q^{-1})\Gup(b''). \label{eq:eq_2}
\end{equation}
Therefore we obtain $f_{b, w\lambda}^{b''}(q)=0$ for any $b''$ by comparing \eqref{eq:eq_1} and \eqref{eq:eq_2}.
\begin{NB}This is not true: In particular, $u_{w\lambda}\bot \jbar_{\mu}(b)$ for any $\lambda, \mu\in P_+$ and $b\in \mscr{B}_w(\mu)$.\end{NB}
\end{proof}
Since there exists $\mu\in P_+$ such that $\pi_\mu(b)\neq 0$, we obtain the following theorem. 
\begin{theo}\label{theo:Demazure}
Let $b\in \mscr{B}_{w}(\infty)$ and $\lambda\in P_{+}$.
There exists $b'\in \mscr{B}_{w}(\infty)$ such that 
\[\Delta_{w\lambda}\Gup(b)\simeq \Gup(b')\]
in $\mca{O}_{q}[\overline{N_{w}}]$.
\end{theo}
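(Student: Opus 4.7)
The plan is to reduce \thmref{theo:Demazure} to the preceding proposition by realizing an arbitrary $b \in \mscr{B}_w(\infty)$ as $\jbar_\mu(b_\mu)$ for some $b_\mu \in \mscr{B}_w(\mu)$ with $\mu \in P_+$ sufficiently dominant, exactly as the author signals in the remark immediately above the theorem.

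First, I would choose $\mu \in P_+$ with $\pibar_\mu(b) \neq 0$. By \thmref{theo:lower_filt}, this is equivalent to requiring $\vep_i^*(b) \leq \bracket{h_i, \mu}$ for every $i \in I$, so any sufficiently dominant $\mu$ will do (e.g. $\mu := \sum_i \vep_i^*(b)\,\varpi_i$). Set $b_\mu := \pibar_\mu(b) \in \mscr{B}(\mu)$; then $\jbar_\mu(b_\mu) = b$ by the bijection in \thmref{theo:grloop}.

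Second, I would verify $b_\mu \in \mscr{B}_w(\mu)$. Fix a reduced expression $\tw = (i_1, \ldots, i_l) \in R(w)$. By \lemref{lem:Demazure}, $b \in \mscr{B}_w(\infty)$ is equivalent to $\eit{i_l}^{\max}\cdots \eit{i_1}^{\max} b = u_\infty$, and analogously $\mscr{B}_w(\mu)$ is cut out by $\eit{i_l}^{\max}\cdots \eit{i_1}^{\max}(\cdot) = u_\mu$. Applying $\pibar_\mu$ to the chain from $b$ down to $u_\infty$ and using the compatibility $\pibar_\mu \circ \eit{i} = \eit{i}\circ \pibar_\mu$ (valid whenever the right side is nonzero) from \thmref{theo:grloop}, anchored by $\pibar_\mu(u_\infty) = u_\mu \neq 0$, I would propagate the nonvanishing backwards along the chain to conclude $\eit{i_l}^{\max}\cdots \eit{i_1}^{\max} b_\mu = u_\mu$, so that $b_\mu \in \mscr{B}_w(\mu)$.

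Third, I would invoke the preceding proposition with $b_\mu$: there exists $b' \in \mscr{B}_w(\lambda + \mu)$ with $\Phi(\lambda, \mu)(b') = u_{w\lambda} \otimes b_\mu$ and
\[
\Delta_{w\lambda}\,\Gup(b) \;=\; \Delta_{w\lambda}\,\Gup(\jbar_\mu(b_\mu)) \;\simeq\; \Gup(\jbar_{\lambda+\mu}(b'))
\]
in $\mca{O}_q[\overline{N_w}]$. Since $\jbar_{\lambda+\mu}(b') \in \mscr{B}_w(\infty)$, this furnishes the element required by the theorem. The main obstacle is the inductive verification in step two: although the compatibility between $\pibar_\mu$ and $\eit{i}$ in \thmref{theo:grloop} is only conditional on the non-vanishing of the image, the terminal identity $\pibar_\mu(u_\infty) = u_\mu$ allows one to back-propagate the non-vanishing along the $\tw$-adapted chain and read off the Demazure membership of $b_\mu$. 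Once step two is settled, the remainder is a direct application of the preceding proposition.
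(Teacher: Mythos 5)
Your proposal is correct and follows the paper's own (very terse) argument: the paper proves the theorem simply by noting that some $\mu\in P_{+}$ satisfies $\pibar_{\mu}(b)\neq 0$ and then applying the preceding proposition, exactly your reduction. The extra details you supply — the explicit choice $\mu=\sum_i \vep_i^{*}(b)\varpi_i$ via \thmref{theo:lower_filt}, and the verification (via \thmref{theo:grloop} and \lemref{lem:Demazure}, i.e.\ Kashiwara's Demazure-crystal compatibility $\pibar_{\mu}(\mscr{B}_{w}(\infty))\setminus\{0\}=\mscr{B}_{w}(\mu)$) that $\pibar_{\mu}(b)\in\mscr{B}_{w}(\mu)$ and $\jbar_{\lambda+\mu}(b')\in\mscr{B}_{w}(\infty)$ — are exactly what the paper leaves implicit, so the approaches coincide.
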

Taking $b$ from $\mscr{B}(w, -1)$, we obtain the following theorem by \corref{cor:dsum}.
\begin{theo}
For $\mbf{c}\in \mbf{Z}_{\geq 0}^{l}$ and $\lambda \in P_+$, 
we have 
\[\Delta_{w\lambda}\Gup(b_{-1}(\mbf{c}, \tw))\simeq \Gup(b_{-1}(\mbf{c}+\mbf{n}^{\lambda}, \tw)).\]
\end{theo}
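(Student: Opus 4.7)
The plan is to derive this as a corollary of the preceding \thmref{theo:Demazure} together with \corref{cor:dsum}, using the identification $\Delta_{w\lambda} = \Gup(b_{-1}(\mbf{n}^{\lambda}, \tw))$ inside $\mca{O}_{q}[N(w)] = \Uq^-(w, -1)$ established in the corollary to \propref{prop:extremePBW1}. Both factors on the left hand side therefore lie in the quantum unipotent subgroup $\Uq^-(w,-1)$, and by \thmref{theo:unipgp} so does their product, expanding as an $\mca{A}$-linear combination of $\Gup(b)$ with $b\in\mscr{B}(w,-1)$.

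First I would apply \thmref{theo:Demazure} to $b=b_{-1}(\mbf{c},\tw)$, viewed as an element of $\mscr{B}_{w}(\infty)$ via the inclusion $\mscr{B}(w,-1)\subset \mscr{B}_{w^{-(-1)}}(\infty)=\mscr{B}_{w}(\infty)$ provided by \thmref{theo:emb}. This produces $b'\in\mscr{B}_{w}(\infty)$ and $N\in\mbb{Z}$ with
\[\Delta_{w\lambda}\,\Gup(b_{-1}(\mbf{c},\tw))\equiv q^{N}\Gup(b')\pmod{(\mbf{U}_{w}^{-})^{\bot}}\]
in $\mca{O}_{q}[\overline{N_{w}}]$.

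Next I would lift this congruence to an equality in $\Uq^{-}(\mfr{g})$. Since the composition $\Uq^{-}(w,-1)\hookrightarrow \Uq^{-}(\mfr{g})\xrightarrow{\iota^{*}_{w}}\mca{O}_{q}[\overline{N_{w}}]$ is an algebra embedding by \thmref{theo:emb} that sends each $\Gup(b)$, $b\in\mscr{B}(w,-1)$, injectively to a distinct element of $\iota^{*}_{w}(\mbf{B}^{\up})\subset\mca{O}_{q}[\overline{N_{w}}]$, the expansion of the left hand side in $\mca{O}_{q}[\overline{N_{w}}]$ involves only basis elements indexed by $\mscr{B}(w,-1)$. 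The congruence therefore forces $b'\in \mscr{B}(w,-1)$ and the genuine identity $\Delta_{w\lambda}\,\Gup(b_{-1}(\mbf{c},\tw))=q^{N}\Gup(b')$ in $\Uq^{-}(\mfr{g})$. In particular, $b_{-1}(\mbf{n}^{\lambda},\tw)\bot b_{-1}(\mbf{c},\tw)$, so \corref{cor:dsum} identifies $b'=b_{-1}(\mbf{n}^{\lambda}+\mbf{c},\tw)$, yielding the claim.

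All genuine work has already been done in the ingredients; the only conceptual point is the lifting step, which relies on \thmref{theo:emb} to ensure that the projection from $\mca{O}_{q}[N(w)]$ into the quantum closed unipotent cell respects dual canonical basis elements injectively. There is no essential obstacle, only the need to state the bookkeeping cleanly.
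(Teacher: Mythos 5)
Your proposal is correct and follows essentially the same route as the paper, which obtains the theorem by specializing \thmref{theo:Demazure} to $b=b_{-1}(\mbf{c},\tw)\in\mscr{B}(w,-1)\subset\mscr{B}_{w}(\infty)$ and then invoking \corref{cor:dsum}, with the lifting from $\mca{O}_{q}[\overline{N_{w}}]$ back to $\Uq^{-}(\mfr{g})$ implicit via the embedding of \thmref{theo:emb}. Your write-up merely makes that lifting/bookkeeping step explicit, which is consistent with the paper's argument.
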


\subsubsection{}\label{sec:adapted}
The following is a generalization of Caldero's result \cite[2.1 Lemma, 2.2 Theorem]{Cal:adapted}.
It follows from \thmref{theo:Demazure} by an induction on the length of $w$.

\begin{theo}\label{theo:seed}
	Let $w\in W$ and fix $\tw\in R(w)$.
	We set 
	\[\Delta_{\tw, k}:=\Delta_{s_{i_{1}}\cdots s_{i_{k}}\varpi_{i_{k}}}\]
	for $1\leq k\leq l$.
	Then $\{\Delta_{\tw, k}\}_{1\leq k\leq l}$ forms a strongly compatible subset.
\end{theo}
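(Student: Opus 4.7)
The plan is to prove strong compatibility by induction on $l=\ell(w)$, using the explicit PBW identification of the quantum flag minors together with the factorization property in \thmref{theo:Demazure}. First I would recall that Proposition~\ref{prop:extremePBW1} and its corollary identify $\Delta_{\tw,k}=\Delta_{s_{i_1}\cdots s_{i_k}\varpi_{i_k}}=\Gup(b_{-1}(\mathbf{n}_k,\tw))$, where $\mathbf{n}_k(j)=1$ if $j\le k$ and $i_j=i_k$ and $\mathbf{n}_k(j)=0$ otherwise. In particular, when the reduced expression $\tw$ is restricted to the length-$(l-1)$ initial piece $\tw'=(i_1,\dots,i_{l-1})$, the elements $\Delta_{\tw,k}$ for $k\le l-1$ coincide with $\Delta_{\tw',k}$ and lie in $\mca O_q[N(s_{i_1}\cdots s_{i_{l-1}})]\subset \mca O_q[N(w)]$. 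This will allow the induction to be set up cleanly. The base case $l=1$ is immediate from \lemref{lem:extremal}, since $\Delta_{\tw,1}=\Delta_{s_{i_1}\varpi_{i_1}}$ is strongly real.

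For the inductive step, I would assume the statement holds for $w'=s_{i_1}\cdots s_{i_{l-1}}$ with reduced expression $\tw'$. Fix $\mathbf{m}=(m_1,\dots,m_l)\in\mbb Z_{\ge 0}^l$. By the induction hypothesis there exists $b'\in\mscr B(\infty)$ such that
\[\Delta_{\tw,1}^{m_1}\cdots \Delta_{\tw,l-1}^{m_{l-1}}\simeq \Gup(b').\]
Since the left-hand side lies in the subalgebra $\Uq^-(w',-1)$, which is compatible with $\mbf B^{\up}$ (\thmref{theo:unipgp}), we have $b'\in\mscr B(w',-1)$, and under the embedding $\Uq^-(w',-1)\hookrightarrow \Uq^-(w,-1)$ (corresponding to appending a zero in the last PBW coordinate) we may write $b'=b_{-1}(\mathbf{c},\tw)$ for a unique $\mathbf{c}\in\mbb Z_{\ge 0}^l$ with $c_l=0$. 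On the other hand, by \lemref{lem:extremal} the element $\Delta_{\tw,l}$ is strongly real, hence $\Delta_{\tw,l}^{m_l}\simeq \Delta_{w,m_l\varpi_{i_l}}$.

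Now I would combine two ingredients. First, the corollary of \propref{prop:q-center} gives, inside $\mca O_q[N(w)]$, the $q$-commutativity
\[\Gup(b_{-1}(\mathbf{c},\tw))\,\Delta_{w,m_l\varpi_{i_l}}\simeq \Delta_{w,m_l\varpi_{i_l}}\,\Gup(b_{-1}(\mathbf{c},\tw)).\]
Second, the strengthened version of \thmref{theo:Demazure} for PBW elements gives the literal equality
\[\Delta_{w,m_l\varpi_{i_l}}\,\Gup(b_{-1}(\mathbf{c},\tw))\simeq \Gup(b_{-1}(\mathbf{c}+m_l\mathbf{n}^{i_l},\tw)).\]
Since $\mathbf{n}^{i_l}=\mathbf{n}_l$ (the largest index $k$ with $i_k=i_l$ is $l$ itself), assembling these two steps with the induction hypothesis yields
\[\Delta_{\tw,1}^{m_1}\cdots \Delta_{\tw,l}^{m_l}\simeq \Gup\!\left(b_{-1}\!\left(\mathbf{c}+m_l\mathbf{n}_l,\tw\right)\right)\in q^{\mbb Z}\mbf B^{\up},\]
which is exactly the strong compatibility statement for $\mathbf{m}$.

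The main subtlety, and what I would check with care, is the transition between algebras in the inductive step: the identity supplied by \thmref{theo:Demazure} a priori lives in the quotient $\mca O_q[\overline{N_w}]$, so to use it as a literal equality in $\Uq^-(\mfr g)$ one must invoke the strengthened PBW version stated after \thmref{theo:Demazure} (which applies precisely because $\mathbf{c}\in\mbb Z_{\ge 0}^l$ lies in the PBW parametrization of $\mscr B(w,-1)$), and likewise for the $q$-commutativity one must use the restriction of \propref{prop:q-center} to $\mca O_q[N(w)]$ via its corollary. Compatibility of the inductive datum with these refinements is what forces the $c_l=0$ observation above, and it is the only point where something beyond a routine bookkeeping argument is required.
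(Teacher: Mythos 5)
Your proposal is correct and follows essentially the same route as the paper, which proves Theorem~\ref{theo:seed} by induction on $\ell(w)$ using \thmref{theo:Demazure} (in the PBW form $\Delta_{w\lambda}\Gup(b_{-1}(\mbf{c},\tw))\simeq \Gup(b_{-1}(\mbf{c}+\mbf{n}^{\lambda},\tw))$ stated right after it, which holds literally in $\Uq^{-}(w,-1)$ by \thmref{theo:emb} and \corref{cor:dsum}). Your careful handling of the passage from the quotient $\mca{O}_q[\overline{N_w}]$ back to $\Uq^{-}(\mfr{g})$, and the identification $\Delta_{\tw,k}=\Gup(b_{-1}(\mbf{n}_k,\tw))$ with $\mbf{n}^{i_l}=\mbf{n}_l$, fills in exactly the details the paper leaves implicit.
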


\subsubsection{}\label{sec:factorization}
Following \cite[15.5]{GLS:KacMoody},
we call $\mbf{c}\in \mbb{Z}_{\geq 0}^l$ \emph{interval-free} if $\mbf{c}$ satisfies the following conditions:
\[c^{(i)}:=\min\{c_k; i_k=i\}=0\]
for any $i\in I$.
By definition, $^\varphi\mbf{c}:=\mbf{c}-\sum_{i\in I}c^{(i)}\mbf{n}^i\in \mbb{Z}_{\geq 0}^l$ is interval free.
We have the following factorization property with respect to the extremal vectors $\{\Delta_{w\lambda}\}_{\lambda\in P_+}$.
\begin{theo}\label{theo:unip}
For $\mbf{c}\in \mbb{Z}_{\geq 0}^{l}$, we set $\lambda(\mbf{c}):=\sum_{i\in I}c^{(i)}\varpi_{i}\in P_+$.
Then we have 
\[\Gup(b_{-1}(\mbf{c}, \tw))\simeq \Gup(b_{-1}({^\varphi\mbf{c}}, \tw))\Delta_{w\lambda(\mbf{c})}.\]
\end{theo}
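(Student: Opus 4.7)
The plan is to reduce the statement to the immediately preceding theorem, which asserts
\[\Delta_{w\lambda}\,\Gup(b_{-1}(\mbf{c}',\tw))\simeq \Gup(b_{-1}(\mbf{c}'+\mbf{n}^{\lambda},\tw))\]
for any $\mbf{c}'\in\mbb{Z}_{\geq 0}^{l}$ and $\lambda\in P_{+}$, combined with the $q$-commutativity coming from the ``$q$-center'' result of \secref{sec:q-center}. First I would verify the combinatorial identity $\mbf{n}^{\lambda(\mbf{c})}=\sum_{i\in I}c^{(i)}\mbf{n}^{i}$: unwinding the definitions, $(\mbf{n}^{i})_{j}=\delta_{i_{j},i}$, so the $j$-th entry of $\sum_{i}c^{(i)}\mbf{n}^{i}$ equals $c^{(i_{j})}=\min\{c_{k}:i_{k}=i_{j}\}$, which is by construction the $j$-th entry of $\mbf{c}-{^{\varphi}\mbf{c}}$. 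Hence ${^{\varphi}\mbf{c}}+\mbf{n}^{\lambda(\mbf{c})}=\mbf{c}$.

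Next, I would apply the preceding theorem with $\mbf{c}'={^{\varphi}\mbf{c}}$ and $\lambda=\lambda(\mbf{c})$, obtaining
\[\Delta_{w\lambda(\mbf{c})}\,\Gup(b_{-1}({^{\varphi}\mbf{c}},\tw))\simeq \Gup(b_{-1}(\mbf{c},\tw)).\]
It then remains only to move the factor $\Delta_{w\lambda(\mbf{c})}$ past $\Gup(b_{-1}({^{\varphi}\mbf{c}},\tw))$. Since $b_{-1}({^{\varphi}\mbf{c}},\tw)\in\mscr{B}(w,-1)\subset\mscr{B}_{w^{-1}}(\infty)$ by \thmref{theo:emb}, and since \propref{prop:q-center} together with its corollary shows that every quantum minor $\Delta_{w\lambda}$ $q$-commutes with $\Gup(b_{-1}(\mbf{c}',\tw))$ inside the quantum unipotent subgroup $\mca{O}_{q}[N(w)]$, we get
\[\Delta_{w\lambda(\mbf{c})}\,\Gup(b_{-1}({^{\varphi}\mbf{c}},\tw))\simeq \Gup(b_{-1}({^{\varphi}\mbf{c}},\tw))\,\Delta_{w\lambda(\mbf{c})}.\]
Combining the two displayed equivalences yields the claimed factorization.

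There is essentially no obstacle: the real content has already been absorbed into the preceding theorem (whose proof uses \lemref{lem:string3} and the dual bar-involution to rule out lower-order terms), into \thmref{theo:emb} (which places $b_{-1}(\mbf{c},\tw)$ in $\mscr{B}_{w^{-1}}(\infty)$ so that the $q$-commutativity of \propref{prop:q-center} genuinely applies in $\mca{O}_{q}[N(w)]$, not merely in the quotient $\mca{O}_{q}[\overline{N_{w^{-1}}}]$), and into \lemref{lem:extremal} (ensuring $\Delta_{w\lambda(\mbf{c})}$ is itself a single dual canonical basis element up to a $q$-shift, so that $\simeq$ is the right notion of equality). The only thing to watch is the bookkeeping of the overall $q$-power when commuting $\Delta_{w\lambda(\mbf{c})}$ across, but because the statement is up to $\simeq$ this power is irrelevant.
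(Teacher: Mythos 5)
Your proof is correct and is essentially the paper's own (implicit) derivation: Theorem~\ref{theo:unip} is just the preceding theorem applied with $\mbf{c}'={}^{\varphi}\mbf{c}$ and $\lambda=\lambda(\mbf{c})$, using ${}^{\varphi}\mbf{c}+\mbf{n}^{\lambda(\mbf{c})}=\mbf{c}$ together with the $q$-commutativity of $\Delta_{w\lambda(\mbf{c})}$ with $\Gup(b_{-1}({}^{\varphi}\mbf{c},\tw))$ (which also follows directly from \corref{cor:inverse}), the $q$-shift being irrelevant for $\simeq$. Only a minor bookkeeping slip: Theorem~\ref{theo:emb} with $e=-1$ places $\mscr{B}(w,-1)$ inside $\mscr{B}_{w}(\infty)$, not $\mscr{B}_{w^{-1}}(\infty)$, but this does not affect your argument.
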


\begin{NB}
The following sections are far from complete.
\section{Conjecture and its consequence}\label{sec:conj}
In this section, we clarify a conjecture which concerns the quantum cluster algebra \cite{BZ:qcluster} and the quantum unipotent subgroup $\mca{O}_{q}[N(w)]$.

\subsection{Quantization conjecture}
\subsubsection{}
For $w\in W$, we consider the quantum unipotent subgroup $\mca{O}_{q}[N(w)]$ and define its $\mca{A}$-form
\[\mca{O}_{q}[N(w)]_{\mca{A}}:=\bigoplus_{b\in \mscr{B}(w)}\mca{A}\Gup(b)\]
by using the dual canonical basis.
Let $\mca{F}(\mca{O}_{q}[N(w)])$ be the fraction field of $\mca{O}_{q}[N(w)]$.
\subsubsection{}
Let $R$ be an (commutative) integral domain,
and $v\in R$ be an invertible element.
Let $L$ be a free $\mbb{Z}$-module  with a $\mbb{Z}$-valued skew symmetric bilinear form 
$\lambda\colon L\times L\to \mbb{Z}$.
\begin{defn}
The \emph{quantum torus} $\mca{T}=\mca{T}(L, \lambda)$ over $(R, v)$ is the $R$-algebra
generated by $X^{g}~(g\in L)$ with the following relations:
\[X^{g}X^{g'}=v^{\lambda(g, g')}X^{g+g'},\]
for any $g, g'\in L$.
\end{defn}

\subsubsection{}
For $\tw\in R(w)$, let $L$ be a free $\mbb{Z}$-module of rank $l$.
We set $\Lambda_{\tw}$ be the skew-symmetric bilinear form defined by
\[\Lambda_{\tw}(e_{i}, e_{j}):=N_{\tw}(\mbf{n}_{i}, \mbf{n}_{j}).\]
Here $N_{\tw}(\mbf{n}_{i}, \mbf{n}_{j})$ is defined in \eqref{eq:lambda}, and $\{e_{i}\}_{1\leq i\leq l}$ is a free basis of $L$.
\subsubsection{}
For $\tw\in R(w)$, we have a subalgebra $\mca{A}_{\tw}$ of $\mca{O}_q[N(w)]$ which is generated by $\{\Delta_{\tw, k}\}_{1\leq k\leq l}$.
Here $\mca{A}_{\tw}$ is a quantum polynomial algebra.
Let $\mca{T}_{\tw}$ by a quantum subtorus of $\mca{F}(\mca{O}_q[N(w)])$ which is generated by 
$\{\Delta_{\tw, k}\}_{1\leq k\leq l}$ and their inverses.
For the construction of quantum cluster algebra, we give a quantum seed.
To give a quantum seed, we need exchange matrix and the compatible pair.
Since the construction of ``initial seed'' is same as in \cite[Theorem 10.1]{BZ:qcluster} (and \cite{GLS:KacMoody}),
we already have the compatible pair of it.
We denote it  by $(\Gamma_{\tw}, \Lambda_{\tw})$.
Then we can associate a quantum cluster algebra with coefficient $\mscr{A}^{q}(\Gamma_{\tw}, \Lambda_{\tw})$ over $(\mca{A}, q_{s})$
in the sense of \cite[Definition 2.1]{Qin}.
As in \cite[Proposition 10.9]{BZ:qcluster}, we have the following: 
\begin{prop}
The correspondence $X_k \mapsto \Delta_{\tw, k}$ uniquely extends to 
the $\mbb{Q}(q_s)$-algebra isomorphism $\mca{T}(\Lambda_{\tw})\simeq \mca{T}_{\tw}$,
in particuar we have a $\Qq$-algebra embedding $\Phi_{\tw}\colon \mca{F}(\mca{T}(\Lambda_{\tw}))\hookrightarrow \mca{F}(\mca{O}_{q}(N(w))).$
\end{prop}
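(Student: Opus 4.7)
The plan is to split the assertion into three steps: (i) verify that the quantum flag minors $\Delta_{\tw,k}$ satisfy exactly the $q$-commutation relations defining $\mca{T}(\Lambda_{\tw})$, so that the assignment $X_k\mapsto\Delta_{\tw,k}$ extends uniquely to an algebra homomorphism $\mca{T}(\Lambda_{\tw})\to \mca{F}(\mca{O}_{q}[N(w)])$ with image $\mca{T}_{\tw}$; (ii) prove that this homomorphism is injective by exhibiting its image of the natural $\Qq$-basis $\{X^{\mbf{a}}\}_{\mbf{a}\in\mbb{Z}^{l}}$ as a $\Qq$-linearly independent family in $\mca{F}(\mca{O}_{q}[N(w)])$; (iii) pass from the isomorphism of Ore domains $\mca{T}(\Lambda_{\tw})\simeq\mca{T}_{\tw}$ to the claimed embedding of skew fields of fractions.

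For step (i), recall from \thmref{theo:seed} that $\{\Delta_{\tw,k}\}_{1\leq k\leq l}$ is strongly compatible, and by \corref{cor:inverse} any two strongly compatible elements $q$-commute. Thus $\Delta_{\tw,k}\Delta_{\tw,j}=q^{N_{\tw,k,j}}\Delta_{\tw,j}\Delta_{\tw,k}$ for some integer $N_{\tw,k,j}$, and since $\Delta_{\tw,k}=\Gup(b_{-1}(\mbf{n}_{k},\tw))$ by \propref{prop:extremePBW1} together with the identification in Section~\ref{sec:qflag}, the explicit formula \eqref{eq:lambda} identifies $N_{\tw,k,j}$ with $N_{\tw}(\mbf{n}_{k},\mbf{n}_{j})=\Lambda_{\tw}(e_{k},e_{j})$. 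Hence the universal property of the subalgebra of $\mca{T}(\Lambda_{\tw})$ generated by the $X_{k}$ gives a well-defined algebra map $\mca{A}(\Lambda_{\tw})\to\mca{O}_{q}[N(w)]$, $X_{k}\mapsto\Delta_{\tw,k}$.

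For step (ii), I want to show that for each $\mbf{a}=(a_{1},\dots,a_{l})\in\mbb{Z}_{\geq 0}^{l}$ the monomial $\Delta_{\tw,1}^{a_{1}}\cdots\Delta_{\tw,l}^{a_{l}}$ is a nonzero $q$-multiple of the single dual canonical basis element $\Gup(b_{-1}(\sum a_{k}\mbf{n}_{k},\tw))$, and that distinct $\mbf{a}$ produce distinct basis elements. The first claim is a repeated application of \corref{cor:dsum} (i.e.\ strong compatibility of the flag minors), after applying \propref{prop:dsum_formula} to track the lexicographically highest dual PBW term. The second claim reduces to the $\mbb{Z}$-linear independence of $\{\mbf{n}_{k}\}_{1\leq k\leq l}$ in $\mbb{Z}^{l}$, which is immediate because $\mbf{n}_{k}$ satisfies $\mbf{n}_{k}(k)=1$ and $\mbf{n}_{k}(j)=0$ for $j>k$, so the matrix $(\mbf{n}_{k}(j))_{j,k}$ is unitriangular. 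Consequently the positive cone of $\mca{T}(\Lambda_{\tw})$ maps injectively into $\mca{O}_{q}[N(w)]$, so the generators $\Delta_{\tw,k}$ generate a quantum affine space, i.e.\ a Noetherian Ore domain.

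For step (iii), since the $\Delta_{\tw,k}$ pairwise $q$-commute inside the domain $\mca{O}_{q}[N(w)]$, the multiplicative set they generate satisfies the Ore conditions, so the Ore localization $\mca{T}_{\tw}$ embeds into $\mca{F}(\mca{O}_{q}[N(w)])$. The universal property of Ore localization now lifts the map of step (i) to an algebra homomorphism $\mca{T}(\Lambda_{\tw})\to\mca{T}_{\tw}$ which is surjective by construction; injectivity on the full quantum torus follows from step (ii) by clearing denominators in any putative relation. Finally, a quantum torus is itself an Ore domain, so it has a skew field of fractions $\mca{F}(\mca{T}(\Lambda_{\tw}))$, and the isomorphism $\mca{T}(\Lambda_{\tw})\simeq\mca{T}_{\tw}\subset\mca{F}(\mca{O}_{q}[N(w)])$ extends uniquely to the asserted embedding $\Phi_{\tw}$. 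The main obstacle is step (i): pinning down that the ambient $q$-power of the $q$-commutation of two flag minors is exactly the combinatorial quantity $\Lambda_{\tw}(e_{k},e_{j})$ rather than some other integer; this depends on combining \corref{cor:inverse} with the explicit formula \eqref{eq:lambda} through the identification $\Delta_{\tw,k}=\Gup(b_{-1}(\mbf{n}_{k},\tw))$, and it is the only place where the precise bilinear form $\Lambda_{\tw}$ enters.
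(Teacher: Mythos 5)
Your proposal is correct and follows essentially the same route as the reference the paper itself invokes for this statement (Berenstein--Zelevinsky, Proposition~10.9): verify the $q$-commutation relations of the flag minors match the defining relations of $\mca{T}(\Lambda_{\tw})$ via Theorem~\ref{theo:seed}, Corollary~\ref{cor:inverse}, formula~\eqref{eq:lambda}, and the identification $\Delta_{\tw,k}=\Gup(b_{-1}(\mbf{n}_k,\tw))$; deduce injectivity from strong compatibility (Corollary~\ref{cor:dsum}) together with the lower-unitriangularity, hence $\mbb{Z}$-linear independence, of $\{\mbf{n}_k\}$; and then pass to skew fields of fractions by the universal property of Ore localization. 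The paper provides no separate proof of this proposition beyond citing that reference, and your reconstruction fills in exactly the expected steps.
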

The following is a main conjecture of this paper.
\begin{conj}[quantization conjecture]

\textup{(1)}
By restricting the isomorphism $\Phi_{\tw} \colon \mca{T}(\Lambda_{\tw})\simeq \mca{T}_{\tw}$,
we obtain an $\mca{A}$-algebra isomorphism: 
\[\mscr{A}^{q}(\Gamma_{\tw}, \Lambda_{\tw})\otimes_{\mbb{Z}}\mbb{Q}\simeq \mca{O}_{q}[N(w)]_{\mca{A}}.\]
\textup{(2)}
The image of the set of quantum cluster monomials is contained in the dual canonical base $\mbf{B}^{\up}(w)$
up to some $q$-shifts.
\end{conj}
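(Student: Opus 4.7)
The quantization conjecture has two parts. Part (1) asks for an $\mca{A}$-algebra isomorphism between the quantum cluster algebra $\mscr{A}^q(\Gamma_{\tw}, \Lambda_{\tw})$ and $\mca{O}_q[N(w)]_{\mca{A}}$ sending the initial seed to the quantum flag minors $\{\Delta_{\tw,k}\}_{1\leq k\leq l}$, and part (2) asks that every quantum cluster monomial lie in $q^{\mbb{Z}}\mbf{B}^{\up}(w)$. The initial seed is essentially handled by this paper: \thmref{theo:seed} shows the quantum flag minors form a strongly compatible subset of $\mbf{B}^{\up}(w)$, and \propref{prop:q-center} together with \eqref{eq:lambda} verifies the $q$-commutation relations encoded by $\Lambda_{\tw}$. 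The quantum Laurent phenomenon of \cite{BZ:qcluster} then extends $\Phi_{\tw}$ uniquely to an $\mca{A}$-algebra homomorphism into the skew field $\mca{F}(\mca{O}_q[N(w)])$. For surjectivity onto $\mca{O}_q[N(w)]_{\mca{A}}$, one would compare graded dimensions via the dual PBW basis of \secref{sec:PBWbasis}, or specialize at $q=1$ and invoke the classical result of \cite{GLS:KacMoody} together with an $\mca{A}$-flatness argument to lift to the quantum setting.

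The natural attack on part (2) is induction on the graph distance from the initial seed in the quantum exchange graph. At each mutation $x_k\to x'_k$ the exchange relation in $\mca{F}(\mca{O}_q[N(w)])$ reads $x'_k\cdot x_k = M_+ + M_-$ where $M_{\pm}$ are monomials in the unchanged cluster variables. By inductive hypothesis, $x_k$ and $M_{\pm}$ all lie in $q^{\mbb{Z}}\mbf{B}^{\up}(w)$; one must conclude the same for $x'_k$ and show that the new cluster is again strongly compatible. The factorization theorem \thmref{theo:unip} handles this when the mutation is essentially by a $q$-center element, reducing to manipulations of interval-free elements, and \corref{cor:string3} controls the behaviour of compatibility under the operators $\eit{i}^{\max}$. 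Combined with \propref{prop:dsum_formula}, these give a partial toolkit for moving around the exchange graph, but a systematic inductive step requires more.

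The main obstacle is that a sum of two elements of $\mbf{B}^{\up}$ is generically not in $\mbf{B}^{\up}$, so the exchange relations must possess very special structure forcing the product $x'_k x_k$ to factor as a product of dual canonical basis elements. In the symmetric Kac-Moody case this is most naturally formulated categorically: via \cite{VV:KLR}, $\mca{O}_q[N(w)]_{\mca{A}}$ is the Grothendieck ring of a subcategory of finite-dimensional Khovanov-Lauda-Rouquier modules, and $\mbf{B}^{\up}(w)$ corresponds to classes of simples; a mutation then lifts to a short exact sequence whose central term, after normalization, is simple, which is the statement that each cluster variable is real and prime in the sense of \secref{sec:upper}. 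Uniformly verifying such simplicity statements across all mutation sequences is the deep content of the conjecture and appears inaccessible purely within the algebraic framework of this paper. For the genuinely symmetrizable (non-symmetric) case, the geometric/categorical interpretation is not available in the same clean form, so one would likely either first reduce to the symmetric case by an unfolding procedure compatible with cluster structures, or develop a purely algebraic inductive scheme exploiting the strong compatibility criterion of \corref{cor:inverse} together with positivity/integrality input going beyond \thmref{theo:dualLS}.
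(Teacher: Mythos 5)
This statement is a conjecture, not a theorem, and the paper does not prove it. The body of the paper establishes only the supporting results that can be interpreted as part of the conjectural quantum cluster structure — \thmref{theo:unipgp} (compatibility of $\mbf{B}^{\up}$ with $\mca{O}_q[N(w)]$), \thmref{theo:seed} (strong compatibility of the quantum flag minors), and \thmref{theo:unip} (factorization by the ``$q$-center'') — and the introduction explicitly says the conjecture is known only for a few low-rank cases by other authors. So there is no ``paper's own proof'' to compare against, and your write-up, which you frame as an analysis of strategy and obstruction rather than a completed argument, correctly reflects that this remains open.

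One concrete gap in the strategy you do sketch is worth flagging. For surjectivity in part (1) you propose to specialize at $q=1$ and ``invoke the classical result of \cite{GLS:KacMoody} together with an $\mca{A}$-flatness argument.'' But the GLS isomorphism $\mscr{A}(\Gamma_{\tw})\simeq\mbb{C}[N(w)]$ is proved \emph{via the dual semicanonical basis} $\mca{S}^*$, not the specialized dual canonical basis. To transfer their statement into the form needed here (that specialized cluster monomials land in $\mbf{B}^{\up}(w)|_{q=1}$, hence that the image of $\Phi_{\tw}$ generates $\mca{O}_q[N(w)]_{\mca{A}}$) you would already need the open orbit conjecture for cluster monomials, and the paper notes that this is \emph{equivalent} to the weak quantization conjecture — i.e.\ Conjecture~\ref{conj:weakqconj}, which is the $q=1$ specialization of the very statement you are trying to prove. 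So the specialization route is circular as stated: it does not give an unconditional surjectivity argument, only a reduction of one open conjecture to another. Similarly, your observation that part (2) seems inaccessible within the paper's algebraic toolkit is accurate; the discussion of monoidal categorifications via KLR algebras and the reduction via \cite{VV:KLR} is exactly what the introduction proposes as the natural framework, but the paper treats these as consequences of the conjecture, not as ingredients of a proof.
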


If we specialize at $q=1$, we obtain the following conjecture.
Since the cluster algebra has natural Poisson algebra structure and this can be ``semiclassical limit'' of  the quantum cluster algebra.
Also the coordinate ring of unipotent subgroup have natural Poisson structure which comes from the Poisson-Lie group structure on $G$.
So the isomorphism is in the classical limit should be considered the the isomorphism of \emph{Poisson algebras}.
\begin{conj}[weak quantization conjecture]
If we specialize the isomorphism at $q=1$, then we have an algebra isomorphism:
\[\mscr{A}(\Gamma_{\tw})\otimes_{\mbb{Z}}\mbb{C}\simeq \mbb{C}[N(w)].\]
\textup{(2)}
The image of the set of cluster monomials is contained in the specialized dual canonical base $\mbf{B}^{\up}(w)|_{q=1}$ at $q=1$.
\end{conj}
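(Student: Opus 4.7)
The plan is to derive part (1) of the conjecture from the classical cluster algebra structure on $\mbb{C}[N(w)]$ established in \cite{GLS:KacMoody}, via the $q=1$ specialization results in \secref{sec:quantumunip}, and to reduce part (2) to an instance of the open orbit conjecture using the strong compatibility of the initial seed proved in this paper.

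First I would match initial data. By \thmref{theo:spec} and \propref{prop:dualspec}, the $\mca{A}$-form $\mca{O}_q[N(w)]_{\mca{A}}$ specializes at $q=1$ to $\mbb{C}[N(w)]$ under $\Phi^{\up}$. I would then verify that the specialization of the quantum flag minor $\Delta^q_{s_{i_1}\cdots s_{i_k}\varpi_{i_k}}$, which equals $\jbar_{\varpi_{i_k}}(u_{s_{i_1}\cdots s_{i_k}\varpi_{i_k}})$ by \defref{def:qminor}, corresponds under $\Phi^{\up}$ to the classical generalized minor $D_{s_{i_1}\cdots s_{i_k}\varpi_{i_k}}$ on $N(w)$ used by GLS. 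This matching reduces to computing the $q=1$ limit of matrix coefficients of extremal weight vectors, which is immediate from \propref{prop:extremePBW1} together with \thmref{prop:spec}. Once the specialized initial seed of $\mca{O}_q[N(w)]$ is identified with GLS's initial seed of generalized minors, the isomorphism $\mscr{A}(\Gamma_{\tw}) \otimes_{\mbb{Z}} \mbb{C} \simeq \mbb{C}[N(w)]$ and the matching of frozen variables (via our \thmref{theo:unip} identifying the $q$-center factorization) are supplied by \cite{GLS:KacMoody}.

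For part (2), the initial cluster monomials are the easy case: \thmref{theo:seed} shows that $\{\Delta_{\tw, k}\}_{1 \leq k \leq l}$ forms a strongly compatible subset of $\mbf{B}^{\up}(w)$, so every monomial in these elements equals a dual canonical basis element up to a $q$-shift, and specialization at $q=1$ places all initial cluster monomials in $\mbf{B}^{\up}(w)|_{q=1}$. For a general cluster monomial obtained by iterated mutation, I would invoke the GLS result that all cluster monomials of $\mbb{C}[N(w)]$ lie in the dual semicanonical basis $\mca{S}^*(w)$; the claim then reduces to showing these particular elements of $\mca{S}^*(w)$ agree with $q=1$ specializations of elements of $\mbf{B}^{\up}(w)$.

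The main obstacle is precisely this last reduction, which is an instance of the open orbit conjecture restricted to cluster monomials and remains open in general. A plausible inductive strategy is to fix a mutation sequence from the initial seed and show at each step that the quantum exchange relation, when written in the dual canonical basis using the characterization \corref{prop:dualtriple} (invariance under $\sigma$ together with the reduction $\equiv b \bmod q\mscr{L}(\infty)$), sends a compatible family to a compatible family. \thmref{theo:unip} can be used to strip off the $q$-center factors at each step and reduce to the interval-free case. The difficulty is to control the expansion of a mutated variable in the dual PBW basis well enough to verify \emph{both} the $\sigma$-invariance and the integrality required by \corref{prop:dualtriple}, uniformly along arbitrary mutation sequences; this is exactly the combinatorial problem that the conjecture poses, and no general method is known within the purely algebraic framework developed here. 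A complete proof will likely require either a geometric categorification through Khovanov-Lauda-Rouquier algebras (in the symmetric case, as outlined in the introduction) or a positivity input going beyond the tools available in this paper.
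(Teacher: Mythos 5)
The statement you were asked to prove is labelled as a conjecture in the paper, and the paper offers no proof of it; it is introduced only as the $q=1$ specialization of the (also conjectural) quantization conjecture. Your proposal is therefore not really in competition with a proof in the paper, and your diagnosis of the situation is essentially the same as what the paper itself says in the introduction: the paper explicitly states ``The open orbit conjecture for the cluster monomials is equivalent to the weak quantization conjecture,'' which is precisely the obstruction you identify.

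That said, a few of your intermediate claims are slightly too confident. You assert that part (1) of the weak conjecture ``is supplied by \cite{GLS:KacMoody}'' once the specialized quantum flag minors are matched with the GLS generalized minors. The paper does not in fact claim this matching is carried out, and leaves part (1) inside the conjecture; showing that $\Phi^{\up}$ sends $\Delta_{s_{i_1}\cdots s_{i_k}\varpi_{i_k},\varpi_{i_k}}$ at $q=1$ to exactly the GLS function $\varphi(\Lambda_{\tilde w_{\leq k}})$ is plausible but not stated as done anywhere in this paper, so citing \propref{prop:extremePBW1} and \thmref{prop:spec} as settling it overstates what those results give. Similarly, the inductive strategy you sketch (controlling $\sigma$-invariance and integrality along arbitrary mutation sequences) is the right shape of a would-be argument, but --- as you correctly say --- no tool in this paper lets you close the induction, and the cases that \emph{are} known (small rank, small $w$) are not obtained this way either. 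The honest conclusion you reach, that one needs either categorical input (KLR algebras) or positivity beyond the paper's scope, agrees with the paper's own outlook.
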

The definition of $\mbb{Z}$-form of the coordinate ring is not clear if we don't use the (dual) canonical basis.
This $\mbb{Z}$-form can be defined by using the Kostant's $\mbb{Z}$-form ?

\subsubsection{}
In \cite{GLS:KacMoody}, the initial seed associated with a reduced expression $\tw$ is also given by 
(restricted) generalized minor, so (1) can be considered as a quantum analogue.
Since ``coefficient'' is contained in any cluster, we should have multiplicative property with respect to any dual canonical basis element in view of the Conjecutre\ref{conj:qconj}.
Hence (3) can be considered as a quantum analogue of \cite[Lemma 15.8]{GLS:KacMoody}.

\subsection{Positivity conjecture}
For symmetric Kac-Moody case, the dual canonical basis has the positivity of structure constants for the multiplication.
In particular, the \emph{positivity conjecture} for (quantum) cluster variable is only a consequence of it.
Here, we say the positivity conjecture  for a  seed $\mbf{x}=\{x_{1}, \cdots, x_{l}\}$
(resp.\ a quantum seed $M\colon \mbb{Z}^{l}\to \mca{F}$) mean the following:

For (quantum) cluster variable $y$,
we consider the (quantum) Laurent expansion at the given seed $\mbf{x}=\{x_{1}, \cdots, x_{l}\}$
(resp.\ the given quantum seed $X\colon \mbb{Z}^{l}\to \mca{F}$):
\[y=\sum_{\mbf{c}\in \mbb{Z}^{l}}f_{\mbf{c}}x_{1}^{c_{1}}\cdots x_{l}^{c_{l}} , (\text{resp.\ }\!\!\sum_{\mbf{c}\in \mbb{Z}^{l}}f_{\mbf{c}}(q)X(\mbf{c}))\]
then we have $f_{\mbf{c}}\in \mbb{Z}_{\geq 0}$
(resp.\ $f_{\mbf{c}}(q_{s})\in \mbb{Z}_{\geq 0}[q_{s}^{\pm}]$).
\subsubsection{}

\subsection{Monoidal categorification}
\subsubsection{}
Hernandez and Leclerc \cite{HerLec} has introduced a notion of {monoidal categorification}.
We state their definition in slightly different form.
For a monoidal abelian category $\mfr{M}$ and its complexified Grothendieck ring $K_{0}(\mfr{M})\otimes_{\mbb{Z}}\mbb{C}$, we have a natural basis which is given by simple modules.
We say that a simple object of a monoidal abelian category $\mfr{M}$ is \emph{prime} if there exists no nontrivial factorization $x\simeq x_{1}\otimes x_{2}$.
We say that a simple object is (strongly) real object if $x\otimes x$ (resp.\ $x^{\otimes m}$ for $m\geq 2$) remains simple.

\subsubsection{}

For a frozen quiver $\Gamma$, we say \emph{monoidal categorification} of cluster algebra $\mscr{A}_{\Gamma}$ with coefficient in the sense of \cite{FZ:cluster4}
by a monoidal abelian category $\mfr{A}_{\Gamma}$ such that 
\begin{enumerate}
	\item There exists an algebra isomorphism between complexified Grothendieck ring of $\mfr{A}_{\Gamma}$,
	that is 
	\[\mscr{A}(\Gamma)\otimes \mbb{C}\simeq K_{0}(\mfr{A}_{\Gamma})\otimes_{\mbb{Z}}\mbb{C},\]
	\item the cluster monomials are the classes of all the (strongly) real simple objects of $\mfr{M}$.
        \item the cluster variables are the classes of all the (strongly) real prime simple objects of $\mfr{M}$ 
\end{enumerate}
If such categorification exists, then the positivity conjectures of $\mscr{A}(\Gamma)$ can be shown, see \cite[Proposition 2.2]{HerLec}

\subsubsection{}
Under the monoidal categorification of the cluster algebra, 
we have the following \emph{simple product theorem} as in \cite{HerLec} and \cite{Her}
\begin{prop}[simple product conjecture]\label{conj:simpleproduct}
If the monoidal categorification is obtained, then we have the following theorem:

For real elements  $\{b_{1}, \cdots, b_{l}\}\subset \mscr{B}(w)$, the followings are equivalent:

\textup{(1)} $\{b_{1}, \cdots, b_{l}\}$ is compatible,

\textup{(2)} $\{b_{1}, \cdots, b_{l}\}$ is strongly compatible, in particular, $b_{i}$ is strongly real for $1\leq i\leq l$.
\end{prop}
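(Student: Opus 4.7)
The implication (2)$\Rightarrow$(1) is immediate by specialising to $m_{1}=\cdots=m_{l}=1$. For the converse (1)$\Rightarrow$(2), my plan is to transport the statement to the monoidal categorification $\mfr{M}$ promised by the hypothesis, where each dual canonical basis element $\Gup(b_{i})$ corresponds to the class of a simple object $L_{i}$, each real $b_{i}$ corresponds to a real simple object (so that $L_{i}\otimes L_{i}$ remains simple), and compatibility of $\{b_{1},\ldots,b_{l}\}$ translates into simplicity of the tensor product $L_{1}\otimes\cdots\otimes L_{l}$. Strong compatibility then corresponds to simplicity of every $L_{1}^{\otimes m_{1}}\otimes\cdots\otimes L_{l}^{\otimes m_{l}}$, and strong reality of $b_{i}$ corresponds to simplicity of every $L_{i}^{\otimes m}$.

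The main technical ingredient I would establish is a Hernandez--Leclerc type lemma: if $M$ is a real simple object and $N$ is a simple object for which $M\otimes N$ is simple, then $M^{\otimes k}\otimes N$ is simple for every $k\geq 0$. I expect to prove this by induction on $k$, exploiting that $\mathrm{End}(M\otimes M)$ is one-dimensional (since $M\otimes M$ is simple), which forces any composition $M\otimes N\to N\otimes M\to M\otimes N$ built from $R$-matrix type morphisms to be a scalar and thereby provides a canonical isomorphism $M\otimes(M^{\otimes k-1}\otimes N)\cong (M^{\otimes k-1}\otimes N)\otimes M$; combined with the inductive hypothesis this pins the head and socle of $M^{\otimes k}\otimes N$ as isomorphic simple objects, whence the whole tensor product is itself simple.

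Granting this lemma, I would close the argument by induction on $l$ together with a secondary induction on the multi-index $(m_{1},\ldots,m_{l})$. A preliminary observation: simplicity of $L_{1}\otimes\cdots\otimes L_{l}$ forces simplicity of every sub-tensor-product $L_{i_{1}}\otimes\cdots\otimes L_{i_{k}}$, since a proper subobject on a subproduct would, by exactness of the tensor product, descend to a proper subobject of the total product. Hence the subset $\{b_{2},\ldots,b_{l}\}$ is again compatible; by induction on $l$ we may assume $L_{2}^{\otimes m_{2}}\otimes\cdots\otimes L_{l}^{\otimes m_{l}}$ is simple, and then apply the key lemma with $M=L_{1}$ (which is real) and $N=L_{2}^{\otimes m_{2}}\otimes\cdots\otimes L_{l}^{\otimes m_{l}}$, after verifying that $L_{1}\otimes N$ is itself simple, a fact I extract by another run of the induction using compatibility of $\{b_{1},b_{2},\ldots,b_{l}\}$ together with the $q$-commutativity supplied by Corollary~\ref{cor:inverse}. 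Taking all but one $m_{j}$ equal to zero yields simplicity of $L_{i}^{\otimes m}$, so each $b_{i}$ is strongly real.

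The hardest part will be the key lemma itself: the head/socle argument demands fine control over Jordan--H\"older multiplicities and the exactness of $\otimes$ in $\mfr{M}$, and the inductive step requires that the isomorphism $M\otimes(M^{\otimes k-1}\otimes N)\cong (M^{\otimes k-1}\otimes N)\otimes M$ be genuinely canonical, arising from an $R$-matrix or intertwiner; this is automatic in the KLR-module realisation of \cite{VV:KLR} but must be axiomatised for an abstract monoidal categorification. A second subtlety is upgrading Corollary~\ref{cor:inverse}, which only records $q$-commutativity of Grothendieck classes, to an actual isomorphism $L_{i}\otimes L_{j}\cong L_{j}\otimes L_{i}$ up to a scalar twist; this compatibility between the categorical and numerical levels is really the strongest content of what ``monoidal categorification'' ought to mean, and making it precise is where most of the care will be needed.
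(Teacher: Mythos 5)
The paper does not actually supply a proof of this statement: it appears inside an excluded \texttt{NB} block, is labelled a ``simple product conjecture,'' and is introduced with the phrase ``we have the following \emph{simple product theorem} as in \cite{HerLec} and \cite{Her}''---so the argument is entirely deferred to Hernandez--Leclerc and to Hernandez's paper on simple tensor products. Your proposal reconstructs precisely that deferred argument, and it is the right template: (2)$\Rightarrow$(1) by specialisation; (1)$\Rightarrow$(2) by passing to the monoidal category, showing that simplicity of the full product $L_{1}\otimes\cdots\otimes L_{l}$ forces simplicity of all adjacent subintervals by exactness of $\otimes$, and then invoking a Hernandez-type lemma (if $M$ is real simple and $M\otimes N$ is simple then $M^{\otimes k}\otimes N$ is simple) proved via R-matrices and a head/socle comparison.

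Two points are worth flagging. First, the exactness argument only directly yields simplicity of \emph{contiguous} subproducts $L_{i}\otimes L_{i+1}\otimes\cdots\otimes L_{j}$; to feed Hernandez's theorem you need every pair $L_{i}\otimes L_{j}$ simple, including non-adjacent ones, and for that you must slide factors past each other, which already requires the categorical braiding whose existence you are in the middle of establishing---so the induction must be set up to avoid this circularity (in Hernandez's treatment this is handled by always working with R-matrix-induced morphisms and never assuming isomorphisms of non-simple tensor products). Second, you have correctly identified that upgrading Corollary~\ref{cor:inverse}, which records only a $q$-commutation of Grothendieck \emph{classes}, to a genuine isomorphism $L_{i}\otimes L_{j}\cong L_{j}\otimes L_{i}$ is the substantive content of the hypothesis ``monoidal categorification''; this is automatic in the KLR realisation of \cite{VV:KLR} via R-matrices but is not a formal consequence of an abstract $K_{0}$-level isomorphism, and any careful writeup would need to take it as part of the definition rather than derive it. Beyond these, your outline is faithful to the intended proof.
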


\subsubsection{}
We propose a natural framework which gives quantum cluster algebras.
Since Khovanov-Lauda-Rouquier's algebras are graded algebras, and their graded module categories give the quantized enveloping algebra and furthermore
the dual canonical basis is given by the class of graded simple modules with some specified shifts \cite{VV:KLR}.
As a consequence of the quantization conjecture for symmetric case, we obtain the following graded analogue of monoidal categorification
(we call \emph{graded monoidal categorification}).
For a monoidal abelian category $(\mfr{M}, [1])$ with a shift functor $[1]$ , we define the corresponding quantum Grothendieck ring as follows:
Let $K_{0}(\mfr{M})$ be the Grothendieck ring of $\mfr{M}$ and define $\mbb{Z}[q^{\pm}]$-action by 
$q[L]=[L[1]]$. 
As same as monoidal categorification, graded monoidal categorification of quantum cluster algebra yields the positivity conjecture for it.
In particular, cluster expansion formulae gives the graded decomposition numbers for the tensor product of mutation pair

For a compatible pair $(\Gamma, \Lambda)$, we say a \emph{graded monoidal categorification} of the quantum cluster algebra with coefficient $\mscr{A}^{q}(\Gamma, \Lambda)$
by a monoidal abelian category with a shift functor $(\mfr{A}_{\Gamma, \Lambda}, [1])$ such that 
\begin{enumerate}
	\item There exists an $\mbb{Z}[q^{\pm}]$-algebra isomorphism between the quantum Grothendieck ring of $\mfr{A}_{\Gamma, \Lambda}$,
	that is we have 
	\[\mscr{A}^{q}(\Gamma, \Lambda)\simeq K_{0}(\mfr{A}_{\Gamma, \Lambda}),\]
	
	\item the quantum cluster monomials are contained in the classes of (strongly) real simple objects of $\mfr{A}_{\Gamma, \Lambda}$,
	
	\item the quantum cluster variables are contained in the classes of (strongly) real prime objects of $\mfr{A}_{\Gamma, \Lambda}$.
\end{enumerate}

\subsubsection{}
In \cite{HerLec}, they introduced the monoidal subcategory $\mca{C}_l$
of finite dimensional representations of quantm affine algebra $\Uq(\mbf{L}\mfr{g})$.
By using $\mca{C}_{1}$ of type $A_{n}$ and $D_{4}$, they gave a monoidal categorification.
Nakajima \cite{Nak:cluster} proved their conjecture for any symmetric Kac-Moody type with some bipartite orientations, in particular for finite ADE cases.
He has shown that the cluster algebra can be embed into its Grothendieck ring of $\mca{C}_1$ and 
the set of cluster monomial forms a subset of the basis given by the classes of simple modules in it.
In particular, under some modification, it is shown that the truncation of (specialization) 
truncated $q$-character gives a cluster character.
In its construction, the $t$-analogue of $q$-character of simple modules can be regard as the dual canonical basis or its $t$-character.
Since there is a $t$-deformed Grothendieck ring by $t$-analogue of $q$-character,
then the $t$-analogue of cluster algebra (quantum cluster algebra) should be given by it under some modification.

For the generalization of the his construction, it is natural to consider subalgebras of $\Uq^-(\mfr{g})$ which are compatible with the dual canonical basis.
So this is precisely the original motivation of the study of the cluster algebras !

\subsubsection{}
By specializing the conjecture \ref{conj:qcat} at $q=1$,
and \cite{VV:KLR}, we have the following conjecture.
Let $\mfr{N}(w)$ be the extension-closed full subcategory of finite dimensional modules over the Khovanov-Lauda-Rouquier's algebras
which is generated by $\mscr{B}(w, -1)$, that is the tensor subcategory 
of finite dimensional module over Khovanov-Lauda-Rouquier's algebras
whose composition factors are contained in $\mscr{B}(w, -1)$.
This is already a corollary of Gei\ss-Leclerc-Schr\"{o}er's additive cateogorification
and theie {open orbit conjecture}.
\begin{prop}
If the (weak) quantization conjecture is true,
then the monoidal category $\mfr{N}(w)$ gives a monoidal categorification of the cluster algebra $\mscr{A}(\Gamma_{\tw})\mbb{C}[N(w)]$.
In particular, we have an isomorphism of algebras:
\[K_0(\mfr{N}(w))\otimes_{\mbb{Z}}\mbb{C}\simeq \mbb{C}[N(w)],\]
and the set of cluster monomials of $\mbb{C}[N(w)]$ coincides with the class of real simple objects.
\end{prop}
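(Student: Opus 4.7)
The plan is to combine three inputs: (i) the Varagnolo--Vasserot/Rouquier categorification theorem, which identifies the graded Grothendieck ring of finite dimensional graded modules over the Khovanov--Lauda--Rouquier algebra $R(\mfr{g})$ (equipped with the induction/convolution product) with $\Uq^-(\mfr{g})_{\mca{A}}^{\up}$, sending each self-dual graded simple $L_b$ to the dual canonical basis element $\Gup(b)$; (ii) \thmref{theo:unipgp} and \thmref{theo:spec}, giving $\mbb{C}\otimes_{\mca{A}}\Uq^-(w,-1)_{\mca{A}}^{\up}\simeq \mbb{C}[N(w)]$ via $\Phi^{\up}$; and (iii) the weak quantization conjecture, identifying $\mscr{A}(\Gamma_{\tw})\otimes\mbb{C}$ with $\mbb{C}[N(w)]$ in such a way that the cluster monomials lie in the specialization $\mbf{B}^{\up}(w)|_{q=1}$ of the dual canonical basis.

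First I would produce the algebra isomorphism. By definition $\mfr{N}(w)$ is the extension-closed full subcategory of the ungraded module category of $R(\mfr{g})$ generated by the simples labelled by $\mscr{B}(w,-1)$. The key point for monoidality is that $\mfr{N}(w)$ is closed under convolution: by \thmref{theo:unipgp} the $\mca{A}$-span $\bigoplus_{b\in\mscr{B}(w,-1)}\mca{A}\Gup(b)$ is a subalgebra of $\Uq^{-}(\mfr{g})_{\mca{A}}^{\up}$, and under the VV/KLR identification this translates into the statement that every composition factor of an induction of two modules in $\mfr{N}(w)$ is again labelled by an element of $\mscr{B}(w,-1)$. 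Passing to the ungraded category (specializing $q\mapsto 1$) and then composing with $\Phi^{\up}$ from \thmref{theo:spec} yields
\[K_0(\mfr{N}(w))\otimes_{\mbb{Z}}\mbb{C}\;\simeq\; \mbb{C}\otimes_{\mca{A}}\Uq^{-}(w,-1)_{\mca{A}}^{\up}\;\xrightarrow{\Phi^{\up}}\;\mbb{C}[N(w)],\]
under which $[L_b]\mapsto \Phi^{\up}(\Gup(b)|_{q=1})$ for $b\in\mscr{B}(w,-1)$.

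Next I would match cluster monomials with real simples. Under the weak quantization conjecture the composite $\mscr{A}(\Gamma_{\tw})\otimes\mbb{C}\xrightarrow{\sim}\mbb{C}[N(w)]\xrightarrow{\sim}K_0(\mfr{N}(w))\otimes\mbb{C}$ carries every cluster monomial $x$ to the class $[L_{b(x)}]$ of a single simple module, because $x\in\mbf{B}^{\up}(w)|_{q=1}$. To see that such $L_{b(x)}$ is real (indeed strongly real), I would use the elementary cluster-theoretic fact that any power of a cluster monomial is again a cluster monomial in the same cluster; hence $[L_{b(x)}]^{m}=[L_{b(x^m)}]$ in $K_0(\mfr{N}(w))$ for every $m\geq 1$. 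Since a class equal to that of a single simple forces the underlying module to be simple, the convolution power $L_{b(x)}^{\circ m}$ is itself simple, so $L_{b(x)}$ is strongly real and prime if $x$ is a cluster variable.

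The main obstacle is the reverse inclusion, namely that every (strongly) real simple object of $\mfr{N}(w)$ arises from a cluster monomial. The weak quantization conjecture only furnishes a one-sided containment of the cluster monomials in $\mbf{B}^{\up}(w)|_{q=1}$, and in general not every real dual canonical basis element is known to be a cluster monomial, so the literal ``coincides'' in the statement must either be read as the Hernandez--Leclerc weak form (cluster monomials are among the classes of real simples, which already implies the positivity consequences via \cite{HerLec}) or proved by substantial extra input such as R-matrix or $\Ext$-vanishing techniques on KLR modules; this latter strengthening is beyond what the weak quantization conjecture alone provides.
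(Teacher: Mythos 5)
The proposition you are proving lives in an excluded \texttt{NB} block and carries no proof in the paper; the only indication of an argument is the one-line preamble ``By specializing the conjecture at $q=1$, and [VV:KLR].'' Your reconstruction is exactly that intended argument: you use the Varagnolo--Vasserot/Rouquier identification of $K_{0}$ of graded KLR-modules with $\Uq^{-}(\mfr{g})_{\mca{A}}^{\up}$ (simples $\leftrightarrow\mbf{B}^{\up}$), invoke \thmref{theo:unipgp} to see that the span of $\{\Gup(b)\}_{b\in\mscr{B}(w,-1)}$ is a subalgebra (equivalently, that $\mfr{N}(w)$ is closed under convolution), invoke \thmref{theo:spec} for the $q=1$ specialization $\Phi^{\up}$, and invoke the weak quantization conjecture to place cluster monomials in $\mbf{B}^{\up}(w)|_{q=1}$. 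Your deduction that a cluster monomial class is the class of a genuine simple, and that powers of cluster monomials being cluster monomials forces strong reality of the corresponding simple (a $K_{0}$-class equal to the class of a single simple forces length one), is correct.

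The obstruction you flag at the end is real and is the right thing to point at. The weak quantization conjecture gives only the containment ``cluster monomials $\subset$ classes of real simple objects.'' The proposition as worded demands the opposite inclusion as well (``coincides''/``all the real simples''), but neither the weak quantization conjecture nor the VV/KLR dictionary supplies it. Notice that the paper's own introduction is more careful: it only claims the one-sided form ``the cluster monomials are classes of simple objects,'' which is the genuine Hernandez--Leclerc formulation. To upgrade to an equality one needs a reachability input such as the conjecture of \cite[Conjecture~II~5.3]{BIRSc} (every rigid/reachable object arises by mutation) together with the open orbit conjecture relating rigid irreducible components to the dual (semi)canonical bases; the paper invokes exactly these in adjacent \texttt{NB} paragraphs, but they are additional hypotheses beyond the weak quantization conjecture. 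So your diagnosis is accurate: the ``coincides'' in the statement is an overstatement relative to the stated hypothesis, and your forward-direction argument is the correct one and is as far as the given assumption actually carries you.
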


\subsection{Gei\ss-Leclerc-Schr\"{o}r's open orbit conjecture}
\subsubsection{}
For the canonical basis $\mbf{B}$, we have a crystal structure $\mscr{B}(\infty)$.
In \cite{KasSai}, they constructed the crystal $\mscr{B}(\infty)$ by using the set $\operatorname{Irr}\Lambda$ of irreducible components of certain Lagrangian variety $\Lambda_{\mbf{V}}$ of cotangent bundle 
$\mbf{E}_{\mbf{V}}=T^*\mbf{E}_{\Omega, \mbf{V}}$ of representation variety $\mbf{E}_{\Omega, \mbf{V}}$ of quiver $(I, \Omega)$.
This Lagrangian variety $\Lambda_{\mbf{V}}$ is called Lusztig's quiver variety.
By using their result, Lusztig introduced the \emph{semicanonical basis} which is a basis of universal enveloping algebra $U(\mathfrak{n}_-)$ in \cite{Lus:aff, Lus:semican}.
He studied the space of constructible functions on $\Lambda_{\mbf{V}}$ and consider a linear form $\rho_b$ which evaluates ``generic value'' of constructible functions for each irreducible component $\Lambda_b\in \Irr\Lambda$.
Here $\{\rho_{b}\}_{b\in \mscr{B}(\infty)}$ is the \emph{dual semicanonical basis}.
The semicanonical basis $\{f_b\}_{b\in \mscr{B}(\infty)}$ is defined by its generic value for each irreducible components
by $\rho_{b}(f_{b'})=\delta_{b, b'}$, so this is canonically parametrized by $\operatorname{Irr}\Lambda$.
It is known that this basis has same crystal structure as same as canonical basis \cite[Theorem 3.1, Theorem 3.8]{Lus:semican}.
\subsubsection{}
In \cite{GLS:adaptable, GLS:KacMoody}, 
Gei\ss, Leclerc, Schr\"{o}er studied a connection between the dual semicanonical basis and cluster algebras.
In particular they gave \emph{additive categorification} of the cluster algebra of 
unipotent subgroups and unipotent cells by using Frobenius $2$-Calabi-Yau category and its cluster structure
in the sense of \cite{BIRSc}.

\subsubsection{}
Furthermore, they give the following conjecture which connects the dual semicanonical basis and the specialization of the dual canonical basis.
\begin{conj}[Open orbit conjecture {\cite[Conjecture 18.1]{GLS:KacMoody}}]\label{conj:naive_ooc}
For $b\in \mscr{B}(\infty)$, let $\Gup(b)|_{q=1}$ and $\rho_{b}$ be the associated the specialization of the dual canonical basis element and the dual semicanonical basis element.
If the associated irreducible component $\Lambda_b$ contains an open orbit, then we have 
\[\Gup(b)|_{q=1}=\rho_{b}.\]
\end{conj}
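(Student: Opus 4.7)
The plan is to argue, in the symmetric Kac-Moody setting where both bases admit geometric realizations, by identifying both sides with a common generating function of Euler characteristics of quiver Grassmannians. First I would reformulate the statement using the Gei\ss-Leclerc-Schr\"oer map $\varphi$, which assigns to each module $M$ of the preprojective algebra $\Lambda$ of dimension vector $\mbf{d}$ a linear form $\varphi_M \in U(\mfr{n})_\mbf{d}^*\subset \mbb{C}[N]$ whose value on a PBW-type monomial $e_{i_1}^{(a_1)}\cdots e_{i_r}^{(a_r)}$ is the Euler characteristic of a flag variety $\operatorname{Fl}_{\mbf{i},\mbf{a}}(M)$ of composition series in $M$. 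By Lusztig's construction of the dual semicanonical basis, if $M_b$ is a generic point of the irreducible component $\Lambda_b\subset \Lambda_\mbf{V}$, then $\varphi_{M_b}=\rho_b$; the existence of a generic $M_b$ is precisely guaranteed by the hypothesis that $\Lambda_b$ contains an open orbit.

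Next I would relate $\Gup(b)|_{q=1}$ to $\varphi_{M_b}$ via Lusztig's geometric formula, which expresses $\langle \Gup(b)|_{q=1}, e_{i_1}^{(a_1)}\cdots e_{i_r}^{(a_r)}\rangle$ as a weighted Euler characteristic on a Lusztig-Nakajima convolution variety, with weights given by the stalks of the simple perverse sheaf $IC_b$ corresponding to $b$ under Lusztig's parametrization of $\mbf{B}$. Under the Kashiwara-Saito microlocal bijection between $\mscr{B}(\infty)$ and $\operatorname{Irr}\Lambda_\mbf{V}$, the component $\Lambda_b$ is the conormal to $\overline{O_{M_b}}\subset \mbf{E}_{\Omega,\mbf{V}}$, so $IC_b = IC(\overline{O_{M_b}})$. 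The open-orbit hypothesis means that $M_b$ lies in the smooth open stratum of its own orbit closure, so the stalk of $IC_b$ at the generic point $M_b$ is the shifted constant sheaf, contributing trivially to the weighted count. This matches the plain Euler characteristic appearing in $\varphi_{M_b}$.

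The final step is to upgrade this pointwise match at $M_b$ to the identity $\Gup(b)|_{q=1}=\rho_b$ as elements of $\mbb{C}[N]$. Expanding $\Gup(b)|_{q=1}-\rho_b$ in the dual semicanonical basis, I would use that both bases share the same crystal parametrization by $\mscr{B}(\infty)$ (with identical leading terms in Kashiwara's crystal filtration, by results of \cite{GLS:chamber}) to conclude that $\Gup(b)|_{q=1}-\rho_b$ lies in the span of $\{\rho_{b'}: b'\neq b, \,\wt(b')=\wt(b)\}$. For each such $b'$, the coefficient of $\rho_{b'}$ can be read off as a value of the difference on the generic module $M_{b'}$ of $\Lambda_{b'}$; this involves IC stalks of $IC_b$ at $M_{b'}\in\overline{O_{M_b}}\setminus O_{M_b}$, which must be shown to cancel against the combinatorics on the quiver Grassmannian side.

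The main obstacle will be controlling precisely these IC stalks at non-generic strata: the open-orbit hypothesis gives trivial stalks only at $M_b$ itself, while the comparison on all of $\mbb{C}[N]$ requires understanding $IC_b$ over the entire closure $\overline{O_{M_b}}$ and over related convolution diagrams. This is the heart of the open orbit conjecture and, as the author notes, remains open in general; a full proof would presumably require either a new vanishing theorem for IC stalks of orbit closures whose conormals are irreducible components of $\Lambda_\mbf{V}$, or a direct sheaf-function dictionary between $IC_b$ on $\mbf{E}_{\Omega,\mbf{V}}$ and the semicanonical constructible function $f_b$ on $\Lambda_b$ realized through a microlocalization or nearby-cycle construction.
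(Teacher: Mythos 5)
This statement is a \emph{conjecture} (GLS's open orbit conjecture, \cite[Conjecture 18.1]{GLS:KacMoody}), and the paper neither proves it nor claims to; it is cited as motivation and remains open. There is thus no ``paper's own proof'' to compare against, and you should not represent your outline as a proof.

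That said, your sketch is a faithful reconstruction of the standard geometric heuristic. The map you call Lusztig's geometric formula expresses, via the decomposition theorem applied to the Lusztig resolution $\pi_{\mbf i,\mbf a}$, the identity
\[
\rho_b \;=\; \sum_{b'} c_{b,b'}\,\Gup(b')\big|_{q=1},
\qquad
c_{b,b'}=\chi\bigl(\text{stalk of } \IC_{b'} \text{ at a generic point of } \Lambda_b\bigr),
\]
where the change-of-basis matrix $(c_{b,b'})$ is unitriangular. Your paragraph two only establishes the diagonal entry: the open-orbit hypothesis, combined with the Kashiwara--Saito identification $\Lambda_b=\overline{T^*_{O_{M_b}}\mbf E_{\Omega,\mbf V}}$, forces $\operatorname{Supp}(\IC_b)=\overline{O_{M_b}}$ and hence $c_{b,b}=1$. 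But the conjecture is the vanishing of the \emph{off}-diagonal entries $c_{b,b'}$, which are Euler characteristics of $\IC$ stalks along \emph{lower} strata of orbit closures and of $\IC$'s from \emph{other} components. Your paragraph three circles back to exactly this point and correctly names it as the obstacle. So the proposal is internally honest, but nothing in it moves past the status quo: no new vanishing theorem, no microlocal or nearby-cycle mechanism that would force the cancellation. You should make explicit that what you are ``upgrading'' in step three was never actually established in step two; the ``pointwise match at $M_b$'' concerns only one entry of the transition matrix, not an equality of functionals. As a plan, the write-up is reasonable and locates the difficulty accurately; as a proof it does not close, and cannot by these means alone.
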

The above conjecture states that the set of cluster monomials is contained the intersection of the specialized dual canonical basis $\mca{B}^*:=\mbf{B}^{\up}|_{q=1}$ 
and the dual semicanonical basis $\mca{S}^*$.

\subsubsection{}
If we assume the {open orbit conjecture}, we obtain the following proposition.
\begin{prop}
If $b$ is rigid, then $b$ is strongly real.
\end{prop}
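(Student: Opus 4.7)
The plan is to combine the open orbit conjecture with the known multiplicative behaviour of the dual semicanonical basis on rigid components, and then use positivity of structure constants in the symmetric case to upgrade an equality at $q=1$ to a genuine equality up to a $q$-shift.

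Suppose $b\in\mscr{B}(\infty)$ is rigid, so the associated irreducible component $\Lambda_b$ of Lusztig's quiver variety contains an open orbit, whose generic point is a rigid module $M$ (i.e.\ $\Ext^1_\Lambda(M,M)=0$). I would first observe that for every $m\ge 1$ the module $M^{\oplus m}$ is again rigid, so the closure of its orbit is an irreducible component of $\Lambda_{\mbf{V}^{\oplus m}}$ that again contains an open orbit; let $b^{[m]}\in\mscr{B}(\infty)$ denote the corresponding element of the crystal (so $b^{[m]}$ is itself rigid). From the defining property of the dual semicanonical basis, together with the standard fact that $\rho_M\rho_N=\rho_{M\oplus N}$ whenever $M\oplus N$ is rigid (so that no other irreducible component meets the dense orbit of $M\oplus N$), one gets
\[
\rho_b^{\,m} \;=\; \rho_{M^{\oplus m}} \;=\; \rho_{b^{[m]}}.
\]

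Next I would invoke Conjecture~\ref{conj:naive_ooc} at both $b$ and $b^{[m]}$: since each of them is rigid, we have $\Gup(b)|_{q=1}=\rho_b$ and $\Gup(b^{[m]})|_{q=1}=\rho_{b^{[m]}}$. Expanding the product in the dual canonical basis,
\[
\Gup(b)^{m} \;=\; \sum_{b'}c_{b'}(q)\,\Gup(b'),\qquad c_{b'}(q)\in\mca{A},
\]
and specialising at $q=1$, the previous display identifies the left-hand side with $\rho_{b^{[m]}}=\Gup(b^{[m]})|_{q=1}$. Since $\{\Gup(b')|_{q=1}\}_{b'\in\mscr{B}(\infty)}$ is a $\mbb{C}$-basis of the specialisation of $\Uq^-(\mfr{g})^{\up}_{\mca{A}}$, this forces $c_{b'}(1)=\delta_{b',b^{[m]}}$.

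Finally, I would use positivity: in the symmetric case (which is the framework of the open orbit conjecture), Lusztig's geometric construction yields $c_{b'}(q)\in\mbb{Z}_{\ge 0}[q^{\pm 1}]$. A Laurent polynomial with non-negative integer coefficients that takes the value $0$ (resp.\ $1$) at $q=1$ must vanish identically (resp.\ be a single monomial $q^{N}$). Hence $c_{b'}(q)=0$ for $b'\ne b^{[m]}$ and $c_{b^{[m]}}(q)=q^{N_m}$ for some $N_m\in\mbb{Z}$, giving $\Gup(b)^{m}\simeq\Gup(b^{[m]})$ for every $m$, which is exactly strong reality. The main obstacle is the upgrade step: the equality at $q=1$ only pins down the sum of the coefficients, so without the positivity input one could not rule out cancellations among several terms. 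Everything else is a combination of the open orbit conjecture, the rigidity of $M^{\oplus m}$, and the elementary fact that $\rho_M\rho_N=\rho_{M\oplus N}$ for rigid direct sums.
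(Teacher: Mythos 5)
Your argument is essentially identical to the paper's: rigidity of $M_b^{\oplus m}$, multiplicativity of the dual semicanonical basis on rigid direct sums, the open orbit conjecture at $b$ and $b^{[m]}$, and positivity in $\mbb{Z}_{\geq 0}[q^{\pm}]$ to lift the $q=1$ identity to an equality up to a power of $q$. The only cosmetic difference is that the paper additionally identifies $b^{[m]}=S_m(b)$ (not needed for strong reality itself), while you make the positivity upgrade step slightly more explicit.
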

\begin{proof}
Let $b$ a rigid element, then 
the corresponding irreducible component $\Lambda_b$ contains an open orbit.
We denote its module by $M_b$.
By its definition, $M_b^{\+m}$ is also rigid for any $m\geq 2$.
Then we have a corresponding irreducible component which arises as its orbit closure.
We denote it $b^{[m]}$. Then we can show that $b^{[m]}=S_m(b)$.
Since $b^{[m]}$ is also rigid, we have $\rho_b^m=\rho_{S_m(b)}$.
\emph{Here we use the {open orbit conjecture}}, then we have $\Gup(b)^m|_{q=1}=\Gup(S_m(b))|_{q=1}$.
Then we obtain $\Gup(b)^m=q^N\Gup(S_m(b))$ for some $N\in \mbb{Z}$ by the \emph{positivity of the dual canonical basis}.
In particular, $b$ is strongly real.
\end{proof}

\subsubsection{}
If we assume the conjecture \cite[Conjecture II 5.3]{BIRSc}, any rigid object in the Frobenius $2$ Calabi-Yau category $\mca{C}_{w}$ can be obtained by iterations of mutations.
In view of the monoidal categorification, 
\begin{prop}\label{conj:refinedOOC}
If the (weak) quantization conjecture is true, then we obtain the following propositon:

For $b\in \mscr{B}(w)$, the followings are equivalent:

\textup{(1)} $b$ is rigid, that is the associated irreducible component containes an open orbit,

\textup{(2)} $b$ is strongly real.

Moreover, the dual semicanonical basis element $\rho_{b}$ and the specialized dual canonical basis element coincides for such elements.
\end{prop}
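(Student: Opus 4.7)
The plan is to chain three identifications: BIRSc's Conjecture II.5.3 (rigid objects of $\mca{C}_w$ $\leftrightarrow$ cluster monomials of $\mscr{A}(\Gamma_{\tw})$), GLS's theorem that cluster monomials of $\mbb{C}[N(w)]$ lie in the dual semicanonical basis $\mca{S}^{*}(w)$ under the parametrization by irreducible components of Lusztig's quiver variety, and part (2) of the weak quantization conjecture (cluster monomials lie in the specialized dual canonical basis $\mbf{B}^{\up}(w)|_{q=1}$). Throughout, $\mfr{g}$ is symmetric, so both $\rho_b$ and positivity of dual canonical multiplicative structure constants are available.

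I would first establish the ``Moreover'' clause for rigid $b$. By BIRSc II.5.3, such $b$ corresponds to a cluster monomial $x \in \mbb{C}[N(w)]$. GLS identifies $x$ with $\rho_b$ (both are parametrized by the same irreducible component $\Lambda_b$, which is the one containing an open orbit), while part (2) of the weak quantization conjecture gives $x = \Gup(b')|_{q=1}$ for some $b' \in \mscr{B}(w)$. Matching weights and leading crystal terms of the two expressions forces $b' = b$, so $\Gup(b)|_{q=1} = \rho_b$.

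For $(1) \Rightarrow (2)$ I would follow the strategy already used in the proposition preceding the statement. For rigid $b$ and any $m \ge 2$, the module $M_b^{\oplus m}$ is rigid and parametrizes the open orbit of $\Lambda_{S_m(b)}$, so $\rho_b^m = \rho_{S_m(b)}$. Applying the ``Moreover'' clause to both $b$ and $S_m(b)$ yields $(\Gup(b)|_{q=1})^m = \Gup(S_m(b))|_{q=1}$. Expanding $\Gup(b)^m = \sum_{b''} c_{b''}(q)\,\Gup(b'')$ and invoking positivity of the multiplicative structure constants, so that $c_{b''}(q) \in \mbb{Z}_{\ge 0}[q^{\pm}]$, the specialization at $q=1$ being a single basis element forces all but one coefficient to vanish and the surviving one to be a monomial $q^{N_m}$. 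Hence $b$ is strongly real with $b^{[m]} = S_m(b)$.

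For $(2) \Rightarrow (1)$ I would proceed by induction on $m$, coupled with the monoidal-categorification picture. Strong reality gives $\Gup(b)^m \simeq \Gup(S_m(b))$, which specializes to $\rho_b^m = \sigma_m$ for some $\sigma_m$ in the specialized dual canonical basis. The key step, which also constitutes the main obstacle, is to show $\sigma_m = \rho_{S_m(b)}$ and that $M_b^{\oplus m}$ exhibits the open orbit of $\Lambda_{S_m(b)}$: already at $m = 2$, $\rho_b^2$ evaluated generically on a component should coincide with the value $\rho_{S_2(b)}$ takes on its open orbit, which forces a generic representative of $\Lambda_{S_2(b)}$ to be isomorphic to $M_b \oplus M_b$---equivalently, $M_b$ is rigid. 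This direction is the principal difficulty because the identification $\Gup(b)|_{q=1} = \rho_b$ from the ``Moreover'' clause was only secured for rigid $b$; extending it to all strongly real $b$ appears to require either the full open-orbit conjecture (equivalent, for cluster monomials, to the weak quantization conjecture per the introduction) or a monoidal-categorification argument in which strongly real simples of $\mfr{N}(w)$ are identified with cluster monomials, each of which is independently conjectural.
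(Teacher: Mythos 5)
The paper offers no proof of this proposition: it appears inside a suppressed NB block with no accompanying proof environment, preceded only by a remark invoking \cite[Conjecture II.5.3]{BIRSc} and the monoidal-categorification picture, so there is nothing in the paper to match your argument against in the $(2)\Rightarrow(1)$ direction. Your $(1)\Rightarrow(2)$ and the Moreover clause track the one genuine argument in the vicinity---the preceding proposition's proof, which invokes the open orbit conjecture, the identity $b^{[m]}=S_m(b)$, and positivity of structure constants for symmetric $\mfr{g}$---and they are sound in substance; the one step you gloss over is that identifying $b'$ with $b$ requires knowing that the crystal parametrizations of cluster monomials in $\mca{S}^*(w)$ and in $\mbf{B}^{\up}(w)|_{q=1}$ agree (both being normalized against the common crystal $\mscr{B}(\infty)$), not merely that the weights match.

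The difficulty you flag in $(2)\Rightarrow(1)$ is a genuine gap, and I would say it is a gap in the proposition as stated, not only in your argument. Conjecture \ref{conj:weakqconj} asserts that cluster monomials land in $\mbf{B}^{\up}(w)|_{q=1}$; it does not assert that every strongly real $b\in\mscr{B}(w)$ occurs as a cluster monomial, which is exactly what $(2)\Rightarrow(1)$ requires before \cite[Conjecture II.5.3]{BIRSc} can carry you from cluster monomial to rigid. That converse inclusion is precisely item (2) of the paper's own definition of a monoidal categorification, and nowhere does the paper argue that the weak quantization conjecture yields a monoidal categorification in that two-sided sense. The author's lead-in to the proposition already piles on further conjectural input---BIRSc II.5.3 and the two-sided monoidal-categorification property are being silently assumed on top of Conjecture \ref{conj:weakqconj}---so your diagnosis that this direction needs additional, independently conjectural hypotheses is exactly right; a proof under the hypothesis as literally stated does not exist in the paper and, as far as the text indicates, was never supplied.
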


\subsubsection{}
In the similarity with the \cite{HerLec} and \cite{Her}, the following conjecture should be expected from our Conjecture \ref{conj:qconj}.
As a collary, we can prove that real element is strongly real, and compatible family which consists of real elements is strongly compatible family.
\begin{conj}\label{conj:simpleproduct}
For a sequence of elements $b_1, \cdots , b_l\in \mscr{B}(\infty)$,
the followings are equivalent: 

\textup{(1)}
$\Gup(b_1)\cdots \Gup(b_l)\in q^{\mbb{Z}}\mbf{B}^{\up}$.

\textup{(2)}
$\Gup(b_i)\Gup(b_j)\in q^{\mbb{Z}}\mbf{B}^{\up}$ for any $i<j$.
\end{conj}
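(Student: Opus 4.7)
The plan is to establish the two implications separately. The direction $(2)\Rightarrow(1)$ I would attack by induction on $l$, while $(1)\Rightarrow(2)$ is more delicate and, outside of the symmetric case, probably requires Conjecture \ref{conj:qconj} or its monoidal-categorical consequences.

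For $(2)\Rightarrow(1)$, the base case $l=2$ is tautological. For the inductive step, the pairwise compatibility assumption together with \corref{cor:inverse} forces the $\Gup(b_i)$ to pairwise $q$-commute, so the product $P:=\Gup(b_1)\cdots\Gup(b_l)$ agrees, up to an explicit power of $q$, with any reordering of the factors. Combining this with \eqref{eq:dualbar} shows that $\sigma(P)\simeq P$ with a computable $q$-shift. Since $P\in \Uq^-(\mfr{g})_{\mca{A}}^{\up}$ automatically as a product of dual canonical basis elements, \propref{prop:dualtriple} reduces the goal to showing that $P$ lies in the crystal lattice $\mscr{L}(\infty)$ up to this $q$-shift and that its image in $\mscr{L}(\infty)/q\mscr{L}(\infty)$ is a single element of $\mscr{B}(\infty)$. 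To extract the leading term I would use iteratively the Kashiwara embeddings $\Psi_i$ of \thmref{theo:Kasemb} together with \corref{cor:string2} and \corref{cor:string3}, which let one control $\vep_i$'s of contributions to the product and peel off one factor at a time, reducing to the inductive hypothesis applied to $\{b_1,\dots,b_{l-1}\circledast b_l\}$ after showing that this shorter sequence still satisfies $(2)$.

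For $(1)\Rightarrow(2)$, the natural reduction is to the case $l=3$: if $\Gup(b_1)\Gup(b_2)\Gup(b_3)\in q^{\mbb{Z}}\mbf{B}^{\up}$, show that each pair product already lies in $q^{\mbb{Z}}\mbf{B}^{\up}$. The heart of the matter is a cancellation-versus-positivity argument: expanding $\Gup(b_1)\Gup(b_2)=\sum_{b'}d^{b'}_{b_1,b_2}(q)\Gup(b')$, we need that if two or more terms appear, then right-multiplication by $\Gup(b_3)$ cannot collapse the whole sum into a single dual canonical basis element up to $q$-shift. When $\mfr{g}$ is symmetric, Lusztig's geometric construction forces $d^{b'}_{b_1,b_2}(q)\in\mbb{Z}_{\geq 0}[q^{\pm1}]$, and a careful tracking of the leading term via \corref{cor:string2} applied to each $i\in I$ rules out such cancellations.

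The main obstacle is precisely the propagation of compatibility through the operation $\circledast$: without positivity of the structure constants $d^{b'}_{b_1,b_2}(q)$, there is no a priori combinatorial reason that the non-leading terms behave well under further multiplication, and for general symmetrizable $\mfr{g}$ positivity is not available. The cleanest unconditional resolution I see is to assume Conjecture \ref{conj:qconj}; then pairwise compatible families of real elements in $\mbf{B}^{\up}$ correspond to commuting families of real simple objects in the monoidal category of finite-dimensional modules over the Khovanov-Lauda-Rouquier algebras of \cite{VV:KLR}, and the equivalence $(1)\Leftrightarrow(2)$ reduces to the known fact that a tensor product of real simples is simple if and only if every pairwise product is.
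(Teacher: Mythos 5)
The statement you are addressing is not a theorem of the paper but an \emph{unproved conjecture}; Kimura states it inside an excluded \texttt{NB} block and offers no proof, only the remark that ``a direction (1) to (2) is clear.\ So the main part is another direction (2) to (1).'' That single sentence already contradicts your architecture: you claim that $(2)\Rightarrow(1)$ is the accessible direction and that $(1)\Rightarrow(2)$ is the delicate one requiring categorification, whereas the paper (following Hernandez) assigns exactly the opposite difficulties. In the monoidal-categorical picture, $(1)\Rightarrow(2)$ corresponds to the \emph{proved} statement that if a multi-fold induced product of simple modules is simple then every two-fold sub-product is simple; the converse, $(2)\Rightarrow(1)$, corresponds to Hernandez's \emph{open} conjecture that pairwise simplicity forces global simplicity, and that is the hard half.

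Your proposed induction for $(2)\Rightarrow(1)$ is circular. The pivotal step reads ``reducing to the inductive hypothesis applied to $\{b_1,\dots,b_{l-1}\circledast b_l\}$ after showing that this shorter sequence still satisfies $(2)$'', but $(2)$ for that shorter sequence requires precisely $b_i \bot (b_{l-1}\circledast b_l)$ for $i<l-1$, and nothing in \corref{cor:inverse}, \corref{cor:string2}, \corref{cor:string3} or \thmref{theo:Kasemb} supplies this. \corref{cor:string3} only propagates compatibility under $\eit{i}^{\max}$, not under $\circledast$; it gives no control over how $b_i$ interacts with the fusion $b_{l-1}\circledast b_l$. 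Establishing that pairwise compatibility survives $\circledast$ is essentially the content of the conjecture itself, which is exactly the ``main obstacle'' you identify at the end---so your base case and inductive step together do not close. Finally, the ``known fact that a tensor product of real simples is simple if and only if every pairwise product is'' that you invoke for the categorical reduction is not a known fact: only the ``only if'' direction (your $(1)\Rightarrow(2)$) is a theorem; the ``if'' direction is still conjectural even in the categorified symmetric setting and is the very point at issue.
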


As in \cite{Her}, a direction (1) to (2) is clear.
So the main part is another direction (2) to (1).

\subsection{Simple product conjecture}
In view of the {open orbit conjecture} and the {quantized categorification conjecure} \ref{conj:qconj},
the following conjecture is expected in the framework of the monoidal categorification \cite[Theorem 8.1]{HerLec} and \cite{Her}.
\begin{conj}\label{conj:simple_product}

\textup{(1)}
Let $b_1, \cdots, b_m$ be a family of elements in $\mscr{B}(\infty)$.
Suppose that $b_i \bot b_j$ for every $1\leq i<j \leq m$.
Then $\{b_1, \cdots, b_m\}$ is a compatible family.

\textup{(2)}
Let $b\in \mscr{B}(\infty)$ be a real element.
Then $b$ is strongly real element and $b^{[m]}=S_m(b)$ for any $m$.

\textup{(3)}
Let $\{b_1, \cdots, b_l\}$ be a compatible family of real elements in $\mscr{B}(\infty)$.
Then $\{b_{1}, \cdots, b_{l}\}$ is a strongly compatible family.
 \end{conj}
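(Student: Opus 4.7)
This statement is presented as a conjecture, so any proposal is necessarily speculative; nonetheless let me sketch a plan that combines the tools developed earlier in the paper. The three parts are logically linked: if (2) holds, then (3) reduces to iterating (1) on products of inflated elements, so the essential content is (1) plus (2). My plan is to attack (2) first by a crystal reduction using \corref{cor:string3}, and then attempt (1) by an induction on the total weight via the same corollary combined with the dual PBW machinery of Section~\ref{sec:unip}.

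For (2), the base case $m=2$ is the hypothesis $b\bot b$, and the paper already shows $b^{[2]}=S_2(b)$. The plan is to argue by induction on the total height $\tr(-\wt(b))$: if $\wt(b)=0$ then $b=u_\infty$ and the statement is trivial; otherwise choose $i\in I$ with $\vep_i(b)>0$ and set $b'=\eit{i}^{\max}b$. By \corref{cor:string3}(2) applied iteratively, $b'$ is real and $S_m(b')=\eit{i}^{\max}S_m(b)$ (using that $S_m$ is a crystal morphism of amplitude $m$ and commutes with $\eit{i}^{\max}$ up to the multiplicity rescaling). By the induction hypothesis $\Gup(b')^m\simeq \Gup(S_m(b'))$. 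The plan is then to ``lift'' this identity from $b'$ to $b$ by applying Lusztig's braid operator $T_i$ (via \thmref{theo:braidupper}) on the component with $\vep_i^*=0$, after conjugating by $*$ so that $\vep_i^*$ replaces $\vep_i$; this uses that $b\mapsto \Lambda_i(b)$ and $S_m$ commute, which is recorded in the corollary following \thmref{theo:braidupper}. The compatibility $b\bot S_{m-1}(b)$, needed to identify $\Gup(b)\cdot \Gup(b)^{m-1}$ with $\Gup(S_m(b))$, would then follow from the same lifting, after establishing it for $b'$ in lower weight.

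For (1), the natural reduction is again via \corref{cor:string3}(1): if $b_i\bot b_j$ for all $i<j$, then $\eit{i}^{\max}b_1,\dots,\eit{i}^{\max}b_m$ satisfies the same pairwise hypothesis, so by induction on total height the inflated family is compatible, with $\eit{i}^{\max}(b_1\circledast\cdots\circledast b_m)=\eit{i}^{\max}b_1\circledast\cdots\circledast \eit{i}^{\max}b_m$. The plan is then to lift compatibility from the family of $\eit{i}^{\max}b_k$'s back to the original family, once more via the $T_i$-equivariance of \thmref{theo:braidupper} combined with the fact that the operators $\fit{i}^{*\vphi_i}$ descend pairwise compatibility (this is where \corref{cor:dsum} in the PBW setting provides a model: if several elements live in $\mscr{B}(w,-1)$ for a common $w$ adapted to them, their Lusztig data simply add). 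Granting (1) and (2), statement (3) follows: given a pairwise-compatible family of real elements, (2) says each $b_k^{[m_k]}=S_{m_k}(b_k)$, and one applies (1) iteratively to the multiset $\{b_k\text{ with multiplicity }m_k\}$ after checking that $S_{m_k}(b_k)\bot S_{m_j}(b_j)$ (a consequence of $b_k\bot b_j$ and the $S_m$-equivariance discussed above) to conclude $\Gup(b_1)^{m_1}\cdots \Gup(b_l)^{m_l}\simeq \Gup(S_{m_1}(b_1)\circledast\cdots\circledast S_{m_l}(b_l))$.

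The main obstacle will be the lifting step in both (1) and (2): namely, passing a multiplicative identity from $\eit{i}^{\max}b$ back to $b$ when $\vep_i^*(b)>0$, because Lusztig's braid isomorphism $T_i$ in \thmref{theo:braidupper} is only available on the $\vep_i^*=0$ (or dually $\vep_i=0$) component. In the PBW setting of \secref{sec:unip} this is precisely why the results there are clean: every element of $\mscr{B}(w,-1)$ is reached by a specific sequence of $\Lambda_{i_k}$'s adapted to $\tw$, and the Levendorskii--Soibelman formula (\thmref{theo:dualLS}) controls all products on the nose. For arbitrary $b\in\mscr{B}(\infty)$ no such adaptation is available, which is why the conjecture is open in general; a viable route, consistent with the framework discussed in the introduction, would be to assume $\mfr{g}$ symmetric and import the Khovanov--Lauda--Rouquier categorification, where strong reality of a simple module follows from reality together with the fact that $L\otimes L$ remains simple implies $L^{\otimes m}$ is simple for all $m$ by a standard head/socle argument \emph{à la} Hernandez--Leclerc. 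Outside the symmetric case, one would need a substitute for this categorical input, and providing one is the essential obstacle I do not see how to remove with the tools of this paper alone.
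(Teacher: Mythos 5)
The statement you are proposing to prove is labeled a \emph{conjecture} in the source, and the paper offers no proof of it: it is stated precisely because the author does not know how to establish it with the tools of the paper. Your ``proposal'' is therefore a plan of attack against an open problem, and you are right to flag it as speculative at the outset; there is no argument in the paper to compare it against.

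That said, the obstacle you identify at the end is exactly the right one, and it is fatal to the plan as written. The downward step is fine: \corref{cor:string3} does show that $b_1\bot b_2$ implies $\eit{i}^{\max}b_1\bot\eit{i}^{\max}b_2$, so compatibility of a family descends under $\eit{i}^{\max}$. But the upward (lifting) step fails in general, because the equivariance in \thmref{theo:braidupper} is only available on the $\vep_i^*=0$ component (or, after applying $*$, on the $\vep_i=0$ component), and an arbitrary $b\in\Binfty$ can have $\vep_i(b)>0$ and $\vep_i^*(b)>0$ simultaneously for every $i$ in its support. Unlike the PBW situation of \secref{sec:unip} --- where a fixed reduced word $\tw$ supplies an adapted chain of $\Lambda_{i_k}$'s that successively trivializes one $*$-string at a time and the dual Levendorskii--Soibelman formula \thmref{theo:dualLS} governs all products on the nose --- there is no such adapted filtration for a general element of $\Binfty$. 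In that PBW setting your remark that ``Lusztig data simply add'' is also slightly overstated: \propref{prop:dsum_formula} only gives that the \emph{top} term of the product has additive Lusztig data; additivity of the whole product is exactly the unproved compatibility assertion. And your suggested categorical escape route (KLR algebras when $\mfr{g}$ is symmetric) is indeed the natural one, but the ``standard head/socle argument'' you invoke is not standard at all: it is precisely the content of the simple-product theorems of Hernandez--Leclerc type, whose general proof (Kang--Kashiwara--Kim--Oh and successors) postdates this paper and carries substantial additional input ($\mathrm{R}$-matrices for KLR modules, non-degeneracy of the symmetric form, etc.) that is not available here. In short: your plan correctly reduces (3) to (2) together with (1), correctly reduces (2) to the claim $b\bot S_{m-1}(b)$, and correctly locates the missing ingredient; but it does not supply that ingredient, and no argument within the paper does either, which is why the statement stands as a conjecture.
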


\begin{rem}
 The converse of (1) follows from the categorification.
 \textup{(2)} and \textup{(3)} are corollaries of \textup{(1)}.
 By construction, the subalgebra generated by a compatible familty of real elements
 are adapted algebra in the sense of \cite{Cal:adapted}, and for such adapted algebra
 $A_t\subset A$ we have $\operatorname{GKdim}(A_t)=l$. 
 \end{rem}

To relate some part of the dual canonical basis and (quantum) cluster algebra,
we introduce a notion which should correspond to the notion of (basic) cluster-tilting object
under the GLS's open orbit conjecture. 
 \begin{defn}
Let $\mscr{A}\subset \Uq^-(\mfr{g})$ be a $\Qq$-algebra which is compatible with upper global basis,
that is we have 
$\mscr{A}=\bigoplus_{b\in \Gup(\mscr{B}(\infty))\cap \mscr{A}}\Qq \Gup(b)$.
Here we set the corresponding subset $\mscr{B}(\mscr{A})\subset \mscr{B}(\infty)$.

A compatible family of real prime elements $\underline{\delta}:=\{\delta_1, \cdots, \delta_l\}\subset  \mscr{B}(\mscr{A})$
is called a \emph{multiplicative cluster} if the following holds:
\begin{align*}
\mscr{B}_{\underline{\delta}}:=
& \{b\in \mscr{B}(\mscr{A}) ; b \bot \delta_{i}~(1\leq i\leq l) \} \\
=&\{b\in \mscr{B}(\mscr{A}); \Gup(b)\simeq \Gup(\delta_1)^{n_1}\cdots \Gup(\delta_l)^{n_l}, n_1,\cdots, n_l\in \mbb{Z}_{\geq 0}\}.
 \end{align*}
 \end{defn}
By construction,
we have $\operatorname{GKdim}(\mscr{A}_{\underline{\delta}})=l$ and $l\leq \operatorname{GKdim}(\mscr{A})$,
where  $\mscr{A}_{\underline{\delta}}$ is the subalgebra of $\mscr{A}$ generated by $\underline{\delta}$.

Our conjecture is the following: 

\begin{conj}
Under the GLS'S open orbit conjecture, 
for the subalgebra $\Uq^-(w)$,
we can define a quantum cluster algebra structure and 
cluster corresponds to a multiplicative cluster in the above sense.
\end{conj}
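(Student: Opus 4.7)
The strategy is to bootstrap from the initial datum already constructed in \thmref{theo:seed} and \thmref{theo:unip}, using GLS's open orbit conjecture as the bridge that allows us to transport the additive categorification of $\mbb{C}[N(w)]$ (in terms of $\mca{C}_w$ and the dual semicanonical basis $\mca{S}^*$) over to the quantum setting governed by $\mbf{B}^{\up}(w)$. Concretely, the candidate quantum cluster algebra is $\mscr{A}^q(\Gamma_{\tw}, \Lambda_{\tw})$ with the compatible pair from \cite{BZ:qcluster} and \cite{GLS:KacMoody}: the initial quantum cluster is $\{\Delta_{\tw,k}\}_{1\leq k\leq l}$, which by \thmref{theo:seed} is a strongly compatible family of real elements of $\mscr{B}(w,-1)$, and by \thmref{theo:unip} every $\Gup(b_{-1}(\mbf{c},\tw))$ factors through this ``$q$-center.'' Together with the commutation relation $N_{\tw}(\mbf{c},\mbf{c}')$ from \eqref{eq:lambda}, this produces an embedding of the initial quantum torus into $\Frac(\mca{O}_q[N(w)])$ compatible with $\Lambda_{\tw}$, which is the ``multiplicative cluster'' attached to $\tw$.

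The core of the plan is then to run an induction on mutation sequences. The base case is exactly the initial seed above, and for the inductive step, given a multiplicative cluster $\underline{\delta}=\{\delta_1,\dots,\delta_l\}\subset \mscr{B}(w,-1)$ with associated quantum torus inside $\Frac(\mca{O}_q[N(w)])$, one mutates $\delta_k$ at a non-frozen index by first running the corresponding mutation in $\mscr{A}(\Gamma_{\tw})$, then transporting the Fomin--Zelevinsky exchange relation through GLS's cluster character $\varphi$ to obtain the classical mutated variable in $\mbb{C}[N(w)]$, and finally lifting back to $\mca{O}_q[N(w)]$ via the open orbit conjecture to produce an element of $\mbf{B}^{\up}(w)$ (up to $q$-shift). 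One would then need to check that this new element $\delta_k'$ is again real and prime, and that swapping $\delta_k$ for $\delta_k'$ in $\underline{\delta}$ still produces a multiplicative cluster; here the $q$-commutativity follows from \propref{prop:q-center}-style arguments applied to the mutated extended exchange matrix, while the ``strong compatibility'' of the new family requires \corref{cor:inverse} together with the criterion from \corref{cor:string3} to promote real compatibility to strong compatibility.

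The main obstacle is exactly the mutation step. Two genuinely hard subproblems are entangled there: first, one must verify that the quantum exchange relation (a sum of two monomials in the old cluster with specific $q$-powers) produces an element that lies, up to $q$-power, in $\mbf{B}^{\up}(w)$ rather than merely in the $\mca{A}$-form; this is precisely where the open orbit conjecture and the positivity of $\mbf{B}^{\up}$ (known for symmetric $\mfr{g}$ via \cite{Lus:quiver}) are needed to rule out cancellations between basis elements. Second, one must establish that the mutated element is \emph{real} and that the resulting family is still compatible; the natural tool is \conjref{conj:simple_product}, but using only its consequences provable from the open orbit conjecture plus the structural results in \secref{sec:unip} (factorization, $q$-commutation with quantum minors, and \thmref{theo:dualcan2}). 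A secondary difficulty is that, for non-symmetric $\mfr{g}$, Lusztig's geometric positivity is unavailable, so the open orbit conjecture must be supplemented or replaced by a direct argument showing that the candidate mutated variable has bar-invariant dual-PBW expansion unitriangular with respect to some $\tw'$; this would plausibly be handled by reducing to the symmetric case by folding, but is the step where the proposal is most fragile.
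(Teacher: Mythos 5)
The statement you have been asked to prove is labeled (and functions) as a \emph{conjecture}: it sits in the paper's concluding discussion of open problems, alongside the Quantization Conjecture and the Simple Product Conjecture, and the author supplies no proof of it. There is therefore no proof in the paper against which to compare your proposal.

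Assessed on its own terms, your sketch correctly identifies the available ingredients --- the strongly compatible initial family $\{\Delta_{\tw,k}\}$ of \thmref{theo:seed}, the $q$-center factorization of \thmref{theo:unip}, the $q$-commutation \propref{prop:q-center}, and the computation of $N_{\tw}$ in \eqref{eq:lambda} --- and it correctly identifies the place where those ingredients stop: the mutation step. But the proposal does not close that gap, and the gap is not cosmetic. To assert that a mutated variable, a priori only a Laurent polynomial in the old quantum torus, lies in $q^{\mbb{Z}}\mbf{B}^{\up}(w)$, one must rule out cancellation among the dual canonical basis elements appearing in the expansion of the exchange binomial. For symmetric $\mfr{g}$ that assertion is exactly the open-orbit conjecture for cluster monomials combined with Lusztig's positivity, and nothing in \secref{sec:unip} or \secref{sec:qflag} derives it; your own induction invokes it rather than proves it. The appeal to \corref{cor:string3} to establish realness of the mutated variable is circular at this step as well, since that criterion requires one to already know the element is a dual canonical basis element (for instance as $\Gup(b)$ for some $b$ with controlled $\vep_i$). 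And the suggestion to reduce the non-symmetric case to the symmetric case by folding has no content supplied and does not help, because the difficulty is already unresolved in the symmetric case. In short: your scaffolding matches the setup the paper constructs in Sections \ref{sec:unip}--\ref{sec:qflag}, but the inductive step assumes precisely the statement it is meant to establish, so this remains, as in the paper, a conjecture rather than a theorem.
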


\begin{defn}\cite[2.2 Definition 1]{Cal:adapted}
\textup{(1)}
Let  $A_{t}\subset A\subset \Uq^-(\mfr{g})$ be $\Qq$-subalgebras which are compatible with the dual canonical basis $\mbf{B}^{\up}$, i.e.,
which are spanned by subsets $\mbf{B}^{\up}(A_t)\subset \mbf{B}^{\up}(A)$ of $\mbf{B}^{\up}$.

$A_t$ is called \emph{an adapted algebra} of $A$ if for any pair of $b^*, b^{*'}\in \mbf{B}^{\up}(A_t)$ is multiplicative and 
there exists a finite subset $\mbf{P}_t^*\subset \mbf{B}^{\up}(A_t)$ such that
a multiplicative subset generated by $\mbf{P}^*_t$ is an Ore set in $A_t$ and coincides with $q^{\mbb{Z}}\mbf{B}^{\up}(A_t)$.

\textup{(2)}
An adapted algebra $A_t\subset A$ is called \emph{maximal} if $A_t\subset A_{t'}\subset A$ and $A_{t'}$ is adapted
implies $A_{t'}=A_{t}$
\end{defn}
\begin{rem}
In the connection of the quantum cluster algebra,
we consider ambient algebra $A'$ as cluster algebra and maximal adapted subalgebra $A\subset A'$ as certain quantum seed (or corresponding toric frame).
In particular, we should have $\Frac(A)=\Frac(A')$.
More precisely, 
the subalgebra generated by maximal adaptable algebras which are ``reachable'' via ``quantum mutation''
from certain seed corresponds to the ambient algebra
and the intersection of  reachable adapted algebras should coincides with the coefficient of quantum cluster algebra and the $q$-center of
the ambient algebra $A'$.

\centering
\begin{tabular}{|c|c|}
cluster algebra & ambient algebra $A$ \\
quantum seed & adapted algebra $A_t$\\
coefficient & $q$-center $Z_q(A)=\cap_{t\in \mbb{T}}A_t$
\end{tabular}
\label{default}

\end{rem}

\subsection{Lusztig's problem on characteristic variety}
In \cite[13.7]{Lus:quiver}, he stated a following conjecture.
For a simple perverse sheaves (or regular holonomic $\mca{D}$-module) $\mscr{P}_{\Omega, b}$ which arising in the canonical basis,
the characteristic varieties $\operatorname{Ch}(\mscr{P}_{\Omega, b}):=\Supp(\mca{E}\otimes_{\pi^{-1}\mca{D}}\mscr{P}_{\Omega, b})$ of 
them are independent of a choice of $\Omega$ and depend only on $b\in \mscr{B}(\infty)$
this can be proved by the Fourier-Deligne-Sato transformation.
\begin{enumerate}
\item For graph of type ADE, the characteristic varieties of simple perverse sheaves (or regular holonomic $\mca{D}$-module) which arising in the canonical basis are irreducible
varieties
\item For general graph, there is a unique bijection $s$ between $\Irr\Lambda$ and the set $\mscr{P}_{\Omega}$ of simple perverse sheaves 
which satisfies $\Lambda_{s(b)}\subset \operatorname{Ch}(\mscr{P}_{\Omega, b})$.
\end{enumerate}
For the first question, a counter example is given in \cite{KasSai} in type $A_5$.
For the second question, existence of such bijection ($s=id$) is given in \cite{KasSai} but uniqueness is still open except for ADE case and affine ADE case.
``Similarity'' of counter example is mentioned in \cite[Remark 1]{Lec:imaginary} and 
\cite[1.3]{GLS:semican1}.
\end{NB}

\bibliographystyle{plain}
\bibliography{cluster}
\end{document}